\def \leq {\leqslant}                                                                
\def \geq {\geqslant}                                                                
\def\ind#1{\lower5pt\hbox{$\scriptstyle #1$}}                                        
\def \d {\mathrm{d}}                                                                 
\def \ds {\displaystyle}                                                             
\def \R{\mathbb R}                                                                   
\def\S{\mathbb S}                                                                    
\def\Z{\mathbb Z}                                                                    
\def\N{\mathbb N}                                                                    
\def \C{\mathbb C}																	 
\def\Cs{\mathcal C}																	 
\def\A{\mathcal A}																	 
\def\D{\mathscr D}																	 
\def\P{\mathcal P}																	 
\def\M{\mathcal M}																	 
\def \F {\mathcal{F}}																 
\def \T {\mathbb{T}}                                                                 
\numberwithin{equation}{section}                                                     
\newcommand{\vertiii}[1]{{\left\vert\kern-0.25ex\left\vert\kern-0.25ex\left\vert #1  
    \right\vert\kern-0.25ex\right\vert\kern-0.25ex\right\vert}}                      
\newcommand{\verti}[1]{{\left\vert\kern-0.25ex\left\vert\kern-0.25ex\left\vert #1    
    \right\vert\kern-0.25ex\right\vert\kern-0.25ex\right\vert}}						 
\def\e{\varepsilon}      															    
\def \m {\bm{\varpi}}															        
\def \ho {h^{0}} 																	    
\def \hu {h^{1}}																		
\def \huu {\Psi}																		
\def \Mo {\Delta_{0}}																	
\def \Rs {\mathcal{R}}																	
\def \cS {\mathcal{S}}																	
\def \sE {\bm{\Xi}}																		
\def \ra {\Big\rangle}																	
\def \la {\Big\langle}																	
\def \en {\vartheta}																	
\def \vE {\theta}																		
\def \lae {\lambda_{\e}}													            
\def \re {\alpha}																		
\def\E{\mathcal{E}} 																	
\def \B{\mathcal{B}}                                         							
\def \Y {\mathbb{Y}}                                         							
\def\W {\mathbb{W}} 
\def \H {\mathcal{H}}                                        							
\def\vet{v_{\ast}}                                           							
\def \vb {v_\ast}                                           							
\def \Q{\mathcal{Q}}                                        							
\def \LL {\mathscr{L}_{\re}} 							 							    
\def \LLe {\mathscr{L}_{\re(\e)}}						 							    
\def \G {\mathcal{G}}							 										
\newtheorem{theo}{Theorem}[section]                          							
\newtheorem{prop}[theo]{Proposition}                         							
\newtheorem{cor}[theo]{Corollary}                            							
\newtheorem{lem}[theo]{Lemma}                                							
\newtheorem{hyp}[theo]{Assumption}                    									
\newtheorem{defi}[theo]{Definition}                          							
\newtheorem{nb}[theo]{Remark}                                							
\begin{document}

\title[Fluid dynamic limit of Boltzmann equation for granular hard--spheres]{Fluid dynamic limit of Boltzmann equation for granular hard--spheres in a nearly elastic regime}

\author{Ricardo {\sc Alonso}}
\address{$^1$Texas A\&M University at Qatar, Science Department, Education City, Doha, Qatar.}
\address{$^2$Departamento de Matem\'atica, PUC-Rio, Rio de Janeiro, Brasil.} 
\email{ricardo.alonso@qatar.tamu.edu}

\author{Bertrand {\sc Lods}}

\address{Universit\`{a} degli Studi di Torino \& Collegio Carlo Alberto, Department of Economics and Statistics, Corso Unione Sovietica, 218/bis, 10134 Torino, Italy.}\email{bertrand.lods@unito.it}

\author{Isabelle {\sc Tristani}}
\address{D\'epartement de Math\'ematiques et Applications, \'Ecole Normale Sup\'erieure, CNRS, PSL University, 75005 Paris, France.}\email{isabelle.tristani@ens.fr}

\maketitle
\begin{abstract}
 In this paper, we provide the first rigorous derivation of hydrodynamic equations from the Boltzmann equation for inelastic hard spheres with small inelasticity. The hydrodynamic system that we obtain is an incompressible Navier-Stokes-Fourier system with self-consistent forcing terms and, to our knowledge, it is thus the first hydrodynamic system that properly describes rapid granular flows {consistent with the kinetic formulation}. To this end, we write our Boltzmann equation in a non dimensional form using the dimensionless Knudsen number which is intended to be sent to~$0$. There are several difficulties in such derivation, the first one coming from the fact that the original Boltzmann equation is free-cooling and, thus, requires a self-similar change of variables to introduce an homogeneous steady state. {Such a homogeneous state} is not explicit and is heavy-tailed, which is a major obstacle to {adapting} energy estimates and spectral analysis. Additionally, a central challenge is to understand the relation between the restitution coefficient, which quantifies the energy loss at the microscopic level, and the Knudsen number. This is achieved by identifying the correct \emph{nearly elastic regime} to capture nontrivial hydrodynamic behavior. We are, then, able to prove exponential stability uniformly with respect to the Knudsen number for solutions of the rescaled Boltzmann equation in a close to equilibrium regime. Finally, we prove that solutions to the Boltzmann equation converge {in a specific weak sense} towards a hydrodynamic limit which depends on time and space variables only through macroscopic quantities. {Such macroscopic quantities are} solutions to a suitable modification of the incompressible Navier-Stokes-Fourier system {which appears to be new in this context.}
\end{abstract}
\vspace{.5cm}
\noindent {\small \textbf{Mathematics Subject Classification (2010)}: 76P05 Rarefied gas flows, Boltzmann equation [See also 82B40, 82C40, 82D05]; 76T25 Granular flows [See also 74C99, 74E20]; 47H20 Semigroups of nonlinear operators [See also 37L05, 47J35, 54H15, 58D07], 35Q35 PDEs in connection with fluid mechanics; 35Q30 Navier-Stokes equations [See also 76D05, 76D07, 76N10].}

\vspace{0.4cm}
{\small \noindent \textbf{Keywords}: Inelastic Boltzmann equation; Granular flows; Nearly elastic regime; Long-time asymptotic; Incompressible Navier-Stokes hydrodynamical limit; Knudsen number.}

\bigskip
\bigskip
{
  \hypersetup{linkcolor=blue}
 \tableofcontents
}


\section{Introduction}

The derivation of hydrodynamic models from suitable nonlinear (and possibly non conservative) kinetic equations is a challenging problem which has attracted a lot of attention in the recent years. Besides the well-documented literature dealing with the Boltzmann equation (see Section \ref{sec:liter} hereafter), a large variety of new kinetic models and limiting processes have been considered, spanning from high friction regimes for kinetic models of swarwing (see e.g. \cite{karper,figalli} for the Cucker-Smale model) to the reaction-diffusion limit for Fitzhugh-Nagumo kinetic equations \cite{crevat}. For  fluid-kinetic systems, the literature is even more important, we mention simply here the works \cite{goudona,goudonb} dealing with light or fine particles regimes for the Vlasov-Navier-Stokes system and refer to \cite{daniel} for the more recent advances on the subject. We also mention the challenging study of gas of charged particles submitted to electro-magnetic forces (Vlasov-Maxwell-Boltzmann system) for which several incompressible fluid limits have been derived recently in the monograph \cite{arsenio}.

We consider in the present paper the paradigmatic example of non conservative kinetic equations given by the  Boltzmann equation for inelastic hard spheres. In a regime of small inelasticity, we derive in a suitable hydrodynamic limit an incompressible Navier-Stokes-Fourier system with self-consistent forcing terms. This provides, to the best of our knowledge, the first rigorous derivation of hydrodynamic system  from kinetic  granular flows in physical dimension $d \geq 2$. 

\subsection{Multiscale descriptions of granular gases}\label{sec:multiscale}
Granular materials are ubiquitous in nature and understanding the behaviour of granular matter is a relevant challenge from both the physics and mathematics viewpoints. Various descriptions of granular matter have been proposed in the literature, see {\cite{garzo}}. An especially relevant one consists in viewing granular systems as clusters of a large number of discrete macroscopic particles (with size exceeding \unit{1}{\micro\meter}, significantly larger than the one of a typical particle described in classical kinetic theory) suffering dissipative interactions. One speaks then of \emph{rapid granular flows} or \emph{gaseous granular matter}. If the number of particles is large enough, it is then common to adopt a kinetic modelling based upon suitable modification of the Boltzmann equation. As usual in kinetic theory, it is then particularly relevant to deduce from this kinetic description the fluid behaviour of the system. This means, roughly speaking, that we look at the granular gas at a scale larger than the mesoscopic one and aim to capture the hydrodynamical features of it through the evolution of macroscopic quantities like \emph{density, bulk velocity} and \emph{temperature} of the gas which satisfy suitable hydrodynamics equations. \smallskip

\textit{\textbf{One of the main objects of the present work is to make a \emph{first rigorous link} between these two co-existing descriptions by deriving a suitable modification of incompressible Navier-Stokes equation from the Boltzmann equation for inelastic hard-spheres as the Knudsen number goes to zero.
}}
\smallskip


Recall that the Knudsen number $\e$ is proportional to the mean free path between collisions and in order to derive hydrodynamic equations from the Boltzmann equation, the usual strategy consists, roughly speaking, in performing a perturbation analysis in the limit $\e \to0$ (meaning that the mean free path is negligible when compared to the typical physical scale length). We point out that these questions are perfectly understood in the elastic case (\textit{molecular gases}) for which rigorous results on the hydrodynamic limits of the Boltzmann equation have been obtained, we refer to the next Section \ref{sec:liter} for more details and to \cite{SR} for an up-to-date review.

\medskip

The picture in the context of granular gases is quite different.  In fact, a satisfying hydrodynamic equation that properly describes rapid granular flows is still a controversial issue among the physics community.  The continuous loss of kinetic energy makes granular gases an \emph{open system} as far as thermodynamics is concerned. Moreover, no non-trivial steady states exist in granular gases without an external energy supply which makes granular gases a prototype of \emph{non-equilibrium} systems. This is  an important obstacle in the derivation of hydrodynamical equations from the kinetic description since it is expected that \emph{equilibrium states} play the role of the typical \emph{hydrodynamic solution} where time-space dependence of the single-particle distribution function $F(t,x,v)$ occurs only through suitable hydrodynamic fields like density $\varrho(t,x)$, bulk velocity $u(t,x)$, and temperature $\vE(t,x)$.  An additional difficulty is related to the size of particles and scale separation. Recall that granular gases involve \emph{macroscopic particles} whose size is much larger than the one described by the usual Boltzmann equation with elastic interactions referred to as \emph{molecular gases}.  As the hydrodynamic description occurs on large time scales (compared to the mean free time) and on large spatial scales (compared to the mean free path) the mesoscopic -- continuum scale separation is problematic to justify in full generality for granular gases.  We refer to {\cite[Section 3.1, p. 102]{garzo}} for more details on this point and observe here that the main concern is related to the time scale induced by the evolution of the temperature (see \eqref{eq:TeHaff} herafter). In particular, as observed in {\cite{garzo}}, this problem can only be answered with a fine spectral analysis of the linearized Boltzmann equation that ensures that the $d + 2$ hydrodynamic modes associated to density, velocity and temperature decay more slowly than the remaining kinetic excitations at large times. This is the only way that the hydrodynamic excitations emerge as the dominant dynamics.   All these physically grounded obstacles make the derivation of hydrodynamic equations from the Boltzmann equation associated to granular gases a reputedly challenging open problem. Quoting~{\cite{brey}}: 
\begin{displayquote}
\textit{``the context of the hydrodynamic equations remains uncertain. What are the relevant space and time scales? How much inelasticity can be described in this way?''}
\end{displayquote}
\textit{\textbf{The present paper is, to the best of our knowledge, the first rigorous answer to these relevant problems}, at least in dimension $d \geq 2$.} We already mentioned that the key point in our analysis is to identify the correct regime which allows to answer these questions: the \emph{nearly elastic} one.  In this regime the energy dissipation rate in the systems happens in a controlled fashion since the inelasticity parameter is compensated accordingly to the number of collisions per time unit.  This process mimics viscoelasticity as particle collisions become more elastic as the collision dissipation mechanism increases in the limit $\e\to0$ (see Assumption \ref{hyp:re} below).  In this way, we are able to consider a re-scaling of the kinetic equation in which a peculiar intermediate asymptotic emerges and prevents the total cooling of the granular gas.

\smallskip

Other regimes can be considered depending on the rate at which kinetic energy is dissipated; for example, an interesting regime is the \emph{mono-kinetic} one which considers the extreme case of infinite energy dissipation rate.  In this way, the limit is formally described by enforcing a Dirac mass solution in the kinetic equation yielding the \emph{pressureless Euler system} (corresponding to sticky particles).  Such a regime has been rigorously addressed in the one-dimensional framework in the interesting contribution {\cite{jabin}}. It is an open question to extend such analysis to higher dimensions {since the approach of {\cite{jabin}} uses the so-called Bony functional which is a tool specifically tailored for 1D kinetic equations.}

\subsection{The Boltzmann equation for granular gases}
We consider here the (freely cooling) Boltzmann equation which provides a statistical description of  identical smooth hard spheres suffering binary and \emph{inelastic collisions}:
\begin{equation}\label{Bol-}
\partial_{t}F(t,x,v) + v\cdot \nabla_{x} F(t,x,v)=\Q_{\re}(F,F)\end{equation}
supplemented with initial condition $F(0,x,v)=F_{\mathrm{in}}(x,v)$, where $F(t,x,v)$ is the density of granular particles having position $x \in \T^{d}$ and velocity $v \in \R^{d}$ at time $t\geq0$  and $d \geq 2$. We consider here for simplicity the case of \emph{flat torus} 
\begin{equation}\label{torus}
\T_{\ell}^{d}=\R^{d}\slash (2\pi\,\ell\,\Z)^{d}\end{equation}
for some typical length-scale $\ell >0$. This corresponds to periodic boundary conditions:
$$F(t,x+2\pi\,\ell\bm{e}_{i},v)=F(t,x,v) \qquad i=1,\ldots,d$$
where $\bm{e}_{i}$ is the $i$-th vector of the canonical basis of $\R^{d}.$ The collision operator $\Q_{\re}$ is defined in weak form as
\begin{equation}\label{co:weak}
\begin{split}
 \int_{\R^{d}} \Q_{\re} (g,f)(v)\, \psi(v)\d v  =  \frac{1}{2}  \int_{\R^{2d}} f(v)\,g(v_{\ast})\,|v-v_{\ast}|
\mathcal{A}_{\re}[\psi](v,v_{\ast})\d v_{\ast}\d v,
\end{split}
\end{equation}
where
\begin{equation}    \label{coll:psi} \mathcal{A}_{\re}[\psi](v,v_{\ast}) =
    \int_{\S^{d-1}}(\psi(v')+\psi(v_{\ast}')-\psi(v)-\psi(v_{\ast}))b(\sigma \cdot \widehat{u})\d{\sigma},
\end{equation}
and the post-collisional velocities $(v',v_{\ast}')$ are given
by
\begin{equation}\begin{split}
\label{co:transf}
  v'=v+\frac{1+\re}{4}\,(|u|\sigma-u),&
\qquad  v_{\ast}'=v_{\ast}-\frac{1+\re}{4}\,(|u|\sigma-u),\\
\text{where} \qquad u=v-v_{\ast},& \qquad \widehat{u}=\frac{u}{|u|}.
\end{split}
\end{equation}
Here, $\d\sigma$ denotes the Lebesgue measure on $\S^{d-1}$ and the angular part $b=b(\cos\theta)$ 
of the collision kernel appearing in \eqref{coll:psi} is a non-negative measurable mapping integrable over~$\S^{d-1}$.  There is no loss of generality assuming
$$\int_{\S^{d-1}}b(\sigma \cdot \widehat{u})\d{\sigma}=1, \qquad \forall \, \widehat{u} \in \S^{d-1}.$$
An additional technical assumption on the angular kernel $b(\cdot)$ will be needed in the sequel, namely, in the rest of the paper, we suppose that there exists $r>2$ such that 
\begin{equation}\label{eq:conditionb}
\int_{-1}^{1}b(s)\left[\left(1-s\right)^{\frac{d-3r}{2r}}\left(1+s\right)^{\frac{d-3}{2}}+\left(1+s\right)^{\frac{d-3r}{2r}}\left(1-s\right)^{\frac{d-3}{2}}\right]\d s < \infty.\end{equation} We just mention here that we need this integral to be finite to get bounds on the bilinear operator~$\Q_{\re}$ on $L^{r}_{v}$ for $r \geq 2$ (see Theorem~\ref{theo:Ricardo}). 
As a consequence, if $b \in L^{\infty}(\S^{d-1})$ and $d \geq 3$, condition~\eqref{eq:conditionb} holds true for any $r \in [2,3)$. In particular, our assumption includes the case of hard spheres in dimension $d=3$.

The fundamental distinction between the classical elastic Boltzmann equation and that associated to granular gases lies in the role of the parameter $\re \in (0,1)$, the \emph{coefficient of restitution}.  This coefficient is given by the ratio between the magnitude of the normal component (along the line of separation between the centers of the two spheres at contact) of the relative velocity after and before the collision (see Appendix \ref{Sec21} for the detailed microscopic velocities). The case $\re = 1$ corresponds to perfectly elastic collisions  where kinetic energy  is conserved. However, when $\re  < 1$, part of the kinetic energy of the relative motion is lost since
\begin{equation} \label{eq:lossenergy}
|v'|^{2}+|\vb'|^{2}-|v|^{2}-|\vb|^{2}=-\frac{1-\re^{2}}{4}|u|^{2}\,\left(1-\sigma \cdot \widehat{u}\right) \leq 0.
\end{equation}
It is assumed in this work that $\alpha$ is independent of the relative velocity $u$ (refer to {\cite{A}},~{\cite{ALCMP}}, and {\cite{ALT}} for the viscoelastic restitution coefficient case).  Notice that the microscopic description \eqref{co:transf} preserves the momentum
$$v'+\vb'=v+\vb$$
and, taking $\psi=1$ and then $\psi(v)=v$ in \eqref{co:weak} yields to the following conservation of macroscopic density and bulk velocity
$$\dfrac{\d}{\d t}\bm{R}(t):=\dfrac{\d}{\d t}\int_{\R^{d}\times \T^{d}_{\ell}}F(t,x,v)\d v\d x=0, \qquad \frac{\d}{\d t}\bm{U}(t):=\frac{\d}{\d t}\int_{\R^{d}\times\T^{d}_{\ell}}v F(t,x,v)\d v\d x=0\,.$$
Consequently, there is no loss of generality in assuming that
$$\bm{R}(t)=\bm{R}(0)=1, \qquad \bm{U}(t)=\bm{U}(0)=0 \qquad \forall t \geq0.$$
As mentioned, the main contrast between elastic and inelastic gases is that in the latter the \emph{granular temperature}
$$\bm{T}(t):=\frac{1}{|\T^{d}_{\ell}|}\int_{\R^{d}\times \T^{d}_{\ell}}|v|^{2}F(t,x,v)\d v\d x$$
is constantly decreasing
$$\dfrac{\d}{\d t}\bm{T}(t)=-(1-\re^{2})\mathcal{D}_{\re}(F(t),F(t)) \leq 0\,,\qquad \forall t \geq 0.$$
Here $\mathcal{D}_{\re}(g,g)$ denotes the normalised energy dissipation associated to $\Q_{\re}$, see {\cite{MiMo2}}, given by
\begin{equation}\label{eq:Dre}
\mathcal{D}_{\re}(g,g):=\frac{\gamma_{b}}{4}\int_{\T^{d}_{\ell}}\frac{\d x}{|\T^{d}_{\ell}|}\int_{\R^d \times \R^d}g(x,v)g(x,\vb)|v-\vb|^{3}\d v\d \vb
\end{equation}
with 
$$\gamma_{b}:=\int_{\S^{d-1}}\frac{1-\sigma \cdot \widehat{u}}{2}\,b(\sigma\cdot \widehat{u})\d \sigma=|\S^{d-2}|\int_{0}^{\pi}b(\cos\theta)\,\left(\sin\theta\right)^{d-2}\,\sin^{2}\left(\frac{\theta}{2}\right)\d\theta.$$
In fact, it is possible to show that 
$$\lim_{t\to\infty}\bm{T}(t)=0$$
which expresses the \emph{total cooling of granular gases}.  Determining the exact dissipation rate of the granular temperature is an important question known as \emph{Haff's law}, see {\cite{haff}}. 
 
\subsection{Navier-Stokes scaling}  To capture some hydrodynamic behaviour of the gas,  we need to write the above equation in \emph{nondimensional form} introducing the dimensionless Knudsen number
$$\e:=\dfrac{\text{ mean free path }}{\text{ spatial length-scale }}$$
which is assumed to be small. We introduce then a rescaling of time and space to capture the hydrodynamic limit and introduce the particle density
\begin{equation}\label{eq:Scaling}
F_{\e}(t,x,v)=F\left(\frac{t}{\e^{2}},\frac{x}{\e},v\right), \qquad t \geq 0.
\end{equation}
In this case, we choose for simplicity $\ell=\e$ in \eqref{torus} which ensures now that $F_{\e}$ is defined on $\R^{+}\times \T^{d}\times \R^{d}$ with $\T^{d}
=\T_{1}^{d}$. {From now on, we assume for simplicity that the torus $\T^{d}$ is equipped with the normalized Lebesgue measure, i.e. $|\T^{d}|= 1.$} 
 It is well-know that, in the classical elastic case, this scaling leads to the incompressible Navier-Stokes, however, other scalings are possible that yield different hydrodynamic models. Under such a scaling, the typical number of collisions per particle per time unit is~$\e^{-2}$, more specifically, $F_{\e}$ satisfies the rescaled Boltzmann equation
\begin{subequations}
\begin{equation}\label{Bol-e}
\e^{2}\partial_{t}F_{\e}(t,x,v) + \e\,v\cdot \nabla_{x} F_{\e}(t,x,v)=\Q_{\re}(F_{\e},F_{\e}), \qquad (x,v) \in \T^{d}\times\R^{d}\,,
\end{equation}
supplemented with initial condition
\begin{equation}\label{eq:init}
F_{\e}(0,x,v)=F_{\mathrm{in}}^{\e}(x,v)=F_{\mathrm{in}}(\tfrac x\e, v)\,.
\end{equation}
\end{subequations}
Conservation of mass and density is preserved under this scaling, consequently, we assume that 
$$\bm{R}_{\e}(t)=\int_{\R^{d}\times \T^{d}}F_{\e}(t,x,v)\d v\d x=1, \quad \bm{U}_{\e}(t)=\int_{\R^{d}\times \T^{d}}F_{\e}(t,x,v)v\d v\d x=0, \quad \forall\, t \geq 0,$$
whereas the cooling of the granular gas is now given by the equation
\begin{equation}\label{eq:TeHaff}
\frac{\d}{\d t}\bm{T}_{\e}(t)= -\frac{1-\re^{2}}{\e^{2}}\mathcal{D}_{\re}(F_{\e}(t),F_{\e}(t)),\end{equation}
where  $\bm{T}_{\e}(t)= \displaystyle \int_{\R^{d}\times\T^{d}}|v|^{2}F_{\e}(t,x,v)\d v\d x.$

\begin{nb}\label{nb:init}
{From now on we will always assume that 
$${\int_{\T^d \times \R^{d}}F_{\mathrm{in}}^{\e}(x,v)\left(\begin{array}{c}1 \\v \\|v|^{2}\end{array}\right)\d v \d x}=\left(\begin{array}{c}1 \\ 0 \\ E_{\mathrm{in}}\end{array}\right)\,$$
with $E_{\mathrm{in}} >0$ fixed and independent of $\e.$ It is important to emphasize that, in the sequel, all the threshold values on $\e$ and the various constants involved are actually depending \emph{only} on this initial choice.}\end{nb}
 
\subsection{Self-similar variable and homogeneous cooling state} Various forcing terms have been added to \eqref{Bol-e} depending on the underlying physics.  Forcing terms prevent the total cooling of the gas (heated bath, thermal bath, see {\cite{vill}} for details) since they act as an energy supply source to the system and induce the existence of a non-trivial steady state. These are, however, systems different from the free-cooling Boltzmann equation \eqref{Bol-e} that we aim to investigate here. 

To understand better this free-cooling scenario, it is still possible to introduce an intermediate asymptotics and a steady state to work with. This is done by performing a self-similar change of variables
\begin{subequations}\label{eq:ScalING}
\begin{equation}
\label{eq:Fescal}
F_{\e}(t,x,v)={V}_{\e}(t)^{d}f_{\e}\big(\tau_{\e}(t),x,V_{\e}(t)v\big)\,,
\end{equation}
with 
\begin{equation}\label{eq:tauVe}
\tau_{\e}(t):=\frac{1}{c_{\e}}\log(1+ c_{\e}\,t)\,, \quad V_{\e}(t)=(1+c_{\e}\,t)\,, \quad t \geq0, \;\, c_{\e} > 0\,.
\end{equation}
With the special choice 
\begin{equation}\label{eq:ce}
c_{\e}=\frac{1-\re}{\e^2}\,,
\end{equation}
\end{subequations}
we can prove that $f_{\e}$ satisfies 
\begin{equation}\label{BE0}
\e^{2}\partial_{t} f_{\e}(t,x,v)+\e v \cdot \nabla_{x} f_{\e}(t,x,v) + \kappa_{\re}\,\nabla_{v}\cdot (v f_{\e}(t,x,v))=  \Q_{\re}(f_{\e},f_{\e})\,,
\end{equation}
with initial condition 
$$f_{\e}(0,x,v)=F^{\e}_{\mathrm{in}}(x,v).$$ 
Here 
$$\kappa_{\re}=1-\re > 0, \qquad \forall \re \in (0,1).$$
The underlying drift term $\kappa_{\re}\nabla_{v}\cdot (v f(t,x,v))$ acts as an energy supply which prevents the total cooling down of the gas.  Indeed, it has been shown in a series of papers ({\cite{MiMo1,MiMo2,MiMo3}}) that there exists a \emph{spatially homogeneous} steady state  $G_{\re}$ to \eqref{BE0} which is unique for $\re \in (\re_{0},1)$ for an explicit threshold value $\re_{0} \in (0,1)$. More specifically, for $\re \in (\re_{0},1)$, there exists a unique solution $G_{\re}$ to the spatially homogeneous steady equation
$$ {\kappa_{\re}} \nabla_{v}\cdot (v G_{\re}(v))=  \Q_{\re}(G_{\re},G_{\re})\,,$$  
with 
$$\int_{\R^{d}}G_{\re}(v)\d v=1, \qquad \int_{\R^{d}}G_{\re}(v)\,v\d v=0.$$
Moreover,  
\begin{equation}\label{eq:GMli}
\lim_{\re\to1^{-}}\|G_{\re}-\M\|_{L^{1}(\langle v \rangle^2)}=0\,,
\end{equation}
where $\M$ is the Maxwellian distribution
\begin{equation}\label{eq:max}
\M(v)=G_{1}(v)=(2\pi\en_{1})^{-\frac{d}{2}}\exp\left(-\frac{|v|^{2}}{2\en_{1}}\right), \qquad v \in \R^{d}\,,
\end{equation}
for some explicit temperature $\en_{1} >0$.  The Maxwellian distribution $\M(v)$ is a steady solution for $\re=1$ and its prescribed temperature $\en_{1}$ (which ensures \eqref{eq:GMli} to hold) will play a role in the rest of the analysis. We refer to Appendix \ref{appen:homog} for more details and explanation of the role of $\en_{1}$. 

Notice also that the equation in self-similar variables~\eqref{BE0} preserves mass and {\em vanishing} momentum. Indeed, a simple computation based on~\eqref{co:weak} gives that 
$$
{\d \over \d t} \int_{\R^{d}\times \T^{d}}f_{\e}(t,x,v) v \d v\d x = \frac{\kappa_{\re}}{\e^{2}} \int_{\R^{d}\times \T^{d}}f_{\e}(t,x,v)v\d v\d x\,.
$$
Consequently, the assumption made in Remark~\ref{nb:init} and the fact that $G_\re$ has mass $1$ and vanishing momentum imply that for any $t \geq 0$, we have
$$
\int_{\R^{d}\times \T^{d}}f_{\e}(t,x,v) \left(\begin{array}{c}1 \\v \end{array}\right)\d v \d x=\left(\begin{array}{c}1 \\ 0\end{array}\right) .
$$ \medskip

Three main questions are addressed in this work regarding the solution to \eqref{BE0}:
\begin{enumerate}[{\bf (Q1)}]
\item First, we aim to prove the existence and uniqueness of solutions to \eqref{BE0} in a close to equilibrium setting, i.e. solutions which are defined \emph{globally} in time and such that
\begin{equation}
\label{eq:unifDelta}
\sup_{t\geq 0}\|f_{\e}(t)-G_{\re}\| \leq \delta\end{equation}
for some positive and explicit $\delta >0$ in a suitable norm $\|\cdot\|$ of a functional space to be identified.  The close-to-equilibrium setting is quite relevant for very small Knudsen numbers given the large number of collisions per unit time which keeps the system thermodynamically relaxed. 
\item More importantly (though closely related), the scope here is to provide estimates on the constructed solutions $f_{\e}$ which are \emph{uniform with respect to $\e$}. This means that, in the previous point, $\delta>0$ is independent of $\e$.  In fact, we are able to prove \emph{exponential time decay} for the difference $\|f_{\e}(t) - G_{\re}\|$.
\item Finally, we aim to prove that, as $\e\to 0$, the solution $f_{\e}(t)$ converges towards some hydrodynamic solution which depends on $(t,x)$ only through macroscopic quantities $(\varrho(t,x),u(t,x),\vE(t,x))$ which are solutions to a suitable modification of the incompressible Navier-Stokes system.\
\end{enumerate}

\noindent
The central underlying assumption in the previous program is the following relation between the restitution coefficient and the Knudsen number.
\begin{hyp}\label{hyp:re}
The restitution coefficient $\re(\cdot)$ is a continuously decreasing function of the Knudsen number $\e$ satisfying the optimal scaling behaviour
\begin{equation}\label{eq:scaling}
\re(\e)=1-\lambda_{0}\e^{2}+\mathrm{o}(\e^{2})
\end{equation}
with $\lambda_{0} \geq 0$.
\end{hyp}
Indeed, a careful spectral analysis of the linearized collision operator around $G_{\re}$ shows that unless one assumes $1-\re$ comparable to $\e^{2}$ the eigenfunction associated to the energy dissipation would explode and prevent \eqref{eq:unifDelta} to hold true.  In fact, we require $\lambda_0$ to be relatively small with respect to the eigenvalues associated to other kinetic excitations. 
As mentioned before, in this regime the energy dissipation rate is controlled along time by mimicking a viscoelastic property in the granular gas which is at contrast to other regimes such as the mono-kinetic limit.  In viscoelastic models, nearly elastic regimes emerges naturally on large-time scale, see {\cite{BCG, ALCMP,ALT}} for details.

\smallskip

Because $\e \to 0$, Assumption \ref{hyp:re} means that the limit produces a model of the cumulative effect of \textbf{\textit{nearly elastic}} collisions in the \textbf{\textit{hydrodynamic regime}}.  Two situations are of interest in our analysis
\begin{enumerate}
\item[\underline{Case 1:}] If $\lambda_{0}=0$ the cumulative effect of the inelasticity is too weak in the hydrodynamic scale and the expected model is the classical Navier-Stokes equations. In this case, to ensure that $\overline{\lambda}_{\e} >0$, we need the additional assumption that $\alpha$ satisfies 
$$\re(\e) = 1 - \e^2 \eta(\e)$$
for some function $\eta(\cdot)$ which is positive on some interval $(0, \bar\e)$. This technical assumption will be made in all the sequel.

\smallskip

\item[\underline{Case 2:}] If $0 < \lambda_{0}< \infty$, the cumulative effect is visible  in the hydrodynamic scale and we expect a model different  from the Navier-Stokes equation accounting for that.  As we mentioned, we require $\lambda_{0}$ to be relatively small compared to some explicit quantities completely determined by the mass and energy of the initial datum, say, $0 < \lambda_{0} \ll 1$ with some explicit upper bounds on $\lambda_{0}$. 
\end{enumerate}
We wish to emphasize here that, without Assumption \ref{hyp:re}, it appears hopeless to resort to any kind of linearized technique, which is somehow at the basis of the Navier-Stokes scaling. Indeed, even in the spatially homogeneous case, the asymptotic behaviour of the Boltzmann equation is not clearly understood far from the elastic case (see the discussion in the introduction of~\cite{MiMo3}). We strongly believe that we captured with Assumption \ref{hyp:re} the correct regime that brings together the delicate balance between inelasticity and Knudsen number  adapted to the hydrodynamic asymptotics for the constant restitution coefficient case. We also remark that it is very likely that the more adapted model of viscoelastic hard spheres  will display naturally such balance and enjoy the nearly inelastic regime in the long-time dynamic (see \cite{ALCMP,ALT} for more details).

\subsection{Main results} The main results are both concerned with the solutions to \eqref{BE0}. The first one is the following Cauchy theorem regarding the existence and uniqueness of close-to-equilibrium solutions to \eqref{BE0}. A precise statement is given in Theorem \ref{theo:main-cauc1} in Section \ref{sec:Cauchy}. 

\begin{theo}\label{theo:main-cauc}
Under Assumption \ref{hyp:re}, one can construct two suitable Banach spaces $X_{1} \subset X$  such that, for~$\e,\lambda_0$ and $\eta_0$ sufficiently small with respect to the initial mass and energy,  if
$$\|F_{\mathrm{in}}^{\e}-G_{\re(\e)}\|_{X} \leq \e\,\eta_0$$
then the inelastic Boltzmann equation \eqref{BE0} has a unique solution 
$$f_{\e} \in\mathcal{C}\big([0,\infty); X\big) \cap L^1\big([0,\infty); X_{1}\big)$$ satisfying
\begin{equation*}
\left\|f_{\e}(t)-G_{\re(\e)}\right\|_{X}\leq C\e\eta_0\,\exp\left(-\overline{\lambda}_{\e}\,t\right), \qquad \forall t >0\end{equation*}
for some positive constant $C >0$ independent of $\e$ and where $\lim_{\e\to 0}\bar{\lambda}_{\e}=\lambda_{0}$.
\end{theo}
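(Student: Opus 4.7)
My plan is to write $f_{\e}=G_{\re(\e)}+h_{\e}$ and, using the stationary identity $\kappa_{\re(\e)}\nabla_{v}\cdot(vG_{\re(\e)})=\Q_{\re(\e)}(G_{\re(\e)},G_{\re(\e)})$, recast \eqref{BE0} as
\begin{equation*}
\e^{2}\partial_{t}h_{\e}+\e\,v\cdot\nabla_{x}h_{\e}+\kappa_{\re(\e)}\nabla_{v}\cdot(vh_{\e})=\mathscr{L}_{\re(\e)}h_{\e}+\Q_{\re(\e)}(h_{\e},h_{\e}),
\end{equation*}
where $\mathscr{L}_{\re(\e)}h:=\Q_{\re(\e)}(G_{\re(\e)},h)+\Q_{\re(\e)}(h,G_{\re(\e)})$. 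Setting $\mathcal{G}_{\e}:=\mathscr{L}_{\re(\e)}-\e v\cdot\nabla_{x}-\kappa_{\re(\e)}\nabla_{v}\cdot(v\,\cdot)$ and letting $\mathcal{U}_{\e}(t)$ denote the semigroup generated by $\mathcal{G}_{\e}/\e^{2}$, the Cauchy problem reduces to the Duhamel identity
\begin{equation*}
h_{\e}(t)=\mathcal{U}_{\e}(t)h_{\e}(0)+\frac{1}{\e^{2}}\int_{0}^{t}\mathcal{U}_{\e}(t-s)\,\Q_{\re(\e)}(h_{\e}(s),h_{\e}(s))\,\d s,
\end{equation*}
to be solved by a fixed-point argument. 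The two ingredients I need are a linear decay estimate for $\mathcal{U}_{\e}$ uniform in $\e$, and a bilinear bound for $\Q_{\re(\e)}$.

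The core work is the linear decay $\|\mathcal{U}_{\e}(t)h\|_{X}\leq C\,e^{-\overline\lambda_{\e}t}\|h\|_{X}$, with $\overline\lambda_{\e}\to\lambda_{0}$ as $\e\to0$, on the admissible subspace of zero mass and zero momentum. I would obtain it in three stages. First, at the spatially homogeneous level, I would establish a spectral gap for the velocity operator $\mathscr{L}_{\re(\e)}-\kappa_{\re(\e)}\nabla_{v}\cdot(v\,\cdot)$ by perturbation from the elastic operator $\mathcal{L}_{1}$ linearized around $\M$, using \eqref{eq:GMli} together with the scaling $\kappa_{\re(\e)}=\mathrm{O}(\e^{2})$ from Assumption \ref{hyp:re}. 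Because $G_{\re(\e)}$ has only algebraic tails, this spectral gap must then be transported from the natural Gaussian-weighted elastic framework to the target polynomially-weighted space $X$ through an enlargement/factorisation argument in the spirit of Gualdani--Mischler--Mouhot. Second, the transport $\e v\cdot\nabla_{x}$ is reinstated via a hypocoercivity estimate mixing $x$- and $v$-derivatives, in order to recover decay on the hydrodynamic modes (mass, momentum, energy) that form the kernel of the homogeneous linearization. After the time rescaling by $\e^{-2}$, the non-hydrodynamic part relaxes at rate $\mathrm{O}(\e^{-2})$, while the hydrodynamic modes relax at rate $\mathrm{O}(1)$ dictated by the acoustic/incompressible eigenvalues of the elastic problem; one chooses $\lambda_{0}$ strictly below these, which accounts for the smallness condition on $\lambda_{0}$ in the statement.

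For the nonlinearity, Theorem \ref{theo:Ricardo} combined with \eqref{eq:conditionb} should yield bilinear estimates of the form
\begin{equation*}
\|\Q_{\re(\e)}(g,h)\|_{X}\lesssim\|g\|_{X_{1}}\|h\|_{X}+\|g\|_{X}\|h\|_{X_{1}},
\end{equation*}
with constants uniform in $\e$, where $X_{1}\hookrightarrow X$ encodes the extra velocity moment/derivative demanded by the hard-sphere weight $|v-v_{\ast}|$. The factor $\e^{-2}$ in front of the Duhamel integral is compensated by the $\e$ in the smallness assumption on the data: schematically $\e^{-2}(\e\,\eta_{0})^{2}=\eta_{0}^{2}$, so the quadratic term remains small compared with the linear decay. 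Running a Banach fixed-point argument in
\begin{equation*}
\mathcal{E}_{\e}:=\Bigl\{h\in\mathcal{C}([0,\infty);X)\cap L^{1}([0,\infty);X_{1}):\sup_{t\geq 0}e^{\overline\lambda_{\e}t}\|h(t)\|_{X}+\int_{0}^{\infty}e^{\overline\lambda_{\e}t}\|h(t)\|_{X_{1}}\,\d t\leq C\,\e\,\eta_{0}\Bigr\}
\end{equation*}
then closes the argument; conservation of mass and of vanishing momentum observed after \eqref{eq:ScalING} guarantees that $h_{\e}(t)$ remains in the subspace on which the spectral gap is valid.

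The main obstacle is unquestionably the second paragraph above. Producing a spectral gap for $\mathscr{L}_{\re(\e)}$ in an algebraically-weighted space, with error terms of size $1-\re(\e)\sim\e^{2}$, while simultaneously isolating the slow eigenmode carrying the energy-dissipation rate $\lambda_{0}$ and keeping it cleanly separated from the remainder of the spectrum, is genuinely delicate: the heavy tails of $G_{\re(\e)}$ rule out any direct Gaussian energy method, so every constant in the enlargement and hypocoercivity procedure has to be tracked and shown uniform in $\e$. Assumption \ref{hyp:re} is precisely what keeps the perturbation from the elastic operator small on the hydrodynamic time scale and makes this program viable.
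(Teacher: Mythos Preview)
Your overall framework (linearise, Duhamel, fixed point) is correct in outline, and you have correctly identified the crux: producing uniform-in-$\e$ decay for the linearised flow in a polynomially weighted space. The gap is in how you propose to obtain that decay. You plan to get $\|\mathcal{U}_{\e}(t)h\|_{X}\leq C\,e^{-\overline{\lambda}_{\e}t}\|h\|_{X}$ by combining a homogeneous spectral gap, an enlargement argument \`a la Gualdani--Mischler--Mouhot, and a hypocoercivity estimate for transport. The enlargement step is where this fails: the factorisation theorem applied at the semigroup level yields a bound of the form $C\,(t^{N}/\e^{N(2+s)})e^{-\mu t}$ (cf.\ Remark~\ref{nb:G1e}), whose prefactor blows up as $\e\to0$. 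The paper removes this initial-layer singularity for the \emph{elastic} semigroup $\G_{1,\e}$ (Theorem~\ref{theo:G1e}) only by working in $L^{2}_{x}$-based spaces so that $\A_{\e}$ maps the large space into the small one without loss in $x$; for the \emph{inelastic} operator $\G_{\e}$ no such uniform semigroup estimate is ever established --- only the spectral structure (Theorem~\ref{cor:mu}).

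The paper's route is genuinely different: instead of enlarging the inelastic semigroup, it splits the \emph{solution} as $h_{\e}=h^{0}+h^{1}$ (system~\eqref{eq:h0}--\eqref{eq:h1}). The piece $h^{0}$ lives in the large space $\E$ and is driven only by the dissipative part $\B_{\re(\e),\e}$, which decays at rate $e^{-\nu_{0}t/\e^{2}}$ with no hydrodynamic kernel to worry about. The piece $h^{1}$ lives in the small Gaussian-weighted Hilbert space $\H$ and is driven by the \emph{elastic} operator $\G_{1,\e}$, for which uniform hypocoercive decay is already known from \cite{briant}, with only the regularising source $\A_{\e}h^{0}$ as coupling. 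Every genuinely inelastic contribution involving $h^{1}$ --- namely $(\G_{\e}-\G_{1,\e})h^{1}$ and $(\Q_{\re}-\Q_{1})(h^{1},h^{1})$ --- is pushed back into the $h^{0}$ equation, where it is of size $(1-\re(\e))/\e^{2}\sim\lambda_{\e}$ by Assumption~\ref{hyp:re} and absorbed by the fast dissipation of $\B_{\re(\e),\e}$. The Cauchy theory is then closed by an iteration in the product space $\mathcal{X}_{0}\times\mathcal{X}_{1}$ (Section~\ref{sec:Cauchy}), not by a single-space Duhamel contraction. Your proposed fixed point in $\mathcal{E}_{\e}$ would require precisely the uniform inelastic semigroup bound that this two-level splitting is designed to avoid.
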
    

Theorem \ref{theo:main-cauc} completely answers queries {\bf (Q1)} and {\bf (Q2)} where the functional spaces $X_{1} \subset X$ are chosen to be $ {L^{1}_{v}L^2_x}$-based Sobolev spaces
$$X= {\W^{k,1}_{v}\W^{m,2}_{x}}(\langle v\rangle^{q}), \qquad X_{1}={\W^{k,1}_{v}\W^{m,2}_{x}}(\langle v\rangle^{q+1})$$
for suitable choice of $m,k,q$. Exact notations for the functional spaces are introduced in Section \ref{sec:nota}. The close-to-equilibrium solutions we construct are shown to decay with a rate that can be made uniform with respect to the Knudsen number $\e$.  Recall here that, since Assumption \ref{hyp:re} is met, the homogeneous cooling state depends on $\e$ and $G_{\re(\e)} \to \M$ as $\e \to 0$. We also point out that $\overline{\lambda}_{\e} \simeq \frac{1-\re(\e)}{\e^{2}}$ is the energy eigenvalue of the linearized operator (see Theorem \ref{cor:mu} hereafter).

\smallskip

The estimates on the solution $f_{\e}$ provided by Theorem \ref{theo:main-cauc} are enough to answer {\bf (Q3)} in the following  (we refer to Section \ref{sec:hydr} for a more accurate statement provided by Theorem \ref{theo:CVNS}).

\begin{theo}\label{theo:CVNS-int}
Under the assumptions of Theorem \ref{theo:main-cauc}, set
$$f_{\e}(t,x,v)=G_{\re(\e)} + \e\,h_{\e}(t,x,v)\,,$$
with $h_{\e}(0,x,v)=h_{\mathrm{in}}^{\e}(x,v)=\e^{-1}\left(F^{\e}_{\mathrm{in}}-G_{\re(\e)}\right)$. For a suitable class of "well-prepared" initial datum $h^{\e}_{\mathrm{in}}$ (see Theorem \ref{theo:CVNS} for a precise definition) and any  $T >0$, the family $\left\{ h_{\e} \right\}_{\e} $ converges in some weak sense to a limit $\bm{h}=\bm{h}(t,x,v)$ which is such that 
\begin{equation}\label{eq:hlimint}
\bm{h}(t,x,v)=\left(\varrho(t,x)+u(t,x)\cdot v + \frac{1}{2}\vE(t,x)(|v|^{2}-d\en_{1})\right)\M(v)\,,
\end{equation}
where $(\varrho,u,\vE)=(\varrho(t,x),u(t,x),\vE(t,x))$ are suitable solutions to the following  \emph{incompressible Navier-Stokes-Fourier system with forcing}
\begin{equation}\label{eq:NSFint}
\begin{cases}
\partial_{t}u-{\frac{{\bm \nu}}{\en_1}}\,\Delta_{x}u + {\en_{1}}\,u\cdot \nabla_{x}\,u+\nabla_{x}p=\lambda_{0}u\,,\\[6pt]
\partial_{t}\,\vE-\frac{\gamma}{\en_{1}^{2}}\,\Delta_{x}\vE{+}\en_{1}\,u\cdot \nabla_{x}\vE=\dfrac{\lambda_{0}\,\bar{c}}{2(d+2)}\sqrt{\en_{1}}\,\vE\,,\\[8pt]
\mathrm{div}_{x}u=0, \qquad \varrho + \en_{1}\,\vE = 0\,,
\end{cases}
\end{equation}
subject to initial conditions $(\varrho_{\mathrm{in}},u_{\mathrm{in}},\vE_{\mathrm{in}})$ (entirely determined by the limiting behaviour of $h_{\rm in}^{\e}$ as $\e \to 0$). The viscosity $\bm{\nu} >0$ and heat conductivity $\gamma >0$ are explicit and $\lambda_{0} >0$ is the parameter appearing in \eqref{eq:scaling}. The parameter $\bar{c} >0$ is depending on the collision kernel $b(\cdot)$.
\end{theo}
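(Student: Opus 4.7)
The plan is to substitute the ansatz $f_\e = G_{\re(\e)} + \e\,h_\e$ into the self-similar equation \eqref{BE0}. Using that $G_{\re(\e)}$ is stationary and spatially homogeneous, the fluctuation $h_\e$ satisfies
\begin{equation*}
\partial_t h_\e + \frac{1}{\e}\,v\cdot \nabla_x h_\e + \frac{\kappa_{\re(\e)}}{\e^2}\,\nabla_v\cdot(v\,h_\e) = \frac{1}{\e^2}\,\LLe\, h_\e + \frac{1}{\e}\,\Q_{\re(\e)}(h_\e, h_\e),
\end{equation*}
where $\LLe\,g := \Q_{\re(\e)}(G_{\re(\e)}, g) + \Q_{\re(\e)}(g, G_{\re(\e)})$ denotes the linearization around the homogeneous cooling state. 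The uniform-in-$\e$ bounds provided by Theorem \ref{theo:main-cauc}, in particular membership in $L^{\infty}_t X \cap L^1_t X_1$, allow us to extract a weak limit $\bm{h}$ along a subsequence of $h_\e$.

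Multiplying by $\e^2$ and letting $\e \to 0$, every term carrying a positive power of $\e$ vanishes; moreover $\LLe \to \mathscr{L}_1$, the standard linearization around the Maxwellian $\M$, thanks to \eqref{eq:GMli}, and $\kappa_{\re(\e)} \to 0$. Hence $\mathscr{L}_1 \bm{h} = 0$, and since $\ker(\mathscr{L}_1) = \mathrm{span}\{\M, v_1\M, \dots, v_d\M, (|v|^2 - d\,\en_1)\M\}$, the limit $\bm{h}$ is forced into the infinitesimal Maxwellian form \eqref{eq:hlimint} for some macroscopic profiles $\varrho(t,x)$, $u(t,x)$, $\vE(t,x)$.

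To derive the evolution of these profiles, one tests the equation for $h_\e$ against the elastic collision invariants $1, v, |v|^2$. Inelastic hard-sphere collisions still preserve mass and momentum pointwise, so the leading-order mass and momentum balances are exact and yield the incompressibility $\mathrm{div}_x u = 0$ together with the Boussinesq relation $\varrho + \en_1\,\vE = 0$. The convective terms $\en_1\,u\cdot \nabla_x u$, $\en_1\,u\cdot\nabla_x \vE$, the viscosity $\bm{\nu}/\en_1$ and the heat-conductivity $\gamma/\en_1^2$ arise at the next order through the pseudo-inverse of $\mathscr{L}_1$ applied to the classical Burnett tensors, in the spirit of the Bardos--Golse--Levermore program. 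The genuinely novel forcing terms in \eqref{eq:NSFint} reflect the nearly elastic regime: since $\kappa_{\re(\e)}/\e^2 \to \lambda_0$ by Assumption \ref{hyp:re}, the drift $\kappa_{\re(\e)}\,\nabla_v\cdot(v\,h_\e)/\e^2$ survives in the limit and, tested against $v$, generates exactly $\lambda_0\,u$ in the momentum equation (via the integration-by-parts identity $\int v\,\nabla_v\cdot(v\,h)\,\d v = -\int v\,h\,\d v$); testing against $|v|^2$ and expanding the dissipative part of $\Q_{\re(\e)}$ through \eqref{eq:lossenergy} with $(1-\re^2(\e))/\e^2 \to 2\lambda_0$ yields the temperature source $\lambda_0\,\bar{c}\,\sqrt{\en_1}\,\vE/(2(d+2))$, the constant $\bar{c}$ being a weighted cubic moment of $|v-v_\ast|$ against $\M(v)\M(v_\ast)$ determined by the angular kernel $b(\cdot)$.

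The main obstacle is passing to the limit in the quadratic term $\e^{-1}\Q_{\re(\e)}(h_\e, h_\e)$, which is only uniformly bounded and not small: smallness has to be replaced by compactness. This is achieved by coupling the uniform estimates of Theorem \ref{theo:main-cauc} with velocity-averaging lemmas to upgrade the weak convergence of $h_\e$ to strong compactness of the macroscopic moments $(\varrho_\e, u_\e, \vE_\e)$, which is enough to identify the limit against smooth test functions. The well-prepared initial data assumption, namely that $h_{\mathrm{in}}^\e$ lies approximately in $\ker(\mathscr{L}_1)$ modulo $o(1)$, lets us bypass any initial-layer analysis. A secondary subtlety is the uniform-in-$\e$ control of the Burnett-type correctors, which requires spectral estimates on $\LLe$ remaining stable as $\re(\e) \to 1^-$; these are supplied by the spectral analysis underlying Theorem \ref{theo:main-cauc}.
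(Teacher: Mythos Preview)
Your high-level architecture is correct and matches the paper: the form \eqref{eq:hlimint} follows from $\mathscr{L}_1\bm{h}=0$, incompressibility and Boussinesq come from the $O(\e^{-1})$ moment equations, the viscous and heat-conduction terms arise via $\mathscr{L}_1^{-1}$ on the Burnett tensors, and you have correctly identified both forcing mechanisms (the drift $\kappa_{\re(\e)}\e^{-2}\nabla_v\cdot(vh_\e)\to\lambda_0$-term in momentum, and the $(1-\re^2)\e^{-2}$ energy-dissipation residue producing the temperature source with constant $\bar{c}$).

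The gap is in your treatment of the quadratic term. Velocity-averaging lemmas do \emph{not} yield strong compactness of the full moment vector $(\varrho_\e,\bm{u}_\e,\vE_\e)$ here: the transport operator carries a factor $\e^{-1}$, so averaging gives compactness only on the fast $O(\e)$ time scale, and in fact $\varrho_\e$ and the gradient part $(\mathbf{Id}-\mathcal{P})\bm{u}_\e$ genuinely oscillate at frequency $\e^{-1}$ (acoustic waves) and converge only weakly. The paper instead proceeds in two steps. First, applying the Leray projection $\mathcal{P}$ to the momentum equation kills the singular $\e^{-1}\nabla_x p_\e$ term, so $\partial_t(\mathcal{P}\bm{u}_\e)$ is bounded in $L^1_t\W^{m-2,2}_x$; likewise the combination $\vartheta_\e=\langle\tfrac12(|v|^2-(d+2)\en_1)h_\e\rangle$ has a bounded time derivative. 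Aubin--Lions (Simon) compactness then gives \emph{strong} convergence of $\mathcal{P}\bm{u}_\e$ and $\vartheta_\e$ in $L^1_t\W^{m-2,2}_x$. Second, the remaining products involving $(\mathbf{Id}-\mathcal{P})\bm{u}_\e=\nabla_x\bm{U}_\e$ and $\bm{\beta}_\e=\varrho_\e+\en_1\vE_\e$ are handled by the compensated-compactness argument of Lions--Masmoudi: the pair $(\bm{U}_\e,\bm{\beta}_\e)$ solves a wave-type system \eqref{eq:system} with sources vanishing strongly, which forces $\mathcal{P}\mathrm{Div}_x(\nabla\bm{U}_\e\otimes\nabla\bm{U}_\e)\to0$ and $\mathrm{div}_x(\bm{\beta}_\e\nabla\bm{U}_\e)\to0$ in $\mathscr{D}'$. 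The well-prepared-data assumption does not suppress these oscillations; it only fixes the initial trace of the limit via an Arzel\`a--Ascoli argument on the strongly convergent quantities.
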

{The precise notion of weak convergence in the above Theorem \ref{theo:CVNS-int} is very peculiar and strongly related to the \emph{a priori} estimates used for the proof of Theorem \ref{theo:main-cauc}. The mode of convergence is detailed in Theorem \ref{theo:strong-conv}, see also Section \ref{sec:hydro} for more details}.\medskip

It is classical for incompressible Navier-Stokes equations, see \cite[Section 1.8, Chapter~I]{majda}, that the pressure term $p$ acts as a Lagrange multiplier due to the constraint $\mathrm{div}_{x} u=0$ and it is recovered (up to a constant) from the knowledge of $(\varrho,{u},\vE)$. 

\medskip

We point out that the above incompressible Navier-Stokes-Fourier system \eqref{eq:NSFint} with the self-consistent forcing terms on the right-hand-side is a new system of hydrodynamic equations that, to our knowledge, has never been rigorously derived earlier to describe granular flows. We also notice that the last two identities in \eqref{eq:NSFint} give respectively the incompressibility condition and a strong Boussinesq relation (see the discussion in Section~\ref{sec:hydr}). It is important to point out that in the case $\lambda_{0}=0$, one recovers the classical incompressible Navier-Stokes-Fourier system derived from elastic Boltzmann equation, see~{\cite{SR}}.  This proves continuity with respect to the restitution coefficient $\re$. 

Moreover, in both cases $\lambda_{0}=0$ or $\lambda_{0} >0$, the limiting system \eqref{eq:NSFint} is \emph{conservative} (for all quantities $\varrho(t,x),u(t,x),\theta(t,x)$) which illustrates the perfect balance of the self-similar scaling in the hydrodynamic limit.

\smallskip

We finally mention that  Theorem \ref{theo:CVNS-int} together with the relations \eqref{eq:ScalING} provide also a quite precise description of the hydrodynamic behaviour of the original problem \eqref{Bol-e} in physical variables. In this framework, the aforementioned \underline{Case 2} for which $\lambda_{0} >0$ enjoys some special features for which uniform-in-time error estimates can be obtained. Turning back to the original problem \eqref{Bol-e} not only gives a precise answer to \emph{Haff's law} (with an explicit cooling rate of the granular temperature $\bm{T}_{\e}(t)$) but also describes the cooling rate of the \emph{local temperature} $\ds\int_{\R^{d}}F_{\e}(t,x,v)|v|^{2}\d v$. We refer to Section~\ref{sec:orig} and Appendix \ref{Appendix-PP} for a more detailed discussion. 
\newpage

Let us summarize here the main original features of this paper:
\begin{itemize}
\item We identify the correct regime of weak inelasticity (Assumption \ref{hyp:re}) which, with a novel use of self-similarity techniques, allows to balance uniformly, in terms of the Knudsen number, the in-and-out fluxes of energy and allows to exploits fully the non Gaussian steady state in the spatially inhomogeneous setting.

\item We craft a fine linear analysis that leads to uniform spectral estimates to the spatially inhomogeneous inelastic Boltzmann linearized operator in terms of $\e$ which renders in particular the appearance of a new negative eigenvalue $-\bar{\lambda}_\e$ driving the evolution of the energy and performing a crucial role in the nonlinear analysis.

\item We introduce a sophisticated argumentation (including some non standard Gronwall Lemma) exploiting fully the interplay between linear and nonlinear estimates. This approach  leads to uniform estimates for the nonlinear spatially inhomogeneous inelastic Boltzmann model in terms of the Knudsen number as well as some long-time decay of the solutions to \eqref{BE}.

\item We bring a precise quantification of the macroscopic observables in the hydrodynamic limit yielding first to a modified Navier-Stokes-Fourier system (completely new in this context) and also to a rigorous derivation of the (local) Haff law.
\end{itemize}

The reader will experience a self-contained and detailed presentation including the material corresponding to the full derivation of the modified Navier-Stokes-Fourier system and the relevant estimates for the Boltzmann collision operator.

\subsection{Hydrodynamic limits in the elastic case}\label{sec:liter}
The derivation of hydrodynamic limits from the elastic Boltzmann equation is an important problem which received a lot of attention and its origin can be traced back at least to D. Hilbert exposition of its 6th problem at the 1900 International Congress of Mathematicians. We refer the reader to {\cite{SR,golse}} for an up-to-date description of the mathematically relevant results in the field. Roughly speaking three main approaches are adopted for the rigorous derivation of hydrodynamic limits.

\begin{enumerate}[A)]
\item Many of the early mathematical justifications of hydrodynamic limits of the Boltzmann equation are based on (truncated) asymptotic expansions of the solution around some hydrodynamic solution
\begin{equation}\label{eq:C-E}
F_{\e}(t,x,v)=F_{0}(t,x,v)\left(1+\sum_{n}\e^{n}F_{n}(t,x,v)\right)\end{equation}
where, typically
\begin{equation}\label{eq:F0C-E}
F_{0}(t,x,v)=\frac{\varrho(t,x)}{(2\pi\vE(t,x))^{\frac{d}{2}}}\exp\left(-\frac{|v-u(t,x)|^{2}}{2\vE(t,x)}\right)\end{equation}
is a \emph{local Maxwellian} associated to the macroscopic fields which is required to satisfy the limiting fluid dynamic equation. This approach (or a variant of it based upon Chapman-Enskog expansion) leads to the first rigorous justification of the compressible Euler limit up to the first singular time for the solution of the Euler system in {\cite{caflisch}} (see also {\cite{lacho}} for more general initial data and a study of initial layers). In the same way, a justification of the incompressible Navier-Stokes limit has been obtained in {\cite{demasi}}. This approach deals mainly with strong solutions for both the kinetic and fluid equations.

\smallskip

\item Another important line of research concerns weak solutions and a whole program on this topic has been introduced in {\cite{BaGoLe1,BaGoLe2}}. The goal is to prove the convergence of the renormalized solutions to the Boltzmann equation (as obtained in {\cite{diperna}}) towards weak solutions to the compressible Euler system or to the incompressible Navier-Stokes equations. This program has been continued exhaustively and the convergence have been obtained in several important results (see {\cite{golseSR,golseSR1,jiang-masm,lever,lions-masm1,lions-masm2}} to mention just a few).  We remark that, in the notion of renormalized solutions for the classical Boltzmann equation, a crucial role is played by the entropy dissipation ($H$-theorem) which asserts that the entropy of solutions to the Boltzmann equation is non increasing
$$\dfrac{\d}{\d t}\int_{\R^{d}\times \T^{d}}F_{\e}\log F_{\e}(t,x,v)\d v\d x \leq 0.$$
This \emph{a priori} estimate is fully exploited in the construction of renormalized solutions to the classical Boltzmann equation and is also fundamental in some justification arguments for the Euler limit, see {\cite{sr}}. 

\smallskip

\item A third line of research deals with strong solutions close to equilibrium and exploits a careful spectral analysis of the linearized Boltzmann equation. Strong solutions to the Boltzmann equation close to equilibrium have been obtained in a weighted $L^{2}$-framework in the work {\cite{ukai}} and the \emph{local-in-time} convergence of these solutions towards solution to the compressible Euler equations have been derived in {\cite{nishida}}. For the limiting incompressible Navier-Stokes solution, a similar result have been carried out in {\cite{bu}} for smooth  global solutions in $\R^{3}$ with a small initial velocity field. The smallness assumption has been recently removed in {\cite{isabelles}} allowing to treat also non global in time solutions to the Navier-Stokes equation.  These results as well as {\cite{briant}} exploit a very careful description of the spectrum of the linearized Boltzmann equation derived in {\cite{ellis}}.  We notice that they are framed in the space $L^{2}(\M^{-1})$ where the linearized Boltzmann operator is self-adjoint and coercive. The fact that the analysis of \cite{ellis} has been extended recently in \cite{gervais} to larger functional spaces of the type $L^{2}_{v}(\langle \cdot\rangle^{q})$ opens the gate to some refinements of several of the aforementioned results. We also mention here the work {\cite{zhao}} which deals with an energy method in $L^{2}(\M^{-1})$ spaces (see also {\cite{guo,guo2}} and~\cite{rachid}) in order to prove the \emph{strong convergence} of the solutions to the Boltzmann equation towards the incompressible Navier-Stokes equation without resorting to the work of {\cite{ellis}}. 
\end{enumerate}

\noindent
We mention finally that the work {\cite{bmam}} was the main inspiration to answer questions {\bf (Q1)-(Q2)}. Indeed, in~{\cite{bmam}}, the first estimates on the elastic Boltzmann equation in Sobolev spaces with polynomial weight (based on $L^{1}$) are obtained \emph{uniformly with respect to the Knudsen number $\e$}.  To answer question {\bf (Q3)}, we will resort to ideas introduced in the theory of renormalized solutions \cite{BaGoLe1,BaGoLe2,golseSR} that we adapt to the notions of solutions we are dealing with here. We notice here already that we cannot resort to the work of \cite{ellis} and need to carefully exploit the properties of the solutions as constructed in Theorem \ref{theo:main-cauc}.

\subsection{The challenge of hydrodynamic limits for granular gases}\label{sec:chall}
There are several reasons which make the derivation of hydrodynamic limits for granular gases a challenging question at the physical level.  In regard of the mathematical aspects of the hydrodynamical limit, several hurdles stand on way when trying to adapt the aforementioned approaches:
\begin{enumerate}[I)]
\item With respect to the strategy given in \textrm{A)}, the main difficulty lies in the identification of the typical \emph{hydrodynamic solution}.  Such solution is such that the time-space dependence of the one-particle distribution function $F(t,x, v)$ occurs only through suitable hydrodynamic fields like density $\varrho(t, x)$, bulk velocity ${u}(t, x),$ and temperature $\vE(t, x)$.  This is the role played by the Maxwellian $F_{0}$ in \eqref{eq:F0C-E} whenever $\re=1$ and one wonders if the homogeneous cooling state $G_{\re}$ plays this role here.  This is indeed the case up to first order capturing the fat tails of inelastic distributions, yet surprisingly, a suitable Maxwellian plays the role of the hydrodynamic solution in the $\e$-order correction.  This Gaussian behaviour emerges in the \emph{hydrodynamic limit} because of the near elastic regime that we treat here.\footnote{See the interesting discussion in {\cite{vill}}, especially the Section 2.8 entitled ``What Is the Trouble with Non-Gaussianity''}
 
\smallskip 
 
\item The direction promoted in \textrm{B)} appears for the moment out of reach in the context of granular gases.  Renormalized solutions in the context of the inelastic Boltzmann equation \eqref{BE} have not been obtained due to the lack of an $H$-Theorem for granular gases.  It is unclear if the classical entropy (or a suitable modification of it) remains bounded in general for granular gases.  

\smallskip

\item Homogeneous cooling states $G_{\re}$ are not explicit, this is a technical difficulty when adapting the approach of {\cite{ellis}} for the spectral analysis of the linearized inelastic Boltzmann equation in the spatial Fourier variable. Partial interesting results have been obtained in {\cite{rey}} (devoted to diffusively heated granular gases) but they do not give a complete asymptotic expansion of eigenvalues and eigenfunctions up to the order leading to the Navier-Stokes asymptotic.  We mention that obtaining an analogue of the work {\cite{ellis}} for granular gases would allow, in particular, to quantify the convergence rate towards the limiting model as in the recent work {\cite{isabelles}}. 

\smallskip

\item A major obstacle to adapt energy estimates and spectral approach lies in the choice of functional spaces. While the linearized Boltzmann operator associated to elastic interactions is self-adjoint and coercive in the weighted $L^{2}$-space $L^{2}_{v}(\M^{-1})$, there is no such ``self-adjoint'' space for the inelastic case. This yields technical difficulties in the study of the spectral analysis of the linearized operator \footnote{Recall that the powerful enlargement techniques for the elastic Boltzmann equation are based on the \emph{knowledge} of the spectral structure in the space $L^{2}(\M^{-1})$ (and Sobolev spaces built on it) which can be extended to the more natural $L^{1}$-setting.}. Moreover, the energy estimates of {\cite{guo,guo2,jiang-masm,zhao}} are essentially based upon the coercivity of the linearized operator. For granular gases, it seems that one needs to face the problem \emph{directly} in a $L^{1}_{v}$-setting.  Points \textrm{III)} and \textrm{IV)} make the approach \textrm{C)} difficult to directly adapt.
\end{enumerate}
\subsection{Notations and definitions}\label{sec:nota}
 {
We first introduce some useful notations for function spaces. For any nonnegative weight function $m\::\:\R^{d}\to \R^{+}$ (notice that all the weights we consider here will depend only on velocity, i.e. $m=m(v)$),
we define $L^q_v L^p_x(m)$, $1 \leq p,q \leq +\infty$, as the Lebesgue space associated to the norm 
$$
\| h \|_{L^q_v L^p_x(m)} = \| \| h(\cdot, v)\|_{L^p_x} \, m(v) \|_{L^q_v}.
$$
We also consider the standard higher-order Sobolev generalizations $\W^{\sigma, q}_v \W^{s,p}_x(m)$ for any $\sigma, s \in \N$ defined by the norm 
$$
\| h \|_{\W^{\sigma, q}_v \W^{s,p}_x(m)} = \sum_{\substack {0\leq s' \leq s, \, 0 \leq \sigma' \leq \sigma, \\ s'+ \sigma' \leq \max(s,\sigma)}} 
\| \| \nabla_x^{s'} \nabla_v^{\sigma'} h(\cdot, v)\|_{L^p_x} \, m(v) \|_{L^q_v}.
$$
This definition reduces to the usual weighted Sobolev space $\W^{s,p}_{v,x}(m)$ when $q=p$ and $\sigma=s$. For $m \equiv 1$, we simply denote the associated spaces by $L^q_vL^{p}_x$ and $\W^{\sigma,q}_v\W^{s,p}_x$.}

\smallskip
\noindent
We consider in the sequel the general weight
$$\m_{s}(v)=(1+ |v|^{2})^{\frac{s}{2}}, \qquad v \in \R^{d},\qquad s \geq 0.$$
On the complex plane, for any $a \in \R$, we set
$$\C_{a}:=\{z \in \C\;;\;\mathrm{Re}z >-a\}, \qquad \C_{a}^{\star}:=\C_{a}\setminus\{0\}$$
and, for any $r >0$, we set
$$\mathbb{D}(r)=\{z \in \C\;;\;|z| \leq r\}.$$
We also introduce the following notion of hypo-dissipativity in a general Banach space.
\begin{defi} Let $(X,\|\cdot\|)$ be a given Banach space. A closed (unbounded) linear operator $A\::\:\D(A) \subset X \to X$ is said to be \emph{hypo-dissipative} on $X$ if there exists  a norm, denoted by~$\vertiii{\cdot}$, equivalent to the $\|\cdot\|$--norm such that $A$ is dissipative on the space $(X,\vertiii{\cdot})$, that is, 
$$\vertiii{(\lambda-A)h} \geq \lambda\,\vertiii{h}, \qquad \forall \lambda >0,\,\;h \in \D(A).$$
\end{defi}
\begin{nb}  This is equivalent to the following (see Proposition 3.23, p. 88 in \cite{engel}):  if $\vertiii{\cdot}_{\star}$ denotes the norm on the dual space $X^{\star}$, for all  $h \in \D(A),$  there exists $\bm{u}_{h} \in X^{\star}$ such that
\begin{equation*}
\left[ \bm{u}_{h},h\right]=\vertiii{h}^{2}=\vertiii{\bm{u}_{h}}_{\star}^{2}\qquad \text{ and } \quad
\mathrm{Re}\left[\bm{u}_{h},A\,h\right] \leq 0,
\end{equation*}
where $\left[\cdot\,,\cdot\right]$ denotes the duality bracket between $(X^{\star},\vertiii{\cdot}_{\star})$ and $(X,\vertiii{\cdot})$.
\end{nb}
For two tensors $A=(A_{i,j}), B=(B_{i,j}) \in \mathscr{M}_{d}(\R)$, we denote by $A:B$ the scalar $(A:B)=\sum_{i,j}A_{i,j}B_{i,j} \in \R$ as the trace of the matrix product $AB$ whereas, for a vector function $w=w(x) \in \R^{d}$, the tensor $(\partial_{x_{i}}w_{j})_{i,j}$ is denoted as $\nabla_{x} w$. We  also write 
$(\mathrm{Div}_{x}A)^{i}=\sum_{j}\partial_{x_{j}}A_{i,j}(x).$\medskip
\subsection{Strategy of the proof}
The strategy used to prove the main results Theorems \ref{theo:main-cauc} and~\ref{theo:CVNS-int} yields to several intermediate results of independent interest.  The approach is perturbative in essence since we are dealing with close-to-equilibrium solutions to \eqref{BE0}.  This means that, in the study of \eqref{BE0}, we introduce the fluctuation $h_{\e}$ around the equilibrium $G_{\re}$ defined through
$$f_{\varepsilon}(t,x,v)=G_{\re}(v)+\varepsilon \,h_{\varepsilon}(t,x,v)\,,$$
and $h_{\e}$ satisfies
\begin{equation}\label{BE}
\begin{cases}
&\partial_{t} h_{\varepsilon}(t,x,v)+\dfrac{1}{\e} v \cdot \nabla_{x} h_{\varepsilon}(t,x,v)-\dfrac{1}{\e^{2}} \LL h_{\varepsilon}(t,x,v) =\dfrac{1}{\e} \Q_{\re}(h_{\varepsilon},h_{\varepsilon})(t,x,v)\,,\\[7pt]
&\;\, h_{\e}(0,x,v)=h_\e^{\mathrm{\mathrm{in}}}(x,v)\,,
\end{cases}
\end{equation}
where $\LL$ is the linearized collision operator (local in the $x$-variable) defined as
$$\LL h(x,v)=\mathbf{L}_{\re}(h)(x,v) - \kappa_{\re}\nabla_{v}\cdot (vh(x,v))\,,$$
with
$$\mathbf{L}_{\re}(h)=2\widetilde{\Q}_{\re}(G_{\re},h)\,,$$
where we set 
\begin{equation} \label{def:Qalphatilde}\widetilde{\Q}_{\re}(f,g)=\frac{1}{2}\left\{\Q_{\re}(f,g)+\Q_{\re}(g,f)\right\}.\end{equation}
We also denote by $\mathscr{L}_{1}$ the linearized operator around $G_{1}=\M$, that is,
$$\mathscr{L}_{1}(h)=\mathbf{L}_{1}(h)=\Q_{1}(\M,h)+\Q_{1}(h,\M).$$
The method of proof requires first a careful spectral analysis of the full linearized operator appearing in \eqref{BE}: 
$$\G_{\re,\e}h:=-\e^{-1} v \cdot \nabla_{x}h +\e^{-2}\LL h.$$
Such a spectral analysis has to be performed in a suitable Sobolev space, our starting point is an enlargement argument developed in {\cite{GMM}} (that has then extended to the case $\e \neq 1$ in {\cite{bmam}}) to study the spectrum of the elastic operator in a large class of Sobolev spaces. 

\smallskip

More precisely, in our approach, we treat $\G_{\re,\e}$ as a \emph{perturbation}\footnote{This perturbation does not fall into the realm of the classical perturbation theory of  unbounded operators as described in {\cite{kato}}. Typically, the domain of $\G_{\re,\e}$ is much smaller than the one of $\G_{1,\e}$ (because of the drift term in velocity) and the relative bound between $\G_{1,\e}$ and $\G_{\re,\e}$ does not converges to zero in the elastic limit $\re\to 1$.} of the elastic linearized operator $\G_{1,\e}$.  The spectrum of $\G_{1,\e}$ in $ {\W^{s,1}_v\W^{\ell,2}_{x}}(\langle v\rangle^{q})$ is well-understood {\cite{bmam}}, so, it is possible to deduce from this characterisation the spectrum of $\G_{\re,\e}$ using ideas from~{\cite{Tr}}.  We \emph{only} study the spectrum of $\G_{\re,\e}$ without requiring knowledge of the decay of the semigroup associated to $\G_{\re,\e}$.  This simplifies the technicalities of the spectral analysis performed in Section \ref{sec:elas} related to Dyson-Phillips iterates which leads to the spectral mapping theorem {\cite{GMM,Tr}}.  Most notably, in this simplified approach one is able to identify the optimal scaling \eqref{eq:scaling} of the restitution coefficient. {It is worth mentioning that capturing the optimal scaling \eqref{eq:scaling} for this linear and spectral approach yields to rather involved and technical analysis with the introduction of  {several kinds of function spaces of type  {$\W^{s,1}_{v}\W^{\ell,2}_{x}(\langle v\rangle^{q})$} and $\W^{s,2}_{v}\W^{\ell,2}_{x}(\langle v\rangle^{q})$} and the properties of the linearized operator on each of those spaces.}

\smallskip

The scaling \eqref{eq:scaling} is precisely the one which allows to preserve exactly $d+2$ eigenvalues in the neighbourhood of zero (recall that $0$ is an eigenvalue of multiplicity $d+2$ in the elastic case).  Recalling that, in any reasonable space, the elastic operator has a spectral gap of size $\mu_{\star} >0$, i.e.
$$\mathfrak{S}(\G_{1,\e})\cap \{z \in \mathbb{C}\;;\,\mathrm{Re}z > -\mu_{\star}\}=\{0\}$$
where $0$ is an eigenvalue of algebraic multiplicity $d+2$ which is associated to the eigenfunctions $\{\M,v_{j}\M,|v|^{2}\M\;,\;j=1,\ldots,d\}$, one can prove the following theorem
\begin{theo}\phantomsection \label{cor:mu} 
Assume that Assumption \ref{hyp:re} is met 
and consider the Banach space
$$\E:=\begin{cases}
\W^{s,2}_{v}\W^{\ell,2}_{x}(\m_{q}), \qquad \ell \in \N,\,s\geq0, \quad  {\ell \geq s}, \qquad  {q >q^{\star}},\\
\text{ or } \\
\W^{s,1}_{v}\W^{\ell,2}_{x}(\m_{q}), \qquad \ell \in \N,\,s\geq0, \quad   {\ell \geq s}, \qquad  {q >2}\,,
\end{cases}$$
{where $q^\star$ is defined in~\eqref{eq:qstar}}. There exists some explicit $\nu_{*} >0$ such that, if $\mu \in (\mu_{\star}-\nu_{*},\mu_{\star})$, there is some explicit $\overline{\e}>0$ depending only on $\mu_{\star}-\mu$ and such that, for all $\e \in (0,\overline{\e})$, the linearized operator 
$$\G_{\re,\e}\::\:\D(\G_{\re,\e}) \subset \E \to \E$$ has the spectral property:
\begin{equation}\label{eq:spectGe}
\mathfrak{S}(\G_{\re,\e}) \cap \{z \in \mathbb{C}\;;\,\mathrm{Re}z \geq -\mu\}=\{\lambda_{1}(\e),\ldots,\lambda_{d+2}(\e)\}\,,
\end{equation}
where $\lambda_{1}(\e),\ldots,\lambda_{d+2}(\e)$ are eigenvalues of $\G_{\e}$ (not necessarily distinct) with 
$$|\lambda_{j}(\e)| \leq \mu_{\star}-\mu \qquad \text{for} \;\, j=1,\ldots,d+2.$$ More precisely, it follows that
\begin{equation*}\begin{split}
\mathfrak{S}(\G_{\re,\e}) \cap \{z \in \mathbb{C}\;;\,\mathrm{Re}z \geq -{\mu}\}
&=\mathfrak{S}(\e^{-2}\mathscr{L}_{\re}) \cap \{z \in \mathbb{C}\;;\,\mathrm{Re}z\geq -{\mu}\}\\
&=\{\lambda_{1}(\e),\ldots,\lambda_{d+2}(\e)\}\,,
\end{split}
\end{equation*} 
with
$$\lambda_{1}(\e)=0, \qquad \lambda_{j}(\e)=\e^{-2}\kappa_{\re(\e)}, \qquad j=2,\ldots,d+1\,,$$
and
\begin{equation}\label{eq:LambdaE}
\lambda_{d+2}(\e)=-\bar{\lambda}_{\e}=-\frac{1-\re(\e)}{\e^{2}}+\mathrm{O}(\e^{2})\,, \qquad \text{ for }\; \e \simeq 0.\end{equation}
\end{theo}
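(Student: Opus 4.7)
The plan is to follow the enlargement-and-factorization framework developed in~\cite{GMM,bmam,Tr}, with $\G_{1,\e}$ — whose spectrum is fully understood in the target spaces by~\cite{bmam} (uniform-in-$\e$ spectral gap $\mu_\star$ and isolated $(d+2)$-dimensional null eigenspace $\mathrm{Span}\{\M,\,v_j\M,\,|v|^2\M\}$) — playing the role of the unperturbed operator. As the footnote in the introduction stresses, $\G_{\re,\e}$ is \emph{not} a Kato-small perturbation of $\G_{1,\e}$. I would remedy this by using a splitting $\G_{\re,\e}=\mathcal A_{\re,\e}+\mathcal B_{\re,\e}$ adapted from~\cite{bmam}, in which $\mathcal B_{\re,\e}$ is hypodissipative with rate arbitrarily close to $\mu_\star$ uniformly in $\e$ and in $\re\to 1^-$, while $\mathcal A_{\re,\e}$ is bounded and regularizing. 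Under Assumption~\ref{hyp:re}, the inelastic corrections
\begin{equation*}
\e^{-2}\bigl(\mathbf L_{\re(\e)}-\mathbf L_1\bigr)\quad\text{and}\quad \e^{-2}\kappa_{\re(\e)}\nabla_v\cdot(v\,\cdot\,)
\end{equation*}
are uniformly $\mathrm O(1)$, but become small perturbations once composed with the regularizing factor $\mathcal A$: the convergence $G_{\re(\e)}\to\M$ from~\eqref{eq:GMli} provides the required smallness for the collision piece, while $\e^{-2}\kappa_{\re(\e)}\to\lambda_0$ is uniformly controlled.

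The proof then splits into two stages. \emph{Stage~1 (spectral localization).} Starting from the elastic spectral picture, standard arguments \`a la~\cite{Tr} combined with the splitting above show that $\G_{\re,\e}$ has discrete spectrum in $\C_{\mu}$ for $\mu\in(\mu_\star-\nu_*,\mu_\star)$, consisting of at most $d+2$ eigenvalues (counted with algebraic multiplicity) contained in $\mathbb D(\mu_\star-\mu)$, depending continuously on $(\re,\e)$ and collapsing to the origin as $\re\to 1$. \emph{Stage~2 (reduction to spatial homogeneity).} Decomposing in Fourier modes in $x$, one has $\G_{\re,\e}^{(k)}=-i\e^{-1}k\cdot v+\e^{-2}\LL$ for $k\in\Z^d$, and a uniform hypocoercivity estimate — transferring the elastic hypocoercivity of~\cite{bmam} to the inelastic setting via the smallness of $\mathbf L_{\re(\e)}-\mathbf L_1$ when paired with regularizing factors — shows that for $k\neq 0$, $\mathfrak S(\G_{\re,\e}^{(k)})\subset\{\mathrm{Re}z<-\mu\}$ uniformly in $k$, $\e$ and $\re\to 1^-$. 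Every eigenvalue of $\G_{\re,\e}$ with $\mathrm{Re}z\geq-\mu$ is therefore an eigenvalue of $\e^{-2}\LL$ acting on $x$-constants, giving the first identity claimed in the statement.

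It remains to diagonalize $\e^{-2}\LL$ on its $(d+2)$-dimensional slow manifold by exploiting the scaling structure of the homogeneous steady equation. \emph{Mass:} by the hard-sphere scaling, $G_\re^{(\rho)}(v):=\rho^{d+1}G_\re(\rho v)$ solves the homogeneous equation for every $\rho>0$, so differentiating at $\rho=1$ produces $\phi_0=G_\re+\nabla_v\cdot(vG_\re)\in\ker\LL$, giving $\lambda_1(\e)=0$. \emph{Momentum:} translation invariance of $\Q_\re$ together with the steady equation yield
\begin{equation*}
\Q_\re\bigl(G_\re(\cdot-w),G_\re(\cdot-w)\bigr)(v)-\kappa_\re\,\nabla_v\cdot\bigl(vG_\re(\cdot-w)\bigr)(v)=-\kappa_\re\,w\cdot(\nabla G_\re)(v-w),
\end{equation*}
and differentiating at $w=0$ gives $\LL(\partial_{v_j}G_\re)=\kappa_\re\,\partial_{v_j}G_\re$ for $j=1,\ldots,d$, producing the $d$ eigenvalues $\e^{-2}\kappa_{\re(\e)}$. \emph{Energy:} since no self-similar family with variable temperature exists for $\re<1$, the energy eigenvalue is obtained by analytic Rayleigh--Schr\"odinger perturbation theory around the simple eigenvalue $0$ of $\mathscr L_1$ with eigenfunction $(|v|^{2}-d\en_1)\M$; the first-order correction, computed by Fredholm-alternative projection of $(\LL-\mathscr L_1)\bigl((|v|^2-d\en_1)\M\bigr)$ onto the elastic energy eigenspace, is consistent with the energy dissipation functional~\eqref{eq:Dre} and yields, after rescaling by $\e^{-2}$, the expansion~\eqref{eq:LambdaE}.

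The main obstacle is the uniform hypocoercivity of Stage~2. Transferring the elastic hypocoercivity of~\cite{bmam} to the inelastic setting is delicate because (i) $\LL$ admits no natural self-adjoint structure and the non-Gaussian shape of $G_\re$ rules out the usual $L^2(\M^{-1})$-energy methods, and (ii) the perturbation $\LL-\mathscr L_1$ fails to be small in the operator norm between the Sobolev--polynomial spaces of the statement due to velocity-weight losses in the inelastic collision operator. Closing the estimate will require a careful splitting of the collision kernel (gain/loss and velocity truncation) combined with the smoothing effect of the gain term and the hierarchical structure of spaces appearing in the statement, so that the dissipativity of $\mathcal B_{\re,\e}$ can be propagated through the ladder of spaces uniformly in $\e$ and in $\re\to 1^-$.
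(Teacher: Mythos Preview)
Your overall plan is correct, but Stage~2 is an unnecessary detour that is precisely the ``main obstacle'' you worry about at the end. The paper bypasses it entirely with a dimension-counting argument. The point is that Stage~1, done via the spectral-projection perturbation estimate $\|\mathbf P_\e-\mathbf P_0\|_{\mathscr B(\Y)}<1$ (the paper's Lemma~\ref{lem:PaP0}), yields not ``at most'' but \emph{exactly} $d+2$ eigenvalues for $\G_\e$ in $\C_\mu$, since rank is preserved under small perturbations of projections. A parallel (and easier) perturbation argument applied to the purely homogeneous operator shows that $\e^{-2}\LL$ also has exactly $d+2$ eigenvalues in $\C_\mu$ (this is Proposition~\ref{prop:specLL} combined with the obvious $d+1$ eigenvalues below). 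Since spatially homogeneous eigenfunctions of $\e^{-2}\LL$ are trivially eigenfunctions of $\G_\e$, the inclusion $\mathfrak S(\e^{-2}\LL)\cap\C_\mu\subset\mathfrak S(\G_\e)\cap\C_\mu$ together with the equality of total multiplicities forces the identity --- no Fourier decomposition, no mode-by-mode inelastic hypocoercivity required.

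For the explicit identification of the $d+1$ eigenvalues $0,\e^{-2}\kappa_{\re(\e)},\ldots,\e^{-2}\kappa_{\re(\e)}$, the paper uses a different device from your scaling/translation families: it works with the \emph{adjoint}. Since $\int_{\R^d}\LL\varphi\,\d v=0$ and $\int_{\R^d}\LL\varphi\,v_j\,\d v=\kappa_{\re(\e)}\int_{\R^d}\varphi\,v_j\,\d v$ for every $\varphi$, the functions $\m_{q+\kappa}^{-1}$ and $v_j\m_{q+\kappa}^{-1}$ are eigenfunctions of $\LL^\star$ in $L^2_v(\m_{q+\kappa})$ with eigenvalues $0$ and $\kappa_{\re(\e)}$; hence these are eigenvalues of $\LL$. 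A bootstrap via $g=\Rs(\lambda,\B_\re^{(\delta)})\A^{(\delta)}g$ then places the corresponding eigenfunctions of $\LL$ in the smaller target space. Your approach via $\rho\mapsto\rho^{d+1}G_\re(\rho\,\cdot)$ and $w\mapsto G_\re(\cdot-w)$ is a valid and more constructive alternative that directly produces eigenfunctions of $\LL$; the duality argument is slightly more robust in that it needs no regularity of $G_\re$. For the energy eigenvalue, the paper simply invokes the homogeneous result (Proposition~\ref{prop:specLL}, from~\cite{MiMo3}) rather than redoing Rayleigh--Schr\"odinger. Finally, note that the paper first proves everything in a space $\Y=\W^{s,2}_v\W^{\ell,2}_x(\m_q)$ with $\ell\ge s+1$ and $q>q^\star+\kappa+2$, and only then transfers the conclusion to the full range of spaces in the statement via the enlargement theorem~\cite[Theorem~2.1]{GMM}; this extension step is not mentioned in your outline.
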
 

To prove Theorem \ref{cor:mu}, it is necessary to strengthen several results of {\cite{MiMo3}} and obtain \emph{sharp} convergence rate in the elastic limit for the linearized operator. Typically, one needs to prove that, for suitable topology
\begin{equation}\label{eq:L1-re}
\LL -\mathscr{L}_{1} \simeq (1-\re)\end{equation}
which gives an estimate of the type
$$\left(\G_{\re,\e}-\G_{1,\e}\right)  \simeq \frac{1-\re}{\e^{2}}.$$
{Our aim is of course to capture the properties of  $\G_{\re,\e}$ in space built on $L^{1}_{v}$ but, for technical reasons, we will also need to investigate the above ansatz \eqref{eq:L1-re} in some $L^{2}_{v}$-spaces with polynomial moments.} This is done in Sections \ref{sec:collL} and \ref{sec:elas}. 
\smallskip

After the spectral analysis is performed, in order to prove Theorem \ref{theo:main-cauc} several \emph{a priori estimates} for the solutions to \eqref{BE0} are required. This is done in Section \ref{sec:non}. The crucial point in the analysis lies in the splitting of \eqref{BE0} into a system of two equations mimicking a spectral enlargement method from a PDE perspective (see the Section 2.3 of \cite{MiMoFP} and \cite{bmam} for pioneering ideas on such a splitting).  More precisely, the splitting performed in Sections \ref{sec:non} and \ref{sec:Cauchy} amounts to look for a solution of \eqref{BE} of the form 
$$h_{\e}(t)=\ho_{\e}(t)+\hu_{\e}(t)$$
where $\hu_{\e}(t)$ is solution to the \emph{linearized elastic equation} with a source term involving the reminder $\ho_{\e}(t)$, namely,
\begin{equation}\label{eq:h1-int}
\partial_{t} \hu_{\e}(t)= {\G_{1,\e}\hu_{\e}} + \e^{-1}\Q_{1}(\hu_{\e},\hu_{\e}) + \A_\e \ho_{\e}\end{equation}
having zero initial datum and where $\A_{\e}$ is a regularizing operator (see Section \ref{sec:elas} for a precise definition). In this way we seek $\hu_{\e}(t)$ in the Hilbert space
$$\hu_{\e}(t) \in \W_{v,x}^{m,2}\left(\M^{-1/2}\right)=:\H$$
with  {$m>d$} and prove bounds of the type
$$\sup_{t\geq 0}\Big(\| \hu_{\e}(t) \|^{2}_{\H} + \int^{t}_{0} \| \hu_{\e}(\tau) \|^{2}_{\H_{1}} \d \tau \Big) \leq C\mathcal{K}_{0}$$
where $\H_{1}$ is the domain of $\G_{1,\e}$ in $\H$, $\mathcal{K}_{0}$ depends only on the initial datum $h^{\e}_{\mathrm{in}}$.   With such a splitting, it is possible to fully exploit the elastic problem and treat $h_{\e}$ as a perturbation of this solution.   {It is important to point out already that $\A_{\e}$ is regularizing only in the velocity variable but not in the $x$-variable. Therefore, no gain of integrability can be deduced from the action of $\A_{\e}$. Therefore, since we look for $\hu_{\e} \in \H$ in~\eqref{eq:h1-int}, we need to look for $\ho_{\e}$ \emph{in a space based on $L^{2}_{x}$.} The velocity regularization properties of $\A_{\e}$ allow then to look for 
$$\ho_{\e}(t) \in \W^{k,1}_{v}\W^{m,2}_{x}(\m_{q}), \qquad t \geq 0.$$}
 This is the role of Section \ref{sec:non}. {Of course, to study the equation solved then by~$\ho_{\e}(t)$, a careful study of the linearized operator on spaces $\W^{k,1}_{v}\W^{m,2}_{x}(\m_{q})$ is necessary (see Sections~\ref{sec:collL} and~\ref{sec:elas}) yielding highly technical additional difficulties}.
\smallskip

In Section \ref{sec:Cauchy}, we prove Theorem \ref{theo:main-cauc} introducing a suitable iterative scheme based upon the coupling $\big(\ho_{\e}(t),\,\hu_{\e}(t)\big)$.  We show in practice that the coupled system of kinetic equations satisfied by $\ho$ and $\hu$ is well-posed.  It is fair to say that the bounds for $\ho_{\e}$ and $\hu_{\e}$ given in Sections \ref{sec:non} and~\ref{sec:Cauchy} play the role of suitable energy estimates as the ones established in the purely Hilbert setting {\cite{guo,guo2,zhao}}. In particular, these bounds are sufficient to deduce {a very peculiar type of weak convergence of $h_{\e}(t)$} towards an element in the kernel of the linearized operator $\mathscr{L}_{1}$, in particular, the limit of $h_{\e}$ is necessarily of the form \eqref{eq:hlimint}. {The notion of weak convergence we use here fully exploits the splitting $h_{\e}=\ho_{\e}+\hu_{\e}$ where we prove that $\ho_{\e}$ converges to $0$ \emph{strongly} in $L^{1}((0,T);{L^{1}_v\W^{m,2}_{x}}(\m_{q}))$ whereas $\hu_{\e}$ converges to $\bm{h}$ \emph{weakly} in $L^{2}((0,T)\,;\,{L^2_v\W^{m,2}_{x}}(\M^{-\frac{1}{2}})).$}

\smallskip
Finally, in Section \ref{sec:hydr}, the regularity of $(\varrho,u,\vE)$ obtained via a simple use of Ascoli-Arzela Theorem and the identification of the limiting equations these macroscopic fields satisfy is presented.
With the {notion of weak convergence  at hand} presented above, the approach is simpler but reminiscent of the program established in {\cite{BaGoLe1,BaGoLe2}}. {In particular, we can adapt some of the main ideas of {\cite{golseSR}} regarding the delicate convergence of nonlinear convection terms.}   Detailed computations are included to make the paper as much self-contained as possible {also because, even in the classical ``elastic'' case, it is difficult to find a full proof of the convergence towards hydrodynamic limit for the weak solutions we consider here. For such solutions, details of proof are scattered in the literature and full proof of the convergence of nonlinear terms is sometimes only sketched where most of the full detailed proofs are dealing with the more delicate case of renormalized solutions {\cite{golseSR,golseSR1,lever}}}. In our framework, the terms involving the quadratic operator $\Q_{\re}(h_{\e},h_{\e})$ are treated as source terms which converge in distributions to zero whereas the drift term and the dissipation of energy function $\mathcal{D}_{\re}$ are the objects responsible for the terms in the right-side of the Navier-Stokes system \eqref{eq:NSFint}. We also observe that the derivation of the strong Boussinesq relation is not as straightforward as in the elastic case. Actually, the classical Boussinesq relation
$$\nabla \left( \varrho(t,x)+\en_{1}\vE(t,x)\right)=0$$
is established as in the elastic case.  In the elastic case, this relation implies the strong form of Boussinesq relation mainly because the two functions $\varrho(t,x)$ and $\vE(t,x)$ have zero spatial averages. This cannot be deduced directly in the granular context due to the dissipation of energy.

\subsection{Organization of the paper} The paper is divided into 6 Sections and three Appendices. In the following Section \ref{sec:collL}, we collect several results regarding the collision operator $\LL$ and introduce the splitting of the operator in $\LL=\A_{\re}+\B_{\re}$ as well as the splitting of the full linearized operator $\G_{\re,\e}.$ As mentioned, even if our final goal is to study the collision operator in spaces built on $L^{1}_{v}$-spaces with polynomial weights, we shall also need to resort to estimates of $\LL$ in $L^{2}_{v}$-spaces. Section \ref{sec:elas} is devoted to the spectral analysis of $\G_{\re,\e}$ culminating with the proof of Theorem \ref{cor:mu}. In Section \ref{sec:non}, we derive the fundamental \emph{a priori} estimates on the close-to-equilibrium solutions to \eqref{BE}. It is the most technical part of the work and fully exploits the splitting of the operator $\G_{\re,\e}$ as explained earlier. Section \ref{sec:Cauchy} gives the proof of Theorem \ref{theo:main-cauc} whereas Section \ref{sec:hydr} gives the full proof of the hydrodynamic limit (Theorem \ref{theo:CVNS-int}). In Appendix \ref{appen:homog}, we recall some facts about the granular Boltzmann equation and gives the full proof of a technical result of Section \ref{sec:collL}. Appendix \ref{Appendix-PP} deals with some properties of the solution to \eqref{Bol-e}, i.e. dealing with the original variables, and provides some insights about the local version of Haff's law. In Appendix \ref{sec:hydro1}, we collect some well-known properties useful for the hydrodynamic limit as well as some technical proofs used in Section \ref{sec:hydr}. Finally, Appendix \ref{appen:G1e} gives the proof of two technical results of Section \ref{sec:collL}.

\smallskip
\noindent {\bf Acknowledgements.}    RA gratefully acknowledges the support from O Conselho Nacional de Desenvolvimento Cient\'ifico e Tecnol\'ogico, Bolsa de Produtividade em Pesquisa  - CNPq (303325/2019-4).  BL gratefully acknowledges the financial support from the Italian Ministry of Education, University and Research (MIUR), ``Dipartimenti di Eccellenza'' grant 2018-2022. Part of this research was performed while the second author was visiting the ``D\'epartement de Math\'ematiques et Applications,'' at \'Ecole Normale Sup\'erieure, Paris in February 2019. He wishes to express his gratitude for the financial support and warm hospitality offered by this Institution. IT thanks the ANR EFI:  ANR-17-CE40-0030  and the ANR SALVE: ANR-19-CE40-0004 for their support. We thank Isabelle Gallagher for stimulating discussions and precious advices. 

\section{Summary of useful results about the collision operator}\label{sec:collL}

\subsection{The linearized operators $\mathscr{L}_{\re}$ and $\mathscr{L}_{1}$}
\label{sec:ll}

{In all the sequel, we will use well-known estimates for the bilinear operator $\Q_{\re}(f,g)$ and $\Q_{1}(f,g)$ in several different functional spaces. We refer to {\cite{ACG,AG,MiMo3}} for precise statements.}
A crucial role in our analysis will be played by the fact that, in some suitable sense, $\mathscr{L}_{\re}$ is close to the elastic linearized operator~$\mathscr{L}_{1}$ for $\re \simeq 1.$ 
Let us begin with the following crucial result which also justifies the optimal scaling \eqref{eq:scaling} and optimise the rate of convergence previously derived in \cite[Proposition 3.1 (iii)]{MiMo3} for weights different to the ones considered here. The technical proof is postponed to Appendix \ref{appen:homog}. 


\begin{lem}\label{lem:els}
Let $a=\max\{d-1,2\}$ and $\kappa>\frac{d}{2}$.  
For $k\in\mathbb{N}$ and {$q \geq 0$}, there is a positive constant $\bm{c}_{k,q} >0$ such that for any~$\re \in (0,1]$, 
\begin{equation*}
\begin{aligned}
\| \Q_{1}(g,f) - \Q_{\re}(g,f) \|_{\W^{k,1}_{v}(\m_{q})} 
&\leq \,\bm{c}_{k,q}\,\frac{1-\re}{\re^{a}}\,\Big(\| f \|_{ \W^{k,1}_{v}(\m_{q+2}) }\,\| g \|_{ \W^{k+1,1}_{v}(\m_{q+2}) }\\
&\qquad \qquad \qquad \qquad {+\| g \|_{ \W^{k,1}_{v}(\m_{q+2}) }\,\| f \|_{ \W^{k+1,1}_{v}(\m_{q+2}) }\Big)}
\end{aligned}
\end{equation*}
and  { a constant $\bm{c}_{k,q}(\kappa) >0$ such that, for any $\re\in (0,1]$,}
\begin{equation*}
\begin{aligned}
\| \Q_{1}(g,f) - \Q_{\re}(g,f) \|_{\W^{k,2}_{v}(\m_{q})} 
&\leq \,\bm{c}_{k,q}\,\frac{1-\re}{\re^{a}}\,\Big(\| f \|_{ \W^{k,2}_{v}(\m_{q+\kappa+2}) }\,\| g \|_{ \W^{k+1,2}_{v}(\m_{q+\kappa+2}) }\\
&\qquad \qquad \qquad\qquad  {+\| g \|_{ \W^{k,2}_{v}(\m_{q+\kappa+2}) }\,\| f \|_{ \W^{k+1,2}_{v}(\m_{q+\kappa+2}) }\Big)}\,.
\end{aligned}
\end{equation*}
\end{lem}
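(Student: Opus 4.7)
The plan is to isolate the parts of $\Q_{\re}$ that genuinely depend on $\re$ and then interpolate linearly between the elastic and inelastic cases. From the weak form~\eqref{co:weak} together with the normalization $\int_{\S^{d-1}} b(\sigma\cdot\widehat u)\,\d\sigma = 1$, the loss contribution to $\Q_{\re}(g,f)$ is simply $\tfrac12\bigl(f\,Lg + g\,Lf\bigr)$ with $Lh(v) = \int h(v_\ast)|v-v_\ast|\,\d v_\ast$, which does not depend on $\re$. Consequently
$$\Q_{1}(g,f) - \Q_{\re}(g,f) = \Q_{1}^+(g,f) - \Q_{\re}^+(g,f),$$
and the whole task reduces to comparing the two gain parts.

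The second step is a continuous interpolation. Inspecting~\eqref{co:transf}, the post-collisional velocities enter the gain operator only through the parameter $\tau := (1+\re)/4$. I would therefore introduce, for $\tau \in [(1+\re)/4,\,1/2]$, the one-parameter family $\Q_\tau^+$ defined by
$$\int \Q_\tau^+(g,f)(v)\,\psi(v)\,\d v = \int f(v)\,g(v_\ast)\,|v-v_\ast| \int_{\S^{d-1}}\psi\bigl(v+\tau(|u|\sigma-u)\bigr)\,b(\sigma\cdot\widehat u)\,\d\sigma\,\d v\,\d v_\ast,$$
so that $\Q_{(1+\re)/4}^+ = \Q_{\re}^+$ and $\Q_{1/2}^+ = \Q_{1}^+$. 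The fundamental theorem of calculus then yields
$$\Q_{1}^+(g,f) - \Q_{\re}^+(g,f) = \int_{(1+\re)/4}^{1/2} \partial_\tau \Q_\tau^+(g,f)\,\d\tau,$$
and the length of the interval, namely $(1-\re)/4$, is precisely what produces the linear factor $1-\re$ in the conclusion.

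The third step is to estimate $\partial_\tau \Q_\tau^+(g,f)$ uniformly in $\tau \in [1/4,1/2]$. Differentiation of the weak form in $\tau$ brings down an extra factor $(|u|\sigma - u)\cdot\nabla\psi$ evaluated at $v+\tau(|u|\sigma - u)$, of size controlled by $|v-v_\ast|\,|\nabla\psi|$. Passing to the strong form of the operator via the change of variable $v \mapsto v+\tau(|u|\sigma-u)$ transfers this gradient onto one of the two arguments of the bilinear operator; the associated Jacobian behaves like $\re^{-a}$ with $a = \max(d-1,2)$, as is standard in the inelastic strong-form representations (see~\cite{MiMo3}), and this is what produces the prefactor $\re^{-a}$ of the statement. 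Together with the translation-invariance identity $\nabla_v\Q_\tau^+(g,f) = \Q_\tau^+(\nabla g,f)+\Q_\tau^+(g,\nabla f)$, which allows to propagate $k$ spatial derivatives through the bilinear operator, one can then apply the standard $\W^{k,1}_v$-bilinear bounds for the Boltzmann gain operator from~\cite{ACG,MiMo3}, uniformly in $\tau \in [1/4,1/2]$: the extra factor $|u|$ in the kernel and the extra $\nabla$ produced by the differentiation account together for the additional weight $\m_{q+2}$ (two units heavier than $\m_q$) and one additional derivative on exactly one of the two factors. Integrating back in $\tau$ and using $\int_{(1+\re)/4}^{1/2}\d\tau = (1-\re)/4$ concludes the first bound.

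For the $\W^{k,2}_v(\m_q)$ estimate the exact same scheme applies, but the $L^2$-bilinear estimates for $\Q^+_\tau$ require, in addition to the extra derivative and the extra weight $\m_2$, $\kappa > d/2$ further polynomial moments in order to absorb the $L^\infty_v$-type factors appearing in Young-type inequalities for the collision kernel (see~\cite{AG,MiMo3}); this is precisely the origin of the additional weight $\m_\kappa$ in the second estimate. The main difficulty I anticipate is the careful bookkeeping in this last step: making sure that the constants in the bilinear estimates for $\Q^+_\tau$ are genuinely uniform in $\tau \in [1/4,1/2]$, that the Jacobian appearing when passing to the strong form is sharp with exponent $a = \max(d-1,2)$, and that the gradient falling on $\psi$ after differentiation in $\tau$ is absorbed with a loss of exactly one derivative and two polynomial moments (rather than more), which is what makes the resulting rate optimal compared to~\cite[Proposition 3.1 (iii)]{MiMo3}.
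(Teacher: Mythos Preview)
Your overall plan—reduce to the gain parts, interpolate in the inelasticity parameter, and estimate the derivative—is reasonable, but the crucial step is not correct as you describe it, and it is also not what the paper does.

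The paper works entirely in the $n$-representation and in the \emph{strong} form from the outset. It writes
\[
\Q_{1}(g,f)-\Q_{\re}(g,f)=\mathcal I_{1}+\mathcal I_{2},
\]
where $\mathcal I_{1}$ carries the explicit prefactor $\tfrac{1-\re^{2}}{\re^{2}}$ of the inelastic strong form, and $\mathcal I_{2}$ is the integral of $g('\!v_{\ast,\re})f('\!v_{\re})-g('\!v_{\ast,1})f('\!v_{1})$ against the kernel. The latter is handled by a first-order Taylor expansion using the \emph{exact} identity $'\!v_{\ast,\re}-{}'\!v_{\ast,1}=\tfrac{1-\re}{2\re}(u\cdot n)n$, together with carefully chosen pre--post collisional changes of variables in the $n$-representation whose Jacobians are explicitly of size $\re^{-(d-1)}$ or $\re^{-2}$. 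This is where the factor $\re^{-a}$, $a=\max(d-1,2)$, genuinely arises.

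In your scheme, after differentiating the weak form in $\tau$ you have $(|u|\sigma-u)\cdot\nabla\psi(v')$. The change of variables $v\mapsto v'=v+\tau(|u|\sigma-u)$ with $v_{\ast},\sigma$ fixed does \emph{not} transfer this gradient onto $f$ or $g$; it merely relabels the output variable. Moreover its Jacobian is
\[
\det\bigl((1-\tau)I+\tau\,\sigma\,\widehat u^{T}\bigr)=(1-\tau)^{d-1}\bigl(1-\tau(1-\widehat u\cdot\sigma)\bigr),
\]
which depends on $\tau$ and the angle, vanishes at $\tau=\tfrac12$, $\widehat u\cdot\sigma=-1$, and does not produce the factor $\re^{-a}$ you claim. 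To actually move the gradient onto $f$ or $g$ you would need an integration by parts in $v$ (or $v_{\ast}$), which brings derivatives of $|u|$, of $|u|\sigma-u$, and of $b(\widehat u\cdot\sigma)$ into play; this is doable but is a substantially different computation than what you outline, and the $\re^{-a}$ must then be extracted from the behaviour of the effective restitution $4\tau-1$ at the lower endpoint $\tau=(1+\re)/4$, not from a single Jacobian. The paper's route through the $n$-representation avoids the backscattering degeneracy of the $\sigma$-representation entirely and makes both the $(1-\re)$ factor and the $\re^{-a}$ powers explicit.
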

By using the previous lemma and estimates on the difference between $G_\re$ and $\M$, one can get an estimate on the difference between $\mathbf{L}_1$ and $\mathbf{L}_{\re}$ with loss of regularity on the argument (see the first part of Proposition~\ref{prop:converLLL0} below). In our analysis, we will also need an estimate on the difference between $\mathbf{L}_1$ and $\mathbf{L}_{\re}$ with no loss of regularity  (i.e. an estimate in the \emph{graph norm}), even if the rate is not anymore optimal. To this end, we here state a lemma which is in the spirit of~\cite[Proposition 3.2]{MiMo3} except from the fact that one of the argument is fixed to be the Maxwellian $\M$. Note that \cite[Proposition 3.2]{MiMo3} gives an estimate on the difference between $\Q_1$ and $\Q_\re$ for general arguments but the  proof heavily relies on the exponential weights considered. It turns out that we can not adapt easily the proof of~\cite[Proposition 3.2]{MiMo3} for polynomial weights. However, by using decay properties of $\M$, we are able, to get an estimate on $\Q_1(\M,\cdot) - \Q_\re(\M,\cdot)$ and its symmetric, which is enough for our purpose. The proof is also postponed to Appendix~\ref{appen:homog}.
\begin{lem} \label{lem:L1Lalpha}
Let {$q \geq 0$} and $k \in \N$. There exist some explicit $\widetilde{\bm{c}}_{q}>0$, $\widetilde{\bm{c}}_{k,q} >0$, $p \in (0,1)$ and~$\re_1 \in (0,1)$ such that 
\begin{align*}
&\| \Q_1(\M,f) - \Q_{\re}(\M,f) \|_{ \W^{k,1}_{v}(\m_{q}) } \\
&\quad + \| \Q_1(f,\M) - \Q_{\re}(f,\M) \|_{ \W^{k,1}_{v}(\m_{q}) }\leq \widetilde{\bm{c}}_{k,q}\,(1-\re)^p\,\| f \|_{ \W^{k,1}_v(\m_{q+1}) }\, , \qquad \re \in [\re_1,1]\,,
\end{align*} 
and
\begin{align*}
&\| \Q_1(\M,f) - \Q_{\re}(\M,f) \|_{ \W^{k,2}_{v}(\m_{q}) } \\
&\quad + \| \Q_1(f,\M) - \Q_{\re}(f,\M) \|_{ \W^{k,2}_{v}(\m_{q}) }\leq \widetilde{\bm{c}}_{k,q}\,(1-\re)^p\,\| f \|_{ \W^{k,2}_v(\m_{q+1}) }\, , \qquad \re \in [\re_1,1]\,.
\end{align*} 
\end{lem}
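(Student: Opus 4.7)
The plan is to combine two complementary estimates and interpolate. First, I note that the loss part of the collision operator, $\Q_\re^-(g,f)(v) = f(v)\,(g \ast |\cdot|)(v)$, does not depend on $\re$; hence $\Q_1(g,f) - \Q_\re(g,f) = \Q_1^+(g,f) - \Q_\re^+(g,f)$ involves only the gain parts. Moreover, the estimate for $\Q_1(f,\M) - \Q_\re(f,\M)$ follows from the one for $\Q_1(\M,f) - \Q_\re(\M,f)$ by the change of variables $v \leftrightarrow v_*$ (together with $\sigma \to -\sigma$) in the weak form, which leaves $b(\sigma\cdot\widehat{u})$ invariant and exchanges $v'$ with $v'_*$. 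I thus focus on $\Q_1^+(\M,f) - \Q_\re^+(\M,f)$.

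Next, I would establish two complementary bounds. On the one hand, applying Lemma~\ref{lem:els} with one argument set equal to $\M$—and using that $\M$ is Schwartz, hence bounded in every $\W^{m,r}_v(\m_s)$—gives the \emph{sharp rate} bound
\[ \bigl\| \Q_1(\M,f) - \Q_\re(\M,f) \bigr\|_{\W^{k,1}_v(\m_q)} \leq C\,(1-\re)\, \| f \|_{\W^{k+1,1}_v(\m_{q+2})}, \]
which has the optimal rate but loses one derivative and two moment powers of $f$. On the other hand, the standard uniform-in-$\re$ continuity of the gain operator $\Q_\re^+(\M,\cdot)$ on weighted Sobolev spaces (see the estimates recalled from \cite{ACG,AG,MiMo3} at the start of Section~\ref{sec:collL}) delivers the \emph{cheap} bound
\[ \bigl\| \Q_1(\M,f) - \Q_\re(\M,f) \bigr\|_{\W^{k,1}_v(\m_q)} \leq C\, \| f \|_{\W^{k,1}_v(\m_{q+1})}, \]
with minimal moment loss and no derivative loss, but no gain in $(1-\re)$.

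To interpolate, I would decompose $f = f\,\chi_R + f\,(1 - \chi_R)$ using a smooth velocity cutoff $\chi_R(v) = \chi(v/R)$ and regularize the first piece by velocity mollification $(f\chi_R)_\delta := (f\chi_R) \ast \rho_\delta$ at a scale $\delta < R$. Applying the sharp bound to $(f\chi_R)_\delta$ yields a contribution of order $(1-\re)\,\delta^{-1}\,R^{q+2}\,\|f\|_{\W^{k,1}_v(\m_{q+1})}$ (using Bernstein-type estimates $\|(f\chi_R)_\delta\|_{\W^{k+1,1}_v(\m_{q+2})}\lesssim \delta^{-1}R^{q+2}\|f\|_{\W^{k,1}_v(\m_{q+1})}$). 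The cheap bound applied to the two residual pieces—the tail $f(1-\chi_R)$ (where the extra factor $|u|$ in the weak form, combined with the Gaussian decay of $\M(v_*)$, produces smallness in $R$) and the mollification error $f\chi_R - (f\chi_R)_\delta$—and optimization over $R = R(\re)$ and $\delta = \delta(\re)$ as $\re \to 1$ produces the claimed rate $(1-\re)^p \|f\|_{\W^{k,1}_v(\m_{q+1})}$ for some explicit $p \in (0,1)$.

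The main obstacle is the precise bookkeeping of weight exponents. A naive Hölder-type interpolation of the test function $\psi$ in the weak form produces a factor $|u|^p$ in the integrand which, after integration against $\M(v_*)$, would yield the weight $\m_{q+1+p}$ on $f$ rather than the claimed $\m_{q+1}$. The crucial point is that the \emph{exponential} decay of $\M(v_*)$ (as opposed to mere polynomial decay of a general argument) allows this extra factor $|u|^p$ to be absorbed in the $v_*$-integration with essentially no $v$-growth penalty, saving one full power of $\langle v\rangle$. This is precisely why the argument works when one argument is $\M$ but cannot be extended directly to arbitrary $g$, as the authors emphasize when comparing with \cite[Proposition 3.2]{MiMo3}. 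Finally, the $\W^{k,2}_v$ estimate follows from an identical strategy, substituting the $L^2$-continuity of the gain operator from the estimates recalled at the start of Section~\ref{sec:collL} for its $L^1$-counterpart.
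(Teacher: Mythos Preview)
Your interpolation strategy has a genuine gap: neither residual term is actually small. Applying the ``cheap'' bound to the mollification error gives $C\,\|f\chi_R-(f\chi_R)_\delta\|_{\W^{k,1}_v(\m_{q+1})}$, but this quantity is \emph{not} $o(1)$ as $\delta\to0$ uniformly over $\|f\|_{\W^{k,1}_v(\m_{q+1})}\le 1$; a rate in $\delta$ would require one extra derivative of $f$, which is precisely what you do not have. Likewise, the tail piece $f(1-\chi_R)$ only satisfies $\|f(1-\chi_R)\|_{\W^{k,1}_v(\m_{q+1})}\le\|f\|_{\W^{k,1}_v(\m_{q+1})}$ with no decay in $R$, since you have no higher moment to trade. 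Your remark that the extra $|u|$ together with the Gaussian decay of $\M(v_*)$ produces smallness does not hold without further restrictions: for collisions with $\widehat u\cdot\sigma$ close to $\pm1$, the pre-collisional velocity $'v$ at which $\M$ is evaluated can be small even when $|v|$ is large, so the Gaussian tail gives nothing.

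The paper's proof avoids these issues by decomposing the \emph{collision kernel}, not the function $f$. One introduces an angular cutoff $\eta_\delta(\widehat u\cdot\sigma)$ together with a truncation $\Theta_R(|u|)$, splitting $\Q_\re^+$ into a small-angle piece (of size $O(\delta)$ uniformly in $\re$), a truncated regular piece on which the map $\re\mapsto{}'v_\re$ is smooth and the difference is $O\!\big((1-\re)(R^2/\delta+R/\delta^3)\big)$, and a large-$|u|$ piece. On this last piece the angular restriction $\sin^2\chi\ge\delta$ is what forces the pointwise lower bound $|'v|^2\gtrsim\delta(|v|^2+|v_*|^2)$, so that $\M('v)$ genuinely absorbs all polynomial weights and one gains $O(1/(R\,\delta^{(q+4)/2}))$. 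Optimising $\delta=(1-\re)^p$ and $R$ accordingly yields the rate. The angular cutoff is the key missing ingredient in your scheme; without it neither the Lipschitz-in-$\re$ estimate on the regular part nor the Gaussian absorption on the large-velocity part is available.
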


Let us now investigate the rate of convergence of the equilibrium $G_{\re}$ towards $\M$. An optimal convergence rate in $L^{1}$-spaces is given in  \cite[Step 2, proof of Lemma 4.4]{MiMo3}: there is $C >0$ and~ {$\re_{2} >0$} such that
\begin{equation}\label{appe1}
\|\M - G_{\re}\|_{ L^{1}_{v}(\langle \cdot \rangle m) }\leq C(1-\re)\,,\qquad  {\re\in [\re_2,1]}\,.
\end{equation}
for $ {m(v)=\exp(a\,|v|)}$, $a >0$ small enough. We need to extend this optimal rate of convergence to the Sobolev spaces $\W^{k,j}_v(\m_{q})$ for $j=1,2$ we are considering here.  {The proof of this technical result is postponed to Appendix \ref{appen:homog}.}
\begin{lem}\label{prop:psi}
Let $k\in \N$,  $q \geq 1$ be given. There exist some explicit  $\re_{3} \in (0,1)$ and $C >0$ such that
\begin{equation*}
\|\M - G_{\re} \|_{ \W^{k,1}_{v}(\m_{q}) }+\|\M - G_{\re} \|_{ \W^{k,2}_{v}(\m_{q}) }  \leq C(1-\re)\,,\qquad  {\re \in [\re_3,1]}\,.
\end{equation*}
\end{lem}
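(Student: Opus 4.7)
The plan is to linearise the stationary equation for $G_{\re}$ around $\M$ and to invert the elastic linearised operator $\mathscr{L}_{1}$ using its spectral gap on the Sobolev spaces at hand; a bootstrap then converts the suboptimal rate $(1-\re)^{p}$ from Lemma~\ref{lem:L1Lalpha} into the sharp rate $(1-\re)$. First, setting $\Psi_{\re}:=G_{\re}-\M$, I would substitute $G_{\re}=\M+\Psi_{\re}$ into the stationary equation $\Q_{\re}(G_{\re},G_{\re})=\kappa_{\re}\,\nabla_{v}\cdot(vG_{\re})$. Bilinearity of $\Q_{\re}$ together with $\Q_{1}(\M,\M)=0$ yields $\mathscr{L}_{1}\Psi_{\re}=R_{\re}$, with
\begin{align*}
R_{\re} = &\,\kappa_{\re}\,\nabla_{v}\cdot(v\M) + \kappa_{\re}\,\nabla_{v}\cdot(v\Psi_{\re}) - \big[\Q_{\re}(\M,\M)-\Q_{1}(\M,\M)\big] \\
& - 2\big[\widetilde{\Q}_{\re}(\M,\Psi_{\re})-\widetilde{\Q}_{1}(\M,\Psi_{\re})\big] - \Q_{\re}(\Psi_{\re},\Psi_{\re})\,.
\end{align*}

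Next I would estimate each term of $R_{\re}$ in $\W^{k,j}_{v}(\m_{q})$ for $j\in\{1,2\}$. The explicit drift $\kappa_{\re}\,\nabla_{v}\cdot(v\M)$ is of size $(1-\re)$ by the smoothness and decay of $\M$; Lemma~\ref{lem:els} gives $\|\Q_{\re}(\M,\M)-\Q_{1}(\M,\M)\|_{\W^{k,j}_{v}(\m_{q})}=O(1-\re)$; Lemma~\ref{lem:L1Lalpha} controls the cross difference by $C(1-\re)^{p}\,\|\Psi_{\re}\|_{\W^{k,j}_{v}(\m_{q+1})}$; standard Boltzmann estimates bound $\Q_{\re}(\Psi_{\re},\Psi_{\re})$ quadratically in $\Psi_{\re}$ in the same space; and the drift on $\Psi_{\re}$ costs a factor $(1-\re)\,\|\Psi_{\re}\|_{\W^{k+1,j}_{v}(\m_{q+1})}$.

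The third step is to invert $\mathscr{L}_{1}$ on $\W^{k,j}_{v}(\m_{q})$, using the spectral gap which is classical in the $L^{2}(\M^{-1/2})$ framework and extended to polynomially weighted spaces in \cite{GMM,bmam}. Since $G_{\re}$ and $\M$ share mass $1$ and vanishing momentum, $\Psi_{\re}$ has zero mass and zero momentum, so its projection onto $\ker\mathscr{L}_{1}=\mathrm{span}\{\M,v_{1}\M,\ldots,v_{d}\M,|v|^{2}\M\}$ reduces to the energy component, whose coefficient is bounded by $\|\Psi_{\re}\|_{L^{1}_{v}(\m_{2})}\leq C(1-\re)$ thanks to~\eqref{appe1}. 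Collecting the pieces, one ends up with the closed inequality
\begin{equation*}
\|\Psi_{\re}\|_{\W^{k,j}_{v}(\m_{q})} \leq C(1-\re) + C(1-\re)^{p}\|\Psi_{\re}\|_{\W^{k,j}_{v}(\m_{q+1})} + C(1-\re)\|\Psi_{\re}\|_{\W^{k+1,j}_{v}(\m_{q+1})} + C\|\Psi_{\re}\|_{\W^{k,j}_{v}(\m_{q+1})}^{2}\,.
\end{equation*}

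Starting from the uniform-in-$\re$ bound $\|\Psi_{\re}\|_{\W^{k+1,j}_{v}(\m_{q+1})}\leq C$ available from the regularity theory of $G_{\re}$ in \cite{MiMo3} together with the smoothness of $\M$, the derivative-loss term is already $O(1-\re)$, and a bootstrap on the rate $\alpha$ for which $\|\Psi_{\re}\|_{\W^{k,j}_{v}(\m_{q+1})}\leq C(1-\re)^{\alpha}$ holds---starting from $\alpha=0$ and improving by $p>0$ at each step via Lemma~\ref{lem:L1Lalpha}---reaches $\alpha=1$ in finitely many iterations, for $\re$ close enough to $1$. The base case $k=0$, $j=1$ is immediate from \eqref{appe1} since $\m_{q}(v)\leq C e^{a|v|}$ for any $a>0$; the settings $j=1$ and $j=2$ are treated in parallel thanks to the twin $L^{1}$ and $L^{2}$ statements of Lemmas~\ref{lem:els} and~\ref{lem:L1Lalpha}. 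The main obstacles are the derivative loss in $\kappa_{\re}\,\nabla_{v}\cdot(v\Psi_{\re})$ and the suboptimal exponent $p<1$ in Lemma~\ref{lem:L1Lalpha}: the former is overcome because only a \emph{uniform} (not rate-sharp) bound on one extra derivative of $\Psi_{\re}$ is needed, which is provided by \cite{MiMo3}; the latter is overcome by the bootstrap just described.
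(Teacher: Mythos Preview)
Your strategy---linearise the stationary equation around a Maxwellian, invert $\mathscr{L}_{1}$ on the orthogonal of its kernel, and close via a quadratic inequality---is exactly the paper's. Two differences are worth noting, and one of them exposes a gap in your argument as written.

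\textbf{The paper linearises around $\M_{\re}$, not $\M$.} Here $\M_{\re}$ is the Maxwellian sharing the mass, momentum \emph{and energy} of $G_{\re}$. This buys two simplifications. First, $G_{\re}-\M_{\re}$ lies exactly in $(\ker\mathbf{L})^{\perp}$, so no separate handling of the energy projection is needed. Second, and more importantly, expanding $\mathbf{L}(G_{\re})$ around $\M_{\re}$ produces only the three terms $\Q_{1}(G_{\re}-\M_{\re},\M_{\re}-G_{\re})$, $-(1-\re)\nabla_{v}\cdot(vG_{\re})$, and $\Q_{1}(G_{\re},G_{\re})-\Q_{\re}(G_{\re},G_{\re})$. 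The last is estimated by Lemma~\ref{lem:els} with the sharp rate $(1-\re)$, since $G_{\re}$ is uniformly bounded in any $\W^{k,1}_{v}(\m_{q})$. There is \emph{no} cross term of the form $\widetilde\Q_{\re}(\M,\Psi_{\re})-\widetilde\Q_{1}(\M,\Psi_{\re})$, so Lemma~\ref{lem:L1Lalpha} and its suboptimal exponent $p$ never enter. The resulting inequality is simply $x\leq C(1-\re)+Cx^{2}$ with $x=\|G_{\re}-\M_{\re}\|_{\W^{k,1}_{v}(\m_{q+1})}$, which is closed in a single norm. One then passes from $\M_{\re}$ to $\M$ via $|\en_{\re}-\en_{1}|\leq C(1-\re)$.

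\textbf{Your bootstrap does not start.} You claim to iterate from rate $0$, improving by $p$ at each step via the cross term. But the quadratic term $C\|\Psi_{\re}\|^{2}$ carries rate $2\beta$ when $\|\Psi_{\re}\|\lesssim(1-\re)^{\beta}$, so at $\beta=0$ it gives rate $0$ and nothing improves. What actually closes the argument---and this is the paper's Step~1---is a preliminary \emph{non-quantitative} convergence $\|\Psi_{\re}\|_{\W^{k,1}_{v}(\m_{q+1})}\to0$ (proved by compactness and uniqueness of the limit), which lets you absorb the quadratic term into the left-hand side for $\re$ close to $1$. Once that is done, the $(1-\re)^{p}\|\Psi_{\re}\|$ term is also absorbed directly for $\re$ close to $1$, and the sharp rate follows in one shot---no bootstrap is needed. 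Your sketch omits this non-quantitative step.

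\textbf{The $L^{2}$ case.} The paper does not run the argument in parallel for $j=2$; instead it deduces the $\W^{k,2}_{v}(\m_{q})$ bound from the $\W^{k,1}_{v}(\m_{q})$ bound by writing $\|\cdot\|_{L^{2}}\leq\|\cdot\|_{L^{1}}^{1/2}\|\cdot\|_{L^{\infty}}^{1/2}$ and controlling the $L^{\infty}$ factor via Sobolev embedding from $\W^{k+k',1}_{v}(\m_{q})$ with $k'>d$, where the sharp rate is already available.
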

{For $j=1,2$, on the underlying space $\W^{k,j}_{v}(\m_{q})$, introduce the operator 
$T_{\re}\::\:\D(T_{\re}) \subset \W^{k,j}_{v}(\m_{q}) \to \W^{k,j}_{v}(\m_{q})$ defined by $\D(T_{\re})=\W^{k+1,j}_{v}(\m_{q+1})$ and 
$$T_{\re}h(v)=-\kappa_{\re}\nabla_v \cdot (v\,  h(v)), \qquad h \in \D(T_{\re}).$$
One sees that $T_{\re}$ is one of the operators responsible for the discrepancy between the domain of $\mathscr{L}_{1}$ and $\LL$. Because of this, we set
$$\P_{\re}\::\:\D(\P_{\re}) \subset \W^{k,j}_{v}(\m_{q}) \to \W^{k,j}_{v}(\m_{q})$$
as $\P_{\re}=\mathbf{L}_{\re}-\mathbf{L}_{1}$ with domain
$$\D(\P_{\re})=\D(\mathscr{L}_{1})=\W^{k,j}_{v}(\m_{q+1}).$$}
One has then the following Proposition in $L^{1}_{v}$ and $L^2_v$-based spaces:
\begin{prop}\phantomsection\label{prop:converLLL0} Consider $k \in \N$ and $q \geq 0$. There exist  some explicit constant $\bm{C}_{k,q} >0$ and $\alpha_\star \in (0,1)$ such that for any $h \in \W^{k+1,1}_{v}(\m_{q+2})$
\begin{equation}\label{eq:llXk0}
\|\P_{\re}h\|_{\W^{k,1}_{v}(\m_{q})}=\|\mathbf{L}_{\re}h-\mathbf{L}_{1}h\|_{\W^{k,1}_{v}(\m_{q})}
\leq \bm{C}_{k,q}(1-\re)\,\|h\|_{\W^{k+1,1}_{v}(\m_{q+2})}\, , \quad  {\alpha \in [\alpha_\star,1]}\,,
\end{equation}
As a consequence, for any $h \in \W^{k+1,1}_{v}(\m_{q+2})$ 
\begin{equation}\label{eq:llXk}
\|\LL h -\mathscr{L}_{1}h\|_{\W^{k,1}_{v}(\m_{q})} \leq \left(\bm{C}_{k,q}(1-\re)+\kappa_{\re}\right)\,\|h\|_{\W^{k+1,1}_{v}(\m_{q+2})}\,, \quad  {\alpha \in [\alpha_\star,1]}\,. 
\end{equation}
In the same way, for any $\kappa > \frac{d}{2}$  and $q \geq 2$, there exist some explicit $\bm{C}_{k,q}(\kappa) >0$ such that for any~$h \in \W^{k,2}_{v}(\m_{q+\kappa+2})$ it holds
\begin{equation}\label{eq:llXk0-L2}
\|\P_{\re}h\|_{\W^{k,2}_{v}(\m_{q})}=\|\mathbf{L}_{\re}h-\mathbf{L}_{1}h\|_{\W^{k,2}_{v}(\m_{q})}
\leq \bm{C}_{k,q}(\kappa)\,(1-\re)\,\|h\|_{\W^{k+1,2}_{v}(\m_{q+\kappa+2})}\, , \quad  {\alpha \in [\alpha_\star,1]}\,,
\end{equation}
As a consequence, for any $h \in \W^{k+1,2}_{v}(\m_{q+\kappa+2})$ 
\begin{equation*}\label{eq:llXk-L2}
\|\LL h -\mathscr{L}_{1}h\|_{\W^{k,2}_{v}(\m_{q})} \leq \left(\bm{C}_{k,q}(\kappa)\,(1-\re)+\kappa_{\re}\right)\,\|h\|_{\W^{k+1,2}_{v}(\m_{q+\kappa+2})}\,, \quad  {\alpha \in [\alpha_\star,1]}\,. 
\end{equation*}
\end{prop}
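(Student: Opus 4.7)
The plan is to decompose $\P_{\re}h=\mathbf{L}_{\re}h-\mathbf{L}_{1}h$ into four bilinear pieces and estimate them separately using the preceding lemmas. Writing
\begin{equation*}
\mathbf{L}_{\re}h-\mathbf{L}_{1}h=\bigl[\Q_{\re}(G_{\re},h)-\Q_{1}(\M,h)\bigr]+\bigl[\Q_{\re}(h,G_{\re})-\Q_{1}(h,\M)\bigr],
\end{equation*}
I would split each bracket by inserting $\Q_{\re}(\M,h)$ and $\Q_{\re}(h,\M)$ respectively, obtaining
\begin{align*}
\P_{\re}h &= \Q_{\re}(G_{\re}-\M,h)+\Q_{\re}(h,G_{\re}-\M)\\
&\quad + \bigl[\Q_{\re}(\M,h)-\Q_{1}(\M,h)\bigr]+\bigl[\Q_{\re}(h,\M)-\Q_{1}(h,\M)\bigr].
\end{align*}
The first two terms are controlled by combining the standard bilinear continuity estimates for $\Q_{\re}$ on $\W^{k,1}_{v}(\m_{q})$ (as recalled from \cite{ACG,AG,MiMo3}) with Lemma \ref{prop:psi}, which furnishes $\|\M-G_{\re}\|_{\W^{k,1}_{v}(\m_{q+1})}\leq C(1-\re)$. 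The last two terms are exactly of the form treated by Lemma \ref{lem:els} with one argument fixed to be $\M$; since $\M$ has arbitrary polynomial moments, all Schwartz seminorms of $\M$ contribute only multiplicative constants, and Lemma \ref{lem:els} delivers a bound proportional to $(1-\re)\,\|h\|_{\W^{k+1,1}_{v}(\m_{q+2})}$ up to a factor $\re^{-a}$ that stays bounded for $\re$ close to $1$.

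Adding these four contributions yields \eqref{eq:llXk0} with an explicit constant $\bm{C}_{k,q}$ on some interval $[\alpha_\star,1]$ chosen to be compatible with the thresholds $\re_{1},\re_{2},\re_{3}$ of the lemmas invoked. The consequence \eqref{eq:llXk} is then immediate: since $\LL h-\mathscr{L}_{1}h=\P_{\re}h-\kappa_{\re}\nabla_{v}\cdot(vh)$ and $\|\nabla_{v}\cdot(vh)\|_{\W^{k,1}_{v}(\m_{q})}\lesssim \|h\|_{\W^{k+1,1}_{v}(\m_{q+1})}$, adding this drift contribution to \eqref{eq:llXk0} gives the stated bound, the weight $\m_{q+2}$ being sufficient to absorb both pieces.

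The $L^{2}$ statement follows by repeating the same four-term decomposition on $\W^{k,2}_{v}(\m_{q})$. For the first two pieces, the bilinear continuity of $\Q_{\re}$ in $L^{2}_{v}$-based weighted Sobolev spaces requires, as is classical for hard spheres, the auxiliary insertion $\langle v\rangle^{-\kappa}\langle v\rangle^{\kappa}$ with $\kappa>d/2$ so that $\langle v\rangle^{-\kappa}\in L^{2}_{v}$; this is precisely the source of the weight shift $q\mapsto q+\kappa+2$ appearing in the statement, and Lemma \ref{prop:psi} again controls $\|\M-G_{\re}\|_{\W^{k,2}_{v}(\m_{q+\kappa+1})}$ by $(1-\re)$. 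For the last two pieces, the $L^{2}$ part of Lemma \ref{lem:els} applied with one entry equal to $\M$ produces the same $(1-\re)$ rate and the appropriate weight. Combining and using the drift estimate $\|\nabla_{v}\cdot(vh)\|_{\W^{k,2}_{v}(\m_{q})}\lesssim\|h\|_{\W^{k+1,2}_{v}(\m_{q+1})}$ yields the analogue of \eqref{eq:llXk} in the $L^{2}$ setting.

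The main delicate point is the bookkeeping of the weight shifts: one must verify that the $(q+2)$ (respectively $(q+\kappa+2)$) moments suffice for \emph{all} four pieces simultaneously, in particular that the bilinear estimate on $\Q_{\re}(G_{\re}-\M,h)$ (and its symmetric) does not require more moments on $h$ than what Lemma \ref{lem:els} demands on the second bracket. This is where the choice of weights in Lemma \ref{prop:psi} is tuned. No new analytic difficulty arises; the estimate is essentially a careful assembly of already-established continuity and convergence lemmas, and the non-optimal losses stem only from the bilinear continuity and not from the $(1-\re)$ factor itself.
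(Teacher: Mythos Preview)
Your proposal is correct and follows essentially the same route as the paper: the same four-term decomposition of $\P_{\re}h$, the first two pieces handled via bilinear continuity of $\Q_{\re}$ combined with the $(1-\re)$-rate for $\|G_{\re}-\M\|$ (the paper cites \eqref{appe1} for $k=0$ and uses \eqref{eq:deriv} for higher derivatives, where you invoke Lemma~\ref{prop:psi} directly, which is equivalent), and the last two pieces via Lemma~\ref{lem:els}; the $L^{2}$ case is handled identically, with the $\kappa>d/2$ insertion arising from the Cauchy--Schwarz treatment of the loss term.
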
\phantomsection 
\begin{proof}  {Recall (see Theorem \ref{theo:Ricardo}) that, for any $q \geq 0$, there exists some universal positive constant~$C_{q} >0$ such that for any $\re \in (0,1]$,
\begin{equation}\label{eq:qpm}
\|\Q_{\re}^\pm(g,f)\|_{L^{1}_{v}(\m_{q})} \leq C_{q}\|g\|_{L^{1}_{v}(\m_{q+1})}\,\|f\|_{L^{1}_{v}(\m_{q+1})}, \qquad \forall f,g \in L^{1}_{v}(\m_{q+1}),\end{equation}
where we have denoted by $\Q_\alpha^+$ (resp. $\Q_\alpha^-$) the gain (resp. loss) part of the operator $\Q_\alpha$. }
Then, we have
\begin{equation}\label{eq:LLL0}
\begin{split}
\mathbf{L}_{\re} h(v) - &\mathbf{L}_{1}h(v)=\Q_{\re}(h,G_{\re}-\M)(v) + \Q_{\re}(G_{\re}-\M,h)(v) \\
&+\big[\Q_{\re}(h,\M)(v)-\Q_{1}(h,\M)(v)\big] + \big[\Q_{\re}(\M,h)(v)-\Q_{1}(\M,h)(v)\big]\,.
\end{split}
\end{equation}
One thus deduce from \eqref{eq:qpm} and Lemma \ref{lem:els} that
\begin{align*}
\|\P_{\re}h\|_{L^{1}_{v}(\m_{q})} \leq 2\,C_{q}\|h\|_{L^{1}_{v}(\m_{q+1})}\,&\|G_{\re}-\M\|_{L^{1}_{v}(\m_{q+1})} \\
&+ 2\,\bm{c}_{q}\,\frac{1-\re}{ {\re^{a}}}\,\|h\|_{\W^{1,1}_{v}(\m_{q+2})}\,\|\M\|_{\W^{1,1}_{v}(\m_{q+2})}
\end{align*}
where we recall that $a = \max\{d-1,2\}$.
Using now~\eqref{appe1}, one can conclude the proof of~\eqref{eq:llXk0} for $k=0$. 
In order to prove the result for higher-order derivatives,  one argues using the fact that 
\begin{equation}\label{eq:deriv}
\nabla_{v} \Q_{\re}(g,f)=\Q_{\re}(\nabla_v g,f) + \Q_{\re}(g,\nabla_v f).\end{equation}
Then, using \eqref{eq:LLL0}  with the help of the estimate
$$\|T_{\re}h\|_{\W^{k,1}_{v}(\m_{q})} \leq \kappa_{\re}\|h\|_{\W^{k+1,1}_{v}(\m_{q+1})}$$
one deduces \eqref{eq:llXk} from \eqref{eq:llXk0}.  

We prove the result in $L^{2}_{v}$-based space in a similar way. From Theorem~\ref{theo:Ricardo}, there exists $C_{q} >0$ such that 
$$\|\Q_{\re}^{+}(g,f)\|_{L^{2}_{v}(\m_{q})} +\|\Q_{\re}^{+}(f,g)\|_{L^{2}_{v}(\m_{q})}  \leq C_{q}\|f\|_{L^{1}_{v}(\m_{q+1})}\,\|g\|_{L^{2}_{v}(\m_{q+1})}.$$
Indeed, since we have supposed that the condition~\eqref{eq:conditionb} is satisfied for some $r>2$, it in particular holds true for $r=2$, which implies that we can apply Theorem~\ref{theo:Ricardo} for $r=2$. 
On the other hand, it is immediate to check that for $\kappa>\frac{d}{2}$, there exists $C_q(\kappa)>0$ such that
$$
\|\Q_\re^-(g,f)\|_{L^2_v(\m_q)} \leq C_q(\kappa) \|f\|_{L^2_v(\m_{q+1})} \|g\|_{L^2_v(\m_{\kappa+1})}
$$
where we used Cauchy Schwarz inequality. One deduces that
\begin{align*}
&\|\Q_{\re}(h,G_{\re}-\M)\|_{L^{2}_{v}(\m_{q})} + \|\Q_{\re}(G_{\re}-\M,h)\|_{L^{2}_{v}(\m_{q})} \\
&\quad \leq C_q(\kappa) \|h\|_{L^{2}_{v}(\m_{q+1})} \left(\|G_{\re}-\M\|_{L^{1}_{v}(\m_{q+1})}+\|G_{\re}-\M\|_{L^{2}_{v}(\m_{\kappa+1})}\right) \\
&\qquad+ C_q(\kappa) \|h\|_{L^{2}_{v}(\m_{\kappa+1})} \|G_{\re}-\M\|_{L^{2}_{v}(\m_{q+1})} .
\end{align*}
Then from Lemma~\ref{lem:els}, for any $\kappa >\frac{d}{2}$, we have:
\begin{align*}
\|\Q_{\re}(h,\M)-\Q_{1}(h,\M)\|_{L^2_v(\m_q)}&+ \|\Q_{\re}(\M,h)-\Q_{1}(\M,h)\|_{L^2_v(\m_q)} \\
&\quad \leq \bm{c}_{q}(\kappa)\,\frac{1-\re}{\re^a}\|\M\|_{\W^{1,2}_{v}(\m_{q+\kappa+2})}\,\|h\|_{\W^{1,2}_{v}(\m_{q+\kappa+2})}
\end{align*}
where $a={\max(d-1,2)}$. One can then conclude that \eqref{eq:llXk0-L2} holds true for $k=0$ thanks to Lemma~\ref{prop:psi} and the proof for $k >0$ follows from \eqref{eq:deriv}.  Finally, since 
$$\|T_{\re}h\|_{\W^{k,2}_{v}(\m_{q})} \leq \kappa_{\re}\|h\|_{\W^{k+1,2}_{v}(\m_{q+1})}$$
one also deduces the estimate for $\|\LL h-\mathscr{L}_{1}h\|_{\W^{k,2}_{v}(\m_{q})}.$\end{proof}
We will need also to derive an estimate for $\P_{\re}$ in its graph norm, at the price of loosing the sharp convergence rate $(1-\re)$. The estimate is easy to deduce in $L^{1}_{v}$-space but the extension to the~$L^{2}_{v}$ case requires the use of Corollary~\ref{cor:Ricardo}.
\begin{lem}\label{lem:Palphap}
For any $k \in \N$ and $q \geq 0$, there exists $\bm{C}_{k,q} >0$ such that 
\begin{equation} \label{eq:Palpha}
\|\P_{\re}h\|_{\W^{k,1}_{v}(\m_{q})} \leq \bm{C}_{k,q}(1-\re)^{p}\,\|h\|_{\W^{k,1}_{v}(\m_{q+1})}\, , \quad \alpha \in [\alpha_\star,1]\,,
\end{equation}
where $p$ is defined in Lemma~\ref{lem:L1Lalpha} (is independent of both $k$ and $q$).
Moreover, 
there exists $\bar{p} \in (0,1)$ (independent of $k,q$) such that
\begin{equation} \label{eq:PalphaL2}
\|\P_{\re}h\|_{\W^{k,2}_{v}(\m_{q})} \leq \bm{C}_{k,q}\,(1-\re)^{\bar{p}}\,\|h\|_{\W^{k,2}_{v}(\m_{q+1})}\, , \quad \alpha \in [\alpha_\star,1]\,.
\end{equation}
\end{lem}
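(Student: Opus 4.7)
The plan is to revisit the decomposition already employed in the proof of Proposition~\ref{prop:converLLL0},
$$
\P_{\re}h = \underbrace{\Q_{\re}(h, G_{\re}-\M) + \Q_{\re}(G_{\re}-\M, h)}_{\mathrm{(I)}} + \underbrace{[\Q_{\re}(h,\M)-\Q_{1}(h,\M)] + [\Q_{\re}(\M,h)-\Q_{1}(\M,h)]}_{\mathrm{(II)}},
$$
and to bound each bracket separately, allowing ourselves a suboptimal rate in $(1-\re)$ in order to avoid any loss of regularity or of velocity weight on $h$. The key point is that for $\mathrm{(II)}$ the desired graph-norm estimate is directly supplied by Lemma~\ref{lem:L1Lalpha}, which gives the bound $\widetilde{\bm c}_{k,q}(1-\re)^{p}\|h\|_{\W^{k,j}_v(\m_{q+1})}$ in both the $L^1$ and $L^2$ settings ($j=1,2$) with the same exponent $p\in(0,1)$. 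Thus the whole difficulty of the lemma is concentrated in bracket $\mathrm{(I)}$.

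For bracket $\mathrm{(I)}$ in the $L^{1}$ setting, I would invoke the standard bilinear bound \eqref{eq:qpm} and its higher-derivative counterpart obtained via~\eqref{eq:deriv}, yielding
$$
\|\Q_{\re}(h,G_{\re}-\M)\|_{\W^{k,1}_v(\m_q)} + \|\Q_{\re}(G_{\re}-\M,h)\|_{\W^{k,1}_v(\m_q)} \lesssim \|h\|_{\W^{k,1}_v(\m_{q+1})}\,\|G_{\re}-\M\|_{\W^{k,1}_v(\m_{q+1})}.
$$
Lemma~\ref{prop:psi} then provides $\|G_{\re}-\M\|_{\W^{k,1}_v(\m_{q+1})}\leq C(1-\re)$, so this bracket contributes a rate of $(1-\re)$, which is at least as good as $(1-\re)^{p}$. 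Combining with the bound for $\mathrm{(II)}$ gives \eqref{eq:Palpha} with the exponent $p$ inherited from Lemma~\ref{lem:L1Lalpha}.

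In the $L^{2}$ case I would split $\Q_{\re}=\Q_{\re}^{+}-\Q_{\re}^{-}$. For the gain part I would use the $L^{2}_v\times L^{2}_v\to L^{2}_v$ bilinear estimate available through Corollary~\ref{cor:Ricardo} (the assumption \eqref{eq:conditionb} on the angular kernel is precisely what makes it applicable), which controls $\|\Q_{\re}^{+}(h,G_{\re}-\M)\|_{L^{2}_v(\m_q)}$ and its symmetric counterpart by a product of two $L^{2}_v$ weighted norms of $h$ and $G_{\re}-\M$, without any loss of derivative. For the loss part the pointwise structure $\Q_{\re}^{-}(f,g)(v)\propto f(v)\int g(v_{*})|v-v_{*}|\,\d v_{*}$ (and its symmetric counterpart) is estimated by Cauchy–Schwarz, producing similar $L^{2}_v$ norms. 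Applying Lemma~\ref{prop:psi} in its $L^{2}_v$ form then gives a rate of $(1-\re)$ for $\mathrm{(I)}$. Higher-order derivatives are transferred via \eqref{eq:deriv}, using that each derivative falling on $G_{\re}-\M$ is still controlled by Lemma~\ref{prop:psi}. Adding bracket $\mathrm{(II)}$, estimated by the $L^{2}$ part of Lemma~\ref{lem:L1Lalpha}, we end up with \eqref{eq:PalphaL2} for some $\bar p\in(0,1)$ (one may in fact take $\bar p=p$).

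The main obstacle, and the reason for the weaker exponent compared to \eqref{eq:llXk0} of Proposition~\ref{prop:converLLL0}, is precisely bracket $\mathrm{(II)}$: in order to estimate $\Q_{\re}(\M,h)-\Q_{1}(\M,h)$ in the graph norm (i.e.~without the extra derivative $\W^{k+1,j}_v$ and without the $\m_{q+\kappa+2}$ weight in the $L^{2}$ case that appear in Lemma~\ref{lem:els}), one cannot use the sharp difference estimate of Lemma~\ref{lem:els} and must rely instead on the softer Lemma~\ref{lem:L1Lalpha}, whose proof exploits the Gaussian decay of $\M$ through an interpolation argument that degrades the rate from $(1-\re)$ to $(1-\re)^{p}$. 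This trade-off — optimal weights and regularity versus optimal rate — is the core reason why the lemma is stated with an exponent $p<1$.
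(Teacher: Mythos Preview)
Your $L^{1}$ argument coincides with the paper's: the same decomposition $\mathrm{(I)}+\mathrm{(II)}$, the bilinear bound \eqref{eq:qpm} together with Lemma~\ref{prop:psi} for $\mathrm{(I)}$, and Lemma~\ref{lem:L1Lalpha} for $\mathrm{(II)}$.

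For the $L^{2}$ estimate, however, the paper takes a genuinely different route. Instead of estimating $\mathrm{(I)}$ and $\mathrm{(II)}$ directly in $L^{2}_{v}$, it uses the $L^{1}$ bound \eqref{eq:Palpha} just proved (rate $(1-\re)^{p}$) together with a uniform-in-$\re$ bound $\P_{\re}\in\mathscr{B}(L^{r}_{v}(\m_{q+1}),L^{r}_{v}(\m_{q}))$ for some $r>2$ obtained via Corollary~\ref{cor:Ricardo}, and then applies Riesz--Thorin interpolation. This yields $\bar p=p\theta$ with $\theta=\tfrac{r-2}{2(r-1)}$, strictly smaller than the $p$ your direct argument would give.

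The reason the paper prefers interpolation is hidden in the loss term of bracket $\mathrm{(I)}$. Writing the loss contribution of $\P_{\re}h$ as $-(G_{\re}-\M)\Sigma_{h}-h\,\Sigma_{G_{\re}-\M}$ with $\Sigma_{g}(v)=\int g(v_{*})|v-v_{*}|\,\d v_{*}$, the second piece is harmless, but the first requires controlling $\Sigma_{h}$, and $|\Sigma_{h}(v)|\lesssim\langle v\rangle\,\|h\|_{L^{1}_{v}(\m_{1})}$ only. This $L^{1}$ moment is \emph{not} dominated by $\|h\|_{L^{2}_{v}(\m_{q+1})}$ unless $q>d/2$; this is exactly why the direct computation in the proof of Proposition~\ref{prop:converLLL0} produces the extra $\m_{\kappa+1}$-weight on $h$. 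So your phrase ``estimated by Cauchy--Schwarz, producing similar $L^{2}_{v}$ norms'' hides a weight loss for small $q$, and the claim that one may take $\bar p=p$ for all $q\geq0$ is not justified by the argument as written. (Note also that Corollary~\ref{cor:Ricardo} and Theorem~\ref{theo:Ricardo} give $L^{r}\times L^{1}\to L^{r}$ bounds, not $L^{2}\times L^{2}\to L^{2}$.) The interpolation avoids touching $\Sigma_{h}$ in $L^{2}$ altogether, at the price of the weaker exponent.
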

\begin{proof} The proof of \eqref{eq:Palpha} is a direct consequence of \eqref{eq:qpm} and Lemma~\ref{lem:L1Lalpha} which give
\begin{align*}
\|\P_{\re}h\|_{L^{1}_{v}(\m_{q})} \leq 2\,C_{q}\|h\|_{L^{1}_{v}(\m_{q+1})}\,&\|G_{\re}-\M\|_{L^{1}_{v}(\m_{q+1})} + \,\widetilde{\bm{c}}_{q}\,{(1-\re)^p}\,\|h\|_{L^1_{v}(\m_{q+1})}.
\end{align*}
The proof of \eqref{eq:PalphaL2} is then deduced from \eqref{eq:Palpha} by Riesz-Thorin interpolation. Indeed, \eqref{eq:Palpha} asserts that 
$$\P_{\re} \in \mathscr{B}(L^{1}_{v}(\m_{q+1}),L^{1}_{v}(\m_{q})), \qquad \left\|\P_{\re}\right\|_{\mathscr{B}(L^{1}_{v}(\m_{q+1}),L^{1}_{v}(\m_{q}))} \leq C_{0,q}(1-\re)^{p}.$$
Now, using Corollary \ref{cor:Ricardo}, and recalling that $b$ is such that~\eqref{eq:conditionb} is satisfied for some $r>2$, one proves easily that, for some $r >2$,
$$\P_{\re} \in \mathscr{B}(L^{r}_{v}(\m_{q+1}),L^{r}_{v}(\m_{q}))\,\qquad \sup_{\re\in (0,1)}\left\|\P_{\re}\right\|_{\mathscr{B}(L^{r}_{v}(\m_{q+1}),L^{r}_{v}(\m_{q}))}=C_{r}(q)< \infty.$$
Then, from Riesz-Thorin interpolation Theorem {\cite[Theorem 1.3.4]{grafakos}}
$$\P_{\re} \in \mathscr{B}(L^{2}_{v}(\m_{q+1}),L^{2}_{v}(\m_{q})) \qquad \text{ with } \quad \left\|\P_{\re}\right\|_{\mathscr{B}(L^{2}_{v}(\m_{q+1}),L^{2}_{v}(\m_{q}))}\leq C\,(1-\re)^{p\theta}$$
where $\theta=\frac{r-2}{2(r-1)}$ is such that $\frac{1}{2}=\theta+\frac{1-\theta}{r}$. This proves \eqref{eq:PalphaL2} for $k=0$ setting $\bar{p}=p\theta$ and its extension to $k >0$ follows from \eqref{eq:deriv}.
\end{proof}
The above extends to functional spaces $\W^{k,1}_{v}L^{2}_{x}(\m_{q})$ and $\W^{k,2}_{v}L^{2}_{x}(\m_{q})$ in an easy way:
\begin{cor} \label{cor:Palpha}
Consider  $k \in \N$ and $q \geq 0$. There exists some explicit constant $\widetilde{\bm{C}}_{k,q} >0$ such that
\begin{equation} \label{eq:Palpha2}
\|\P_{\re}h\|_{\W^{k,1}_{v}L^2_x(\m_{q})} \leq \widetilde{\bm{C}}_{k,q}(1-\re)^{\frac{p}{2}}\,\|h\|_{\W^{k,1}_{v}L^2_x(\m_{q+1})}\, , \quad \alpha \in [\alpha_\star,1]\,,
\end{equation} and similarly, 
$$\|\P_{\re}h\|_{\W^{k,2}_{v}L^{2}_{x}(\m_{q})} \leq \widetilde{\bm{C}}_{k,q}(1-\re)^{\bar{p}}\,\|h\|_{\W^{k,2}_{v}L^{2}_{x}(\m_{q+1})}\,, \quad \re \in [\re_{\star},1],$$
where $p>0$ and $\bar{p}$ are defined respectively in Lemmas~\ref{lem:L1Lalpha} and~\ref{lem:Palphap}.
\end{cor}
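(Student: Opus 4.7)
The operator $\P_\re = \mathbf{L}_\re - \mathbf{L}_1$ is local in the position variable $x$: for each fixed $x\in\T^d$, the map $h(x,\cdot)\mapsto \P_\re h(x,\cdot)$ is precisely the velocity operator studied in Lemma~\ref{lem:Palphap}. My plan is to exploit this locality together with Fubini's theorem for the $L^2_vL^2_x$ bound, and with a mixed-norm Riesz-Thorin interpolation for the $L^1_vL^2_x$ bound. I will first prove the case $k=0$ and then reduce the general case $k\geq 1$ to it by the Leibniz identity~\eqref{eq:deriv}.

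The $L^2_vL^2_x$ estimate is immediate: since $L^2_vL^2_x(\m_q) = L^2_{x,v}(\m_q)$ isometrically, applying~\eqref{eq:PalphaL2} pointwise in $x$ and integrating in $x$ gives the claim with exponent $\bar p$ and no loss. The $L^1_vL^2_x$ estimate is more delicate: Minkowski's integral inequality yields $\|\cdot\|_{L^1_vL^2_x}\leq \|\cdot\|_{L^2_xL^1_v}$, so that a naive integration of~\eqref{eq:Palpha} pointwise in $x$ produces a bound in terms of $\|h\|_{L^2_xL^1_v(\m_{q+1})}$, which does \emph{not} dominate $\|h\|_{L^1_vL^2_x(\m_{q+1})}$. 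To bypass this difficulty I set up two endpoint estimates for the operator $\P_\re$ on mixed-norm spaces. First, integrating~\eqref{eq:Palpha} in $x$ over $\T^d$ yields $\|\P_\re h\|_{L^1_vL^1_x(\m_q)}\leq \bm{C}_{0,q}(1-\re)^p\|h\|_{L^1_vL^1_x(\m_{q+1})}$. Second, introducing the envelope $H(v):=\sup_{x}|h(x,v)|$ and exploiting the non-negativity of the gain and loss kernels of $\tilde\Q_\re^\pm$ and $\tilde\Q_1^\pm$, one controls $\sup_x|\mathbf{L}_\re h(x,\cdot)(v)|$ pointwise by $2[\tilde\Q_\re^+(G_\re,H)+\tilde\Q_\re^-(G_\re,H)](v)$ (and similarly for $\mathbf{L}_1$); integrating against $\m_q$, using~\eqref{eq:qpm} together with the uniform-in-$\re$ moment bounds on $G_\re$ from the Appendix, gives the $\re$-\emph{uniform} second endpoint $\|\P_\re h\|_{L^1_vL^\infty_x(\m_q)}\leq C\|h\|_{L^1_vL^\infty_x(\m_{q+1})}$. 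Applying the mixed-norm Riesz-Thorin theorem at $\theta=1/2$ — keeping the outer $L^1_v$ norm unchanged and interpolating $L^1_x$ with $L^\infty_x$ to reach $L^2_x$ — then produces exactly~\eqref{eq:Palpha2}, the exponent $p/2$ appearing as the geometric mean of $p$ and $0$.

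For $k\geq 1$, differentiating via~\eqref{eq:deriv} writes $\nabla_v^k\P_\re h$ as a finite sum of terms of the form $\tilde\Q_\re(\nabla_v^j G_\re,\nabla_v^{k-j}h)-\tilde\Q_1(\nabla_v^j\M,\nabla_v^{k-j}h)$. Each of these is decomposed, as in the proofs of Lemma~\ref{lem:Palphap} and Proposition~\ref{prop:converLLL0}, into a contribution involving $\nabla_v^j(G_\re-\M)$ (controlled by Lemma~\ref{prop:psi}) and one in which only the collision kernel changes from $\Q_\re$ to $\Q_1$ (controlled by Lemma~\ref{lem:L1Lalpha}); the Fubini/Riesz-Thorin scheme from the $k=0$ step then transfers each contribution to the desired mixed-norm space, at the price of raising the weight index and the derivative order by one on the right-hand side. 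The only potential obstacle is checking that the uniform endpoint bound in $L^1_vL^\infty_x$ survives these higher-order expansions, which reduces to the uniform-in-$\re$ moment and derivative bounds on $G_\re$ already available from the homogeneous theory recalled in the Appendix.
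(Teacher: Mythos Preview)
Your proposal is correct and follows essentially the same route as the paper: the $L^2_vL^2_x$ bound is obtained by Fubini from~\eqref{eq:PalphaL2}, while for the $L^1_vL^2_x$ bound you set up the two endpoints $L^1_vL^1_x$ (with rate $(1-\re)^p$ from Fubini and~\eqref{eq:Palpha}) and $L^1_vL^\infty_x$ (uniform in $\re$ from locality in $x$ and~\eqref{eq:qpm}), then interpolate. Your envelope argument is a clean way to make the $L^\infty_x$ endpoint explicit, and your remark on why a direct Minkowski argument fails is a useful diagnostic; the paper simply states both endpoints and invokes interpolation. For $k\geq1$ the paper works directly at the $\W^{k,1}_v$ level (both~\eqref{eq:Palpha} and the $L^\infty_x$ endpoint already hold there), which is slightly leaner than your Leibniz expansion, but the content is the same.
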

\begin{proof}
On the one hand, the $\W^{k,1}_{v}L^1_x(\m_{q})$-norm of $\P_\re h$ is estimated using Fubini theorem and~\eqref{eq:Palpha} in~Lemma~\ref{lem:Palphap}:
$$
\|\P_{\re}h\|_{\W^{k,1}_{v}L^1_x(\m_{q})}  \leq {\bm C}_{k,q} (1-\re)^p \, \|h\|_{\W^{k,1}_{v}L^1_x(\m_{q+1})}.
$$
On the other hand, using~\eqref{eq:qpm} and the fact that $\Q_\alpha^\pm$ are local in $x$, one can show that 
$$
\|\P_{\re}h\|_{\W^{k,1}_{v}L^{\infty}_x(\m_{q})} \leq \widetilde{\bm C}_{k,q} \|h\|_{\W^{k,1}_{v}L^{\infty}_x(\m_{q+1})}.
$$
We obtain \eqref{eq:Palpha2} by interpolation. The proof for $L^{2}_{v}L^{2}_{x}$-based spaces is simpler since it is deduced directly from Fubini Theorem.
\end{proof}
 \subsection{Decomposition of $\LL$}\label{sec:hypo} Let us now recall the following decomposition of $\mathscr{L}_{0}$ introduced in {\cite{GMM,Tr}}. For any $\delta\in (0,1)$, we consider the cutoff function $0\leq\Theta_\delta = \Theta_\delta(\xi,\xi_*, \sigma) \in\Cs^{\infty}(\R^d\times \R^d\times \S^{d-1})$, assumed to be bounded by $1$, which equals $1$ on 
$$J_{\delta}:=\left\{(\xi,\xi_{*},\sigma)\in \R^d\times \R^d\times \S^{d-1}\,\Big|\,|\xi|\leq \delta^{-1}\,,\, 2\delta \leq |\xi-\xi_*|\leq \delta^{-1}\,,\,
 |\!\cos\theta| \leq 1-2\delta \right\},$$
and whose support is included in $J_{\delta/2}$ (where $\cos \theta=\langle \frac{\xi-\xi_{*}}{|\xi-\xi_{*}|},\sigma\rangle$). We then set 
\begin{equation*}
\begin{split}
\mathscr{L}_{1}^{S,\delta}h(\xi)& =\int_{\R^d\times \S^{d-1}} 
\big[\M(\xi'_*)h(\xi') +\M(\xi')h(\xi'_*)-\M(\xi)h(\xi_*)\big]\\
&\hspace{6cm}\times|\xi-\xi_*|\,\Theta_\delta (\xi,\xi_{*},\sigma) \d\xi_*\d\sigma\,, \\ 
\mathscr{L}_{1}^{{R,\delta}}h(\xi)&=  \int_{\R^d\times \S^{d-1}}
\big[\M(\xi'_*)h(\xi') +\M(\xi')h(\xi'_*)-\M(\xi)h(\xi_*)\big] \\
&\hspace{6cm}\times|\xi-\xi_*|\,(1-\Theta_\delta(\xi,\xi,\sigma) ) \d\xi_*\d\sigma\,,
\end{split}\end{equation*}
so that $\mathscr{L}_{1}h= \mathscr{L}_{1}^{{S,\delta}}h+ \mathscr{L}_{1}^{{R,\delta}}h-h\Sigma_\M$
where $\Sigma_{\M}$ denotes the mapping 
\begin{equation}\label{eq:SigmaM}
\Sigma_{\M}(\xi)=\int_{\R^{d}}\M(\xi_{*})|\xi-\xi_{*}|\d\xi_{*}, \qquad \xi \in \R^{d}.\end{equation}
 {Recall that there exist $\sigma_0>0$ and $\sigma_1>0$ such that 
\begin{equation} \label{eq:collfreq}
\sigma_0 \, \m_1(\xi) \leq \Sigma_{\M}(\xi) \leq \sigma_1 \, \m_1{(\xi)}, \quad \xi \in \R^d. 
\end{equation}}
Introduce
\begin{equation*} {\mathcal A}^{(\delta)} (h):=  \mathscr{L}_{1}^{S,\delta}(h)\qquad \text{ and } \qquad
{\mathcal B}_{1}^{(\delta)} (h) := \mathscr{L}_{1}^{{R,\delta}}-\Sigma_\M
\end{equation*}
so that $\mathscr{L}_{1}=\mathcal{A}^{(\delta)}+ \mathcal{B}_{1}^{(\delta)}$.  Let us now recall the known hypo-dissipitavity results for the elastic Boltzmann operator in $L^{1}_{v}L^{2}_{x}$ and $L^2_{v,x}$-based Sobolev spaces, see {\cite[Lemmas~4.12,~4.14 \& Lemma~4.16]{GMM}}:
\begin{lem}\phantomsection\label{prop:hypo1}For any $k \in \N$ and $\delta >0,$ there are two positive constants $C_{k,\delta} >0$ and $R_{\delta} >0$ such that
$\mathrm{supp}\left(\mathcal{A}^{(\delta)}f\right)\subset B(0,R_{\delta})$
and
\begin{equation}\label{eq:Adelta}
 {\|\mathcal{A}^{(\delta)}f\|_{\W^{k,2}_{v}(\R^{d})}} \leq C_{k,\delta}\|f\|_{L^{1}_{v}(\m_{1})}, \qquad \forall  f \in L^{1}_v(\m_{1}).\end{equation}
Moreover, the following holds
\begin{enumerate}
\item For any $q > 2$ and any $\delta \in (0,1)$ it holds
{\begin{multline}\label{eq:B1delta}
\int_{\R^d} \|h(\cdot,v)\|_{L^{2}_{x}}^{-1} \left(\int_{\T^d} \left(\B_{1}^{(\delta)}h(x,v)\right) h(x,v) \, \d x\right) \m_q\, \d v \\
\leq \left(\Lambda_{q}^{(1)}(\delta)-1\right)\|h\|_{L^{1}_{v}L^{2}_{x}(\m_{q}\Sigma_\M)}
\end{multline}}
where $\Lambda_{q}^{(1)}\::\:(0,1) \to \R^{+}$ is some explicit function such that $\lim_{\delta\to0}\Lambda_{q}^{(1)}(\delta)=\frac{4}{q+2}.$
\item {For any $q > q^\star$ and any $\delta \in (0,1)$,
\begin{equation}\label{eq:B1deltaL2}
\int_{\R^d\times\T^d} \left(\B_{1}^{(\delta)}h(x,v)\right) h(x,v) \, \d x \,\m_{q}^2(v)\, \d v \\
\leq \left(\sqrt{\sigma_1}\Lambda_{q}^{(2)}(\delta)-\sqrt{\sigma_0}\right)\|h\|_{L^{2}_{v,x}(\m_{q}\sqrt{\Sigma_\M})}
\end{equation}
where $\Lambda_{q}^{(2)}\::\:(0,1) \to \R^{+}$ is explicit and such that $\lim_{\delta\to0}\Lambda_{q}^{(2)}(\delta)={\frac{8}{2q-3}}$.}
\end{enumerate}
\end{lem}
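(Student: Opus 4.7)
The plan is to establish the two properties of $\mathcal{A}^{(\delta)}$ first and then derive the hypo-dissipativity bounds \eqref{eq:B1delta} and \eqref{eq:B1deltaL2} for $\mathcal{B}_{1}^{(\delta)}=\mathscr{L}_{1}^{R,\delta}-\Sigma_{\M}$ by lifting the spatially-homogeneous arguments of \cite{GMM} through Fubini and Cauchy--Schwarz. First, I would observe that since $\Theta_\delta$ is supported in $J_{\delta/2}$, every $(\xi,\xi_{*},\sigma)$ contributing to the integrand defining $\mathcal{A}^{(\delta)}f(\xi)$ satisfies $|\xi| \leq 2/\delta$, which yields the support property with $R_\delta \sim 1/\delta$. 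For the regularity estimate \eqref{eq:Adelta}, I would perform the Carleman change of variables $(\xi_{*},\sigma)\mapsto(\xi',\xi'_{*})$: since $\Theta_\delta$ is smooth and compactly supported away from the collisional singularities, $\mathcal{A}^{(\delta)}f$ rewrites as a convolution-type operator against a smooth kernel $k_\delta(\xi,\cdot)$ whose $\xi$-derivatives of arbitrary order are bounded uniformly. Combining this with the linear factor $|\xi-\xi_{*}|$ (which absorbs the weight $\m_1$) and standard convolution inequalities gives \eqref{eq:Adelta}.

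For \eqref{eq:B1delta}, I would exploit the fact that $\Sigma_{\M}$ is local in $x$, so that
\[
\int_{\T^{d}}\Sigma_{\M}(v)\,h(x,v)^{2}\,\d x = \Sigma_{\M}(v)\,\|h(\cdot,v)\|_{L^{2}_{x}}^{2},
\]
producing, after division by $\|h(\cdot,v)\|_{L^{2}_{x}}$ and integration against $\m_{q}\,\d v$, the coercive contribution $-\|h\|_{L^{1}_{v}L^{2}_{x}(\m_{q}\Sigma_{\M})}$. For the remaining term $\int_{\T^{d}}\mathscr{L}_{1}^{R,\delta}h(x,v)\,h(x,v)\,\d x$, I apply Cauchy--Schwarz in $x$ to bound it by $\|\mathscr{L}_{1}^{R,\delta}h(\cdot,v)\|_{L^{2}_{x}}\,\|h(\cdot,v)\|_{L^{2}_{x}}$, and then use Minkowski's integral inequality inside the definition of $\mathscr{L}_{1}^{R,\delta}$ to exchange the $L^{2}_{x}$-norm with the velocity integration. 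This reduces the task to the purely velocity estimate of $\mathscr{L}_{1}^{R,\delta}$ on $L^{1}_{v}(\m_{q}\Sigma_{\M})$ treated in \cite[Lemma 4.14]{GMM}, whence the constant $\Lambda_{q}^{(1)}(\delta)$, obtained by careful moment computations involving $\M$, $|v-v_{*}|$ and the angular kernel, with limit $\tfrac{4}{q+2}$ as $\delta\to0$ via the classical moment inequality for the elastic collision operator against polynomial weights.

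For \eqref{eq:B1deltaL2}, I would work directly with the full $L^{2}_{v,x}$-pairing. The coercive part gives $-\int_{\R^{d}\times\T^{d}}\Sigma_{\M}(v)\,h(x,v)^{2}\,\m_{q}^{2}(v)\,\d v\,\d x$, from which the lower bound \eqref{eq:collfreq} allows to factor out $\sqrt{\sigma_{0}}$ and obtain $-\sqrt{\sigma_{0}}\|h\|_{L^{2}_{v,x}(\m_{q}\sqrt{\Sigma_{\M}})}^{2}$. The gain terms in $\mathscr{L}_{1}^{R,\delta}$ are handled through the pre-/post-collisional change of variables, distributing a weight $\sqrt{\Sigma_{\M}}$ on each copy of $h$ via Cauchy--Schwarz so that the estimate is closed in $L^{2}_{v,x}(\m_{q}\sqrt{\Sigma_{\M}})$. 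This again reduces to the spatially homogeneous $L^{2}_{v}$ estimate of \cite[Lemma 4.16]{GMM}, yielding the constant $\sqrt{\sigma_{1}}\Lambda_{q}^{(2)}(\delta)$ with limit $\tfrac{8}{2q-3}$ thanks to the upper bound $\Sigma_{\M}\leq\sigma_{1}\m_{1}$.

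The main obstacle is maintaining the \emph{sharp} constants: in the $L^{1}_{v}L^{2}_{x}$ setting, Minkowski must be applied without any multiplicative loss beyond what the velocity estimate already contributes, which forces a precise bookkeeping of the symmetric gain operator and of the $x$-pairing; in $L^{2}_{v,x}$, the symmetric splitting of the gain term balanced by $\sqrt{\Sigma_{\M}}$ on both factors is what ultimately dictates the $\sqrt{\sigma_{1}}$ factor. The thresholds $q>2$ and $q>q^{\star}$ arise exactly as the conditions under which the loss term $\Sigma_{\M}$ dominates the gain contribution in the $\delta\to 0$ limit, ensuring negativity of the effective coefficient $\Lambda_{q}^{(1)}(\delta)-1$ and $\sqrt{\sigma_{1}}\Lambda_{q}^{(2)}(\delta)-\sqrt{\sigma_{0}}$ for $\delta$ small enough.
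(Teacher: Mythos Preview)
Your proposal is correct and matches the paper's approach: the paper simply cites \cite[Lemmas~4.12,~4.14,~4.16]{GMM} for this result, and your sketch is precisely how those spatially-homogeneous estimates lift to $L^{1}_{v}L^{2}_{x}$ and $L^{2}_{v,x}$ via Cauchy--Schwarz in $x$ combined with Minkowski (for the $L^{1}_{v}L^{2}_{x}$ case) or Fubini (for the $L^{2}_{v,x}$ case), using locality of $\mathscr{L}_{1}^{R,\delta}$ and $\Sigma_{\M}$ in $x$. Your identification of the Carleman representation for the regularity of $\mathcal{A}^{(\delta)}$ and of the role of the bounds \eqref{eq:collfreq} in producing the $\sqrt{\sigma_{0}},\sqrt{\sigma_{1}}$ factors is exactly right.
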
\phantomsection
\begin{nb}
Notice that this lemma directly comes from~{\cite{GMM}} but the constants involved in the final estimates are not the same as in~Lemma 4.14 of {\cite{GMM}} where it seems that some multiplicative constants coming from~\eqref{eq:collfreq} have been omitted in some computations of their proof. 
\end{nb}
This leads to the following decomposition of $\LL$:
\begin{equation} \label{eq:splitLalpha}
\mathscr{L}_{\re}=\mathcal{B}_{\re}^{(\delta)} + \mathcal{A}^{(\delta)}\,,\quad \text{where} \quad \mathcal{B}_{\re}^{(\delta)}=\mathcal{B}_{1}^{(\delta)}+\left[\LL-\mathscr{L}_{1}\right]\,.
\end{equation}

\subsection{The complete linearized operator}  The complete linearized operator is given by
$$\mathcal{G}_{\re,\e}h=\e^{-2} \LL(h) - \e^{-1} v \cdot \nabla_{x}h, {\qquad \forall \re \in (0,1]}.$$
With previous decomposition, we have that
$$\mathcal{G}_{\re,\e}=\mathcal{A}_{\e}^{(\delta)}+ \mathcal{B}_{\re,\e}^{(\delta)}$$
where
$$\mathcal{A}_{\e}^{(\delta)}=\e^{-2} \A^{(\delta)}, \qquad \mathcal{B}_{\re,\e}^{(\delta)}=\e^{-2} \mathcal{B}_{\re}^{(\delta)} -\e^{-1} v\cdot \nabla_{x}\,.$$
Notice that
$$\mathcal{B}_{\re,\e}^{(\delta)}-\mathcal{B}_{1,\e}^{(\delta)}=\G_{\re,\e}-\G_{1,\e}=\e^{-2} \mathcal{P}_{\re} + \e^{-2} T_{\re}.$$
We set
\begin{equation}\label{eq:qstar}
q_1:=2, \quad q_2=q^\star:=4\sqrt{\frac{\sigma_{1}}{\sigma_{0}}} + \frac{3}{2},\end{equation}
where we recall that $\sigma_{0}$ and $\sigma_1$ are defined in~\eqref{eq:collfreq}. One has the following properties of $\mathcal{B}_{\re,\e}^{(\delta)}$ in~$L^1_vL^2_x$ and $L^{2}_{v,x}$-based spaces. 
\begin{prop}\phantomsection\label{prop:hypo} 
For $j=1,2$, for any $\ell\geq s \geq 0$ and $q >q_j$ there exist ${\re}^{\dagger}_{j,\ell,s,q} >0$, ${\delta}_{j,\ell,s,q}^{\dagger} >0$ and ${\nu}_{j,\ell,s,q} >0$ such that {for any $\e \in (0,1]$}, 
$$\mathcal{B}_{\re,\e}^{(\delta)} + \e^{-2}{\nu}_{j,\ell,s,q} \;\, \text{ is hypo--dissipative in $\W^{s,j}_v \W^{\ell,2}_{x}(\m_{q})$} 
$$
for any $\re \in ({\re}^{\dagger}_{j,\ell,s,q},1),$ and $\delta \in (0,{\delta}_{j,\ell,s,q}^{\dagger}).$\end{prop}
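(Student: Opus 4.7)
The plan is to exploit the decomposition
$$\mathcal{B}^{(\delta)}_{\re,\e}=\e^{-2}\mathcal{B}^{(\delta)}_{1}+\e^{-2}\mathcal{P}_{\re}+\e^{-2}T_{\re}-\e^{-1}v\cdot\nabla_{x}$$
inherited from \eqref{eq:splitLalpha}, treating $\e^{-2}\mathcal{B}^{(\delta)}_{1}$ as the source of dissipation and the remaining operators as perturbations which are either smaller than the dissipation or which do not contribute at all. The base case $s=\ell=0$ will be handled directly from Lemma \ref{prop:hypo1} combined with Corollary \ref{cor:Palpha}; higher regularity indices will then be handled by computing the evolution of each derivative $\nabla_{x}^{s'}\nabla_{v}^{\sigma'}h$ and using an equivalent norm on $\W^{s,j}_{v}\W^{\ell,2}_{x}(\m_{q})$ that absorbs commutator terms.

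\textbf{Base case.} For $s=\ell=0$, I would compute the time derivative of $\|h(t)\|_{L^{j}_{v}L^{2}_{x}(\m_{q})}^{j}$ along the flow generated by $\mathcal{B}^{(\delta)}_{\re,\e}$. The transport term $-\e^{-1}v\cdot\nabla_{x}$ vanishes since $\int_{\T^{d}}h\,(v\cdot\nabla_{x}h)\,\d x=\frac{1}{2}\int_{\T^{d}}v\cdot\nabla_{x}(h^{2})\,\d x=0$. When $j=1$, estimate \eqref{eq:B1delta} provides the bound $(\Lambda^{(1)}_{q}(\delta)-1)\|h\|_{L^{1}_{v}L^{2}_{x}(\m_{q}\Sigma_{\M})}$, and since $q>q_{1}=2$ the limit $\Lambda^{(1)}_{q}(\delta)\to\frac{4}{q+2}<1$ as $\delta\to 0$ allows to fix $\delta^{\dagger}_{1,0,0,q}$ so that the coefficient is strictly negative; combined with \eqref{eq:collfreq} this dominates a positive multiple of $\|h\|_{L^{1}_{v}L^{2}_{x}(\m_{q+1})}$. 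The perturbation $\e^{-2}\mathcal{P}_{\re}$ is controlled by Corollary \ref{cor:Palpha} and the drift satisfies $\e^{-2}\|T_{\re}h\|_{L^{1}_{v}L^{2}_{x}(\m_{q})}\leq\e^{-2}\kappa_{\re}(d+2q)\|h\|_{L^{1}_{v}L^{2}_{x}(\m_{q+1})}$, both with prefactors $\e^{-2}(1-\re)^{p/2}$ and $\e^{-2}(1-\re)$ which become small when $\re$ is close enough to $1$. Choosing $\re^{\dagger}_{1,0,0,q}$ accordingly produces the hypo-dissipativity constant $\e^{-2}\nu_{1,0,0,q}>0$. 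The case $j=2$ is identical but relies on \eqref{eq:B1deltaL2}, using instead the threshold $q>q^{\star}=q_{2}$ defined in \eqref{eq:qstar} to ensure $\sqrt{\sigma_{1}}\Lambda^{(2)}_{q}(\delta)<\sqrt{\sigma_{0}}$ for small $\delta$.

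\textbf{Higher derivatives.} For arbitrary $\ell\geq s$, I would apply $\nabla_{x}^{s'}\nabla_{v}^{\sigma'}$ to $\partial_{t}h=\mathcal{B}^{(\delta)}_{\re,\e}h$ for each admissible pair $(s',\sigma')$ with $s'+\sigma'\leq\ell$. Since $v\cdot\nabla_{x}$ commutes with $\nabla_{x}$ but satisfies $[\partial_{v_{i}},v\cdot\nabla_{x}]=\partial_{x_{i}}$, applying $\nabla_{v}^{\sigma'}$ produces lower order terms of the form $\e^{-1}\nabla_{x}^{s'+1}\nabla_{v}^{\sigma'-1}h$; similarly $[\nabla_{v}^{\sigma'},T_{\re}]$ generates terms bounded in the correct norm by $\e^{-2}\kappa_{\re}\sigma'\|\nabla_{x}^{s'}\nabla_{v}^{\sigma'}h\|$, while commutators with $\mathcal{B}^{(\delta)}_{1}$ and $\mathcal{P}_{\re}$ are treated via the bilinear identity \eqref{eq:deriv} together with Corollary \ref{cor:Palpha} once derivatives are distributed onto the smooth factors $\M$, $\Theta_{\delta}$, or $G_{\re}$. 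To absorb the $\e^{-1}$ transport commutator, one should work with an equivalent norm
$$\vertiii{h}^{j}:=\sum_{\substack{0\leq s'\leq\ell,\,0\leq\sigma'\leq s\\ s'+\sigma'\leq\ell}}\eta^{\sigma'}\,\bigl\|\bigl\|\nabla_{x}^{s'}\nabla_{v}^{\sigma'}h(\cdot,v)\bigr\|_{L^{2}_{x}}\m_{q}(v)\bigr\|^{j}_{L^{j}_{v}}$$
with $\eta>0$ small (independent of $\e$), so that when estimating $\frac{d}{dt}\vertiii{h}^{j}$ the $\e^{-1}$ cross term, bounded via Cauchy--Schwarz by $\e^{-1}\eta^{\sigma'}\|\nabla_{x}^{s'+1}\nabla_{v}^{\sigma'-1}h\|\,\|\nabla_{x}^{s'}\nabla_{v}^{\sigma'}h\|$, is Young-controlled against the $\e^{-2}$ dissipation on both the $(s',\sigma')$ and $(s'+1,\sigma'-1)$ modes after tuning $\eta$.

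\textbf{Main obstacle.} The delicate point is the scaling mismatch between the $\e^{-2}$ dissipation and the $\e^{-1}$ commutator from the transport, which must be absorbed uniformly in $\e\in(0,1]$. This forces the weights $\eta^{\sigma'}$ to be chosen decreasing in $\sigma'$ in a precise way depending on $\ell$ so that each cross term is compensated by the dissipation of a companion mode with one more $x$-derivative and one fewer $v$-derivative, without spoiling the negativity of the overall coefficient. A secondary difficulty is that Lemma \ref{prop:hypo1} is stated only at the level of the function itself, so one must verify that $\mathcal{B}^{(\delta)}_{1}$ applied to derivatives of $h$ still satisfies the same type of estimates; this is where the smoothness of the cutoff $\Theta_{\delta}$ and the Gaussian decay of $\M$ play a crucial role, allowing to rewrite commutators of $\mathcal{B}^{(\delta)}_{1}$ with $\nabla_{v}$ as bounded operators on the same underlying spaces.
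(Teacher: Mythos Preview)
Your proposal is correct and follows essentially the same route as the paper: the same four-term decomposition of $\mathcal{B}^{(\delta)}_{\re,\e}$, the same base-case argument using Lemma~\ref{prop:hypo1} for the $\mathcal{B}^{(\delta)}_{1}$ contribution, Corollary~\ref{cor:Palpha} for $\mathcal{P}_{\re}$, and a direct integration-by-parts computation for $T_{\re}$, followed by the same $\eta^{\sigma'}$-weighted equivalent norm at higher regularity with Young's inequality absorbing the $\e^{-1}$ transport commutator into the $\e^{-2}$ dissipation. The paper additionally notes that since $\nabla_{x}$ commutes with $\mathcal{B}^{(\delta)}_{\re,\e}$ one may reduce to $\ell=s$, which slightly streamlines the induction, but otherwise your outline matches.
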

\begin{proof}
We present here the proof in the space $\W^{s,2}_{v}\W^{\ell,2}_{x}(\m_{q})$, the proof for $\W^{s,1}_{v}\W^{\ell,2}_{x}(\m_{q})$ follows the same lines and is given in Appendix \ref{appen:homog}. Notice that derivatives with respect to the $x$-variable commute with the operator $\mathcal{B}_{\re,\e}^{(\delta)}$ and this allows to prove the result, without loss of generality, in the special case $\ell=s$. We divide the proof in several steps:\\
\noindent$\bullet$ We first consider the case $\ell=0$. We write $\mathcal{B}_{\re,\e}^{(\delta)}(h)=\sum_{i=0}^{3}C_{i}(h)$ with 
$$C_{0}(h)=\e^{-2}\B_{1}^{(\delta)}h, \quad  C_{1}(h)=-\e^{-1} v \cdot \nabla_{x}h,$$ 
$$C_{2}(h)=\e^{-2} \P_{\re}h,\qquad  C_{3}(h)=\e^{-2}  {T}_{\re}h=-\e^{-2} \kappa_{\re}\nabla_{v}\cdot(v\,h(x,v))\,,$$ and correspondingly and with obvious notations,
\begin{equation*}
\int_{\R^{d}\times\T^{d}} \mathcal{B}_{\re,\e}^{(\delta)}(h)(x,v)\,  h(x,v) \m_{q}^{2}(v)\,\d x \d v  
 =:\sum_{i=0}^{3}I_{i}(h).
 \end{equation*}
{First, $I_1(h)=0$ since 
$$
\int_{\T^d} \left(v \cdot \nabla_{x} h(x,v)\right) h(x,v) \, \d x = \frac12 \int_{\T^d} v \cdot \nabla_x h^2(x,v) \, \d x = 0.  
$$
Then, from~\eqref{eq:B1deltaL2}, one has
$$
I_0(h) \leq \e^{-2} \left(\sqrt{\sigma_1}\Lambda_{q}^{(2)}(\delta)-\sqrt{\sigma_0}\right)\|h\|_{L^{2}_{v,x}(\m_{q}\sqrt{\Sigma_\M})}.
$$
Recalling that $\lim_{\delta\to0}\Lambda_{q}^{(2)}(\delta)=8/(2q-3)$, one can choose $\delta >0$ small enough so that, for $q > q^{\star}$, 
$$\sqrt{\sigma_1} \Lambda_{q}^{(2)}(\delta) < \sqrt{\sigma_{0}}.$$ Then, for $q>q^\star$ and $\delta$ small enough, we have
\begin{equation} \label{eq:I0hL2}
I_0(h) \leq \e^{-2} \sqrt{\sigma_0} \left(\sqrt{\sigma_1}\Lambda_{q}^{(2)}(\delta)-\sqrt{\sigma_0}\right)\|h\|_{L^{2}_{v,x}(\m_{q+1/2})}.
\end{equation}
Moreover, it follows from Cauchy-Schwarz inequality and \eqref{eq:PalphaL2} that 
\begin{equation*}\begin{split}
I_2(h) &\leq 
\e^{-2}\|\P_{\re} h\|_{L^2_{v,x}(\m_{q-\frac{1}{2}})}\|h\|_{L^{2}_{v,x}(\m_{q+\frac{1}{2}})}\\
&\leq \e^{-2} {\bm C}_{0,q-\frac{1}{2}} (1-\alpha)^{\bar{p}} \|h\|_{L^{2}_{v,x}(\m_{q+\frac{1}{2}})}^{2} 
\end{split}\end{equation*}
Finally, for $I_3$, one can compute 
\begin{align*}
&\int_{\R^d \times \T^d} \nabla_v\cdot (vh(x,v)) \, h(x,v)   \,\, \m_q^{2}(v) \, \d x\d v
 \\
&\quad= d \|h\|_{L^{2}_{v,x}(\m_{q})}^{2} +  {1 \over 2} \int_{\R^d \times \T^d} v \cdot \nabla_v h^2(x,v)  \m_{q}^{2}(v) \,\d x \d v
 \\
 &\quad = \frac{d}{2} \|h\|_{L^{2}_{v,x}(\m_{q})}^{2} -  {1 \over 2} \int_{\R^d \times \T^d}  h^{2}(x,v) \, v \cdot \nabla_{v}\m_{q}^{2}(v) \,\d x \d v \\
\end{align*}
Since $v \cdot \nabla_{v} \m_{q}^{2}(v)=2q \m_{q}^{2}(v)-2q\m_{q-1}^{2}(v)$  we get
\begin{equation} \label{eq:I3L2}
I_{3}(h) \leq q\kappa_{\re}\e^{-2} \|h\|_{L^{2}_{v,x}(\m_{q})}^{2} \leq q\kappa_{\re}\e^{-2}\|h\|_{L^{2}_{v,x}(\m_{q+\frac{1}{2}})}^{2}\,.
\end{equation}
Gathering the previous estimates, one obtains 
\begin{multline*}
\int_{\R^{d}\times\T^{d}} \mathcal{B}_{\re,\e}^{(\delta)}(h)(x,v)\,  h(x,v) \m_{q}^{2}(v)\,\d x \d v \\
\leq \e^{-2}\,\left({\bm C}_{0,q-\frac{1}{2}} (1-\alpha)^{\bar{p}} + \sqrt{\sigma_0}\left(\sqrt{\sigma_1}\Lambda_{q}^{(2)}(\delta)-\sqrt{\sigma_{0}}\right) + q\kappa_{\re}\right)\|h\|_{L^2_{v,x}(\m_{q+1/2})}.
\end{multline*}
Recalling that $\kappa_{\re}=1-\re$ while $\lim_{\delta\to 0}\sqrt{\sigma_1}\Lambda_{q}^{(2)}(\delta) < \sqrt{\sigma_{0}} $ for $q >q^{\star}$ we can pick $\delta_{2,0,0,q}^{\dagger}$ small enough and then $\re^{\dagger}_{2,0,0,q}\in(0,1)$ close enough to $1$ so that
  \begin{align*}&\nu_{2,0,0,q}:=\\
  & -\inf\left\{{\bm C}_{0,q-\frac{1}{2}} (1-\alpha)^{\bar{p}} + \sqrt{\sigma_0}\left(\sqrt{\sigma_1}\Lambda_{q}^{(2)}(\delta)-\sqrt{\sigma_{0}}\right) + q\kappa_{\re}\,;\,\re \in (\re_{2,0,0,q}^{\dagger},1),\, \delta \in (0,\delta_{2,0,0,q}^{\dagger})  \right\}\end{align*}
is positive and get, for any $\delta \in (0,\delta_{2,0,0,q}^{\dagger})$ and $\re \in (\re^{\dagger}_{2,0,0,q},1)$,
\begin{equation}\label{eq:Bal-L2}\begin{split}
\mathcal{I}:=\int_{\R^{d}\times\T^{d}} \mathcal{B}_{\re,\e}^{(\delta)}(h)(x,v)\,  h(x,v) \m_{q}^{2}(v)\d x \d v &\leq -\e^{-2}\nu_{2,0,0,q}\left\|h\right\|_{L^{2}_{v,x}(\m_{q+\frac{1}{2}})}^{2}\\
 &\leq -\e^{-2}\nu_{2,0,0,q}\|h\|^2_{L^{2}_{v,x}(\m_{q})}\end{split}\end{equation}
which implies  that $\B_{\re,\e}^{(\delta)}+\e^{-2}\nu_{2,0,0,q}$ is dissipative in $L^{2}_{v,x}(\m_{q})$. }
\smallskip  

\noindent
Let now investigate the case $\ell=1$. We consider the norm 
$$\vertiii{h}^{2}= \|h\|_{L^{2}_{v,x}(\m_{q})}^{2}+   \|\nabla_{x} h\|_{L^{2}_{v,x}(\m_{q})}^{2} +\eta\,\|\nabla_{v} h\|_{L^{2}_{v,x}(\m_{q})}^{2}  , $$
for some $\eta>0$, the value of which shall be fixed later on. This norm is equivalent to the classical $\W^{1,2}_{v,x} (\m_{q})$-norm. We shall prove that for some $\nu_{2,1,1,q}>0$, $ \mathcal{B}_{\re,\e}^{(\delta)}+\e^{-2}\nu_{2,1,1,q}$ is dissipative in $ \W^{1,2}_{v,x} (\m_{q})$ for the norm $\vertiii{\cdot}$.  
Notice first that the $x$-derivative commutes with all the above terms $C_{i}(h)$, $i=0,\ldots,3$, i.e.
$$\nabla_{x} \mathcal{B}_{\re,\e}^{(\delta)}h(x,v)= \mathcal{B}_{\re,\e}^{(\delta)}\left(\nabla_{x}h\right)(x,v)$$
so that, according to the previous step
\begin{align}\label{eq:BaldxL2}
\begin{split}
\mathcal{J}_{x}:&=\int_{\R^{d} \times \T^d}  \nabla_x \,\mathcal{B}_{\re,\e}^{(\delta)}( h)(x,v) \cdot \nabla_x h(x,v) \,\, \m_{q}^{2}(v)\,\d x \d v\\
&\leq -\e^{-2} \nu_{2,0,0,q}\|\nabla_{x} h\|_{L^{2}_{v,x}(\m_{q+\frac{1}{2}})}^{2}.
\end{split}
\end{align}
Consider now the quantity
$$\mathcal{J}_{v}:=\int_{\R^{d} \times \T^d}  \nabla_v \, \mathcal{B}_{\re,\e}^{(\delta)}( h)(x,v) \cdot \nabla_v h(x,v) \, \m_{q}^{2}(v)\,\d x \d v.$$
Using the notations above, one notices that $\nabla_{v} C_{1}(h)=-\e^{-1} \nabla_{x} h + C_{1}(\nabla_{v}h)$, so that
\begin{equation}\label{eq:nabBL2}\begin{split}
\nabla_{v} ( \mathcal{B}_{\re,\e}^{(\delta)} h(x,v))&=\e^{-2} \nabla_{v} (\mathcal{B}_{1}^{(\delta)}h) -\e^{-1} \nabla_{x}h + C_{1}(\nabla_{v}h) \\
&\hspace{4cm}+ \e^{-2} \nabla_{v}(\P_{\re}h) + \e^{-2} \nabla_{v} (T_{\re}h)\\
\end{split}
\end{equation} 
Then, it follows from Corollary~\ref{cor:Palpha} that 
\begin{equation}\label{eq:nabL2}
\|\nabla_{v} (\P_{\re} h) \|_{L^{2}_{v,x}(\m_{q-\frac{1}{2}})} \leq \widetilde{\bm{C}}_{1,q-\frac{1}{2}}(1-\re)^{\bar{p}}\left(\|h\|_{L^{2}_{v,x}(\m_{q+\frac{1}{2}})}+\|\nabla_{v} h\|_{L^{2}_{v,x}(\m_{q+\frac{1}{2}})}\right)\,.\end{equation}
Now, 
$$\nabla_{v}\B_{1}^{(\delta)}h=\nabla_{v} [\mathscr{L}_{1}^{{R,\delta}}h -\Sigma_\M  h ]
= \mathscr{L}_{1}^{R,\delta}(\nabla_{v} h) -\Sigma_\M  \nabla_{v} h 
+{\mathcal R} (h) , $$
where 
$${\mathcal R}( h )=\Q_1(h,\nabla_{v} \M)+ \Q_1(\nabla_{v} \M,h) 
- (\nabla_{v}{\mathcal A}^{(\delta)})(h) - {\mathcal A}^{(\delta)}(\nabla_{v} h). $$
From the proof of~{\cite[Lemma~4.14]{GMM}}, we have that
$$\| {\mathcal R}( h )\|_{L^{2}_{v,x}(\m_{q-\frac{1}{2}})}  
\leq  C_\delta \|h\|_{L^{2}_{v,x}(\m_{q+\frac{1}{2}})}. $$
while, according to \eqref{eq:I0hL2}, one has
\begin{multline*}
\e^{-2}\int_{\R^{d}\times \T^{d}}\left[\mathscr{L}_{1}^{R,\delta}(\nabla_{v} h) -\Sigma_\M  \nabla_{v} h \right] \cdot \nabla_{v} h \,\m_{q}^{2}\d x \d v\\
=I_{0}(\nabla_{v} h)
\leq \e^{-2}\sqrt{\sigma_{0}}\left(\sqrt{\sigma_1}\Lambda_{q}^{(2)}(\delta)-\sqrt{\sigma_{0}}\right)\|\nabla_{v} h\|_{L^{2}_{v,x}(\m_{q+\frac{1}{2}})}^{2}.
\end{multline*}
Therefore, the contribution of $\nabla_{v} \B_{1}^{(\delta)}h$ is 
\begin{multline}\label{eq:nabL0L2}
\e^{-2}\int_{\R^{d}\times \T^{d}}\nabla_{v}\B_{1}^{(\delta)}h \cdot \nabla_{v} h \,\m_{q}^{2}\d x \d v \\
\leq \e^{-2}C_\delta	\|h\|_{L^{2}_{v,x}(\m_{q+\frac{1}{2}})}^{2}+\e^{-2}\sqrt{\sigma_{0}}\left(\sqrt{\sigma_1}\Lambda_{q}^{(2)}(\delta)-\sqrt{\sigma_{0}}\right)\|\nabla_{v} h\|_{L^{2}_{v,x}(\m_{q+\frac{1}{2}})}^{2}\end{multline}
where $\sqrt{\sigma_1}\Lambda_{q}^{(2)}(\delta)-\sqrt{\sigma_{0}} <0$ for $\delta$ small enough and $q > q^{\star}.$ Finally, using the short-hand notation 
$$
\nabla_v \cdot (v \, \nabla_v h) =\big(\nabla_v \cdot (v \, \partial_{v_1} h),\cdots,\nabla_v \cdot (v \,\partial_{v_d} h)\big),
$$
we have
$$
\nabla_v h \cdot \nabla_v \left(\nabla_v \cdot (vh)\right) = |\nabla_v h|^2 + \left[\nabla_v \cdot (v\, \nabla_vh)\right] \cdot \nabla_v h. 
$$
Doing similar computations as the ones leading to~\eqref{eq:I3L2}, we obtain:
\begin{equation}\label{eq:divL2}
\begin{split} 
\int_{\R^d \times \T^d}  \nabla_v \, T_\re( h)(x,v) \cdot \nabla_v h(x,v) \m_{q}^{2}(v)\,\d x \d v
\leq (q-1) \kappa_{\re} \|\nabla_v h\|_{L^{2}_{v,x}(\m_{q+\frac{1}{2}})}^{2}.
\end{split}
\end{equation}
Coming back to~\eqref{eq:nabBL2}, Cauchy-Schwarz inequality and estimates \eqref{eq:nabL2},~\eqref{eq:nabL0L2} and~\eqref{eq:divL2} give that  
\begin{multline*}
\mathcal{J}_{v} \leq \e^{-2} (C_\delta +\widetilde{\bm{C}}_{1,q-1/2}(1-\re)^{\bar p} )\| h\|_{L^{2}_{v,x}(\m_{q+\frac{1}{2}})}^{2} + \e^{-1} \|\nabla_x h\|_{L^{2}_{v,x}(\m_q)}\,\|\nabla_{v}h\|_{L^{2}_{v,x}(\m_{q})}
\\ +\e^{-2}\left(\widetilde{\bm{C}}_{1,q-\frac{1}{2}}(1-\re)^{\bar{p}}+\,(q-1)\kappa_{\re}+\sqrt{\sigma_{0}}\left(\sqrt{\sigma_1} \Lambda_{q}^{(2)}(\delta)-\sqrt{\sigma_0}\right)\right) \|\nabla_{v} h\|_{L^{2}_{v,x}(\m_{q+\frac{1}{2}})}^{2}\,,
\end{multline*}
where we used that the contribution to $\mathcal{J}_{v}$ of the term $C_{1}(\nabla_{v}h)$ vanishes. 
Hence, combining this estimate with \eqref{eq:Bal-L2} and \eqref{eq:BaldxL2} and using that $\e \leq 1$, one obtains, for any $\tau >0$,
\begin{multline*}
\mathcal{I} + \mathcal{J}_{x}+ \eta\,\mathcal{J}_{v} 
\leq \e^{-2} \bigg(\left[-\nu_{2,0,0,q} + \eta\left(C_{\delta}+\widetilde{\bm{C}}_{1,q-\frac{1}{2}}(1-\re)^{\bar{p}}\right)\right]\|h\|_{L^{2}_{v,x}(\m_{q+\frac{1}{2}})}^{2} \\ 
-\left(\nu_{2,0,0,q}- \frac{1}{4\tau}\eta\right)\|\nabla_{x}h\|_{L^{2}_{v,x}(\m_{q+\frac{1}{2}})}^{2} \\
+ \eta\left[\widetilde{\bm{C}}_{1,q-\frac{1}{2}}(1-\re)^{\bar{p}}+\,(q-1)\kappa_{\re}+\sqrt{\sigma_{0}}\left(\sqrt{\sigma_1}\Lambda_{q}^{(2)}(\delta)-\sqrt{\sigma_0}\right)+\tau\right] \|\nabla_{v} h\|_{L^{2}(\m_{q+\frac{1}{2}})}^{2}\bigg)
\end{multline*}
where we used Young's inequality to estimate  the mixed term 
$$\|\nabla_x h\|_{L^{2}_{v,x}(\m_q)}\,\|\nabla_{v}h\|_{L^{2}_{v,x}(\m_{q})} \leq \frac{1}{4\tau}\|\nabla_{x}h\|_{L^{2}_{v,x}(\m_{q})}^{2}+\tau\,\|\nabla_{v}h\|_{L^{2}_{v,x}(\m_{q})}^{2}, \qquad \tau >0.$$
There exist $\re^{\dagger}_{2,1,1,q} >0$ and $\delta_{2,1,1,q}^{\dagger} >0$ and $\tau >0$ small enough so that for any  $\re \in (\re^{\dagger}_{2,1,1,q},1)$ and any $\delta \in (0,\delta^{\dagger}_{2,1,1,q})$, 
$$\left(\widetilde{\bm{C}}_{1,q-\frac{1}{2}}(1-\re)^{\bar{p}}+\,(q-1)\kappa_{\re}+\sqrt{\sigma_{0}}\left(\Lambda_{q}^{(2)}(\delta)-\sqrt{\sigma_0}\right)\right) +\tau <0 .$$
One chooses then $\eta>0$ small enough such that 
$$
\nu_{2,0,0,q}-\eta\, \max\bigg(\frac{1}{4\tau},\sup_{\delta \in (0,\delta^\dagger_{2,1,1,q})}C_\delta +\widetilde{\bm{C}}_{1,q-\frac{1}{2}}(1-\re^\dagger_{2,1,1,q})^{\bar{p}}\bigg) >0,
$$ 
we finally obtain that there exists $\nu_{2,1,1,q}>0$ such that for $\re \in (\re^{\dagger}_{2,1,1,q},1)$ and $\delta \in (0,\delta^{\dagger}_{2,1,1,q})$,
\begin{equation*}\begin{split}
\mathcal{I} + \mathcal{J}_{x}+ \eta\,\mathcal{J}_{v}
&\leq  -\e^{-2} \nu_{2,1,1,q}\left [\|h\|_{L^{2}_{x.v}(\m_{q+\frac{1}{2}})}^{2} + \|\nabla_{x}h\|_{L^{2}_{v,x}(\m_{q+\frac{1}{2}})}^{2}
+ \eta\,  \|\nabla_{v} h\|_{L^{2}_{v,x}(\m_{q+\frac{1}{2}})}^{2}\right]\\
&\leq -\e^{-2}\nu_{2,1,1,q} \vertiii{h}^{2}.
\end{split}\end{equation*}
This proves that ${\mathcal B}_{\re,\e}^{(\delta)}+\e^{-2} \nu_{2,1,1,q}$ is hypo-dissipative in $\W^{1,2}_{v,x}(\m_{q})$. We prove the result for higher order derivatives in the same way considering now the norm 
$$\vertiii{h}^{2}=\sum_{|\bm{\beta}_{1}|+|\bm{\beta}_{2}| \leq k}\eta^{|\bm{\beta}_{1}|}\left\|\nabla^{|\bm{\beta}_{1}|}_{v}\nabla_{x}^{|\bm{\beta}_{2}|}h\right\|_{L^{2}_{v}L^{2}_{x}(\m_{q})}^{2}$$
for some $\eta>0$ to be chosen sufficiently small.\end{proof}

\begin{nb}\label{nb:hypo} It is important to notice that the equivalent norms constructed in the Proposition \ref{prop:hypo} are \emph{independent of} $\e$. This means that the hypo-dissipativity  of $\mathcal{B}_{\re,\e}^{(\delta)}+\e^{-2}{\nu}_{j,\ell,s,q}$ on $ {\W^{s,j}_{v}\W^{\ell,2}_{x}(\m_{q})}$, $j=1,2$ can be re-written as
$$\|(\lambda-\e^{-2}{\nu}_{j,\ell,s,q}-\B_{\re,\e}^{(\delta)})g\|_{ {\W^{s,j}_{v}\W^{\ell,2}_{x}(\m_{q})}} \geq C\,\lambda\|g\|_{ {\W^{s,j}_{v}\W^{\ell,2}_{x}(\m_{q})}}, \qquad j=1,2$$ 
for any $\lambda >0$, $g \in \D(\B_{\re,\e}^{(\delta)})$, and some constant $C>0$ depending on $j,\ell,s,q$ but not on $\e$.
\end{nb}

We now state some semigroup generation result.
\begin{prop}\label{prop:Bree}
For $j=1,2$, for any $\ell \geq s \geq 0$, $q >q_j$, $\re \in (\re^{\dagger}_{j,\ell,s,q},1), \delta \in (0,\delta^{\dagger}_{j,\ell,s,q})$ and $\e >0$, the operator 
$$\mathcal{B}_{\re,\e}^{(\delta)}\::\:\D(\mathcal{B}_{\re,\e}^{(\delta)}) \subset  {\W^{s,j}_{v}\W^{\ell,2}_{x}}(\m_{q}) \longrightarrow {\W^{s,j}_{v}\W^{\ell,2}_{x}}(\m_{q})$$ is the generator of a $C_{0}$-semigroup $\{\cS_{\re,\e}^{(\delta)}(t)\;;\;t \geq 0\}$ in $ {\W^{s,j}_{v}\W^{\ell,2}_{x}}(\m_{q})$ and there exist $0 < \nu_{*} < {\nu}_{j,\ell,s,q}$ and $C_{j,\ell,s,q} >0$  such that
\begin{equation}\label{eq:cSale}
\left\|\,\cS_{\re,\e}^{(\delta)}(t)\,\right\|_{\mathscr{B}( {\W^{s,j}_{v}\W^{\ell,2}_{x}}(\m_{q}))} \leq C_{j,\ell,s,q}\exp(-\e^{-2}\nu_{*}\,t)\,, \qquad \forall \, t \geq 0.
\end{equation}
As a consequence, 
$$\G_{\re,\e}\::\:\D(\G_{\re,\e}) \subset  {\W^{s,j}_{v}\W^{\ell,2}_{x}}(\m_{q}) \longrightarrow  {\W^{s,j}_{v}\W^{\ell,2}_{x}}(\m_{q})$$
is the generator of a $C_{0}$-semigroup $\left\{\mathcal{V}_{\re,\e}(t)\;;\;t\geq 0\right\}$
in $\W^{s,j}_{v}\W^{\ell,2}_{x}(\m_{q})$. 
\end{prop}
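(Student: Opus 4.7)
The plan is to derive Proposition \ref{prop:Bree} from the Hille-Yosida-Lumer-Phillips generation theorem, using Proposition \ref{prop:hypo} as the main input. Concretely, I would first verify that $\mathcal{B}_{\re,\e}^{(\delta)}$ is a closed, densely defined operator on $\W^{s,j}_v\W^{\ell,2}_x(\m_q)$ with natural domain consisting of those $h$ for which $v\cdot\nabla_x h$, $\nabla_v\cdot(vh)$, and the multiplicative part $\Sigma_\M h$ all lie in the space (i.e. the extra regularity in $x$, in $v$, and the extra moment of order one in $\m_q$). The collision contributions $\mathscr{L}_1^{R,\delta}$ and $\P_\re$ appearing in $\mathcal{B}_{\re,\e}^{(\delta)}$ map this domain continuously into $\W^{s,j}_v\W^{\ell,2}_x(\m_q)$ thanks to Proposition \ref{prop:converLLL0} and Corollary \ref{cor:Palpha}, so closedness is straightforward. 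Density of the domain is standard since it contains $\mathcal{C}^\infty_c(\T^d\times\R^d)$.

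The dissipativity ingredient is then read off directly from Proposition \ref{prop:hypo}: there is an equivalent norm $\vertiii{\cdot}$ on $\W^{s,j}_v\W^{\ell,2}_x(\m_q)$, independent of $\e$ by Remark \ref{nb:hypo}, such that $\mathcal{B}_{\re,\e}^{(\delta)}+\e^{-2}\nu_{j,\ell,s,q}$ is dissipative, equivalently $\vertiii{(\lambda - \B_{\re,\e}^{(\delta)})h} \geq (\lambda + \e^{-2}\nu_{j,\ell,s,q})\vertiii{h}$ for $\lambda > -\e^{-2}\nu_{j,\ell,s,q}$. Once this and the range condition are in place, Lumer-Phillips produces a $C_0$-semigroup $\cS_{\re,\e}^{(\delta)}(t)$ satisfying $\vertiii{\cS_{\re,\e}^{(\delta)}(t)h}\leq e^{-\e^{-2}\nu_{j,\ell,s,q}t}\vertiii{h}$, and translating back into the original norm via the $\e$-independent norm equivalence yields the bound \eqref{eq:cSale} with any $\nu_*<\nu_{j,\ell,s,q}$ and an appropriate constant $C_{j,\ell,s,q}$.

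The only non-routine step, and the expected main obstacle, is the range condition: one must show $\lambda-\mathcal{B}_{\re,\e}^{(\delta)}$ is surjective for large $\lambda$. The strategy I would follow is to decompose $\mathcal{B}_{\re,\e}^{(\delta)}=A_0+K$ where $A_0 h := -\e^{-1}v\cdot\nabla_x h-\e^{-2}\kappa_\re\nabla_v\cdot(vh)-\e^{-2}\Sigma_\M h$ is the free streaming-plus-drift part with multiplicative damping and $K := \e^{-2}\mathscr{L}_1^{R,\delta}+\e^{-2}\P_\re$ collects the smoother contributions of the linearized operator. The operator $A_0$ generates a $C_0$-semigroup on $\W^{s,j}_v\W^{\ell,2}_x(\m_q)$ that can be written explicitly along the characteristics $\dot x=\e^{-1}v$, $\dot v=\e^{-2}\kappa_\re v$, with exponential damping $\exp(-\e^{-2}\int_0^t \Sigma_\M(v(s))\d s)$. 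Using Proposition \ref{prop:converLLL0} and the decomposition of $\mathscr{L}_1$ from Section \ref{sec:hypo}, $K$ is $A_0$-bounded with relative bound strictly less than one (the loss in $\m_q$ by one unit is absorbed by the collision frequency $\Sigma_\M\sim\m_1$ from $A_0$, cf.\ \eqref{eq:collfreq}); hence $(\lambda-A_0)^{-1}K$ has norm $<1$ for $\lambda$ large, and a Neumann series argument solves $(\lambda-\B_{\re,\e}^{(\delta)})h=f$.

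The second assertion then follows by a bounded perturbation argument. Lemma \ref{prop:hypo1} shows that $\mathcal{A}^{(\delta)}$ sends $L^1_v(\m_1)$ into $\W^{k,2}_v$ with support in the ball $B(0,R_\delta)$ for every $k\in\N$. Combined with the continuous embedding $\W^{s,j}_v\W^{\ell,2}_x(\m_q)\hookrightarrow L^1_vL^2_x(\m_1)$ valid for $q>q_j$ and with the compact $v$-support, we deduce that $\mathcal{A}^{(\delta)}$, and therefore $\mathcal{A}_\e^{(\delta)}=\e^{-2}\mathcal{A}^{(\delta)}$, is a bounded operator on $\W^{s,j}_v\W^{\ell,2}_x(\m_q)$. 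Since $\G_{\re,\e}=\mathcal{B}_{\re,\e}^{(\delta)}+\mathcal{A}_\e^{(\delta)}$ is a bounded perturbation of a generator, Phillips' theorem produces the announced $C_0$-semigroup $\{\mathcal{V}_{\re,\e}(t)\}_{t\geq 0}$ on $\W^{s,j}_v\W^{\ell,2}_x(\m_q)$, completing the proof.
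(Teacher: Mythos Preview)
Your overall architecture matches the paper's: Lumer--Phillips with the dissipativity input from Proposition~\ref{prop:hypo}, a Neumann-series argument for the range condition built on the explicit transport-plus-drift-plus-$\Sigma_\M$ part, and then the bounded perturbation theorem to pass from $\mathcal{B}_{\re,\e}^{(\delta)}$ to $\G_{\re,\e}$. The decay estimate \eqref{eq:cSale} and the final sentence of the argument are exactly how the paper proceeds.

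Where you diverge from the paper is in the range condition, and this is where you have a gap. You work directly in the polynomial-weighted space $\W^{s,j}_v\W^{\ell,2}_x(\m_q)$ and assert that $K=\e^{-2}\mathscr{L}_1^{R,\delta}+\e^{-2}\P_\re$ is $A_0$-bounded with relative bound strictly less than one. The $\P_\re$ piece is indeed small by Corollary~\ref{cor:Palpha}, but for $\mathscr{L}_1^{R,\delta}$ the available norm estimate reads $\|\mathscr{L}_1^{R,\delta}h\|_{L^1_vL^2_x(\m_q)}\le \Lambda_q^{(1)}(\delta)\,\|\Sigma_\M h\|_{L^1_vL^2_x(\m_q)}$, while the resolvent gain of integrability for your $A_0$ produces a factor of order $1/\sigma_0$. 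The composite bound on $(\lambda-A_0)^{-1}K$ therefore involves a ratio of the type $\Lambda_q^{(1)}(\delta)\,\sigma_1/\sigma_0$, and nothing you cite forces this below $1$ for all $q>q_j$; merely saying ``the moment loss is absorbed by $\Sigma_\M$'' does not control the constants. Relative $A_0$-boundedness alone (with unspecified bound) does not make $\|(\lambda-A_0)^{-1}K\|<1$ for large $\lambda$, since $A_0(\lambda-A_0)^{-1}=-\mathbf{Id}+\lambda(\lambda-A_0)^{-1}$ stays of order one.

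The paper sidesteps exactly this issue by first proving the range condition in an exponentially weighted space $\W^{s,1}_v\W^{\ell,2}_x(\overline{m}_{a,\beta})$, where the gain part $\mathscr{L}_{1,+}^{R,\delta}$ is known (from \cite{AC}) to satisfy a genuine smallness estimate with constant $\tau(\delta)\to 0$ as $\delta\to 0$; the Neumann series then closes without any constraint on the ratio $\sigma_1/\sigma_0$. Generation in the polynomial spaces $\W^{s,j}_v\W^{\ell,2}_x(\m_q)$ is afterwards obtained by the enlargement result \cite[Theorem~2.13]{GMM}. If you want to keep your direct route, you must either exhibit a polynomial-weight analogue of the $\tau(\delta)\to 0$ estimate, or otherwise justify the strict inequality in the relative-bound claim.
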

\begin{proof} The fact that $\mathcal{B}_{\re,\e}^{(\delta)}$ is a generator of a $C_{0}$-semigroup in ${\W^{s,1}_{v}\W^{\ell,2}_{x}}(\m_{q})
$ is proven in Appendix~\ref{appen:G1e}.  Since we already proved that  $\mathcal{B}_{\re,\e}^{(\delta)}+\e^{-2}\nu_{j,\ell,s,q}$ is hypo-dissipative, we deduce directly~\eqref{eq:cSale}. Finally, because $\A_{\e}^{(\delta)}$ is a bounded operator in $\W_{v}^{s,1}\W^{\ell,2}_{x}(\m_{q})$, we deduce from the bounded perturbation theorem that $\G_{\re,\e}=\A_{\e}^{(\delta)}+\mathcal{B}_{\re,\e}^{(\delta)}$ generates a $C_{0}$-semigroup in $\W_{v}^{s,1}\W^{\ell,2}_{x}(\m_{q})$.
\end{proof}

\subsection{The elastic semigroup}

The spectral analysis of $\G_{1,\e}$ and the generation of its associated semigroup has been performed in Theorem 2.1 of \cite{bmam}. We need a slightly more precise estimate on the decay of the semigroup independently of $\e$. Our main result concerning $\G_{1,\e}$ is the following whose proof is postponed to Appendix \ref{appen:G1e}: 
\begin{theo}\label{theo:G1e}
There exists $\e_{0} \in (0,1)$ such that, for all $\ell,s \in \N$ with $\ell \geq s$  and $q >q^\star$ and any $\e \in (0,\e_{0})$, the full transport operator $\G_{1,\e}$ generates a $C_{0}$-semigroup $\{\mathcal{V}_{1,\e}(t)\;;\;t \geq 0\}$ on  {$\W^{s,2}_{v}\W^{\ell,2}_{x}(\m_{q})$}. Moreover, there exist $C_{0} >0$ and $\mu_{\star} >0$ (both independent of $\e$) such that, 
\begin{multline}\label{eq:decay}
\big\|\mathcal{V}_{1,\e}(t)\left[h-\mathbf{P}_{0}h\right]\big\|_{ {\W^{s,2}_{v}\W^{\ell,2}_{x}}(\m_{q})} \\
\leq C_{0}\exp(-\mu_{\star}t)\,\|h-\mathbf{P}_{0}h\|_{ {\W^{s,2}_{v}\W^{\ell,2}_{x}}(\m_{q})}\,, \qquad \forall \, t \geq 0\,,
\end{multline}
holds true for any $h \in  {\W^{s,2}_{v}\W^{\ell,2}_{x}}(\m_{q})$, where $\mathbf{P}_{0}$ is the spectral projection onto $\mathrm{Ker}(\G_{1,\e})=\mathrm{Ker}(\mathscr{L}_{1})$ which is \emph{independent of} $\e$ and given by
\begin{equation}\label{eq:P0}
\mathbf{P}_{0}h=\sum_{i=1}^{d+2}\left(\int_{\T^{d}\times\R^{d}}h\,\Psi_{i}\,\d x\d v\right)\,\Psi_{i}\,\M\end{equation}
where $\Psi_{1}(v)=1$, $\Psi_{i}(v)=\frac{1}{\sqrt{\en_{1}}}v_{i-1}$ $(i=2,\ldots,d+1)$ and $\Psi_{d+2}(v)=\frac{|v|^{2}-d\en_{1}}{\en_{1}\sqrt{2d}}$ $(v \in \R^{d}).$
\end{theo}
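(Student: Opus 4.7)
The generation of the semigroup $\{\mathcal{V}_{1,\e}(t)\}_{t\geq 0}$ on $\W^{s,2}_{v}\W^{\ell,2}_{x}(\m_q)$ follows immediately from Proposition~\ref{prop:Bree} specialized to $\re=1$ (note that $\kappa_{1}=0$ so the threshold $\re^{\dagger}_{2,\ell,s,q}$ is indeed crossed at $\re=1$). The identification of $\mathrm{Ker}(\G_{1,\e})$ with $\mathrm{Ker}(\mathscr{L}_{1})$ is a one-line computation on the torus: if $\G_{1,\e}h=0$, testing against $h$ in $L^{2}(\M^{-1/2})$ yields both $\mathscr{L}_{1}h=0$ and $v\cdot\nabla_{x}h=0$, and since the hydrodynamic kernel consists of velocity polynomials times $\M$, this forces $h$ to be $x$-independent. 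The projection $\mathbf{P}_{0}$ in \eqref{eq:P0} is then the $L^{2}(\T^{d}\times\R^{d})$-orthogonal projection onto this space and is manifestly $\e$-independent.

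The hard part is the uniform-in-$\e$ decay \eqref{eq:decay}. The plan is to establish the estimate first on the reference Hilbert space $\H_{\star}:=\W^{s,2}_{v}\W^{\ell,2}_{x}(\M^{-1/2})$ via Fourier analysis, then transfer it to the polynomial-weighted space via the GMM enlargement scheme. For the first step, expanding in Fourier series on $\T^{d}$ block-diagonalizes $\G_{1,\e}$ into the family
\begin{equation*}
\G_{1,\e}^{(k)}=\e^{-2}\bigl(\mathscr{L}_{1}-i\e(k\cdot v)\bigr)=\e^{-2}\Lambda(\e k),\qquad k\in\Z^{d},
\end{equation*}
where $\Lambda(\xi):=\mathscr{L}_{1}-i(\xi\cdot v)$ is the standard Fourier-transformed elastic linearized operator. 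The Ellis--Pinsky analysis used in \cite{ellis,bmam} provides $r_{0}>0$, $\lambda^{\sharp}>0$ and analytic branches such that for $|\xi|\leq r_{0}$, $\Lambda(\xi)$ admits exactly $d+2$ isolated eigenvalues with expansions $\mu_{j}(\xi)=i\gamma_{j}|\xi|-\beta_{j}|\xi|^{2}+\mathrm{O}(|\xi|^{3})$ with $\beta_{j}>0$, the rest of the spectrum lying in $\{\mathrm{Re}\,z\leq-\lambda^{\sharp}\}$. For $k\neq 0$ and $\e$ small, $\mathrm{Re}(\e^{-2}\mu_{j}(\e k))\approx-\beta_{j}|k|^{2}\leq-\beta_{\star}|k|^{2}$, uniformly in $\e$, while for $\e|k|>r_{0}$ a direct coercivity estimate $\mathrm{Re}\langle\Lambda(\xi)h,h\rangle_{L^{2}(\M^{-1/2})}\leq-\lambda^{\sharp}\|h\|^{2}$ (valid on the orthogonal complement of the hydrodynamic kernel) gives a uniform decay after multiplication by $\e^{-2}$. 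For $k=0$, $\G_{1,\e}^{(0)}=\e^{-2}\mathscr{L}_{1}$ has decay $\e^{-2}\lambda_{1}\geq\lambda_{1}$ off $\mathrm{Ker}(\mathscr{L}_{1})$. Assembling the three regimes yields an $\e$-independent $\mu_{\star}>0$ and $C_{\star}>0$ with $\|\mathcal{V}_{1,\e}(t)(I-\mathbf{P}_{0})\|_{\mathscr{B}(\H_{\star})}\leq C_{\star}e^{-\mu_{\star}t}$.

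To pass to $\E:=\W^{s,2}_{v}\W^{\ell,2}_{x}(\m_{q})$, iterate the Duhamel formula associated with the splitting $\G_{1,\e}=\mathcal{A}_{\e}^{(\delta)}+\mathcal{B}_{1,\e}^{(\delta)}$:
\begin{equation*}
\mathcal{V}_{1,\e}(t)=\sum_{n=0}^{N-1}\bigl(\mathcal{S}_{1,\e}^{(\delta)}\star\mathcal{A}_{\e}^{(\delta)}\bigr)^{(\star n)}\star\mathcal{S}_{1,\e}^{(\delta)}(t)+\bigl(\mathcal{V}_{1,\e}\star\mathcal{A}_{\e}^{(\delta)}\bigr)\star\bigl(\mathcal{S}_{1,\e}^{(\delta)}\star\mathcal{A}_{\e}^{(\delta)}\bigr)^{(\star N-1)}\star\mathcal{S}_{1,\e}^{(\delta)}(t),
\end{equation*}
with $\star$ denoting time convolution. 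By \eqref{eq:Adelta}, each application of $\mathcal{A}_{\e}^{(\delta)}$ maps into functions compactly supported in $v$ with arbitrary velocity regularity, so that for some $N$ depending only on $q,\ell,s$, the iterated operator $(\mathcal{S}_{1,\e}^{(\delta)}\star\mathcal{A}_{\e}^{(\delta)})^{(\star N)}$ sends $\E$ continuously into $\H_{\star}$. The first $N-1$ partial sums decay at rate $\e^{-2}\nu_{*}$ by Proposition~\ref{prop:Bree}, which dominates any polynomial rate, while the remainder term inherits the rate $\mu_{\star}$ from Step~1.

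The main obstacle is uniformity in $\e$: since $\mathcal{A}_{\e}^{(\delta)}=\e^{-2}\mathcal{A}^{(\delta)}$ has norm blowing up like $\e^{-2}$, while $\mathcal{S}_{1,\e}^{(\delta)}$ decays at rate $\e^{-2}\nu_{*}$, the time convolutions must be evaluated carefully. Performing the change of variables $\tau=\e^{-2}t$ in each convolution, each factor $\e^{-2}\mathcal{A}^{(\delta)}\mathcal{S}_{1,\e}^{(\delta)}(s)$ integrated in $s$ produces a bound independent of $\e$, so the partial-sum terms contribute an $\e$-independent prefactor multiplying $e^{-\mu_{\star}t}$ after the final bound on $\H_{\star}$ is invoked. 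This yields \eqref{eq:decay} with $\mu_{\star}$ as described in Step~1 and an $\e$-independent constant $C_{0}$, possibly after shrinking $\mu_{\star}$ by an arbitrarily small amount to absorb sub-leading terms.
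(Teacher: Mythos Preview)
Your overall strategy---establish the $\e$-uniform decay on a Gaussian-weighted space and then enlarge to the polynomial weight---matches the paper's, but there is a genuine gap in your Step~1 and your Step~2 is considerably more circuitous than the paper's argument.

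\textbf{The gap.} Your claim that for $|\xi|>r_{0}$ one has a ``direct coercivity estimate'' $\mathrm{Re}\langle\Lambda(\xi)h,h\rangle_{L^{2}(\M^{-1/2})}\leq-\lambda^{\sharp}\|h\|^{2}$ on the orthogonal complement of the hydrodynamic kernel is not usable: while the spectral gap of $\mathscr{L}_{1}$ does give this inequality on $(\mathrm{Ker}\mathscr{L}_{1})^{\perp}$, that subspace is \emph{not invariant} under $\Lambda(\xi)=\mathscr{L}_{1}-i\xi\cdot v$ for $\xi\neq0$, since the transport multiplier mixes macroscopic and microscopic modes. Hence the inequality does not yield semigroup decay. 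What the Ellis--Pinsky machinery actually provides for $|\xi|\geq r_{0}$ is a uniform resolvent bound obtained by compactness (intermediate $|\xi|$) together with the structure $\Lambda(\xi)=-\Sigma_{\M}+\text{compact}-i\xi\cdot v$ (large $|\xi|$); this is not a one-line coercivity. You also do not explain how the Ellis--Pinsky decay, which is stated at the level of $L^{2}_{v}(\M^{-1/2})$, propagates to the $\W^{s,2}_{v}$ norm in your $\H_{\star}$.

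\textbf{Comparison with the paper.} The paper sidesteps both issues by simply citing Briant's hypocoercivity result \cite[Theorems~2.1 and~2.4]{briant} for the $\e$-uniform decay on $\H=\W^{\ell,2}_{v,x}(\M^{-1/2})$, rather than rederiving it. For the enlargement, instead of the full $N$-term Dyson--Phillips expansion, the paper uses the \emph{single-step} splitting $f=f^{0}+f^{1}$ with $f^{0}=\mathcal{S}_{1,\e}(t)f_{\mathrm{in}}$ and $f^{1}$ solving $\partial_{t}f^{1}=\G_{1,\e}f^{1}+\mathcal{A}_{\e}f^{0}$ in $\H$; this works with $N=1$ precisely because $\mathcal{A}^{(\delta)}$ already maps $\E$ into $\H$ (full $v$-regularity and Gaussian weight, same $x$-regularity $\ell$)---the key point of Remark~\ref{nb:G1e}. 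The projection issue you leave unaddressed in your remainder term is handled by the identity $\mathbf{P}_{0}f^{1}=-\mathbf{P}_{0}f^{0}$ (since $\mathbf{P}_{0}f=0$), and the $(\mathbf{Id}-\mathbf{P}_{0})$-part of $f^{1}$ is controlled via a Gr\"onwall argument on Briant's \emph{differential} energy inequality, giving the $\e$-uniform constant directly without the change-of-variables bookkeeping your approach requires.
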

\begin{nb}  \label{nb:G1e} Theorem \ref{theo:G1e} is known to be true on the Hilbert space $\W^{\ell,2}_{v,x}(\M^{-\frac{1}{2}})$, see \cite[Theorems~2.1 and~2.4]{briant}. Notice that Theorem 2.1 from~{\cite{briant}} only provides an exponential decay of a norm of the solution which depends on $\e$. The introduction of a new norm which is equivalent to the usual $\W^{\ell,2}_{v,x}$ norm uniformly in $\e$ then allows the author to recover in Theorem~2.4 a uniform in $\e$ exponential decay of the solution to the whole nonlinear problem. One can of course proceed similarly to obtain a uniform in $\e$ exponential decay of the semigroup. In the present context of polynomial weighted spaces, a similar result was obtained in {\cite[Theorem~2.1]{bmam}} with the important difference that the estimate \eqref{eq:decay} was shown only for $t > t_{\star} >0$. This actually comes from the use of a general enlargement theorem from {\cite{GMM}} which yields 
\begin{equation} \label{eq:badrate}
\left\|\mathcal{V}_{1,\e}(t)h-\mathbf{P}_{0}h\right\|_{{\W^{s,2}_{v}\W^{\ell,2}_{x}}(\m_{q})} \leq C_{0}\frac{t^{N}}{\e^{N(2+s)}}\exp(-\mu\,t)\,\|h-\mathbf{P}_{0}h\|_{{\W^{s,2}_{v}\W^{\ell,2}_{x}}(\m_{q})}, \quad t\geq0
\end{equation}
for some $N \in \N$ and $s >0$ and $\mu <\mu_{\star}$. It is important for the rest of our analysis to be able to remove this strong dependence on $\e$ in the decay estimate of $\,\mathcal{V}_{1,\e}(t)(\mathbf{Id-P}_{0})$. This is done in Appendix~\ref{appen:G1e}. The key point is that, in our case, the enlargement argument is developed with~$\W^{\ell,2}_{v,x}(\M^{-1/2})$ as the small space and $\W^{s,2}_v\W^{\ell,2}_x(\m_q)$ as the larger one. One can remark that in this context, we do not need any gain of regularity in the space variable. The fact that the rate degenerates for small times in~\eqref{eq:badrate} actually comes from the use of an averaging lemma to gain regularity in $x$, which is no longer necessary in our framework. Note that Theorem~\ref{theo:G1e} also holds in $\W^{s,1}_v\W^{\ell,2}_x(\m_q)$ for $q>2$ for the same reasons.
\end{nb}
\begin{nb} \label{nb:G1e2} Notice that, unfortunately, it seems that the above result is \emph{not true} on the natural space $\W^{s,1}_{v}\W^{\ell,1}_{x}(\m_{q})$: indeed, whereas \eqref{eq:badrate} still holds in such a space, it seems that the ``initial layer'' dependence on $\e$ is not removable in this case because $\A_{\e}$ \emph{has no regularizing properties on the $x$-variable.} This is the main reason why we need in our approach to deal with spaces built upon~$L^{2}_{x}$.  
\end{nb}

An important consequence of the Theorem \ref{theo:G1e} is the following proposition. 
\begin{prop}\label{prop:resG1e} Let $\ell,s \in \N$ with $\ell \geq s$  and  {$q >q^{\star}$.} There exists $C_{1} >0$ such that, 
$$\|\Rs(\lambda,\G_{1,\e})\|_{\mathscr{B}( {\W^{s,2}_{v}\W^{\ell,2}_{x}}(\m_{q}))} \leq C_{1}\,\max\bigg(\frac{1}{|\lambda|},\frac{1}{\mathrm{Re}\lambda+\mu_{\star}}\bigg)\,, \quad \forall \, \lambda \in \C_{\mu_{\star}}^{\star}\,, \;\, \forall \,\e \in (0,\e_{0})\,,$$
where $\e_{0}$ and $\mu_{\star}$ have been defined in Theorem \ref{theo:G1e}, $C_{1}$ being independent of $\e$.
\end{prop}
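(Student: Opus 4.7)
The plan is to deduce the resolvent bound directly from the semigroup decay estimate \eqref{eq:decay} of Theorem~\ref{theo:G1e}, by splitting the identity via the spectral projection $\mathbf{P}_{0}$. The decomposition is
\[
\Rs(\lambda,\G_{1,\e}) = \Rs(\lambda,\G_{1,\e})\mathbf{P}_{0} + \Rs(\lambda,\G_{1,\e})(\mathrm{Id}-\mathbf{P}_{0}),
\]
and I would estimate the two pieces separately.

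For the first piece, I would exploit that $\mathbf{P}_{0}$ projects onto $\mathrm{Ker}(\G_{1,\e})$, so that $\mathbf{P}_{0}$ and $\G_{1,\e}$ commute and $\G_{1,\e}\mathbf{P}_{0}=0$. Hence for any $\lambda \neq 0$, $\Rs(\lambda,\G_{1,\e})\mathbf{P}_{0} = \lambda^{-1}\mathbf{P}_{0}$, which yields the contribution $\|\mathbf{P}_{0}\|/|\lambda|$. The key point here is that $\mathbf{P}_{0}$ is the finite-rank projector given explicitly by \eqref{eq:P0} and is \emph{independent of} $\e$, so its operator norm on $\W^{s,2}_{v}\W^{\ell,2}_{x}(\m_{q})$ is a finite constant independent of $\e$ (a quick check against the weighted integrals of the $\Psi_{i}\,\M$ against $\m_{q}$, $q>q^{\star}$, shows this).

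For the second piece, I would use that the range of $(\mathrm{Id}-\mathbf{P}_{0})$ is invariant under $\mathcal{V}_{1,\e}(t)$ and that, on this invariant subspace, the semigroup decays at the uniform exponential rate $\mu_{\star}$ by \eqref{eq:decay}. Consequently, the shifted family $\{e^{\mu_{\star} t}\mathcal{V}_{1,\e}(t)(\mathrm{Id}-\mathbf{P}_{0})\}_{t\geq 0}$ is uniformly bounded by $C_{0}$, independently of $\e$. By the standard Hille--Yosida Laplace representation of the resolvent, for every $\lambda\in\C_{\mu_{\star}}^{\star}$,
\[
\Rs(\lambda,\G_{1,\e})(\mathrm{Id}-\mathbf{P}_{0})h = \int_{0}^{\infty} e^{-\lambda t}\,\mathcal{V}_{1,\e}(t)(\mathrm{Id}-\mathbf{P}_{0})h\,\d t,
\]
where the integral converges absolutely since $\mathrm{Re}\lambda > -\mu_{\star}$. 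Taking norms gives
\[
\bigl\|\Rs(\lambda,\G_{1,\e})(\mathrm{Id}-\mathbf{P}_{0})\bigr\|_{\mathscr{B}(\W^{s,2}_{v}\W^{\ell,2}_{x}(\m_{q}))} \leq \frac{C_{0}\,\|\mathrm{Id}-\mathbf{P}_{0}\|}{\mathrm{Re}\lambda + \mu_{\star}}.
\]
Summing both contributions yields the claimed bound with $C_{1}:=\max\bigl(\|\mathbf{P}_{0}\|,\,C_{0}\|\mathrm{Id}-\mathbf{P}_{0}\|\bigr)$ (up to a factor $2$), which is independent of $\e \in (0,\e_{0})$ since every constant appearing is.

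There is no substantive obstacle here once Theorem~\ref{theo:G1e} is available; the only point that requires attention is the uniformity in $\e$, which is why it was crucial to establish in Theorem~\ref{theo:G1e} a decay estimate in which neither the rate $\mu_{\star}$ nor the prefactor $C_{0}$ depends on $\e$ and without any $\e$-dependent initial-layer term of the form \eqref{eq:badrate}. This is precisely what allows the Laplace integral above to produce an $\e$-independent bound and justifies the entire perturbative strategy of Section~\ref{sec:elas}.
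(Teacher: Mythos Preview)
Your proof is correct and follows essentially the same approach as the paper: split the resolvent via $\mathbf{P}_{0}$, use $\Rs(\lambda,\G_{1,\e})\mathbf{P}_{0}=\lambda^{-1}\mathbf{P}_{0}$ on the kernel part, and bound the complementary part by the Laplace representation combined with the uniform decay \eqref{eq:decay}. The paper takes $C_{1}=\|\mathbf{P}_{0}\|+C_{0}\|\mathbf{Id}-\mathbf{P}_{0}\|$ rather than your $\max$, but this is cosmetic.
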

\begin{proof} 
On the space $ {\W^{s,2}_{v}\W^{\ell,2}_{x}}(\m_{q})$, the spectrum of $\G_{1,\e}$ satisfies
$$\mathfrak{S}(\G_{1,\e}) \cap \left\{z \in \mathbb{C}\;;\;\mathrm{Re}z >-\mu_{\star}\right\}=\{0\}$$
and the above projection $\mathbf{P}_{0}$ is nothing but the spectral projection of $\mathfrak{S}(\G_{1,\e})$ associated to the zero eigenvalue given by
$$\mathbf{P}_{0}=\frac{1}{2i\pi}\oint_{\gamma_{r}}\Rs(z,\G_{1,\e})\d z\,, \qquad \gamma_{r}:=\{z \in \mathbb{C}\;;\;|z|=r\}\,, \qquad r < \mu_{\star}.$$
Notice also 
$$\mathrm{dim}\left(\mathrm{Range}(\mathbf{P}_{0})\right)=\mathrm{dim}\,\mathrm{Ker}(\G_{1,\e})=d+2\,,$$
which means that the algebraic multiplicity of the zero eigenvalue coincides with its geometrical multiplicity and, as such, $0$ is a \emph{simple} pole of the resolvent $\Rs(\cdot,\G_{1,\e})$ (see \cite[III.5]{kato}).  Denote by $\|\cdot\|$ the operator norm in $\mathscr{B}\left( {\W^{s,2}_{v}\W^{\ell,2}_{x}}(\m_{q})\right)$ and fix $\mu \in (0,\mu_{\star})$. Since $\Rs(\lambda,\G_{1,\e})=\Rs(\lambda,\G_{1,\e})\mathbf{P}_{0}+\Rs(\lambda,\G_{1,\e})(\mathbf{Id-P}_{0})$ and $\mathbf{P}_{0}$ commutes with $\G_{1,\e}$, we only need to estimate independently
$$\|\Rs(\lambda,\G_{1,\e})\mathbf{P}_{0}\| \qquad \text{ and } \qquad \|\Rs\left(\lambda,\G_{1,\e}\right)\left[\mathbf{Id-P}_{0}\right]\|$$
for any $\lambda \in \C_{\mu}^{\star}.$ Since the multiplicity of the pole $0$ is one, one has
$\Rs(\lambda,\G_{1,\e})\mathbf{P}_{0}=\frac{1}{\lambda}\mathbf{P}_{0}$
and
$$\|\Rs(\lambda,\G_{1,\e})\mathbf{P}_{0}\| \leq \frac{\|\mathbf{P}_{0}\|}{|\lambda|}, \qquad \lambda \in \C_{\mu_{\star}}^{\star}\,.$$ 
On the other hand, since for any $\lambda \in \C_{\mu_{\star}}$ 
$$\Rs\left(\lambda,\G_{1,\e} \right)\left[\mathbf{Id-P}_{0}\right]=\int_{0}^{\infty}e^{-\lambda\,t}\mathcal{V}_{1,\e}(t)\left[\mathbf{Id-P}_{0}\right]\d t\,,$$
one deduces from Theorem \ref{theo:G1e} that
$$\|\Rs\left(\lambda,\G_{1,\e} \right)\left[\mathbf{Id-P}_{0}\right]\| \leq C_{0}\int_{0}^{\infty}e^{-\mathrm{Re}\lambda\,t}e^{-\mu_{\star}\,t}\|\mathbf{Id-P}_{0}\|\d t\,,$$
which gives that
$$\|\Rs\left(\lambda,\G_{1,\e} \right)\left[\mathbf{Id-P}_{0}\right]\| \leq C_{0}\|\mathbf{Id-P}_{0}\|\frac{1}{\mathrm{Re}\lambda+\mu_{\star}}, \qquad \forall \lambda \in \C_{\mu_{\star}}.$$
This gives the desired estimate with $C_{1}=\|\mathbf{P}_{0}\|+C_{0}\|\mathbf{Id-P}_{0}\|$ independent of $\e$ and~$\mu$.\end{proof}

\section{Linear theory in the weakly inelastic regime} \label{sec:elas}

We start this part by giving some results on the spectrum of the homogeneous operator $\LL$ in the weakly inelastic regime.
\begin{prop}\label{prop:specLL}
{For $\alpha$ close enough to $1$, on the spaces
$$L^{1}_{v}(\m_{q}) \cap \Big\{ h \in L^{1}_{v}(\m_{1})\;;\;\int_{\R^{d}}h(v)\d v=\int_{\R^{d}}h(v)\,v\,\d v=0\Big\}, \qquad q > 2$$
and
$$L^{2}_{v}(\m_{q}) \cap \Big\{ h \in L^{1}_{v}(\m_{1})\;;\;\int_{\R^{d}}h(v)\d v=\int_{\R^{d}}h(v)\,v\,\d v=0\Big\}, \qquad q > q^\star,$$}
the spectrum of $\LL$ is such that there exists $\overline{\mu} >0$ such that
\begin{equation}\label{eq:spectLL}
\mathfrak{S}(\LL) \cap \{\lambda \in \mathbb{C}\;;\;\mathrm{Re}\lambda > -\overline{\mu}\,\}=\{-\mu_{\re} \}\end{equation}
where  $\mu_{\re}$ is a simple eigenvalue of $\LL$ with
\begin{equation}\label{eq:mualpha}
\mu_{\re}=(1-\re) + \mathrm{O}((1-\re)^{2}) \qquad \text{ as } \quad \re \to 1.\end{equation}
Moreover,  denoting by $\phi_{\re}$ the unique associated eigenfunction such that $\|\phi_{\re}\|_{L^{1}_{v}(\m_{2})}= 1$ and $\phi_{\re}(0) < 0$, 
it holds
\begin{equation}\label{eq:limphialpha}
\lim_{\re\to 1}\phi_{\re}(v)=c_{0}\left(|v|^{2}-d\en_{1}\right)\M=:\phi_{1}(v).\end{equation}
\end{prop}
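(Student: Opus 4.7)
The proposition is a standard result on the perturbation of an isolated simple eigenvalue, and my plan is to treat $\LL$ as a small perturbation of the elastic operator $\mathscr{L}_{1}$, along the lines of \cite{MiMo3} but adapted to the weighted spaces at hand. I would start by identifying the spectrum of $\mathscr{L}_{1}$ on the subspace of functions with vanishing mass and momentum. On the ambient space, $\mathrm{Ker}(\mathscr{L}_{1})$ is the $(d+2)$-dimensional span of the collision invariants $\{\M,\,v_{1}\M,\ldots,v_{d}\M,\,|v|^{2}\M\}$; restricting to $h$ satisfying $\int h\,\d v = 0$ and $\int v\,h\,\d v = 0$ kills all but the energy direction, so that the constrained kernel is one-dimensional and spanned by $\phi_{1}(v) = c_{0}(|v|^{2} - d\en_{1})\M(v)$ with $c_{0} > 0$ fixed so that $\|\phi_{1}\|_{L^{1}_{v}(\m_{2})} = 1$ and $\phi_{1}(0) < 0$. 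The spectral gap of $\mathscr{L}_{1}$ on the weighted spaces considered here, obtained via the enlargement techniques of \cite{GMM} already invoked in Section~\ref{sec:collL}, then yields $\mathfrak{S}(\mathscr{L}_{1}) \cap \{\mathrm{Re}\lambda > -\mu_{\star}\} = \{0\}$ with $0$ a simple isolated eigenvalue on the constrained subspace.

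I would then apply classical perturbation theory of isolated eigenvalues. Writing $\LL - \mathscr{L}_{1} = \P_{\re} - \kappa_{\re}\nabla_{v}\cdot(v\,\cdot)$, Lemma~\ref{lem:Palphap} provides $\|\P_{\re}\| = O((1-\re)^{p})$ with loss of only one weight, while the drift is $O(1-\re)$ with loss of one derivative and one weight. Choosing a small smooth contour $\Gamma$ encircling $0$ and contained in the resolvent set of $\mathscr{L}_{1}$, for $\re$ close enough to $1$ the resolvent identity
\begin{equation*}
\Rs(\lambda,\LL) = \Rs(\lambda,\mathscr{L}_{1})\Big(I - (\LL - \mathscr{L}_{1})\,\Rs(\lambda,\mathscr{L}_{1})\Big)^{-1}
\end{equation*}
holds via a Neumann series converging uniformly on $\Gamma$, so that the spectral projections $\Pi_{\re} = -\frac{1}{2i\pi}\oint_{\Gamma}\Rs(\lambda,\LL)\,\d\lambda$ converge in operator norm to $\Pi_{1}$. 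Since $\Pi_{1}$ has rank one, so does $\Pi_{\re}$ for $\re$ sufficiently close to $1$, producing a unique simple eigenvalue $-\mu_{\re}$ of $\LL$ inside $\Gamma$ and proving \eqref{eq:spectLL}, together with the convergence $\phi_{\re} \to \phi_{1}$ in the relevant norm (after fixing sign and normalization), hence \eqref{eq:limphialpha}.

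To extract the sharp first-order expansion \eqref{eq:mualpha}, I would pair the eigenvalue equation $\LL\phi_{\re} = -\mu_{\re}\phi_{\re}$ with the dual eigenfunction $\phi_{1}^{*}(v) = c_{1}(|v|^{2} - d\en_{1})$, which spans $\mathrm{Ker}(\mathscr{L}_{1}^{*})$ in the $L^{2}(\d v)$ pairing by self-adjointness of $\mathscr{L}_{1}$ in $L^{2}(\M^{-1})$. Substituting $\phi_{\re} = \phi_{1} + O(1-\re)$ and using $\mathscr{L}_{1}^{*}\phi_{1}^{*} = 0$ yields
\begin{equation*}
-\mu_{\re}\,\langle\phi_{1},\phi_{1}^{*}\rangle = \langle(\mathbf{L}_{\re} - \mathbf{L}_{1})\phi_{1},\phi_{1}^{*}\rangle + \kappa_{\re}\,\langle v\phi_{1},\nabla_{v}\phi_{1}^{*}\rangle + O\big((1-\re)^{2}\big).
\end{equation*}
The drift term is computed by integration by parts (with $\nabla_{v}\phi_{1}^{*} = 2c_{1}v$ explicit); the collisional term is computed through the weak form \eqref{co:weak} combined with the energy loss identity \eqref{eq:lossenergy} evaluated against Maxwellians, plus the first-order estimate $G_{\re} - \M = O(1-\re)$ from Lemma~\ref{prop:psi}. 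Balancing the order-$(1-\re)$ contributions delivers $\mu_{\re} = (1-\re) + O((1-\re)^{2})$, which is precisely the linearized form of Haff's cooling law. The main obstacle is this last explicit computation: since $G_{\re}$ is not available in closed form, the cancellations at order $1-\re$ cannot be read off directly and must instead be tracked using the steady-state equation $\kappa_{\re}\nabla_{v}\cdot(vG_{\re}) = \Q_{\re}(G_{\re},G_{\re})$ as a consistency relation, which is what pins down the coefficient $1$ in front of $(1-\re)$ in \eqref{eq:mualpha}.
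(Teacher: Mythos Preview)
Your Neumann-series step has a genuine gap. You correctly note that the perturbation $\LL-\mathscr{L}_{1}=\P_{\re}+T_{\re}$ contains the drift $T_{\re}h=-\kappa_{\re}\nabla_{v}\cdot(vh)$, which loses one velocity derivative; yet you then write $\Rs(\lambda,\LL)=\Rs(\lambda,\mathscr{L}_{1})\big(I-(\LL-\mathscr{L}_{1})\Rs(\lambda,\mathscr{L}_{1})\big)^{-1}$ as if $(\LL-\mathscr{L}_{1})\Rs(\lambda,\mathscr{L}_{1})$ were bounded on the ambient space with small norm. It is not: the resolvent $\Rs(\lambda,\mathscr{L}_{1})$ maps $L^{j}_{v}(\m_{q})$ into $\D(\mathscr{L}_{1})=L^{j}_{v}(\m_{q+1})$, gaining a moment but no derivative, so $T_{\re}\Rs(\lambda,\mathscr{L}_{1})$ does not land back in $L^{j}_{v}(\m_{q})$. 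This is exactly the domain mismatch the paper flags (see the footnote to the definition of $\G_{\re,\e}$): the perturbation falls outside classical Kato theory because $\D(\LL)\subsetneq\D(\mathscr{L}_{1})$ and the relative bound does not go to zero. Your argument for the convergence of spectral projections, and hence for \eqref{eq:spectLL} and \eqref{eq:limphialpha}, therefore does not close as written. A workable repair is to insert the splitting $\LL=\A^{(\delta)}+\B_{\re}^{(\delta)}$ and factor through $\A^{(\delta)}\Rs(\lambda,\B_{\re}^{(\delta)})$, whose velocity-regularizing effect absorbs the derivative loss; this is precisely the mechanism behind Proposition~\ref{lem:inverse} in the inhomogeneous setting.

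The paper takes a different and shorter route: it does not re-prove the perturbation result at all, but invokes \cite{MiMo3}, where the statement is established in exponentially weighted $L^{1}_{v}$-spaces, and then transfers it to the polynomial-weight spaces $L^{1}_{v}(\m_{q})$ and $L^{2}_{v}(\m_{q})$ via the enlargement and shrinkage theorem \cite[Theorem~2.2]{MiMoFP}, whose hypotheses are supplied by Lemma~\ref{prop:hypo1} and Proposition~\ref{prop:hypo}. In particular the quantitative expansion \eqref{eq:mualpha} and the convergence \eqref{eq:limphialpha} are imported directly from \cite{MiMo3} rather than recomputed. Your first-order computation of $\mu_{\re}$ via the dual pairing is sound in spirit and matches how \cite{MiMo3} identifies the coefficient, but it is downstream of the spectral-projection argument that currently fails.
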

\begin{nb}  
In~{\cite{MiMo3}}, the authors obtained the exact same result but only in exponentially weighted~$L^1_v$-spaces. But it is an easy matter to enlarge and shrink the space in which this type of result holds thanks to the enlargement and shrinkage arguments developed in~{\cite{MiMoFP}} and thanks to the splitting of~$\LL$ exhibited in~\eqref{eq:splitLalpha}. Indeed, one can check that the assumptions of~ {\cite[Theorem~2.2]{MiMoFP}} are satisfied thanks to Lemma~\ref{prop:hypo1} and Proposition~\ref{prop:hypo}.
\end{nb}

The final goal of this section is to prove Theorem \ref{cor:mu} in both families of spaces considered there. {Let $\kappa>\frac{d}{2}$ be fixed.} For simplicity, in this section, we use the following notation:
$$\Y:=\W^{ {s,2}}_{v}\W^{\ell,2}_{x} (\m_{q}),\qquad \ell\in \N, \, \, s\in \N^*, \quad {\ell \geq s+1}, \quad {q >q^{\star}+\kappa+2}.$$

We will actually prove that the Theorem \ref{cor:mu} holds true first in the space $\Y$ for any choice of the parameter {$q >q^{\star}+\kappa+2$} and then will use a factorization argument to deduce it also holds in the spaces $\mathcal{E}$ introduced in Theorem~\ref{cor:mu}. We introduce then the two spaces
$$\Y_{-1}:=\W^{s-1,2}_{v}\,\W^{\ell,2}_{x}(\m_{q -\kappa-2}), \qquad \Y_{1}:=\W^{s+1,2}_{v}\W^{\ell,2}_{x}(\m_{q +\kappa+2})$$
and will also use the notation $\Y_0:=\Y$. 
We recall that, in the space $\Y$, the full linearized operator is given by
$$\mathcal{G}_{\re,\e}h=\e^{-2} \LL(h) - \e^{-1} v \cdot \nabla_{x}h, \qquad {\forall \re \in (0,1]}\,,$$
with domain $\D(\G_{\re,\e})=\W^{s+1,2}_{v} \W^{\ell+1,2}_{x}(\m_{q+1})\,.$

\noindent
Clearly, any spatially homogeneous eigenfunction of $\LL $ associated to an eigenvalue $\lambda \in \mathbb{C}$ is an eigenfunction to $\G_{\re,\e}$ with associated eigenvalue $\e^{-2} \lambda.$ In particular
$$\mathrm{Ker}(\LL) \subset \mathrm{Ker}(\G_{\re,\e}).$$
Notice that, in contrast to {\cite{briant,bmam}}, it is not clear whether such kernels agree. We deduce in particular from Proposition \ref{prop:specLL} that, on the space {$L^{2}_{v,x}(\m_{q})$}, 
$$-\e^{-2} \mu_{\re} \in \mathfrak{S}(\G_{\re,\e})$$
with associated eigenfunction $\phi_{\re}$, that is,
$$\G_{\re,\e}\phi_{\re}=-\e^{-2}\,\mu_{\re} \phi_{\re}.$$
For the eigenvalue $-\e^{-2}\,\mu_{\re}$ to stay sufficiently close to $0$, we assume that $\re=\re(\e)$ satisfies Assumption \ref{hyp:re} and write 
$$\G_{\e}=\G_{\re(\e),\e}\,,$$
and keep the notation $\G_{1,\e}$ for the elastic operator.  Similarly, for all the operators introduced in Section \ref{sec:ll} the double subscript $(\re,\e)$ will be replaced by $\e$ except when $\re=1$.   More precisely, to fix notations, we have
$$\G_{\e}h=\e^{-2} \LLe h - \e^{-1}v \cdot \nabla_{x}h\,,$$
with 
$$\LLe h=\mathbf{L}_{\re(\e)}h -\kappa_{\re(\e)}\nabla_{v}\cdot (vh)\,,$$
and 
$$\mathbf{L}_{\re(\e)}h=\Q_{\re(\e)}(h,G_{\re(\e)})+\Q_{\re(\e)}(G_{\re(\e)},h).$$
In the sequel, since $j,\ell,s,q$ are fixed, we set
$$\delta^{\dagger}:=\min\big\{\delta_{2,\ell,s-1,q-\kappa-2}^{\dagger}\, , \,\delta_{2,\ell,s,q}^{\dagger}\,,\,\delta_{2,\ell,s+1,q+\kappa+2}^{\dagger}\big\},
$$ 
and
$$\re^{\dagger}:=\max\big\{\re_{2,\ell,s-1,q-\kappa-2}^{\dagger}\, , \,\re_{2,\ell,s,q}^{\dagger}\,,\,\re_{2,\ell,s+1,q+\kappa+2}^{\dagger}\big\}\,,$$
so that, for $\delta \in (0,\delta^{\dagger})$ and $\re \in (\re^{\dagger},1)$, the results of the previous section hold in all the spaces~$\Y_j$.  Moreover, we denote by $\e^{\dagger} >0$ the \emph{unique} solution  to 
$$\re(\e^{\dagger})=\re^{\dagger}.$$
We consider $\delta \in (0,\delta^{\dagger})$, $\e \in (0,\e^{\dagger})$ (which implies $\re(\e) \in (\re^{\dagger},1)$), and write
$$\A_{\e}=\A_{\e}^{(\delta)}, \qquad \B_{\e}=\B_{\re,\e}^{(\delta)}.$$
Our scope here is to obtain a result similar to {Lemma 2.16 of \cite{Tr}} regarding the invertibility of $\G_{\e}.$ We actually drastically simplify the proof given there by exploiting the fact that the difference operator $\G_{\e}-\G_{1,\e}$ does not involve any spatial derivatives. Precisely, one starts with the following estimate for this difference:
\begin{lem}\label{lem:diffG} There exists some positive constant $C_{0}$ such that, for any $\e \in (0,\e^{\dagger})$
\begin{equation}\label{eq:diffG}
\max\left(\left\|\G_{\e}-\G_{1,\e}\right\|_{\mathscr{B}(\Y_{0},\Y_{-1})},\left\|\G_{\e}-\G_{1,\e}\right\|_{\mathscr{B}(\Y_{1},\Y_{0})}\right) \leq C_{0}\,\frac{1-\re(\e)}{\e^{2}}.\end{equation}
\end{lem}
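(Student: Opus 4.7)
The plan is to exploit the fact that the transport term cancels in the difference so the whole estimate reduces to controlling the homogeneous operator $\LLe - \mathscr{L}_1$, for which Proposition~\ref{prop:converLLL0} (in its $L^{2}_v$ form) already gives the optimal $(1-\re)$ rate with exactly the weight/derivative losses that match the gap between $\Y_0,\Y_1$ and $\Y_{-1},\Y_0$.

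First, I would write
\[
\G_{\e}-\G_{1,\e}=\e^{-2}\bigl(\LLe-\mathscr{L}_{1}\bigr)=\e^{-2}\bigl(\P_{\re(\e)}+T_{\re(\e)}\bigr),
\]
using the decomposition $\LLe h = \mathbf{L}_{\re(\e)}h - \kappa_{\re(\e)}\nabla_v\cdot(vh)$ and $\P_{\re}:=\mathbf{L}_{\re}-\mathbf{L}_1$, $T_{\re}h=-\kappa_{\re}\nabla_v\cdot(vh)$. Since $\re(\e)\in(\re^{\dagger},1)$ for $\e\in(0,\e^{\dagger})$, all quantitative estimates from Section~\ref{sec:collL} apply.

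Next I would pick $\kappa>\frac{d}{2}$ as fixed at the beginning of this section. By Proposition~\ref{prop:converLLL0} (the $L^2_v$ statement), applied with parameters $k=s$ and weight $q$, there exists $\bm{C}_{s,q}(\kappa)>0$ such that, for every $v$-Sobolev index,
\[
\|\P_{\re(\e)}\phi\|_{\W^{s,2}_v(\m_q)}\le \bm{C}_{s,q}(\kappa)\,(1-\re(\e))\,\|\phi\|_{\W^{s+1,2}_v(\m_{q+\kappa+2})}.
\]
Because $\P_{\re}$ acts only on the velocity variable (it is local in $x$), the same estimate is preserved after applying $\nabla_x^{\ell'}$ for every $|\ell'|\le \ell$ and integrating in $x$ by Fubini, which gives
\[
\|\P_{\re(\e)}h\|_{\W^{s,2}_v\W^{\ell,2}_x(\m_q)}\le \bm{C}\,(1-\re(\e))\,\|h\|_{\W^{s+1,2}_v\W^{\ell,2}_x(\m_{q+\kappa+2})}=\bm{C}\,(1-\re(\e))\,\|h\|_{\Y_1}.
\]
The analogous estimate for $\P_{\re(\e)}:\Y_0\to\Y_{-1}$ follows the same way, replacing $(s,q)$ by $(s-1,q-\kappa-2)$ in Proposition~\ref{prop:converLLL0}, which is legitimate because $s\ge 1$ and $q-\kappa-2>q^\star>0$ by our standing assumption $q>q^\star+\kappa+2$.

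Finally, for the drift term the bound $\|T_{\re}\phi\|_{\W^{k,2}_v(\m_q)}\le \kappa_{\re}\,\|\phi\|_{\W^{k+1,2}_v(\m_{q+1})}$ (which is immediate from $\nabla_v\cdot(v\phi)=d\phi+v\cdot\nabla_v\phi$) combined with Fubini in $x$ and the inclusions $\W^{s+1,2}_v\W^{\ell,2}_x(\m_{q+1})\hookrightarrow \Y_1$ and $\W^{s,2}_v\W^{\ell,2}_x(\m_{q-\kappa-1})\hookrightarrow \Y_0\cap\text{preimage of }\Y_{-1}$ gives
\[
\|T_{\re(\e)}\|_{\mathscr{B}(\Y_1,\Y_0)}+\|T_{\re(\e)}\|_{\mathscr{B}(\Y_0,\Y_{-1})}\le C\,\kappa_{\re(\e)}=C\,(1-\re(\e)).
\]
Summing the two contributions and multiplying by $\e^{-2}$ yields~\eqref{eq:diffG} with $C_0$ depending only on $s,\ell,q,\kappa$.

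There is essentially no obstacle here: all estimates are already in place in Section~\ref{sec:collL}, and the only point requiring attention is the bookkeeping of weights and derivatives---specifically verifying that $\kappa+2$ weight units and one velocity derivative are the precise losses allowed between $\Y_1$ and $\Y_0$ (and between $\Y_0$ and $\Y_{-1}$), which is exactly how the spaces $\Y_{\pm 1}$ were defined at the beginning of the section. The one subtle point is that the rate $(1-\re(\e))$ in Proposition~\ref{prop:converLLL0} (as opposed to the suboptimal $(1-\re)^p$ in Lemma~\ref{lem:Palphap}) is crucial to absorb the factor $\e^{-2}$ cleanly and obtain $\tfrac{1-\re(\e)}{\e^2}$ rather than a weaker power.
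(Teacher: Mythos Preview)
Your proof is correct and follows essentially the same route as the paper: cancel the transport term to reduce to $\e^{-2}(\LLe-\mathscr{L}_1)$, then invoke the $L^2_v$ estimate of Proposition~\ref{prop:converLLL0} together with Fubini and commutation of $\nabla_x$ with the homogeneous operator. The paper applies the combined estimate for $\LLe-\mathscr{L}_1$ directly rather than splitting into $\P_{\re}+T_{\re}$, but this is cosmetic since that combined estimate is itself obtained from the same two pieces. One minor slip: your embedding $\W^{s+1,2}_v\W^{\ell,2}_x(\m_{q+1})\hookrightarrow \Y_1$ is written in the wrong direction (it is $\Y_1\hookrightarrow \W^{s+1,2}_v\W^{\ell,2}_x(\m_{q+1})$, since heavier weights embed into lighter ones), but the logic you need is the correct one.
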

\begin{proof} Observe that, in the difference $\G_{\e}-\G_{1,\e}$, the transport term $v\cdot\nabla_{x}$ vanishes so that
$$\G_{\e}-\G_{1,\e}=\e^{-2} \left[\LLe-\mathscr{L}_{1}\right].$$
Let us only prove the estimate in $\mathscr{B}(\Y_0,\Y_{-1})$, the other being the same (changing only the value of $s$ and $q$). For a given $h \in \Y_0$, one has, since $\nabla_{x}^{k}$ commutes with both $\LLe$ and $\mathscr{L}_{1}$, 
\begin{equation*}\begin{split}
\|\left(\G_{\e}-\G_{1,\e}\right)h\|_{\Y_{-1}}^{2}&=\e^{-4}\sum_{\substack {0\leq k \leq \ell, \, 0 \leq r \leq s-1, \\ r+k \leq \ell}} 
\int_{\R^{d}} \| \nabla_x^{k} \nabla_v^{r} \left[\LLe-\mathscr{L}_{1}\right]h\|_{L^{2}_{x}}^{2} \, \m_{q-\kappa-2}^{2}(v)\d v\\
&=\e^{-4} \sum_{\substack {0\leq k \leq \ell}} \int_{\T^{d}}\left\|\left[\LLe-%
\mathscr{L}_{1}\right]\nabla_{x}^{k}h\right\|_{\W^{s-1,2}_{v}(\m_{q-\kappa-2})}^{2}\d x\\
\end{split}\end{equation*}
According to \eqref{eq:llXk-L2}, we deduce that there is $C(s,q) >0$ such that
\begin{multline*}
\left\|\left(\G_{\e}-\G_{1,\e}\right)h\right\|_{\Y_{-1}} \leq C(s,q)\frac{1-\re(\e)}{\e^{2}}\left(\sum_{\substack {0\leq k \leq \ell}}\int_{\T^{d}}\|\nabla_{x}^{k}h\|_{\W^{s,2}_{v}(\m_{q})}^{2}\d x\right)^{\frac{1}{2}}\\
=C(s,q)\frac{1-\re(\e)}{\e^{2}}\|h\|_{\Y_0}\end{multline*}
which is the desired result. \end{proof}
\begin{nb} The above result does seem to be true if one replace $\Y$ with $\W^{s,1}_{v}\W^{\ell,2}_{x}(\m_{q})$ here even if $\mathscr{L}_{\re}-\mathscr{L}_{1}$ satisfies nice estimates on $\W^{s,1}_{v}(\m_{q})$. The use of Fubini Theorem is fundamental here to be able to get our rate.
\end{nb}
\begin{nb}\label{nb:diffQXY}  Notice that the exact same argument together with Lemma \ref{lem:els} show that for $\ell$, $s \in \N$ and $q \geq 0$,
\begin{multline*}
\|\Q_{\re(\e)}(g,h)-\Q_{1}(g,h)\|_{\W^{s,1}_v\W^{\ell,2}_x(\m_q)}\\
\leq C(1-\re(\e))\|g\|_{\W^{s+1,2}_v \W^{\ell,2}_x (\m_{q+2\kappa+2})}\|h\|_{\W^{s+1,2}_v \W^{\ell,2}_x (\m_{q+2\kappa+2})}
\end{multline*}
with $\kappa>\frac{d}{2}$.
\end{nb}

One can now adapt Lemma 2.16 of {\cite{Tr}}:
\begin{prop}\label{lem:inverse} For all $\lambda \in \C_{\mu_{\star}}^{\star}$, let
$$\mathcal{J}_{\e}(\lambda)=\left(\G_{\e} -\G_{1,\e}\right)\Rs(\lambda,\G_{1,\e})\A_{\e}\,\Rs(\lambda,\B_{\e}).$$
Then, $\mathcal{J}_{\e}(\lambda) \in \mathscr{B}(\Y)$.  Furthemore, for any $\mu \in (0,\mu_{\star})$ and $$\lambda \in \C_{\mu}\setminus \mathbb{D}(\mu_{\star}-\mu)=\{z\in \C\;;\;\mathrm{Re}z >-\mu\,,\,|z| >\mu_{\star}-\mu\}\,,$$
it holds that
\begin{equation}\label{eq:Jalk}
\left\|\mathcal{J}_{\e}(\lambda)\right\|_{\mathscr{B}(\Y)} \leq \frac{C}{\mu_{\star}-\mu}\,\frac{1-\re(\e)}{\e^{2}}
\end{equation}
for a universal constant $C >0$. In addition, there exists ${\e}^{\star} \in (0,\e^{\dagger})$ such that $\mathbf{Id}-\mathcal{J}_{\e}(\lambda)$ and $\lambda-\G_{\e}$ are invertible in $\Y$ with
\begin{equation}\label{eq:reso}
\Rs(\lambda,\G_{\e})=\Gamma_{\e}(\lambda)(\mathbf{Id}-\mathcal{J}_{\e}(\lambda))^{-1}, \qquad \lambda \in \C_{\mu} \setminus \mathbb{D}(\mu_{\star}-\mu)\,,\quad \e \in (0,{\e}^{\star})\,,
\end{equation}
where $\Gamma_{\e}(\lambda)=\Rs(\lambda,\B_{\e})+\Rs(\lambda,\G_{1,\e})\A_{\e}\,\Rs(\lambda,\B_{\e})$.  Finally, there exists some constant $C >0$ such that
\begin{equation}\label{eq:estimR}
\|\Rs(\lambda,\G_{\e})\|_{\mathscr{B}(\Y)} \leq \frac{C}{\mu_{\star}-\mu}, \qquad \forall \lambda \in \C_{\mu} \setminus \mathbb{D}(\mu_{\star}-\mu)\,, \quad \e \in (0,{\e}^{\star}).\end{equation}
\end{prop}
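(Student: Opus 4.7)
The plan is to follow the strategy of \cite[Lemma 2.16]{Tr}, exploiting the fact that the difference $\G_\e - \G_{1,\e} = \e^{-2}(\LLe - \mathscr{L}_{1})$ contains no spatial derivatives and hence can trade the extra $v$-regularity and moments produced by the smoothing operator $\A_\e$ for the small factor $(1-\re(\e))/\e^2$ quantified in Lemma \ref{lem:diffG}. First I would check that $\mathcal{J}_\e(\lambda)$ is a well-defined bounded operator on $\Y$ by following the mapping chain
\begin{equation*}
\Y \;\xrightarrow{\,\Rs(\lambda,\B_\e)\,}\; \Y \;\xrightarrow{\,\A_\e\,}\; \Y_1 \;\xrightarrow{\,\Rs(\lambda,\G_{1,\e})\,}\; \Y_1 \;\xrightarrow{\,\G_\e - \G_{1,\e}\,}\; \Y.
\end{equation*}
Here $\Rs(\lambda,\B_\e)$ is bounded on $\Y$ by Proposition \ref{prop:Bree} (the resolvent exists since $\e^{-2}\nu_{*}\gg \mu$ for small $\e$, so $\lambda \in \C_\mu$ lies in the resolvent set of $\B_\e$); $\A_\e = \e^{-2}\A^{(\delta)}$ maps $\Y$ into $\Y_1$ by Lemma \ref{prop:hypo1} (the image $\A^{(\delta)}f$ is compactly supported in $v$ with arbitrary smoothness, $\W^{s,2}_v(\m_q) \hookrightarrow L^1_v(\m_1)$ since $q > 1 + d/2$, and $\A^{(\delta)}$ is local in $x$ so it commutes with $\nabla_x^k$); $\Rs(\lambda,\G_{1,\e})$ is bounded on $\Y_1$ by Proposition \ref{prop:resG1e} applied at that regularity level (permitted since $\ell\geq s+1$); and the final arrow $\G_\e - \G_{1,\e}:\Y_1 \to \Y$ is exactly the second estimate of Lemma \ref{lem:diffG}.

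To obtain \eqref{eq:Jalk}, I would quantify each factor on $\lambda \in \C_\mu\setminus \mathbb{D}(\mu_\star-\mu)$. Combining the semigroup bound of Proposition \ref{prop:Bree} with the Laplace representation gives $\|\Rs(\lambda,\B_\e)\|_{\mathscr{B}(\Y)} \leq C\e^2/(\nu_{*}-\e^2\mu) \leq 2C\e^2/\nu_{*}$ for $\e$ small; the identity $\A_\e = \e^{-2}\A^{(\delta)}$ gives $\|\A_\e\|_{\mathscr{B}(\Y,\Y_1)}\leq C_{\A}\,\e^{-2}$; on the prescribed region both $|\lambda|$ and $\mathrm{Re}\lambda + \mu_\star$ exceed $\mu_\star - \mu$, so Proposition \ref{prop:resG1e} yields $\|\Rs(\lambda,\G_{1,\e})\|_{\mathscr{B}(\Y_1)} \leq C_1/(\mu_\star-\mu)$; and Lemma \ref{lem:diffG} gives $\|\G_\e - \G_{1,\e}\|_{\mathscr{B}(\Y_1,\Y)} \leq C_0(1-\re(\e))/\e^2$. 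The $\e^{-2}$ from $\A_\e$ exactly cancels the $\e^{2}$ from $\Rs(\lambda,\B_\e)$, producing the bound \eqref{eq:Jalk}.

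Next I would establish the algebraic identity $(\lambda - \G_\e)\Gamma_\e(\lambda) = \mathbf{Id} - \mathcal{J}_\e(\lambda)$ by splitting $\lambda - \G_\e$ in two different ways. Using $\G_\e = \B_\e + \A_\e$ yields $(\lambda - \G_\e)\Rs(\lambda,\B_\e) = \mathbf{Id} - \A_\e\Rs(\lambda,\B_\e)$, while using $\lambda - \G_\e = (\lambda - \G_{1,\e}) - (\G_\e - \G_{1,\e})$ gives $(\lambda - \G_\e)\Rs(\lambda,\G_{1,\e})\A_\e\Rs(\lambda,\B_\e) = \A_\e\Rs(\lambda,\B_\e) - \mathcal{J}_\e(\lambda)$; summing cancels $\A_\e\Rs(\lambda,\B_\e)$ and produces the identity. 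Under Assumption \ref{hyp:re}, $(1-\re(\e))/\e^2 \to \lambda_0$ as $\e\to 0$ and is therefore uniformly bounded on a neighbourhood of $0$; fixing the upper threshold $\e^\star \in (0,\e^\dagger)$ sufficiently small so that the constant in \eqref{eq:Jalk} times $(1-\re(\e))/\e^2$ is at most $(\mu_\star - \mu)/2$, we obtain $\|\mathcal{J}_\e(\lambda)\|_{\mathscr{B}(\Y)} \leq 1/2$ uniformly in $\lambda$. Then $\mathbf{Id} - \mathcal{J}_\e(\lambda)$ is invertible by Neumann series with operator norm bounded by $2$, and the identity reorganises as \eqref{eq:reso}. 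The final bound \eqref{eq:estimR} follows by estimating $\|\Gamma_\e(\lambda)\|_{\mathscr{B}(\Y)} \leq \|\Rs(\lambda,\B_\e)\|_{\mathscr{B}(\Y)} + \|\Rs(\lambda,\G_{1,\e})\|_{\mathscr{B}(\Y)}\,\|\A_\e\Rs(\lambda,\B_\e)\|_{\mathscr{B}(\Y)}$, where the composition $\A_\e\Rs(\lambda,\B_\e)$ has norm $O(1)$ on $\Y$ because the $\e^{-2}$ and $\e^{2}$ cancel, yielding $\|\Gamma_\e(\lambda)\|_{\mathscr{B}(\Y)} \leq C/(\mu_\star-\mu)$.

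The main obstacle is the conceptual one that $\G_\e-\G_{1,\e}$ is \emph{not} a bounded operator on $\Y$ (it loses a velocity derivative and $\kappa+2$ moments), so one cannot perturb $\Rs(\lambda,\G_{1,\e})$ directly; the factorization $\Gamma_\e(\lambda)$ is precisely engineered so that the smoothing of $\A_\e$ restores the regularity lost by $\G_\e-\G_{1,\e}$, and the compensating $\e^{\pm 2}$ scalings allow the full combination $\mathcal{J}_\e(\lambda)$ to be controlled by the intrinsically small quantity $(1-\re(\e))/\e^2$. Passing from $\Y$ to the larger spaces $\mathcal{E}$ announced in Theorem \ref{cor:mu} will then be handled in a subsequent enlargement step and is not needed here.
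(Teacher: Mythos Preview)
Your approach matches the paper's almost exactly---the mapping chain $\Y\to\Y\to\Y_1\to\Y_1\to\Y$, the $\e^{\pm2}$ cancellation, and the algebraic identity $(\lambda-\G_\e)\Gamma_\e(\lambda)=\mathbf{Id}-\mathcal{J}_\e(\lambda)$ are all as in the original argument.

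There is, however, one genuine gap. From the identity $(\lambda-\G_\e)\Gamma_\e(\lambda)=\mathbf{Id}-\mathcal{J}_\e(\lambda)$ and the invertibility of $\mathbf{Id}-\mathcal{J}_\e(\lambda)$ you only obtain that $\Gamma_\e(\lambda)(\mathbf{Id}-\mathcal{J}_\e(\lambda))^{-1}$ is a \emph{right} inverse of $\lambda-\G_\e$, i.e.\ that $\lambda-\G_\e$ is surjective. In a Banach space this does not imply invertibility; you still need to rule out that $\lambda$ is an eigenvalue. The paper closes this gap with a separate injectivity argument: assuming $\G_\e h=\lambda h$ with $h\in\D(\G_\e)$, one writes $(\lambda-\G_{1,\e})h=(\G_\e-\G_{1,\e})h$ to get $\|h\|_{\Y}\lesssim\frac{1-\re(\e)}{\e^2}\|h\|_{\Y_1}$ via Lemma~\ref{lem:diffG} and Proposition~\ref{prop:resG1e}, and then uses $(\lambda-\B_\e)h=\A_\e h$, i.e.\ $h=\Rs(\lambda,\B_\e)\A_\e h$, to bound $\|h\|_{\Y_1}\lesssim\|h\|_{\Y}$. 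Combining gives $\|h\|_{\Y}\leq C\frac{1-\re(\e)}{\e^2}\|h\|_{\Y}$, which forces $h=0$ for $\e$ small. Without this step, your statement ``the identity reorganises as \eqref{eq:reso}'' is not justified.
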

\begin{proof} We adapt the method of \cite{Tr} but simplifies it in several aspects. For $\mathrm{Re}\lambda >-{\mu}_{\star}$, $\lambda\neq 0$, one knows from Proposition \ref{prop:resG1e} that $\Rs(\lambda,\G_{1,\e}) \in \mathscr{B}(\Y_{1})$ {since we assumed $\ell\geq s+1$} and there is $C_{1} >0$ such that, for any $\e \in (0,\e^{\dagger})$, it holds that
$$\left\|\Rs(\lambda,\G_{1,\e})\right\|_{\mathscr{B}(\Y_{1})} \leq C_{1}\max\left(\frac{1}{|\lambda|},\,\frac{1}{\mathrm{Re}\lambda+\mu_{\star}}\right), \qquad \lambda \in \C_{\mu_{\star}}^{\star}.$$ 
Moreover, from Proposition \ref{prop:hypo}, there is $\bm{\nu} >0$ such that $\B_{\e}+\e^{-2}\bm{\nu}$ is hypo-dissipative in $\Y$. In particular (see Remark \ref{nb:hypo}) there exists $C_{2} >0$, independent of $\e$, such that
$$\|\Rs(\lambda,\B_{\e})\|_{\mathscr{B}(\Y)} \leq \frac{C_{2}}{\mathrm{Re}\lambda+\e^{-2}\bm{\nu}}, \qquad \forall \, \mathrm{Re}\lambda >-\mu_{\star}\,.$$
Therefore, as soon as $\e^{-2}\bm{\nu} > 2\mu_{\star}$, one gets 
\begin{equation}\label{eq:estiRsB}
\|\Rs(\lambda,\B_{\e})\|_{\mathscr{B}(\Y)} \leq \frac{C_{2}\,\e^{2}}{\e^{2}\mathrm{Re}\lambda+\bm{\nu}} \leq C_{3}\,\e^{2}, \qquad \forall \, \mathrm{Re}\lambda >-\mu_{\star},\end{equation}
with $C_{3}=2C_{2}/\bm{\nu}.$  A similar estimate holds true if $\Y$ is replaced with $\Y_{\pm 1}$. Notice that the regularization properties of $\A_{\e}$ in both velocity regularity and tail behaviour implies that there exists $C  >0$ (independent of $\e$) such that 
\begin{equation}\label{eq:Aeij}\|\A_{\e}\|_{\mathscr{B}(\Y_{i},\Y_{j})} \leq C \e^{-2}, \qquad i \leq j, \quad i,j \in \{-1,0,1\}\end{equation}
from which in particular,
$$\|\A_{\e}\Rs(\lambda,\B_{\e})\|_{\mathscr{B}(\Y,\Y_{1})} \leq C_{4}\,,\qquad \qquad \forall\, \mathrm{Re}\lambda >-\mu_{\star},$$
with $C_{4}=C_{3}\,C.$  We deduce with this that, for any $\mathrm{Re}\lambda >-\mu_{\star}$, $\lambda\neq 0$, the operator $\mathcal{J}_{\e}(\lambda) \in \mathscr{B}(\Y)$ is well-defined and, for any $r \in (0,\mu_{\star})$
\begin{equation*}\begin{split}
\left\|\mathcal{J}_{\e}(\lambda)\right\|_{\mathscr{B}(\Y)} &\leq \left\|\G_{\e}-\G_{1,\e}\right\|_{\mathscr{B}(\Y_{1},\Y)}\,\|\Rs(\lambda,\G_{1,\e})\|_{\mathscr{B}(\Y_{1})}\,\left\|\A_{\e}\,\Rs(\lambda,\mathcal{B}_{\e})\right\|_{\mathscr{B}(\Y,\Y_{1})}\\
&\leq C_{5}\frac{1-\re(\e)}{\e^{2}}\,\,\max\left(\frac{1}{|\lambda|},\,\frac{1}{\mathrm{Re}\lambda+\mu_{\star}}\right), \qquad \lambda \in \C_{\mu_{\star}}^{\star}\,,
\end{split}\end{equation*}
with $C_{5}:=C_{0}C_{1}C_{4} >0$ independent of $\e$. Then, for $\mu \in (0,\mu_{\star})$ it holds
\begin{equation}\label{eq:Jel}
\left\|\mathcal{J}_{\e}(\lambda)\right\|_{\mathscr{B}(\Y)} \leq C_{5}\frac{1-\re(\e)}{\e^{2}}\,\,\max\left(\frac{1}{|\lambda|},\,\frac{1}{\mu_{\star}-\mu}\right), \qquad \lambda \in \C_{\mu}^{\star}\,,\end{equation}
which gives \eqref{eq:Jalk}. With this, under Assumptions \ref{hyp:re}, one can choose $\e^{\star}$ small enough, depending on the difference $|\mu_{\star}-\mu|$, so that
\begin{equation}\label{eq:def-re}
\rho(\e)=\frac{C_{5}}{\mu_{\star}-\mu}\,\frac{1-\re(\e)}{\e^{2}} < 1, \qquad \forall \e \in (0,\e^{\star}).\end{equation}
Under such an assumption, one sees that, for all $\lambda \in \C_{\mu}\setminus \mathbb{D}(\mu_{\star}-\mu)$,  $\mathbf{Id}-\mathcal{J}_{\e}(\lambda)$ is invertible in~$\Y$  with 
$$(\mathbf{Id}-\mathcal{J}_{\e}(\lambda))^{-1}=\sum_{p=0}^{\infty}\left[\mathcal{J}_{\e}(\lambda)\right]^{p}, \qquad \forall \e \in (0,\e^{\star}).$$
Let us fix then $\e \in (0,\e^{\star})$ and $\lambda \in \C_{\mu}\setminus \mathbb{D}(\mu_{\star}-\mu)$. The range of $\Gamma_{\e}(\lambda)$ is clearly included in $\D(\B_{\e})=\D(\G_{1,\e})$. Then, writing $\G_{\e}=\A_{\e}+\B_{\e}$ we easily get that
$$(\lambda-\G_{\e})\Gamma_{\e}(\lambda)=\mathbf{Id}-\mathcal{J}_{\e}(\lambda)$$
i.e. $\Gamma_{\e}(\lambda)(\mathbf{Id}-\mathcal{J}_{\e}(\lambda))^{-1}$ is a right-inverse of $(\lambda-\G_{\e}).$ To prove that $\lambda-\G_{\e}$ is invertible, it is therefore enough to prove that it is one-to-one. Consider the eigenvalue problem
$$\G_{\e} h=\lambda\,h, \qquad h \in \D(\G_{\e}),$$
 Writing this as $(\lambda-\G_{1,\e})h=\G_{\e} h-\G_{1,\e}h$, there is a positive constant $C_{6} >0$ independent of $\e$ such that
\begin{equation}\label{eq:hx1} 
\|h\|_{\Y}=\|\Rs(\lambda,\G_{1,\e})(\G_{\e}-\G_{1,\e})h\|_{\Y}  \leq C_{6}\frac{1-\re(\e)}{\e^{2}}\,\,\|h\|_{\Y_{1}}\end{equation}
where we used  Proposition \ref{prop:resG1e} to estimate $\|\Rs(\lambda,\G_{1,\e})\|_{\mathscr{B}(\Y)}$ on $\C_{\mu}\setminus\mathbb{D}(\mu_{\star}-\mu)$ and \eqref{eq:diffG} for the difference $(\G_{\e}-\G_{1,\e})h$. Let us now estimate $\|h\|_{\Y_{1}}$. Since $\G_{\e} h=\lambda\,h$, one has $(\lambda-\B_{\e})h=\A_{\e} h$ and
$h=\Rs(\lambda,\B_{\e})\A_{\e} h$,
so that, thanks to \eqref{eq:estiRsB},
$$\|h\|_{\Y_{1}} \leq \|\Rs(\lambda,\B_{\e})\|_{\mathscr{B}(\Y_{1})}\,\|\A_{\e} h\|_{\Y_{1}} \leq C_{3}\e^{2}\|\A_{\e} h\|_{\Y_{1}}\leq \bar{C}_{3}\|h\|_{\Y}$$
where we used \eqref{eq:Aeij}. Combining this with the above estimate \eqref{eq:hx1}, we end up with
$$\|h\|_{\Y} \leq   C_{7}\frac{1-\re(\e)}{\e^{2}}\,\,\|h\|_{\Y}$$
with $C_{7}:=C_{6}\bar{C}_{3}$ independent of $\e$. One sees that, up to reducing $\e^{\star}$, one can assume that  
$C_{7}\frac{1-\re(\e)}{\e^{2}}< 1$ for $\e \in (0,\e^{\star})$ which implies that $h=0.$ This proves that $\lambda-\G_{\e}$ is one-to-one and its right-inverse is, actually, its inverse. Thus, for $\e\in (0,\e^{\star})$, $\C_{\mu} \setminus \mathbb{D}(\mu_{\star}-\mu)$ belongs to the resolvent set of $\G_{\e}$ and this shows \eqref{eq:reso}. To estimate now $\|\Rs(\lambda,\G_{\e})\|_{\mathscr{B}(\Y)}$ one simply notices that
\begin{equation}\label{eq:I-Jel}
\|(\mathbf{Id}-\mathcal{J}_{\e}(\lambda))^{-1}\|_{\mathscr{B}(\Y)} \leq \sum_{p=0}^{\infty}\|\mathcal{J}_{\e}(\lambda)\|_{\mathscr{B}(\Y)}^{p}\leq \frac{1}{1-\rho(\e)}, \quad \forall\, \lambda \in \C_{\mu}\setminus \mathbb{D}(\mu_{\star}-\mu)\end{equation}
from which, as soon as $\lambda \in \C_{\mu}\setminus\mathbb{D}(\mu_{\star}-\mu)$,
$$\|\Rs(\lambda,\G_{\e})\|_{\mathscr{B}(\Y)}\leq \frac{1}{1-\rho(\e)}\,\|\Gamma_{\e}(\lambda)\|_{\mathscr{B}(\Y)}\,.$$
One checks, using the previous computations, that for $\lambda\in \C_{\mu}\setminus\mathbb{D}(\mu_{\star}-\mu)$,
\begin{equation}\label{eq:normGe}\|\Gamma_{\e}(\lambda)\|_{\mathscr{B}(\Y)} \leq C_{3}\e^{2}+C_{3}\|\A\|_{\mathscr{B}(\Y)}\|\Rs(\lambda,\G_{1,\e})\|_{\mathscr{B}(\Y)}\end{equation}
and deduces \eqref{eq:estimR}.  This achieves the proof.
\end{proof}
%
\begin{nb} Of course, the above result is relevant mainly for $\tfrac{1}{2}\mu_{\star} < \mu < \mu_{\star}$ for which $\mathbb{D}(\mu_{\star}-\mu) \subset \C_{\mu}$, see Figure 1. Notice also that, in previous statement, the parameter $\e^{\star}$ is depending only on the gap
$$\chi:=\mu_{\star}-\mu\,.$$
From \eqref{eq:def-re} we consider $\e$ for which
$$\lambda_{0}:=\lim_{\e\to0^{+}}\frac{1-\re(\e)}{\e^{2}} \ll \chi\,,$$
therefore, $\lambda_{0}$ is a fraction of $\chi$.
\end{nb}
A first obvious consequence of Proposition \ref{lem:inverse} is that, for any $\mu \in (0,\mu_{\star})$, there is $\e^{\star} \in (0,\e^{\dagger})$ depending only on $\chi=\mu_{\star}-\mu$ such that, on $\Y$
$$\mathfrak{S}(\G_{\e}) \cap \{\lambda\in \mathbb{C}\,;\,\mathrm{Re}\lambda > -\mu\} \subset \{z \in \mathbb{C}\,;\,|z| \leq  \mu_{\star}-\mu\}, \qquad \forall\, \e \in (0,\e^{\star}).$$
We denote by $\mathbf{P}_{\e}$ the spectral projection associated to the set
$$\mathfrak{S}_{\e}:=\mathfrak{S}(\G_{\e}) \cap \C_{\mu}=\mathfrak{S}(\G_{\e}) \cap  \mathbb{D}(\mu_{\star}-\mu).$$
One can deduce then the following lemma whose proof is similar to \cite[Lemma 2.17]{Tr}. 
%
\begin{lem}\phantomsection\label{lem:PaP0}\phantomsection For any $\mu \in (0,\mu_{\star})$ there is some $\e_{0}^{\star} \in (0,\e^{\star})$ depending only on $\mu_{\star}-\mu$ and such that
$$\left\|\mathbf{P}_{\e}-\mathbf{P}_{0}\right\|_{\mathscr{B}(\Y)} < 1, \qquad \forall\, \e \in (0,\e_{0}^{\star}).$$
In particular,
\begin{equation}\label{eq:dim}
\mathrm{dim}\,\mathrm{Range}(\mathbf{P}_{\e})=\mathrm{dim}\,\mathrm{Range}(\mathbf{P}_{0})=d+2, \qquad \forall\, \e \in (0,\e_{0}^{\star}).\end{equation}
\end{lem}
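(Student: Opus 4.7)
The plan is to represent both $\mathbf{P}_\e$ and $\mathbf{P}_0$ as Dunford integrals of the resolvents along a common contour, estimate their difference via the second resolvent identity together with the bounds already established in this section, and then invoke the classical stability of rank under small perturbations of projections.

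First, I would fix some $r \in (\mu_{\star}-\mu,\mu_{\star})$ and take the circle $\Gamma=\{z\in\C\,:\,|z|=r\}$. By Proposition~\ref{lem:inverse} the contour $\Gamma\subset \C_{\mu}\setminus\mathbb{D}(\mu_{\star}-\mu)$ lies in the resolvent set of $\G_{\e}$ for every $\e\in(0,\e^{\star})$ and encloses the full spectral set $\mathfrak{S}_{\e}$, while Proposition~\ref{prop:resG1e} ensures $\Gamma\subset \C_{\mu_{\star}}^{\star}$ lies in the resolvent set of $\G_{1,\e}$ and surrounds its unique eigenvalue $\{0\}$ inside the disc. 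Hence
\begin{equation*}
\mathbf{P}_{\e}-\mathbf{P}_{0}=\frac{1}{2\pi i}\oint_{\Gamma}\Bigl[\Rs(\lambda,\G_{\e})-\Rs(\lambda,\G_{1,\e})\Bigr]\d\lambda .
\end{equation*}

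Next, I would use the second resolvent identity
\begin{equation*}
\Rs(\lambda,\G_{\e})-\Rs(\lambda,\G_{1,\e})=\Rs(\lambda,\G_{\e})(\G_{\e}-\G_{1,\e})\Rs(\lambda,\G_{1,\e})
\end{equation*}
and aim for an $\mathscr{B}(\Y)$-bound of order $\frac{1-\re(\e)}{\e^{2}}$ uniform along $\Gamma$. Lemma~\ref{lem:diffG} delivers $\|\G_{\e}-\G_{1,\e}\|_{\mathscr{B}(\Y_{1},\Y)}\leq C_{0}\,\frac{1-\re(\e)}{\e^{2}}$ and Proposition~\ref{lem:inverse} provides a uniform bound for $\Rs(\lambda,\G_{\e})$ in $\mathscr{B}(\Y)$ on $\Gamma$. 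The only missing ingredient is a regularity-gaining estimate $\|\Rs(\lambda,\G_{1,\e})\|_{\mathscr{B}(\Y,\Y_{1})}\leq C_{r}$, which I would obtain from the factorization
\begin{equation*}
\Rs(\lambda,\G_{1,\e})=\Rs(\lambda,\B_{1,\e})+\Rs(\lambda,\G_{1,\e})\,\A_{\e}\,\Rs(\lambda,\B_{1,\e}) ,
\end{equation*}
combined with (i) the uniform $\mathscr{B}(\Y_{1})$-bound on $\Rs(\lambda,\B_{1,\e})$ from the hypo-dissipativity of $\B_{1,\e}+\e^{-2}\nu_{2,\ell,s+1,q+\kappa+2}$ in $\Y_{1}$ (Proposition~\ref{prop:hypo}), (ii) the smoothing $\A_{\e}\in\mathscr{B}(\Y,\Y_{1})$ from \eqref{eq:Aeij}, and (iii) the uniform $\mathscr{B}(\Y_{1})$-bound on $\Rs(\lambda,\G_{1,\e})$ (Proposition~\ref{prop:resG1e} applied in $\Y_{1}$). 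Chaining everything and integrating along $\Gamma$ yields
\begin{equation*}
\|\mathbf{P}_{\e}-\mathbf{P}_{0}\|_{\mathscr{B}(\Y)}\leq r\,C_{r}\,\frac{1-\re(\e)}{\e^{2}} ,
\end{equation*}
where $C_{r}$ depends only on $\mu_{\star}-\mu$.

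To conclude, Assumption~\ref{hyp:re} yields $\frac{1-\re(\e)}{\e^{2}}\to\lambda_{0}$ as $\e\to 0^{+}$; since $\lambda_{0}$ is assumed small, I can pick $\e_{0}^{\star}\in(0,\e^{\star})$ depending only on $\mu_{\star}-\mu$ so that $\|\mathbf{P}_{\e}-\mathbf{P}_{0}\|_{\mathscr{B}(\Y)}<1$ for all $\e\in(0,\e_{0}^{\star})$. The equality \eqref{eq:dim} then follows from the classical fact that two bounded projections at operator distance strictly less than $1$ have ranges of the same (finite) dimension (see e.g.\ \cite[Lemma~I.4.10]{kato}), while $\dim\,\mathrm{Range}(\mathbf{P}_{0})=d+2$ is given by Theorem~\ref{theo:G1e}. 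The main obstacle is the regularity-gain step: the loss of one $v$-derivative and $\kappa+2$ moments carried by $\G_{\e}-\G_{1,\e}$ (see Lemma~\ref{lem:diffG}) must be compensated by a symmetric gain through $\Rs(\lambda,\G_{1,\e})$, and this compensation relies crucially on the fact that $\A_{\e}$ is smoothing enough to bridge $\Y$ into $\Y_{1}$ with $\e$-independent norm.
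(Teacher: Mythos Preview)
Your overall strategy---Dunford integrals plus the second resolvent identity---is the natural first attempt, but the ``regularity-gain'' step you flag as the main obstacle does not go through as written. In the factorization
\[
\Rs(\lambda,\G_{1,\e})=\Rs(\lambda,\B_{1,\e})+\Rs(\lambda,\G_{1,\e})\,\A_{\e}\,\Rs(\lambda,\B_{1,\e}),
\]
the second summand does map $\Y\to\Y_{1}$ thanks to the smoothing of $\A_{\e}$, but the first summand does not: $\Rs(\lambda,\B_{1,\e})$ sends $\Y$ into the domain of $\B_{1,\e}$, which gains one moment and one $x$-derivative (from $\Sigma_{\M}$ and $v\cdot\nabla_{x}$) but no $v$-derivative and certainly not the $\kappa+2$ extra moments required by $\Y_{1}=\W^{s+1,2}_{v}\W^{\ell,2}_{x}(\m_{q+\kappa+2})$. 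Hence $\Rs(\lambda,\G_{1,\e})\notin\mathscr{B}(\Y,\Y_{1})$ and the chain $\Rs(\lambda,\G_{\e})(\G_{\e}-\G_{1,\e})\Rs(\lambda,\G_{1,\e})$ cannot be controlled in $\mathscr{B}(\Y)$ via Lemma~\ref{lem:diffG} alone.

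The paper avoids this precisely by \emph{not} using the second resolvent identity directly. Instead it expands each resolvent separately through the splitting $\G=\A+\B$: it writes $\Rs(\lambda,\G_{\e})$ via the representation $\Gamma_{\e}(\lambda)(\mathbf{Id}-\mathcal{J}_{\e}(\lambda))^{-1}$ of Proposition~\ref{lem:inverse}, expands $(\mathbf{Id}-\mathcal{J}_{\e}(\lambda))^{-1}=\mathbf{Id}+\mathcal{J}_{\e}(\lambda)(\mathbf{Id}-\mathcal{J}_{\e}(\lambda))^{-1}$, and does the same for $\Rs(\lambda,\G_{1,\e})$. The key observation is that the ``bare'' pieces $\Rs(\lambda,\B_{\e})$ and $\Rs(\lambda,\B_{1,\e})$ are analytic inside the contour (no spectrum there), so $\oint_{\gamma_{r}}\Rs(\lambda,\B_{\e})\,\d\lambda=\oint_{\gamma_{r}}\Rs(\lambda,\B_{1,\e})\,\d\lambda=0$. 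After this cancellation, every surviving term in $\mathbf{P}_{\e}-\mathbf{P}_{0}$ contains at least one factor of $\A_{\e}$, and it is this factor that bridges the regularity gap $\Y\to\Y_{1}$ (or $\Y_{-1}\to\Y$) needed to invoke Lemma~\ref{lem:diffG}. In other words, the analyticity of $\B_{\e},\B_{1,\e}$ inside the disk is what rescues the argument---the regularity gain comes only \emph{after integration}, not pointwise on the contour.
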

\begin{proof} Let $\frac{\mu_{\star}}{2} < \mu < \mu_{\star}$ and $0 < r < \chi:=\mu_{\star}-\mu$. Recall that $\e^{\star}$ depends only on $\chi$. One has $\mathbb{D}(r) \subset \C_{\mu}^{\star}.$ We set $\gamma_{r}:=\{z \in \C\;;\;|z|=r\}$. Recall that
$$\mathbf{P}_{\e}:=\frac{1}{2i\pi}\oint_{\gamma_{r}}\Rs(\lambda,\G_{\e})\d\lambda, \qquad \mathbf{P}_{0}:=\frac{1}{2i\pi}\oint_{\gamma_{r}}\Rs(\lambda,\G_{1,\e})\d\lambda.$$
For $\lambda \in \gamma_{r}$, set
$$\mathcal{Z}_{\e}(\lambda)=\Rs(\lambda,\G_{1,\e})\A_{\e}\Rs(\lambda,\B_{\e})$$
so that $\Gamma_{\e}(\lambda)=\Rs(\lambda,\B_{\e})+\mathcal{Z}_{\e}(\lambda)$. Recall from \eqref{eq:reso} that, for $\lambda \in \gamma_{r}$,
\begin{multline*}
\Rs(\lambda,\G_{\e})=\Rs(\lambda,\B_{\e})(\mathbf{Id}-\mathcal{J}_{\e}(\lambda))^{-1}+\mathcal{Z}_{\e}(\lambda)(\mathbf{Id}-\mathcal{J}_{\e}(\lambda))^{-1}\\
=\Rs(\lambda,\B_{\e})+\Rs(\lambda,\B_{\e})\mathcal{J}_{\e}(\lambda)(\mathbf{Id}-\mathcal{J}_{\e}(\lambda))^{-1}+\mathcal{Z}_{\e}(\lambda)(\mathbf{Id}-\mathcal{J}_{\e}(\lambda))^{-1}\end{multline*}
where we wrote $(\mathbf{Id}-\mathcal{J}_{\e}(\lambda))^{-1}=\mathbf{Id}+\mathcal{J}_{\e}(\lambda)(\mathbf{Id}-\mathcal{J}_{\e}(\lambda))^{-1}.$ In the same way, one sees that
\begin{multline*}
\Rs(\lambda,\G_{1,\e})=\Rs(\lambda,\B_{1,\e})+\Rs(\lambda,\G_{1,\e})\A_{\e}\Rs(\lambda,\B_{1,\e})\\
=\Rs(\lambda,\B_{1,\e})+\Rs(\lambda,\G_{1,\e})\A_{\e}\left[\Rs(\lambda,\B_{1,\e})-\Rs(\lambda,\B_{\e})\right]+\mathcal{Z}_{\e}(\lambda).\end{multline*}
Since the mappings $\lambda \in \mathbb{D}(r) \mapsto \Rs(\lambda,\B_{\e})$ and $\lambda \in \mathbb{D}(r) \mapsto \Rs(\lambda,\B_{1,\e})$ are analytic, one has
$$\oint_{\gamma_{r}}\Rs(\lambda,\B_{\e})\d \lambda=\oint_{\gamma_{r}}\Rs(\lambda,\B_{1,\e})\d \lambda=0\,,$$
so that
$$\mathbf{P}_{\e}=\frac{1}{2i\pi}\oint_{\gamma_{r}}\Rs(\lambda,\B_{\e})\mathcal{J}_{\e}(\lambda)(\mathbf{Id}-\mathcal{J}_{\e}(\lambda))^{-1}\d\lambda
+\frac{1}{2i\pi}\oint_{\gamma_{r}}\mathcal{Z}_{\e}(\lambda)(\mathbf{Id}-\mathcal{J}_{\e}(\lambda))^{-1}\d\lambda\,,$$
whereas 
$$\mathbf{P}_{0}=\frac{1}{2i\pi}\oint_{\gamma_{r}}\Rs(\lambda,\G_{1,\e})\A_{\e}\left[\Rs(\lambda,\B_{1,\e})-\Rs(\lambda,\B_{\e})\right]\d\lambda
+\frac{1}{2i\pi}\oint_{\gamma_{r}}\mathcal{Z}_{\e}(\lambda)\d\lambda.$$
 Consequently, one easily obtains that
\begin{multline*}\mathbf{P}_{\e}-\mathbf{P}_{0}=\frac{1}{2i\pi}\oint_{\gamma_{r}}\Gamma_{\e}(\lambda)\mathcal{J}_{\e}(\lambda)(\mathbf{Id}-\mathcal{J}_{\e}(\lambda))^{-1}\d\lambda\\
+\frac{1}{2i\pi}\oint_{\gamma_{r}}\Rs(\lambda,\G_{1,\e})\A_{\e}\left[\Rs(\lambda,\B_{\e})-\Rs(\lambda,\B_{1,\e})\right]\d\lambda.\end{multline*}
Using \eqref{eq:Jel}, \eqref{eq:I-Jel}, and \eqref{eq:estimR}, one notices that there exists $C >0$ independent of $\e$ such that
$$\left\|\Gamma_{\e}(\lambda)\mathcal{J}_{\e}(\lambda)(\mathbf{Id}-\mathcal{J}_{\e}(\lambda))^{-1}\right\|_{\mathscr{B}(\Y)} \leq \frac{C}{r^{2}(1-\rho(\e))}\frac{1-\re(\e)}{\e^{2}}, \qquad \forall \lambda \in \gamma_{r}\,,$$
where we used that $0< r < \mu_{\star}-\mu$ and noticed that $\|\Gamma_{\e}(\lambda)\|_{\mathscr{B}(\Y)} \leq C/r$ by virtue of \eqref{eq:normGe}. Moreover, from Proposition \ref{prop:resG1e}, it follows that
$$\left\|\Rs(\lambda,\G_{1,\e})\A_{\e}\left[\Rs(\lambda,\B_{\e})-\Rs(\lambda,\B_{1,\e})\right]\right\|_{\mathscr{B}(\Y)} \leq \frac{C_{1}}{r}\,\left\|\A_{\e}\Rs(\lambda,\B_{\e})-\A_{\e}\Rs(\lambda,\B_{1,\e})\right\|_{\mathscr{B}(\Y)}$$
for any $\lambda \in \gamma_{r}$, from which
\begin{multline*}
\left\|\mathbf{P}_{\e}-\mathbf{P}_{0}\right\|_{\mathscr{B}(\Y)} \leq \frac{C_{0}}{r}\left(\frac{1}{r(1-\rho(\e))}\frac{1-\re(\e)}{\e^{2}}+\sup_{\lambda\in\gamma_{r}}\left\|\A_{\e}\Rs(\lambda,\B_{\e})-\A_{\e}\Rs(\lambda,\B_{1,\e})\right\|_{\mathscr{B}(\Y)}\right)\end{multline*}
for some positive constant $C_{0} >0$ independent of $\e$. We only need to estimate 
$$\left\|\A_{\e}\Rs(\lambda,\B_{\e})-\A_{\e}\Rs(\lambda,\B_{1,\e})\right\|_{\mathscr{B}(\Y)}$$ for $\lambda \in \gamma_{r}$. {Observe that, for $\lambda \in \gamma_{r}$,
$$\A_{\e}\Rs(\lambda,\B_{\e})-\A_{\e}\Rs(\lambda,\B_{1,\e})=\A_{\e}\Rs(\lambda,\B_{\e})\left[\B_{\e}-\B_{1,\e}\right]\Rs(\lambda,\B_{1,\e})$$
and, with the notations of the proof of Proposition \ref{lem:inverse}, 
\begin{multline*}
\left\|\A_{\e}\Rs(\lambda,\B_{\e})-\A_{\e}\Rs(\lambda,\B_{1,\e})\right\|_{\mathscr{B}(\Y)}\\
\leq \|\A_{\e}\Rs(\lambda,\B_{\e})\|_{\mathscr{B}(\Y_{-1},\Y_{0})}\,\|\B_{\e}-\B_{1,\e}\|_{\mathscr{B}(\Y_{0},\Y_{-1})}\,\|\Rs(\lambda,\B_{1,\e})\|_{\mathscr{B}(\Y_{0})}.\end{multline*}
Now, as in Proposition \ref{lem:inverse} (see \eqref{eq:estiRsB}) there is a positive constant $C >0$ independent of $\e$ such that
$$\,\|\Rs(\lambda,\B_{\e})\|_{\mathscr{B}(\Y_{-1})} \leq C\e^{2} \qquad \|\Rs(\lambda,\B_{1,\e})\|_{\mathscr{B}(\Y_{0})} \leq C\,,\quad \lambda \in \gamma_{r}\,,$$
where we used the hypo-dissipativity of $\B_{\e}+\e^{-2}\nu_{0}$ in $\Y_{-1}$ thanks to Proposition \ref{prop:hypo}. Now, using \eqref{eq:Aeij} with $i=-1,j=0$, we deduce from \eqref{eq:diffG} that
$$\left\|\A_{\e}\Rs(\lambda,\B_{\e})-\A_{\e}\Rs(\lambda,\B_{1,\e})\right\|_{\mathscr{B}(\Y)} \leq C\frac{1-\re(\e)}{\e^{2}}, \qquad \forall \lambda \in \gamma_{r}.$$
}
Gathering the previous estimates, it follows that, for any $0<r<\chi=\mu_{\star}-\mu$,
\begin{equation}\label{eq:PePo}
\|\mathbf{P}_{\e}-\mathbf{P}_{0}\|_{\mathscr{B}(\Y)} \leq \frac{C}{r}\,\frac{1-\re(\e)}{\e^{2}}\left(\frac{1}{r(1-\rho(\e))}+1\right):=\ell(\e)
\end{equation}
and, thanks to Assumption \ref{hyp:re}, one can find $\e_{\star}$ depending only on $\chi$ such that $\ell(\e) < 1$ for any $\e \in (0,\e^{\star})$. In particular, we deduce \eqref{eq:dim} from {\cite[Paragraph I.4.6]{kato}}.
\end{proof}
\begin{figure}\label{fig:1}
\begin{tikzpicture}[line cap=round,line join=round,x=1.0cm,y=1.0cm, scale=0.9]
\draw[->,color=black] (-6,0) -- (4,0);
\draw[->,color=black] (0,-3.3) -- (0,3.3);
\draw[color=black] (+5.2pt,-5.3pt) node  {\tiny O} ;
\draw(0,0) circle (1cm);
\draw[fill](-5,0) circle (0.6mm);
\draw[fill](-4,0) circle (0.6mm);
\draw (1.6,-0.8) node {\small $\mathbb{D}(\mu_{\star}-\mu)$};
\draw[dashed] (-4,-3)--(-4,3);
\draw[dashed] (-5,-3)--(-5,3);
\draw (0,0)--(0.71,0.71)node[midway, above]{$\chi$};
\draw[<->] (-5,-3.3)--(-4,-3.3)node[midway, below]{$\chi$};
\draw (-5,-0.3) node[left] {$-\mu_{\star}$};
\draw (-4,-0.3) node[right] {$-\mu$};
\draw (4.3,0) node[below] {\small $\mathrm{Re}\lambda$};
\draw (0,3.3) node[left] {\small $\mathrm{Im}\lambda$};
\draw[fill] (-0.3,0) circle (0.6mm);
\draw (-0.1,-0.3) node[left] {\small $-\bar{\lambda}_{\e}$};
\draw (-2,2.3) node {\small $\C_{\mu}\setminus\mathbb{D}(\mu_{\star}-\mu)$};
\end{tikzpicture}
\caption{The set $\C_{\mu} \setminus\mathbb{D}(\mu_{\star}-\mu)$ and the eigenvalue $-\bar{\lambda}_{\e}$.}
\end{figure}
With Lemma \ref{lem:PaP0} we can prove  Theorem \ref{cor:mu}:
\begin{proof}[Proof of Theorem \ref{cor:mu}] We prove the result first in the space 
$$\E=\Y=\W^{s,2}_{v}\W^{\ell,2}_{x}(\m_{q}), \quad \quad \ell \geq s+1, \qquad  {q > q^{\star}+\kappa+2}$$ 
where we recall that $\kappa>\frac{d}{2}$. 
The structure of the spectrum of $\mathfrak{S}(\G_{\e}) \cap \C_{\mu}$ in the space $\Y$ comes directly from Lemma \ref{lem:PaP0} together with Proposition \ref{lem:inverse}. To describe more precisely the spectrum, one first recalls that
$$\mathfrak{S}(\LLe) \cap \{z \in \mathbb{C}\;;\,\mathrm{Re}z > -\mu\} \subset \mathfrak{S}(\G_{\e})  \cap \{z \in \mathbb{C}\;;\,\mathrm{Re}z>-\mu\}.$$
Since, for $\e$ small enough, the spectral projection $\Pi_{\LLe}$ associated to $\mathfrak{S}(\LLe) \cap \C_{\mu}$ satisfies
$$\mathrm{dim(Range}(\Pi_{\LLe}))=\mathrm{dim(Range}(\Pi_{\mathscr{L}_{1}}))=d+2=\mathrm{dim(Range}(\mathbf{P}_{\e}))\,,$$
we get that
\begin{equation}\label{eq:spectr=}
\mathfrak{S}(\LLe) \cap \C_{\mu} = \mathfrak{S}(\G_{\e})  \cap \C_{\mu}\,,
\end{equation}
that is, the eigenvalues $\lambda_{j}(\e)$ are actually eigenvalues of $\LLe$.  In particular, one has that 
$$\lambda_{d+2}(\e)=-\e^{-2} \mu_{\re(\e)}=-\frac{1-\re(\e)}{\e^{2}}+\mathrm{O}\left(\left(\frac{1-\re(\e)}{\e}\right)^{2}\right)\,, \qquad \text{ for } \e \simeq 0\,,$$
according to \eqref{eq:spectLL} and \eqref{eq:mualpha}. We set 
$$\bar{\lambda}_{\e}:=-\lambda_{d+2}(\e) >0, \qquad \lambda_{\e} \simeq -\e^{-2}(1-\re(\e)).$$
For the other eigenvalues, one notices that 
$$\int_{\R^{d}}\LLe \varphi(v)\d v=0\,, \qquad \forall \,\varphi \in \D(\LLe) \subset \Y\,.$$ 
Of course, the spatial variable $x$ plays no role here since $\LLe$ is local in $x$. We begin with understanding the eigenfunctions in 
$${\overline{Y}=L^{2}_{v}L^{2}_{x}(\m_{q+\kappa}).}$$
Recall that 
$$\int_{\R^{d}}\LLe \varphi(v)\d v=0\,, \qquad \forall \,\varphi \in \D(\LLe) \subset \overline{Y}\,,$$
which implies that
$$\m_{q+\kappa}^{-1} \in \D(\LLe^{\star}) \quad \text{ with } \quad \LLe^{\star}(\m_{q+\kappa}^{-1})=0,$$ that is, $0$ is an eigenvalue of the adjoint $\LLe^{\star}$ in $\overline{Y}^{\star}$ and therefore an eigenvalue of $\LLe$ in $\overline{Y}.$ With the same reasoning, since  
$$\int_{ \R^{d}}\LLe \varphi(v)\,v_{i}\d v =-\e^{-2}\kappa_{\re(\e)}\int_{\R^{d}}v_{i}\nabla \cdot (v\varphi(v))\d v=\e^{-2}\kappa_{\re(\e)}\int_{\R^{d}}v_{i}\,\varphi(v)\d v$$
one sees that, for any $i=1,\ldots,d$, $m_{i}^{\star}(v):=v_{i}\m_{q+\kappa}^{-1}(v) \in \D(\LLe^{\star})$ satisfies 
$$\LLe^{\star}m_{i}^{\star}=\e^{-2}\kappa_{\re(\e)}m_{i}^{\star}\,,$$
that is, $\e^{-2}\kappa_{\re(\e)}$ is an eigenvalue of $\LLe^{\star}$ of multiplicity $d$ and, as such, an eigenvalue of $\LLe$ with same multiplicity in the space $\overline{Y}.$ With this, we found $d+1$ eigenvalues of $\LLe$ in the space~$\overline{Y}$. To prove that these $d+1$ eigenvalues are still eigenvalues of $\LLe$ in the smaller space~$\Y$, we proceed as follows. Let $\bar{g}$ be an eigenfunction of $\LLe$ in $\overline{Y}$ associated to the $\e^{-2}\kappa_{\re(\e)}$ eigenvalue, i.e
$$\LLe\,\bar{g}=\e^{-2}\kappa_{\re(\e)}\bar{g}, \qquad \bar{g} \in \D(\LLe) \cap \overline{Y}.$$
With the splitting $\LLe=\B_{\re}^{\delta}+\A^{(\delta)}$, where we recall $\re=\re(\e)$ and $\delta$ is sufficiently small, one deduces from this that
$$\left(\e^{-2}\kappa_{\re(\e)}-\B_{\re}^{\delta}\right)\bar{g}=\A^{(\delta)}\bar{g}.$$
Using the fact that for $\e^{-2}\kappa_{\re(\e)} < \mu_{\star} - \mu < \nu_{\ast}$ the operator $\e^{-2}\kappa_{\re(\e)}-\B_{\re}^{\delta}$ is invertible in both~$\Y$ and $\overline{Y}$ thanks to Proposition \ref{prop:Bree} and 
$$\bar{g}=\Rs(\e^{-2}\kappa_{\re(\e)}\,,\,\B_{\re}^{(\delta)})\A^{(\delta)}\bar{g}\,.$$
Because $\bar{g}$ is depending on the velocity only, using the regularizing effect of $\A^{(\delta)}$ and the {hypo-dissipativity} property of the operator $\e^{-2}\kappa_{\re(\e)}-\B_{\re}^{\delta}$ one concludes that $\A^{(\delta)}\bar{g} \in \Y$ and, by previous identity, so is $\bar{g}$.  Therefore, any eigenfunction of $\LLe$ associated to the eigenvalue $\e^{-2}\kappa_{\re(\e)}$ in $\overline{Y}$ lies in $\Y$ as well, consequently, it is an eigenvalue of $\LLe$ in $\Y$. It has the same multiplicity $d$ as in $\overline{Y}$ since the reasoning is valid for \emph{any} eigenfunction $\bar{g}$. In the same way, we prove that $0$ is a simple eigenvalue of $\LLe$ in $\Y$. We just found exactly $d+1$ eigenvalues and exhausted $\mathfrak{S}(\LLe) \cap \C_{\mu}=\mathfrak{S}(\LLe) \cap \mathbb{D}(\mu_{\star}-\mu)$ under the assumption that $\e^{-2}\kappa_{\re(\e)} < \mu_{\star}-\mu$ which gives the desired result. \\

\noindent Now, let us prove the result in the space
$$\E:=\W^{s,1}_{v}\W^{\ell,2}_{x}(\m_{q}), \qquad q >2, \qquad {\ell \geq s}.$$
As mentioned earlier, we will resort to Theorem 2.1 of \cite{GMM}. We observe that, since $\kappa >\frac{d}{2}$, 
{$$E:=\W^{s,2}_{v}\W^{\ell+1,2}_{x}(\m_{q+q^\star+\kappa}) \hookrightarrow \E$$}
with a continuous embedding, with of course $E$ dense in $\E$. {We observe that $q+q^\star+\kappa > q^{\star}+\kappa+2$ and $\ell+1 \geq s+1$.} From the first part of the proof, the conclusion of Theorem \ref{cor:mu} holds in $E$. It follows then from a simple application of  Theorem 2.1 in \cite{GMM} that the part of the spectrum of  $ \G_{\e}$ lying in the half plance $\{z \in \mathbb{C}\;;\,\mathrm{Re}z \geq -\mu\}$ coincides in both the spaces $\E$ and $E$, i.e. Theorem \ref{cor:mu} holds in $\E$. Notice that all assumptions of Theorem 2.1 in \cite{GMM} are met for the pair of Banach spaces $(E,\E)$ in a straightforward way due to the splitting
$$\G_{\e}=\A_{\e}+\B_{\e}$$
and using Lemma \ref{prop:hypo1}, Propositions \ref{prop:hypo}--\ref{prop:Bree}. In particular, due to the regularizing effect of $\A_{\e}$ in velocity, Hypothesis (H2) (iii) of {\cite[Theorem 2.1]{GMM}} is satisfied with $n=1$. {The extension to spaces of type $\W^{s,2}_{v}\W^{\ell,2}_{x}(\m_{q})$ with $q >q^\star$ and $\ell \geq s$ is similar.}
\end{proof} 
\begin{nb} {It is an interesting open question to determine whether the conclusion of Theorem~\ref{cor:mu} remains true in spaces of the type
$\W^{s,1}_{v}\W^{\ell,1}_{x}(\m_{q}).$
The obstacle here is of course the fact that the regularizing operator $\A_{\e}$ acts in velocity only and do not induce any gain of integrability in the space variable. }  
\end{nb}

\section{Nonlinear analysis}\label{sec:non}

We now apply the results obtained so far to the study of Eq. \eqref{BE}. In all this section we assume that 
\begin{equation}\label{eq:Ekmq}
\E= {\W^{k,1}_{v}\W^{m,2}_{x}(\m_{q})}\end{equation} 
with 
\begin{equation}\label{eq:mkq}
 m  > d, \qquad m-1 \geq {k   \geq 0}, \qquad  {q   \geq 3}\,,
\end{equation}
and introduce also the Hilbert space on which $\mathscr{L}_{1}$ is symmetric 
$$\H:=\W_{v,x}^{m,2}\left(\M^{-1/2}\right).$$ 
We recall here that $\M$ is the steady state of $\mathscr{L}_{1}$ whereas $\H$ is a Hilbert space on which the elastic Boltzmann equation is well-understood {\cite{briant}}.

We also denote
\begin{multline}\label{E1E2}
{\E_{1}:={\W^{k,1}_{v}\W^{m,2}_{x}(\m_{q+1})}}, \qquad \qquad \E_{2}:={\W^{k+1,2}_{v}\W^{m,2}_{x}(\m_{q+{2\kappa}+2}), \qquad \kappa > \frac{d}{2}}\\
 \qquad \qquad \H_{1}:=\W_{v,x}^{m,2}\left(\M^{-1/2}\langle \cdot \rangle^{1/2}\right) \qquad \text{ and } \quad \E_{-1}:=\W^{k,1}_{v}\W^{m,2}_{x} (\m_{q-1})\end{multline}
where $k,m,q$ satisfy \eqref{eq:mkq}.

\medskip
\noindent
The analysis of the elastic case in {\cite{bmam,briant}} holds in $\W^{\beta,2}_{v,x}(\M^{-1/2})$ for $\beta >d$.  We need, however, the $\H$-norm to control the $\E$-norm, which constrains $\beta \geq m$.  At the same time, it is needed that $\mathcal{A}_{\e} \in \mathscr{B}(\E,\H)$ and,  because $\mathcal{A}_{\e}$ has no regularisation effect on the spatial variable, we are forced to choose $\beta \leq m$. {This explains the choice of $\beta=m.$  {Moreover, we need the constraint $m>d$ to carry out our nonlinear analysis, more precisely, we use that the embedding $\W^{m/2,2}_{x}(\T^{d}) \hookrightarrow L^{\infty}_{x}(\T^{d})$ is continuous if $m>d$ which provides us an algebra structure. Notice that the analysis of~{\cite{briant}} is also valid under this condition.}}
{Taking $q \geq3$ allows us to control the dissipation of kinetic energy $\int_{\R^{d}}\Q_{\re}(f,f)|v|^{2}\d v$ and to apply the results of Section \ref{sec:elas}.} Finally, the restriction $k \leq m-1$ in~\eqref{eq:mkq} implies the continuous embedding $\H \hookrightarrow \E_{2}$.
\medskip
\noindent

For $\bm{A,B}> 0$, we will indicate in the sequel $\bm{A} \lesssim \bm{B}$
whenever there is a positive constant $C>0$ depending on the mass and energy of the $h(0)$, but not on parameters like $t,\e$ or $\Mo$, such that
$\bm{A} \leq C\,\bm{B}.$

\medskip
\noindent

We adapt the approach of {\cite{bmam}} and decompose the solution $h_{\e}$ into
$$h_{\e}(t,x,v)=\ho(t,x,v)+\hu(t,x,v)$$
where $\ho=\ho_{\e} \in \E$ and $\hu=\hu_{\e} \in \H$ are the solutions to the following system of equations 
\begin{equation}\label{eq:h0}
\hspace{-.4cm}\left\{\begin{array}{ccl}
\partial_{t} \ho&=&\!\!\!\B_{\re(\e),\e}\ho + \e^{-1}\Q_{\re(\e)}(\ho,\ho) + \e^{-1}\Big[\Q_{\re(\e)}(\ho,\hu)+\Q_{\re(\e)}(\hu,\ho)\Big] \\ [10pt]
&& + {\Big[\G_{\e}\hu-\G_{1,\e}\hu\Big] + \e^{-1}\Big[\Q_{\re(\e)}(\hu,\hu)-\Q_{1}(\hu,\hu)\Big]} \,,\\ [10pt]
\ho(0,x,v)&=&\!\!\!h^{\e}_{\mathrm{\mathrm{in}}}(x,v) \in \mathcal{E}\,.
\end{array}\right.
\end{equation}
and 
\begin{equation}\label{eq:h1}
\left\{
\begin{array}{ccl}
\partial_{t} \hu&=& {\G_{1,\e}\hu} + \e^{-1}\Q_{1}(\hu,\hu) + \A_\e \ho
\,,\\[10pt]
\hu(0,x,v)&=&0\,.
\end{array}\right.
\end{equation}
In this section, we omit the dependence on $\e$ for $\ho$ and $\hu$.  We recall that 
$$\int_{\T^{d}\times\R^{d}} F_{\mathrm{in}}^{\e}(x,v)\left(\begin{array}{c}1\\v\end{array}\right)\d v\d x=0 \Longrightarrow \int_{\T^{d}\times\R^{d}} f_{\e}(t,x,v)\left(\begin{array}{c}1\\v\end{array}\right)\d v\d x=\left(\begin{array}{c}1\\0\end{array}\right)$$
and, in particular, the fluctuation $h_{\e}(t,x,v)$ also satisfies
\begin{equation}\label{eq:massmomH}
\int_{\T^{d}\times\R^{d}}h_{\e}(t,x,v) \left(\begin{array}{c}1\\v\end{array}\right)\d v\d x=\left(\begin{array}{c}0 \\0\end{array}\right).\end{equation}
Recalling the definition of $\mathbf{P}_{0}$ in Theorem \ref{theo:G1e}, we define
\begin{equation}\label{eq:PP0}
\mathbb{P}_{0}h=\sum_{i=1}^{d+1}\left(\int_{\T^{d}\times\R^{d}}h\,\Psi_{i}\,\d v\d x\right)\,\Psi_{i}\,\M\,, \quad \Pi_{0}h=\left(\int_{\T^{d}\times\R^{d}}h\Psi_{d+2}\,\d v\d x\right)\,\Psi_{d+2}\,\M\,,
\end{equation}
where recall that
$$\Psi_{1}=1\,,\quad \Psi_{i}=\frac{v_{i-1}}{\sqrt{\en_{1}}} \; \text{ for } \; i=2,\ldots,d+1\,, \quad \text{and} \quad \Psi_{d+2}=\frac{1}{\en_{1}\sqrt{2d}}(|v|^{2}-d\en_{1}).$$
Of course, see \eqref{eq:P0}, one has $\mathbb{P}_{0}=\mathbf{P}_{0}-\Pi_{0}$.  Recall that the eigenfunctions $\Psi_{j}$ are such that
$$\int_{\R^{3}}\Psi_{i}(v)\Psi_{j}(v)\M(v)\d v=\delta_{i,j} \qquad i,j=1,\ldots,d+2\,,$$
which in particular implies that, in the Hilbert space $\H$\footnote{Recall here that, on the space $L^{2}_{v}(\M^{-\frac{1}{2}})$ the inner product is $\langle f,g\rangle=\int_{\R^{d}}f(v)g(v)\M^{-1}(v)\d v.$}, one has $\mathbf{Id}-\mathbf{P}_{0}=\mathbf{P}_{0}^{\perp}$. We begin with two basic observations. The first one is related to $\mathbb{P}_{0}$:
\begin{lem}\label{lemobservables}
For $i=1,\ldots,d+1$, it holds that
\begin{equation*}
\bigg|\int_{\T^{d}\times\R^{d}} \hu(t,x,v)  \Psi_{i}(v) \d  v \d x \bigg| \leq \max\left(1,\frac{1}{\sqrt{\en_{1}}}\right)\| \ho(t) \|_{\E}\,.
\end{equation*}
As a consequence,
\begin{equation*}
\| \mathbb{P}_{0} \hu(t)\|_{\E}\leq C\| \ho(t) \|_{\E}\,,
\end{equation*}
for some constant $C>0$ depending only on {$\M$}.\end{lem}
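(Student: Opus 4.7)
The plan exploits directly the conservation laws satisfied by the total fluctuation $h_\e=\ho+\hu$. Recall from \eqref{eq:massmomH} that $h_\e$ has vanishing total mass and total momentum. Since $\Psi_1\equiv 1$ and $\Psi_i=v_{i-1}/\sqrt{\en_{1}}$ for $i=2,\ldots,d+1$, integration against $\Psi_i$ for $i=1,\ldots,d+1$ corresponds exactly to computing (up to constants) the total mass or a component of the total momentum. Thus
\begin{equation*}
\int_{\T^{d}\times\R^{d}}\hu(t)\,\Psi_i\,\d v\d x \;=\; -\int_{\T^{d}\times\R^{d}}\ho(t)\,\Psi_i\,\d v\d x, \qquad i=1,\ldots,d+1,
\end{equation*}
and the problem reduces to bounding the right-hand side in terms of $\|\ho(t)\|_{\E}$. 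It is worth emphasising that this conservation argument works \emph{only} for the $d+1$ mass/momentum modes and not for the energy mode $\Psi_{d+2}$, since inelastic collisions dissipate energy; this is precisely the reason why the statement is phrased in terms of $\mathbb{P}_{0}$ rather than the full elastic projection $\mathbf{P}_{0}$.

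The bounds on $|\int \ho\,\Psi_i\,\d v\d x|$ are then essentially automatic from the definition of the $\E$-norm. For $\Psi_1\equiv 1$, one applies Cauchy--Schwarz on $\T^d$ (which has unit normalized measure) to dominate the $L^{1}_{x}$-norm by the $L^{2}_{x}$-norm, then uses $\m_q(v)\geq 1$ to recognise $\|\ho(t)\|_{L^{1}_{v}L^{2}_{x}(\m_q)}\leq \|\ho(t)\|_{\E}$. For $i\geq 2$, the same reasoning applies with an extra factor $\langle v\rangle$, which is controlled by $\m_q(v)$ since $q\geq 3\geq 1$; this is where the prefactor $\max(1,1/\sqrt{\en_1})$ emerges.

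For the consequence, using the explicit representation
\begin{equation*}
\mathbb{P}_{0}\hu(t) \;=\; \sum_{i=1}^{d+1}\Bigl(\int_{\T^{d}\times\R^{d}}\hu(t)\,\Psi_i\,\d v\d x\Bigr)\,\Psi_i\,\M,
\end{equation*}
the triangle inequality together with the first part yields
\begin{equation*}
\|\mathbb{P}_{0}\hu(t)\|_{\E} \;\leq\; \max\bigl(1,1/\sqrt{\en_1}\bigr)\,\|\ho(t)\|_{\E}\,\sum_{i=1}^{d+1}\|\Psi_i\,\M\|_{\E},
\end{equation*}
and the Gaussian decay of $\M$ ensures each $\|\Psi_i\M\|_{\E}$ is a finite constant depending only on $\M$. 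There is no genuine analytic obstacle: the proof is essentially a bookkeeping exercise, the only conceptual point being the careful use of conservation of mass and momentum to transfer estimates from $\hu$ (which lives in the Gaussian-weighted Hilbert space $\H$ and for which such a direct $\E$-bound would be awkward) onto $\ho$ (where the estimate is immediate).
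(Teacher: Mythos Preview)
Your proposal is correct and follows essentially the same approach as the paper: both use the conservation of mass and momentum \eqref{eq:massmomH} to write $\int \hu\,\Psi_i = -\int \ho\,\Psi_i$, then bound the latter by $\|\ho\|_{\E}$ via Cauchy--Schwarz in $x$ and the pointwise bound $|\Psi_i(v)|\leq \max(1,1/\sqrt{\en_1})\,\m_q(v)$, and finally read off the $\mathbb{P}_0$-estimate from the explicit formula \eqref{eq:PP0}. Your additional remark explaining why the argument is restricted to $i\leq d+1$ (energy dissipation) is a welcome clarification that the paper leaves implicit.
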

\begin{proof}
Note that total mass and momentum conservation leads to
\begin{equation*}\begin{split}
0=\int_{\T^{d}\times\R^{d}} h(t,x,v) \Psi_{i}(v)\d v\d x &= \int_{\T^{d}\times\R^{d}} \ho(t,x,v)  \Psi_{i}(v)\d v\d x \\
&\qquad + \int_{\T^{d}\times\R^{d}} \hu(t,x,v) \Psi_{i}(v)\d v\d x\,, \quad i=1,\ldots,d+1.
\end{split}\end{equation*}
Thus, for any  $i=1,\ldots,d+1$, 
\begin{equation*}
\bigg|\int_{\T^{d}\times\R^{d}} \hu(t,x,v)   \Psi_{i}(v)\d v\d x \bigg| = \bigg|\int_{\T^{d}\times\R^{d}} \ho(t,x,v) \Psi_{i}(v)\d v\d x \bigg| \leq \max\bigg(1,\frac{1}{\sqrt{\en_{1}}}\bigg)\| \ho(t) \|_{\E} \,
\end{equation*}
 {thanks to Cauchy-Schwarz inequality and}
since $|\Psi_{i}(v)| \leq \max\big(1,\frac{1}{\sqrt{\en_{1}}}\big)\m_{q}(v)$ for any $i=1,\ldots,d+1.$ Regarding the estimate for the projection, it follows from the previous inequality and \eqref{eq:PP0} by taking for example
$C:=\max_{i=1,\ldots,d+1}\|\Psi_{i}\M\|_{\E}$.\end{proof}
A second observation regards the action of $\Pi_{0}$ on the linearized operator $\G_{\e}$:
\begin{lem} For any solution $h=h(t,x,v)$ to \eqref{BE}, one has
\begin{equation}\label{pi0Ge}
\Pi_{0}\left[\G_{\e}h(t)\right]=-\bar{\lambda}_{\e}\left(1+r_{\e}\right)\Pi_{0}h(t) + {s}_{\e}(t)\phi_{1}, \qquad t \geq0 \end{equation}
where $\phi_{1}$ is defined in \eqref{eq:limphialpha}, $r_{\e} \in \R$ (independent of $\,t$) and  $s_{\e}(t) \in \R$ are such that
\begin{equation}\label{eq:stimerese}
\left|r_{\e}\right| \leq C_{b}\left(1-\alpha(\e)\right), \qquad \left|{s}_{\e}(t)\right| \leq C_{b}\frac{1-\re(\e)}{\e^{2}}\left\|\left(\mathbf{I-P}_{0}\right)h(t)\right\|_{L^{1}_{x,v}(\varpi_{3})}\end{equation}
for some positive constant $C_{b}$ independent of $\e$ and $t\geq0.$ 
\end{lem}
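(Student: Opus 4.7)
The plan is to exploit two structural features: the transport term drops out of $\Pi_{0}$ by periodicity, and $\Psi_{d+2}$ lies in the left null-space of $\mathscr{L}_{1}$ under the $L^{2}_{v}$ pairing. Combined with the conservation laws \eqref{eq:massmomH} which force $\mathbb{P}_{0}h(t)\equiv 0$, this reduces the claim to a scalar identity in the one-dimensional direction $\chi:=\Psi_{d+2}\M$; the linear rate on $r_{\e}$ is the only delicate point.

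First, \eqref{eq:massmomH} gives $\mathbf{P}_{0}h(t)=\Pi_{0}h(t)=a(t)\chi$ with $a(t):=\int h\,\Psi_{d+2}\,\d v\,\d x$, and $\int_{\T^{d}\times\R^{d}}(v\cdot\nabla_{x}h)\Psi_{d+2}\,\d v\,\d x=0$ by integration by parts on the torus. Hence
$$\Pi_{0}[\G_{\e}h]=\left(\e^{-2}\int_{\T^{d}\times\R^{d}}(\LLe h)\,\Psi_{d+2}\,\d v\,\d x\right)\chi.$$
By self-adjointness of $\mathscr{L}_{1}$ in $L^{2}_{v}(\M^{-1})$ together with $\chi\in\ker\mathscr{L}_{1}$, one has $\int(\mathscr{L}_{1}f)\,\Psi_{d+2}\,\d v=\langle \mathscr{L}_{1}f,\chi\rangle_{L^{2}(\M^{-1})}=\langle f,\mathscr{L}_{1}\chi\rangle_{L^{2}(\M^{-1})}=0$ for all admissible $f$. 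Splitting $h=a(t)\chi+(\mathbf{I}-\mathbf{P}_{0})h$ and writing $\LLe=\mathscr{L}_{1}+(\LLe-\mathscr{L}_{1})$, the $\mathscr{L}_{1}$-contributions vanish, leaving
\begin{align*}
\int(\LLe h)\Psi_{d+2}\,\d v\,\d x=\;&a(t)\int(\LLe-\mathscr{L}_{1})\chi\cdot\Psi_{d+2}\,\d v\\
&+\int(\LLe-\mathscr{L}_{1})(\mathbf{I}-\mathbf{P}_{0})h\cdot\Psi_{d+2}\,\d v\,\d x.
\end{align*}

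To identify the coefficient of $a(t)$, I pair $\LLe\phi_{\re(\e)}=-\mu_{\re(\e)}\phi_{\re(\e)}$ with $\Psi_{d+2}$. Writing $\phi_{\re(\e)}=\phi_{1}+\eta_{\re(\e)}$ with $\phi_{1}=\gamma\chi$, $\gamma:=c_{0}\en_{1}\sqrt{2d}$, and using again that $\mathscr{L}_{1}$ gives zero in this pairing,
$$\gamma\!\int(\LLe-\mathscr{L}_{1})\chi\cdot\Psi_{d+2}\,\d v=-\mu_{\re(\e)}\!\int\phi_{\re(\e)}\Psi_{d+2}\,\d v\;-\;\int(\LLe-\mathscr{L}_{1})\eta_{\re(\e)}\cdot\Psi_{d+2}\,\d v.$$
Since $\int\phi_{1}\Psi_{d+2}\,\d v=\gamma$, invoking Lemma~\ref{lem:Palphap} together with the graph-norm bound $\|T_{\re(\e)}h\|\lesssim\kappa_{\re(\e)}\|h\|_{\cdot+1}$ to absorb the $\eta_{\re(\e)}$-remainder yields $\int(\LLe-\mathscr{L}_{1})\chi\cdot\Psi_{d+2}\,\d v=-\mu_{\re(\e)}(1+r_{\e})$, with $r_{\e}$ manifestly independent of $t$ and $|r_{\e}|\lesssim\|\phi_{\re(\e)}-\phi_{1}\|+(1-\re(\e))$. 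Dividing by $\e^{2}$ and recalling $\bar\lambda_{\e}=\e^{-2}\mu_{\re(\e)}$ produces the stated coefficient $-\bar\lambda_{\e}(1+r_{\e})$.

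For the remainder term, the pointwise bound $|\Psi_{d+2}(v)|\lesssim\m_{2}(v)$ and Lemma~\ref{lem:Palphap} (with the trivial estimate on $T_{\re(\e)}$) give
$$\e^{-2}\Big|\int(\LLe-\mathscr{L}_{1})(\mathbf{I}-\mathbf{P}_{0})h\cdot\Psi_{d+2}\,\d v\,\d x\Big|\lesssim\frac{1-\re(\e)}{\e^{2}}\,\|(\mathbf{I}-\mathbf{P}_{0})h(t)\|_{L^{1}_{x,v}(\m_{3})},$$
from which one reads off $s_{\e}(t)$ and its estimate \eqref{eq:stimerese}. The main obstacle is securing the \emph{linear} rate $|r_{\e}|\lesssim 1-\re(\e)$: since \eqref{eq:limphialpha} supplies only qualitative convergence, one must upgrade it to the quantitative bound $\|\phi_{\re(\e)}-\phi_{1}\|\lesssim 1-\re(\e)$ in a sufficiently strong weighted norm. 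This I would obtain via standard isolated-eigenvalue perturbation theory applied to the simple eigenvalue $-\mu_{\re(\e)}$ of $\LLe$, fueled by the sharp $(1-\re)$ rate for $G_{\re(\e)}\to\M$ from Lemma~\ref{prop:psi} and the operator estimates of Lemma~\ref{lem:els} and Proposition~\ref{prop:converLLL0}.
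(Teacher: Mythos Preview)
Your route is genuinely different from the paper's, and the difference matters. The paper never touches the eigenfunction $\phi_{\re(\e)}$: it computes the coefficient of $\Pi_{0}h$ directly from the explicit dissipation identity
\[
\int_{\T^{d}\times\R^{d}}\mathbf{L}_{\re(\e)}g\,|v|^{2}\,\d v\,\d x=-\tfrac{1-\re^{2}(\e)}{2}\gamma_{b}\!\int g\,G_{\re(\e)}(\vet)|v-\vet|^{3}\,\d\vet\,\d v\,\d x,
\]
then splits $h=\beta_{0}[h]\phi_{1}+(\mathbf{I}-\Pi_{0})h$ and $G_{\re(\e)}=\M+(G_{\re(\e)}-\M)$, and evaluates the $\phi_{1}\!\ast\!\M$ moment with a single known identity. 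The linear rate on $r_{\e}$ then comes \emph{for free} from $\|G_{\re(\e)}-\M\|_{L^{1}(\m_{3})}\lesssim 1-\re(\e)$ (Lemma~\ref{prop:psi}); no spectral perturbation is needed.

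By contrast, your argument hinges on $\|\phi_{\re(\e)}-\phi_{1}\|\lesssim 1-\re(\e)$, and you defer this to ``standard isolated-eigenvalue perturbation theory''. That is precisely where the difficulty lies: the perturbation $\LLe-\mathscr{L}_{1}$ contains the drift $T_{\re}$, so the domains do not match and Kato-type perturbation is not directly applicable (the paper flags this explicitly). One can make it work via a scale-of-spaces argument in the spirit of Section~\ref{sec:elas}, but that is real additional work, whereas the paper's direct computation sidesteps it entirely.

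There is also a concrete mis-citation in your remainder bound. Lemma~\ref{lem:Palphap} delivers only the sub-linear rate $(1-\re)^{p}$ for $\P_{\re}$, so invoking it yields $|s_{\e}(t)|\lesssim\e^{-2}(1-\re)^{p}\|(\mathbf{I}-\mathbf{P}_{0})h\|$, which is weaker than \eqref{eq:stimerese}. The sharp $(1-\re)$ rate you claim does hold, but it comes from pairing with $\Psi_{d+2}$ \emph{explicitly}: $\int\mathbf{L}_{\re}g\,\Psi_{d+2}$ is a constant times $(1-\re^{2})\int g\,G_{\re}|v-\vet|^{3}$, and $\int T_{\re}g\,\Psi_{d+2}$ picks up $\kappa_{\re}=1-\re$. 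Once you switch to this direct moment computation, both $s_{\e}$ and the $a(t)$-coefficient fall out without any recourse to $\phi_{\re(\e)}$, and your proof collapses onto the paper's.
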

\begin{proof} The proof is by direct inspection. One first notices that 
$$\Pi_{0}h=\frac{1}{2dc_{0}\vartheta_{1}^{2}}\left(\int_{\R^{d}\times\T^{d}}h\left(|v|^{2}-d\vartheta_{1}\right)\d v\d x\right)\phi_{1}(v)=:\beta_{0}[h]\phi_{1}.$$
Notice that, since $\int_{\R^{d}}\G_{\e}h\d v=0$ and $\int_{\T^{d}}v \cdot \nabla_{x}h \d x=0$ one has
\begin{multline*}
\Pi_{0}\left[\G_{\e}h(t)\right]=\frac{1}{2d\e^{2}c_{0}\vartheta_{1}^{2}}\left(\int_{\T^{d}\times\R^{d}}\LLe h(t,x,v)|v|^{2}\d v\d x\right)\phi_{1}\\
=\frac{1}{2d\e^{2}c_{0}\vartheta_{1}^{2}}\left(\int_{\T^{d}\times \R^{d}}\left(\mathbf{L}_{\alpha(\e)}h(t,x,v)-\kappa_{\alpha(\e)}\nabla_{v}\cdot (vh(t,x,v))\right)|v|^{2}\d v\d x\right)\phi_{1}\\
=\frac{1}{2d\e^{2}c_{0}\vartheta_{1}^{2}}\left(\int_{\T^{d}\times \R^{d}}\left(\mathbf{L}_{\alpha(\e)}h(t,x,v)+2\kappa_{\alpha(\e)}h(t,x,v))\right)|v|^{2}\d v\d x\right)\phi_{1}
\end{multline*}
Now, as in~\eqref{eq:Dre}, one can check that
$$\int_{\T^{d}\times \R^{d}}\mathbf{L}_{\alpha(\e)}h(t,x,v)|v|^{2}\d v\d x=-\frac{1-\alpha^{2}(\e)}{2}\gamma_{b}\int_{\T^{d}\times \R^{2d}}h(t,x,v)G_{\alpha(\e)}(\vet)|v-\vet|^{3}\d\vet\d v\d x$$
which, writing first $h=\Pi_{0}h+\left(\mathbf{I}-\Pi_{0}\right)h=\beta_{0}[h]\phi_{1}+\left(\mathbf{I}-\Pi_{0}\right)h$ and then $G_{\alpha(\e)}=\M_{1}+(G_{\alpha(\e)}-\M_{1})$ gives
$$\Pi_{0}\left[\G_{\e}h(t)\right]=a_{\e}\beta_{0}[h]\phi_{1} + s_{\e}(t)\phi_{1}=a_{\e}\Pi_{0}h+s_{\e}(t)\phi_{1}$$
where 
$$s_{\e}(t):=
-\frac{1-\alpha^{2}(\e)}{4d\e^{2}\,c_{0}\vartheta_{1}^{2}}\gamma_{b}\left(\int_{\T^{d}\times \R^{2d}}\left[\mathbf{I}-\Pi_{0}\right]h(t,x,v)G_{\alpha(\e)}(\vet)|v-\vet|^{3}\d\vet\d v\d x\right)$$
and 
\begin{multline*}
a_{\e}:=\frac{1}{\e^{2}}\bigg\{2\kappa_{\alpha(\e)}-\frac{1-\alpha^{2}(\e)}{4dc_{0}\vartheta^{2}_{1}}\gamma_{b}\int_{\R^{2d}}\phi_{1}(v)\M_{1}(\vet)|v-\vet|^{3}\d\vet\d v\\
-\frac{1-\alpha^{2}(\e)}{4dc_{0}\vartheta^{2}_{1}}\gamma_{b}\int_{\R^{2d}}\phi_{1}(v)\left[G_{\alpha(\e)}(\vet)-\M_{1}(\vet)\right]|v-\vet|^{3}\d\vet\d v\bigg\}.\end{multline*}
One has (see {\cite[Lemma 5.19, Eq. (5.10)]{MiMo3}}) 
$$\frac{\gamma_{b}}{4}\int_{\R^{2d}}\phi_{1}(v)\M_{1}(\vet)|v-\vet|^{3}\d v\d\vet=\frac{3d}{2}c_{0}\vartheta_{1}^{2}$$
which, recalling that $\kappa_{\alpha}=1-\alpha$, results easily in 
\begin{multline*}
a_{\e}=\frac{1-\alpha(\e)}{\e^{2}}\Big\{2-\frac{3}{2}(1+\alpha(\e)) \\-\frac{1+\alpha(\e)}{4dc_{0}\vartheta^{2}_{1}}\gamma_{b}\int_{\R^{2d}}\phi_{1}(v)\left[G_{\alpha(\e)}(\vet)-\M_{1}(\vet)\right]|v-\vet|^{3}\d\vet\d v\Big\}.
\end{multline*}
Writing simply $1+\alpha(\e)=2-(1-\alpha(\e))$ one sees that
$$a_{\e}=\frac{1-\alpha(\e)}{\e^{2}}\left(-1 + \tilde{r}_{\e}\right),$$
with 
$$|\tilde{r}_{\e}| \leq \frac{3}{2}\left(1-\alpha(\e)\right)+ \frac{\gamma_{b}}{2dc_{0}\vartheta_{1}^{2}}\|\phi_{1}\|_{L^{1}_{v}(\m_{3})}\left\|G_{\alpha(\e)}-\M_{1}\right\|_{L^{1}_{v}(\m_{3})} \leq C(1-\alpha(\e))$$
thanks to Lemma \ref{prop:psi}. 
The bound on $s_{\e}(t)$ is also obvious since, for solution $h$ to \eqref{BE}, conservation of mass and momentum implies that $\Pi_{0}h(t)=\mathbf{P}_{0}h(t).$ Then, since 
$$-\bar{\lambda}_{\e}=-\frac{1-\alpha(\e)}{\e^{2}}+\mathrm{O}\left(\left(\frac{1-\alpha(\e)}{\e^{2}}\right)^{2}\right)$$
we get the desired result.
\end{proof}

\begin{nb} If we denote by $\Pi_{\e}$ the spectral projection associated to $\G_{\e}$ and its eigenvalue $-\bar{\lambda}_{\e}$, it may appear at first sight preferable to rather deal with the projection $\Pi_{\e}$ (since $\Pi_{\e}\G_{\e}h=-\bar{\lambda}_{\e}\Pi_{\e}h$) but we face then two different problems: first, $\Pi_{\e}$ is not fully explicit whereas $\Pi_{0}$ is; second, applying $\Pi_{\e}$ to the equation satisfied by $h$
$$\partial_{t} h= \G_{\e}h + \e^{-1}\Q_{\re(\e)}(h,h),$$
nothing guarantees that $\Pi_{\e}\left[\e^{-1}\Q_{\re(\e)}(h,h)\right]$ remains of order $1$ with respect to $\e$ whereas we will see later on (see Lemma \ref{lem:energy}) that, due to the dissipation of kinetic energy,  $\Pi_{0}\left[\e^{-1}\Q_{\re(\e)}(h,h)\right]$ is actually of order $\e$.\end{nb}
In all the sequel, we will denote
$$\lae:=\bar{\lambda}_{\e}\left(1+r_{\e}\right), \quad \lae >0.$$
Notice that $\lae$ does not exactly corresponds to the eigenvalue $\bar{\lambda}_{\e}$ of $\G_{\e}$ but we observe that 
$$\lae \underset{\e \to 0}\sim \bar{\lambda}_{\e}$$
with $\lim_{\e\to0}\lae=\lim_{\e\to 0}\bar{\lambda}_{\e}=\lambda_{0}$ where we recall $\lambda_{0}$ is defined in Assumption \ref{hyp:re}. In the rest of this Section, we estimate separately $\ho$ and $\hu$.

\subsection{Estimating $\ho$} For the part of the solution $\ho(t)$ in $\E$ we have the following estimate.
\begin{prop}\label{prop:h0} Assume that {$\ho\in \E$, {$\hu \in \H$}} are such that
\begin{equation*}
\sup_{t\geq0}\big(\|\ho (t)\|_{\E}  + \|\hu(t)\|_{{{\H}}} \big) \leq \Mo<\infty\,.
\end{equation*}
Let $\nu_{0}:=\min\{\nu_{1,m,k,q},\nu_{1,m,k,q+1}\}$ given in Proposition \ref{prop:hypo}. Then, for $\mu_{0} \in (0,\nu_{0})$ there exists an explicit {$\e_1 >0$ (that can be chosen less than $\e_0$ defined in Theorem \ref{theo:G1e})} such that:
\begin{equation}\begin{split}\label{eq:estimh0}
\|\ho(t)\|_{\E} &\,{\lesssim} \,\|\ho(0)\|_{\E}\,e^{-\frac{\mu_0}{\e^2}t} + {\lambda}_{\e}\int^{t}_{0}e^{-\frac{\mu_0}{\e^2}(t-s)}\|\hu(s)\|_{\E_{2}}\,\d s\\
&\phantom{+++++} +\e{\lambda}_{\e}\int^{t}_{0}e^{-\frac{\mu_0}{\e^2}(t-s)}\|\hu(s)\|^{2}_{\E_{2}}\,\d s\,,\quad {\forall\, \e\in(0,\e_1)}\,.
\end{split}\end{equation}
As a consequence, {for any $\e \in (0,\e_1)$}, 
\begin{equation}
\begin{split}\label{eq:estimh02}
\|\ho(t)\|^{2}_{\E} &\,{\lesssim} \, \|\ho(0)\|^2_{\E}\,e^{-\frac{2\mu_0}{\e^2}t} + \big(\e\, {\lambda}_{\e}\big)^{2}\int^{t}_{0}e^{-\frac{\mu_0}{\e^2}(t-s)}\|\hu(s)\|^{2}_{\E_{2}}\,\d s\\
&\phantom{++++++} +\big( \e^2\, {\lambda}_{\e} \big)^{2}\int^{t}_{0}e^{-\frac{\mu_0}{\e^2}(t-s)}\|\hu(s)\|^{4}_{\E_{2}}\,\d s\,.
\end{split}
\end{equation}
\end{prop}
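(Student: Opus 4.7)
The plan is to derive an energy-type differential inequality for $\vertiii{\ho(t)}_\E$ directly from Eq.~\eqref{eq:h0}, exploiting the hypo-dissipativity of $\mathcal{B}^{(\delta)}_{\re(\e),\e}$ from Proposition~\ref{prop:hypo} together with the $L^1_v$-counterpart of the dissipation bound~\eqref{eq:B1delta}. The expected infinitesimal estimate, using the equivalent norm from Proposition~\ref{prop:hypo}, takes the form
\begin{equation*}
\frac{d^+}{dt}\vertiii{\ho(t)}_\E \,+\, \frac{\nu_0}{\e^2}\,\|\ho(t)\|_{\E_1} \,\leq\, \|\Phi(t)\|_\E,
\end{equation*}
where $\Phi(t)$ collects the five remaining terms on the right-hand side of~\eqref{eq:h0}. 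Crucially, the dissipation is in the stronger norm $\E_1$ (gain of one polynomial weight), which is the mechanism by which the quadratic terms involving $\ho$ will be absorbed. A Gronwall step then produces the integral estimate once $\Phi$ is appropriately split.

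The three quadratic contributions $\e^{-1}\Q_{\re(\e)}(\ho,\ho)$, $\e^{-1}\Q_{\re(\e)}(\ho,\hu)$ and $\e^{-1}\Q_{\re(\e)}(\hu,\ho)$ are treated as absorbable perturbations. Using the bilinear bound $\|\Q_{\re(\e)}(f,g)\|_\E \lesssim \|f\|_{\E_1}\|g\|_\E + \|f\|_\E\|g\|_{\E_1}$ (legitimate since $\W^{m,2}_x$ is a Banach algebra for $m>d$ and $\Q_{\re}$ is local in $x$), the embedding $\H\hookrightarrow \E_1$ granted by the Gaussian weight $\M^{-1/2}$, and the standing hypothesis $\|\ho\|_\E,\|\hu\|_\H \leq \Mo$, each of these three quantities is bounded by $C\Mo\,\e^{-1}\|\ho\|_{\E_1}$. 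Choosing $\e_1 \leq (\nu_0-\mu_0)/(C\Mo)$ for the prescribed $\mu_0 \in (0,\nu_0)$ allows us to absorb them into the dissipation, effectively downgrading the rate from $\nu_0/\e^2$ to $\mu_0/\e^2$.

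It remains to control the two genuine source terms, which do not involve $\ho$. Writing $(\G_\e-\G_{1,\e})\hu = \e^{-2}\P_{\re(\e)}\hu - \e^{-2}\kappa_{\re(\e)}\nabla_v\cdot(v\hu)$, Proposition~\ref{prop:converLLL0} (trivially upgraded from $\W^{k,1}_v(\m_q)$ to $\E$ by Fubini since $\mathbf{L}_\re$ and $T_\re$ are local in $x$) combined with the embedding $\H\hookrightarrow\E_2$ yields
\begin{equation*}
\|(\G_\e-\G_{1,\e})\hu\|_\E \,\lesssim\, \tfrac{1-\re(\e)}{\e^2}\,\|\hu\|_{\E_2} \,\lesssim\, \lambda_\e\|\hu\|_{\E_2}.
\end{equation*}
Similarly, Remark~\ref{nb:diffQXY} gives $\e^{-1}\|\Q_{\re(\e)}(\hu,\hu)-\Q_1(\hu,\hu)\|_\E \lesssim \tfrac{1-\re(\e)}{\e}\|\hu\|_{\E_2}^2 \lesssim \e\lambda_\e\|\hu\|_{\E_2}^2$. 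Inserting both bounds into the Gronwall output produces precisely~\eqref{eq:estimh0}.

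For~\eqref{eq:estimh02}, one squares~\eqref{eq:estimh0}, applies $(a+b+c)^2 \leq 3(a^2+b^2+c^2)$, and uses Cauchy-Schwarz on each time integral with the measure $e^{-\mu_0(t-s)/\e^2}\,ds$: since $\int_0^t e^{-\mu_0(t-s)/\e^2}\,ds \leq \e^2/\mu_0$, one copy of the exponential is absorbed and the square passes under the remaining integral, which is exactly what turns $\lambda_\e^2$ into $(\e\lambda_\e)^2$ and $(\e\lambda_\e)^2$ into $(\e^2\lambda_\e)^2$. The main technical hurdle is really the first step: the proof of Proposition~\ref{prop:hypo} written out in the text is carried out in $L^2$-based spaces with an explicit gain of half a polynomial weight in the dissipation, and one must reproduce the $L^1_v$-version with a gain of one full weight by combining~\eqref{eq:B1delta} with the $L^1_v$ estimate~\eqref{eq:Palpha} for $\mathcal{P}_{\re(\e)}$ and the friction identity for $T_{\re(\e)}$; once this infinitesimal estimate is secured, the rest of the argument is a routine Gronwall computation.
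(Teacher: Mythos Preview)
Your proof is correct and follows essentially the same approach as the paper's: derive a differential inequality from the hypo-dissipativity of $\B_{\re(\e),\e}$ in an equivalent norm, absorb the three quadratic $\Q_{\re}$-terms involving $\ho$ into the dissipation (this fixes $\e_1$), treat the two remaining $\hu$-terms as forcing, integrate, and then square using Cauchy--Schwarz against the exponential kernel.

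One point deserves a correction. The upgrade of Proposition~\ref{prop:converLLL0} from $\W^{k,1}_v(\m_q)$ to $\E=\W^{k,1}_v\W^{m,2}_x(\m_q)$ is \emph{not} a straight Fubini argument: Fubini works cleanly for $L^2_vL^2_x$-spaces (this is exactly the content of the remark following Lemma~\ref{lem:diffG}), but not for the mixed $L^1_vL^2_x$-norm. The route the paper uses --- invoking Remark~\ref{nb:diffQXY} for \emph{both} the linear and the quadratic source terms --- is to pass from $L^1_v$ to $L^2_v$ by Cauchy--Schwarz (at the cost of a weight $\m_\kappa$, $\kappa>d/2$), Fubini in $L^2_{v,x}$, and then apply the $L^2_v$-part of Lemma~\ref{lem:els}. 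This is precisely why the input space $\E_2$ is $L^2_v$-based and carries the weight $\m_{q+2\kappa+2}$. Since you correctly feed $\hu$ through $\H\hookrightarrow\E_2$, your conclusion is right; only the one-line justification ``trivially by Fubini'' is off.

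A second minor remark: the $L^1_v$-version of Proposition~\ref{prop:hypo} that you flag as the main technical hurdle is fully proven in Appendix~\ref{appen:homog} and may simply be quoted; it is not left to the reader.
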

 \begin{proof} 
{In the subsequent proof, we denote by $\|\cdot\|_{\E_{1}}$ and $\|\cdot\|_{\E}$ the norms on $\E_{1}$ and $\E$ that are equivalent to the standard ones (with multiplicative constants independent of $\e$) and that make $\e^{-2}\nu_{0}+\B_{\re(\e),\e}$ \emph{dissipative}. \footnote{ {More precisely, we shall use here norms such that
$$\text{ if } \quad \partial_{t}g=\B_{\re(\e),\e}g \quad \text{ then } \quad  \frac{\d}{\d t}\|g(t)\|_{\E} \leq -\e^{-2}\nu_{0}\|g(t)\|_{\E_{1}}$$
Notice that such norm exists from Proposition \ref{prop:hypo}.}}
The conclusion with standard norms will simply follows by equivalence.} We first observe that
 \begin{multline*}
\dfrac{\d}{\d t}\|\ho(t)\|_{\E} \leq -\frac{\nu_{0}}{\e^2}\|\ho(t)\|_{\E_{1}} + \e^{-1}\Big(\|\Q_{\re(\e)}(\ho(t),\ho(t))\|_{\E} + \|\Q_{\re(\e)}(\ho(t),\hu(t))\|_{\E}\\
+\|\Q_{\re(\e)}(\hu(t),\ho(t))\|_{\E}\Big)+\big\|\G_{\e}\hu(t) - \G_{1,\e}\hu(t)\big\|_{\E} \\
+ \e^{-1}\big\|\Q_{\re(\e)}(\hu(t),\hu(t))-\Q_{1}(\hu(t),\hu(t))\big\|_{\E}\,.
\end{multline*}
Using classical estimates for $\Q_{\re(\e)}$ and $\Q_1$, (see {\cite{ACG,AG}}), there exist  $C >0$ independent of $\e$ such that
\begin{multline*}
\|\Q_{\re(\e)}(\ho(t),\ho(t))\|_{\E} + \|\Q_{\re(\e)}(\ho(t),\hu(t))\|_{\E}\\
+\|\Q_{\re(\e)}(\hu(t),\ho(t))\|_{\E} \leq C\Big(\|\ho(t)\|_{\E}+\|\hu(t)\|_{\E_{1}}\Big)
\|\ho(t)\|_{\E_{1}}\,,
\end{multline*}
and, {thanks to Remark \ref{nb:diffQXY}}
\begin{multline*}
{\left\|\G_{\e}\hu(t)-\G_{1,\e}\hu(t)\right\|_{\E} + \e^{-1}\left\|\Q_{\re(\e)}(\hu(t),\hu(t))-\Q_{1}(\hu(t),\hu(t))\right\|_{\E}} \\
\leq C(1-\re(\e))\|\hu(t)\|_{\E_{2}}\Big(\e^{-2}+\e^{-1}\|\hu(t)\|_{\E_{2}}\Big).
\end{multline*}
Notice that such estimate is exactly what motivated the definition of $\E_{2}.$
We conclude that
\begin{align*}
\dfrac{\d}{\d t}\|\ho(t)\|_{\E} \leq -&\e^{-2}\Big(\nu_{0}-\e\,C\big(\|\ho(t)\|_{\E}+\|\hu(t)\|_{\E_{1}}\big)\Big)\|\ho(t)\|_{\E_{1}}\\
&+ {C(1-\re(\e))\e^{-2}\|\hu(t)\|_{\E_{2}} + C(1-\re(\e))\e^{-1}\|\hu(t)\|_{\E_{2}}^{2}}\,.
\end{align*}
For any $\mu_0\in(0,\nu_{0})$, we pick {$\e_1 \in (0,\e_0)$ as $\nu_{0}-\e_1\,C\,\Mo \ge \mu_{0}$}.  Therefore,
\begin{equation*}
\nu_{0}-\e\,C\big(\|\ho(t)\|_{\E}+\|\hu(t)\|_{\E_{1}}\big) \geq  \mu_{0}\,, \qquad {\forall\,\e\in(0,\e_{1})}\,.
\end{equation*} 
Consequently, we obtain that
\begin{equation}
\begin{split}\label{imp:h0}
\dfrac{\d}{\d t}\|\ho(t)\|_{\E} &\leq -\frac{\mu_{0}}{\e^2}\,\|\ho(t)\|_{\E_{1}} + C(1-\re(\e))\e^{-2}\|\hu(t)\|_{\E_{2}} \\
&\hspace{5cm}+ C(1-\re(\e))\e^{-1}\|\hu(t)\|_{\E_{2}}^{2},\\
&\leq -\frac{\mu_{0}}{\e^{2}}\,\|\ho(t)\|_{\E_{1}} + C{\lambda}_{\e}\|\hu(t)\|_{\E_{2}} + C\e\,{\lambda}_{\e}\|\hu(t)\|_{\E_{2}}^{2}\,,
 \qquad \forall\, t \geq0,\end{split}
\end{equation}
where we used that $\e^{2}\lae \simeq \e^{2}\bar{\lambda}_{\e} \simeq 1-\re(\e)$ which gives \eqref{eq:estimh0}  after integration. To prove \eqref{eq:estimh02}, we use the fact that  {by Cauchy-Schwarz inequality, for any nonnegative mapping $t \mapsto \zeta(t)$ and $\beta >0$, we have that for any $r \in (0,1)$,
\begin{equation}\label{eq:integsquare2}
\begin{split}
\bigg(\int^{t}_{0}e^{-\beta\,(t-s)}\zeta(s)\d s\bigg)^{2} &\leq \left(\int^{t}_{0}e^{-2r\beta\,(t-s)}\,\d s\right)\left(\int^{t}_{0}e^{-2(1-r)\beta\,(t-s)}\zeta(s)^{2}\,\d s\right)\\
&\leq\frac{1}{2 r \beta}\int^{t}_{0}e^{-2(1-r)\beta\,(t-s)} \zeta(s)^{2}\,\d s\,,\qquad \forall\, t \geq 0\,.
\end{split}\end{equation}
This inequality applied with $r=\frac12$ gives the result.}
\end{proof}
\subsection{Estimating $\mathbf{P}_{0}\hu$}
One has the following fundamental estimate for $\mathbf{P}_{0}\hu(t)$.
\begin{lem}\label{lem:energy}
We have that
\begin{multline}\label{eq:lemenergy}
\| \mathbf{P}_{0} \hu(t)  \|_{\E_{-1}}  \lesssim  \| \Pi_{0} h(0) \|_{\E}e^{-\lae t}  + \|\ho(t)\|_{\E} \\
+\e \lae\int^{t}_{0}e^{-\lae (t-s)}\Big(\|\hu(s)\|^{2}_{\E} + \|\ho(s)\|^{2}_{\E}\Big) \d s \\
+\lae \int_{0}^{t}e^{-\lae (t-s)}\Big(\left\|\ho(s)\right\|_{\E}+\left\|\left(\mathbf{I-P}_{0}\right)\hu(s)\right\|_{\E}\Big)\d s
\end{multline}
for any $t \geq0$.
\end{lem}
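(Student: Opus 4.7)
The strategy is to split $\mathbf{P}_{0}\hu=\mathbb{P}_{0}\hu+\Pi_{0}\hu$ and estimate each piece separately, using crucially that, because mass and momentum are conserved for the solution $h_{\e}=\ho+\hu$ to \eqref{BE}, one has $\mathbf{P}_{0}h_{\e}=\Pi_{0}h_{\e}$. For the first piece $\mathbb{P}_{0}\hu$, Lemma \ref{lemobservables} directly yields
$\|\mathbb{P}_{0}\hu(t)\|_{\E}\lesssim \|\ho(t)\|_{\E}$, which accounts for the term $\|\ho(t)\|_{\E}$ on the right-hand side of~\eqref{eq:lemenergy}. For the second piece, write $\Pi_{0}\hu=\Pi_{0}h_{\e}-\Pi_{0}\ho$, where the boundedness of $\Pi_{0}$ gives $\|\Pi_{0}\ho\|_{\E}\lesssim\|\ho\|_{\E}$, so the whole matter is reduced to controlling $\Pi_{0}h_{\e}(t)$.

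The key step is to derive a scalar ODE for $\Pi_{0}h_{\e}$. Starting from
$\partial_{t}h_{\e}=\G_{\e}h_{\e}+\e^{-1}\Q_{\re(\e)}(h_{\e},h_{\e})$
and applying $\Pi_{0}$, I will use the identity \eqref{pi0Ge} to obtain
\begin{equation*}
\partial_{t}\Pi_{0}h_{\e}(t)=-\lae\,\Pi_{0}h_{\e}(t)+s_{\e}(t)\phi_{1}+\e^{-1}\Pi_{0}\Q_{\re(\e)}(h_{\e}(t),h_{\e}(t)).
\end{equation*}
Because $\Pi_{0}$ only probes the $|v|^{2}$-moment, the nonlinear remainder is controlled via the dissipation identity
$\int_{\R^{d}}\Q_{\re(\e)}(g,g)|v|^{2}\d v=-\tfrac{1-\re(\e)^{2}}{4}\gamma_{b}\int g\,g_{\ast}|v-\vb|^{3}\d v\d \vb$,
which introduces a gain of $(1-\re(\e))\sim\e^{2}\lae$ and, combined with the $\e^{-1}$ prefactor, yields
$\e^{-1}\|\Pi_{0}\Q_{\re(\e)}(h_{\e},h_{\e})\|_{\E}\lesssim \e\,\lae\,\|h_{\e}\|_{\E}^{2}\lesssim \e\,\lae(\|\ho\|_{\E}^{2}+\|\hu\|_{\E}^{2})$. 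The source $s_{\e}(t)\phi_{1}$ is bounded by \eqref{eq:stimerese} and the splitting $(\mathbf{I}-\mathbf{P}_{0})h_{\e}=(\mathbf{I}-\mathbf{P}_{0})\ho+(\mathbf{I}-\mathbf{P}_{0})\hu$ with $\|(\mathbf{I}-\mathbf{P}_{0})\ho\|\lesssim \|\ho\|_{\E}$, giving $|s_{\e}(t)|\lesssim\lae(\|\ho(t)\|_{\E}+\|(\mathbf{I}-\mathbf{P}_{0})\hu(t)\|_{\E})$.

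Since the ODE takes values in the one-dimensional space $\R\phi_{1}$, Duhamel's formula gives
\begin{equation*}
\Pi_{0}h_{\e}(t)=e^{-\lae t}\Pi_{0}h_{\e}(0)+\int_{0}^{t}e^{-\lae(t-s)}\Bigl[s_{\e}(s)\phi_{1}+\e^{-1}\Pi_{0}\Q_{\re(\e)}(h_{\e}(s),h_{\e}(s))\Bigr]\d s,
\end{equation*}
and taking the $\E_{-1}$-norm, where all norms on the finite-dimensional range of $\Pi_{0}$ are equivalent, yields the desired inequality after combining with the earlier bound on $\mathbb{P}_{0}\hu$.

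The only subtle point, and the place where the whole estimate would break down without the self-similar scaling, is the gain of one extra factor of $\e$ in the nonlinear contribution: a naive bound would only produce $\e^{-1}\lae(\|\ho\|_{\E}^{2}+\|\hu\|_{\E}^{2})$, which is too singular to close the subsequent nonlinear analysis. The dissipation-of-energy identity is precisely what converts this $\e^{-1}$ into an $\e^{+1}$, matching the heuristic that the temperature equation sees cancellations of order $(1-\re(\e))\sim\e^{2}\lae$. All other manipulations are routine projections and triangle inequalities.
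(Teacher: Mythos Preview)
Your proposal is correct and follows essentially the same approach as the paper. The only cosmetic difference is that you split $\mathbf{P}_{0}\hu=\mathbb{P}_{0}\hu+\Pi_{0}\hu$ and then write $\Pi_{0}\hu=\Pi_{0}h_{\e}-\Pi_{0}\ho$, whereas the paper writes directly $\mathbf{P}_{0}\hu=\mathbf{P}_{0}h_{\e}-\mathbf{P}_{0}\ho$ and invokes $\mathbf{P}_{0}h_{\e}=\Pi_{0}h_{\e}$; both routes reduce to the same Duhamel estimate on $\Pi_{0}h_{\e}$, the same use of the energy-dissipation identity to gain the crucial factor $(1-\re(\e))\sim\e^{2}\lae$ on the nonlinear term, and the same treatment of $s_{\e}(t)$ via \eqref{eq:stimerese}.
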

\begin{proof}
The equation for $h$ is given by
\begin{equation*}
\partial_{t} h= \G_{\e}h + \e^{-1}\Q_{\re(\e)}(h,h)\,.
\end{equation*}
 Thus, applying the projection $\Pi_{0}$ and using \eqref{pi0Ge}
\begin{equation*}
\partial_{t}\big(\Pi_{0} h\big)=-\bar{\lambda}_{\e}\left(1+r_{\e}\right)\Pi_{0}h + s_{\e}(t)\phi_{1}+ \e^{-1}\Pi_{0} \Q_{\re(\e)}(h,h)\,,\end{equation*}
so that
\begin{align}\label{e1-pert-en}
\Pi_{0}  h(t)= \Pi_{0}  h(0)\,e^{-\lae\,t} + \int^{t}_{0}e^{-\lae\,(t-s)}\left(\e^{-1}\Pi_{0} \Q_{\re(\e)}(h(s),h(s)) + s_{\e}(s)\phi_{1}\right) \d s\,,
\end{align}
where  $\lae=\bar{\lambda}_{\e}\left(1+r_{\e}\right)$.
Notice that, according to \eqref{eq:PP0}, $\Pi_{0}\Q_{\re(\e)}$ is explicit with
\begin{equation*}
 \big\| \Pi_{0} \Q_{\re(\e)}(h(s),h(s)) \big\|_{\E_{-1}} = (1-\re^{2}(\e) )\Big|\mathcal{D}_{\re(\e)}(h(s),h(s)) \Big| \big\|\Psi_{d+2}\M \big\|_{\E_{-1}}\,,
 \end{equation*}
 where $\mathcal{D}_{\re}(g,g)$ denotes 
  the normalized energy dissipation associated to $\Q_{\re}$, namely,
\begin{equation*}\begin{split}
\mathcal{D}_{\re}(g,g)&=-\frac{1}{1-\re^{2}}\int_{\T^{d}\times \R^{d}}\Psi_{d+2}(v)\Q_{\re}(g,g)\d v\d x\\
&=\gamma_{b}\int_{\T^{d}}\d x\int_{\R^d \times \R^d}g(x,v)g(x,\vb)|v-\vb|^{3}\d \vb\d v
\end{split}\end{equation*}
for some nonnegative $\gamma_{b}$ independent of $\re$, see~\eqref{eq:Dre}. 
 {Now, one clearly has
$$|\mathcal{D}_{\re}(h(s),h(s))| \leq C\,\int_{\T^{d}}\left[\int_{\R^{d}}\m_{3}(v)|h(s,x,v)|\d v\right]^{2}\d x$$
and, using Minkowski's integral inequality, we deduce that
$$\left|\mathcal{D}_{\re}(h(s),h(s))\right| \leq C\left[\int_{\R^{d}}\m_{3}(v)\left(\int_{\T^{d}}|h(s,x,v)|^{2}\d x\right)^{\frac{1}{2}}\d v\right]^{2}=C\|h(s)\|_{L^{1}_{v}L^{2}_{x}(\m_{3})}^{2}.$$
}
Therefore,
$$\big\| \Pi_{0} \Q_{\re(\e)}(h(s),h(s)) \big\|_{\E_{-1}} \lesssim (1-\re(\e))\| h(s) \|^{2}_{\E}\,$$
because $\m_{q}(v) \geq \langle v\rangle^{3}$ for any $v\in\R^{3}$. 
\smallskip
\noindent 
Thus, applying the $\|\cdot\|_{\E_{-1}}$-norm in \eqref{e1-pert-en}, one obtains
\begin{multline}\label{eq:Pi0h}
\big\| \Pi_{0}  h(t) \big\|_{\E_{-1}} \lesssim \| \Pi_{0}  h(0) \|_{\E}\,e^{-\lae\,t} + \frac{1-\re(\e)}{\e}\,\int_{0}^{t}e^{-\lae(t-s)}\|h(s)\|^{2}_{\E}\,\d s \\
+	\lae\int_{0}^{t}e^{-\lae(t-s)}\left\|\left(\mathbf{I-P}_{0}\right)h(s)\right\|_{\E}\d s
\end{multline}
where we used \eqref{eq:stimerese} to estimate $\|s_{\e}(s)\phi_{1}\|_{\E_{-1}}$. 
As already observed, according to \eqref{eq:massmomH}, one has 
$$\mathbf{P}_{0}h(t)=\Pi_{0}h(t)$$ for any  $t\geq0.$ 
Since $\mathbf{P}_{0} \hu(t) =\mathbf{P}_{0} h(t) - \mathbf{P}_{0} \ho(t)$, we can reformulate the above \eqref{eq:Pi0h} in terms of the relevant functions $\hu$ and $\ho$ to obtain the desired estimate recalling that $\e\lae \simeq \frac{1-\re(\e)}{\e}$.
\end{proof}
%
\noindent
We make more precise our estimates of $\mathbf{P}_{0}\hu(t)$ in the following
\begin{prop}\label{prop:precisePo}
There exists an explicit $\e_2 \in (0,\e_1)$ such that for any $\e \in (0,\e_2)$ and $t \geq 0$, it holds that
\begin{multline*}
\|\mathbf{P}_{0}\hu(t)\|_{\E_{-1}} \lesssim \Big( \| \Pi_{0} h(0) \|_{\E} + \| \ho(0) \|_{\E} +  \e^{3}\lambda_{\e}\| \ho(0) \|^{2}_{\E} \Big)e^{-\lambda_{\e}t}\\ 
+ \lambda_{\e}\int^{t}_{0}e^{-\frac{\mu_0}{\e^2}(t-s)}\|\hu(s)\|_{\E_{2}}\,\d s +\e\lambda_{\e}\int^{t}_{0}e^{-\frac{\mu_0}{\e^2}(t-s)}\|\hu(s)\|^{2}_{\E_{2}}\,\d s\\
+\lambda_{\e}\int_{0}^{t}e^{-\lae(t-s)}\left\|\left(\mathbf{I-P}_{0}\right)\hu(s)\right\|_{\E}\d s+  {\e^{2}\lambda_{\e}^2} \int_{0}^{t}e^{-\lambda_{\e}(t-s)}\|\hu(s)\|_{\E}\d s\\
+ \e\lambda_{\e}\int^{t}_{0}e^{-\lambda_{\e} (t-s)}\|\hu(s)\|^{2}_{\E} \d s
+ \e^7\,\lambda_{\e}^{3}\int_{0}^{t}e^{-\lambda_{\e}(t-s)}\|\hu(s)\|^{4}_{\E_{2}}\,\d s\,.
\end{multline*}
\end{prop}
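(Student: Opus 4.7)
The plan is to combine the bound from Lemma \ref{lem:energy} with the refined estimates on $\|\ho(t)\|_{\E}$ from Proposition \ref{prop:h0} by direct substitution, followed by a systematic collapse of the resulting nested integrals. Concretely, I would substitute \eqref{eq:estimh0} into the terms of \eqref{eq:lemenergy} where $\|\ho\|_{\E}$ appears linearly, and \eqref{eq:estimh02} where $\|\ho\|^{2}_{\E}$ appears. Each substitution produces a pure initial-data contribution of the form $C\|\ho(0)\|_{\E}^{\ast}e^{-\mu_{0}s/\e^{2}}$ plus integral contributions in $\hu$, yielding in total about a dozen subterms whose bookkeeping is the technical heart of the argument.

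The core ingredient is the Fubini manipulation: for any nonnegative $F$ and $\beta>\lae$, swapping the order of integration gives
\begin{equation*}
\int_{0}^{t}e^{-\lae(t-s)}\int_{0}^{s}e^{-\beta(s-\tau)}F(\tau)\d\tau\,\d s \;\leq\; \frac{1}{\beta-\lae}\int_{0}^{t}e^{-\lae(t-\tau)}F(\tau)\d\tau.
\end{equation*}
Choosing $\e_{2}\in(0,\e_{1})$ small enough so that $\mu_{0}/\e^{2}\geq 2\lae$ for $\e\in(0,\e_{2})$---which is possible since $\lae\to\lambda_{0}<\infty$---one applies this with $\beta=\mu_{0}/\e^{2}$, producing a gain of a factor $\e^{2}/\mu_{0}$ each time an inner integral is absorbed. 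The parallel elementary estimate $\int_{0}^{t}e^{-\lae(t-s)}e^{-2\mu_{0}s/\e^{2}}\d s\lesssim \e^{2}e^{-\lae t}$ handles the pure initial-data contributions; for instance it converts $\e\lae\|\ho(0)\|_{\E}^{2}\int_{0}^{t}e^{-\lae(t-s)}e^{-2\mu_{0}s/\e^{2}}\d s$ into the target term $\e^{3}\lae\|\ho(0)\|_{\E}^{2}e^{-\lae t}$. Matching subterms: substitution into the direct $\|\ho(t)\|_{\E}$ summand of \eqref{eq:lemenergy} produces the two ``$e^{-\mu_{0}(t-s)/\e^{2}}$'' terms; the substitution of \eqref{eq:estimh0} into $\lae\int_{0}^{t}e^{-\lae(t-s)}\|\ho(s)\|_{\E}\d s$ yields (after Fubini) both the decay term $\|\ho(0)\|_{\E}e^{-\lae t}$ and the $\e^{2}\lae^{2}\int_{0}^{t}e^{-\lae(t-s)}\|\hu(s)\|\d s$ contribution; finally, substitution of \eqref{eq:estimh02} into $\e\lae\int_{0}^{t}e^{-\lae(t-s)}\|\ho(s)\|^{2}_{\E}\d s$ produces the $\e^{3}\lae\|\ho(0)\|_{\E}^{2}e^{-\lae t}$ term alongside the quartic $\e^{7}\lae^{3}\int_{0}^{t}e^{-\lae(t-s)}\|\hu(s)\|_{\E_{2}}^{4}\d s$.

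The main obstacle is purely organizational: each of the five terms in \eqref{eq:lemenergy} generates several subterms upon substitution, and one must track the exact powers of $\e$ and $\lae$ so as to match each contribution either with one of the target terms or against a dominated one (using the continuous embedding $\E_{2}\hookrightarrow\E$ together with the smallness of $\e^{2}\lae$). No new analytic ingredient beyond the Fubini trick and the smallness assumption used to choose $\e_{2}$ should be needed.
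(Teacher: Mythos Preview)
Your proposal is correct and matches the paper's own proof essentially line for line: substitute \eqref{eq:estimh0} and \eqref{eq:estimh02} into \eqref{eq:lemenergy}, collapse the nested integrals via the Fubini identity (which is exactly \eqref{eq:integdouble} in the paper), choose $\e_{2}$ so that $\mu_{0}\geq 2\e^{2}\lae$, and then discard dominated terms. Your term-by-term accounting of which substitutions produce which target contributions is also accurate.
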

\begin{proof} We insert the bound for $\|\ho(t)\|^{i}_{\E}$ for $i=1,2$ in \eqref{eq:estimh0} and \eqref{eq:estimh02} in the estimate of Lemma~\ref{lem:energy}. Assuming $\mu_{0} \geq 2\e^{2}\lambda_{\e}$ and recalling that $1-\re(\e) \simeq \e^{2}\lambda_{\e}$, we first deduce from~\eqref{eq:estimh0}--\eqref{eq:lemenergy}  that 
\begin{multline}\label{eq:P0h1-1}
\| \mathbf{P}_{0} \hu(t)  \|_{\E_{-1}}  \lesssim   \left(\| \Pi_{0} h(0) \|_{\E}+\|\ho(0)\|_{\E}\right)e^{-\lambda_{\e}t} 
+ \lambda_{\e}\int^{t}_{0}e^{-\frac{\mu_0}{\e^2}(t-s)}\|\hu(s)\|_{\E_{2}}\,\d s \\ +\e\lambda_{\e}\int^{t}_{0}e^{-\frac{\mu_0}{\e^2}(t-s)}\|\hu(s)\|^{2}_{\E_{2}}\,\d s 
+ \e\lambda_{\e}\int^{t}_{0}e^{-\lambda_{\e} (t-s)}\|\hu(s)\|^{2}_{\E} \d s\\
+\ {\lambda_\e}\|\ho(0)\|_{\E}\int_{0}^{t}e^{-\lambda_{\e}(t-s)}e^{-\frac{\mu_{0}}{\e^{2}}s}\d s +  {\lambda_{\e}^2}\int_{0}^{t}e^{-\lae(t-s)}\d s\int_{0}^{s}e^{-\frac{\mu_{0}}{\e^{2}}(s-\tau)}\|\hu(\tau)\|_{\E_{2}}\d\tau\\
+{\e\lambda_{\e}^2}\int_{0}^{t}e^{-\lae(t-s)}\d s\int_{0}^{s}e^{-\frac{\mu_{0}}{\e^{2}}(s-\tau)}\|\hu(\tau)\|_{\E_{2}}^{2}\d\tau
+\lae \int_{0}^{t}e^{-\lae(t-s)}\left\|\left(\mathbf{I-P}_{0}\right)\hu(s)\right\|_{\E}\d s\\
+ \e\lambda_{\e}\int^{t}_{0}e^{-\lambda_{\e} (t-s)}\|\ho(s)\|^{2}_{\E}\,\d s\,.
\end{multline}
Now, using \eqref{eq:estimh02} for the last integral, we obtain
\begin{multline*}
\int^{t}_{0}e^{-\lambda_{\e} (t-s)}\|\ho(s)\|^{2}_{\E}\,\d s \lesssim \|\ho(0)\|^2_{\E}\int_{0}^{t}e^{-\frac{2\mu_0}{\e^2}s-\lambda_{\e}(t-s)}\,\d s \\
+ \big(\e\,\lambda_{\e}\big)^{2}\int_{0}^{t}e^{-\lambda_{\e}(t-s)}\,\d s\int^{s}_{0}e^{-\frac{\mu_0}{\e^2}(s-\tau)}\|\hu(\tau)\|^{2}_{\E_{2}}\d \tau\\
 + \big( \e^2\,\lambda_{\e} \big)^{2}\int_{0}^{t}e^{-\lambda_{\e}(t-s)}\,\d s\int^{s}_{0}e^{-\frac{\mu_0}{\e^2}(s-\tau)}\|\hu(\tau)\|^{4}_{\E_{2}}\,\d \tau\,.
\end{multline*}
Using that, for any $\beta >\re >0$ and nonnegative mapping $t\mapsto \zeta(t)$
\begin{equation}\begin{split}\label{eq:integdouble}
\int_{0}^{t}e^{-\re(t-s)}\,\d s\int_{0}^{s}e^{-\beta(s-\tau)}\zeta(\tau)\d\tau
&=e^{-\re\,t}\int_{0}^{t}e^{\beta\tau}\zeta(\tau)\d\tau\int_{\tau}^{t}e^{-(\beta-\re)s}\,\d s\\
&\leq \frac{1}{\beta-\re}\int_{0}^{t}e^{-\re(t-\tau)}\zeta(\tau)\d\tau
\end{split}\end{equation}
we have, for $\mu_{0} \geq 2\e^{2}\lambda_{\e}$, that
\begin{equation}\label{eq:doublehuk}
{\int_{0}^{t}e^{-\lambda_{\e}(t-s)}\,\d s\int^{s}_{0}e^{-\frac{\mu_0}{\e^2}(s-\tau)}\|\hu(\tau)\|^{k}_{\E_{2}}\d \tau \leq \frac{2\e^{2}}{\mu_{0}}\int_{0}^{t}e^{-\lambda_{\e}(t-s)}\|\hu(s)\|_{\E_{2}}^{k}\,\d s, \quad k=1,2,4\,,}\end{equation}
so that
\begin{multline*}
\int^{t}_{0}e^{-\lambda_{\e} (t-s)}\|\ho(s)\|^{2}_{\E}\,\d s \lesssim \e^{2}\|\ho(0)\|^2_{\E}e^{-\lambda_{\e}\,t}
+  (\e^{2}\lambda_{\e})^{2}\int_{0}^{t}e^{-\lambda_{\e}(t-s)}\|\hu(s)\|^{2}_{\E_{2}}\,\d s\\
 +  \big( \e^3\,\lambda_{\e} \big)^{2}\int_{0}^{t}e^{-\lambda_{\e}(t-s)}\|\hu(s)\|^{4}_{\E_{2}}\,\d s\,.
\end{multline*}
{Using again the above \eqref{eq:doublehuk} to estimate the sixth and seventh terms of \eqref{eq:P0h1-1} and keeping only the dominant terms, we get the desired estimate.}
\end{proof}
\begin{nb}\label{nb:P0h1-2}
We will also need an estimate for $\|\mathbf{P}_{0}\hu(t)\|^{2}_{\E_{-1}}$. {Using~\eqref{eq:integsquare2}, we obtain that for any~$r \in (0,1)$, 
\begin{multline*}
\|\mathbf{P}_{0} \hu(t)  \|^2_{\E_{-1}} \lesssim \Big( \| \Pi_{0} h(0) \|^2_{\E} + \| \ho(0) \|^2_{\E} +   (\e^{3}\lambda_{\e})^2 \| \ho(0) \|^{4}_{\E} \Big)e^{-2\lambda_{\e}t}\\
\qquad +  {(\e\,\lambda_{\e})^2} \int^{t}_{0}e^{-\frac{\mu_0}{\e^2}(t-s)}\| \hu(s) \|^2_{\E_{2}}\,\d s +  (\e^2\lambda_{\e})^{2} \int^{t}_{0}e^{-\frac{\mu_0}{\e^2}(t-s)}\| \hu(s) \|^{4}_{\E_{2}}\,\d s\\
 +\frac{\lambda_{\e}}{r}\int_{0}^{t}e^{-2(1-r)\lae(t-s)}\left\|\left(\mathbf{I-P}_{0}\right)\hu(s)\right\|_{\E}^{2}\d s +  {\frac{\e^{4}\lambda_{\e}^3}{r}} \int_{0}^{t}e^{-2(1-r)\lambda_{\e}(t-s)}\|\hu(s)\|_{\E_{2}}^{2}\d s\\
\hspace{2cm}+\,\frac{\e^2\lambda_{\e}}{r}\int^{t}_{0}e^{-2(1-r)\lambda_{\e}(t-s)}\| \hu(s) \|^{4}_{\E_{2}}\,\d s +\frac{\e^{4}\big( \e^{2}\lambda_{\e}\big)^{5}}{r}\int^{t}_{0}e^{-2(1-r)\lambda_{\e}(t-s)}\| \hu(s) \|^{8}_{\E_{2}}\,\d s\,.
\end{multline*}
where the multiplicative constant does not depend on $r$. }
\end{nb}

\subsection{Estimating the complement $(\mathbf{Id-P}_0)\hu$} Let us focus on an estimate on $\mathbf{P}_0^\perp \hu(t)$ with $\mathbf{P}_0^\perp=\mathbf{Id-P}_{0}$, the orthogonal projection onto $\left(\mathrm{Ker}(\G_{1,\e})\right)^{\perp}$ in the Hilbert space $L^2_{v,x}(\M^{-1/2})$.  The same notation for the operator $\G_{1,\e}$ in the spaces $\E$ and $\H$ is used.  
\smallskip
\noindent
We begin with the following lemma where, we recall that $\Sigma_{\M}$ is defined in \eqref{eq:SigmaM}.
\begin{lem}\label{prop:huut} With the notations of  Theorem \ref{theo:G1e}, let {$\e \in (0,\e_0)$}, $\mu \in (0,\mu_{\star})$ and assume that
$$\sup_{t\geq0}\left(\| \ho(t)\|_{\E}+\|\hu(t)\|_{\H}\right) \leq \Mo$$
with {$\Mo\leq 1$} small enough so that
\begin{equation}\label{eq:nu}
\nu:=\frac{2\mu}{\sigma_{0}^{2}}-c_{0}\Mo^{2} >0\end{equation}
where $\sigma_{0}:=\inf_{\xi\in\R^{d}}\Sigma_{\M}(\xi) >0$ and $c_{0} >0$ is a universal constant depending only on $\M$ {defined in~\eqref{eq:hutH}}. Set
$$\huu(t)=\hu(t)-\mathbf{P}_{0}\hu(t)\,, \qquad \forall\, t \geq 0.$$
Then, there exists $C_{0} >0$ independent of $\e >0$ such that
\begin{multline}\label{eq:huut1-0}
\|\huu(t)\|_{\H}^{2} \leq 
C_{0}\int_{0}^{t}e^{-\nu(t-s)}\|\mathbf{P}_{0}\hu(s)\|_{\E_{-1}}^{4}\,\d s+\frac{C_{0}}{\e^{2}}\int_{0}^{t}e^{-\nu(t-s)}\|\hu(s)\|_{\H}\,\|\ho(s)\|_{\E}\,\d s\end{multline}
for any $t \geq0.$ In particular,
\begin{multline}\label{eq:huut1}
\|\huu(t)\|_{\H}^{2} \leq 
C_{0}\Mo^{2}\int_{0}^{t}e^{-\nu(t-s)}\|\hu(s)\|_{\H}^{2}\,\d s+\frac{C_{0}}{\e^{2}}\int_{0}^{t}e^{-\nu(t-s)}\|\hu(s)\|_{\H}\,\|\ho(s)\|_{\E}\,\d s\end{multline}
for any $t \geq0.$
\end{lem}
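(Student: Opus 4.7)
The starting point is to derive the PDE satisfied by $\huu=\mathbf{P}_{0}^{\perp}\hu$. Since the spectral projector $\mathbf{P}_{0}$ onto $\mathrm{Ker}(\G_{1,\e})=\mathrm{Ker}(\mathscr{L}_{1})$ is independent of $\e$ and commutes with $\G_{1,\e}$, and since the elastic operator $\Q_{1}$ preserves mass, momentum and kinetic energy so that $\mathbf{P}_{0}\Q_{1}(\hu,\hu)=0$, equation \eqref{eq:h1} yields
\begin{equation*}
\partial_{t}\huu=\G_{1,\e}\huu+\e^{-1}\Q_{1}(\hu,\hu)+\mathbf{P}_{0}^{\perp}\A_{\e}\ho,\qquad \huu(0)=0.
\end{equation*}
I would then take the $\H$-inner product of this equation with $\huu$. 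The transport part $-\e^{-1}v\cdot\nabla_{x}$ is skew-symmetric in the Hilbert space $L^{2}(\M^{-1/2})$ and therefore contributes nothing, while $\e^{-2}\mathscr{L}_{1}$ is self-adjoint. Exploiting the coercivity of $-\mathscr{L}_{1}$ on $\mathbf{P}_{0}^{\perp}\H$ together with the multiplicative lower bound $\Sigma_{\M}\geq \sigma_{0}\m_{1}$ from \eqref{eq:collfreq} will produce a dissipation of the form
\begin{equation*}
\frac{1}{2}\frac{\d}{\d t}\|\huu\|_{\H}^{2}+\frac{2\mu}{\sigma_{0}^{2}\,\e^{2}}\|\huu\|_{\H_{1}}^{2}\leq \e^{-1}\bigl|\langle \Q_{1}(\hu,\hu),\huu\rangle_{\H}\bigr|+\bigl|\langle \mathbf{P}_{0}^{\perp}\A_{\e}\ho,\huu\rangle_{\H}\bigr|,
\end{equation*}
where $\mu\in(0,\mu_{\star})$ is as in Theorem \ref{theo:G1e}; the factor $\sigma_{0}^{-2}$ is what will convert the $\H_{1}$-coercivity into the prefactor $2\mu/\sigma_{0}^{2}$ appearing in \eqref{eq:nu}.

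\textbf{Bounding the nonlinear contribution.} The central step is the bilinear estimate for $\Q_{1}$ in weighted $L^{2}$-spaces. Decomposing $\hu=\mathbf{P}_{0}\hu+\huu$ gives four quadratic terms. Using the classical Grad-type estimate of the form $|\langle \Q_{1}(f,g),h\rangle_{\H}|\lesssim \|f\|_{\E_{-1}}\|g\|_{\H_{1}}\|h\|_{\H_{1}}$ (or its symmetric variant, exploiting the smoothness and fast decay of $\mathbf{P}_{0}\hu$ which lies in the finite-dimensional space spanned by $\Psi_{i}\M$), I would bound the terms that are at least linear in $\huu$ as
\begin{equation*}
\e^{-1}\bigl|\langle \Q_{1}(\hu,\hu),\huu\rangle_{\H}\bigr|\lesssim \e^{-1}\bigl(\|\hu\|_{\H}\|\huu\|_{\H_{1}}+\|\mathbf{P}_{0}\hu\|_{\E_{-1}}^{2}\bigr)\|\huu\|_{\H_{1}},
\end{equation*}
and then absorb the $\e^{-2}\|\huu\|_{\H_{1}}^{2}$ pieces into the coercivity by Young's inequality, paying the price $c_{0}\Mo^{2}\|\huu\|_{\H_{1}}^{2}$ which enters the definition of $\nu$ in \eqref{eq:nu}. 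What remains on the right is controlled by $\|\mathbf{P}_{0}\hu\|_{\E_{-1}}^{4}$ (without any $\e^{-1}$ factor, thanks to pairing Young with the $\e^{-2}$ coercivity), which is exactly the first term of \eqref{eq:huut1-0}.

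\textbf{Source term and conclusion.} For the source, I would use that $\A_{\e}=\e^{-2}\A^{(\delta)}$ with $\A^{(\delta)}\::\:\E\to\H$ bounded (by Lemma \ref{prop:hypo1}, since $\A^{(\delta)}$ is compactly supported and $\H\hookrightarrow L^{\infty}_{x}$ localised on $B(0,R_{\delta})$). Hence, by Cauchy--Schwarz and $\|\huu\|_{\H}\leq \|\hu\|_{\H}$,
\begin{equation*}
\bigl|\langle \mathbf{P}_{0}^{\perp}\A_{\e}\ho,\huu\rangle_{\H}\bigr|\lesssim \e^{-2}\|\ho\|_{\E}\|\hu\|_{\H},
\end{equation*}
which furnishes the second term of \eqref{eq:huut1-0}. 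Gathering everything and using $\|\huu\|_{\H}\leq \|\huu\|_{\H_{1}}$ to close the inequality in $\H$-norm, I obtain a differential inequality of the shape
\begin{equation*}
\frac{\d}{\d t}\|\huu\|_{\H}^{2}+\nu\,\|\huu\|_{\H}^{2}\leq C_{0}\,\|\mathbf{P}_{0}\hu\|_{\E_{-1}}^{4}+C_{0}\,\e^{-2}\|\hu\|_{\H}\|\ho\|_{\E},
\end{equation*}
which integrates (via Gronwall with the factor $e^{\nu s}$ and the vanishing initial datum $\huu(0)=0$) to \eqref{eq:huut1-0}. Finally, \eqref{eq:huut1} follows from $\|\mathbf{P}_{0}\hu\|_{\E_{-1}}^{4}\leq \|\hu\|_{\H}^{2}\|\mathbf{P}_{0}\hu\|_{\E_{-1}}^{2}\lesssim \Mo^{2}\|\hu\|_{\H}^{2}$ thanks to Lemma \ref{lemobservables} together with the uniform bound $\|\hu\|_{\H}\leq \Mo$. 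The principal technical obstacle will be the careful choice of the bilinear estimate for $\Q_{1}$ that both keeps track of the exact weight losses (so as to exploit the $\H_{1}$-coercivity to its full extent) and isolates the purely hydrodynamic nonlinearity $\Q_{1}(\mathbf{P}_{0}\hu,\mathbf{P}_{0}\hu)$, whose size is captured by the finite-dimensional $\E_{-1}$-norm appearing to the fourth power.
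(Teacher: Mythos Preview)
Your overall strategy matches the paper's: derive the equation for $\huu$, obtain a differential energy inequality, split the nonlinearity via $\hu=\mathbf{P}_{0}\hu+\huu$, absorb the $\huu$-quadratic piece into the dissipation, and integrate. The difficulty is hidden in the step you treat as routine, namely producing the energy inequality
\[
\dfrac{\d}{\d t}\|\huu\|_{\H}^{2}\leq -\dfrac{2\mu}{\sigma_{0}^{2}}\|\huu\|_{\H_{1}}^{2}+C\|\hu\|_{\H}^{2}\|\hu\|_{\H_{1}}^{2}+\|\huu\|_{\H}\|\mathbf{P}_{0}^{\perp}\A_{\e}\ho\|_{\H}
\]
with \emph{no} powers of $\e$ on the first two terms. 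Your derivation does not give this. First, $\H=\W^{m,2}_{v,x}(\M^{-1/2})$ involves velocity derivatives, and $v\cdot\nabla_{x}$ is \emph{not} skew-symmetric there (the commutator $[\nabla_{v},v\cdot\nabla_{x}]=\nabla_{x}$ produces cross-terms). Second, even ignoring that, your own display has coercivity $\tfrac{2\mu}{\sigma_{0}^{2}\e^{2}}\|\huu\|_{\H_{1}}^{2}$ and nonlinearity at scale $\e^{-1}$; after Young's inequality you cannot land on $\nu=\tfrac{2\mu}{\sigma_{0}^{2}}-c_{0}\Mo^{2}$ as in \eqref{eq:nu}---your bookkeeping of $\e$-powers is inconsistent between the displayed inequality and the claimed final differential inequality. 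The paper closes this gap by working in a \emph{hypocoercive norm} equivalent to $\|\cdot\|_{\H}$ uniformly in $\e$ (from \cite{briant}, Theorem~2.4) and invoking \cite[Theorem~4.7, Eq.~(4.8)]{bmam}, which packages both the skew-symmetry failure and the $\e$-balance into the single inequality \eqref{eq:bmh1}. This is the substantive input you are missing.

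A minor point: your deduction of \eqref{eq:huut1} from \eqref{eq:huut1-0} cites Lemma~\ref{lemobservables}, but that lemma controls $\mathbb{P}_{0}\hu$ (mass and momentum only) by $\|\ho\|_{\E}$, not $\mathbf{P}_{0}\hu$. The correct argument is simply $\|\mathbf{P}_{0}\hu\|_{\E_{-1}}\lesssim\|\mathbf{P}_{0}\hu\|_{\H}\leq\|\hu\|_{\H}\leq\Mo$, which is what the paper uses via \eqref{eq:estimNorm}.
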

\begin{proof}  We start by recalling that $h^1(0)=0$ so that $\Psi(0)=0$. One checks from \eqref{eq:h1} that
\begin{align*}
\partial_t \huu &=\G_{1,\e}\huu + \mathbf{P}_0^\perp\left(\e^{-1} \Q_1(\hu,\hu)+\A_\e \ho\right) =\G_{1,\e}\huu + \e^{-1} \Q_1(\hu,\hu) + \mathbf{P}_0^\perp\A_\e \ho\,,
\end{align*}
where for the later we used that $\mathbf{P}_0 \Q_1(\hu,\hu)=0$. {Hereafter, we denote by $\|\cdot\|_\H$ a hypocoercive norm which is equivalent to the usual one independently of $\e$ and which allows us to write nice energy estimates. It is worth mentioning that such a norm has been exhibited in~\cite[Theorem~2.4]{briant}.} Using {\cite[Theorem 4.7]{bmam}}, one obtains as in {\cite[Eq.~(4.8)]{bmam}} that, for any $\mu \in (0,\mu_{\star})$ there is some positive constant $C >0$ such that
\begin{equation}\label{eq:bmh1}
\dfrac{\d}{\d t}\|\huu(t)\|_{\H}^{2} \leq -\frac{2\mu}{\sigma_{0}^{2}}\|\huu(t)\|_{\H_{1}}^2 + C\|\hu(t)\|_{\H}^2\,\|\hu(t)\|_{\H_{1}}^2 + \|\huu(t)\|_{\H}\,\|\mathbf{P}_0^\perp\A_{\e} \ho(t)\|_{\H}.
\end{equation}
Writing $\hu=\mathbf{P}_0 \hu+\huu$, we obtain
\begin{equation*}\begin{split}
\|\hu(t)\|_{\H}^2\,\|\hu(t)\|_{\H_{1}}^2 &\leq 2\|\hu(t)\|_{\H}^2\Big( \|\mathbf{P}_0 \hu(t)\|_{\H_{1}}^2 + \|\huu(t)\|_{\H_{1}}^2\Big)\,\\
&\leq 2\Mo^{2}\|\huu(t)\|_{\H_{1}}^{2}+4\|\mathbf{P}_{0}\hu(t)\|_{\H_{1}}^{2}\left(\|\mathbf{P}_{0}\hu(t)\|_{\H}^{2}+\|\huu(t)\|_{\H}^{2}\right).
\end{split}\end{equation*}
In particular, since there exists a positive constant $c >0$ depending only on $\M$ such that 
$$\|\mathbf{P}_{0}\hu(t)\|_{\H}^{2} \leq \|\mathbf{P}_{0}\hu(t)\|_{\H_{1}}^{2}\leq  c\|\mathbf{P}_{0}\hu(t)\|_{\E_{-1}}^{2}\,,$$
we deduce that
\begin{equation}\label{eq:hutH}
\|\hu(t)\|_{\H}^{2}\|\hu(t)\|_{\H_{1}}^2 \leq c_{0}\|\mathbf{P}_{0}\hu(t)\|_{\E_{-1}}^{4}+c_{0}\Mo^{2}\|\huu(t)\|_{\H_{1}}^{2}\end{equation}
for some universal constant $c_{0} >0$ depending only on $\M$. 
 Therefore, assuming that $\Mo$ is small enough so that
$$\nu:=\frac{2\mu}{\sigma_{0}^{2}}-c_{0}\Mo^{2} >0$$
we deduce that
$$\dfrac{\d}{\d t}\|\huu(t)\|_{\H}^{2} \leq -\nu\,\|\huu(t)\|_{\H_{1}}^2 + C\|\mathbf{P}_{0}\hu(t)\|_{\E_{-1}}^{4}  + \|\huu(t)\|_{\H}\,\|\mathbf{P}_0^\perp\A_{\e} \ho(t)\|_{\H}, \qquad \forall \, t \geq 0$$
for some $C >0$ independent of $t$ and $\e$. Moreover, we also have that
 $$\|\mathbf{P}_0^\perp\A_{\e} \ho(t)\|_{\H} \lesssim \frac{1}{\e^2}\|\ho(t)\|_{\E}\,,\quad \|\huu(t)\|_{\H} \lesssim \|\hu(t)\|_{\H}\,, \qquad \forall \,t \geq 0\,,$$
from which we get the desired estimate \eqref{eq:huut1-0} after integration of the previous differential inequality. We deduce then \eqref{eq:huut1} from \eqref{eq:huut1-0} we use that using the estimate
\begin{equation}\label{eq:estimNorm}
{\| \cdot \|_{\E_{-1}}}\lesssim \| \cdot \|_{\H}\,, \quad \text{and}\quad \| \hu \|^{i+2}_{\H}\leq \Mo^{i}\,\| \hu \|^{2}_{\H}\,,\qquad \qquad \forall i \geq 0\,,
\end{equation}
together with the fact that $\|\mathbf{P}_{0}\hu\|_{\H} \leq \|\hu\|_{\H}.$ 
\end{proof}
To complete the estimate of $\|\huu(t)\|_{\H}^{2}$ we need to estimate the last integral in \eqref{eq:huut1}:
\begin{lem}\label{lem:huut1}
With the notation of Lemma \ref{prop:huut}, there is {an explicit $\e_3\in (0,\e_2)$} such that for any $\delta >0$, {$\e \in (0,\e_3)$}, and $t \geq 0$
\begin{multline*}
{1 \over \e^{2}}\int_{0}^{t}e^{-\nu(t-s)}\|\hu(s)\|_{\H}\,\|\ho(s)\|_{\E}\,\d s \, {\lesssim} \,\frac{\delta}{\e^{2}}\int_{0}^{t}e^{-\nu(t-s)-\frac{\mu_{0}}{\e^{2}}s}\|\hu(s)\|_{\H}^{2}\,\d s
+\frac{1}{\delta}\|\ho(0)\|_{\E}^{2}e^{-\nu\,t}\\
+ \left(\delta+\frac{\lambda_{\e}^{2}}{\delta}
\right)\int_{0}^{t}e^{-\nu(t-s)}\|\hu(s)\|_{\H}^{2}\,\d s\,.\end{multline*}
\end{lem}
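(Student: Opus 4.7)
The strategy is to apply a weighted Young inequality to the integrand $\e^{-2}\|\hu(s)\|_\H\|\ho(s)\|_\E$, then bound $\|\ho(s)\|_\E$ using the linear estimate \eqref{eq:estimh0} of Proposition \ref{prop:h0}, and finally absorb the resulting double integrals via the elementary identities \eqref{eq:integsquare2} and \eqref{eq:integdouble}. The continuous embedding $\H\hookrightarrow \E_{2}$ (which holds thanks to $k\leq m-1$ and the exponential weight in $\H$) will allow us to replace $\|\hu\|_{\E_{2}}$ by $\|\hu\|_{\H}$ throughout.

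Concretely, first I would split into three pieces using \eqref{eq:estimh0}:
\[
\|\ho(s)\|_\E \lesssim \|\ho(0)\|_\E e^{-\mu_0 s/\e^2} + \lambda_\e J_1(s) + \e\lambda_\e J_2(s),
\]
where $J_k(s):=\int_0^s e^{-\mu_0(s-\tau)/\e^2}\|\hu(\tau)\|_{\E_2}^k\,\d\tau$. For the first piece, writing $e^{-\mu_0 s/\e^2}=e^{-\mu_0 s/(2\e^2)}\cdot e^{-\mu_0 s/(2\e^2)}$ and applying Young in the form $ab\leq \tfrac{\delta}{2}a^2+\tfrac{1}{2\delta}b^2$ to the product $\e^{-1}\|\hu(s)\|_\H e^{-\mu_0 s/(2\e^2)}\cdot \e^{-1}\|\ho(0)\|_\E e^{-\mu_0 s/(2\e^2)}$ yields exactly the target term $\tfrac{\delta}{\e^2}\int_0^t e^{-\nu(t-s)-\mu_0 s/\e^2}\|\hu(s)\|_\H^2 \d s$ plus, after integrating in $s$ and using $\mu_0/\e^2\gg \nu$ (choose $\e_3$ small enough), a contribution bounded by $\tfrac{1}{\delta\mu_0}\|\ho(0)\|_\E^2 e^{-\nu t}$.

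For the piece involving $\lambda_\e J_1(s)$, I would use $\H\hookrightarrow\E_2$ to replace $\|\hu(\tau)\|_{\E_2}$ by $\|\hu(\tau)\|_\H$, then apply Young as $ab\leq \tfrac{\delta}{2}a^2+\tfrac{1}{2\delta}b^2$ with $a=\|\hu(s)\|_\H$ and $b=\tfrac{\lambda_\e}{\e^2}J_1(s)$. Bounding $J_1(s)^2$ via \eqref{eq:integsquare2} (with $r=1/2$, $\beta=\mu_0/\e^2$) produces a factor $\e^2/\mu_0$ together with a single integral of $\|\hu(\tau)\|_\H^2$ against $e^{-\mu_0(s-\tau)/\e^2}$; then swapping the order of integration through \eqref{eq:integdouble} (legitimate for $\e_3$ small enough to ensure $\mu_0/\e^2>2\nu$) gives a further factor $2\e^2/\mu_0$, and the resulting contribution is of size $(\delta+\lambda_\e^2/\delta)\int_0^t e^{-\nu(t-s)}\|\hu(s)\|_\H^2\d s$, matching the claim. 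The remaining piece involving $\e\lambda_\e J_2(s)$ is handled identically after using $\|\hu(\tau)\|_{\E_2}^2\leq c\,\Mo\,\|\hu(\tau)\|_\H$, which produces an additional factor $\e\,\Mo$ compared with the previous term, hence is absorbed for $\e_3$ small enough.

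The only delicate point in the plan is bookkeeping: one must choose $\e_3\in(0,\e_2)$ small enough, depending only on $\mu_0$, $\nu$ and the a priori bound $\Mo$, so that (i) $\mu_0/\e^2\geq 2\nu$ (so that \eqref{eq:integdouble} can be applied and produces the bound $2\e^2/\mu_0$), and (ii) the lower-order contribution from the $J_2$ term is dominated by the other two. There is no conceptual difficulty beyond the careful tracking of powers of $\e$, $\lambda_\e$ and $\delta$; the argument is purely a weighted Young inequality coupled with the convolution estimates already established in the paper.
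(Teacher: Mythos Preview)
Your proof plan is correct and follows essentially the same approach as the paper: split $\|\ho(s)\|_{\E}$ via \eqref{eq:estimh0}, apply a weighted Young inequality to each piece, and reduce the resulting double integrals through \eqref{eq:integsquare2} (with $r=\tfrac12$) and \eqref{eq:integdouble}, finally invoking the embedding $\H\hookrightarrow\E_{2}$ together with the a~priori bound $\|\hu\|_{\H}\le \Mo$. The only cosmetic differences are that the paper applies Young directly to $\|\hu(s)\|_{\H}\|\ho(0)\|_{\E}$ without splitting the exponential (your splitting is harmless but unnecessary), and that it defers the use of the $\Mo$ bound on the $J_{2}$-contribution to the very end (via \eqref{eq:estimNorm} on the term $\|\hu\|_{\E_{2}}^{4}$) rather than reducing $J_{2}$ to a $J_{1}$-type term as you do; both orderings lead to the same conclusion.
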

\begin{proof} We use the estimate of $\|\ho(s)\|_{\E}$ provided in \eqref{eq:estimh0} which gives
$$\e^{-2}\int_{0}^{t}e^{-\nu(t-s)}\|\hu(s)\|_{\H}\,\|\ho(s)\|_{\E}\,\d s \lesssim I_{1}(t)+I_{2}(t)+I_{3}(t)$$
with
$$I_{1}(t)=\e^{-2}\int_{0}^{t}e^{-\nu(t-s)}\|\hu(s)\|_{\H}\,\|\ho(0)\|_{\E}e^{-\frac{\mu_{0}}{\e^{2}}s}\,\d s,$$
$$I_{2}(t)=\e^{-2}\lambda_{\e}\int_{0}^{t}e^{-\nu(t-s)}\|\hu(s)\|_{\H}\,\d s\int^{s}_{0}e^{-\frac{\mu_0}{\e^2}(s-\tau)}\|\hu(\tau)\|_{\E_{2}}\d \tau\,,$$
and
$$I_{3}(t)=\e^{-1}\lambda_{\e}\int_{0}^{t}e^{-\nu(t-s)}\|\hu(s)\|_{\H}\,\d s\int_{0}^{s}e^{-\frac{\mu_0}{\e^2}(s-\tau)}\|\hu(\tau)\|^{2}_{\E_{2}}\d \tau.$$
Using Young's inequality, for any $\delta >0$ it holds that
$$ \|\hu(s)\|_{\H}\|\ho(0)\|_{\E} \leq  \delta\, \|\hu(s)\|_{\H}^{2}+\frac{1}{4\delta}\|\ho(0)\|_{\E}^{2}\,,$$
so that, since $\mu_{0}-\e^{2}\nu \geq \tfrac{\mu_{0}}{2}$, 
\begin{equation*}\begin{split}
I_{1}(t) &\leq \delta\e^{-2}\int_{0}^{t}e^{-\nu(t-s)-\frac{\mu_{0}}{\e^{2}}s}\|\hu(s)\|_{\H}^{2}\,\d s +\frac{1}{4\delta\,\e^{2}}\|\ho(0)\|_{\E}^{2}\int_{0}^{t}e^{-\nu(t-s)-\frac{\mu_{0}}{\e^{2}}s}\,\d s\\
&\leq \delta\e^{-2}\int_{0}^{t}e^{-\nu(t-s)-\frac{\mu_{0}}{\e^{2}}s}\|\hu(s)\|_{\H}^{2}\,\d s + \frac{1}{2\delta\mu_{0}}\|\ho(0)\|_{\E}^{2}e^{-\nu\,t}.\end{split}\end{equation*} 
Similarly, Young's inequality implies, for any $\delta >0$, that
\begin{align*}
I_{2}(t) \leq \delta \int_{0}^{t}&e^{-\nu(t-s)}\|\hu(s)\|_{\H}^{2}\,\d s \\
&+ \frac{(\e^{-2}\lambda_{\e})^{2}}{4\delta}\int_{0}^{t}e^{-\nu(t-s)}\,\d s
\left(\int_{0}^{s}e^{-\frac{\mu_{0}}{\e^{2}}(s-\tau)}\|\hu(\tau)\|_{\E_{2}}\d \tau\right)^{2},
\end{align*}
and,  using \eqref{eq:integsquare2} with $r=\frac12$ and \eqref{eq:integdouble} to estimate the square of the last integral, we get for $\mu_{0} \geq 2\e^{2}\nu$ that
$$I_{2}(t)  \leq \delta \int_{0}^{t}e^{-\nu(t-s)}\|\hu(s)\|_{\H}^{2}\,\d s + \frac{1}{2\delta\mu_{0}^{2}}\lambda_{\e}^{2}\int_{0}^{t}e^{-\nu(t-s)}\|\hu(s)\|_{\E_{2}}^{2}\,\d s.$$
In the same way, it follows that
$$I_{3}(t)  \leq \delta \int_{0}^{t}e^{-\nu(t-s)}\|\hu(s)\|_{\H}^{2}\,\d s + \frac{1}{2\delta\mu_{0}^{2}}(\e\lambda_{\e})^{2}\,\int_{0}^{t}e^{-\nu(t-s)}\|\hu(s)\|_{\E_{2}}^{4}\,\d s.$$
Combining these estimates yields 
\begin{multline}\label{eq:estimNorm1}
\e^{-2}\int_{0}^{t}e^{-\nu(t-s)}\|\hu(s)\|_{\H}\,\|\ho(s)\|_{\E}\,\d s \, {\lesssim} \, \delta\int_{0}^{t}e^{-\nu(t-s)}\|\hu(s)\|_{\H}^{2}\,\d s +\frac{1}{\delta}\|\ho(0)\|_{\E}^{2}e^{-\nu\,t}\\
+\frac{\delta}{\e^{2}}\int_{0}^{t}e^{-\nu(t-s)-\frac{\mu_{0}}{\e^{2}}s}\|\hu(s)\|_{\H}^{2}\,\d s+ 
 \frac{\lambda_{\e}^{2}}{\delta} \int_{0}^{t}e^{-\nu(t-s)}\|\hu(s)\|_{\E_{2}}^{2}\,\d s\\
+ \frac{(\e\lambda_{\e})^{2}}{\delta}\,\int_{0}^{t}e^{-\nu(t-s)}\|\hu(s)\|_{\E_{2}}^{4}\,\d s\,.
\end{multline}
We conclude thanks to \eqref{eq:estimNorm}.\end{proof}
 
We deduce from the previous the following main estimate for $\|\huu(t)\|_{\H}$ 
\begin{prop} Under the Assumptions of Lemma \ref{prop:huut},  there  exist $\e_4 \in (0,\e_3)$,  {$\lambda_4 >0$}, $c >0$ a positive universal constant  that depends on $\mu_0$ and $\nu$ such that, for any $\delta >0$, $t \geq0$, $\e \in (0,\e_4)$,  {$\lambda_\e \in (0, \lambda_4)$}, 
\begin{multline}\label{est:Ps}
\frac{1}{c}\|\huu(t)\|_{\H}^{2} \leq  \frac{1}{\delta}\|\ho(0)\|_{\E}^{2}e^{-\nu\,t}
+\bm{C}_{2}\int_{0}^{t}e^{-\nu(t-s)}\|\hu(s)\|_{\H}^{2}\d s\\
+\frac{\delta}{\e^{2}}\int_{0}^{t}e^{-\nu(t-s)-\frac{\mu_{0}}{\e^{2}}s}\|\hu(s)\|_{\H}^{2}\d s\,,\end{multline}
with $\bm{C}_{2}=\bm{C}_{2}(\delta,\e,\Mo):=\left[\delta+\Mo^{2}+
\frac{\lae^{2}}{\delta} 
\right].$ In particular,  {there is a positive constant $c_{1}$ depending only on $\mu_{0}$ and $\nu$ such that for any $r \in (0,1)$,
\begin{multline}\label{eq:intPsi}
\frac{1}{c_{1}}\int_{0}^{t}e^{-2(1-r)\lae(t-s)}\|\huu(s)\|_{\H}^{2}\d s \\
\leq  \frac{1}{\delta}\|\ho(0)\|_{\E}^{2}e^{-2(1-r)\lae t}+  \bm{C}_{2}\int_{0}^{t}e^{-2(1-r)\lae(t-s)}\|\hu(s)\|_{\H}^{2}\d s \\
+\frac{\delta}{\e^{2}}\int_{0}^{t}e^{-2(1-r)\lae(t-s)}e^{-\frac{\mu_{0}}{\e^{2}}s}\|\hu(s)\|^{2}_{\H}\d s
\end{multline}}
holds for any $t \geq 0$, $\delta >0$.
\end{prop}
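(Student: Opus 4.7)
The plan is to combine the two preceding lemmas and then to integrate the resulting pointwise estimate against an appropriate exponential weight using Fubini. First I would note that the pointwise bound \eqref{est:Ps} is essentially an immediate consequence of Lemma~\ref{prop:huut} and Lemma~\ref{lem:huut1}: inserting the estimate of Lemma~\ref{lem:huut1} into the second integral on the right-hand side of \eqref{eq:huut1}, one collects the coefficients in front of $\int_{0}^{t}e^{-\nu(t-s)}\|\hu(s)\|_{\H}^{2}\d s$, namely $C_0 \Mo^2$ (from the first term of \eqref{eq:huut1}) together with $\delta + \lae^2/\delta$ (from Lemma~\ref{lem:huut1}), so that the total coefficient becomes a constant multiple of $\bm{C}_{2}=\delta + \Mo^{2}+\lae^{2}/\delta$. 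The remaining two contributions (the initial data term $\frac{1}{\delta}\|\ho(0)\|_{\E}^{2}e^{-\nu t}$ and the short-time layer $\frac{\delta}{\e^{2}}\int_0^t e^{-\nu(t-s)-\mu_0 s/\e^2}\|\hu(s)\|_\H^2\,\d s$) also come directly from Lemma~\ref{lem:huut1}. The thresholds $\e_4 \in (0,\e_3)$ and $\lambda_4 > 0$ are chosen so that all the smallness conditions from Lemmas~\ref{prop:huut}--\ref{lem:huut1} are simultaneously met; in particular one requires $\mu_0 \geq 2\e^2 \nu$ as in the statement of Lemma~\ref{lem:huut1}, which is harmless for $\e$ small.

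For \eqref{eq:intPsi} the plan is to multiply \eqref{est:Ps} evaluated at time $s$ by $e^{-2(1-r)\lae(t-s)}$ and integrate over $s\in (0,t)$. The three resulting terms must be handled separately. The first one is a one-dimensional integral
$$\int_{0}^{t} e^{-2(1-r)\lae(t-s)-\nu s}\,\d s \leq \frac{1}{\nu - 2(1-r)\lae}e^{-2(1-r)\lae t},$$
which requires the constraint $\nu > 2(1-r)\lae$; this is ensured by choosing $\lambda_4 < \nu/2$, so that $2(1-r)\lae < \nu$ uniformly in $r \in (0,1)$ and $\lae \in (0,\lambda_4)$, and the denominator $\nu - 2(1-r)\lae$ can then be absorbed into the universal constant $c_1$. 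For the second term I would apply directly the double-integral identity \eqref{eq:integdouble} with $\re = 2(1-r)\lae$ and $\beta = \nu$, which gives
$$\int_{0}^{t}e^{-2(1-r)\lae(t-s)}\d s\int_{0}^{s}e^{-\nu(s-\tau)}\|\hu(\tau)\|_{\H}^{2}\d\tau \leq \frac{1}{\nu-2(1-r)\lae}\int_{0}^{t}e^{-2(1-r)\lae(t-\tau)}\|\hu(\tau)\|_{\H}^{2}\d\tau.$$
For the third term, after exchanging the order of integration via Fubini, the inner integral in $s$ computes to
$$\int_{\tau}^{t}e^{-2(1-r)\lae(t-s)-\nu(s-\tau)}\d s=\frac{1}{\nu-2(1-r)\lae}\bigl(e^{-2(1-r)\lae(t-\tau)}-e^{-\nu(t-\tau)}\bigr) \leq \frac{e^{-2(1-r)\lae(t-\tau)}}{\nu - 2(1-r)\lae},$$
so that the factor $e^{-\mu_0\tau/\e^{2}}$ survives and we recover exactly the third term on the right-hand side of \eqref{eq:intPsi}. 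Gathering all three bounds and absorbing the uniform denominator $(\nu - 2(1-r)\lae)^{-1}$ into $c_1$ produces the claimed inequality.

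The main obstacle is mostly bookkeeping: the crucial analytical point is the spectral gap condition $\nu > 2(1-r)\lae$, which is what forces the smallness assumption $\lae < \lambda_4$ and reflects the fact that the hypocoercive decay rate $\nu$ for $\huu$ on the microscopic space $\H$ must dominate the macroscopic energy-dissipation rate $\lae$ appearing in the integrating weight. Once this is secured, each of the three terms is handled by a single application of \eqref{eq:integdouble} or a direct computation of a product of exponentials, with no further ideas needed.
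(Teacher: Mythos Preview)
Your proof is correct and follows essentially the same route as the paper: for \eqref{est:Ps} you insert Lemma~\ref{lem:huut1} into \eqref{eq:huut1}, and for \eqref{eq:intPsi} you integrate \eqref{est:Ps} against the weight $e^{-2(1-r)\lae(t-s)}$ and handle each term with \eqref{eq:integdouble} (your Fubini computation for the third term is exactly the content of that identity). The only cosmetic difference is that the paper imposes $\nu\geq 4\lambda_\e$ rather than your $\lambda_4<\nu/2$, which yields the cleaner uniform bound $\nu-2(1-r)\lae\geq \nu/2$, but your choice works equally well since it still gives $\nu-2(1-r)\lae\geq \nu-2\lambda_4>0$ uniformly in $r$ and $\lae$.
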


\begin{proof} Inserting the estimate  obtained in Lemma  \ref{lem:huut1}  into \eqref{eq:huut1} we get directly \eqref{est:Ps}. Using \eqref{eq:integdouble} twice, we deduce then easily \eqref{eq:intPsi} from \eqref{est:Ps} after integration {by choosing $\lambda_5$ small enough such that $\nu \geq 4 \lambda_\e$ and thus $\nu - 2(1-r)\lambda_\e \geq \nu - 2 \lambda_\e \geq \frac{\nu}{2}$ for $\lambda_\e \in (0,\lambda_4)$.}
\end{proof}
\begin{nb}\label{nb:lamba} We wish to clarify here the role of the parameter $\lambda_4$ in the above result (and in several similar others in the sequel). Of course, for any choice of $\e$, the parameter $\overline{\lambda}_{\e}$ (or here $\lambda_\e$) is fixed as it is the energy eigenvalue. Asking it to be smaller than the threshold value $\lambda_4$ has to be however understood as a constraint on the parameter $\lambda_{0}$ appearing in \eqref{eq:scaling}. The above result applies to any $\lambda_0 \geq 0$ such that $ {\lambda}_\e < \lambda_4.$ \end{nb}

We deduce from the above the following
\begin{prop}\label{prop:hufin}
Under the assumptions of Lemma \ref{prop:huut}, {there exist $\e_{5} \in (0,\e_{4})$ and $c_{2} >0$  that depends on $\mu_0$ and $\nu$}   such that, for any $\delta \in (0,1)$, $t \geq0$, $\e \in (0,\e_{5})$, {$\lambda_\e \in (0,\lambda_4)$ and $r \in (0,1)$},
\begin{multline}\label{eq:hufin}
\frac{1}{c_{2}}\|\hu(t)\|_{\H}^{2} \leq  {1 \over {\delta r}}\, \mathcal{K}_{0}\, {e^{-2(1-r)\lambda_{\e}t}} +  \left(\delta+\frac{\lambda_{\e}^{2}}{\delta}+\Mo^{2}\right)\int_{0}^{t}e^{-\nu(t-s)}\|\hu(s)\|_{\H}^{2}\d s\\
+ {\frac{\lae}{r}}\left(\delta+\frac{\lambda_{\e}^{2}}{\delta}+\Mo^{2}\right)\,\int^{t}_{0} {e^{-2(1-r)\lambda_{\e}(t-s)}}\| \hu(s) \|^{2}_{\H}\,\d s \\
+ \,  \frac{\delta}{\e^{2}}\left(1+ {\frac{\lae}{r}}\right) \int_{0}^{t} {e^{-2(1-r)\lae(t-s)-\frac{\mu_{0}}{\e^{2}}s}}\|\hu(s)\|_{\H}^{2}\,\d s
\end{multline}
where
$$
\mathcal{K}_{0}:=
\|\ho(0)\|_{\E}^{2}+
\|\Pi_{0}h(0)\|_{\E}^{2}+ \|\ho(0)\|^4_\E$$
depends only on $h(0)$.
\end{prop}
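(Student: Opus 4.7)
\smallskip

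The plan is to split $\hu(t) = \mathbf{P}_0 \hu(t) + \huu(t)$ and bound each piece separately, then combine. The basic inequality
$$\|\hu(t)\|_{\H}^2 \leq 2\|\mathbf{P}_0\hu(t)\|_{\H}^2 + 2\|\huu(t)\|_{\H}^2$$
reduces the task to controlling each of the two terms on the right. Since $\mathrm{Range}(\mathbf{P}_0)$ is the $(d+2)$-dimensional space spanned by $\{\Psi_i \M\}$, the norms $\|\cdot\|_{\H}$ and $\|\cdot\|_{\E_{-1}}$ are equivalent on this subspace, so it is enough to estimate $\|\mathbf{P}_0 \hu(t)\|_{\E_{-1}}^2$. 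For this one, I would invoke Remark~\ref{nb:P0h1-2} directly: the first bracket gives exactly the initial-data term $\mathcal{K}_0 e^{-2(1-r)\lambda_\e t}$ after using $e^{-2\lambda_\e t} \leq e^{-2(1-r)\lambda_\e t}$.

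The higher-order terms appearing in Remark~\ref{nb:P0h1-2} (those of the form $\|\hu(s)\|_{\E_2}^{2k}$ with $k=1,2,4$) would be handled via the continuous embedding $\H \hookrightarrow \E_2$ together with the a priori bound $\|\hu(s)\|_{\H}\leq \Mo \leq 1$: concretely $\|\hu\|_{\E_2}^{2k} \lesssim \Mo^{2k-2}\|\hu\|_{\H}^2$, so all quartic and octic integrands collapse to integrals of $\|\hu(s)\|_{\H}^2$ with coefficients that involve powers of $\e$, $\lambda_\e$ and $\Mo$. The two integrals with $e^{-\mu_0(t-s)/\e^2}$ weight would be left as is (producing the last term of \eqref{eq:hufin}), while those with $e^{-2(1-r)\lambda_\e(t-s)}$ weight contribute to the third term of \eqref{eq:hufin}. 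The remaining integral $\int_0^t e^{-2(1-r)\lambda_\e(t-s)}\|(\mathbf{I}-\mathbf{P}_0)\hu(s)\|_{\E}^2 \d s$ coming from Remark~\ref{nb:P0h1-2} would be controlled via $\|\huu\|_{\E} \lesssim \|\huu\|_{\H}$ and~\eqref{eq:intPsi}, which produces precisely the three types of terms present on the right-hand side of \eqref{eq:hufin}.

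For the piece $\|\huu(t)\|_{\H}^2$, I would apply \eqref{est:Ps} directly; this gives the same three categories of terms (initial data $\|\ho(0)\|_{\E}^2 e^{-\nu t}$, integral with weight $e^{-\nu(t-s)}$, and integral with mixed weight $e^{-\nu(t-s) - \mu_0 s/\e^2}$). One then uses $e^{-\nu t}\leq e^{-2(1-r)\lambda_\e t}$ when $\lambda_\e$ is small enough, so the pure decay term merges with the $\mathcal{K}_0 e^{-2(1-r)\lambda_\e t}$ contribution already produced.

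The final step is pure bookkeeping: collecting all the contributions and choosing $\e_5$ and $\lambda_4$ small enough so that the coefficients of all the integral terms are bounded by constant multiples of the three grouping factors $\delta$, $\lambda_\e^2/\delta$, and $\Mo^2$ (for the $e^{-\nu(t-s)}$ and $e^{-2(1-r)\lambda_\e(t-s)}$ integrals), and of $\delta(1 + \lambda_\e/r)/\e^2$ (for the mixed-weight integral). I expect the main obstacle to be exactly this bookkeeping: tracking which powers of $\e^k \lambda_\e^j$ appear in front of each integrand of $\|\hu(s)\|_{\H}^{2}$ and verifying that, once the a priori bound $\Mo\leq 1$ and the smallness of $\lambda_\e < \lambda_4$ are used, each such prefactor fits within one of the three allowed shapes in~\eqref{eq:hufin}. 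No new analytic idea is needed beyond carefully matching the two previous estimates \eqref{est:Ps}--\eqref{eq:intPsi} and Remark~\ref{nb:P0h1-2}.
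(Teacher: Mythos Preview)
Your proposal is correct and follows essentially the same route as the paper: split $\hu = \mathbf{P}_0\hu + \huu$, control $\|\huu\|_{\H}^2$ via \eqref{est:Ps}, control $\|\mathbf{P}_0\hu\|_{\E_{-1}}^2$ via Remark~\ref{nb:P0h1-2}, feed the $\|\huu\|_{\E}^2$-integral appearing there back through \eqref{eq:intPsi}, and absorb all higher powers $\|\hu\|_{\E_2}^{2k}$ into $\|\hu\|_{\H}^2$ using $\H\hookrightarrow\E_2$ and $\Mo\leq 1$. The paper does exactly this (its intermediate step is the displayed estimate \eqref{eq:Pohufin}), with the only cosmetic difference that it uses the orthogonality $\|\hu\|_{\H}^2 = \|\mathbf{P}_0\hu\|_{\H}^2 + \|\huu\|_{\H}^2$ rather than your factor-of-two split.
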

{
\begin{proof} The previous Proposition gives an estimate of $\|\huu(t)\|_{\H}^{2}$ in terms of $\|\hu(t)\|_{\H}^{2}$. Adding $\|\mathbf{P}_{0}\hu(t)\|_{\H}^{2}$ to both sides of \eqref{est:Ps} and, since 
$$\|\hu(t)\|_{\H}^{2} \leq \|\mathbf{P}_{0}\hu(t)\|_{\H}^{2}+\|\huu(t)\|_{\H}^{2} \lesssim \|\mathbf{P}_{0}\hu(t)\|_{\E_{-1}}^{2}+\|\huu(t)\|_{\H}^{2}\,,$$
we only need to estimate  $\|\mathbf{P}_{0}\hu(t)\|_{\E_{-1}}^{2}$ in terms of $\|\hu(t)\|_{\H}^{2}.$ We invoke the estimate in Remark~\ref{nb:P0h1-2} which, using that  {$\frac{\mu_{0}}{\e^{2}} \geq 2(1-r)\lae$} and assuming $\e$ small enough and keeping only the dominant terms reads simply
\begin{multline}\label{eq:Pohufin}
\|\mathbf{P}_{0} \hu(t)  \|^2_{\E_{-1}} \lesssim \Big( \| \Pi_{0} h(0) \|^2_{\E} + \| \ho(0) \|^2_{\E} +  \| \ho(0) \|^{4}_{\E} \Big)e^{-2\lambda_{\e}t}\\
 +{\frac{\e^2\lae}{r}}\int_{0}^{t} {e^{-2(1-r)\lae(t-s)}}\|\hu(s)\|_{\H}^{2} \ds 
 +{\frac{\lambda_{\e}}{r}}\int_{0}^{t} {e^{-2(1-r)\lae(t-s)}}\left\|\Psi(s)\right\|_{\E}^{2}\d s \,
\end{multline}
where we used \eqref{eq:estimNorm} repeatedly. Then, using now \eqref{eq:intPsi} to estimate the last integral and adding \eqref{eq:Pohufin} to \eqref{est:Ps}, there is $c_{0} >0$ depending only on $\mu_{0},\nu$ such that 
\begin{multline*}
\frac{1}{c_{0}}\|\hu(t)\|_{\H}^{2} \leq  {{1 \over {\delta r}} \, \mathcal{K}_{0}}\, {e^{-2(1-r)\lae t}} +\bm{C}_{2}\int_{0}^{t}e^{-\nu(t-s)}\|\hu(s)\|_{\H}^{2}\d s\\
+ {\lae \over r}\left( {\e^2}+ \bm{C}_{2}\right)\int_{0}^{t} {e^{-2(1-r)\lae(t-s)}}\|\hu(s)\|_{\H}^{2}\d s
\\
+\dfrac{\delta}{\e^{2}}\left(1+\ {\frac{\lae}{r}}\right)\int_{0}^{t} {e^{-2(1-r)\lae(t-s)-\frac{\mu_{0}}{\e^{2}}s}}\|\hu(s)\|_{\H}^{2}\d s\end{multline*}
{where we used that $\lae \lesssim 1$ to obtain the first bound in the right-hand-side of the inequality and $\nu \geq 2(1-r)\lae$ for the last term.}
This gives the desired conclusion once we noticed that, for $\e$ small enough (independent of $\delta$), it holds
${\lae\left({\e^{2}}+  \bm{C}_{2}\right)} \lesssim \lae\left(\delta + \frac{\lae^{2}}{\delta}+\Mo^{2}\right)$.
\end{proof}
}
 {We derive from this the following decay rate for $\|\hu(t)\|_{\H}$:}
\begin{cor}\label{h1-relaxation}
 {Let $r \in (0,1)$.} There exist $\e_{6} \in (0,\e_5)$,  {$\lambda_6 \in (0,\lambda_4)$} and $C>0$ depending on $\nu,\mu_{0},\Mo$  {and $r$} such that
\begin{equation}\label{exponential-e1}
\| \hu(t)\|^{2}_{\H} \leq C \,{\mathcal{K}_{0}}\,\exp\left(- 2(1-r)\overline{\lambda}_{\e}\,t\right)\,, \qquad \forall\, t \geq0\,,
\end{equation}
for any $\e \in (0,\e_{6})$, $\lambda_\e \in (0,\lambda_6)$ and where $-\overline{\lambda}_{\e}$ denotes the eigenvalue of $\G_{\re(\e),\e}$ obtained in Theorem \ref{cor:mu}, Eq. \eqref{eq:LambdaE} and  $\mathcal{K}_0$ is given in Proposition \ref{prop:hufin}.
\end{cor}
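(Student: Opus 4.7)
My plan is to use Proposition \ref{prop:hufin} as the starting point and note that it already contains nearly all the information needed. However, the inequality features the term
\[\frac{\lae}{r}\bm{C}_{2}\int^{t}_{0} e^{-2(1-r)\lambda_{\e}(t-s)}\| \hu(s) \|^{2}_{\H}\,\d s,\]
whose kernel decays at exactly the target rate $2(1-r)\lambda_\e$. This is the main obstacle: if one multiplies the inequality directly by $e^{2(1-r)\lambda_\e t}$ to isolate $\Phi(t) := e^{2(1-r)\lambda_\e t}\|\hu(t)\|_\H^2$, this integral becomes an unweighted $\int_0^t \Phi(s)\,\d s$, which by standard Gr\"onwall only yields an exponentially growing bound on $\Phi$ that could overwhelm the desired decay.

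To bypass this, I would apply Proposition \ref{prop:hufin} not with $r$ but with $r' := r/2 \in (0,r)$ (and fix a small parameter $\delta = \delta_0 > 0$ to be chosen independently of $\e$). The resulting inequality has kernels decaying at the strictly faster rate $\beta' := 2(1-r/2)\lambda_\e$. Define the target rate $\beta := 2(1-r)\bar{\lambda}_\e$ and $U(t) := e^{\beta t}\|\hu(t)\|_\H^2$. Using that $\lambda_\e = \bar{\lambda}_\e(1+r_\e)$ with $|r_\e|\leq C_b(1-\alpha(\e))\to 0$, for $\e_6$ and $\lambda_6$ sufficiently small we obtain the uniform spectral gap
\[\rho \,:=\, \beta' - \beta \,=\, 2\bar{\lambda}_\e\bigl[\tfrac{r}{2} + r_\e(1-\tfrac{r}{2})\bigr] \,\geq\, \tfrac{r}{2}\bar{\lambda}_\e \,>\, 0,\]
and similarly $\nu - \beta \geq \nu/2$.

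Multiplying the chosen inequality by $e^{\beta t}$, every integral becomes of the form $\int_0^t e^{-\kappa(t-s)} U(s)\,\d s$ with $\kappa \in \{\nu-\beta,\,\rho\}$ strictly positive (the term with the extra factor $e^{-\mu_0 s/\e^2}$ additionally gains an $\e^2/\mu_0$ from integrating in $s$). Letting $M(t) := \sup_{0\leq s \leq t} U(s)$ and bounding each integral by $M(t)/\kappa$ (respectively by $(\e^2/\mu_0)M(t)$), I obtain
\[M(t) \,\leq\, \frac{C'\mathcal{K}_0}{\delta_0 r} \,+\, q\,M(t),\qquad q \,\lesssim\, \frac{\bm{C}_2}{\nu-\beta} + \frac{\lambda_\e\bm{C}_2}{r\rho} + \frac{\delta_0}{\mu_0}\Bigl(1+\frac{\lambda_\e}{r}\Bigr).\]
Since $\bm{C}_2 = \delta_0 + \lambda_\e^2/\delta_0 + \Mo^2$ and $\lambda_\e/\rho$ is bounded by a constant depending only on $r$, every summand in $q$ is controlled by $\delta_0 + \Mo^2 + \lambda_\e^2/\delta_0$ up to constants depending on $r,\nu,\mu_0$.

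Finally, one fixes $\delta_0$ small (depending only on $r,\nu,\mu_0$) and then shrinks $\e_6 < \e_5$, $\lambda_6 < \lambda_4$, and $\Mo$ so that $q \leq 1/2$. This yields $M(t) \leq 2C'\mathcal{K}_0/(\delta_0 r)$ uniformly in $t$, hence
\[\|\hu(t)\|_\H^2 \,=\, e^{-\beta t}\,U(t) \,\leq\, e^{-\beta t}\,M(t) \,\leq\, C\,\mathcal{K}_0\,e^{-2(1-r)\bar{\lambda}_\e t},\]
with $C := 2C'/(\delta_0 r)$ depending only on $\nu,\mu_0,\Mo,r$ (and in particular independent of $\e$), as claimed. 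The only delicate point is the trade-off $r \leadsto r/2$: halving $r$ inside Proposition \ref{prop:hufin} buys the uniform gap $\rho$ that renders the otherwise critical integral term harmless.
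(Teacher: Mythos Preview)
Your proposal is correct and shares the paper's central trick: applying Proposition \ref{prop:hufin} with $r' = r/2$ rather than $r$ to manufacture a positive gap in the otherwise critical kernel $e^{-2(1-r)\lambda_\e(t-s)}$. The execution, however, is different and in fact more direct than the paper's. The paper multiplies by the \emph{stronger} exponential $e^{2(1-r/2)\lambda_\e t}$, which leaves one integral term \emph{without} a gap (an unweighted $\int_0^t \bm{x}(s)\d s$ after the change of variable); it then runs a somewhat delicate ODE-type Gr\"onwall argument on the resulting upper bound $\Upsilon(t)$, allowing $\bm{x}(t)$ to grow like $e^{r\lambda_\e t}$, which upon undoing the multiplication yields exactly the rate $2(1-r)\lambda_\e$. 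You instead multiply by the \emph{target} (weaker) exponential $e^{2(1-r)\bar{\lambda}_\e t}$, so that every kernel retains a strictly positive gap ($\rho \gtrsim r\bar{\lambda}_\e$ or $\nu/2$), and close by a straightforward supremum argument $M(t) \leq A + qM(t)$ with $q < 1$. Your route avoids the differential-inequality machinery entirely and is shorter; the paper's route is slightly more flexible in that it tracks the growth explicitly, but here that flexibility is not needed. One minor point: your bound ``$(\e^2/\mu_0)M(t)$'' for the term carrying $e^{-\mu_0 s/\e^2}$ uses $\int_0^t e^{-\rho(t-s)-\mu_0 s/\e^2}\d s \leq \e^2/\mu_0$, which is correct since $\rho>0$, so this step is fine.
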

\begin{proof} Set for simplicity {$r_{0}:=\frac{r}{2}$} and
$$\bm{x}(t) := {e^{2(1-r_{0})\lambda_{\e}t}}\|\hu(t)\|_{\H}^{2}, \qquad t \geq 0.$$  {We choose $\lambda_6>0$ small enough so that $\lambda_\e \leq r_{0}$ for any $\lambda_\e \in (0,\lambda_6)$.} 
Inequality \eqref{eq:hufin}  {(applied with $r_{0}$ instead of $r$)} then yields
\begin{align*}
\frac{1}{{c_{2}}}\bm{x}(t) \leq  {{1 \over {\delta r_{0}}}} \, \mathcal{K}_{0} &+ \left(\delta+\frac{\lambda_{\e}^{2}}{\delta}+\Mo^{2}\right)\int_{0}^{t} {e^{-\left(\nu-2(1-r_{0})\lae\right)(t-s)}}\bm{x}(s)\d s\\
&\qquad+ {\frac{\lae}{r_{0}}}\left(\delta+\frac{\lambda_{\e}^{2}}{\delta}+\Mo^{2}\right)\,\int^{t}_{0}\bm{x}(s)\,\d s +\frac{\delta}{\e^{2}}\int_{0}^{t}e^{-\frac{\mu_{0}}{\e^{2}}s}\bm{x}(s)\,\d s
\end{align*}
{where $c_2$ is defined in the previous proposition}.  
We use a Gronwall type argument to prove the result. For notational simplicity introduce 
\begin{multline*}
C_{\delta}(\lae,\Mo)=c_{2}\left(\delta+\frac{\lambda_{\e}^{2}}{\delta}+\Mo^{2}\right), \quad   {A_0=\frac{c_{2}\mathcal{K}_{0}}{\delta r_{0}}},\quad  {\xi_{\delta}}(t)= {\frac{\lae}{r_{0}}}\,C_{\delta}(\lae,\Mo)+\frac{c_{2}\delta}{\e^{2}}e^{-\frac{\mu_{0}}{\e^{2}}t}, \quad t \geq 0\,,
\end{multline*}  
from which one obtains that
\begin{equation}\label{eq:x(t)}
0 \leq \bm{x}(t) \leq A_0 + C_{\delta}(\lambda_\e,\Mo)\int_{0}^{t}{e^{-(\nu-2(1-r_{0})\lambda_{\e})(t-s)}}\bm{x}(s)\d s + \int_{0}^{t} {\xi_{\delta}}(s)\bm{x}(s)\d s=:\Upsilon(t)\,.
\end{equation}
Thus,
\begin{equation*}\begin{split}
\frac{\d}{\d t}\Upsilon(t)&=- {(\nu-2(1-r_{0})\lambda_{\e})}C_{\delta}(\lambda_\e,\Mo)\int_{0}^{t} {e^{-(\nu-2(1-r_{0})\lambda_{\e})(t-s)}}\bm{x}(s)\d s \\
&\quad + \left(C_{\delta}(\lambda_\e,\Mo)+ {\xi_{\delta}}(t)\right)\bm{x}(t)\\
&=-{(\nu-2(1-r_{0})\lambda_{\e})}\left(\Upsilon(t) - A_0 - \int_{0}^{t} {\xi_{\delta}}(s)\bm{x}(s)\d s\right) + \left(C_{\delta}(\lambda_\e,\Mo)+ {\xi_{\delta}}(t)\right)\bm{x}(t).
\end{split}\end{equation*}
Using \eqref{eq:x(t)}, which reads $\bm{x}(t) \leq \Upsilon(t)$, we deduce that
\begin{align*}
\frac{\d}{\d t}\Upsilon(t) &\leq  {(\nu-2(1-r_{0})\lambda_{\e})}A_0 + \left[C_{\delta}(\lambda_\e,\Mo)- {(\nu-2(1-r_{0})\lambda_{\e})} +  {\xi_{\delta}}(t)\right]\Upsilon(t) \\
&\quad +  {(\nu-2(1-r_{0})\lambda_{\e})}\int_{0}^{t} {\xi_{\delta}}(s)\Upsilon(s)\d s\,.
\end{align*}
Clearly, it is possible to choose {$\delta \in (0,1)$ and $\Mo$ sufficiently small depending on $\mu_0$ and $\nu$ and then $\lambda_\star$ sufficiently small depending on $\mu_0$, $\nu$, $\delta$  {and $r$}} such that
$$
C_{ {\delta}}(\lambda_\e,\Mo)- {(\nu-2(1-r_{0})\lambda_{\e})} +  {\xi_{\delta}}(t)  \leq -\frac{\nu}{2} + \frac{ {\delta}\,c_{2}}{\e^{2}}e^{-\frac{\mu_{0}}{\e^{2}}t}, \qquad \forall\, t\geq 0,
$$
 {and
\begin{equation}\label{eq:lambdae}
e^{\frac{ {\delta}\,c_{2}}{\mu_{0}}} c_{2}\left( {\delta}+\Mo^{2}+\frac{\lae^{2}}{\delta}\right)\leq r_{0}^2 
\end{equation}}
hold true for any $\lambda_{\e} \in (0,\lambda_{\star})$.  Fixing $\lambda_{\e}$ in this range, we introduce
$$\bm{z}(t)=-\frac{\nu}{2}t+\frac{ {\delta}\,c_{2}}{\e^{2}}\int_{0}^{t}e^{-\frac{\mu_{0}}{\e^{2}}s}\d s$$
to deduce that
\begin{align*}
\dfrac{\d}{\d t}\left(e^{-\bm{z}(t)}\Upsilon(t)\right) &\leq {(\nu-2(1-r_{0})\lambda_{\e})}\,A_0\,e^{-\bm{z}(t)} + {(\nu-2(1-r_{0})\lambda_{\e})}e^{-\bm{z}(t)}\int_{0}^{t} {\xi_{\delta}}(s)\Upsilon(s)\d s\\
&\leq \nu\,A_0\,e^{-\bm{z}(t)} + \nu\,e^{-\bm{z}(t)}\int_{0}^{t} {\xi_{\delta}}(s)\Upsilon(s)\d s\,
\end{align*}
{where $A_{0}$ is defined as before. }
Integration of this differential inequality yields (recalling that $\Upsilon(0)=A_0$)
\begin{align*}
\Upsilon (t) &\leq A_0e^{\bm{z}(t)} + \nu\,A_0\,\int^{t}_0e^{\bm{z}(t)-\bm{z}(s)}\d s  + \nu\int_{0}^{t}e^{\bm{z}(t)-\bm{z}(s)}\d s\int_{0}^{s} {\xi_{\delta}}(\tau)\Upsilon(\tau)\d \tau\\
&= A_0e^{\bm{z}(t)} + \nu\,A_0\,\int^{t}_0 e^{\bm{z}(t)-\bm{z}(s)}\d s  + \nu \int_{0}^{t} {\xi_{\delta}}(\tau)\Upsilon(\tau)\bigg(\int^{t}_{\tau}e^{\bm{z}(t)-\bm{z}(s)}\d s\bigg)\d \tau\,.
\end{align*}
Notice that $\bm{z}(t)-\bm{z}(s) \leq \frac{ {\delta}\,c_{2}}{\mu_{0}} -\frac{\nu}{2}(t-s)$, for $0\leq s \leq t$, from which we conclude that
\begin{equation*}
\int_{\tau}^{t}e^{\bm{z}(t)-\bm{z}(s)}\d s \leq \frac{2}{\nu}\,e^{\tfrac{ {\delta}\,c_{2}}{\mu_{0}}}\,,\qquad 0 \leq \tau \leq t\,.
\end{equation*}
Consequently,
\begin{align*}
\Upsilon(t) \leq 3A_0\,e^{\frac{ {\delta}\,c_{2}}{\mu_{0}}} + 2 e^{\frac{ {\delta}\,c_{2}}{\mu_{0}}}\int_{0}^{t} {\xi_{\delta}}(s)\Upsilon(s)\d s\,,
\end{align*}
which, thanks to Gronwall lemma, implies that 
$$\Upsilon(t) \leq 3A_0\,e^{\frac{{\delta}\,c_{2}}{\mu_{0}}}\exp\left(2e^{\frac{ {\delta}\,c_{2}}{\mu_{0}}}\int_{0}^{t} {\xi_{\delta}}(s)\d s\right).$$
Notice that
\begin{equation*}
\int_{0}^{t} {\xi_{\delta}}(s)\d s \leq c_{2} {\frac{\lae}{r_{0}}}\left( {\delta}+\Mo^{2}+\frac{\lae^{2}}{ {\delta}}\right)\,t + \frac{c_{2} {\delta}}{\mu_{0}}\,.
\end{equation*}
Moreover, {\eqref{eq:lambdae} implies that}
$$
2 e^{\frac{ {\delta}\,c_{2}}{\mu_{0}}} c_{2} {\frac{\lae}{r_{0}}}\left( {\delta}+\Mo^{2}+\frac{\lae^{2}}{ {\delta}}\right)\leq  {{r} \lae}\quad \text{for any} \quad \e\in(0,\e_\star)\,.
$$
Consequently,
$$\Upsilon(t) \leq C A_0\,e^{ {r \lae t}}$$
for some positive constant $C$ depending on $\nu$, $\mu_0$, $\Mo$  {and $r$}.  Recalling the definition of $A_0$, such estimate combined with \eqref{eq:x(t)} gives that
\begin{equation}\label{exponential-e1/2}
\| \hu(t)\|^{2}_{\H} \leq C \,\mathcal{K}_{0}\,\exp\left(- {2(1-r)\lae}t\right)\,, \qquad \forall\, t \geq0\,.
\end{equation}
 {This estimate yields the final result since it can be proven for any $r \in (0,1)$ and $\lae \sim_{\e \to 0} \overline \lae$. } \end{proof}
 
Estimate \eqref{exponential-e1} leads to the main result of this section.
\begin{theo}\label{theo:h-relaxation}  {Let $r \in (0,1)$.} There exist {$\e^{\dagger} \in (0,\e_6)$, $\lambda^{\dagger} \in( 0, \lambda_6)$}, and $C>0$ depending on  {$\nu,\mu_{0},\Mo,r$}  such that
\begin{equation*}
\| h(t)\|^{2}_{\E} \leq C \,\Big(  \| h(0)\|^{2}_{\E} + \| h(0)\|^{4}_{\E}\Big)\,\exp\left(- {2(1-r)\overline \lambda_{\e}}t\right)\,, \qquad \forall\, t \geq0\,,
\end{equation*}
where $\overline \lambda_\e$ is defined in~\eqref{eq:LambdaE}, for any $\e \in (0,\e^{\dagger})$ and $\lambda_{\e} \in (0,\lambda^{\dagger})$.
\end{theo}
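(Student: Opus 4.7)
The plan is to combine the two main estimates already obtained, namely the exponential decay of $\|h^1(t)\|_\H$ from Corollary \ref{h1-relaxation} with the estimate on $\|h^0(t)\|_\E^2$ from Proposition \ref{prop:h0}, using the triangle inequality $\|h(t)\|_\E^2 \lesssim \|h^0(t)\|_\E^2+\|h^1(t)\|_\E^2$. The key observation is that $h^1(0)=0$ implies $h^0(0)=h(0)$, while $\|\Pi_0 h(0)\|_\E\lesssim \|h(0)\|_\E$ by boundedness of the projection, so that the constant $\mathcal{K}_0$ in Corollary \ref{h1-relaxation} satisfies $\mathcal{K}_0\lesssim \|h(0)\|_\E^2+\|h(0)\|_\E^4$.

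First, the bound on $\|h^1(t)\|_\E^2$ is immediate: the continuous embedding $\H\hookrightarrow \E$ (valid under the choice $m\leq m$ with polynomial weight dominated by $\M^{-1/2}$) combined with \eqref{exponential-e1} gives
\[
\|h^1(t)\|_\E^2 \lesssim \|h^1(t)\|_\H^2 \lesssim \bigl(\|h(0)\|_\E^2+\|h(0)\|_\E^4\bigr)\,e^{-2(1-r)\overline{\lambda}_\e t}.
\]
Second, for the bound on $\|h^0(t)\|_\E^2$, I would apply the estimate \eqref{eq:estimh02} from Proposition \ref{prop:h0} and substitute in the decay $\|h^1(s)\|_{\E_2}^2\lesssim (\|h(0)\|_\E^2+\|h(0)\|_\E^4)e^{-2(1-r)\overline{\lambda}_\e s}$ arising from $\H\hookrightarrow\E_2$ and \eqref{exponential-e1}. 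For $\e$ small enough (say $\mu_0/\e^2 > 4(1-r)\overline{\lambda}_\e$), the elementary integral identity
\[
\int_0^t e^{-\tfrac{\mu_0}{\e^2}(t-s)}e^{-2(1-r)\overline{\lambda}_\e s}\,\d s \;\leq\; \frac{\e^2}{\mu_0-2(1-r)\e^2\overline{\lambda}_\e}\,e^{-2(1-r)\overline{\lambda}_\e t} \;\lesssim\; \frac{\e^2}{\mu_0}\,e^{-2(1-r)\overline{\lambda}_\e t}
\]
(and analogously for the quartic term) converts the convolution against $\|h^1\|_{\E_2}^k$ into a pure exponential decay at rate $2(1-r)\overline{\lambda}_\e$. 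The first term $\|h^0(0)\|_\E^2 e^{-2\mu_0 t/\e^2}$ in \eqref{eq:estimh02} decays much faster than $e^{-2(1-r)\overline{\lambda}_\e t}$ and can be absorbed (up to an $\e$-independent constant) into the target rate. The prefactors $(\e\lambda_\e)^2\cdot\e^2/\mu_0$ and $(\e^2\lambda_\e)^2\cdot\e^2/\mu_0$ are of order $\e^4$ and $\e^6$ respectively, and in particular remain uniformly bounded.

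Summing the two bounds yields the conclusion with a constant $C$ depending only on $\nu,\mu_0,\Delta_0,r$. The main point of care is that the estimates of Proposition \ref{prop:h0} and Corollary \ref{h1-relaxation} were derived under the \emph{a priori} bootstrap assumption $\sup_{t\geq 0}\bigl(\|h^0(t)\|_\E+\|h^1(t)\|_\H\bigr)\leq \Delta_0$; however, the content of the present theorem is precisely the decay estimate conditional on the existence of such a splitting—the verification of the bootstrap assumption itself (closing the argument by taking $\|h(0)\|_\E$ sufficiently small so that the decaying bound re-inforces the smallness) is deferred to the iterative construction of Section \ref{sec:Cauchy}. The only genuinely delicate point would have been handling a mismatch between the rate $\mu_0/\e^2$ appearing in $h^0$ and the rate $\overline{\lambda}_\e$ appearing in $h^1$, but this is resolved cleanly by the separation of scales $\mu_0/\e^2\gg\overline{\lambda}_\e$ valid in the hydrodynamic limit under Assumption \ref{hyp:re}.
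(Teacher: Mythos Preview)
Your proposal is correct and follows essentially the same approach as the paper's own proof: substitute the decay \eqref{exponential-e1} of $\|h^1\|_{\H}$ (via the embeddings $\H\hookrightarrow\E$ and $\H\hookrightarrow\E_2$) into the estimate \eqref{eq:estimh02} for $\|h^0\|_\E^2$, use the elementary convolution bound to extract the rate $2(1-r)\overline{\lambda}_\e$, then combine via the triangle inequality and identify $\mathcal{K}_0\lesssim\|h(0)\|_\E^2+\|h(0)\|_\E^4$ from $h^0(0)=h(0)$. The paper's argument is more compressed but structurally identical; your additional remarks on the bootstrap assumption and the scale separation $\mu_0/\e^2\gg\overline{\lambda}_\e$ are accurate and helpful commentary.
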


\begin{proof} 
Using estimate \eqref{exponential-e1} in estimate \eqref{eq:estimh02}, it follows that
\begin{equation}\label{h0-decay}
\|\ho(t)\|^{2}_{\E} \leq 2\|\ho(0)\|^2_{\E}\,e^{-\frac{2\mu_0}{\e^2}t} +  C\mathcal{K}_0 \e^{2}\big(\e\,\lambda_{\e}\big)^{2} {e^{-{2(1-r)\overline \lambda_{\e}}t}}\leq C\,\mathcal{K}_{0}\, {e^{-{2(1-r)\overline \lambda_{\e}}t}}\,.
\end{equation}
Consequently,
\begin{equation}\label{h-decay}
\begin{split}
\|h(t)\|^{2}_{\E} \leq 2\big( \|h^{0}(t)\|^{2}_{\E} + \| \hu(t)\|^{2}_{\E}\big)\leq C\big( \|h^{0}(t)\|^{2}_{\E} + \| \hu(t)\|^{2}_{\H}\big)\leq C\,\mathcal{K}_{0}\, {e^{-2(1-r)\overline\lambda_{\e}t}}\,.
\end{split}
\end{equation}
Recalling that $h^{0}(0)=h(0)$ in the definition of $\mathcal{K}_0$, estimate \eqref{h-decay} gives the result.\end{proof}
 \begin{nb} \label{nb:1/2}
In the following sections, for sake of simplicity, we will use this result for the special value $r=\frac{1}{2}$ and denote by $\e^\dagger$ and $\lambda^\dagger$ the threshold values associated to $r=\frac{1}{2}$.
\end{nb} 
We also point out the gain of decay in $h$ in the following corollary.

\smallskip
\begin{cor}\label{cor:h-relaxation}
Under the same conditions of Theorem \ref{theo:h-relaxation} it follows that
\begin{equation*}
\int^{t}_{0}\|  h (\tau)\|_{ {\E_{1}} } \d \tau \leq C\sqrt{ \mathcal{K}_0 }\min\left\{ 1 + t, 1 + {\tfrac{1}{ \overline \lambda_{\e} }} \right\}\,,\qquad \forall\, t>0\,.
\end{equation*}
In particular, $\|  h (\cdot)\|_ {{\E_{1}}} $ is integrable and exists $a.e.$ in $(0,T)$ for any $T>0$.
\end{cor}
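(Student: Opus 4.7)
My plan is to split $h=h^{0}+h^{1}$ as in the previous sections and bound the two pieces separately in $L^{1}_{t}\E_{1}$, then take the minimum between the trivial bound and one coming from exponential decay.

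\smallskip
\noindent\textbf{Bounding the integral of $\|h^{1}\|_{\E_{1}}$.} Because we imposed $k\leq m-1$ in \eqref{eq:mkq}, the Hilbert space $\H=\W^{m,2}_{v,x}(\M^{-1/2})$ embeds continuously into $\E_{2}$, and $\E_{2}$ embeds into $\E_{1}$. Hence $\|h^{1}(\tau)\|_{\E_{1}}\lesssim \|h^{1}(\tau)\|_{\H}$. Then Corollary \ref{h1-relaxation} (in the form used in Remark \ref{nb:1/2}, i.e. $r=\tfrac12$) gives
$\|h^{1}(\tau)\|_{\H}\lesssim \sqrt{\mathcal{K}_{0}}\,\exp(-\tfrac12\overline{\lambda}_{\e}\tau).$
Integrating yields the desired bound for $h^{1}$, namely
$\int_{0}^{t}\|h^{1}(\tau)\|_{\E_{1}}\d \tau \lesssim \sqrt{\mathcal{K}_{0}}\,\min\{t,\,2/\overline\lambda_\e\}$, and a fortiori the form $\sqrt{\mathcal{K}_{0}}\min\{1+t,\,1+\overline\lambda_\e^{-1}\}$.

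\smallskip
\noindent\textbf{Bounding the integral of $\|h^{0}\|_{\E_{1}}$.} Here the key is to integrate in time the differential inequality \eqref{imp:h0} from the proof of Proposition~\ref{prop:h0}, that is
$$\dfrac{\d}{\d t}\|h^{0}(t)\|_{\E}\leq -\frac{\mu_{0}}{\e^{2}}\|h^{0}(t)\|_{\E_{1}}+C\lambda_{\e}\|h^{1}(t)\|_{\E_{2}}+C\e\,\lambda_{\e}\|h^{1}(t)\|_{\E_{2}}^{2}.$$
Integrating between $0$ and $t$ and rearranging,
$$\frac{\mu_{0}}{\e^{2}}\int_{0}^{t}\|h^{0}(\tau)\|_{\E_{1}}\d \tau\leq \|h^{0}(0)\|_{\E}+C\lambda_{\e}\int_{0}^{t}\|h^{1}(\tau)\|_{\E_{2}}\d \tau+C\e\lambda_{\e}\int_{0}^{t}\|h^{1}(\tau)\|_{\E_{2}}^{2}\d \tau.$$
Using again $\H\hookrightarrow \E_{2}$ and the two integrated decay estimates
$$\int_{0}^{t}\|h^{1}(\tau)\|_{\H}\d\tau\lesssim \sqrt{\mathcal{K}_{0}}\,\min\{t,1/\overline\lambda_\e\},\qquad \int_{0}^{t}\|h^{1}(\tau)\|_{\H}^{2}\d\tau\lesssim \mathcal{K}_{0}\,\min\{t,1/\overline\lambda_\e\},$$
together with $\|h^{0}(0)\|_{\E}\leq \sqrt{\mathcal{K}_{0}}$ from the definition of $\mathcal{K}_{0}$, one obtains
$$\int_{0}^{t}\|h^{0}(\tau)\|_{\E_{1}}\d \tau\lesssim \frac{\e^{2}}{\mu_{0}}\left(\sqrt{\mathcal{K}_{0}}+\lambda_{\e}\sqrt{\mathcal{K}_{0}}\min\{t,1/\overline\lambda_\e\}+\e\lambda_{\e}\mathcal{K}_{0}\min\{t,1/\overline\lambda_\e\}\right).$$
Since by Assumption~\ref{hyp:re} $\e^{2}\lambda_{\e}\simeq 1-\re(\e)=\mathrm{O}(\e^{2})$ is bounded independently of $\e\in(0,\e^{\dagger})$ and $\mathcal{K}_{0}$ controls both $\sqrt{\mathcal{K}_0}$ and $\mathcal{K}_0$ up to constants depending only on the initial data, the right-hand side is bounded by a constant multiple of $\sqrt{\mathcal{K}_{0}}\min\{1+t,\,1+1/\overline\lambda_\e\}$.

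\smallskip
\noindent\textbf{Conclusion.} Summing the two contributions via the triangle inequality $\|h\|_{\E_{1}}\leq \|h^{0}\|_{\E_{1}}+\|h^{1}\|_{\E_{1}}$ yields the stated bound. The essentially integrable character of $\|h(\cdot)\|_{\E_{1}}$ on every $(0,T)$ (and therefore its existence a.e.) follows at once. The only subtle point is the interplay $\e^{2}\lambda_{\e}=\mathrm{O}(1)$ which allows to absorb the prefactor $\e^{-2}$ coming from the spectral gap of $\B_{\re(\e),\e}$ against the smallness of $\lambda_{\e}$ multiplying the source terms $\|h^{1}\|_{\E_{2}}$, $\|h^{1}\|_{\E_{2}}^{2}$; this is not really an obstacle but does use crucially the calibration in Assumption~\ref{hyp:re}.
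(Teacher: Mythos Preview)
Your proof is correct and follows essentially the same approach as the paper: split $h=h^{0}+h^{1}$, integrate the differential inequality \eqref{imp:h0} in time to control $\int_{0}^{t}\|h^{0}\|_{\E_{1}}$, and use the exponential decay of $\|h^{1}\|_{\H}$ from Corollary~\ref{h1-relaxation} together with the embedding $\H\hookrightarrow\E_{2}\hookrightarrow\E_{1}$ to bound both the $h^{1}$ contribution and the source terms in the $h^{0}$ estimate. The paper's version is slightly more compressed (it writes the outcome of the time integration directly as \eqref{aprioriho} and then bounds $\int_{0}^{t}e^{-(1-r)\overline\lambda_{\e}\tau}\d\tau$), but the ingredients and structure are identical.
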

\begin{proof}
After performing time integration of equation \eqref{imp:h0} in $[0,t]$ one finds that
\begin{equation}\label{aprioriho}
\begin{split}
\|\ho(t)\|_{\E} + \frac{\mu_{0}}{\e^2}\,&\int^{t}_{0}\|\ho(\tau)\|_{{\E_{1}}}\d \tau \\
&\leq \|\ho(0)\|_{\E} + C\int^{t}_{0}\Big(\lambda_{\e}\|\hu(\tau)\|_{ {\E_{1}}} + \e\,\lambda_{\e}\|\hu(\tau)\|_{ {\E_{1}}}^{2}\Big)\d \tau \\
&\leq C\Big( \sqrt{ \mathcal{K}_0 } + \e\,\mathcal{K}_{0} \big)\,, \qquad \forall\, t > 0\,,
\end{split}
\end{equation}
where we used estimate \eqref{exponential-e1} in the latter inequality.  Thus,
\begin{equation*}
\begin{split}
\int^{t}_{0}\|  h (\tau)\|_{ {\E_{1}} } \d \tau &\leq \int^{t}_{0}\|  h^{0} (\tau)\|_{  {\E_{1}} } \d \tau + \int^{t}_{0}\|  h^{1} (\tau)\|_{  {\E_{1}} } \d \tau \\
&\leq C\big( \sqrt{ \mathcal{K}_0 } + \e\,\mathcal{K}_{0} \big) + C\sqrt{ \mathcal{K}_0 }\int^{t}_{0} {e^{-(1-r) \overline \lambda_{\e}\tau}}\d \tau\,,\\
\end{split}
\end{equation*}
which gives the result.
\end{proof}
\begin{nb} Of course, for a fixed $\e >0$, one can replace $\min\big\{1+t,1+{\frac{1}{\overline \lambda_{\e}}}\big\}$ with $1+ {\frac{1}{\overline \lambda_{\e}}}$ and the above estimate shows that $h(t)=h_{\e}(t) \in L^{1}([0,\infty),{\E_{1}}).$ However, in the case in which {$\lim_{\e\to0}\overline \lambda_{\e}=0$} then the bound is not uniform with respect to $\e$. In practice, two situations occur according to the value of $\lambda_{0}$ in Assumption \ref{hyp:re}:
\begin{enumerate}[a)]
\item If {$\lambda_0 >0$}, then the family $\{h_{\e}(t)\}_{\e\geq0}$ is bounded in $L^{1}([0,\infty), {\E_{1}})$,\\

\item If {$\lambda_0=0$} then for any $T >0$, the  family $\{h_{\e}(t)\}_{\e\geq0}$ is bounded in $L^{1}([0,T], {\E_{1}})$.
\end{enumerate}
\end{nb}

\section{Cauchy Theory}\label{sec:Cauchy}

The scope of this Section is to prove the well-posedness of the system \eqref{eq:h0}-\eqref{eq:h1} thanks to the \emph{a priori} estimates derived in the previous section. We namely aim to prove the following precise version of Theorem \ref{theo:main-cauc} and  we will use the functional spaces introduced at the beginning of Section~\ref{sec:non}. 
\begin{theo}\label{theo:main-cauc1}
Under Assumption \ref{hyp:re}, let
\begin{equation*}
    {m  > d}, \qquad m-1 \geq  k \geq 0, \qquad q   \geq 3,
\end{equation*}
be fixed. 
There exists a triple $(\varepsilon^{\dagger},\lambda^{\dagger},\mathcal{K}^{\dagger}_{0})$ depending only on the mass and energy of $F_{\mathrm{in}}^\e$ and $m,k,q$ such that, for $\e\in(0,\e^{\dagger})$, {$\overline{\lambda}_{\e}\in(0,\lambda^{\dagger})$}, and $\mathcal{K}_0\in(0,\mathcal{K}^{\dagger}_{0})$, if
$$\|F_{\mathrm{in}}^{\e}-G_{\re(\e)}\|_{ {\W^{k,1}_{v}\W^{m,2}_{x}}(\langle v\rangle^{q})} \leq \e\,\sqrt{\mathcal{K}_{0}}$$
then the inelastic Boltzmann equation \eqref{BE0} has a unique solution 
$$f_{\e} \in\mathcal{C}\big([0,\infty); {\W^{k,1}_{v}\W^{m,2}_{x}}(\langle v\rangle^{q})\big)$$ satisfying for $t>0$
\begin{multline*}
\left\|f_{\e}(t)-G_{\re(\e)}\right\|_{ {\W^{k,1}_{v}\W^{m,2}_{x}}(\langle v\rangle^{q})}\leq C\e\sqrt{ \mathcal{K}_0 }\,\exp\left(-\overline{\lambda}_{\e}\,t\right),\\
\quad\text{and}\quad \int^{t}_{0}\left\|f_{\e}(\tau)-G_{\re(\e)}\right\|_{ {\W^{k,1}_{v}\W^{m,2}_{x}}(\langle v\rangle^{q+1})} \d \tau \leq C\e\sqrt{ \mathcal{K}_0 }\min\Big\{ 1 + t, 1 + \tfrac{1}{ \overline{\lambda}_{\e} } \Big\}\,,
\end{multline*}
for some positive constant $C >0$ independent of $\e$ and where we recall that $\overline{\lambda}_{\e} \simeq \frac{1-\re(\e)}{\e^{2}}$ is the energy eigenvalue of the linearized operator derived in Theorem \ref{cor:mu}.
\end{theo}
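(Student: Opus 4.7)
The proof proceeds via an iterative scheme based on the coupling $(\ho_\e,\hu_\e)$ introduced in Section~\ref{sec:non}, using the \emph{a priori} estimates already established there as the backbone of both the stability and contraction arguments. Throughout, $\e$ and $\mathcal{K}_0$ are tacitly assumed to lie below the thresholds furnished by Theorem~\ref{theo:h-relaxation} (see Remark~\ref{nb:1/2}).

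\textbf{Step 1: Iterative scheme.} Set $(\ho^{(0)},\hu^{(0)}):=(h^{\e}_{\mathrm{in}},0)$ and, given $(\ho^{(n)},\hu^{(n)})$, define $(\ho^{(n+1)},\hu^{(n+1)})$ as the solution of the \emph{linear} system
\begin{align*}
\partial_t \ho^{(n+1)} &= \B_{\re(\e),\e}\ho^{(n+1)} + \e^{-1}\Q_{\re(\e)}(\ho^{(n)},\ho^{(n)}) \\
&\quad + \e^{-1}\bigl[\Q_{\re(\e)}(\ho^{(n)},\hu^{(n)})+\Q_{\re(\e)}(\hu^{(n)},\ho^{(n)})\bigr] \\
&\quad + [\G_{\e}-\G_{1,\e}]\hu^{(n)} + \e^{-1}[\Q_{\re(\e)}-\Q_{1}](\hu^{(n)},\hu^{(n)}), \\
\partial_t \hu^{(n+1)} &= \G_{1,\e}\hu^{(n+1)} + \e^{-1}\Q_{1}(\hu^{(n)},\hu^{(n)}) + \A_{\e}\ho^{(n+1)},
\end{align*}
with initial data $\ho^{(n+1)}(0)=h^{\e}_{\mathrm{in}}$ and $\hu^{(n+1)}(0)=0$. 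The first equation is well-posed in $\E$ by Proposition~\ref{prop:Bree} (bounded perturbation with a source in $\E$ via the collision bounds of Theorem~\ref{theo:Ricardo} and Remark~\ref{nb:diffQXY}), while the second is well-posed in $\H$ by Theorem~\ref{theo:G1e} together with $\A_\e\in\mathscr{B}(\E,\H)$ (since $\A_\e$ regularises in $v$ and preserves $L^{2}_x$).

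\textbf{Step 2: Uniform bounds.} We show by induction that the family is bounded: there exists $\Mo>0$, which can be chosen as small as needed (in particular compatible with \eqref{eq:nu}), such that
$$
\sup_{t\ge 0}\bigl\{\|\ho^{(n)}(t)\|_{\E} + \|\hu^{(n)}(t)\|_{\H}\bigr\}\le \Mo
$$
for all $n$, provided $\sqrt{\mathcal{K}_0}\ll \Mo$. This is precisely the content of Theorem~\ref{theo:h-relaxation}: all estimates of Section~\ref{sec:non} are \emph{linear} in the iterates on the right-hand side and hence apply to the linearised system, yielding $\|\hu^{(n+1)}\|_{\H}^{2}\lesssim \mathcal{K}_0 \exp(-\overline\lambda_\e t)$ and $\|\ho^{(n+1)}\|_{\E}^{2}\lesssim \mathcal{K}_0 \exp(-\overline\lambda_\e t)$. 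By Corollary~\ref{cor:h-relaxation}, one also obtains the uniform integrability of $\|\ho^{(n)}\|_{\E_1}+\|\hu^{(n)}\|_{\H_1}$ on $[0,\infty)$ when $\lambda_0>0$, or on each $[0,T]$ otherwise.

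\textbf{Step 3: Contraction.} Define the differences $\mathsf{D}^{j}_n := (j=0,1)\mapsto (\ho^{(n+1)}-\ho^{(n)},\hu^{(n+1)}-\hu^{(n)})$. They satisfy the same linear system as above with zero initial data and source terms that are \emph{bilinear} in $(\mathsf{D}^{0}_{n-1},\mathsf{D}^{1}_{n-1})$ and $(\ho^{(n)},\ho^{(n-1)},\hu^{(n)},\hu^{(n-1)})$, i.e. of the form $\Q_{\re(\e)}(\mathsf{D},\cdot)+\Q_{\re(\e)}(\cdot,\mathsf{D})$ plus the remainders quantified by Lemma~\ref{lem:diffG} and Remark~\ref{nb:diffQXY}. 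Running the energy arguments of Section~\ref{sec:non} line by line on the pair $(\mathsf{D}^{0}_{n},\mathsf{D}^{1}_{n})$, each appearance of $\|\ho^{(n)}\|$ or $\|\hu^{(n)}\|$ is bounded by $\Mo$, so that one obtains, in the norm
$$
\mathcal{N}(\mathsf{D}^{0},\mathsf{D}^{1}):=\sup_{t\ge 0}e^{\overline\lambda_\e t}\bigl(\|\mathsf{D}^{0}(t)\|_{\E}^{2}+\|\mathsf{D}^{1}(t)\|_{\H}^{2}\bigr),
$$
an estimate of the form $\mathcal{N}(\mathsf{D}^{0}_{n},\mathsf{D}^{1}_{n})\le \theta\,\mathcal{N}(\mathsf{D}^{0}_{n-1},\mathsf{D}^{1}_{n-1})$ with $\theta=\theta(\Mo,\e,\overline\lambda_\e)<1$ after choosing $\Mo$, then $\e$ and $\overline\lambda_\e$ small enough, exactly as in the absorption mechanism used in the Gronwall argument of Corollary~\ref{h1-relaxation}.

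\textbf{Step 4: Passage to the limit and uniqueness.} The contraction furnishes a limit $(\ho_\e,\hu_\e)$ in the Banach space determined by $\mathcal{N}$; classical arguments show $h_\e=\ho_\e+\hu_\e$ solves \eqref{BE} in $\mathcal{C}([0,\infty);\E)$, and hence $f_\e=G_{\re(\e)}+\e h_\e$ solves \eqref{BE0}. The announced decay and integrability bounds are direct restatements of Theorem~\ref{theo:h-relaxation} and Corollary~\ref{cor:h-relaxation} for the limit. Uniqueness follows by applying the very same difference estimates to the pair of differences of two putative solutions, yielding $\mathcal{N}(\tilde{\mathsf{D}}^{0},\tilde{\mathsf{D}}^{1})\le \theta\mathcal{N}(\tilde{\mathsf{D}}^{0},\tilde{\mathsf{D}}^{1})$ with $\theta<1$, forcing them to coincide.

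\textbf{Main obstacle.} The delicate point is the contraction: the quadratic source terms carry a factor $\e^{-1}$ and the linear operator a factor $\e^{-2}$, so one must track carefully how the hypocoercive dissipation inherited from $\B_{\re(\e),\e}+\e^{-2}\nu_0$ and from $\G_{1,\e}$ (with the non-standard Gronwall argument of Corollary~\ref{h1-relaxation}) absorbs these large prefactors \emph{uniformly} in $\e$, using in a crucial way the smallness of $\Mo$ and of $\overline\lambda_\e/\chi$ (the gap to the spectral threshold $\mu_\star$) provided by Assumption~\ref{hyp:re}.
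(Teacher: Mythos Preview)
Your overall strategy—iteration on the split system $(\ho,\hu)$, stability via the \emph{a priori} estimates of Section~\ref{sec:non}, contraction on the differences, and passage to the limit—is exactly the paper's approach. Your iteration scheme differs slightly from the paper's \eqref{CTe2}--\eqref{CTe3}: the paper keeps the coupling term $[\G_{\e}-\G_{1,\e}]\hu_{n+1}$ at level $n{+}1$ (making the scheme implicitly coupled, resolved via an auxiliary equation \eqref{CTe1} for the full $h_{n+1}$), whereas you place it at level $n$. Your fully explicit variant is legitimate and in some ways cleaner; the corresponding contribution simply shifts from the left to the right side of the contraction estimate with an extra harmless factor $\e^{2}\lambda_{\e}$.

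There is, however, a genuine gap in Step~3: the norm $\mathcal{N}$ you propose, built from $\sup_{t}$ of $\|\cdot\|_{\E}$ and $\|\cdot\|_{\H}$ alone, is too weak to close the contraction uniformly in $\e$. The paper works instead with
\[
\vertiii{g}_{0}=\sup_{t}\Bigl(\|g(t)\|_{\E}+\e^{-2}\!\int_{0}^{t}\|g(\tau)\|_{\E_{1}}\d\tau\Bigr),\qquad
\vertiii{g}_{1}=\sup_{t}\Bigl(\|g(t)\|_{\H}^{2}+\int_{0}^{t}\|g(\tau)\|_{\H_{1}}^{2}\d\tau\Bigr)^{1/2},
\]
and the time-integrated pieces are not decorative. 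In the difference source $\mathcal{F}^{0}_{n}$ one unavoidably has a term $\|d^{0}_{n}\|_{\E_{1}}\cdot\Psi^{\infty}_{n}$ (from $\Q_{\re(\e)}(d^{0}_{n},h^{0}_{n})$, since the bilinear estimate requires one argument in $\E_{1}$); after the factor $\e^{-1}$ and convolution with $e^{-\mu_{0}\e^{-2}(t-s)}$, this becomes $\e^{-1}\sqrt{\mathcal{K}_{0}}\int\|d^{0}_{n}\|_{\E_{1}}$, which only the $\e^{-2}$-weighted integral in $\vertiii{\cdot}_{0}$ converts into $\e\sqrt{\mathcal{K}_{0}}\,\sE^{0}_{n}$. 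Similarly, the $\mathbf{P}_{0}^{\perp}d^{1}_{n+1}$ estimate (Lemma~\ref{perp:projection}) produces $\int\Phi^{\infty}_{n}\|d^{1}_{n}\|_{\H_{1}}^{2}$, requiring $\int\|d^{1}_{n}\|_{\H_{1}}^{2}$ in the norm. The exponential weight $e^{\overline{\lambda}_{\e}t}$ in your $\mathcal{N}$ cannot compensate, since $\overline{\lambda}_{\e}=O(1)$ while the singular prefactors are $O(\e^{-1})$. You must include these integrated terms in the contraction norm (and prove the analogue of Proposition~\ref{prop:average} for the differences) to make Step~3 go through.
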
    
\begin{nb} As in Remark \ref{nb:lamba}, the threshold value $\lambda^{\dagger}$ has to be  understood as a constraint on the parameter $\lambda_{0}$ appearing in \eqref{eq:scaling}. The above result applies to any $\lambda_0 \geq 0$ such that $\overline{\lambda}_\e < \lambda^\dagger.$ \end{nb}


\subsection{Iteration scheme}  Let us follow the iteration scheme of \cite[Section 3]{Tr} with suitable modifications.  We are seeking to approximate the solution to the inelastic Boltzmann equation using the iteration scheme
\begin{equation}\label{CTe1}
\begin{cases}
\partial_{t}h_{n+1}(t)  &= \G_{\e}h_{n+1}(t) + \e^{-1}\Q_{\re(\e)}(h_{n}(t),h_{n}(t))\,, \quad n \geq 1\\[3pt]
\partial_{t}h_{1}(t) &=\G_{\e}h_{1}(t)\,,\\
h_{n}(0) &= h(0)\in\E\,,\quad \quad n \geq 1,
\end{cases}
\end{equation} 
where the initial perturbation $h(0)$ has zero mass and momentum.  This is done using the decomposition of previous section.  More precisely, writing $h_{n}= h^{0}_{n} + h^{1}_{n}$ we consider solutions with the coupled system
\begin{equation}\label{CTe2}
\hspace{-.4cm}\left\{\begin{array}{ccl}
\!\!\partial_{t} \ho_{n+1}\!\!\!\!&=&\!\!\!\B_{\re(\e),\e}\ho_{n+1} + \e^{-1}\Q_{\re(\e)}(\ho_{n},\ho_{n}) + \e^{-1}\Big[\Q_{\re(\e)}(\ho_{n},\hu_{n})+\Q_{\re(\e)}(\hu_{n},\ho_{n})\Big] \\ [10pt]
&& + \Big[\G_{\e}\hu_{n+1}-\G_{1,\e}\hu_{n+1}\Big] + \e^{-1}\Big[\Q_{\re(\e)}(\hu_{n},\hu_{n})-\Q_{1}(\hu_{n},\hu_{n})\Big]\,,\\ [10pt]
\!\!\ho_{n+1}(0)\!\!\!\!&=&\!\!\!h^{0}(0) \in \mathcal{E}\,,
\end{array}\right.
\end{equation}
and
\begin{equation}\label{CTe3}
\left\{
\begin{array}{ccl}
\partial_{t} \hu_{n+1}&=& \G_{1,\e}\hu_{n+1} + \e^{-1}\Q_{1}(\hu_{n},\hu_{n}) + \A_\e \ho_{n+1} \,,\\[10pt]
\hu_{n+1}(0)&=&\hu(0)\in \H\,.
\end{array}\right.
\end{equation}
Motivated by the \textit{a priori} estimates of Section \ref{sec:non}, we introduce the following norms
{$$\vertiii{g}_{0}:=\sup_{ t\geq0}\Big(\| g(t) \|_{\E} + \e^{-2}\int^{t}_{0}\| g(\tau) \|_{\E_{1}} \d \tau\Big), \qquad g\in \mathcal{C}([0,\infty),\E)\,,$$
and 
$$\vertiii{g}_{1}:=\sup_{t\geq 0}\Big(\| g(t) \|^{2}_{\H} + \int^{t}_{0}\| g(\tau) \|^{2}_{\H_{1}} \d \tau \Big)^{\frac{1}{2}}, \qquad
g \in \mathcal{C}([0,\infty),\H)\,,$$}
where 
we recall that $\E, \E_{1}$, $\H$ and $\H_{1}$ are defined in \eqref{E1E2}.   
Notice that $\big(\mathcal{C}([0,\infty),\E)\,;\,\vertiii{\cdot}_{0}\big)$ and $\big(\mathcal{C}([0,\infty),\H)\,;\,\vertiii{\cdot}_{1}\big)$ are Banach spaces. In particular, the space 
$$\mathbb{B}:=\mathcal{C}([0,\infty),\E) \times \mathcal{C}([0,\infty),\H)$$
endowed with the norm 
$$\vertiii{(g,h)}:=\vertiii{g}_{0}+\vertiii{h}_{1} \qquad \text{ for  } \quad (g,h) \in \mathbb{B},$$
is a Banach space. Define then
\begin{multline}\label{CTe4}
\mathcal{X}_0 = \Big\{ \ho\in\mathcal{C}\big([0,\infty);\E\big) \;\Big|\; \vertiii{\ho}_{0} \leq C\sqrt{\mathcal{K}_0} \Big\}\,,\\
\mathcal{X}_1=\Big\{ \hu\in\mathcal{C}\big( [0,\infty); \H\big) \;\Big|\; \vertiii{\hu}_{1} \leq C\sqrt{ \mathcal{K}_0 } \Big\}\,,
\end{multline}
{for some positive constant $C >0$ which can be explicitly estimated from the subsequent computations.}  The system \eqref{CTe2}-\eqref{CTe3} is a simplified coupled version of the system \eqref{eq:h0}-\eqref{eq:h1} with all nonlinear terms as sources. Notice however that the coupling between $\ho_{n+1}$ and $\hu_{n+1}$ in the system makes it \emph{nonlinear}. However, because $\G_{\e}$ is the generator of a $C_{0}$-semigroup in $\E$, equation \eqref{CTe1} is well-posed and 
$$h_{n+1}(t)=\mathcal{V}_{\e}(t)h(0)+\e^{-1}\int_{0}^{t}\mathcal{V}_{\e}(t-s)\Q_{\re(\e)}(h_{n}(s),h_{n}(s))\d s$$
{where $\left\{\mathcal{V}_{\e}(t)\,;\,t\geq0\right\}$ is the $C_{0}$-semigroup in $\E$ generated by $\G_{\e}$ (i.e., with the notations of Prop. \ref{prop:Bree}, $\mathcal{V}_{\e}(t)=\mathcal{V}_{\re(\e),\e}(t)$, $t\geq0$). With this at hands}, substitute in \eqref{CTe2} the term $h^{1}_{n+1}$ by $h_{n+1}-h^{0}_{n+1}$ and look at $h_{n+1}(t)$ as an additional source term.   In the same way for \eqref{CTe3}, the system \eqref{CTe2}-\eqref{CTe3} becomes linear (in terms of $\ho_{n+1}$ and $\hu_{n+1}$) and admits, for any $n\in \N$, a unique solution. One can use a slight modification of the ideas of Section \ref{sec:non} to check that the iteration scheme is stable, that is, the mapping 
$$\big(\ho_{n},\hu_{n}\big)\in \mathcal{X}_0\times \mathcal{X}_{1} \mapsto \big(\ho_{n+1},\hu_{n+1}\big)\, \in \mathcal{X}_0\times \mathcal{X}_{1}$$
is well defined.  Indeed, existence of the scheme is guaranteed by the linear theory as the iteration scheme is based on the linear equation.  Moreover, note that \eqref{CTe1} preserves the conservation laws: mass conservation and vanishing momentum, which were essential for the \textit{a priori} estimates related to $\mathbf{P}_0h^{1}$.   Thus, stability holds true under the conditions of the \textit{a priori} estimates, that is, for $\e\in(0,\e^{\dagger})$ (where $\e^\dagger$ is defined in Theorem~\ref{theo:h-relaxation}-Remark~\ref{nb:1/2}) and
\begin{equation*}
\sup_{t\geq0}\left(\|\hu_{n}(t)\|_{\H}+\| \ho_{n}(t)\|_{\E}\right) \leq C\sqrt{\mathcal{K}_0} \leq \Mo\,,\qquad n \in \N\,.
\end{equation*}
This latter condition is possible by taking $\mathcal{K}_0$ smaller than a threshold depending only {on the initial mass and energy $E_{\mathrm{in}}$ \footnote{{since all the threshold values appearing here are prescribed by the choice of the initial mass and energy, see Remark \ref{nb:init}.}} 
$$\mathcal{K}_0\leq (\Mo/C)^{2}=:\mathcal{K}^{\dagger}_{0}.$$}  We leave the details to the reader and focus in the next subsections on the convergence of the scheme.
\subsection{Estimating $\|\ho_{n+1}-\ho_{n}\|_{\E}$ and $\|\hu_{n+1}-\hu_{n}\|_{\H}$}  To prove the convergence of the scheme, we define for $n \in \N$
\begin{equation*}
d^{0}_{n+1} = \ho_{n+1} - \ho_{n}, \qquad \qquad d^{1}_{n+1}=\hu_{n+1}-\hu_{n}.
\end{equation*}
Then, one deduces from \eqref{CTe2} and \eqref{CTe3}
\begin{equation}\label{CTe5}
\hspace{-.4cm}\left\{\begin{array}{ccl}
\partial_{t} d^{0}_{n+1} & = & \B_{\re(\e),\e}d^{0}_{n+1} + \Big[\G_{\e}d^{1}_{n+1}-\G_{1,\e}d^{1}_{n+1}\Big] + \e^{-1}\mathcal{F}^{0}_{n}\,,\\ [10pt]
d^{0}_{n+1}(0) & = & 0\,,
\end{array}\right.
\end{equation}
and
\begin{equation}\label{CTe6}
\left\{
\begin{array}{ccl}
\partial_{t} d^{1}_{n+1}&=& \G_{1,\e}d^{1}_{n+1} + \A_\e d^{0}_{n+1} + \e^{-1}\mathcal{F}^{1}_{n} \,,\\[10pt]
d^{1}_{n+1}(0)&=&0\,.
\end{array}\right.
\end{equation}
The sources $\mathcal{F}^{i}_{n}$, for $i\in\{0,1\}$, correspond to the bilinear terms and depend only on the previous iterations $\{h^{i}_{n},h^{i}_{n-1}\}$, for $i\in\{0,1\}$ and $n \geq 2$ (see \eqref{F0} and \eqref{F1} for the precise expression). We introduce
\begin{equation*}\begin{cases}
\Psi_{n}^{1}(t)=&  {\|h^{0}_{n}(t)\|_{\E_{1}}+\|h^{0}_{n-1}(t)\|_{\E_{1}}}\\
\Psi_{n}^{\infty}(t)=& \|h^{0}_{n}(t)\|_{\E}+\|h^{0}_{n-1}(t)\|_{\E} + \|h^{1}_{n}(t)\|_{\H}+\|h^{1}_{n-1}(t)\|_{\H}\,,\end{cases}
\end{equation*}
which satisfy
\begin{equation}\label{eq:Psi1inf}
 {\sup_{ t\geq0 }\left(\Psi_{n}^{\infty}(t) + \e^{-2} \int^{t}_{0} \Psi_{n}^{1}(\tau) \d \tau\right)} \leq C\sqrt{ \mathcal{K}_0 }\,,\qquad n\geq	2\,,
\end{equation}
for $\ho_{n},\ho_{n-1} \in \mathcal{X}_{0}$, and $\hu_{n},\hu_{n-1} \in \mathcal{X}_{1}$.  Consequently, the following estimate for $d^{0}_{n+1}$ follows under suitable modifications of the arguments leading to Proposition \ref{prop:h0} (keep in mind that $\| \cdot \|_{\E_{2}} \lesssim \| \cdot \|_{\H}$). 
\begin{lem}\label{prop:d0} Let $\e\in(0,\e^{\dagger})$ and $\mathcal{K}_0\leq \mathcal{K}^{\dagger}_{0}$.  Then, we have that
\begin{equation}\label{eq:estimd0}
\begin{split}
\|d^{0}_{n+1}(t)\|_{\E} &\lesssim \lambda_{\e}\int^{t}_{0}e^{-\frac{{\mu_{0}}}{\e^2}(t-s)}\|d^{1}_{n+1}(s)\|_{\H}\,\d s\\
&\qquad + \e^{-1}\int^{t}_{0}e^{-\frac{{\mu_{0}}}{\e^2}(t-s)}\Psi^{1}_{n}(s)\Big( {\|d^{0}_{n}(s)\|_{\E}}+ \|d^{1}_{n}(s)\|_{\H}\Big)\,\d s\\
&\quad + \int^{t}_{0}e^{-\frac{{\mu_{0}}}{\e^2}(t-s)}\Psi^{\infty}_{n}(s)\Big(\e^{-1}\|d^{0}_{n}(s)\|_{\E_{1}}+ \e\lambda_{\e}\|d^{1}_{n}(s)\|_{\H}\Big)\,\d s\,.
\end{split}
\end{equation}
\end{lem}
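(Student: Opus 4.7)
The plan is to mimic the proof of Proposition \ref{prop:h0}, but rewritten as an estimate on the iterate differences rather than on the nonlinear solution itself. Since $d^{0}_{n+1}$ satisfies the linear evolution \eqref{CTe5} with zero initial datum, the first step is to apply Duhamel's formula against the semigroup $\{\cS_{\re(\e),\e}^{(\delta)}(t)\}_{t\ge 0}$ generated by $\B_{\re(\e),\e}$. By Proposition \ref{prop:Bree} this semigroup satisfies $\|\cS_{\re(\e),\e}^{(\delta)}(t)\|_{\mathscr{B}(\E)} \lesssim e^{-\nu_{0} t/\e^{2}}$, where $\nu_0$ can be chosen arbitrarily close to (and larger than) $\mu_{0}$ provided $\e$ is small enough; up to replacing $\nu_0$ by $\mu_0$ this yields
\begin{equation*}
\|d^{0}_{n+1}(t)\|_{\E} \lesssim \int_{0}^{t} e^{-\frac{\mu_{0}}{\e^{2}}(t-s)}\Bigl(\bigl\|[\G_\e-\G_{1,\e}]d^{1}_{n+1}(s)\bigr\|_{\E} + \e^{-1}\|\mathcal{F}^{0}_{n}(s)\|_{\E}\Bigr)\d s.
\end{equation*}

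The second step is to handle the linear coupling term. Since $\G_\e-\G_{1,\e}=\e^{-2}(\LLe-\mathscr{L}_1)$ contains no transport, Lemma \ref{lem:diffG} (more precisely, its consequence \eqref{eq:llXk}) together with the continuous embedding $\H \hookrightarrow \E_{2}$ gives
$\bigl\|[\G_\e-\G_{1,\e}]d^{1}_{n+1}\bigr\|_{\E} \lesssim \e^{-2}(1-\re(\e))\,\|d^{1}_{n+1}\|_{\E_{2}} \lesssim \lambda_\e\,\|d^{1}_{n+1}\|_{\H}$,
which produces the first term on the right-hand side of \eqref{eq:estimd0}.

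The third step is the bilinear decomposition of $\mathcal{F}^{0}_{n}$. By bilinearity, every contribution of the form $\Q_\beta(a_n,b_n)-\Q_\beta(a_{n-1},b_{n-1})$ (for $\beta\in\{\re(\e),1\}$) splits as $\Q_\beta(d^{a}_n,b_n)+\Q_\beta(a_{n-1},d^{b}_n)$, where $d^a_n, d^b_n$ denote the appropriate increments. Each of the resulting six basic pieces (involving only $\Q_{\re(\e)}$) is controlled by the estimates on $\Q_{\re(\e)}$ in $\E$ used in Proposition \ref{prop:h0}: placing the $\E_{1}$-norm on the undifferentiated factor produces a contribution bounded by $\Psi^{1}_{n}(s)(\|d^{0}_{n}(s)\|_{\E}+\|d^{1}_{n}(s)\|_{\E_{1}})$, and $\|d^1_n\|_{\E_1}\lesssim \|d^1_n\|_\H$ gives the second line of \eqref{eq:estimd0} after dividing by $\e$; placing the $\E_{1}$-norm on the increment instead yields a contribution controlled by $\Psi^{\infty}_{n}(s)\,\e^{-1}\|d^{0}_{n}(s)\|_{\E_{1}}$, contributing to the third line.

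The last step treats the two pieces of the form $\Q_{\re(\e)}(h^{1}_{\bullet},h^{1}_{\bullet})-\Q_{1}(h^{1}_{\bullet},h^{1}_{\bullet})$. After the same bilinear splitting, applying Remark \ref{nb:diffQXY} to each bilinear difference gains a factor $(1-\re(\e))\simeq \e^{2}\lambda_\e$; together with the $\e^{-1}$ prefactor and the embedding $\H\hookrightarrow \E_{2}$ this produces exactly the $\e\lambda_\e\,\Psi^{\infty}_{n}(s)\|d^{1}_{n}(s)\|_{\H}$ contribution in the third line of \eqref{eq:estimd0}. Assembling the four estimates under the integral sign and regrouping terms according to whether the multiplicative weight on $d^{0}_{n}$ or $d^{1}_{n}$ is $\Psi^{1}_{n}$ or $\Psi^{\infty}_{n}$ yields the claim. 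The main obstacle here is bookkeeping rather than analysis: one must consistently decide, in each bilinear term, whether to absorb the $\E_{1}$-norm into the difference or into the undifferentiated factor so that the right-hand side really collapses to the compact two-integral form \eqref{eq:estimd0} and can later be combined with the a priori bound \eqref{eq:Psi1inf}.
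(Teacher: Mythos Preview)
Your plan is correct and follows essentially the same route as the paper. The only cosmetic difference is that the paper writes a differential inequality for $\|d^{0}_{n+1}\|_{\E}$ using the dissipative equivalent norm on $\E$ (as in the proof of Proposition \ref{prop:h0}) and then integrates, whereas you invoke Duhamel and the semigroup bound from Proposition \ref{prop:Bree} directly; the estimates on $[\G_\e-\G_{1,\e}]d^1_{n+1}$ and on $\mathcal{F}^0_n$ (given explicitly in \eqref{F0}) are handled identically, with the same bilinear splittings and the same embeddings $\|\cdot\|_{\E_2}\lesssim\|\cdot\|_{\H}$.
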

\begin{proof} {Here again, as in the proof of Proposition \ref{prop:h0}, we denote by $\|\cdot\|_{\E_{1}}$ and $\|\cdot\|_{\E}$ the norms on $\E_{1}$ and $\E$ that are equivalent to the standard ones (with multiplicative constants independent of $\e$) and that make $\e^{-2}\nu_{0}+\B_{\re(\e),\e}$ \emph{dissipative} so that}
\begin{equation*}\begin{split}
\dfrac{\d}{\d t}\|d^{0}_{n+1}(t)\|_{\E} &\leq -\frac{\nu_{0}}{\e^{2}}\|d^{0}_{n+1}(t)\|_{\E_{1}} +\e^{-1}\|\mathcal{F}^{0}_{n}(t)\|_{\E}
+\left\|\G_{\e}d^{1}_{n+1}(t)-\G_{1,\e}d^{1}_{n+1}(t)\right\|_{\E}\\
&\leq -\frac{\nu_{0}}{\e^{2}}\|d^{0}_{n+1}(t)\|_{\E_{1}} +\e^{-1}\|\mathcal{F}^{0}_{n}(t)\|_{\E}+
C\lambda_{\e}\|d^{1}_{n+1}(t)\|_{\H}.\end{split}\end{equation*}
We need to estimate $\|\mathcal{F}_{n}^{0}(t)\|_{\E}$. One has, 
\begin{equation}\label{F0}\begin{split}
\mathcal{F}_{n}^{0}&=\Q_{\re(\e)}(d^{0}_{n},h^{0}_{n})+\Q_{\re(\e)}(h^{0}_{n-1},d^{0}_{n})+2\widetilde{\Q}_{\re(\e)}(d^{0}_{n},h^{1}_{n})+2\widetilde{\Q}_{\re(\e)}(h^{0}_{n-1},d^{1}_{n})\\
&\phantom{++++} +\left(\Q_{\re(\e)}(d^{1}_{n},h^{1}_{n})-\Q_{1}(d^{1}_{n},h^{1}_{n})\right)+\left(\Q_{\re(\e)}(h^{1}_{n-1},d^{1}_{n})-\Q_{1}(h^{1}_{n-1},d^{1}_{n})\right).
\end{split}\end{equation}
Therefore, since $1-\re(\e) \lesssim \e^{2}\,\lambda_{\e}$, using  {Remark \ref{nb:diffQXY}} and the usual estimates for $\Q_{\re}$ and~$\Q_{1}$:{
\begin{multline*}
\|\mathcal{F}_{n}^{0}\|_{\E} \lesssim \|d^{0}_{n}\|_{\E_{1}}\left(\|h^{0}_{n}\|_{\E}+\|h^{0}_{n-1}\|_{\E}\right) + \|d^{0}_{n}\|_{\E}\left(\|h^{0}_{n}\|_{\E_{1}}+\|h^{0}_{n-1}\|_{\E_{1}}\right)\\
+\|d^{0}_{n}\|_{\E}\|h^{1}_{n}\|_{\E_{1}}+\|d^{0}_{n}\|_{\E_{1}}\|h^{1}_{n}\|_{\E} + \|d^{1}_{n}\|_{\E_{1}}\,\|h^{0}_{n-1}\|_{\E}\\
+ \|d^{1}_{n}\|_{\E}\,\|h^{0}_{n-1}\|_{\E_{1}}+\e^{2}\lambda_{\e}\|d^{1}_{n}\|_{\E_{2}}\left(\|h^{1}_{n}\|_{\E_{2}}+\|h^{1}_{n-1}\|_{\E_{2}}\right).
\end{multline*}}
Using that $\|\cdot\|_{\E_{2}} \lesssim \|\cdot\|_{\H}$ we get
$$\|\mathcal{F}_{n}^{0}(t)\|_{\E} \lesssim \|d^{0}_{n}(t)\|_{\E_{1}}\Psi^{\infty}_{n}(t)+ \Psi^{1}_{n}(t)\left( \|d^{0}_{n}(t)\|_{\E}
+\|d^{1}_{n}(t)\|_{\H}\right) + \e^{2}\lambda_{\e}\|d^{1}_{n}(t)\|_{\E_{2}}\Psi^{\infty}_{n}(t).$$
This leads to the desired estimate {since $\mu_0<\nu_0$ (see the proof of Proposition~\ref{prop:h0})}.
\end{proof} 

Regarding the projection $ \mathbf{P}_{0} d^{1}_{n+1}(t)$, since the difference $h_{n+1}-h_{n}=d^{0}_{n+1}+d_{n+1}^{1}$ has zero mass and momentum, one can follow the line of proof of Lemma \ref{lem:energy} to deduce that
\begin{align*}
\| \mathbf{P}_{0} d^{1}_{n+1}(t)  \|_{\E_{-1}}  &\lesssim   \|d^{0}_{n+1}(t)\|_{\E} + \e\lambda_{\e}\int^{t}_{0}e^{-\lambda_{\e} (t-s)}\Psi^{\infty}_{n}(s)\Big(\|d^{0}_{n}(s)\|_{\E} +  {\|d^{1}_{n}(s)\|_{\H}}\Big) \d s\\
& {+ \lambda_\e \int_0^te^{-\lambda_\e(t-s)} \left(\| d^0_{n+1}(s)\|_\E + \|\mathbf{P}_{0}^\perp d^1_{n+1}(s)\|_\E\right) \, \d s \, .}
\end{align*}
Consequently, plugging \eqref{eq:estimd0} in the second term in the right side and recalling that $$\| \mathbf{P}_{0} d^{1}_{n+1}(t)  \|_{\H} \lesssim \| \mathbf{P}_{0} d^{1}_{n+1}(t)  \|_{\E_{-1} } \lesssim \| \mathbf{P}_{0} d^{1}_{n+1}(t)  \|_{\H}$$ we obtain the following lemma.
\begin{lem}\label{projection}
For any $t \ge 0$, we have that
\begin{align*}
\| \mathbf{P}_{0} d^{1}_{n+1}(t)&  \|_{\H} \lesssim   \lambda_{\e}\int^{t}_{0}e^{-\frac{\mu_0}{\e^2}(t-s)}\|d^{1}_{n+1}(s)\|_{\H}\,\d s\\
& + \e^{-1}\int^{t}_{0}e^{-\frac{\mu_0}{\e^2}(t-s)}\Psi^{1}_{n}(s)\Big(\|d^{0}_{n}(s)\|_{\E}+ \|d^{1}_{n}(s)\|_{\H}\Big)\,\d s\\
&\quad + \int^{t}_{0}e^{-\frac{\mu_0}{\e^2}(t-s)}\Psi^{\infty}_{n}(s)\Big(\e^{-1}\|d^{0}_{n}(s)\|_{\E_{1}}+ \e\lambda_{\e}\|d^{1}_{n}(s)\|_{\H}\Big)\,\d s\\
&\qquad + \e\lambda_{\e}\int^{t}_{0}e^{-\lambda_{\e} (t-s)}\Psi^{\infty}_{n}(s)\Big(\|d^{0}_{n}(s)\|_{\E} + \|d^{1}_{n}(s)\|_{\H}\Big) \d s \\
&{+ \lambda_\e \int_0^te^{-\lambda_\e(t-s)} \left(\|d^0_{n+1}(s)\|_\E + \|\mathbf{P}_{0}^\perp d^1_{n+1}(s)\|_\H\right) \, \d s \, .}
\end{align*}
\end{lem}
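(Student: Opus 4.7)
The plan is to mirror the derivation of Lemma~\ref{lem:energy}, but applied to the difference $d_{n+1} := h_{n+1} - h_n = d^0_{n+1} + d^1_{n+1}$. Subtracting the equations for consecutive iterates in \eqref{CTe1}, one finds
\begin{equation*}
\partial_t d_{n+1} = \G_\e d_{n+1} + \e^{-1}\left[\Q_{\re(\e)}(d_n, h_n) + \Q_{\re(\e)}(h_{n-1}, d_n)\right],\qquad d_{n+1}(0) = 0.
\end{equation*}
The crucial observation is that the iteration scheme \eqref{CTe1} preserves mass and momentum, so $d_{n+1}(t)$ has zero mass and zero momentum for every $t \geq 0$. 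This collapses the full hydrodynamic projection to the energy mode: $\mathbf{P}_0 d_{n+1}(t) = \Pi_0 d_{n+1}(t)$.

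Applying $\Pi_0$ to the equation for $d_{n+1}$ and invoking~\eqref{pi0Ge} with the bound~\eqref{eq:stimerese} gives
\begin{equation*}
\partial_t \Pi_0 d_{n+1} = -\lae\, \Pi_0 d_{n+1} + s_\e^{(n)}(t)\,\phi_1 + \e^{-1}\, \Pi_0\!\left[\Q_{\re(\e)}(d_n, h_n) + \Q_{\re(\e)}(h_{n-1}, d_n)\right],
\end{equation*}
where $|s_\e^{(n)}(t)| \lesssim \lae\, \|(\mathbf{I} - \mathbf{P}_0) d_{n+1}(t)\|_\E$. Duhamel's formula (with zero initial data) then yields, after applying the polarized version of the $\mathcal{D}_{\re(\e)}$ estimate used in the proof of Lemma~\ref{lem:energy} together with $1-\re(\e) \simeq \e^2 \lae$,
\begin{align*}
\|\Pi_0 d_{n+1}(t)\|_{\E_{-1}} &\lesssim \lae \int_0^t e^{-\lae(t-s)} \|(\mathbf{I}-\mathbf{P}_0) d_{n+1}(s)\|_\E \, \d s \\
&\quad + \e\,\lae \int_0^t e^{-\lae(t-s)}\, \Psi_n^\infty(s) \left(\|d^0_n(s)\|_\E + \|d^1_n(s)\|_\H\right) \d s.
\end{align*}

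To close the argument, I would use $\mathbf{P}_0 d^1_{n+1} = \Pi_0 d_{n+1} - \Pi_0 d^0_{n+1}$ together with the continuous embeddings $\|\Pi_0 (\cdot)\|_\H \lesssim \|\Pi_0(\cdot)\|_{\E_{-1}}$ and $\|\Pi_0 d^0_{n+1}\|_\H \lesssim \|d^0_{n+1}\|_\E$, which gives $\|\mathbf{P}_0 d^1_{n+1}(t)\|_\H \lesssim \|d^0_{n+1}(t)\|_\E + \|\Pi_0 d_{n+1}(t)\|_{\E_{-1}}$. The decomposition $(\mathbf{I}-\mathbf{P}_0) d_{n+1} = (\mathbf{I}-\mathbf{P}_0)d^0_{n+1} + \mathbf{P}_0^\perp d^1_{n+1}$, combined with the bound $\|(\mathbf{I}-\mathbf{P}_0)d^0_{n+1}\|_\E \lesssim \|d^0_{n+1}\|_\E$, turns the first Duhamel integral above into the last line of the stated estimate, while the second Duhamel integral produces the penultimate line. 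Finally, feeding in Lemma~\ref{prop:d0} to bound the remaining occurrence of $\|d^0_{n+1}(t)\|_\E$ generates precisely the three terms with fast decay rate $\mu_0/\e^2$.

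The main point to keep straight is the bookkeeping of the two distinct decay rates: the three terms coming from Lemma~\ref{prop:d0} inherit the fast exponential $e^{-\mu_0 (t-s)/\e^2}$, whereas the two terms genuinely originating from the slow energy mode retain the rate $e^{-\lae (t-s)}$. No new analytic obstacle appears beyond what was already resolved for the nonlinear problem in Section~\ref{sec:non}; the proof is essentially a linearization/polarization of Lemma~\ref{lem:energy} combined with the iteration-level bound of Lemma~\ref{prop:d0}.
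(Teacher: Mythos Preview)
Your proposal is correct and follows essentially the same route as the paper: the paper states (just before the lemma) that one follows the line of Lemma~\ref{lem:energy} applied to $d_{n+1}=h_{n+1}-h_n$ (using its zero mass and momentum) to obtain an $\E_{-1}$ bound on $\mathbf{P}_0 d^1_{n+1}$, then passes to the $\H$-norm via the equivalence $\|\mathbf{P}_0(\cdot)\|_\H\lesssim\|\mathbf{P}_0(\cdot)\|_{\E_{-1}}$ and plugs in \eqref{eq:estimd0} for the pointwise term $\|d^0_{n+1}(t)\|_\E$. The only slip is notational: in $\mathbf{P}_0 d^1_{n+1}=\Pi_0 d_{n+1}-\mathbf{P}_0 d^0_{n+1}$ the second projection should be $\mathbf{P}_0$ rather than $\Pi_0$ (since $d^0_{n+1}$ alone need not have vanishing mass and momentum), but this is harmless since $\|\mathbf{P}_0 d^0_{n+1}\|_\H\lesssim\|d^0_{n+1}\|_\E$ just as well.
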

Let us focus on estimating $\mathbf{P}^{\perp}_{0} d^{1}_{n+1}(t)$. To do so, we introduce the functions $\Phi^{1}_{n}$ and $\Phi^{\infty}_{n}$ defined by
\begin{equation*}
\Phi^{1}_{n}(t) = \| h^{1}_{n}(t) \|^{2}_{\H_{1}} +  {\| h^{1}_{n-1}(t) \|^{2}_{\H_{1}}}\quad\text{and}\quad\Phi^{\infty}_{n} (t)= \| h^{1}_{n}(t)\|^{2}_{\H} +  {\| h^{1}_{n-1}(t) \|^{2}_{\H}}
\end{equation*}
which satisfy
\begin{equation}\label{eq:Phi1inf}
\sup_{ t\geq0 }\Big(\Phi^{\infty}_{n}(t) +  {\int^{t}_{0} \Phi^{1}_{n}(\tau) \d \tau}\Big) \leq C \mathcal{K}_0\,,\quad n\geq2.
\end{equation}
One has the following lemma.
\begin{lem}\label{perp:projection}
Let $\e\in(0,\e^{\dagger})$ and $\mathcal{K}_0\leq \mathcal{K}^{\dagger}_{0}$.  Then,
\begin{multline*}
\|\mathbf{P}^{\perp}_{0} d^{1}_{n+1}(t) \|_{\H}^{2} \lesssim \int^{t}_{0}e^{-\nu(t-s)}\Phi^{1}_{n}(s) \| d^{1}_{n}(s) \|^{2}_{\H}\d s\\
+\int^{t}_{0}e^{-\nu(t-s)}\Phi^{\infty}_{n}(s) \| d^{1}_{n}(s) \|^{2}_{\H_{1}}\d s
+{{\lambda_{\e}}} \int^{t}_{0}e^{-\nu(t-s)} \| d^{1}_{n+1}(s) \|^{2}_{\H} \d s \\
+{\e^{-1}}\left(\sup_{s\geq0}\|d^{1}_{n+1}(s)\|_{\H}\right) 
\int^{t}_{0}e^{-\nu(t-\tau)} \, \Psi^{1}_{n}(\tau)\,\Big( \|d^{0}_{n}(\tau)\|_{\E} +  \|d^{1}_{n}(\tau)\|_{\H}\Big)\,\d \tau\\
+\left(\sup_{s\geq0}\|d^{1}_{n+1}(s)\|_{\H}\right)\int^{t}_{0}e^{-\nu(t-\tau)} \, \Psi^{\infty}_{n}(\tau)\,\Big(\e^{-1}\|d^{0}_{n}(\tau)\|_{\E} + \e\, \lambda_{\e}\|d^{1}_{n}(\tau)\|_{\H}\Big)\,\d \tau.
\end{multline*}
\end{lem}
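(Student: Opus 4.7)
The strategy mirrors the proof of Lemma \ref{prop:huut}, transposed to the difference equation \eqref{CTe6}. The starting point is to apply $\mathbf{P}_{0}^{\perp}$ to \eqref{CTe6} and use that $\mathbf{P}_{0}$ commutes with $\G_{1,\e}$ and that $\mathbf{P}_{0}\,\Q_{1}(\cdot,\cdot)=0$, obtaining
\begin{equation*}
\partial_{t}\mathbf{P}_{0}^{\perp}d^{1}_{n+1}=\G_{1,\e}\mathbf{P}_{0}^{\perp}d^{1}_{n+1}+\e^{-1}\Q_{1}(d^{1}_{n},h^{1}_{n})+\e^{-1}\Q_{1}(h^{1}_{n-1},d^{1}_{n})+\mathbf{P}_{0}^{\perp}\A_{\e}d^{0}_{n+1}.
\end{equation*}
Performing the same hypocoercive $\H$-energy estimate as in \eqref{eq:bmh1} (using the same equivalent norm on $\H$ as in Lemma \ref{prop:huut}) one gets
\begin{equation*}
\tfrac{\d}{\d t}\|\mathbf{P}_{0}^{\perp}d^{1}_{n+1}(t)\|_{\H}^{2}\leq -\tfrac{2\mu}{\sigma_{0}^{2}}\|\mathbf{P}_{0}^{\perp}d^{1}_{n+1}(t)\|_{\H_{1}}^{2}+\mathcal{S}_{\Q}(t)+\mathcal{S}_{\A}(t),
\end{equation*}
where $\mathcal{S}_{\Q}$ collects the bilinear source and $\mathcal{S}_{\A}=\|\mathbf{P}_{0}^{\perp}d^{1}_{n+1}\|_{\H}\cdot\|\mathbf{P}_{0}^{\perp}\A_{\e}d^{0}_{n+1}\|_{\H}$.

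For $\mathcal{S}_{\Q}$, I would use the standard bilinear estimate for $\Q_{1}$ in $L^{2}(\M^{-1/2})$ from \cite{bmam,briant} to write
\begin{equation*}
\e^{-1}|\langle\Q_{1}(d^{1}_{n},h^{1}_{n})+\Q_{1}(h^{1}_{n-1},d^{1}_{n}),\mathbf{P}_{0}^{\perp}d^{1}_{n+1}\rangle_{\H}|\lesssim \e^{-1}\|d^{1}_{n}\|_{\H}\,\|h^{1}_{n}+h^{1}_{n-1}\|_{\H_{1}}\|\mathbf{P}_{0}^{\perp}d^{1}_{n+1}\|_{\H_{1}},
\end{equation*}
and similar permutations. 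Decomposing $d^{1}_{n}=\mathbf{P}_{0}d^{1}_{n}+\mathbf{P}_{0}^{\perp}d^{1}_{n}$ and using \eqref{eq:hutH}-type manipulations, together with a Young inequality (with a small parameter absorbed in the dissipation $\tfrac{2\mu}{\sigma_{0}^{2}}\|\mathbf{P}_{0}^{\perp}d^{1}_{n+1}\|_{\H_{1}}^{2}$ thanks to the smallness of $\mathcal{K}_{0}$), one obtains terms of the form $\Phi^{1}_{n}\|d^{1}_{n}\|_{\H}^{2}+\Phi^{\infty}_{n}\|d^{1}_{n}\|_{\H_{1}}^{2}$ on the right-hand side. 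The remaining $\mathbf{P}_{0}d^{1}_{n}$-contribution, via Lemma \ref{lemobservables}-type bounds (mass and momentum of $d^{1}_{n+1}+d^{0}_{n+1}$ vanish), plus the $\lambda_{\e}$-perturbative terms coming from the difference between $\mathbf{P}_{0}$ and the spectral projector of $\G_{\e}$, yield the free $\lambda_{\e}\|d^{1}_{n+1}\|_{\H}^{2}$ term in the statement.

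The really delicate piece is $\mathcal{S}_{\A}$. Since $\A_{\e}$ has no effect on the $x$-variable and behaves as $\e^{-2}$, one only has
\begin{equation*}
\|\mathbf{P}_{0}^{\perp}\A_{\e}d^{0}_{n+1}(s)\|_{\H}\lesssim \e^{-2}\|d^{0}_{n+1}(s)\|_{\E}.
\end{equation*}
I would then insert the pointwise bound \eqref{eq:estimd0} on $\|d^{0}_{n+1}(s)\|_{\E}$ from Lemma \ref{prop:d0}. After integration in time against the exponential factor $e^{-\nu(t-s)}$ (using Gronwall on the differential inequality, coming from the dissipation), the first term of \eqref{eq:estimd0} contributes $\lambda_{\e}\int_{0}^{t}e^{-\nu(t-s)}\|d^{1}_{n+1}(s)\|_{\H}^{2}\d s$ (after applying Young's inequality to the product $\|d^{1}_{n+1}\|_{\H}\cdot\|\mathbf{P}_{0}^{\perp}d^{1}_{n+1}\|_{\H}$). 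For the other terms in \eqref{eq:estimd0}, which are themselves time-integrals against $e^{-\mu_{0}(t-s)/\e^{2}}$, one encounters double integrals that are reduced to single integrals exactly as in \eqref{eq:integdouble}, using $\mu_{0}/\e^{2}\gg\nu$; to avoid having the slow factor $\|\mathbf{P}_{0}^{\perp}d^{1}_{n+1}\|_{\H}$ trapped inside the double integral, one simply bounds it by $\sup_{s\geq 0}\|d^{1}_{n+1}(s)\|_{\H}$ and pulls it outside. This produces precisely the last two terms of the statement, with the prefactors $\Psi^{1}_{n}$ and $\Psi^{\infty}_{n}$, and with the $\e^{-1}$ factor tracking the scale of the nonlinear source and of $\e\lambda_{\e}$.

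The main obstacle will be bookkeeping: combining the $\Q_{1}$-bilinear and the $\A_{\e}$-source contributions while carefully distributing Young inequalities so that one never pays an $\e^{-1}$ on a term that is not controlled by the $\Psi^{1}_{n}$/$\Psi^{\infty}_{n}$-integrability estimates \eqref{eq:Psi1inf}-\eqref{eq:Phi1inf}. The trick of pulling $\sup_{s\geq 0}\|d^{1}_{n+1}(s)\|_{\H}$ out of the last two integrals is what allows the contraction argument of the next subsection to close: it produces an inequality of Banach-fixed-point type on $\sup_{s}\|d^{1}_{n+1}(s)\|_{\H}$ that can be absorbed on the left-hand side for $\mathcal{K}_{0}$ and $\e$ small enough.
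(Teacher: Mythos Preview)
Your overall strategy matches the paper's: derive the equation for $\mathbf{P}_0^\perp d^1_{n+1}$, perform the hypocoercive $\H$-energy estimate as in \eqref{eq:bmh1}, integrate against $e^{-\nu(t-s)}$, and then plug \eqref{eq:estimd0} into the $\A_\e d^0_{n+1}$ contribution. Two parts of your proposal, however, are muddled.

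\textbf{The $\Q_1$-source.} Equation \eqref{CTe6} involves only $\G_{1,\e}$, not $\G_\e$, so there are no ``$\lambda_\e$-perturbative terms coming from the difference between $\mathbf{P}_0$ and the spectral projector of $\G_\e$'' in this lemma. You also do not need to decompose $d^1_n$ along $\mathbf{P}_0$: the bilinear estimate for $\Q_1$ in $\H$ (the one behind \eqref{eq:bmh1}, cf.\ \cite[Theorem 4.7]{bmam}) applied to $\F^1_n=\Q_1(d^1_n,h^1_n)+\Q_1(h^1_{n-1},d^1_n)$ directly yields
\[
C\big(\Phi^1_n(s)\|d^1_n(s)\|_{\H}^2+\Phi^\infty_n(s)\|d^1_n(s)\|_{\H_1}^2\big),
\]
with half the dissipation absorbed via Young. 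The $\lambda_\e$ term in the statement does \emph{not} originate here.

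\textbf{The $\lambda_\e$ term.} You correctly locate it in the first line of \eqref{eq:estimd0} fed into $\mathcal{S}_\A$, but ``Young's inequality on $\|d^1_{n+1}\|_\H\cdot\|\mathbf{P}_0^\perp d^1_{n+1}\|_\H$'' is not what is needed. After integration, the relevant quantity is
\[
\mathcal{T}_1=\frac{\lambda_\e}{\e^2}\int_0^t e^{-\nu(t-s)}\|d^1_{n+1}(s)\|_\H\Big(\int_0^s e^{-\frac{\mu_0}{\e^2}(s-\tau)}\|d^1_{n+1}(\tau)\|_\H\,\d\tau\Big)\d s,
\]
and the two copies of $\|d^1_{n+1}\|_\H$ sit at \emph{different} times; a pointwise Young does not balance the $\e^{-2}$. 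The paper applies Cauchy--Schwarz in $s$, then \eqref{eq:integsquare2} to the squared inner integral, then \eqref{eq:integdouble}; this is precisely what makes the $\e^{-2}$ cancel and leaves $\lambda_\e\int_0^t e^{-\nu(t-s)}\|d^1_{n+1}(s)\|_\H^2\,\d s$.

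Your handling of the remaining terms $\mathcal{T}_2,\mathcal{T}_3$ (pulling out $\sup_s\|d^1_{n+1}(s)\|_\H$ and collapsing the double integrals via \eqref{eq:integdouble}) is correct and matches the paper.
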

\begin{proof} {One deduces from \eqref{CTe6} that $\mathbf{P}_{0}^{\perp}d_{n+1}^{1}(t)$ is such that
\begin{equation*}
\partial_{t}\mathbf{P}_{0}^{\perp}d_{n+1}^{1}(t)=\G_{1,\e}\mathbf{P}_{0}^{\perp}\,d^{1}_{n+1}(t)+\mathbf{P}_{0}^{\perp}\A_{\e}d^{0}_{n+1}(t)+\e^{-1}\F_{n}^{1}
\end{equation*}
where
\begin{equation}\label{F1}
\mathcal{F}_{n}^{1}=\Q_{1}(\hu_{n},\hu_{n})-\Q_{1}(\hu_{n-1},\hu_{n-1})=\Q_{1}(d^{1}_{n},\hu_{n})+\Q_{1}(\hu_{n-1},d^{1}_{n})\,.
\end{equation}
Following the argument leading to inequality \eqref{eq:bmh1} (see also \cite[Lemma 4.6, Theorem 4.7]{bmam}) one deduces   that}
\begin{multline}\label{eq:d1}
\|\mathbf{P}^{\perp}_{0} d^{1}_{n+1}(t) \|_{\H}^{2} \lesssim \int^{t}_{0}e^{-\nu(t-s)}\Phi^{1}_{n}(s) \| d^{1}_{n}(s) \|^{2}_{\H}\d s+\int^{t}_{0}e^{-\nu(t-s)}\Phi^{\infty}_{n}(s) \| d^{1}_{n}(s) \|^{2}_{\H_{1}}\d s\\
\qquad+\int^{t}_{0}e^{-\nu(t-s)} \| d^{1}_{n+1}(s) \|_{\H}\,\|\mathcal{A}_{\e}d^{0}_{n+1}(s)\|_{\H} \d s\,.
\end{multline}
The latter term in the right side of \eqref{eq:d1} can be estimated using \eqref{eq:estimd0} and recalling that $\|\mathcal{A}_{\e}d^{0}_{n+1}\|_{\H}\lesssim\e^{-2}\,\|d^{0}_{n+1}\|_{\E}$.  Thus, 
\begin{equation*}
\int^{t}_{0}e^{-\nu(t-s)} \| d^{1}_{n+1}(s) \|_{\H}\,\|\mathcal{A}_{\e}d^{0}_{n+1}(s)\|_{\H} \d s \lesssim \sum^{3}_{i=1}\mathcal{T}_{i}\,,
\end{equation*}
with
\begin{equation*}\begin{split}
\mathcal{T}_{1} &= \frac{\lambda_{\e}}{\e^{2}}\int^{t}_{0}e^{-\nu(t-s)}  \| d^{1}_{n+1}(s) \|_{\H}\bigg(\int^{s}_{0}e^{-\frac{\mu_0}{\e^2}(s-\tau)}\|d^{1}_{n+1}(\tau)\|_{\H}\,\d \tau\bigg) \d s,\\
\mathcal{T}_{2} &= \e^{-3}\int^{t}_{0}e^{-\nu(t-s)} \| d^{1}_{n+1}(s) \|_{\H}\bigg[\int^{s}_{0}e^{-\frac{\mu_0}{\e^2}(s-\tau)}\Psi^{1}_{n}(\tau)\Big( \|d^{0}_{n}(\tau)\|_{\E} +  \|d^{1}_{n}(\tau)\|_{\H}\Big) \d\tau\bigg]\d s\\
\mathcal{T}_{3} &= {\e^{-2}}\int^{t}_{0}e^{-\nu(t-s)} \| d^{1}_{n+1}(s) \|_{\H}\bigg[\int^{s}_{0}e^{-\frac{\mu_0}{\e^2}(s-\tau)}\Psi^{\infty}_{n}(\tau)\Big(\e^{-1}\|d^{0}_{n}(\tau)\|_{\E_{1}} +  \e\lambda_{\e}\|d^{1}_{n}(\tau)\|_{\H}\Big) \d\tau\bigg] \d s.\end{split}\end{equation*}
{It is easy to check, using \eqref{eq:integdouble}, that}
$$\mathcal{T}_{2} \leq \frac{2}{\mu_{0}\e}\left(\sup_{s\geq0}\|d_{n+1}^{1}(s)\|_{\H}\right)\,\int^{t}_{0}e^{-\nu(t-\tau)} \, \Psi^{1}_{n}(\tau)\,\Big( \|d^{0}_{n}(\tau)\|_{\E} +  \|d^{1}_{n}(\tau)\|_{\H}\Big)\,\d \tau$$
and
$$\mathcal{T}_{3} \leq \frac{2}{\mu_{0}}\left(\sup_{s\geq0}\|d_{n+1}^{1}(s)\|_{\H}\right)\,\int^{t}_{0}e^{-\nu(t-\tau)} \, \Psi^{\infty}_{n}(\tau)\,\Big( \e^{-1}\|d^{0}_{n}(\tau)\|_{\E_{1}} + \e\, \lambda_{\e}\|d^{1}_{n}(\tau)\|_{\H}\Big)\,\d \tau.$$ 
{The estimate for $\mathcal{T}_{1}$ is a bit more involved. Thanks to Cauchy-Schwarz inequality one first has
$$
\mathcal{T}_{1} \leq \frac{\lambda_{\e}}{\e^{2}}\left(\int^{t}_{0}e^{-\nu(t-s)}  \| d^{1}_{n+1}(s) \|_{\H}^{2}\d s\right)^{\frac{1}{2}}\\
\left(\int_{0}^{t}e^{-\nu(t-s)}Y^{2}(s)\d s \right)^{\frac{1}{2}}$$
where 
$$Y(s):=\int_{0}^{s}e^{-\frac{\mu_{0}}{\e^{2}}(s-\tau)}\|d^{1}_{n+1}(\tau)\|_{\H}\d \tau, \qquad s \in (0,t).$$
Thanks to \eqref{eq:integsquare2}  {applied with $r=\frac12$}, 
$$Y^{2}(s) \leq \frac{\e^{2}}{\mu_{0}}\int_{0}^{s}e^{-\frac{\mu_{0}}{\e^{2}}(s-\tau)}\|d^{1}_{n+1}(\tau)\|_{\H}^{2}\d \tau$$
and, using now \eqref{eq:integdouble} for $\mu_{0} > 2\e^{2}\nu$,
\begin{multline*}
\int_{0}^{t}e^{-\nu(t-s)}Y^{2}(s)\d s  \leq \frac{\e^{2}}{\mu_{0}}\int_{0}^{t}e^{-\nu(t-s)}\d s\int_{0}^{s}e^{-\frac{\mu_{0}}{\e^{2}}(s-\tau)}\|d^{1}_{n+1}(\tau)\|_{\H}^{2}\d \tau\\
\leq \frac{\e^{2}}{\mu_{0}\left(\frac{\mu_{0}}{\e^{2}}-\nu\right)}\int_{0}^{t}e^{-\nu(t-s)}\|d^{1}_{n+1}(s)\|_{\H}^{2}\d s
\leq \frac{2\e^{4}}{\mu_{0}^{2}}\int_{0}^{t}e^{-\nu(t-s)}\|d^{1}_{n+1}(s)\|_{\H}^{2}\d s.\end{multline*}
We deduce finally that
$$\mathcal{T}_{1} \leq \frac{\sqrt{2}\lambda_{\e}}{\mu_{0}}\int_{0}^{t}e^{-\nu(t-s)}\|d^{1}_{n+1}(s)\|_{\H}^{2}\d s$$
and this, together with the estimates for $\mathcal{T}_{2}$ and $\mathcal{T}_{3}$, gives the result.}
\end{proof}
Introducing now the quantities
\begin{equation*}\begin{split}
\sE^{0}_{n}&=\sup_{ t\geq0}\Big(\| d^{0}_{n}(t) \|_{\E} +  {\e^{-2}\int^{t}_{0}\| d^{0}_{n}(\tau) \|_{\E_{1}} \d \tau}\Big)\,,\\
\sE^{1}_{n}&=\sup_{t\geq0}\Big(\| d^{1}_{n}(t) \|^{2}_{\H} + {\int^{t}_{0}\| d^{1}_{n}(\tau) \|^{2}_{\H_{1}} \d \tau} \Big)^{\frac{1}{2}}\,,\qquad n \geq 2\,,
\end{split}\end{equation*}
we can gather the three  previous lemmas and use \eqref{eq:Psi1inf} to obtain the following result.

\begin{prop}\label{prop:sE1}
{For any $n \in \N$ and $t \geq0$
\begin{equation}\label{do-final}
\|d^{0}_{n+1}(t)\|_{\E} \lesssim {\e^{2}\lambda_{\e}}\, \sE^{1}_{n+1} + \e\,\sqrt{ \mathcal{K}_0 }\,\big( \sE^{0}_{n} + \sE^{1}_{n}\big)\,,
\end{equation}
while
\begin{equation}\label{d1-final}
\| d^{1}_{n+1}(t)\|_{\H} {\, \lesssim \sqrt{{\lambda_{\e}+\e}} \; \sE^{1}_{n+1} + \sqrt{\e \mathcal{K}_0 }\,\sE^{0}_{n} + \sqrt{ \mathcal{K}_0 }\,\sE^{1}_{n}\,,}
\end{equation}
as long as $\e\in(0,\e^{\dagger})$, $\mathcal{K}_0\leq\mathcal{K}^{\dagger}_{0}$.}
\end{prop}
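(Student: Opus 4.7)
The idea is to feed Lemmas \ref{prop:d0}, \ref{projection}, and \ref{perp:projection} with the uniform a priori bounds \eqref{eq:Psi1inf} and \eqref{eq:Phi1inf} on $\Psi^{1,\infty}_n$ and $\Phi^{1,\infty}_n$, together with the elementary controls
$\sup_s\|d^0_n(s)\|_\E\leq \sE^0_n$, $\e^{-2}\int_0^t\|d^0_n(s)\|_{\E_1}\d s\leq \sE^0_n$, $\sup_s\|d^1_n(s)\|_\H\leq \sE^1_n$, $\int_0^t\|d^1_n(s)\|^2_{\H_1}\d s\leq (\sE^1_n)^2$,
derived from the very definitions of $\sE^0_n$ and $\sE^1_n$. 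The repeatedly used quantitative fact is that $\int_0^t e^{-\mu_0(t-s)/\e^2}\d s\leq \e^2/\mu_0$, which combines with $\int_0^t\Psi^1_n(\tau)\d\tau\lesssim \e^2\sqrt{\mathcal{K}_0}$ to yield an extra $\e$-factor where it is needed.

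For the first inequality \eqref{do-final}, I start from \eqref{eq:estimd0} and estimate the three terms of its right-hand side. The term carrying $\|d^1_{n+1}\|_\H$ is bounded by $\lambda_\e(\e^2/\mu_0)\sE^1_{n+1}\lesssim \e^2\lambda_\e\sE^1_{n+1}$. The term involving $\Psi^1_n$ is handled by pulling out $\sE^0_n+\sE^1_n$ and using $\int\Psi^1_n\lesssim \e^2\sqrt{\mathcal{K}_0}$, which gives $\e\sqrt{\mathcal{K}_0}(\sE^0_n+\sE^1_n)$. The $\Psi^\infty_n$ term splits in two: the first piece uses $\Psi^\infty_n\lesssim\sqrt{\mathcal{K}_0}$ combined with $\e^{-1}\int\|d^0_n\|_{\E_1}\d s\leq \e\sE^0_n$, producing $\e\sqrt{\mathcal{K}_0}\sE^0_n$; the second piece gives $\e^3\lambda_\e\sqrt{\mathcal{K}_0}\sE^1_n\lesssim\e\sqrt{\mathcal{K}_0}\sE^1_n$ because $\e^2\lambda_\e\simeq 1-\re(\e)\leq 1$. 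Summing yields \eqref{do-final}.

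For \eqref{d1-final} I split $\|d^1_{n+1}(t)\|_\H\leq \|\mathbf{P}_0 d^1_{n+1}(t)\|_\H+\|\mathbf{P}_0^\perp d^1_{n+1}(t)\|_\H$. The projection part is controlled via Lemma~\ref{projection}: the first three integrals are estimated exactly as in the $d^0$ analysis above, giving the contribution $\e^2\lambda_\e\sE^1_{n+1}+\e\sqrt{\mathcal{K}_0}(\sE^0_n+\sE^1_n)$; the fourth integral yields $\e\sqrt{\mathcal{K}_0}(\sE^0_n+\sE^1_n)$ once one uses that $\lambda_\e\int_0^t e^{-\lambda_\e(t-s)}\d s\leq 1$ and $\Psi^\infty_n\lesssim\sqrt{\mathcal{K}_0}$; the last integral reduces to $\sup_s\|d^0_{n+1}(s)\|_\E+\sup_s\|\mathbf{P}_0^\perp d^1_{n+1}(s)\|_\H$ (again by $\lambda_\e\int e^{-\lambda_\e(t-s)}\d s\leq 1$), the first of these being already bounded by \eqref{do-final}.

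The main technical step is the perpendicular component, via Lemma~\ref{perp:projection}. After taking square roots in the stated $L^2$-estimate, the first two integrals produce $\sqrt{\mathcal{K}_0}\sE^1_n$ (using $\int\Phi^1_n\lesssim\mathcal{K}_0$ and $\Phi^\infty_n\lesssim\mathcal{K}_0$), while the integral $\lambda_\e\int e^{-\nu(t-s)}\|d^1_{n+1}\|^2_\H\d s$ contributes $\sqrt{\lambda_\e/\nu}\,\sE^1_{n+1}$, i.e. the $\sqrt{\lambda_\e}$ piece of the announced coefficient. The delicate point is the remaining two integrals, where $\sup_s\|d^1_{n+1}\|_\H$ multiplies integrals controlled by $\e\sqrt{\mathcal{K}_0}\,(\sE^0_n+\sE^1_n)$; under the square root this gives expressions of the form $\sqrt{C\e\sqrt{\mathcal{K}_0}}\,\sqrt{\sE^1_{n+1}(\sE^0_n+\sE^1_n)}$, which do not directly match the desired bound. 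I resolve this by Young's inequality $\sqrt{ab}\leq\tfrac{1}{2}(\delta a+b/\delta)$ with the tuning $\delta\sim \mathcal{K}_0^{-1/4}$, which simultaneously produces the coefficient $\sqrt{\e\mathcal{K}_0}$ in front of $\sE^0_n$ (matching the announced bound) and a small $\sqrt{\e}\,\sE^1_{n+1}$ contribution, absorbed into the $\sqrt{\lambda_\e+\e}\,\sE^1_{n+1}$ term. Combining the two projections gives \eqref{d1-final}. The balancing of these Young couplings is the only non-routine step; everything else is a bookkeeping of exponential integrals.
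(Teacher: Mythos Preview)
Your proposal is correct and follows essentially the same route as the paper: bound $d^{0}_{n+1}$ via Lemma~\ref{prop:d0} and the integral control $\int_0^t e^{-\mu_0(t-s)/\e^2}\,\d s\lesssim \e^2$ together with \eqref{eq:Psi1inf}, then split $d^{1}_{n+1}$ into its $\mathbf{P}_0$ and $\mathbf{P}_0^\perp$ parts and use Lemmas~\ref{projection}--\ref{perp:projection} with \eqref{eq:Psi1inf}--\eqref{eq:Phi1inf}.

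The only minor divergence is in the Young step for the cross terms coming from Lemma~\ref{perp:projection}. The paper stays at the squared level and writes directly
\[
\e\sqrt{\mathcal{K}_0}\,\sE^1_{n+1}\big(\sE^0_n+\sE^1_n\big)
\;\leq\; \e\,[\sE^1_{n+1}]^2 + \e\,\mathcal{K}_0\big(\sE^0_n+\sE^1_n\big)^2,
\]
then absorbs $\e\mathcal{K}_0[\sE^1_n]^2$ into the already present $\mathcal{K}_0[\sE^1_n]^2$ and only afterwards takes the square root. Your version (take the square root first, then Young with $\delta\sim\mathcal{K}_0^{-1/4}$) is equivalent and in fact yields the slightly stronger coefficient $\sqrt{\e\mathcal{K}_0}$ in front of $\sE^1_n$; the paper's version is just a shade cleaner since no tuning is needed. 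Note also that the paper proves the $\mathbf{P}_0^\perp$ bound \emph{first} and feeds it into the $\mathbf{P}_0$ bound, whereas you present them in the opposite order; since both estimates carry $\sE^1_{n+1}$ on the right-hand side anyway (to be absorbed only later), the ordering is immaterial.
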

\begin{proof}  First, we claim that 
\begin{equation}\label{d1-perp-b}
\| \mathbf{P}^{\perp}_{0}d^{1}_{n+1}(t)\|_{\H} {\, \lesssim \sqrt{{\lambda_{\e}+\e}} \; \sE^{1}_{n+1} + \sqrt{\e \mathcal{K}_0 }\,\sE^{0}_{n} + \sqrt{ \mathcal{K}_0 }\,\sE^{1}_{n}\,,}
\end{equation}
Indeed, from Lemma \ref{perp:projection}, we have that
\begin{multline*}
\| \mathbf{P}^{\perp}_{0}d^{1}_{n+1}(t)\|_{\H}^{2} \lesssim {{\lambda_{\e}}}\left[\sE^{1}_{n+1}\right]^{2}
+\left[\sE^{1}_{n}\right]^{2}\left(\int_{0}^{t}e^{-\nu(t-s)}\Phi_{n}^{1}(s)\d s+\sup_{s\geq0}\Phi_{n}^{\infty}(s)\right)\\
+\e^{-1}\sE^{1}_{n+1}\left(\sE^{0}_{n}+\sE^{1}_{n}\right)
\int^{t}_{0}e^{-\nu(t-\tau)} \, \Psi^{1}_{n}(\tau)\d\tau\\
+\sE^{1}_{n+1}\left(\sup_{s\geq0}\Psi_{n}^{\infty}(s)\right)
{\left( \e \, \sE^0_n+ \e \lambda_\e \sE^1_{n}\right)}\,.
\end{multline*}
We can thus invoke \eqref{eq:Psi1inf} and~\eqref{eq:Phi1inf} to deduce that
$$
\| \mathbf{P}^{\perp}_{0}d^{1}_{n+1}(t)\|_{\H}^{2} \lesssim 
\mathcal{K}_{0}\left[\sE^{1}_{n}\right]^{2}+{{\lambda_{\e}}}\left[\sE^{1}_{n+1}\right]^{2}+{\e}\sqrt{\mathcal{K}_{0}}\left(\sE^{0}_{n}+\sE^{1}_{n}\right)\sE^{1}_{n+1}$$where we used that $\e\lambda_{\e} < \e.$ From Young's inequality, we 
deduce that
$$
\| \mathbf{P}^{\perp}_{0}d^{1}_{n+1}(t)\|_{\H}^{2} \lesssim {(\lae+\e)\left[\sE^{1}_{n+1}\right]^{2}} 
{+\mathcal{K}_{0} \left[\sE^{1}_{n}\right]^{2}
+\e\mathcal{K}_{0}\left[\sE^{0}_{n}\right]^{2}},
$$
which proves \eqref{d1-perp-b}. 
In the same way, the estimate~\eqref{do-final} is easily deduced from Lemma \ref{prop:d0}. To end the proof, it remains to prove that 
\begin{equation}\label{d1-pro-a}
{\| \mathbf{P}_{0}d^{1}_{n+1}(t)\|_{\H} \lesssim \sqrt{\lambda_{\e}+\e}\, \sE^{1}_{n+1} + \sqrt{ \e \, \mathcal{K}_0 }\,\sE^{0}_{n} + \sqrt{\mathcal{K}_0} \sE^{1}_{n} \,.}
\end{equation}
This inequality is a consequence of Lemma \ref{projection} combined with \eqref{eq:Psi1inf},~\eqref{do-final} and~\eqref{d1-perp-b}.\end{proof}
\begin{prop}\label{prop:average} For any $t \geq 0,$ we have that
\begin{equation}\label{eq:averd0}
\Big(\frac{\mu_0}{\e^{2}}-\nu\Big)\int^{t}_{0}e^{-\nu(t-s)}\|d^{0}_{n+1}(s)\|_{\E_{1}}\d s  \lesssim  \lambda_{\e}\,\sE^{1}_{n+1} + \e\,\sqrt{\mathcal{K}_{0}}\,\big( \sE^{0}_{n} + \sE^{1}_{n}\big)\,,\end{equation}
and
\begin{equation}\label{int-d1-final}
\begin{aligned}
\bigg(\int^{t}_{0}e^{-\nu(t-\tau)}\| d^{1}_{n+1}(\tau)\|^{2}_{\H_{1}}\d\tau\bigg)^{\frac 12} &\lesssim \sqrt{\lambda_{\e}+\e}\, \sE^{1}_{n+1} + \sqrt{ \e \, \mathcal{K}_0 }\,\sE^{0}_{n} + \sqrt{\mathcal{K}_0} \sE^{1}_{n}\,.
\end{aligned}
\end{equation}
\end{prop}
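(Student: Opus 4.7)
The strategy is to mimic the \emph{a priori} analysis performed in Section \ref{sec:non}, but applied to the differences $d^0_{n+1}, d^1_{n+1}$, keeping the full strength of the dissipation on the right-hand side instead of discarding it. Both estimates follow from integrating the energy inequalities underlying Lemma \ref{prop:d0} and Lemma \ref{perp:projection} against $e^{\nu t}$.

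For \eqref{eq:averd0}, the plan is to revisit the dissipative differential inequality derived in the proof of Lemma \ref{prop:d0}:
\begin{equation*}
\frac{\d}{\d t}\|d^0_{n+1}(t)\|_{\E}\leq -\frac{\nu_0}{\e^2}\|d^0_{n+1}(t)\|_{\E_{1}}+\e^{-1}\|\mathcal{F}^0_n(t)\|_{\E}+C\lambda_{\e}\|d^1_{n+1}(t)\|_{\H}.
\end{equation*}
Multiplying by $e^{\nu t}$ with $\nu<\mu_0/\e^2$ (which holds for small $\e$), using the continuous embedding $\|\cdot\|_{\E}\lesssim\|\cdot\|_{\E_{1}}$ to absorb the surplus $\nu\|d^0_{n+1}\|_{\E}$ into the dissipation, integrating in time from $0$ to $t$, and using $d^0_{n+1}(0)=0$, I would obtain
\begin{equation*}
\left(\frac{\mu_0}{\e^2}-\nu\right)\int_0^t e^{-\nu(t-s)}\|d^0_{n+1}(s)\|_{\E_{1}}\d s\lesssim\int_0^t e^{-\nu(t-s)}\Big(\e^{-1}\|\mathcal{F}^0_n(s)\|_{\E}+\lambda_{\e}\|d^1_{n+1}(s)\|_{\H}\Big)\d s.
\end{equation*}
The second term on the right is bounded by $\frac{\lambda_{\e}}{\nu}\sE^1_{n+1}$. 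For the first, I would substitute the bound on $\|\mathcal{F}^0_n\|_{\E}$ established inside the proof of Lemma \ref{prop:d0} and then handle each piece either by taking a supremum in time on one factor and using the $L^1_t$-bound \eqref{eq:Psi1inf} on $\Psi^1_n$ (producing a factor $\e^2\sqrt{\mathcal{K}_0}$, which after multiplication by $\e^{-1}$ yields $\e\sqrt{\mathcal{K}_0}$), or by using Cauchy-Schwarz against the $L^2_t$-bound on $\Psi^\infty_n$-weighted terms. The same reasoning as in Proposition \ref{prop:sE1} produces the claimed right-hand side.

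For \eqref{int-d1-final}, I would split $d^1_{n+1}=\mathbf{P}_0 d^1_{n+1}+\mathbf{P}_0^\perp d^1_{n+1}$. On the orthogonal part, the hypocoercive energy inequality used in the proof of Lemma \ref{perp:projection} (taken from \cite[Thm.\ 4.7]{bmam} applied to equation \eqref{CTe6}) reads, schematically,
\begin{equation*}
\frac{\d}{\d t}\|\mathbf{P}_0^\perp d^1_{n+1}(t)\|_{\H}^{2}\leq -\nu\|\mathbf{P}_0^\perp d^1_{n+1}(t)\|_{\H_{1}}^{2}+\Phi^1_n\|d^1_n\|_{\H}^{2}+\Phi^\infty_n\|d^1_n\|_{\H_1}^2+\|d^1_{n+1}\|_{\H}\|\A_\e d^0_{n+1}\|_{\H}+\e^{-1}\text{(source)}.
\end{equation*}
Multiplying by $e^{\nu t}$ and integrating gives control on $\int_0^t e^{-\nu(t-s)}\|\mathbf{P}_0^\perp d^1_{n+1}(s)\|_{\H_{1}}^{2}\d s$ by the time-integral of the right-hand side. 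The terms involving $\Phi^1_n,\Phi^\infty_n,d^1_n$ are handled exactly as in the proof of Lemma \ref{perp:projection}, producing contributions of size $\mathcal{K}_0(\sE^1_n)^2$. For the $\A_\e d^0_{n+1}$ term, using $\|\A_\e d^0_{n+1}\|_{\H}\lesssim\e^{-2}\|d^0_{n+1}\|_{\E}$ and the three terms in \eqref{eq:estimd0}, Cauchy-Schwarz together with the already-obtained \eqref{eq:averd0} give the bound $(\lambda_{\e}+\e)(\sE^1_{n+1})^2+\e\mathcal{K}_0(\sE^0_n)^2+\mathcal{K}_0(\sE^1_n)^2$. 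For the projection part, since $\mathrm{Range}(\mathbf{P}_0)$ is finite-dimensional, $\|\mathbf{P}_0 d^1_{n+1}\|_{\H_1}\lesssim\|\mathbf{P}_0 d^1_{n+1}\|_{\H}$, so Lemma \ref{projection} together with \eqref{do-final}, \eqref{d1-perp-b} and the previously obtained control of $\int_0^t e^{-\nu(t-s)}\|d^0_{n+1}(s)\|_{\E_1}\d s$ yield a bound of the same form after applying the elementary inequality \eqref{eq:integdouble} to unravel the double time-integrals.

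The main obstacle is the $\A_\e d^0_{n+1}$ source in \eqref{CTe6}: the singular prefactor $\e^{-2}$ from $\A_\e$ would spoil the estimate unless one exploits the improved factor $\frac{\mu_0}{\e^2}-\nu\sim\e^{-2}$ provided by \eqref{eq:averd0}. This precise trade-off (the loss $\e^{-2}$ from $\A_\e$ being absorbed by the gain $\e^{2}$ from the dissipation estimate on $d^0_{n+1}$) is what makes both estimates close with the right scaling $\sqrt{\lambda_\e+\e}$ in front of $\sE^1_{n+1}$ and is ultimately what drives contractivity of the iteration scheme in the next step.
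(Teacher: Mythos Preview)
Your proposal is correct and follows essentially the same route as the paper: for \eqref{eq:averd0} you reintegrate the dissipative inequality from Lemma~\ref{prop:d0} against $e^{\nu t}$ keeping the $\E_1$--dissipation, and for \eqref{int-d1-final} you split $d^1_{n+1}=\mathbf{P}_0 d^1_{n+1}+\mathbf{P}_0^\perp d^1_{n+1}$, control the orthogonal part via the hypocoercive energy inequality with the $\H_1$--dissipation retained, and the projection part by finite dimensionality and Lemma~\ref{projection}. Two small remarks: first, in your schematic differential inequality the dissipation coefficient should be $-\tfrac{2\mu}{\sigma_0^2}$ rather than $-\nu$ (the gap $\tfrac{2\mu}{\sigma_0^2}-\nu=c_0\Mo^2>0$ is precisely what survives after multiplying by $e^{\nu t}$); second, your treatment of the $\A_\e d^0_{n+1}$ term via the already proven \eqref{eq:averd0} is a slight (and perfectly valid) shortcut compared to the paper, which instead reuses the explicit $\mathcal{T}_1,\mathcal{T}_2,\mathcal{T}_3$ estimates from the proof of Lemma~\ref{perp:projection}.
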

\begin{proof} {To prove \eqref{eq:averd0}, we follow the argument that led to Lemma \ref{prop:d0} and thus in the subsequent proof, we again denote by $\|\cdot\|_{\E_{1}}$ and $\|\cdot\|_{\E}$ the norms on $\E_{1}$ and $\E$ that are equivalent to the standard ones independently of $\e$ and that make $\e^{-2}\nu_{0}+\B_{\re(\e),\e}$ \emph{dissipative} so that}
we can write
\begin{equation*}\frac{\d}{\d t}\|d^{0}_{n+1}(t)\|_{\E}\leq -\frac{\nu_{0}}{\e^{2}}\|d^{0}_{n+1}(t)\|_{\E_{1}} +\e^{-1}\|\mathcal{F}^{0}_{n}(t)\|_{\E}+
C\lambda_{\e}\|d^{1}_{n+1}(t)\|_{\H}\,,
\end{equation*}
which implies that,
\begin{multline*}
\frac{\d}{\d t}\|d^{0}_{n+1}(t)\|_{\E}+\nu\,\|d^{0}_{n+1}(t)\|_{\E_{1}} \leq -\Big(\frac{\mu_{0}}{\e^{2}} - \nu\Big)\|d^{0}_{n+1}(t)\|_{\E_{1}}\\
+\e^{-1}\|\mathcal{F}^{0}_{n}(t)\|_{\E}+
C\lambda_{\e}\|d^{1}_{n+1}(t)\|_{\H}\end{multline*}
where we used that $\mu_0<\nu_0$. 
After integration over $[0,t]$, using that $d^{0}_{n+1}(0)=0$, we get that
\begin{multline*}
\|d^{0}_{n+1}(t)\|_{\E} \leq -\Big(\frac{\mu_{0}}{\e^{2}} - \nu\Big)\int_{0}^{t}e^{-\nu(t-s)}\|d^{0}_{n+1}(s)\|_{\E_{1}}\d s
+\e^{-1}\int_{0}^{t}e^{-\nu(t-s)}\|\mathcal{F}_{n}^{0}(s)\|_{\E}\d s \\
+C\lambda_{\e}\int_{0}^{t}e^{-\nu(t-s)}\|d^{1}_{n+1}(s)\|_{\H}\d s\,,
\end{multline*}
and, recalling that $\mathcal{F}_{n}^{0}$ is given by \eqref{F0}, we 
estimate $\|\mathcal{F}_{n}^{0}(s)\|_{\E}$ as in Lemma \ref{prop:d0} to obtain that
\begin{multline*}
\Big(\frac{\mu_0}{\e^{2}}-\nu\Big)\int^{t}_{0}e^{-\nu(t-s)}\|d^{0}_{n+1}(s)\|_{\E_{1}}\d s \lesssim \lambda_{\e}\int^{t}_{0}e^{-\nu(t-s)}\|d^{1}_{n+1}(s)\|_{\H}\,\d s\\
+ \e^{-1}\int^{t}_{0}e^{-\nu(t-s)}\Psi^{1}_{n}(s)\Big(\|d^{0}_{n}(s)\|_{\E}+ \|d^{1}_{n}(s)\|_{\H}\Big)\,\d s\\
+ \int^{t}_{0}e^{-\nu(t-s)}\Psi^{\infty}_{n}(s)\Big(\e^{-1}\|d^{0}_{n}(s)\|_{\E_{1}}+ \e\lambda_{\e}\|d^{1}_{n}(s)\|_{\H}\Big)\,\d s.
\end{multline*}
This yields \eqref{eq:averd0}. In the same way, we adapt the proof of Lemma \ref{perp:projection} to get that
\begin{multline*}
\Big(\frac{2\mu}{\sigma^{2}_{0}}  - \nu\Big)\int^{t}_{0}e^{-\nu(t-\tau)}\|\mathbf{P}^{\perp}_{0} d^{1}_{n+1}(\tau) \|^{2}_{ \H_{1}}\d \tau
\lesssim \int^{t}_{0}e^{-\nu(t-s)}\Phi^{1}_{n}(s) \| d^{1}_{n}(s) \|^{2}_{\H}\d s\\
+\int^{t}_{0}e^{-\nu(t-s)}\Phi^{\infty}_{n}(s) \| d^{1}_{n}(s) \|^{2}_{\H_{1}}\d s + \int^{t}_{0}e^{-\nu(t-s)} \| d^{1}_{n+1}(s) \|_{\H}\,\|\mathcal{A}_{\e}d^{0}_{n+1}(s)\|_{\H} \d s
\end{multline*}
{where $2\mu/\sigma_0^2 - \nu = c_0 \Delta_0^2$ (see~\eqref{eq:nu})}. 
This estimate is similar to \eqref{eq:d1} and therefore we can resume both the proofs of Lemma~\ref{perp:projection} and Proposition \ref{prop:sE1} to obtain that
$$
\begin{aligned}
\Delta_0^2 \bigg(\int^{t}_{0}e^{-\nu(t-\tau)}\| \mathbf{P}_0^\perp d^{1}_{n+1}(\tau)\|^{2}_{\H_{1}}\d\tau\bigg)^{\frac 12} &\lesssim \sqrt{\lambda_{\e}+\e}\, \sE^{1}_{n+1} + \sqrt{ \e \, \mathcal{K}_0 }\,\sE^{0}_{n} + \sqrt{\mathcal{K}_0} \sE^{1}_{n}\,.
\end{aligned}
$$
 To conclude, we recall that $\| \mathbf{P}_{0}d^{1}_{n+1}\|_{\H_{1}} \lesssim \| \mathbf{P}_{0}d^{1}_{n+1}\|_{\H}$ so that a simple integration of \eqref{d1-pro-a} gives that
$$
\begin{aligned}
\bigg(\int^{t}_{0}e^{-\nu(t-\tau)}\| \mathbf{P}_0d^{1}_{n+1}(\tau)\|^{2}_{\H_{1}}\d\tau\bigg)^{\frac 12} &\lesssim \sqrt{\lambda_{\e}+\e}\, \sE^{1}_{n+1} + \sqrt{ \e \, \mathcal{K}_0 }\,\sE^{0}_{n} + \sqrt{\mathcal{K}_0} \sE^{1}_{n}\,.
\end{aligned}
$$
Adding these two estimates, one deduces \eqref{int-d1-final}.
\end{proof}
\subsection{Convergence of the iteration scheme} {
We are now in position to conclude our analysis by proving the convergence of the iteration scheme.  {In the sequel, we indicate with a same letter $C$ a positive constant depending on $\mu_{0}$, $\nu$ and $\Mo$ that may change from line to line.} Suitably adding \eqref{do-final} and \eqref{eq:averd0} and taking the supremum in time, one has that
\begin{equation}\label{E0-final-final}
\sE^{0}_{n+1} \lesssim \lambda_{\e}\,\sE^{1}_{n+1} + \e\,\sqrt{\mathcal{K}_{0}}\,\big( \sE^{0}_{n} + \sE^{1}_{n}\big)
\end{equation}
{where we used that $\mu_0 \geq 2 \e^2 \nu$}. 
Similarly, adding \eqref{d1-final} and \eqref{int-d1-final} and taking the supremum in time it holds that
{\begin{equation}\label{E1-final-final}
\sE^{1}_{n+1}\lesssim \sqrt{\lambda_{\e}+\e} \; \sE^{1}_{n+1} +  \sqrt{\e \mathcal{K}_0 }\,\sE^{0}_{n} + \sqrt{ \mathcal{K}_0 }\,\sE^{1}_{n}\,.\end{equation}}
Let us define $\mathscr{E}_{n}=\sE^{0}_{n}+\sE^{1}_{n}$, for $n \geq 2$.  {Adding the estimates \eqref{E0-final-final} and \eqref{E1-final-final}, we conclude that there exists $C>0$ such that  {$\mathscr{E}_{n+1} \leq C  \sqrt{\lambda_\e+\e} \, \mathscr{E}_{n+1} + C \sqrt{\mathcal{K}_0} \, \mathscr{E}_{n}$.} 
Thus, choosing $\e$ sufficiently small such that {$C\sqrt{\lambda_{\e}+\e} < \frac{1}{2}$}, we get that 
$\mathscr{E}_{n+1} \leq C  \sqrt{\mathcal{K}_{0}}\,\mathscr{E}_{n}$ from which
\begin{equation*}
\mathscr{E}_{n+1} \leq \big(C\sqrt{\mathcal{K}_0}\big)^{n-1}\,\mathscr{E}_{2}\,,\quad \qquad \forall \, n\geq 2\,.
\end{equation*}
{Choosing  $\mathcal{K}_0\leq\mathcal{K}^{\dagger}_{0} < {C^{-2}}$ so that
$$\theta:=C  \sqrt{\mathcal{K}_0} < 1$$
we deduce that,} in the Banach space $(\mathbb{B},\vertiii{\cdot})$, one has for $m > n \geq 1$,
\begin{equation*}
\vertiii{(h^{0}_{m},h^{1}_{m}) - (h^{0}_{n},h^{1}_{n})} \leq \sum^{m-1}_{i=n}\mathscr{E}_{i+1}\leq \mathscr{E}_{2}\,\frac{\theta^{n-1}}{1-\theta}\,,\qquad \theta:=C  \sqrt{\mathcal{K}_0}\,.
\end{equation*}
Whence the sequence $\big\{(h^{0}_{n},h^{1}_{n})\big\}_{n}\subset\mathcal{X}_0\times\mathcal{X}_{1} \subset \mathbb{B}$ is a Cauchy sequence and  it converges in $(\mathbb{B},\vertiii{\cdot})$ to a limit $(h^{0},h^{1})\in \mathcal{X}_0\times\mathcal{X}_{1}$.  Of course, such limit satisfies equations \eqref{eq:h0} and \eqref{eq:h1}.  Thus, $h=h^{0}+h^{1}$ is a solution to the inelastic Boltzmann problem \eqref{BE}.  Such solution is unique {in the class of functions that we consider} since, at essence, we proved that the problem is a contraction on $\mathcal{X}_0\times\mathcal{X}_{1}$.  This completes the proof of Theorem \ref{theo:main-cauc1} recalling that $f_\e=G_{\re(\e)}+\e h_\e$.

\section{Hydrodynamic limit}\label{sec:hydr}
 
 {In this last section, we will once again specify that $h$, $h^0$ and $h^1$ depend on $\e$ by noting $h=h_\e$, $h^0=\ho_{\e}$, $h^1=\hu_{\e}$. On the other hand, to lighten notations, we will write $\alpha$ for~$\alpha(\e)$ but recall that $\alpha=\alpha(\e)$ satisfies Assumption~\ref{hyp:re}. Finally, we will consider $m$, $k$ and~$q$ satisfying
 $$
 m>d, \quad m-1 \geq k \geq 1, \quad q \geq 5. 
 $$
as well as the corresponding spaces $\mathcal{E}$ and $\mathcal{E}_1$ defined in~\eqref{eq:Ekmq}-\eqref{E1E2}.} We prove here the following precised version of 
Theorem \ref{theo:CVNS-int} in the Introduction:
\begin{theo}\label{theo:CVNS}
Under the Assumptions of Theorem \ref{theo:main-cauc}, set
$$f_{\e}(t,x,v)=G_{\re(\e)} + \e\,h_{\e}(t,x,v)\,,$$
with $h_{\e}(0,x,v)=h_{\mathrm{in}}^{\e}(x,v)=\e^{-1}\left(F^{\e}_{\mathrm{in}}-G_{\re(\e)}\right)$ such that
$$\lim_{\e\to0}\left\|\bm{\pi}_{0}h_{\mathrm{in}}^{\e}-h_{0}\right\|_{{L^1_v\W^{m,2}_{x}}}=0\,,$$
where $\bm{\pi}_{0}$ stands for the projection over the elastic linearized Boltzmann operator (see \eqref{pi0} for a precise definition)
$$h_{0}(x,v)=\left(\varrho_{0}(x)+u_{0}(x)\cdot v + \tfrac{1}{2}\vE_{0}(x)(|v|^{2}-d\en_{1})\right)\M(v)\,,$$
with $\M$ being the Maxwellian distribution introduced in \eqref{eq:max} and
$$(\varrho_{0},u_{0},\vE_{0}) \in \mathscr{W}_{m}\,,$$
where we set $\mathscr{W}_{\ell}:=\ \left(\W^{\ell,2}_{x}(\T^{d})\right)^{d+2}$  for any $\ell \in \N$. 

\smallskip
\noindent
Then, for any $T >0$, $\left\{ h_{\e} \right\}_{\e}$ converges in some weak sense to a limit $\bm{h}=\bm{h}(t,x,v)$ which is such that 
\begin{equation*}
\bm{h}(t,x,v)=\left(\varrho(t,x)+u(t,x)\cdot v + \frac{1}{2}\vE(t,x)(|v|^{2}-d\en_{1})\right)\M(v)\,,
\end{equation*}
where 
$$(\varrho,u,\vE) \in \mathcal{C}([0,T];\,\mathscr{W}_{m-2}) \cap L^{1}\left((0,T);\,\mathscr{W}_{m}\right),
$$
is solution to the following  \emph{incompressible Navier-Stokes-Fourier system with forcing}
\begin{equation*} 
\begin{cases}
\partial_{t}u-{\frac{{\bm \nu}}{\en_1}}\,\Delta_{x}u + {\en_{1}}\,u\cdot \nabla_{x}\,u+\nabla_{x}p=\lambda_{0}u\,,\\[6pt]
\partial_{t}\,\vE-\frac{\gamma}{\en_{1}^{2}}\,\Delta_{x}\vE{+}\en_{1}\,u\cdot \nabla_{x}\vE=\dfrac{\lambda_{0}\,\bar{c}}{2(d+2)}\sqrt{\en_{1}}\,\vE\,,\\[8pt]
\mathrm{div}_{x}u=0, \qquad \varrho + \en_{1}\,\vE = 0\,,
\end{cases}
\end{equation*}
subject to initial conditions $(\varrho_{\mathrm{in}},u_{\mathrm{in}},\vE_{\mathrm{in}})$ given by
\begin{equation*}
u_{\mathrm{in}}=u(0)=\mathcal{P}u_{0}, \quad \vE_{\mathrm{in}}=\vE(0)=\frac{d}{d+2}\theta_{0}-\frac{2}{(d+2)\en_{1}}\varrho_{0}, \quad
\varrho_{\mathrm{in}}=\varrho(0)=-\en_{1}\vE_{\mathrm{in}}\,,
\end{equation*}
where $\mathcal{P}u_{0}$ is the Leray projection of $u_0$ on divergence-free vector fields. The viscosity $\bm{\nu} >0$ and heat conductivity $\gamma >0$ are explicit and $\lambda_{0} >0$ is the parameter appearing in \eqref{eq:scaling}. The parameter $\bar{c} >0$ is depending on the collision kernel $b(\cdot)$.
\end{theo}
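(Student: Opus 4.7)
The starting point is the uniform estimates furnished by Theorem \ref{theo:main-cauc1} and Corollary \ref{cor:h-relaxation}. Writing $h_{\e}=\ho_{\e}+\hu_{\e}$, we have $\ho_{\e}\to 0$ strongly in $L^{1}((0,T);\mathcal{E}_{1})$ (because of the prefactors $\e\lae$, $\e^{2}\lae$ multiplying $\hu_{\e}$ in \eqref{eq:estimh0}--\eqref{eq:estimh02}), while $\hu_{\e}$ is uniformly bounded in $L^{\infty}((0,T);\H)\cap L^{2}((0,T);\H_{1})$. From the Banach--Alaoglu theorem, up to extraction, $\hu_{\e}\rightharpoonup \bm{h}$ weakly-$\star$ in $L^{\infty}((0,T);\H)$ and weakly in $L^{2}((0,T);\H_{1})$; this is the precise ``peculiar weak convergence'' alluded to in the statement. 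Multiplying \eqref{eq:h1} by $\e^{2}$ and letting $\e\to 0$ (in the distributional sense), every term except $\mathscr{L}_{1}\hu_{\e}$ is $\mathrm{O}(\e)$, hence $\mathscr{L}_{1}\bm{h}=0$. This forces $\bm{h}(t,x,v)=\bigl(\varrho(t,x)+u(t,x)\cdot v+\tfrac{1}{2}\vE(t,x)(|v|^{2}-d\en_{1})\bigr)\M(v)$, with macroscopic fields obtained by integrating $\bm{h}$ against the collision invariants $\Psi_{j}$.

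Next, I derive the macroscopic equations. Testing \eqref{BE} against $1$, $v$, and $\frac{|v|^{2}-d\en_{1}}{\en_{1}\sqrt{2d}}$ and using that $\int v\cdot\nabla_{x}h_{\e}\,\Psi_{j}\,\d v$ can be rewritten as $\nabla_{x}\cdot\int v h_{\e}\Psi_{j}\,\d v$, one obtains local conservation laws for $\varrho_{\e}=\int h_{\e}\,\d v$ and $u_{\e}=\int h_{\e}v\,\d v$ and a temperature balance law for $\vE_{\e}$. The leading-order constraints (i.e.\ reading off the $\e^{-1}$ terms) give $\mathrm{div}_{x}u=0$ and the weak Boussinesq identity $\nabla_{x}(\varrho+\en_{1}\vE)=0$. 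To upgrade the latter to the \emph{strong} Boussinesq relation $\varrho+\en_{1}\vE=0$, one cannot simply use zero spatial averages (as in the elastic case) because of the dissipation of energy; instead, the forcing coming from the self-similar drift term $-\kappa_{\re(\e)}\nabla_{v}\cdot(vh_{\e})$ (which after projection produces a term proportional to $\lambda_{0}$ times the mass) fixes the constant and yields $\varrho+\en_{1}\vE=0$. This is precisely where Assumption \ref{hyp:re} is essential.

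The passage to the limit in the linear fluxes is straightforward from the $L^{2}_{t}\H_{1}$-bound. The \emph{key obstacle} is the passage to the limit in the nonlinear convection terms $u_{\e}\otimes u_{\e}$ and $u_{\e}\vE_{\e}$ appearing in the momentum and temperature equations. Following the strategy of \cite{BaGoLe1,BaGoLe2,golseSR}, I proceed in two stages: (i) establish strong compactness in space of the macroscopic moments by an averaging-lemma-type argument combined with the elliptic regularity furnished by $\hu_{\e}\in L^{2}_{t}\H_{1}$; (ii) establish the equicontinuity in time of the projected moments $\mathbb{P}_{0}\hu_{\e}$ thanks to Ascoli--Arzela applied to the evolution equation for $\mathbb{P}_{0}\hu_{\e}$ (which has an $L^{1}_{t}$-bounded right-hand side modulo fast oscillations in the acoustic modes). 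The divergence-free part and the incompressible/Boussinesq part of the constraints eliminate the acoustic waves in the limit, and one recovers the standard transport terms $\en_{1}u\cdot\nabla_{x}u$ and $\en_{1}u\cdot\nabla_{x}\vE$ by expanding $\Q_{1}(\hu_{\e},\hu_{\e})$ against the Burnett functions, exactly as in the elastic theory.

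Finally, I identify the source terms that distinguish the system from the classical NSF one. Projecting \eqref{BE} via $\Pi_{0}$ and using \eqref{pi0Ge}, the eigenvalue $-\bar{\lambda}_{\e}$ on the energy mode survives in the limit and produces the term $\frac{\lambda_{0}\bar{c}}{2(d+2)}\sqrt{\en_{1}}\,\vE$ in the temperature equation (with $\bar{c}$ explicitly computed from the integral of $\gamma_{b}$ against the Maxwellian, cf.\ Lemma \ref{lem:energy}). Similarly, the drift $-\kappa_{\re(\e)}\nabla_{v}\cdot(vh_{\e})$ contributes after momentum projection the forcing $\lambda_{0}u$. The viscosity $\bm\nu$ and heat conductivity $\gamma$ are computed in the standard way from the Burnett functions associated to $\mathscr{L}_{1}$. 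The initial data $(\varrho_{\mathrm{in}},u_{\mathrm{in}},\vE_{\mathrm{in}})$ are identified by letting $\e\to 0$ in the projection $\bm{\pi}_{0}h^{\e}_{\mathrm{in}}$ and applying the Leray projector to remove the acoustic part of $u_{0}$, which together with the Boussinesq constraint pins down $\varrho_{\mathrm{in}}$ and $\vE_{\mathrm{in}}$ as in the statement. The regularity $(\varrho,u,\vE)\in\mathcal{C}([0,T];\mathscr{W}_{m-2})\cap L^{1}((0,T);\mathscr{W}_{m})$ is inherited from the uniform bounds on $\hu_{\e}$ and standard parabolic regularity for the limiting NSF system.
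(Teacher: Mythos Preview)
Your overall architecture is sound and tracks the paper's proof closely, but there are two places where the argument you sketch would not close as written.

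\textbf{Nonlinear convection terms.} You propose to obtain strong compactness of the macroscopic moments via ``an averaging-lemma-type argument'' plus Ascoli--Arzel\`a, and then say the acoustic modes are ``eliminated by the constraints''. This is where the paper's proof is substantially more delicate. The bulk velocity $\bm{u}_{\e}$ splits as $\mathcal{P}\bm{u}_{\e}+(\mathbf{Id}-\mathcal{P})\bm{u}_{\e}$; the Leray part $\mathcal{P}\bm{u}_{\e}$ is shown to converge \emph{strongly} in $L^{1}((0,T);\W^{m-2,2}_{x})$ by an Aubin--Lions/Simon argument (Lemma \ref{prop:strongue}), using that $\partial_{t}\mathcal{P}\bm{u}_{\e}$ is bounded in $L^{1}_{t}\W^{m-2,2}_{x}$ thanks to $\la\bm{A}h_{\e}\ra=\la\bm{A}(\mathbf{Id}-\bm{\pi}_{0})h_{\e}\ra$ and Proposition \ref{lem:equi}. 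No averaging lemma is used. The gradient part $(\mathbf{Id}-\mathcal{P})\bm{u}_{\e}=\nabla_{x}\bm{U}_{\e}$, however, is only weakly convergent (to zero), so the quadratic term $\nabla\bm{U}_{\e}\otimes\nabla\bm{U}_{\e}$ does not pass to the limit by compactness alone. The paper handles this (Lemma \ref{lem:convect}) by observing that $(\bm{U}_{\e},\bm{\beta}_{\e})$ with $\bm{\beta}_{\e}=\frac{1}{d\en_{1}}\la|v|^{2}h_{\e}\ra$ satisfies a wave-type system \eqref{eq:system} with sources $\bm{F}_{\e},\bm{G}_{\e}\to 0$ strongly in $L^{1}_{t}L^{2}_{x}$, and then invokes the \emph{compensated compactness} result of Lions--Masmoudi (Proposition \ref{prop:LM}) to conclude $\mathcal{P}\mathrm{Div}_{x}(\nabla\bm{U}_{\e}\otimes\nabla\bm{U}_{\e})\to 0$ and $\mathrm{div}_{x}(\bm{\beta}_{\e}\nabla\bm{U}_{\e})\to 0$. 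Without this step the product $\bm{u}_{\e}\otimes\bm{u}_{\e}$ cannot be shown to converge to $u\otimes u$.

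\textbf{Strong Boussinesq relation.} Your claim that the drift term ``fixes the constant'' is not how the argument works. The paper first obtains the strengthened form $\varrho+\en_{1}(\vE-E(t))=0$ from mass conservation (Corollary \ref{cor:boussi}). Then, averaging the limiting energy equation over $\T^{d}$ yields the ODE $\frac{\d}{\d t}E(t)=\bar{c}_{0}E(t)$ (Proposition \ref{prop:temp}). The crucial point is $E(0)=0$: since $\M$ is chosen with the same global energy as $F^{\e}_{\mathrm{in}}$, one has $\int\la\tfrac12|v|^{2}h^{\e}_{\mathrm{in}}\ra\,\d x=-\e^{-1}\int\la\tfrac12|v|^{2}(G_{\re}-\M)\ra\,\d x$, and Lemma \ref{prop:psi} gives $\|G_{\re}-\M\|_{L^{1}_{v}(\m_{2})}\lesssim(1-\re)\lesssim\e^{2}$, so the right side is $\mathrm{O}(\e)\to 0$. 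Hence $E\equiv 0$ and the strong Boussinesq follows.

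A minor point: the forcing $\frac{\lambda_{0}\bar{c}}{2(d+2)}\sqrt{\en_{1}}\,\vE$ in the temperature equation is not read off from the eigenvalue $-\bar{\lambda}_{\e}$ via $\Pi_{0}$ as you suggest, but from the explicit computation of the limit of $\e^{-3}\mathscr{J}_{\re}(f_{\e},f_{\e})$ (the inelastic energy dissipation), see Lemma \ref{lemJ0}; the constant $\bar{c}$ arises from $\int\M(v)\M(v_{\ast})|v-v_{\ast}|^{3}\,\d v\,\d v_{\ast}$.
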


\subsection{Compactness and convergence}

We start this section recalling the expression for the spectral projection $\bm{\pi}_{0}$ onto the kernel $\mathrm{Ker}(\mathbf{L}_{1})$ of the linearized collision operator $\mathbf{L}_{1}$ \emph{seen as an operator acting in velocity only} on the space $L^{2}_{v}(\M^{-\frac{1}{2}})$. We recall that, with the notations of Theorem \ref{theo:G1e},
\begin{equation}\label{pi0}
\bm{\pi}_{0}(g) : =\sum_{i=1}^{d+2}\left(\int_{ \R^{d} }g\,\Psi_{i}\,\d v \right)\,\Psi_{i}\,\M\,,
\end{equation}
where $\Psi_{1}(v)=1$, $\Psi_{i}(v)=\frac{1}{\sqrt{\en_{1}}}v_{i-1}$ $(i=2,\ldots,d+1)$ and $\Psi_{d+2}(v)=\frac{|v|^{2}-d\en_{1}}{\en_{1}\sqrt{2d}}$.  Note that the difference with respect to the spectral projection $\mathbf{P}_{0}$ for the operator $\G_{1,\e}$ in \eqref{eq:P0} is that no spatial integration is performed. 

 {Consider now $h_\e=\ho_{\e}+\hu_{\e}$ the solution constructed in Section~\ref{sec:Cauchy}.}
One can prove the following estimate for time-averages of $(\mathbf{Id}-\bm{\pi}_{0})h_\e(\tau)$ in spaces which do not involve derivatives in the $v$-variable:
\begin{prop}\label{lem:equi} For any $0\leq \beta \leq m - 1$ there exists $C >0$ independent of $\e$ such that
\begin{equation}\label{strong-hyde2}
\int^{t_{2}}_{t_{1}}\| (\mathbf{Id}-\bm{\pi}_{0})h_\e(\tau) \|_{{L^{1}_{v}\W^{\beta,2}_{x}(\m_{q})}}\d \tau \leq C\,\e\,\sqrt{\mathcal{K}_0}\,	{\max\left\{\sqrt{t_{2} - t_{1}}, t_2-t_1\right\}}\end{equation}
holds true for any $0\leq t_{1} \leq  t_{2}$.
\end{prop}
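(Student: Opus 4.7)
My plan is to decompose $h_\e = \ho_{\e} + \hu_{\e}$ as in Section~\ref{sec:non} and to estimate the two contributions to $(\mathbf{Id}-\bm{\pi}_{0})h_\e$ separately. Since $\bm{\pi}_{0}$ is a bounded projection on $L^{1}_{v}\W^{\beta,2}_{x}(\m_{q})$ with an $\e$-independent norm (the $\Psi_{i}\M$ being Schwartz and $q\geq 2$), and since for $\beta\leq m$ the continuous embeddings $\E\hookrightarrow L^{1}_{v}\W^{\beta,2}_{x}(\m_{q})$ (trivial) and $L^{2}_{v}\W^{m,2}_{x}(\M^{-1/2}\langle v\rangle^{1/2})\hookrightarrow L^{1}_{v}\W^{\beta,2}_{x}(\m_{q})$ (by Cauchy--Schwarz in $v$ against the integrable Gaussian weight $\M^{1/2}\langle v\rangle^{-1/2}\m_{q}$ and Fubini) hold with constants independent of $\e$, it suffices to bound
$$
\int_{t_{1}}^{t_{2}}\|\ho_{\e}(\tau)\|_{\E}\,\d\tau\qquad\text{and}\qquad\int_{t_{1}}^{t_{2}}\|(\mathbf{Id}-\bm{\pi}_{0})\hu_{\e}(\tau)\|_{\widetilde{\H}_{1}}\,\d\tau,
$$
where I set $\widetilde{\H}:=L^{2}_{v}\W^{m,2}_{x}(\M^{-1/2})$ and $\widetilde{\H}_{1}:=L^{2}_{v}\W^{m,2}_{x}(\M^{-1/2}\langle v\rangle^{1/2})$.

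The first integral is handled by combining the pointwise bound $\|\ho_{\e}(t)\|_{\E}\lesssim\sqrt{\mathcal{K}_{0}}$ from Theorem~\ref{theo:main-cauc1} with the a priori estimate~\eqref{aprioriho}, which yields $\int_{0}^{+\infty}\|\ho_{\e}\|_{\E_{1}}\d\tau\lesssim\e^{2}\sqrt{\mathcal{K}_{0}}$ and hence the same bound for $\|\ho_{\e}\|_{\E}$ (since $\m_{q}\leq\m_{q+1}$). One deduces $\int_{t_{1}}^{t_{2}}\|\ho_{\e}\|_{\E}\d\tau\lesssim\sqrt{\mathcal{K}_{0}}\min\{t_{2}-t_{1},\e^{2}\}$, and a short case analysis comparing $t_{2}-t_{1}$ with $\e^{2}$ and with $1$ shows this to be $\leq C\,\e\sqrt{\mathcal{K}_{0}}\max\{\sqrt{t_{2}-t_{1}},t_{2}-t_{1}\}$.

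The heart of the proof is the second integral, where the key step is to establish
$$
\int_{0}^{+\infty}\|(\mathbf{Id}-\bm{\pi}_{0})\hu_{\e}(\tau)\|^{2}_{\widetilde{\H}_{1}}\,\d\tau\,\lesssim\,\e^{2}\,\mathcal{K}_{0},
$$
after which Cauchy--Schwarz immediately yields $\int_{t_{1}}^{t_{2}}\|(\mathbf{Id}-\bm{\pi}_{0})\hu_{\e}\|_{\widetilde{\H}_{1}}\d\tau\leq \e\sqrt{\mathcal{K}_{0}}\sqrt{t_{2}-t_{1}}$, which together with the $\ho_\e$-bound gives the claim. The $\e^{2}$ gain originates in the $\e^{-2}$ factor in front of the collision operator in $\G_{1,\e}=\e^{-2}\mathscr{L}_{1}-\e^{-1}v\cdot\nabla_{x}$. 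Since $\nabla_{x}$ commutes with both $\mathscr{L}_{1}$ and $v\cdot\nabla_{x}$, and the latter is skew-symmetric in $L^{2}_{v,x}(\M^{-1/2})$, a direct energy estimate for~\eqref{eq:h1} in $\widetilde{\H}$---crucially involving no $v$-derivatives, so avoiding the commutator $[\nabla_{v},v\cdot\nabla_{x}]$---gives
$$
\frac{1}{2}\frac{\d}{\d t}\|\hu_{\e}\|^{2}_{\widetilde{\H}}\,\leq\,-\frac{c}{\e^{2}}\|(\mathbf{Id}-\bm{\pi}_{0})\hu_{\e}\|^{2}_{\widetilde{\H}_{1}}+\e^{-1}\langle Q_{1}(\hu_{\e},\hu_{\e}),\hu_{\e}\rangle_{\widetilde{\H}}+\langle\A_{\e}\ho_{\e},\hu_{\e}\rangle_{\widetilde{\H}},
$$
with $c>0$ the $\e$-independent spectral gap of $\mathscr{L}_{1}$ on $\mathrm{Range}(\bm{\pi}_{0})^{\perp}$. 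The nonlinear term is dealt with by using $\bm{\pi}_{0}Q_{1}(\hu_{\e},\hu_{\e})=0$ (elastic conservation laws) to rewrite it as $\e^{-1}\langle Q_{1}(\hu_{\e},\hu_{\e}),(\mathbf{Id}-\bm{\pi}_{0})\hu_{\e}\rangle_{\widetilde{\H}}$, then classical bilinear Boltzmann estimates together with Young's inequality absorb a fraction of the dissipation; the leftover is controlled by $\sup_{t}\|\hu_{\e}\|^{2}_{\H}\int_{0}^{+\infty}\|\hu_{\e}\|^{2}_{\H_{1}}\d\tau\lesssim\mathcal{K}_{0}^{2}$ thanks to Corollary~\ref{h1-relaxation}. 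The source term is handled via $\|\A_{\e}\ho_{\e}\|_{\widetilde{\H}}\lesssim\e^{-2}\|\ho_{\e}\|_{\E}$ combined with $\int_{0}^{+\infty}\|\ho_{\e}\|_{\E_{1}}\d\tau\lesssim\e^{2}\sqrt{\mathcal{K}_{0}}$, giving an $\e$-independent contribution upon integration. Since $\hu_{\e}(0)=0$, integration in time then yields the desired $\e^{2}$ smallness. The main delicate point will be establishing the bilinear estimate that extracts the factor $\|(\mathbf{Id}-\bm{\pi}_{0})\hu_{\e}\|_{\widetilde{\H}_{1}}$ from $\langle Q_1(\hu_\e,\hu_\e),\cdot\rangle_{\widetilde{\H}}$ in a space involving no $v$-regularity, with constants uniform in $\e$ so that the $\e^{2}$-scaling is preserved.
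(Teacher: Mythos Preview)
Your approach is essentially the paper's: split $h_\e=\ho_\e+\hu_\e$, handle $\ho_\e$ via the a priori smallness $\int\|\ho_\e\|_{\E_1}\lesssim\e^2\sqrt{\mathcal{K}_0}$, and run an $L^2_v$-in-velocity energy estimate (no $v$-derivatives) on \eqref{eq:h1} to exploit the $\e^{-2}$-coercivity of $\mathscr{L}_1$ on $\mathrm{Range}(\mathbf{Id}-\bm{\pi}_0)$, then Cauchy--Schwarz in time. The only notable variation is that you test \eqref{eq:h1} against the full $\hu_\e$ (so the transport term vanishes by skew-symmetry, allowing you to work at regularity $m$ in $x$), whereas the paper tests against $(\mathbf{Id}-\bm{\pi}_0)\hu_\e$ and must bound the residual transport term $\e^{-1}\|v\cdot\nabla_x\hu_\e\|_{\tilde H_{-1}}$, which is why it restricts to $\beta\leq m-1$; both routes lead to the same dissipation inequality and the same conclusion.
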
 
\begin{proof} For a given $0\leq \beta \leq m-1$, we introduce the hierarchy of Hilbert spaces $$\tilde{H}_{s}=L^{2}_{v}\W^{\beta,2}_{x}(\M^{-\frac{1}{2}}\langle v\rangle^{\frac{s}{2}}), \qquad s \in \R\,,$$
setting simply $\tilde{H}:=\tilde{H}_{0}.$ Recall that $-\mathbf{L}_{1}$ is (better than) coercive on $(\mathbf{Id}-\bm{\pi_{0}})\tilde{H}$ (see {\cite{briant}} for instance) and denote by $\tilde{\mu}_{1}$ the coercivity constant, namely
$$-\langle \mathbf{L}_{1}(\mathbf{Id}-\bm{\pi}_{0})g\,,(\mathbf{Id}-\bm{\pi}_{0})g\rangle_{\tilde{H}} \geq \tilde{\mu}_{1}\|(\mathbf{Id}-\bm{\pi}_{0})g\|_{\tilde{H}_{1}}^{2}, \qquad g \in \tilde{H}_{1}.$$ 
In the space $\tilde{H}$, we can compute the inner product between $\partial_{t}\hu_\e$ and $(\mathbf{Id}-\bm{\pi}_{0})\hu_{\e}$ where we recall that $\hu_\e$ solves \begin{equation*}\label{eq:huG1e}
\partial_{t} \hu_\e =  {\G_{1,\e}\hu_\e} + \e^{-1}\Q_{1}(\hu_\e,\hu_\e) + \A_\e \ho_\e \,.
\end{equation*} We obtain, thanks to Cauchy-Schwarz inequality, that
\begin{align*}
&\frac{1}{2}\frac{\d}{\d t}\left\| (\mathbf{Id} - \bm{\pi}_{0})\hu_{\e} \right\|^2_{\tilde{H}}  + \frac{\tilde{\mu}_{1}}{\e^{2}}\| (\mathbf{Id}-\bm{\pi}_{0})\hu_{\e} \|^{2}_{ \tilde{H}_{1} } \\
&\quad \leq \langle \e^{-1}\big(\Q_{1}(\hu_{\e},\hu_{\e})  - (\mathbf{Id}-\bm{\pi}_{0})(v\cdot\nabla_{x}\hu_\e)\big)+  (\mathbf{Id}-\bm{\pi}_{0})(\A_\e \ho_\e) , (\mathbf{Id}-\bm{\pi}_{0})\hu_{\e}\rangle_{\tilde{H}}\\
&\quad \leq \e^{-1}\Big( \| \Q_{1}(\hu_{\e},\hu_{\e})\|_{\tilde{H}_{-1}} + \| v\cdot \nabla_{x} \hu_{\e} \|_{\tilde{H}_{-1}} \Big)\|  (\mathbf{Id}-\bm{\pi}_{0})\hu_{\e} \|_{\tilde{H}_{1}}\\
&\hspace{4.1cm} + \| \A_\e \ho_\e \|_{\tilde{H}}\|  (\mathbf{Id}-\bm{\pi}_{0})\hu_{\e} \|_{\tilde{H}}\,.
\end{align*}
We deduce easily then with a simple use of Young's inequality on the right-hand-side of this inequality that there is $C >0$ independent of $\e$ such that
\begin{multline*}
\frac{1}{2}\frac{\d}{\d t}\left\| (\mathbf{Id}-\bm{\pi}_{0})\hu_{\e} \right\|^2_{\tilde{H}}  + \frac{\tilde{\mu}_{1}}{2\e^{2}}\| (\mathbf{Id}-\bm{\pi}_{0})\hu_{\e} \|^{2}_{ \tilde{H}_{1} } \\
\leq C\big(\| \Q_{1}(\hu_{\e},\hu_{\e})\|^{2}_{\tilde{H}_{-1}} + \| v\cdot\nabla_{x} \hu_{\e} \|^{2}_{\tilde{H}_{-1}} +\e^{2} \| \A_\e \ho_\e \|^{2}_{\tilde{H}}\big)\,.\end{multline*}
Thus, for some different $C >0$, one has
\begin{multline*}
\frac{\d}{\d t}\left\| (\mathbf{Id}-\bm{\pi}_{0})\hu_{\e}(t) \right\|^2_{\tilde{H}}  + \frac{\tilde{\mu}_{1}}{\e^{2}}\| (\mathbf{Id}-\bm{\pi}_{0})\hu_{\e}(t) \|^{2}_{ \tilde{H}_{1} }\\ \leq C\big( \| \hu_\e(t) \|^{4}_{\H} + \| \hu_\e(t) \|^{2}_{\H} + \| \ho_\e(t)\|^{2}_{\E} \big)
\leq C\mathcal{K}_0\,\end{multline*}
where the last estimate comes from the results obtained in Sections \ref{sec:non} and~\ref{sec:Cauchy},  {$\mathcal{K}_0 \lesssim 1$} and also $\|\cdot\|_{\tilde{H}} \lesssim \|\cdot\|_{\H}.$ We integrate this inequality over $(t_{1},t_{2})$ to get
\begin{equation}\label{strong-comp}\begin{split}
\frac{\tilde{\mu}_{1}}{\e^{2}}\int^{t_{2}}_{t_{1}}\| (\mathbf{Id}-\bm{\pi}_{0})\hu_{\e}(t) \|^{2}_{ \tilde{H}_{1} }\d t &\leq \|(\mathbf{Id}-\bm{\pi}_{0})\hu_{\e}(t_{1}) \|^2_{\tilde{H}} + C\mathcal{K}_0(t_{2}-t_{1})\\
&\leq C\mathcal{K}_0\,\max(1,t_{2}-t_{1})\,,
\end{split}\end{equation}
where we used \eqref{exponential-e1/2}. 
Introduce now the space  $\tilde{E}=L^{1}_{v}\W^{\beta,2}_{x}(\m_{q})$.  Noticing that 
$$\| \cdot \|_{\tilde{E}}\lesssim\| \cdot\|_{\E} \qquad \text{ and } \quad \| \cdot \|_{\tilde{E}}\lesssim\| \cdot\|_{ \tilde{H} }$$
and writing that $h_\e(\tau)=\hu_\e(\tau)+\ho_\e(\tau)$, one has
\begin{equation}\label{eq:noT}
\int^{t_{2}}_{t_{1}}\| (\mathbf{Id}-\bm{\pi}_{0})h_{\e}(\tau) \|_{\tilde{E} } \, \d \tau \leq \int^{t_{2}}_{t_{1}}\Big(\| (\mathbf{Id}-\bm{\pi}_{0})\hu_\e(\tau) \|_{ \tilde{E} } + \| (\mathbf{Id}-\bm{\pi}_{0})\ho_{\e}(\tau) \|_{ \tilde{E} }\Big)\d \tau\,.
\end{equation}
Using Cauchy-Schwarz inequality
\begin{align*}
\int^{t_{2}}_{t_{1}}\| (\mathbf{Id}-\bm{\pi}_{0})&h_{\e}(\tau) \|_{\tilde{E} }\d \tau\\
&\lesssim \sqrt{t_2 - t_1}\, \Bigg(\bigg(\int^{t_2}_{t_1}\| (\mathbf{Id}-\bm{\pi}_{0})\hu_{\e}(\tau) \|^{2}_{ \tilde{H} }\, \d \tau \bigg)^{\frac 12} +  \bigg(\int^{t_{2}}_{t_{1}}\| \ho_\e(\tau) \|^{2}_{ \E }\,\d \tau\bigg)^{\frac 12}\Bigg)\,.
\end{align*}
From \eqref{strong-comp}, the first integral involving $\hu_\e$ is such that
$$\bigg(\int^{t_2}_{t_1}\| (\mathbf{Id}-\bm{\pi}_{0})\hu_{\e}(\tau) \|^{2}_{ \tilde{H} }\, \d \tau \bigg)^{\frac 12} \leq C \e \sqrt{{\mathcal{K}_{0}}}
\max\big\{ 1 , \sqrt{t_{2}-t_{1}} \big\}$$
whereas, to estimate the integral involving $\ho_\e$ we use that $\ho_\e \in \mathcal{X}_{0}$ as defined in Section \ref{sec:Cauchy} to get
\begin{equation*}
\begin{split}
\int^{t_{2}}_{t_{1}}\| \ho_\e(\tau) \|^{2}_{ \E }\,\d \tau &\leq \sup_{t_{1}\leq \tau\leq t_{2}}\|\ho_\e(\tau)\|_{\E}\int_{t_{1}}^{t_{2}}\|\ho_\e(\tau)\|_{\E_{1}}\d\tau\\
&\leq \e^{2}\vertiii{\ho_\e}_{0}^{2} \leq C\e^{2}\,\mathcal{K}_{0}.
\end{split}\end{equation*}
This proves the result.
\end{proof}

{\begin{nb} Notice that, if we are not interested in introducing a modulus of continuity in time for the above integral, we can directly deduce from \eqref{strong-comp} and \eqref{eq:noT} that
\begin{multline*}
\int_{0}^{T}\left\|\left(\mathbf{Id}-\bm{\pi}_{0}\right)h_{\e}(t)\right\|_{L^{1}_{v}\W^{\beta,2}_{x}\,(\m_{q+1})}\d t \\
\lesssim
\int_{0}^{T}\left\|\left(\mathbf{Id}-\bm{\pi}_{0}\right)\ho_{\e}(t)\right\|_{\E_{1}}\d t + \int_{0}^{T}\left\|\left(\mathbf{Id}-\bm{\pi}_{0}\right)\hu_{\e}(t)\right\|_{\tilde{H}_{1}}\d t \\
\lesssim \e\vertiii{\ho_{\e}}_{0} + \sqrt{T}\left(\int_{0}^{T}\left\|\left(\mathbf{Id}-\bm{\pi}_{0}\right)\hu_{\e}(t)\right\|_{\tilde{H}_{1}}^{2}\d t \right)^{\frac{1}{2}}\end{multline*}
which results in
\begin{equation}\label{eq:weight+1}
\int_{0}^{T}\left\|\left(\mathbf{Id}-\bm{\pi}_{0}\right)h_{\e}(t)\right\|_{L^{1}_{v}\W^{\beta,2}_{x}(\m_{q+1})}\d t \lesssim \e {\sqrt{\mathcal{K}_0}(1 + \sqrt{T})}\end{equation}
for any $0\leq \beta \leq m-1$.
\end{nb}}
We deduce the following convergence result:
\begin{theo}[Weak convergence]\label{theo:strong-conv} Fix $T >0$, and let 
$$\left\{h_{\e}\right\}_{\e}\subset L^1\big((0,T);{L^{1}_{v}\W_{x}^{m,2}(\m_{q})}\big)$$ be a sequence of solutions to the inelastic Boltzmann equation \eqref{BE}.  Then, with the splitting $h_{\e}=\ho_{\e}+\hu_{\e}$, up to extraction of a subsequence, one has
\begin{equation}\begin{cases}\label{eq:mode-conv}
\left\{\ho_{\e}\right\}_{\e} \text{converges to $0$ strongly  in } L^{1}((0,T)\,;\,{\E_1}) \\
\\
\left\{\hu_{\e}\right\}_{\e} \text{which converges to $\bm{h}$ weakly in } L^{2}\left((0,T)\,;\,L^{2}_{v}\W^{m,2}_x\big(\M^{-\frac{1}{2}}\big)\right)\end{cases}\end{equation}
where $\bm{h}=\bm{\pi}_{0}(\bm{h}).$ In particular, there exist 
$$\varrho \in L^{2}\left((0,T);\,\W^{m,2}_{x}(\T^{d})\right), \qquad \vE \in L^{2}\left((0,T);\,\W^{m,2}_x(\T^{d})\right),$$
$$u \in L^{2}\left((0,T);\;\left(\W^{m,2}_{x}(\T^{d})\right)^{d}\right)$$
such that
 \begin{equation}\label{eq:hlim}
\bm{h}(t,x,v)=\left(\varrho(t,x)+u(t,x)\cdot v + \frac{1}{2}\vE(t,x)(|v|^{2}-d\en_{1})\right)\M(v)\end{equation}
where $\M$ is the Maxwellian distribution introduced in \eqref{eq:max}.
\end{theo}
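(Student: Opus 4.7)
The plan is to combine the uniform a priori bounds obtained in Sections~\ref{sec:non} and~\ref{sec:Cauchy} with the time-averaged estimate of Proposition~\ref{lem:equi}, which quantifies the distance from $h_{\e}$ to the kernel of $\mathbf{L}_{1}$.

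First, the strong convergence of $\ho_{\e}$ to zero is essentially free of charge. The norm $\vertiii{\ho_{\e}}_{0}$ carries the factor $\e^{-2}$ in front of $\int_{0}^{t}\|\ho_{\e}(\tau)\|_{\E_{1}}\d\tau$, so the Cauchy estimate $\vertiii{\ho_{\e}}_{0}\leq C\sqrt{\mathcal{K}_0}$ (from Theorem~\ref{theo:main-cauc1}) gives immediately
$$
\int_{0}^{T}\|\ho_{\e}(\tau)\|_{\E_{1}}\d\tau \leq C\e^{2}\sqrt{\mathcal{K}_0}\xrightarrow[\e\to 0]{} 0,
$$
with no need for any extraction. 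Next, from $\vertiii{\hu_{\e}}_{1}\leq C\sqrt{\mathcal{K}_0}$ and the continuous embedding $\H\hookrightarrow L^{2}_{v}\W^{m,2}_{x}(\M^{-\frac{1}{2}})$, the family $\{\hu_{\e}\}$ is uniformly bounded in the reflexive Hilbert space $L^{2}((0,T);L^{2}_{v}\W^{m,2}_{x}(\M^{-\frac{1}{2}}))$, so Banach--Alaoglu provides a subsequence converging weakly to some limit $\bm{h}$.

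The key step, and the only genuine obstacle, is to identify $\bm{h}$ as an element of the kernel of the linearized operator $\mathbf{L}_{1}$. For this, I would invoke the refined estimate~\eqref{eq:weight+1} (with $\beta=m-1$), which yields
$$
\int_{0}^{T}\|(\mathbf{Id}-\bm{\pi}_{0})h_{\e}(\tau)\|_{L^{1}_{v}\W^{m-1,2}_{x}(\m_{q})}\d\tau = O(\e),
$$
so $(\mathbf{Id}-\bm{\pi}_{0})h_{\e}\to 0$ strongly in $L^{1}((0,T);L^{1}_{v}\W^{m-1,2}_{x}(\m_{q}))$. Combined with $\ho_{\e}\to 0$ strongly from the first step, this forces $(\mathbf{Id}-\bm{\pi}_{0})\hu_{\e}\to 0$ strongly in that same weaker space. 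On the other hand, the bounded linearity of $\mathbf{Id}-\bm{\pi}_{0}$ on $L^{2}_{v}\W^{m,2}_{x}(\M^{-\frac{1}{2}})$ transfers the weak convergence of $\hu_{\e}$ into the weak convergence of $(\mathbf{Id}-\bm{\pi}_{0})\hu_{\e}$ towards $(\mathbf{Id}-\bm{\pi}_{0})\bm{h}$. The subtle point is that the strong and weak convergences live in topologically unrelated function spaces; the resolution is to test both against smooth compactly supported functions on $(0,T)\times\T^{d}\times\R^{d}$, against which both convergences induce the same scalar pairing. Uniqueness of limits in the sense of distributions then forces $(\mathbf{Id}-\bm{\pi}_{0})\bm{h}=0$ almost everywhere, i.e. $\bm{h}=\bm{\pi}_{0}\bm{h}$.

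Finally, expanding $\bm{h}=\bm{\pi}_{0}\bm{h}$ on the orthogonal basis $\{\Psi_{i}\M\}_{i=1}^{d+2}$ of $\mathrm{Ker}(\mathbf{L}_{1})$ yields the representation~\eqref{eq:hlim}, with $(\varrho,u,\vE)$ given explicitly as the appropriate velocity moments of $\bm{h}$. The $L^{2}((0,T);\W^{m,2}_{x}(\T^{d}))$-regularity of these fields is inherited from the weak convergence in $L^{2}((0,T);L^{2}_{v}\W^{m,2}_{x}(\M^{-\frac{1}{2}}))$ by composition with the bounded linear functionals $g\mapsto\int_{\R^{d}}g\,\Psi_{i}\d v$, whose continuity into $L^{2}((0,T);\W^{m,2}_{x}(\T^{d}))$ follows from a Cauchy--Schwarz bound using the $\M^{\frac12}$-weight against the $\M^{-\frac12}$-weighted Gaussian tails.
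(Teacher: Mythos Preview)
Your proof is correct and follows the same overall architecture as the paper: strong convergence of $\ho_{\e}$ from the $\e^{-2}$ weighting in $\vertiii{\cdot}_{0}$, weak compactness of $\hu_{\e}$ in the Hilbert space $L^{2}((0,T);L^{2}_{v}\W^{m,2}_{x}(\M^{-1/2}))$, and identification of the limit in $\mathrm{Ker}(\mathbf{L}_{1})$.

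The only difference lies in the identification step. The paper invokes~\eqref{strong-comp} directly, which gives
\[
\int_{0}^{T}\bigl\|(\mathbf{Id}-\bm{\pi}_{0})\hu_{\e}(t)\bigr\|_{L^{2}_{v}\W^{m-1,2}_{x}(\M^{-1/2})}^{2}\d t \lesssim \e^{2},
\]
i.e.\ strong convergence of $(\mathbf{Id}-\bm{\pi}_{0})\hu_{\e}$ in an $L^{2}$-based space that embeds the weak convergence space (one $x$-derivative lost); the conclusion $(\mathbf{Id}-\bm{\pi}_{0})\bm{h}=0$ is then immediate by uniqueness of weak limits in that same Hilbert scale. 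You instead route through~\eqref{eq:weight+1}, which controls $(\mathbf{Id}-\bm{\pi}_{0})h_{\e}$ in an $L^{1}_{v}$-based space, then subtract off $(\mathbf{Id}-\bm{\pi}_{0})\ho_{\e}$ and reconcile the two limits via a distributional pairing. This is perfectly valid but introduces a small detour; the paper's choice keeps the entire identification argument inside the $L^{2}(\M^{-1/2})$ framework where the weak limit lives, avoiding the need to compare convergences across unrelated topologies. In fact, as Remark~\ref{nb:mode} points out, the paper's route yields the slightly sharper byproduct that $(\mathbf{Id}-\bm{\pi}_{0})\hu_{\e}\to 0$ \emph{strongly} in $L^{2}((0,T);L^{2}_{v}\W^{m-1,2}_{x}(\M^{-1/2}))$.
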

\begin{proof} Let $T >0$ be fixed. We use the notations of Proposition \ref{lem:equi}. The estimates obtained in Section \ref{sec:Cauchy}, using  the splitting $h_{\e}=\ho_{\e}(t)+\hu_{\e}(t)$  imply the following properties of the sequences of time-dependent vector-valued mappings $\{\hu_{\e}\}_{\e}, \{\ho_{\e}\}_{\e}$ and $\{h_{\e}\}_{\e}$:
\begin{equation}\label{eq:sequ}
\{\hu_{\e} \} \subset \big( L^{1} \cap L^{\infty} \big)\big((0,T);\H\big)\qquad \text{ is bounded } \end{equation}
\begin{equation}\label{eq:seqo}
\int_{0}^{T}\|\ho_{\e}(t)\|_{\E_{1}}\d t \lesssim \e^{2}\end{equation}
From \eqref{eq:sequ} and since $\|\cdot\|_{L^{2}_{v}\W^{m,2}_{x}(\M^{-\frac{1}{2}})} \lesssim \|\cdot\|_{\H}$, we deduce that
$$\{h_{\e}^{1}\} \text{ is bounded in } L^{2}\left((0,T)\,;\,L^{2}_{v}\W^{m,2}_{x}(\M^{-\frac{1}{2}})\right)$$
and therefore, admits a subsequence, say $\left\{\hu_{\e'}\right\}_{\e'}$ which converges weakly to some $\bm{h}$ in the space $L^{2}\big((0,T);L^{2}_{v}\W^{m,2}_{x}(\M^{-\frac{1}{2}})\big)$. This, combined with \eqref{eq:seqo} gives \eqref{eq:mode-conv}. From \eqref{strong-comp} we also have, for that subsequence, 
$$\lim_{\e'\to0}\int_{0}^{T}\left\|\left(\mathbf{Id}-\bm{\pi}_{0}\right)\hu_{\e'}(t)\right\|_{L^{2}_{v}\W^{m-1,2}_{x}(\M^{-\frac{1}{2}})}^{2}\d t=0$$
so that $\left(\mathbf{Id}-\bm{\pi}_{0}\right)\bm{h}=0$. This gives the result.\end{proof}
 \begin{nb}\label{nb:mode}
As observed in the previous proof, the convergence \eqref{eq:mode-conv} can be made even more precise since we also have
\begin{equation*}
\left\{\left(\mathbf{Id}-\bm{\pi}_{0}\right)\hu_{\e}\right\} \text{ converges strongly to $0$ in } L^{2}\left((0,T)\,;\,L^{2}_{v}\W^{m-1,2}_{x}(\M^{-\frac{1}{2}})\right).\end{equation*}
This means somehow that the only part of $h_{\e}$ which prevents the strong convergence towards $\bm{h}$ is~$\left\{\bm{\pi}_{0}\hu_{\e}\right\}_{\e}$. 
\end{nb}

Because of Theorem~\ref{theo:strong-conv} and for simplicity sake, from here on, we will write that our sequences converge even if it is true up to an extraction.

{The above mode of convergence implies the following convergence of velocity averages of $h_{\e}$.  For any function $f=f(t,x,v)$ we denote the velocity average by
$$\la f\,\ra=\int_{\R^{d}}f(t,x,v)\d v$$
recalling of course that this is a function depending on $(t,x)$. We have then the following:
\begin{lem}\label{lem:mode}
Let $\{h_{\e}\}$ be converging to $\bm{h}$ in the sense of Theorem \ref{theo:strong-conv}. Then, for any function $\psi=\psi(v)$ such that
$$|\psi(v)| \lesssim \m_{q}(v)$$
one has
\begin{equation}\label{eq:distr}
\la \psi\,h_{\e}\ra \longrightarrow \la \psi\,\bm{h}\ra \quad \text{ in } \mathscr{D}'_{t,x}\end{equation}
whereas
\begin{equation}\label{eq:distrQ1}
\la \psi\,\Q_{1}^{\bm{r}}(h_{\e},h_{\e})\ra \longrightarrow 0 \quad \text{ in } \mathscr{D}'_{t,x}\end{equation}
where we set $\Q_{1}^{\bm{r}}(h_{\e},h_{\e})=\Q_{1}(h_{\e},h_{\e})-\Q_{1}\left(\bm{\pi}_{0}h_{\e},\bm{\pi}_{0}h_{\e}\right).$
\end{lem}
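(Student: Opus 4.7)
For \eqref{eq:distr}, I would fix a test function $\varphi\in\mathscr{D}((0,T)\times\T^{d})$ and split $h_{\e}=\ho_{\e}+\hu_{\e}$, so that
\[
\int_{0}^{T}\!\!\int_{\T^{d}}\langle\psi h_{\e}\rangle\,\varphi\,\d x\,\d t
=\int_{0}^{T}\!\!\int_{\T^{d}\times\R^{d}}\psi(v)\,\ho_{\e}\,\varphi\,\d v\,\d x\,\d t
+\int_{0}^{T}\!\!\int_{\T^{d}\times\R^{d}}\psi(v)\,\hu_{\e}\,\varphi\,\d v\,\d x\,\d t.
\]
For the $\ho_{\e}$-contribution, the bound $|\psi|\lesssim\m_{q}$ together with $\W^{m,2}_{x}\hookrightarrow L^{2}_{x}$ gives the estimate by $\|\varphi\|_{\infty}\int_{0}^{T}\|\ho_{\e}(t)\|_{L^{1}_{v}L^{1}_{x}(\m_{q})}\d t$, and the latter vanishes by the strong convergence of $\ho_{\e}$ to $0$ in $L^{1}((0,T);\E)$ coming from Theorem~\ref{theo:strong-conv}. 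For the $\hu_{\e}$-contribution, observe that $\psi(v)\M^{1/2}(v)\varphi(t,x)\in L^{2}((0,T)\times\T^{d}\times\R^{d})$ because $\M^{1/2}$ absorbs the polynomial growth of $\psi$ and $\varphi$ is compactly supported. Writing the integrand as $(\psi\M^{1/2}\varphi)\cdot(\hu_{\e}\M^{-1/2})$, the weak convergence of $\hu_{\e}$ toward $\bm{h}$ in $L^{2}((0,T);L^{2}_{v}\W^{m,2}_{x}(\M^{-1/2}))$ yields convergence to $\int\psi\bm{h}\varphi$, which is the desired limit.

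For \eqref{eq:distrQ1}, I would exploit the bilinearity of $\Q_{1}$ and the decomposition $h_{\e}=\bm{\pi}_{0}h_{\e}+(\mathbf{Id}-\bm{\pi}_{0})h_{\e}$ to write
\[
\Q_{1}^{\bm r}(h_{\e},h_{\e})=\Q_{1}\!\left(\bm{\pi}_{0}h_{\e},(\mathbf{Id}-\bm{\pi}_{0})h_{\e}\right)
+\Q_{1}\!\left((\mathbf{Id}-\bm{\pi}_{0})h_{\e},\bm{\pi}_{0}h_{\e}\right)
+\Q_{1}\!\left((\mathbf{Id}-\bm{\pi}_{0})h_{\e},(\mathbf{Id}-\bm{\pi}_{0})h_{\e}\right),
\]
so that every term carries at least one factor $(\mathbf{Id}-\bm{\pi}_{0})h_{\e}$. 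The standard bilinear estimate of Theorem~\ref{theo:Ricardo}, together with $|\psi|\lesssim\m_{q}$, implies the pointwise (in $t,x$) bound
\[
\left|\langle\psi\,\Q_{1}(f,g)\rangle(t,x)\right|\lesssim\|f(t,x,\cdot)\|_{L^{1}_{v}(\m_{q+1})}\,\|g(t,x,\cdot)\|_{L^{1}_{v}(\m_{q+1})}.
\]
Testing against $\varphi(t,x)$ and using Cauchy--Schwarz in $x$ controls each contribution by
\[
\|\varphi\|_{\infty}\int_{0}^{T}\|f(t)\|_{L^{1}_{v}L^{2}_{x}(\m_{q+1})}\|g(t)\|_{L^{1}_{v}L^{2}_{x}(\m_{q+1})}\d t.
\]
Since $\bm{\pi}_{0}$ is a finite-rank projection sending any $h\in L^{1}_{v}(\m_{2})$ onto a combination of polynomials multiplied by $\M$, the quantity $\|\bm{\pi}_{0}h_{\e}(t)\|_{L^{1}_{v}L^{2}_{x}(\m_{q+1})}$ is uniformly bounded in $t$ and $\e$ thanks to the uniform bound on $h_{\e}$ in $L^{\infty}((0,T);\E)$ from Theorem~\ref{theo:main-cauc1}. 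On the other hand, estimate~\eqref{eq:weight+1} furnishes the strong convergence $(\mathbf{Id}-\bm{\pi}_{0})h_{\e}\to 0$ in $L^{1}((0,T);L^{1}_{v}\W^{m-1,2}_{x}(\m_{q+1}))$, hence in particular in $L^{1}((0,T);L^{1}_{v}L^{2}_{x}(\m_{q+1}))$. Combining the two, the first two crossed terms go to $0$; for the quadratic term, one also uses the uniform-in-$\e$ boundedness of $(\mathbf{Id}-\bm{\pi}_{0})h_{\e}$ in $L^{\infty}((0,T);L^{1}_{v}L^{2}_{x}(\m_{q+1}))$ to absorb one of the two factors, leaving the other to converge strongly to $0$ in $L^{1}$ in time.

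The routine ingredients---weak convergence against a $L^{2}$ Maxwellian-weighted test function, standard bilinear estimates for $\Q_{1}$, and the explicit form of $\bm{\pi}_{0}$---are all in place; the only delicate point I anticipate is the bookkeeping of the weights for the quadratic term $\Q_{1}((\mathbf{Id}-\bm{\pi}_{0})h_{\e},(\mathbf{Id}-\bm{\pi}_{0})h_{\e})$, where one must argue that the uniform bound provided by the Cauchy theory (in $L^{\infty}_{t}$ with weight $\m_{q}$) combines with the sharp $L^{1}_{t}$ strong convergence (with weight $\m_{q+1}$) to yield the decay; this is precisely what \eqref{eq:weight+1} was designed for.
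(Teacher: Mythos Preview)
Your approach is essentially the same as the paper's: split $h_\e=h^0_\e+h^1_\e$ for \eqref{eq:distr}, and for \eqref{eq:distrQ1} expand $\Q_1^{\bm r}$ bilinearly so that every term carries at least one factor $(\mathbf{Id}-\bm{\pi}_0)h_\e$, then combine a uniform-in-time bound on one factor with the $L^1_t$ smallness of $(\mathbf{Id}-\bm{\pi}_0)h_\e$ from \eqref{eq:weight+1}.

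There is one genuine bookkeeping issue in your treatment of the purely quadratic term. Your pointwise bilinear bound is the \emph{symmetric} one, putting $\m_{q+1}$ on both factors; after Cauchy--Schwarz in $x$ you then need $(\mathbf{Id}-\bm{\pi}_0)h_\e$ bounded in $L^\infty((0,T);L^1_vL^2_x(\m_{q+1}))$, which you claim but which is not provided by the Cauchy theory: Theorem~\ref{theo:main-cauc1} only gives $L^\infty_t$ control with weight $\m_q$ (the $\m_{q+1}$ bound is only in $L^1_t$). You flag this yourself in the last paragraph, but do not resolve it. The paper sidesteps the issue by estimating $\|\Q_1((\mathbf{Id}-\bm{\pi}_0)h_\e,(\mathbf{Id}-\bm{\pi}_0)h_\e)\|_{L^1_vL^2_x(\m_q)}$ directly via an \emph{asymmetric} bilinear bound that exploits the $\W^{m,2}_x$-algebra structure, namely
\[
\|\Q_1((\mathbf{Id}-\bm{\pi}_0)h_\e,(\mathbf{Id}-\bm{\pi}_0)h_\e)\|_{L^1_vL^2_x(\m_q)}\lesssim \|h_\e\|_{\E}\,\|(\mathbf{Id}-\bm{\pi}_0)h_\e\|_{L^1_v\W^{m-1,2}_x(\m_{q+1})},
\]
so that the $\m_{q+1}$-weight lands only on the factor that is small in $L^1_t$, while the other factor sits in $L^\infty_t$ with weight $\m_q$. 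Your route can be repaired in the same spirit by replacing the symmetric pointwise bound with an asymmetric one (which the standard convolution estimates for $\Q_1$ do provide), but as written the argument for the quadratic piece is incomplete.
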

}\begin{proof} Let $\psi$ be such that $|\psi(v)| \lesssim \m_{q}(v)$ and let $\varphi=\varphi(t,x) \in \mathcal{C}_{c}^{\infty}((0,T) \times \T^{d})$ be given. One computes
$$I_{\e}:=\int_{0}^{T}\d t\int_{\T^{d}}\varphi(t,x)\left(\la \psi\,h_{\e}\ra - \la \psi\,\bm{h}\ra\right)\d x=I_{\e}^{0}+I_{\e}^{1}$$
where
\begin{equation*}
I^{0}_{\e}=\int_{0}^{T}\d t\int_{\T^{d}}\varphi(t,x) \la \psi\,\ho_{\e}\ra\d x, \qquad 
I^{1}_{\e}=\int_{0}^{T}\d t\int_{\T^{d}}\varphi(t,x)\left(\la \psi\,\hu_{\e}\ra - \la \psi\,\bm{h}\ra\right)\d x.\end{equation*}
Because $|I_{\e}^{0}| \lesssim \|\varphi\|_{L^{\infty}_{t,x}}\int_{0}^{T}\|\ho_{\e}(t)\|_{L^{1}_{v,x}(\m_{q})}\d t   \lesssim \|\varphi\|_{L^{\infty}_{t,x}}\int_{0}^{T}\|\ho_{\e}(t)\|_{L^{1}_{v}L^{2}_{x}(\m_{q})}\d t$, we deduce from~\eqref{eq:mode-conv} that
$\lim_{\e\to0}I_{\e}^{0}=0.$ In the same way, one has
$$I_{\e}^{1}=\int_{0}^{T}\d t\int_{\T^{d}\times \R^{d}}\left(\psi(v)\,\M(v)\,\varphi(t,x)\right)\,\left(\hu_{\e}(t,x,v)-\bm{h}(t,x,v)\right)\d x\,\M^{-1}(v)\d v$$
and, since the mapping 
\begin{equation}\label{eq:varphiL2}
(t,x,v) \longmapsto \psi(v)\,\M(v)\,\varphi(t,x) \quad \text{ belongs to } L^{2}((0,T)\,;\,L^{2}_{v}\W^{m,2}_{x}(\M^{-\frac{1}{2}})),\end{equation} we deduce from \eqref{eq:mode-conv} that $\lim_{\e \to 0}I_{\e}^{1}=0.$ This proves \eqref{eq:distr}. 
To prove \eqref{eq:distrQ1}, one sets$$J_{\e}:=\int_{0}^{T}\d t\int_{\T^{d}}\varphi(t,x)\la \psi\,\Q_{1}^{\bm{r}}(h_{\e},h_{\e})\ra\d x.$$
One writes $J_{\e}=J^{1}_{\e}+J_{\e}^{2}$ where
\begin{multline*}
J_{\e}^{1}=\int_{0}^{T}\d t\int_{\T^{d}}\varphi(t,x) \la \psi\,\Q_{1}((\mathbf{Id}-\bm{\pi}_{0})h_{\e},(\mathbf{Id}-\bm{\pi}_{0})h_{\e})\ra \d x\\
J_{\e}^{2}=2\int_{0}^{T}\d t\int_{\T^{d}}\varphi(t,x) \la \psi\,\widetilde{\Q}_{1}((\mathbf{Id}-\bm{\pi}_{0})h_{\e},\bm{\pi}_{0}h_{\e})\ra \d x\\
\end{multline*}
where we recall that $\widetilde\Q_1$ is defined in~\eqref{def:Qalphatilde}. 
One has
\begin{align*}|J_{\e}^{1}| &\lesssim \|\varphi\|_{L^{\infty}_{t,x}}\int_{0}^{T}\left\|\Q_{1}\left((\mathbf{Id}-\bm{\pi}_{0})h_{\e},(\mathbf{Id}-\bm{\pi}_{0})h_{\e}\right)\right\|_{L^{1}_{v,x}(\m_{q})}\d t\\
&\lesssim \|\varphi\|_{L^{\infty}_{t,x}}\int_{0}^{T}\left\|\Q_{1}\left((\mathbf{Id}-\bm{\pi}_{0})h_{\e},(\mathbf{Id}-\bm{\pi}_{0})h_{\e}\right)\right\|_{L^{1}_{v}L^2_x(\m_{q})}\d t.
\end{align*}
Noticing that 
$$
\left\|\Q_{1}\left((\mathbf{Id}-\bm{\pi}_{0})h_{\e},(\mathbf{Id}-\bm{\pi}_{0})h_{\e}\right)
\right\|_{L^{1}_{v}L^2_{x}(\m_{q})} \lesssim \|h_\e\|_{\mathcal{E}} \|(\mathbf{Id}-\bm{\pi}_0)h_\e\|_{L^{1}_{v}\W^{m-1,2}_{x}(\m_{q+1})}.
$$
we deduce from \eqref{eq:weight+1} and the fact that $\sup_{t\in (0,T)}\|h_{\e}(t)\|_{\E} < \infty$ that
$$\lim_{\e \to0}|J_{\e}^{1}|=0.$$
We prove exactly in the same way that
$$\lim_{\e \to 0}|J_{\e}^{2}|=0.$$
This proves the result.\end{proof}

\noindent
Regarding the characterisation \eqref{eq:hlim} of the limit $\bm{h}(t)$, note that
$$\varrho(t,x)=\int_{\R^{d}}\bm{h}(t,x,v)\d v, \qquad u(t,x)=\frac{1}{\en_{1}}\int_{\R^{d}}v\,\bm{h}(t,x,v)\d v\,,$$
and
$$\varrho(t,x)+\en_{1}\vE(t,x)=\frac{1}{d\en_{1}}\int_{\R^{d}}|v|^{2}\bm{h}(t,x,v)\d v\,.$$

\begin{cor}\label{cor:boussi} With the notations of Theorem \ref{theo:strong-conv}, for any $T >0$, the limit $\bm{h}(t,x,v)$ given by \eqref{eq:hlim} satisfies the \emph{incompressibility condition}
\begin{equation}\label{eq:incomp}
\mathrm{div}_{x} u(t,x)=0\,, \qquad t \in (0,T)\,,
\end{equation}
and \emph{Boussinesq relation}
\begin{equation}\label{eq:boussi}
\nabla_{x}\left(\varrho+\en_{1}\vE\right)\,=0\,.
\end{equation}
As a consequence, introducing 
$$E(t)= \int_{\T^{d}}\,\vE(t,x)\d x, \qquad t \in (0,T)\,,$$
one has \emph{strengthened Boussinesq relation}
\begin{equation}\label{eq:boussi2}
\varrho(t,x) + \en_{1}\left(\vE(t,x)- E(t)\right)=0\,, \qquad \text{ for a.e } (t,x) \in (0,T)\times \T^{d}.\end{equation}
\end{cor}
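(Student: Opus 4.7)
The proof will proceed by passing to the limit in the local conservation laws of mass and momentum attached to equation \eqref{BE}. Writing the perturbation equation in conservative form and integrating in velocity against $\psi(v)=1$ and $\psi(v)=v$, one uses $\int_{\R^{d}}\Q_{\re(\e)}(h_{\e},h_{\e})\d v=\int_{\R^{d}}v\,\Q_{\re(\e)}(h_{\e},h_{\e})\d v=0$ (mass/momentum conservation of the collision operator) together with $\int_{\R^{d}}\mathscr{L}_{\re(\e)}h\,\d v=0$ and $\int_{\R^{d}}v\,\mathscr{L}_{\re(\e)}h\,\d v=\kappa_{\re(\e)}\langle v\,h\rangle$ (the latter coming only from the drift term $-\kappa_{\re(\e)}\nabla_{v}\cdot(vh)$, since $\int v\,\widetilde{\Q}_{\re(\e)}(G_{\re(\e)},h)\d v=0$). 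This yields the local conservation laws
\begin{equation*}
\e\,\partial_{t}\langle h_{\e}\rangle+\mathrm{div}_{x}\langle v\,h_{\e}\rangle=0,\qquad
\e\,\partial_{t}\langle v\,h_{\e}\rangle+\mathrm{div}_{x}\langle v\otimes v\,h_{\e}\rangle-\frac{\kappa_{\re(\e)}}{\e}\langle v\,h_{\e}\rangle=0.
\end{equation*}

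The first identity, tested against an arbitrary $\varphi\in\mathcal{C}^{\infty}_{c}((0,T)\times\T^{d})$ and using Lemma \ref{lem:mode} with $\psi=1$ and $\psi=v_{i}$, shows that $\e\partial_{t}\langle h_{\e}\rangle\to 0$ in $\mathscr{D}'_{t,x}$ and $\langle v\,h_{\e}\rangle\to\langle v\,\bm{h}\rangle$. A direct computation with $\bm{h}$ given by \eqref{eq:hlim} gives $\langle v\,\bm{h}\rangle=\en_{1}\,u$, so that $\mathrm{div}_{x}(\en_{1}u)=0$, i.e.\ the incompressibility condition \eqref{eq:incomp} holds. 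For the second identity, Assumption \ref{hyp:re} ensures $\kappa_{\re(\e)}/\e=(1-\re(\e))/\e=\mathrm{O}(\e)\to 0$, so the last term vanishes in distribution (since $\langle v h_{\e}\rangle$ converges); as above $\e\partial_{t}\langle v h_{\e}\rangle\to 0$ in $\mathscr{D}'_{t,x}$, and Lemma \ref{lem:mode} applied with $\psi(v)=v_{i}v_{j}$ (which satisfies $|\psi|\lesssim\m_{q}(v)$ since $q\geq 5\geq 2$) gives $\langle v\otimes v\,h_{\e}\rangle\to\langle v\otimes v\,\bm{h}\rangle$. A standard Gaussian computation using $\int v_{i}v_{j}\M\,\d v=\en_{1}\delta_{ij}$, $\int v_{i}v_{j}v_{k}\M\,\d v=0$ and $\int v_{i}v_{j}|v|^{2}\M\,\d v=(d+2)\en_{1}^{2}\delta_{ij}$ yields
\begin{equation*}
\langle v\otimes v\,\bm{h}\rangle=\en_{1}\bigl(\varrho+\en_{1}\vE\bigr)\mathrm{Id},
\end{equation*}
so $\mathrm{div}_{x}\langle v\otimes v\,\bm{h}\rangle=\en_{1}\nabla_{x}(\varrho+\en_{1}\vE)=0$, which is Boussinesq \eqref{eq:boussi}.

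To obtain \eqref{eq:boussi2}, \eqref{eq:boussi} implies that, for a.e.\ $t\in(0,T)$, there exists $c(t)\in\R$ with $\varrho(t,x)+\en_{1}\vE(t,x)=c(t)$. Integrating over $\T^{d}$ (of normalized measure) and recalling that mass conservation for $f_{\e}$ together with $\int G_{\re(\e)}\d v=1$ yields $\int_{\T^{d}\times\R^{d}}h_{\e}(t,x,v)\d v\d x=0$ for every $t\geq 0$, passing to the limit (in the sense of Lemma \ref{lem:mode} with $\psi=1$) gives $\int_{\T^{d}}\varrho(t,x)\d x=0$. Therefore $c(t)=\en_{1}E(t)$ and \eqref{eq:boussi2} follows.

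The main delicate point is the passage to the limit in the quadratic moment $\langle v\otimes v\,h_{\e}\rangle$: it requires that $\psi(v)=v_{i}v_{j}$ be dominated by $\m_{q}$ (hence the choice $q\geq 5$ made at the outset of Section \ref{sec:hydr}, which leaves room for the $\m_{q+1}$ weights appearing in the strong/weak convergence splitting of Theorem \ref{theo:strong-conv}), so that Lemma \ref{lem:mode} directly applies. All remaining steps are algebraic manipulations on the explicit form of $\bm{h}$ and elementary Gaussian integrals against $\M$.
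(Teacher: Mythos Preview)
Your proof is correct and follows essentially the same approach as the paper: derive the local mass and momentum balance laws from \eqref{BE}, pass to the distributional limit using the convergence results (the paper spells out the test-function argument directly, you invoke Lemma \ref{lem:mode}), identify the limiting moments via Gaussian integrals against the explicit form of $\bm{h}$, and finally use global mass conservation $\int_{\T^{d}}\varrho(t,x)\d x=0$ to upgrade \eqref{eq:boussi} to \eqref{eq:boussi2}. The only cosmetic difference is that the paper packages the second-moment tensor as $\bm{J}_{\e}=\en_{1}^{-1}\langle v\otimes v\,h_{\e}\rangle$, but the computations are identical.
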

\begin{proof} Set
$$\varrho_{\e}(t,x)=\int_{\R^{d}}h_{\e}(t,x,v)\d v, \qquad \bm{u}_{\e}(t,x)={\frac{1}{\en_{1}}}\int_{\R^{d}}v\,h_{\e}(t,x,v)\d v\,,$$
and, multiplying \eqref{BE} with $1$ and $v$ and integrating in velocity, we get
\begin{equation}
\label{mass0}
\e\partial_{t}\varrho_{\e}+\,{\en_{1}}\mathrm{div}_{x}\left(\bm{u}_{\e}\right)=0\,,
\end{equation}
\begin{equation}\label{bulk0}
\e\,\partial_{t}\bm{u}_{\e}+
\mathrm{Div}_{x}\left(\bm{J}_{\e}\right)=\frac{\kappa_{\re}}{\e}\bm{u}_{\e}\,,
\end{equation}
where $\bm{J}_{\e}(t,x)$ denotes the tensor
$$\bm{J}_{\e}(t,x):={\frac{1}{\en_{1}}}\int_{\R^{d}}v \otimes v\,h_{\e}(t,x,v)\d v\,,$$
since both $\mathbf{L}_{\re}$ and $\Q_{\re}$ conserve mass and momentum. The proof of \eqref{eq:incomp} is straightforward since $\e\partial_{t}\varrho_{\e} \to 0$ and $\mathrm{div}_{x}(\bm{u}_{\e})\to \mathrm{div}_{x}u$ in the distribution sense. Let us give the detail for the sake of completeness.
Multiplying \eqref{mass0} with a function $\varphi \in \mathcal{C}_{c}^{\infty}((0,T) \times \T^{d})$  and integrating over $(0,T) \times \T^{d}$ we get that
$$
-\int_{0}^{T}\d t\int_{\T^{d}}\nabla_{x}\varphi(t,x)\cdot \bm{u}_{\e}(t,x)\d x=\e\int_{0}^{T}\d t\int_{\T^{d}}\varrho_{\e}(t,x)\partial_{t}\varphi(t,x)\d x\,,
$$
which, taking the limit $\e \to 0$ and because $\varrho_{\e} \to \varrho$ and $\bm{u}_{\e} \to u$  in {$\mathscr{D}'_{t,x}$}, yields
$$\int_{0}^{T}\d t\int_{\T^{d}}\nabla_{x}\varphi(t,x)\cdot u(t,x)\d x=0\,, \qquad \forall \, \varphi \in \mathcal{C}_{c}^{\infty}((0,T) \times \T^{d}).$$
Since $u(t,x) \in {L^{2}((0,T)\,;\,(\W^{m,2}_{x}(\T^{d})^d))}$, the incompressibility condition \eqref{eq:incomp} holds true. In the same way, for any $i=1,\ldots,d $ and $\varphi \in \mathcal{C}_{c}^{\infty}((0,T) \times \T^{d})$, noticing that
$$\lim_{\e\to0^{+}}\e\int_{0}^{T}\bm{u}_{\e}^{i}\,\partial_{t}\varphi(t,x)\d x=\lim_{\e\to0^{+}}\frac{\kappa_{\re}}{\e}\int_{0}^{T}\d t\int_{\T^{d}}\bm{u}_{\e}^{i}(t,x)\varphi(t,x)\d x=0\,,$$
because $\kappa_{\re}=1-\re \leq C\e^{2}$ we get that 
$$0=\lim_{\e \to 0^{+}}\sum_{j=1}^{d}\int_{0}^{T}\d t \int_{\T^{d}}\bm{J}_{\e}^{i,j}(t,x)\partial_{x_{j}}\varphi(t,x)\d x=\sum_{j=1}^{d}\int_{0}^{T}\d t \int_{\T^{d}}\bm{J}_{0}^{i,j}(t,x)\partial_{x_{j}}\varphi(t,x)\d x\,,$$
where
$$\bm{J}_{0}^{i,j}(t,x)={\frac{1}{\en_{1}}}\int_{\R^{d}}v_{i}\,v_{j}\,\bm{h}(t,x,v)\d v=\left(\varrho(t,x)+\en_{1}\vE(t,x)\right)\delta_{ij}, \qquad i,j=1,\ldots,d.$$
Therefore, for any $i=1,\ldots,d$,
$$\int_{0}^{T}\d t\int_{\T^{d}}\left(\varrho(t,x)+\en_{1}\vE(t,x)\right)\,\partial_{x_{i}}\varphi(t,x)\d x=0\,, \qquad \forall\, \varphi \in \mathcal{C}_{c}^{\infty}((0,T) \times \T^{d}).$$
As before, this gives the Boussinesq relation \eqref{eq:boussi}. To show that Boussinesq relation can be strengthened, one notices that
$$\lim_{\e\to0^{+}}\int_{\T^{d}}\varrho_{\e}(t,x)\d x=\int_{\T^{d}}\varrho(t,x)\d x\, \quad \text{ in } \mathscr{D}'_{t}$$
from which we deduce, from the conservation of mass for \eqref{BE1}, that
$$\int_{\T^{d}}\varrho(t,x)\d x=0\,, \qquad \text{ for a.e. } t >0.$$
With the definition of $E(t)$, this implies that
$$\int_{\T^{d}}(\varrho(t,x)+\en_{1}\left(\vE(t,x)-E(t)\right))\d x=0\,, \qquad \text{ for a.e. } t >0\,,$$
and, this combined with \eqref{eq:boussi} yields the strengthened form \eqref{eq:boussi2}.\end{proof}

\begin{nb} Using Boussinesq relation together with \eqref{eq:hlim}, one checks without major difficulty that
\begin{equation}\label{eq:vnah} 
v \cdot \nabla_{x}\bm{h}\,=\,\M(v \otimes v):\nabla_{x}u + \frac{1}{2}\M\,\left(|v|^{2}-(d+2)\en_{1}\right)\,v\cdot \nabla_{x}\vE\,.
\end{equation} 
Then, using the incompressibility condition \eqref{eq:incomp} it holds that
$$\int_{\R^{d}}\Psi_{j}(v)\,v\cdot\nabla_{x}\bm{h}\,\d v=0, \qquad \forall j=1,\ldots,d+2\,,$$
that is, $\bm{\pi}_{0}(v\cdot \nabla_{x}\bm{h})=0$.  In particular, $v\cdot \nabla_{x}\bm{h} \in \mathrm{Range}(\mathbf{Id}-\bm{\pi}_{0})  \subset \mathrm{Range}(\mathbf{L}_{1})$ (see \cite[Eq. (6.34), p. 180]{kato}).\end{nb}

\subsection{Identification of the limit}\label{sec:hydro}

We aim here to fully characterise the limit $\bm{h}(t,x,v)$ obtained in Theorem \ref{theo:strong-conv}. To do so, we identify the limit equation satisfied by the macroscopic quantities
$(\varrho,u,\vE)$  in \eqref{eq:hlim} following the path of {\cite{BaGoLe2,golseSR}} and exploiting the fact that {the mode of convergence in Theorem \ref{theo:strong-conv} is stronger than the one of {\cite{BaGoLe2,golseSR}}}.  The regime of weak inelasticity is central in the analysis. 

We denote by $\{h_{\e}\}$ \emph{any} subsequence which converges to $\bm{h}$ in the above Theorem \ref{theo:strong-conv}.  We will see in the sequel, under some strong convergence assumption on the initial datum that all subsequences will share the same limit and, as such, the whole sequence will be convergent.

\medskip
\noindent
Recall \eqref{BE}
\begin{equation}\label{BE1}
\e\partial_{t}h_{\e}+v\cdot \nabla_{x}h_{\e} + \e^{-1}\kappa_{\re}\nabla_{v}\cdot (vh_{\e})=\e^{-1}\mathbf{L}_{\re}h_{\e}+\Q_{\re}(h_{\e},h_{\e})\,,
\end{equation}
under the scaling hypothesis that $\re=1-\lambda_{0}\e^{2}+o(\e^{2})$, $\lambda_{0}\geq0$ (see Assumption~\ref{hyp:re}).  Multiplying \eqref{BE1} respectively with $1$, $v$, $\tfrac{1}{2}|v|^{2}$, we observe that the quantities
$$\la h_{\e}\ra,\quad \la vh_{\e}\ra,\quad \la \tfrac{1}{2}|v|^{2}\ra, \quad \la \tfrac{1}{2}|v|^{2}v\,h_{\e}\ra\,, \quad \text{and} \quad \la v\otimes v\,h_{\e}\ra\,,$$
are important.  As in the classical elastic case, we write
 $$\la v\otimes v\,h_{\e}\ra=\la \bm{A}\,h_{\e}\ra + p_{\e}\mathbf{Id}, \qquad p_{\e}=\la \frac{1}{d}|v|^{2}\,h_{\e}\ra\,,$$
where we introduce the traceless tensor 
$$\bm{A}=\bm{A}(v)=v \otimes v -\frac{1}{d}|v|^{2}\mathbf{Id}.$$
Properties of this tensor are established in Appendix \ref{sec:hydro1}. In a more precise way, one obtains, after integrating \eqref{BE1} against $1$, $v_{i}$, $\frac{1}{2}\,|v|^{2}$,
\begin{subequations}\label{moments}
\begin{equation}\label{mass1}
\partial_{t}\la h_{\e}\ra +\frac{1}{\e}\mathrm{div}_{x}\la v\,h_{\e}\ra=0\,,
\end{equation}
\begin{equation}\label{bulk1}
\partial_{t} \la v\,h_{\e}\ra +\frac{1}{\e}
\mathrm{Div}_{x}\la \bm{A}\,h_{\e} \ra +  \frac{1}{\e}\nabla_{x}p_{\e}=\frac{\kappa_{\re}}{\e^{2}}\la v\,h_{\e}\ra\,,
\end{equation}
\begin{equation}\label{energy1}
\partial_{t}\la \tfrac{1}{2}|v|^{2}h_{\e}\ra+\frac{1}{\e}\mathrm{div}_{x}\,\la \tfrac{1}{2}|v|^{2}v\,h_{\e}\ra\,=\frac{1}{\e^{3}}\mathscr{J}_{\re}(f_{\e},f_{\e})+\frac{2\kappa_{\re}}{\e^{2}}\la \tfrac{1}{2}|v|^{2}h_{\e}\ra\,,
\end{equation}
where
$$\mathscr{J}_{\re}(f,f)=\int_{\R^{d}}\left[\Q_{\re}(f,f)-\Q_{\re}(G_{\re},G_{\re})\right]\,|v|^{2}\d v.$$
\end{subequations} 
Notice that, using \eqref{eq:hlim} as well as Corollary \ref{cor:boussi}, 
\begin{multline*}
\mathrm{div}_{x}\la v\,h_{\e}\ra \longrightarrow {\en_{1}}\mathrm{div}_{x}u=0,\qquad \la \tfrac{1}{2}|v|^{2}h_{\e}\ra \longrightarrow \frac{d\en_{1}}{2}\left(\varrho+\en_{1}\vE\right),\\
\nabla_{x}p_{\e} \longrightarrow \frac{1}{d}\nabla_{x}\la |v|^{2}\bm{h}\ra =\en_{1}\nabla_{x}(\varrho+\en_{1}\vE)=0\,,\\
\la \bm{A}\,h_{\e}\ra \longrightarrow \la \bm{A}\,\bm{h}\ra=0\,, \\
\la \tfrac{1}{2}|v|^{2}v_{j}\,h_{\e}\ra\,\longrightarrow \la \tfrac{1}{2}|v|^{2}v_{j}\,\bm{h}\ra=\tfrac{1}{2}u_{j}\la |v|^{2}v_{j}^{2}\M\ra=\frac{d+2}{2}\en_{1}^{2}u_{j}, \qquad j=1,\ldots,d\,,
\end{multline*}
where all the limits hold in {$\mathscr{D}'_{t,x}$} and where $\la \bm{A}\bm{h}\ra=0$ since $\bm{h} \in \mathrm{Ker}(\mathbf{L}_{1})$ and $\bm{A} \in \mathrm{Range}(\mathbf{I}-\bm{\pi}_{0})$. Moreover, under the above scaling 
$$\frac{\kappa_{\re}}{\e^{2}}\la v\,h_{\e}\ra \longrightarrow {\en_{1}}\lambda_0u\,, \quad \text{in} \quad {\mathscr{D}'_{t,x}}\,,$$
since $\lambda_0=\lim_{\e \to 0^{+}}\e^{-2}\kappa_{\re}$. The limit of $\e^{-3}\mathscr{J}_{\re}(f_{\e},f_{\e})$ is handled in the following lemma.
\begin{lem}\label{lemJ0}
It holds that  
$$\frac{1}{\e^{3}}\mathscr{J}_{\re}(f_{\e},f_{\e}) \longrightarrow \mathcal{J}_{0}\qquad \text{ in } \mathscr{D}'_{t,x}\,,$$
where
$$\mathcal{J}_{0}(t,x)=-\lambda_{0}\,\bar{c}\,\en_{1}^{\frac{3}{2}}\left(\varrho(t,x)+\frac{3}{4}\en_{1}\,\vE(t,x)\right)$$
for some positive constant $\bar{c}$ depending only on the angular kernel $b(\cdot)$ and $d$. In particular, 
$$\mathcal{J}_{0} =-\lambda_{0}\,\bar{c}\,\en_{1}^{\frac{5}{2}}\left(E(t)-\frac{1}{4}\vE(t,x)\right).$$
\end{lem}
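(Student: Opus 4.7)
My plan is to expand $\mathscr{J}_{\re}(f_{\e},f_{\e})$ explicitly using the dissipation-of-energy identity and then pass to the limit using the mode of convergence established in Theorem \ref{theo:strong-conv} together with Lemma \ref{lem:mode}. The key algebraic input is the analogue of \eqref{eq:Dre} at the local level: testing \eqref{co:weak} with $\psi(v)=|v|^{2}$ and using the energy-loss identity \eqref{eq:lossenergy} yields
\begin{equation*}
\int_{\R^{d}}\Q_{\re}(g,g)(v)|v|^{2}\d v=-\frac{1-\re^{2}}{4}\gamma_{b}\iint_{\R^{2d}} g(v)g(v_{*})|v-v_{*}|^{3}\d v\d v_{*}.
\end{equation*}
Substituting $f_{\e}=G_{\re}+\e\,h_{\e}$, expanding the bilinearity and exploiting the symmetry in $(v,v_{*})$ gives
\begin{equation*}
\frac{1}{\e^{3}}\mathscr{J}_{\re}(f_{\e},f_{\e})=-\frac{1-\re^{2}}{\e^{2}}\frac{\gamma_{b}}{2}\int_{\R^{d}}\Phi_{\re}(v)h_{\e}(v)\d v-\frac{1-\re^{2}}{\e}\frac{\gamma_{b}}{4}\iint h_{\e}(v)h_{\e}(v_{*})|v-v_{*}|^{3}\d v\d v_{*},
\end{equation*}
where I set $\Phi_{\re}(v):=\int|v-v_{*}|^{3}G_{\re}(v_{*})\d v_{*}$ and $\Phi_{0}(v):=\int|v-v_{*}|^{3}\M(v_{*})\d v_{*}$.

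Next I would pass to the limit $\e\to 0$ in each term. Under Assumption \ref{hyp:re}, $(1-\re^{2})/\e^{2}\to 2\lambda_{0}$ while $(1-\re^{2})/\e\to 0$. For the quadratic term, Minkowski's inequality and the uniform bound $\sup_{t}\|h_{\e}(t)\|_{\E}\lesssim\sqrt{\mathcal{K}_{0}}$ yield
\begin{equation*}
\Bigl\|\iint h_{\e}(v)h_{\e}(v_{*})|v-v_{*}|^{3}\d v\d v_{*}\Bigr\|_{L^{\infty}_{t}L^{1}_{x}}\lesssim \sup_{t}\|h_{\e}(t)\|_{L^{1}_{v}L^{2}_{x}(\m_{3})}^{2}\lesssim \mathcal{K}_{0},
\end{equation*}
so multiplication by $(1-\re^{2})/\e=O(\e)$ sends this term to $0$ in $\mathscr{D}'_{t,x}$. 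For the linear term I would write $\int\Phi_{\re}h_{\e}\d v=\int\Phi_{0}h_{\e}\d v+\int(\Phi_{\re}-\Phi_{0})h_{\e}\d v$. Since $|\Phi_{0}(v)|\lesssim\langle v\rangle^{3}\lesssim\m_{q}(v)$ (recall $q\geq 5$), Lemma \ref{lem:mode} applied with $\psi=\Phi_{0}$ gives $\int\Phi_{0}h_{\e}\d v\to\int\Phi_{0}\bm{h}\d v$ in $\mathscr{D}'_{t,x}$, while the pointwise estimate $|\Phi_{\re}(v)-\Phi_{0}(v)|\lesssim\m_{3}(v)\|G_{\re}-\M\|_{L^{1}_{v}(\m_{3})}$ together with Lemma \ref{prop:psi} and the uniform bound on $h_{\e}$ send the remainder to $0$ in $L^{\infty}_{t}L^{1}_{x}$.

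The remaining step is the explicit evaluation of $\int\Phi_{0}(v)\bm{h}(v)\d v$. Since $\M$ is radial, $\Phi_{0}$ depends only on $|v|$, so the odd term $u\cdot v\,\M$ in the decomposition \eqref{eq:hlim} does not contribute and
\begin{equation*}
\int\Phi_{0}\bm{h}\d v=\varrho\,I_{0}+\tfrac{\vE}{2}\bigl(I_{2}-d\en_{1}I_{0}\bigr),\quad I_{0}:=\iint|v-v_{*}|^{3}\M\M_{*}\d v\d v_{*},\quad I_{2}:=\iint|v|^{2}|v-v_{*}|^{3}\M\M_{*}\d v\d v_{*}.
\end{equation*}
Using the change of variables $V=v+v_{*}$, $U=v-v_{*}$ factorises $\M(v)\M(v_{*})\d v\d v_{*}$ as $\widetilde{\M}(V)\widetilde{\M}(U)\d V\d U$ for a centred Gaussian $\widetilde{\M}$ of covariance $2\en_{1}\mathbf{Id}$; writing $|v|^{2}=(|V|^{2}+2V\cdot U+|U|^{2})/4$ kills the cross term, and the identity $E[|Z|^{k+2}]/E[|Z|^{k}]=d+k$ for $Z\sim\mathcal{N}(0,\mathbf{Id}_{d})$ gives $I_{2}=\tfrac{2d+3}{2}\en_{1}I_{0}$, hence $I_{2}-d\en_{1}I_{0}=\tfrac{3}{2}\en_{1}I_{0}$. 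Setting $\bar{c}:=\gamma_{b}I_{0}/\en_{1}^{3/2}$ (so $\bar{c}=\gamma_{b}\cdot 8\Gamma((d+3)/2)/\Gamma(d/2)$), one obtains $\mathcal{J}_{0}=-\lambda_{0}\bar{c}\,\en_{1}^{3/2}\bigl(\varrho+\tfrac{3}{4}\en_{1}\vE\bigr)$. The alternative form follows immediately from the strengthened Boussinesq relation \eqref{eq:boussi2}, which reads $\varrho=\en_{1}(E-\vE)$, giving $\varrho+\tfrac{3}{4}\en_{1}\vE=\en_{1}(E-\tfrac{1}{4}\vE)$.

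The only delicate step is the convergence of the linear term: although $h_{\e}$ converges to $\bm{h}$ only in the weak sense of Theorem \ref{theo:strong-conv} and the natural test function $\Phi_{\re}$ grows like $|v|^{3}$ \emph{and} depends on $\e$, the splitting $\Phi_{\re}=\Phi_{0}+(\Phi_{\re}-\Phi_{0})$ reduces the matter to the $\e$-independent weight $\Phi_{0}\in L^{\infty}(\m_{q}^{-1})$ (for which Lemma \ref{lem:mode} applies) together with the quantitative rate $\|G_{\re}-\M\|_{L^{1}_{v}(\m_{3})}\lesssim 1-\re$ from Lemma \ref{prop:psi}. Everything else reduces to elementary Gaussian moment computations.
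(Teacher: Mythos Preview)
Your proof is correct and follows essentially the same approach as the paper: expand $\mathscr{J}_{\re}$ via the energy-dissipation identity, kill the quadratic term by the extra factor of $\e$, pass to the limit in the linear term by combining the weak convergence of $h_{\e}$ (your Lemma~\ref{lem:mode}) with the strong convergence $G_{\re}\to\M$, and then evaluate the resulting Gaussian integrals. The only cosmetic differences are that the paper cites \cite[Lemma~A.1]{MiMo3} for the identities $I_{0}=\en_{1}^{3/2}a$ and $I_{2}=\tfrac{2d+3}{2}\en_{1}^{5/2}a$ (with $a=2\sqrt{2}(2\pi)^{-d/2}\int e^{-|v|^{2}/2}|v|^{3}\d v$, which indeed equals your $8\Gamma((d+3)/2)/\Gamma(d/2)$), whereas you derive them directly via the change of variables $(v,v_{*})\mapsto(V,U)$; your splitting $\Phi_{\re}=\Phi_{0}+(\Phi_{\re}-\Phi_{0})$ makes explicit what the paper summarises as ``$G_{\re}\to\M$ strongly''.
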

\begin{proof} We recall, see \eqref{eq:Dre}, that 
$${\int_{\R^{d}}|v|^{2}\Q_{\re}(g,f)\d v=-(1-\re^{2})\frac{\gamma_{b}}{4}\int_{\R^{d}\times\R^{d}}f(v)g(v_{\ast})\,|v-v_{\ast}|^{3}\d v\d v_{\ast}}$$
where $\gamma_{b}=\frac{1}{2}\int_{\S^{d-1}}\left(1-\widehat{u}\cdot \sigma\right)b(\widehat{u}\cdot\sigma)\d \sigma$.
Thus, for $f_{\e}=G_{\re}+\e\,h_{\e}$ we obtain
\begin{multline}\label{eq:Jalp}
\frac{1}{\e^{3}}\mathscr{J}_{\re}(f_{\e},f_{\e})=-\frac{\gamma_{b}}{4}\frac{1-\re^{2}}{\e^{2}}\bigg(\int_{\R^{d}\times \R^{d}}\left[h_{\e}(v)G_{\re}(v_{\ast})+h_{\e}(v_{\ast})G_{\re}(v)\right]|v-v_{\ast}|^{3}\d v\d v_{\ast}\\
+\e\int_{\R^{d}\times\R^{d}}h_{\e}(v)h_{\e}(v_{\ast})|v-v_{\ast}|^{3}\d v\d v_{\ast}\bigg)\,.
\end{multline}
Recall that $\lim_{\e\to0^{+}}\frac{1-\re}{\e^{2}}=\lambda_{0}$. It is clear {from Minkowski's integral inequality} that the $\W^{m,2}_{x}(\T^{d})$ norm of the last term in the right-side is controlled by $\|h_{\e}\|_{\E}^{2}$.  
Theorem \ref{theo:main-cauc1} implies that the last term in \eqref{eq:Jalp} is converging to $0$ in $L^{1}((0,T);\W^{m,2}_{x}(\T^{d}))$. One handles the first term in the right-side using Theorem \ref{theo:strong-conv} and the fact that $G_{\re}\to \M$ strongly.  Details are left to the reader.
We then easily obtain the convergence of $\e^{-3}\mathscr{J}_{\re}(f_{\e},f_{\e})$ towards
$$\mathcal{J}_{0}:=-\lambda_{0}\,\gamma_{b}\,\int_{\R^{d}\times\R^{d}}\bm{h}(t,x,v)\M(v_{\ast})|v-v_{\ast}|^{3}\d v\d v_{\ast}.$$
The expression of $\mathcal{J}_{0}$ is then obtained by direct inspection from \eqref{eq:hlim} with 
$$\bar{c}=\gamma_{b}\,a,\qquad a=\frac{2\sqrt{2}}{(2\pi)^{\frac{d}{2}}}\int_{\R^{d}}\exp\left(-\frac{1}{2}|v|^{2}\right)|v|^{3}\d v\,,$$ 
where
 \begin{multline*}
\int_{\R^{2d}}\M(v)\M(v_{\ast})|v-v_{\ast}|^{3}\d v\d v_{\ast}=\en_{1}^{\frac{3}{2}}a\,,\\
\int_{\R^{2d}}\M(v)\M(v_{\ast})|v|^{2}|v-v_{\ast}|^{3}\d v\d v_{\ast}= \frac{2d+3}{2}\,\en_{1}^{\frac{5}{2}}\,a.
\end{multline*}
We refer to \cite[Lemma A.1]{MiMo3} for these identities. The second part of the lemma follows from the strengthened Boussinesq relation \eqref{eq:boussi2}.
\end{proof}

\subsection{About the equations of motion and temperature} \label{subsec:mottemp}
We give here some preliminary result aiming at deriving the equations satisfied by the bulk velocity $u(t,x)$ and $\vE(t,x)$. As in {\cite{BaGoLe2,golseSR}}, in order to investigate the limiting behaviour of the system \eqref{moments} as $\e\to0^{+}$, we need to investigate the limit in the distributional sense of 
 \begin{equation}\label{eq:Avh}
\e^{-1}\mathrm{Div}_{x}\la \bm{A}\,h_{\e} \ra=-\e^{-1}\mathrm{Div}_{x}\la \phi\,\mathbf{L}_{1}h_{\e} \ra \end{equation}
and
\begin{equation}\label{eq:bvh}
\e^{-1}\mathrm{div}_{x}\la \bm{b}\,h_{\e}\ra=-\e^{-1}\mathrm{div}_{x}\la \psi\,\mathbf{L}_{1}h_{\e}\ra\end{equation}
where $\phi$ and $\psi$ are defined in Lemma~\ref{lem:phipsi} and where we used that $\mathbf{L}_1$ is selfadjoint in~$L^2_v(\mathcal{M}^{-1/2})$. \\

{Since the limiting vector-field $u$ is divergence-free, it turns out enough to investigate only the limit of $\mathcal{P} \mathrm{Div}_{x}\la \e^{-1}\bm{A}\,h_{\e}\ra$ where we recall that $\mathcal{P}$ is the Leray projection on divergence-free vector fields\footnote{Recall that, for a vector field $\bm{u}$, $\mathcal{P}\bm{u}=\bm{u}-\nabla\,\Delta^{-1}(\nabla \cdot \bm{u})$. On the torus, it can be defined via Fourier expansion, if $\bm{u}=\sum_{k \in \Z^{d}}\bm{a}_{k}e^{i k \cdot x}$, $\bm{a}_{k} \in \C^{d}$, then $\mathcal{P}\bm{u}=\sum_{k \in \Z^{d}}\left(\mathbf{I}_{d}-\frac{k\otimes k}{|k|^{2}}\right)\bm{a}_{k}e^{i k \cdot x}.$} .}
We begin with a strong compactness result 
  
\begin{lem}\label{prop:strongue} Introduce
\begin{multline*}
{u_\e(t,x)=\exp\left(-t\frac{\kappa_{\re}}{\e^{2}}\right)\mathcal{P}\bm{u}_{\e}(t,x)}\,\\
\text{and}  \, \, \, \vartheta_{\e}(t,x)=\la \tfrac{1}{2}\big( |v|^{2}-(d+2)\en_{1} \big)h_{\e}\ra\,, \, \, t \in (0,T),\,\;x \in \T^{d}
.\end{multline*}
Then, {$\{\partial_{t}u_{\e}\}_{\e}$ and $\{\partial_{t}\vartheta_{\e}\}_{\e}$ are bounded in $L^{1}\left((0,T)\,;\, {\W^{m-2,2}_{x}(\T^{d})}\right)$.}
Consequently, up to the extraction of a subsequence,
\begin{equation}\label{eq:Puest}
\lim_{\e \to 0}\int_{0}^{T}\left\|\mathcal{P}\bm{u}_{\e}(t)-u(t)\right\|_{ {\W^{m-2,2}_{x}(\T^{d})}}\d t=0\end{equation}
and
\begin{equation}\label{eq:varthstr}
\lim_{\e\to0^{+}}\int_{0}^{T}\left\|\vartheta_{\e}(t,\cdot)-\vartheta_{0}(t,\cdot)\right\|_{ {\W^{m-2,2}_{x}(\T^{d})}}\d t=0\end{equation}
where 
$$\vartheta_{0}(t,x)=\la\tfrac{1}{2}(|v|^{2}-(d+2)\en_{1})\bm{h}\ra=\frac{d\en_{1}}{2}\left(\varrho(t,x)+\en_{1}\vE(t,x)\right)-\frac{d+2}{2}\en_{1}\varrho(t,x)\,.$$ In other words, $\{\mathcal{P}\bm{u}_{\e}\}_{\e}$ converges strongly to $u$ in $L^{1}\left((0,T)\,;\, {\W^{m-2,2}_{x}(\T^{d})}\right)$ and $\{\vartheta_{\e}\}_{\e}$ converges strongly to $\vartheta_{0}$ in $L^{1}\left((0,T)\,;\, {\W^{m-2,2}_{x}(\T^{d})}\right).$ 
\end{lem}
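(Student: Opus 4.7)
The plan is to derive suitable evolution equations for $\mathcal{P}\bm{u}_{\e}$ and $\vartheta_{\e}$ whose right-hand sides are uniformly small in $\e$ (in a negative-index Sobolev norm), then invoke an Aubin--Lions--Simon compactness argument together with the uniform bound in $L^{\infty}((0,T);\W^{m,2}_{x})$ that is inherited from $\|h_{\e}\|_{\E}$. Starting from the momentum balance~\eqref{bulk1}, I would apply the Leray projection $\mathcal{P}$, which annihilates the pressure gradient, and then use the integrating factor $\exp(-t\kappa_{\re}/\e^{2})$ to eliminate the linear drift coming from $\e^{-2}\kappa_{\re}\bm{u}_{\e}$, arriving at
\begin{equation*}
\partial_{t}u_{\e}=-\frac{1}{\e}\exp\left(-t\tfrac{\kappa_{\re}}{\e^{2}}\right)\mathcal{P}\,\mathrm{Div}_{x}\la \bm{A}\,h_{\e}\ra\,.
\end{equation*}
For $\vartheta_{\e}$, subtracting $\tfrac{d+2}{2}\en_{1}$ times the mass equation~\eqref{mass1} from the energy equation~\eqref{energy1} yields
\begin{equation*}
\partial_{t}\vartheta_{\e}+\frac{1}{\e}\mathrm{div}_{x}\la \bm{b}\,h_{\e}\ra=\frac{1}{\e^{3}}\mathscr{J}_{\re}(f_{\e},f_{\e})+\frac{\kappa_{\re}}{\e^{2}}\la |v|^{2}h_{\e}\ra\,,
\end{equation*}
with $\bm{b}(v)=\tfrac12(|v|^{2}-(d+2)\en_{1})v$. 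The key structural fact I would check by direct moment computation against $\{\Psi_{i}\M\}_{i=1}^{d+2}$ is that both $\bm{A}$ and $\bm{b}$ lie in $\mathrm{Range}(\mathbf{Id}-\bm{\pi}_{0})$, so the two fluxes reduce to moments of $(\mathbf{Id}-\bm{\pi}_{0})h_{\e}$.

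The next step is to estimate these fluxes uniformly in $\e$. Since $|\bm{A}(v)|+|\bm{b}(v)|\lesssim \m_{3}(v)\leq \m_{q}(v)$ (because $q\geq 5$), one has
\begin{equation*}
\|\la \bm{A}\,h_{\e}\ra\|_{\W^{m-1,2}_{x}}+\|\la \bm{b}\,h_{\e}\ra\|_{\W^{m-1,2}_{x}}\lesssim \|(\mathbf{Id}-\bm{\pi}_{0})h_{\e}\|_{L^{1}_{v}\W^{m-1,2}_{x}(\m_{q})}\,.
\end{equation*}
Proposition~\ref{lem:equi} with $\beta=m-1$ absorbs exactly the singular prefactor $\e^{-1}$, providing
\begin{equation*}
\frac{1}{\e}\int_{0}^{T}\|(\mathbf{Id}-\bm{\pi}_{0})h_{\e}(\tau)\|_{L^{1}_{v}\W^{m-1,2}_{x}(\m_{q})}\,\d\tau\lesssim \sqrt{\mathcal{K}_{0}}\,\max\{\sqrt{T},T\}\,.
\end{equation*}
Combined with the fact that $\e^{-3}\mathscr{J}_{\re}(f_{\e},f_{\e})$ is uniformly bounded in $L^{1}((0,T);\W^{m,2}_{x})$ (which follows from the estimates inside the proof of Lemma~\ref{lemJ0}) and $\kappa_{\re}/\e^{2}$ stays bounded under Assumption~\ref{hyp:re}, I would conclude that $\{\partial_{t}u_{\e}\}$ and $\{\partial_{t}\vartheta_{\e}\}$ are bounded in $L^{1}((0,T);\W^{m-2,2}_{x})$ uniformly in $\e$.

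To conclude, I would combine this time-derivative bound with the fact that $\{u_{\e}\},\{\vartheta_{\e}\}$ are uniformly bounded in $L^{\infty}((0,T);\W^{m,2}_{x})$, which is inherited from $\sup_{\e}\|h_{\e}\|_{L^{\infty}((0,T);\E)}<\infty$. Since the embedding $\W^{m,2}_{x}(\T^{d})\hookrightarrow \W^{m-2,2}_{x}(\T^{d})$ is compact, Aubin--Lions--Simon yields relative compactness of both families in $\mathcal{C}([0,T];\W^{m-2,2}_{x})$ and, in particular, strong convergence up to extraction in $L^{1}((0,T);\W^{m-2,2}_{x})$. The limits are then identified by Theorem~\ref{theo:strong-conv} and Lemma~\ref{lem:mode}: $\bm{u}_{\e}=\frac{1}{\en_{1}}\la v\,h_{\e}\ra\to u$ and $\la\tfrac12(|v|^{2}-(d+2)\en_{1})h_{\e}\ra\to \vartheta_{0}$ in $\mathscr{D}'_{t,x}$, so by uniqueness the strong limits must coincide with these distributional limits (using $\mathcal{P}u=u$ from the incompressibility condition in Corollary~\ref{cor:boussi}); the strong convergence of $\mathcal{P}\bm{u}_{\e}$ is recovered from that of $u_{\e}$ because $\exp(t\kappa_{\re}/\e^{2})\to\exp(\lambda_{0}t)$ uniformly on $[0,T]$. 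The main delicacy in the whole argument is the tight accounting between the $\e^{-1}$ singularity coming from the hyperbolic rescaling of the flux and the $\e$-smallness of $(\mathbf{Id}-\bm{\pi}_{0})h_{\e}$ supplied by Proposition~\ref{lem:equi}; this balance is exactly the payoff of the nearly elastic regime~\eqref{eq:scaling} and of the fact that $\bm{A},\bm{b}$ are fluctuation modes, and without it the compactness scheme would break down.
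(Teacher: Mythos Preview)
Your proposal is correct and follows essentially the same route as the paper: derive the evolution equations for $u_{\e}$ and $\vartheta_{\e}$ via the Leray projection and the mass--energy combination, use that $\bm{A},\bm{b}\in\mathrm{Range}(\mathbf{Id}-\bm{\pi}_{0})$ together with Proposition~\ref{lem:equi} to bound the singular fluxes, and conclude by Aubin--Lions--Simon with the compact embedding $\W^{m,2}_{x}\hookrightarrow\W^{m-2,2}_{x}$ and the distributional identification of the limits. The only cosmetic difference is that you invoke the $L^{\infty}((0,T);\W^{m,2}_{x})$ bound to get compactness in $\mathcal{C}([0,T];\W^{m-2,2}_{x})$, whereas the paper uses the weaker $L^{1}$ bound and concludes compactness in $L^{1}((0,T);\W^{m-2,2}_{x})$; either suffices for the stated conclusion.
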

\begin{proof} We begin with the proof of \eqref{eq:Puest}. We apply the  Leray projection $\mathcal{P}$   to~\eqref{bulk1} to eliminate the pressure gradient term. Then, 
we have that
\begin{equation*}\label{bulk1P}
\partial_{t} u_{\e}=-\exp\left(-t\frac{\kappa_{\re}}{\e^{2}}\right)\mathcal{P}\left(\en_{1}^{-1}\mathrm{Div}_{x}\la \tfrac{1}{\e}\bm{A}\,h_{\e} \ra\right)\,.
\end{equation*}
Notice that, since $\{h_{\e}\}_{\e}$ is bounded in $L^{1}((0,T)\,;\,\E)$ {by Minkowski's integral inequality}, one has that  $$ {\{u_{\e}\}_{\e} \text{ is bounded in } L^{1}\left((0,T)\,;\,\W^{m,2}_{x}(\T^{d})\right).}$$
Moreover, since $\bm{A}\,h_{\e}=\bm{A}\,\left(\mathbf{Id}-\bm{\pi}_{0}\right)h_{\e}$ we deduce from Proposition \ref{lem:equi}  {and Minkowski's integral inequality} that
$$\sup_{\e}\int_{0}^{T}\left\|\mathcal{P}\left(\mathrm{Div}_{x}\la \tfrac{1}{\e}\bm{A}\,h_{\e}\ra\right)\right\|_{ {\W^{m-2,2}_{x}(\T^{d})}} \d t < \infty.$$
In particular
$$\{\partial_{t}u_{\e}\}_{\e} \text{ is bounded in } L^{1}\left((0,T)\,;\, {\W^{m-2,2}_{x}(\T^{d})}\right).$$
Applying \cite[Corollary 4]{simon} with $X= {\W^{m,2}_{x}(\T^{d})}$ and $B=Y= {\W^{m-2,2}_{x}(\T^{d})}$ (so that the embedding of $X$ into $B$ is compact by Rellich-Kondrachov Theorem \cite[Proposition~3.4, p.~330]{taylor}), we deduce that $\{u_{\e}\}_{\e}$ is relatively compact in $L^{1}\left((0,T)\,;\, {\W^{m-2,2}_{x}(\T^{d})}\right)$. The result of strong convergence follows easily since we already now that $\mathcal{P}u_{\e}$ converges to {$u$} in $\mathscr{D}'_{t,x}$ (see Lemma~\ref{lem:mode} and recall $u=\mathcal{P}u$ since $u$ is divergence-free). 

The proof of \eqref{eq:varthstr} is similar.   We begin with observing that, multiplying \eqref{mass1} with~$-\frac{d+2}{2}\en_{1}$ and add it to \eqref{energy1} we obtain the evolution of $\vartheta_{\e}(t,x)$
\begin{equation}\label{eq:trick}
\partial_{t}\vartheta_{\e}+\frac{1}{\e}\mathrm{div}_{x}\la \bm{b}\,h_{\e}\ra=\frac{1}{\e^{3}}\mathscr{J}_{\re}(f_{\e},f_{\e})+\frac{2\kappa_{\re}}{\e^{2}}\la \tfrac{1}{2}|v|^{2}h_{\e}\ra\,.
\end{equation}
Notice that $\{\vartheta_{\e}\}_{\e}$ is bounded in $L^{1}\left((0,T)\,;\, {\W^{m,2}_{x}(\T^{d})}\right)$
while, because $\bm{b}\,h_{\e}=\bm{b}\,\left(\mathbf{Id}-\bm{\pi}_{0}\right)h_{\e}$ we deduce from Proposition \ref{lem:equi}  {by Minkowski's integral inequality} that
$$\sup_{\e}\int_{0}^{T}\left\|\left(\mathrm{div}_{x}\la \tfrac{1}{\e}\bm{b}\,h_{\e}\ra\right)\right\|_{ {\W^{m-2,2}_{x}(\T^{d})}} \d t < \infty.$$
It is easy to see that the right-hand side of \eqref{eq:trick} is also bounded in $L^{1}\left((0,T)\,,\, {\W^{m,2}_{x}(\T^{d})}\right)$
so that
$\{\partial_{t}\vartheta_{\e}\}_{\e}$ is bounded in $L^{1}\left((0,T)\,; {\W^{m-2,2}_{x}(\T^{d})}\right).$
Using again \cite[Corollary 4]{simon} together with  Rellich-Kondrachov Theorem, we deduce as before that $\{\vartheta_{\e}\}_{\e}$ is relatively compact in $L^{1}\left((0,T)\,;\, {\W^{m-2,2}_{x}(\T^{d})}\right)$. Since we already know that $\vartheta_{\e}$ converges in the distributional sense to $\vartheta_{0}$ (see Lemma \ref{lem:mode}), we get the result of strong convergence. 
\end{proof}
\begin{nb} We will see later that the convergence of $\{\mathcal{P}\bm{u}_{\e}\}_{\e}$ and $\{\vartheta_{\e}\}_{\e}$ can actually be strenghten for well-prepared initial datum (see Proposition \ref{prop:ascoli}). \end{nb}

A first consequence of the above Lemma is the following which regards \eqref{eq:Avh}
\begin{lem}\label{prop:limitA} In the distributional sense, 
\begin{equation}\label{eq:Ah}\lim_{\e\to0^{+}}\mathcal{P}\mathrm{Div}_{x}\left(\la {\e}^{-1}\bm{A}\,h_{\e}\ra-\la \phi \,\Q_{1}\left(\bm{\pi}_{0}h_{\e},\bm{\pi}_{0}h_{\e}\right)\ra\right)=-{\bm \nu}\,\Delta_{x}u\end{equation}
where ${\bm \nu}$ is defined in Lemma~\ref{lem:phipsi}. 
 \end{lem}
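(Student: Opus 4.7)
The starting point is the defining identity of $\phi$ (from Lemma~\ref{lem:phipsi}), namely $\int_{\R^d} \bm{A}(v) g(v) \d v = -\int_{\R^d}\phi(v)\, \mathbf{L}_1 g(v) \d v$ for any test function $g$, which already appeared as the motivation for~\eqref{eq:Avh}. Multiplying the rescaled Boltzmann equation \eqref{BE1} by $\e^{-1}\phi$ and integrating in velocity, one obtains after rearrangement the key identity
\begin{align*}
\la \e^{-1}\bm{A}\, h_\e \ra &= -\la \phi\, v\cdot\nabla_x h_\e\ra - \e \la \phi\, \partial_t h_\e\ra - \e^{-1}\kappa_{\re}\la \phi\, \nabla_v\cdot(v h_\e)\ra\\
&\quad + \la \phi\, \Q_\re(h_\e,h_\e)\ra  - \e^{-1} \la \phi\, (\mathbf{L}_1-\mathbf{L}_\re)h_\e\ra.
\end{align*}
The plan is to apply $\mathcal{P}\mathrm{Div}_x$, isolate the Navier-Stokes term from the transport contribution, and show that all the other terms vanish in $\mathscr{D}'_{t,x}$, while the quadratic term $\la \phi\,\Q_\re(h_\e,h_\e)\ra$ is precisely what produces the subtraction $\la \phi\,\Q_1(\bm{\pi}_0 h_\e,\bm{\pi}_0 h_\e)\ra$ appearing in the statement, up to lower-order contributions.

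First I would dispatch all the "small'' terms: pairing with a test function $\varphi \in \mathcal{C}_c^\infty((0,T)\times\T^d)$ and integrating by parts in $t$, the factor $\e$ in $\e\la \phi\,\partial_t h_\e\ra$ together with the uniform bound on $\la \phi\, h_\e\ra$ in $L^\infty((0,T);L^2_x)$ (following from Theorem~\ref{theo:main-cauc1}) kills that term. The drift contribution is $O(\e^{-1}\kappa_{\re})=O(\e\lambda_0)$ thanks to Assumption~\ref{hyp:re}; the term $\e^{-1}\la \phi\,(\mathbf{L}_1-\mathbf{L}_\re)h_\e\ra$ is $O(\e)$ thanks to Proposition~\ref{prop:converLLL0} and the uniform-in-$\e$ bound of $h_\e$ in $L^1((0,T);\E_1)$ from Corollary~\ref{cor:h-relaxation}. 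For the nonlinear term, I would split
$$\la \phi\,\Q_\re(h_\e,h_\e)\ra - \la \phi\,\Q_1(\bm{\pi}_0 h_\e,\bm{\pi}_0 h_\e)\ra = \la \phi\,\Q_1^{\bm r}(h_\e,h_\e)\ra + \la \phi\,(\Q_\re-\Q_1)(h_\e,h_\e)\ra,$$
the first piece vanishing in $\mathscr{D}'_{t,x}$ by the second assertion of Lemma~\ref{lem:mode} (applied to each component of $\phi$, which has polynomial growth $\lesssim\m_q$), while the second is $O(1-\re)=O(\e^2)$ via Remark~\ref{nb:diffQXY} combined with the $L^2_t(\H)$-bound on $h_\e$.

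It then only remains to pass to the limit in the transport term $-\mathcal{P}\mathrm{Div}_x\la \phi\, v\cdot\nabla_x h_\e\ra$. Since $\phi(v) v_k \in (\mathbf{Id}-\bm{\pi}_0)$-range in the sense of the adjoint pairing and has polynomial growth, Lemma~\ref{lem:mode} yields
$$\la \phi\, v\cdot\nabla_x h_\e\ra \longrightarrow \la \phi\, v\cdot\nabla_x \bm{h}\ra \qquad \text{in }\mathscr{D}'_{t,x},$$
and on the limit one invokes the explicit formula \eqref{eq:vnah} for $v\cdot\nabla_x \bm{h}$ combined with the Boussinesq relation~\eqref{eq:boussi}. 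The odd-parity part (involving $\nabla_x\vE$) integrates to $0$ against $\phi$, and the remaining piece produces $\la \phi\otimes v\otimes v\cdot\M\ra :\nabla_x u$, which by isotropy of the linearized collision operator reduces to $\bm{\nu}$ times the traceless symmetric part of $\nabla_x u$; the precise identification of $\bm{\nu}$ is exactly the normalization set in Lemma~\ref{lem:phipsi}. Taking $\mathcal{P}\mathrm{Div}_x$ and using $\mathrm{div}_x u=0$ transforms this tensorial expression into $-\bm{\nu}\Delta_x u$, which yields \eqref{eq:Ah}.

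The main obstacle will be the careful justification that $\la \phi\, v\cdot\nabla_x h_\e\ra$ genuinely converges in $\mathscr{D}'_{t,x}$ toward $\la \phi\, v\cdot\nabla_x \bm{h}\ra$: the presence of $v\otimes v$ inside the velocity moment requires growth of order $\m_2$ (hence the choice $q\geq 5$ in Theorem~\ref{theo:CVNS}) and the weak convergence of $\hu_\e$ in $L^2((0,T);L^2_v\W^{m,2}_x(\M^{-1/2}))$ combined with the strong convergence of $\ho_\e$ in $L^1((0,T);\E_1)$ from Theorem~\ref{theo:strong-conv} must be used in tandem. The tensorial algebra leading from the symmetric traceless integral $\int \phi_{ij}v_k v_l \M \d v$ to $-\bm{\nu}\Delta_x u$ via the Leray projection is then a routine application of the identities collected in Appendix~\ref{sec:hydro1}.
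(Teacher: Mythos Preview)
Your plan is correct and follows essentially the same route as the paper: multiply \eqref{BE1} by $\phi$, use the duality $\la\bm{A}h_\e\ra=-\la\phi\,\mathbf{L}_1 h_\e\ra$, pass to the limit in the transport term via Lemma~\ref{lem:mode}, discard the quadratic remainder $\Q_1^{\bm r}$ by the same lemma, and identify the viscous term through the isotropy relations of Lemma~\ref{lem:phipsi}. The only organizational difference is that the paper bundles your three ``small'' contributions (the drift term, $\e^{-1}(\mathbf{L}_\re-\mathbf{L}_1)h_\e$, and $\Q_\re-\Q_1$) into a single source $\bm{S}_\e$ and proves once for all in Lemma~\ref{lem:source} that $\bm{S}_\e\to 0$ strongly in $L^1((0,T);L^1_vL^2_x(\m_{q-1}))$ via the splitting $h_\e=\ho_\e+\hu_\e$, whereas you treat each piece separately; the paper also computes $\mathrm{div}_x\la v\phi^{i,j}\bm{h}\ra$ by quoting the explicit formulas of Lemma~\ref{lem:phinu} rather than going through \eqref{eq:vnah}, but this is the same computation.
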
 
\begin{proof} When compared to the elastic case, $\mathbf{L}_{1}h_{\e}$ does not appear in \eqref{BE1}.  We add it, as well as the quadratic elastic Boltzmann operator, by force and rewrite the latter as
\begin{multline}\label{BE2}
\e\partial_{t}h_{\e}+\,v\cdot \nabla_{x}h_{\e} -\e^{-1}\mathbf{L}_{1}h_{\e} = \Q_{1}(h_{\e},h_{\e}) - \e^{-1}\kappa_{\re}\nabla_{v}\cdot (vh_{\e}) +\\\e^{-1}\left(\mathbf{L}_{\re}h_{\e}-\mathbf{L}_{1}h_{\e}\right)+\Q_{\re}(h_{\e},h_{\e})-\Q_{1}(h_{\e},h_{e}).
\end{multline}
We interpret the last three terms as a source term
\begin{equation} \label{eq:source}\bm{S}_{\e}:=\e^{-1}\left(\mathbf{L}_{\re}h_{\e}-\mathbf{L}_{1}h_{\e}\right)+\Q_{\re}(h_{\e},h_{\e})-\Q_{1}(h_{\e},h_{\e})-\e^{-1}\kappa_{\re}\nabla_{v}\cdot (vh_{\e}).\end{equation}
Then, multiplying \eqref{BE2} by $\phi$ and integrating over $\R^{d}$, we get using \eqref{eq:Avh} that, for any~$i,\,j=1,\ldots,d,$
\begin{multline}\label{eq:phiijh}
\e\partial_{t}\la \phi^{i,j} h_{\e}\ra+ \mathrm{div}_{x}\la v\,\phi^{i,j} h_{\e}\ra -\e^{-1}\la \phi^{i,j}\mathbf{L}_{1}h_{\e}\ra\\
=\la\,\phi^{i,j}\,\Q_{1}(h_{\e},h_{\e})\ra+ \la \phi^{i,j}\,\bm{S}_{\e}\ra\,.\end{multline}
{One writes
$$\Q_{1}(h_{\e},h_{\e})=\Q_{1}\left(\bm{\pi}_{0}h_{\e},\bm{\pi}_{0}h_{\e}\right) + \Q_{1}^{\bm{r}}(h_{\e},h_{\e})$$
so that \eqref{eq:phiijh} becomes
\begin{multline*}
\e\partial_{t}\la \phi^{i,j} h_{\e}\ra+ \mathrm{div}_{x}\la v\,\phi^{i,j} h_{\e}\ra -\e^{-1}\la \phi^{i,j}\mathbf{L}_{1}h_{\e}\ra \\
=\la \phi^{i,j} \,\Q_{1}\left(\bm{\pi}_{0}h_{\e},\bm{\pi}_{0}h_{\e}\right)\ra\,+\,\la\,\phi^{i,j}\,\Q_{1}^{\bm{r}}(h_{\e},h_{\e})\ra + \la \phi^{i,j}\,\bm{S}_{\e}\ra\,.\end{multline*}
}
According to Lemma \ref{lem:mode} we have that
\begin{equation*}\begin{split}
\e\partial_{t}\la \phi^{i,j}\,h_{\e}\ra &\longrightarrow 0, \qquad \mathrm{div}_{x}\la v\,\phi^{i,j}\,h_{\e}\ra\longrightarrow \mathrm{div}_{x}\la v\,\phi^{i,j}\,\bm{h}\ra\,,\\
\la \phi^{i,j} \Q_{1}^{\bm{r}}(h_{\e},h_{\e})\ra &\longrightarrow 0\,, \qquad
\la\,\phi^{i,j}\,\bm{S}_{\e}\ra \longrightarrow 0\,,
\end{split}
\end{equation*}
where the limits are all meant in the distributional sense and where the last limit is deduced from the strong convergence of $\bm{S}_{\e}$ to $0$ in $L^{1}((0,T); {L^{1}_{v}L^{2}_{x}(\m_{q-1}))}$ (see Lemma~\ref{lem:source}).

From Lemma \ref{lem:phinu} in Appendix \ref{sec:hydro1}, one has
$$\la v_{\ell}\,\phi^{i,j}\,\bm{h}\ra =\begin{cases} \qquad\quad {\bm \nu}\,u_{j}  \quad &\text{if}\;\, i \neq j, \;\, \ell=i,\\
\qquad\quad{\bm \nu}\,u_{i}  \qquad &\text{if}\;\, i \neq j, \;\, \ell=j,\\
- \frac{2}{d}{\bm \nu}\,u_{\ell}+2{\bm \nu}\,u_{i}\delta_{i\ell} \quad &\text{if}\;\, i=j,\\
\qquad\quad 0 \quad &\text{else}.
\end{cases}$$
Therefore, using the incompressibility condition, 
$$\mathrm{div}_{x}\la v\,\phi^{i,j}\,\bm{h}\ra={\bm \nu}\left(\partial_{x_{j}}u_{i}+\partial_{x_{i}}u_{j}\right).$$
We deduce that 
\begin{equation*}\label{eQ:limit}
\lim_{\e\to0^{+}}\left(\e^{-1}\la \phi^{i,j}\,\mathbf{L}_{1}h_{\e}\ra {+}\la \phi^{i,j} \,\Q_{1}\left(\bm{\pi}_{0}h_{\e},\bm{\pi}_{0}h_{\e}\right)\ra\right)={\bm \nu}(\partial_{x_{j}}u_{i}+\partial_{x_{i}}u_{j}),
\end{equation*}
in the distributional sense. Applying the $\mathrm{Div}_{x}$ operator one deduces that, in $\mathscr{D}'_{t,x}$
$$\lim_{\e\to0^{+}}\mathrm{Div}_{x}^{i}\left(\e^{-1}\la \phi\,\mathbf{L}_{1}h_{\e}\ra{+}\la \phi \,\Q_{1}\left(\bm{\pi}_{0}h_{\e},\bm{\pi}_{0}h_{\e}\right)\ra\right)={\bm \nu} \Delta_{x}u_{i}\,,$$
where we use the incompressibility condition to deduce that $\mathrm{Div}_{x}^{i}\left(\partial_{x_{j}}u_{i}+\partial_{x_{i}}u_{j}\right)=\Delta_{x}u_{i}$. This proves the result.\end{proof}
In the same spirit, we have the following which now regards \eqref{eq:bvh}.
\begin{lem}\label{lem:temvE} In the distributional sense, 
\begin{equation}\label{eq:limpsiL1}
\lim_{\e\to 0^{+}}\left(\e^{-1}\mathrm{div}_{x}\la \bm{b}\,h_{\e}\ra + \mathrm{div}_{x} \la \psi\,\Q_{1}(\bm{\pi}_{0}h_{\e},\bm{\pi}_{0}h_{\e}\ra\right)=-\frac{d+2}{2}\,\gamma\,\Delta_{x}\vE\,.
\end{equation}
\end{lem}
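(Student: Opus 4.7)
The strategy is completely parallel to that of Lemma \ref{prop:limitA}: one works from the reformulated equation \eqref{BE2} with the source $\bm{S}_{\e}$ defined in \eqref{eq:source}, tests against the scalar function $\psi$ from Lemma \ref{lem:phipsi}, and uses that $\mathbf{L}_{1}$ is self-adjoint in $L^{2}_{v}(\M^{-1/2})$ to identify $\e^{-1}\la\psi\,\mathbf{L}_{1}h_{\e}\ra$ with $\e^{-1}\la\bm{b}\,h_{\e}\ra$ (this is exactly \eqref{eq:bvh}). Splitting $\Q_{1}(h_{\e},h_{\e})=\Q_{1}(\bm{\pi}_{0}h_{\e},\bm{\pi}_{0}h_{\e})+\Q_{1}^{\bm{r}}(h_{\e},h_{\e})$, the scalar analogue of \eqref{eq:phiijh} reads
\begin{equation*}
\e\partial_{t}\la\psi\,h_{\e}\ra+\mathrm{div}_{x}\la v\,\psi\,h_{\e}\ra-\e^{-1}\la\psi\,\mathbf{L}_{1}h_{\e}\ra=\la\psi\,\Q_{1}(\bm{\pi}_{0}h_{\e},\bm{\pi}_{0}h_{\e})\ra+\la\psi\,\Q_{1}^{\bm{r}}(h_{\e},h_{\e})\ra+\la\psi\,\bm{S}_{\e}\ra.
\end{equation*}

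The first step is to pass to the limit $\e\to 0^{+}$ in the distributional sense for each term: $\e\partial_{t}\la\psi\,h_{\e}\ra\to 0$ and $\la\psi\,\Q_{1}^{\bm{r}}(h_{\e},h_{\e})\ra\to 0$ follow from Lemma~\ref{lem:mode} applied with $\varpi_{q}$-bounded test functions (note that $|\psi(v)|\lesssim\m_{q}(v)$ by Lemma~\ref{lem:phipsi}); $\la\psi\,\bm{S}_{\e}\ra\to 0$ follows from the strong convergence of $\bm{S}_{\e}$ to $0$ in $L^{1}((0,T);L^{1}_{v}L^{2}_{x}(\m_{q-1}))$ provided by Lemma~\ref{lem:source} (cited in the proof of Lemma \ref{prop:limitA}); finally $\mathrm{div}_{x}\la v\,\psi\,h_{\e}\ra\to\mathrm{div}_{x}\la v\,\psi\,\bm{h}\ra$ by Lemma~\ref{lem:mode} again.

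The main computation, which is the second step, consists in evaluating $\mathrm{div}_{x}\la v\,\psi\,\bm{h}\ra$ using the explicit expression \eqref{eq:hlim} for $\bm{h}$. Writing
\begin{equation*}
\la v_{\ell}\,\psi\,\bm{h}\ra=\varrho\la v_{\ell}\,\psi\,\M\ra+\sum_{j}u_{j}\la v_{\ell}\,v_{j}\,\psi\,\M\ra+\tfrac{1}{2}\vE\la v_{\ell}\,\psi\,(|v|^{2}-d\en_{1})\M\ra,
\end{equation*}
one uses the defining identity $\mathbf{L}_{1}\psi=\bm{b}$ (equivalently $\psi\,\M\in\mathrm{Range}(\mathbf{Id}-\bm{\pi}_{0})$), together with parity in $v$ and the explicit computation of the diffusion coefficient from Lemma~\ref{lem:phipsi} (appearing through $\gamma$), to obtain an expression of the form $c_{1}\,u_{\ell}\,(\varrho+\en_{1}\vE)+c_{2}\,\gamma\,\vE\,\delta$-type terms, the precise numerology yielding, after taking $\mathrm{div}_{x}$, a contribution $\frac{d+2}{2}\gamma\,\Delta_{x}\vE$ plus terms containing $\mathrm{div}_{x}u$ and $\nabla_{x}(\varrho+\en_{1}\vE)$.

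The final step is to kill those remaining terms using Corollary~\ref{cor:boussi}: the incompressibility condition \eqref{eq:incomp} eliminates the $\mathrm{div}_{x}u$ contributions, while the Boussinesq relation \eqref{eq:boussi} eliminates the $\nabla_{x}(\varrho+\en_{1}\vE)$ contributions. The surviving term is exactly $\frac{d+2}{2}\gamma\,\Delta_{x}\vE$, and combining with the sign arising from passing $\e^{-1}\la\psi\,\mathbf{L}_{1}h_{\e}\ra$ and $\la\psi\,\Q_{1}(\bm{\pi}_{0}h_{\e},\bm{\pi}_{0}h_{\e})\ra$ to the other side and applying $\mathrm{div}_{x}$ gives \eqref{eq:limpsiL1}. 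The only non-routine step is the algebraic/parity bookkeeping in the second step, which is a scalar (and therefore slightly simpler) analogue of the tensorial computation already carried out in Lemma~\ref{prop:limitA}; the precise constant $\frac{d+2}{2}\gamma$ is read off from the normalization of $\psi$ fixed in Lemma~\ref{lem:phipsi}.
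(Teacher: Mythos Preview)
Your proposal is correct and follows essentially the same approach as the paper: multiply \eqref{BE2} by $\psi_{i}$, pass to the limit in each term exactly as you describe, and identify $\mathrm{div}_{x}\la v\,\psi\,\bm{h}\ra$. The paper defers this last identity to Lemma~\ref{lem:psiQ} in the appendix rather than computing it inline. One small inaccuracy: the $u$-contribution $\sum_{j}u_{j}\la v_{\ell}v_{j}\psi_{i}\M\ra$ vanishes purely by parity (each such integral is zero), so incompressibility is not actually needed here---only the Boussinesq relation \eqref{eq:boussi} is used, to combine the $\partial_{x_{i}}\varrho$ and $\partial_{x_{i}}\vE$ terms into $\la\psi_{i}\bm{b}_{i}\M\ra\,\partial_{x_{i}}\vE=\tfrac{d+2}{2}\gamma\,\partial_{x_{i}}\vE$.
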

\begin{proof} We recall that 
$$\frac{1}{\e}\mathrm{div}_{x}\la \bm{b}\,h_{\e}\ra=-\frac{1}{\e}\mathrm{div}_{x}\la \mathbf{L}_{1}(h_{\e})\psi\ra.$$
Multiply \eqref{BE2} with $\psi_{i}$ (recall that $\psi$ is defined by \eqref{eq:L1chi}). As previously, it holds that 
$$
\e\partial_{t}\la \psi_{i} h_{\e}\ra+ \mathrm{div}_{x}\la v\,\psi_{i} h_{\e}\ra -\e^{-1}\la \psi_{i}\mathbf{L}_{1}h_{\e}\ra=\la\,\psi_{i}\,\Q_{1}(h_{\e},h_{\e})\ra+ \la \psi_{i}\,\bm{S}_{\e}\ra\,,$$
and 
$$\e\partial_{t}\la \psi_{i} h_{\e}\ra \longrightarrow 0, \qquad \la \psi_{i}\,\bm{S}_{\e}\ra \longrightarrow 0\,,$$
in the distributional sense.  Splitting again $\Q_{1}(h_{\e},h_{\e})=\Q_{1}^{\bm{r}}(h_{\e},h_{\e})+\Q_{1}\left(\bm{\pi}_{0}h_{\e},\bm{\pi}_{0}h_{\e}\right)$, one has $\la\,\psi_{i}\,\Q_{1}^{\bm{r}}(h_{\e},h_{\e})\ra$ converges to $0$ in $\mathscr{D}'_{t,x}$ so that, in the distributional sense, it follows that
$$\lim_{\e\to 0^{+}}\left(\e^{-1}\la \psi_{i}\mathbf{L}_{1}h_{\e}\ra{+}\la \psi_{i}\Q_{1}(\bm{\pi}_{0}h_{\e},\bm{\pi}_{0}h_{\e}\ra \right)=\mathrm{div}_{x}\la v\,\psi_{i}\bm{h}\ra=\frac{d+2}{2}\,\gamma\,\partial_{x_{i}}\vE\,$$
thanks to Lemma \ref{lem:psiQ} in Appendix \ref{sec:hydro1}. This gives the result.\end{proof}

\subsection{Convergence of the nonlinear terms}

To determine the distributional limit of \eqref{eq:Avh} and \eqref{eq:bvh}, we ``only'' need now to explicit the limit of 
$$\mathcal{P}\mathrm{Div}_{x}\la \phi\,\Q_{1}(\bm{\pi}_{0}h_{\e},\bm{\pi}_{0}h_{\e})\ra \quad \text{ and } \quad \mathrm{div}_{x}\la \psi\Q_{1}(\bm{\pi}_{0}h_{\e},\bm{\pi}_{0}h_{\e})\ra$$ respectively. Writing
$$\bm{\pi}_{0}h_{\e}=\left(\varrho_{\e}(t,x)+\bm{u}_{\e}(t,x) \cdot v + \frac{1}{2}\vE_{\e}(t,x)\left(|v|^{2}-d\en_{1}\right)\right)\M(v)$$
we first observe that, according to Lemma \ref{lem:Q1hh} and Lemma \ref{lem:psiQ} in Appendix \ref{sec:hydro1},
$$\la \phi \,\Q_{1}\left(\bm{\pi}_{0}h_{\e},\bm{\pi}_{0}h_{\e}\right)\ra=\en_{1}^{2}\left[\bm{u}_{\e}\otimes \bm{u}_{\e} -\frac{2}{d}|\bm{u}_{\e}|^{2}\mathbf{Id}\right]$$
and 
$$ \la \psi\,\Q_{1}(\bm{\pi}_{0}h_{\e},\bm{\pi}_{0}h_{\e})\ra=\frac{d+2}{2}\en_{1}^{3}\,(\vE_{\e}\,\bm{u}_{\e})\,.$$
Therefore,
$$\mathcal{P}\mathrm{Div}_{x}\la \phi\,\Q_{1}\left(\bm{\pi}_{0}h_{\e},\bm{\pi}_{0}h_{\e}\right)\ra=\en_{1}^{2}\mathcal{P}\mathrm{Div}_{x}\left(\bm{u}_{\e} \otimes \bm{u}_{\e}\right)$$
since $\mathrm{Div}_{x}\left(|\bm{u}_{\e}|^{2}\mathbf{Id}\right)$ is a gradient term and
$$\mathrm{div}_{x}\la \psi\,\Q_{1}(\bm{\pi}_{0}h_{\e},\bm{\pi}_{0}h_{\e})\ra=\frac{d+2}{2}\en_{1}^{3}\,\mathrm{div}_{x}\left(\vE_{\e}\,\bm{u}_{\e}\right).$$
One has the following whose proof is adapted from \cite[Corollary~5.7]{golseSR}.
\begin{lem}\label{lem:convect}
In the distributional sense (in $\mathscr{D}'_{t,x}$), one has
$$\lim_{\e\to 0^{+}}\mathcal{P}\mathrm{Div}_{x}\la \phi\,\Q_{1}\left(\bm{\pi}_{0}h_{\e},\bm{\pi}_{0}h_{\e}\right)\ra=\en_{1}^{2} \mathcal{P}\mathrm{Div}_{x}(u\otimes u)$$
and
$$\lim_{\e \to 0^{+}}\mathrm{div}_{x}\la \psi\,\Q_{1}(\bm{\pi}_{0}h_{\e},\bm{\pi}_{0}h_{\e})\ra=\frac{d+2}{2}\en_{1}^{3}u \cdot \nabla_{x}\vE.$$
In particular
\begin{equation}\label{eq:limphiL1-2}\lim_{\e \to 0^{+}}\mathcal{P}\mathrm{Div}_{x}\la {\e}^{-1}\bm{A}\,h_{\e}\ra=-\bm{\nu}\Delta_{x}u + \en_{1}^{2}\mathcal{P}\mathrm{Div}_{x}(u \otimes u) \qquad \text{ in } \mathscr{D}'_{t,x}\end{equation}
while
\begin{equation}\label{eq:limpsiL1-2}
\lim_{\e\to0^{+}}\mathrm{div}_{x}\la \e^{-1}\bm{b}\,h_{\e}\ra=-\frac{d+2}{2}\left(\gamma\,\Delta_{x}\vE - \en_{1}^{3}u \cdot \nabla_{x}\vE\right) \qquad \text{ in } \mathscr{D}'_{t,x}.\end{equation}
\end{lem}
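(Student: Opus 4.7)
\smallskip

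My plan is to use the closed-form identities from Lemma \ref{lem:Q1hh} (and its companion for $\psi$) as a starting point, which reduces the nonlinear limits to passing to the limit in the macroscopic products $\bm u_\e\otimes \bm u_\e$ and $\vE_\e\bm u_\e$. The fact that $\bm u_\e$ and $\vE_\e$ converge only \emph{weakly} (whereas $\mathcal P\bm u_\e$ and $\vartheta_\e$ converge strongly by Proposition \ref{prop:strongue}) is the source of the difficulty, and my strategy is the classical splitting into divergence-free part plus an oscillatory gradient (acoustic) component, following closely the approach of \cite{golseSR}.

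For the momentum term, I decompose $\bm u_\e = \mathcal{P}\bm u_\e + \bm w_\e$ with $\bm w_\e = \nabla_{x}\Phi_\e$. Writing
$$\bm u_\e\otimes\bm u_\e - \tfrac{2}{d}|\bm u_\e|^{2}\mathbf{Id}=\mathcal{P}\bm u_\e\otimes\mathcal P\bm u_\e + \mathcal P\bm u_\e\otimes\bm w_\e+\bm w_\e\otimes\mathcal P\bm u_\e+\bm w_\e\otimes\bm w_\e - \tfrac{2}{d}|\bm u_\e|^{2}\mathbf{Id},$$
I first observe that the isotropic piece $|\bm u_\e|^{2}\mathbf{Id}$ gives rise under $\mathrm{Div}_{x}$ to a pure gradient, hence vanishes under $\mathcal P$. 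For the leading quadratic term $\mathcal P\bm u_\e\otimes\mathcal P\bm u_\e$, Proposition \ref{prop:strongue} together with the Sobolev embedding $\W^{m-2,2}_{x}\hookrightarrow L^{\infty}_{x}$ (valid since $m>d$) upgrades the $L^{1}_{t}\W^{m-2,2}_{x}$ strong convergence of $\mathcal P\bm u_\e$ into enough control to pass to the limit: $\mathcal P\bm u_\e\otimes \mathcal P\bm u_\e\to u\otimes u$ in $\mathscr{D}'_{t,x}$. For the cross terms, I combine the strong convergence of $\mathcal P\bm u_\e$ with the fact that $\bm w_\e\rightharpoonup 0$ weakly (since $\bm u_\e\rightharpoonup u$, $\mathcal P\bm u_\e\to u$, and $u$ is divergence-free by Corollary \ref{cor:boussi}), which gives convergence of the cross products to zero by a weak-strong pairing argument. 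For the acoustic-acoustic term, I invoke the Lions--Masmoudi identity
$$\mathcal P\,\mathrm{Div}_{x}\bigl(\nabla_{x}\Phi_\e\otimes\nabla_{x}\Phi_\e\bigr)=\mathcal P\bigl(\tfrac{1}{2}\nabla_{x}|\nabla_{x}\Phi_\e|^{2}+(\Delta_{x}\Phi_\e)\nabla_{x}\Phi_\e\bigr)=\mathcal P\bigl((\Delta_{x}\Phi_\e)\bm w_\e\bigr),$$
and use the continuity equation \eqref{mass1}, which yields $\mathrm{div}_{x}\bm w_\e=\mathrm{div}_{x}\bm u_\e=-\e\en_{1}^{-1}\partial_{t}\varrho_\e$. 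After integration by parts in time against a smooth test function, this term is seen to vanish as $\e\to 0$.

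For the temperature term, I write $\mathrm{div}_{x}(\vE_\e\bm u_\e)=\mathrm{div}_{x}(\vE_\e\mathcal P\bm u_\e)+\mathrm{div}_{x}(\vE_\e\bm w_\e)$. The crucial ingredient is the strong convergence of the compound quantity $\vartheta_\e=\la\tfrac{1}{2}(|v|^{2}-(d+2)\en_{1})h_\e\ra$ furnished by Proposition \ref{prop:strongue}. Expanding $\vartheta_\e$ against $\bm\pi_{0}h_\e$ plus a remainder $(\mathbf{Id}-\bm\pi_{0})h_\e$ (which is small in the $L^{1}$-in-time norm by Proposition \ref{lem:equi}) gives $\vartheta_\e=\tfrac{1}{2}\en_{1}(d\en_{1}\vE_\e-\varrho_\e)+o(1)$; combining this with the strengthened Boussinesq relation (Corollary \ref{cor:boussi}) that provides a quantitative control on $\varrho_\e+\en_{1}\vE_\e$ at the limit, one recovers strong compactness of $\vE_\e$ (modulo acoustic components), and the product $\vE_\e\mathcal P\bm u_\e$ then converges to $\vE u$. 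Taking the divergence and using $\mathrm{div}_{x}u=0$ yields the claimed limit $u\cdot\nabla_{x}\vE$. The oscillatory piece $\mathrm{div}_{x}(\vE_\e\bm w_\e)$ is treated as in the momentum case: again, $\mathrm{div}_{x}\bm w_\e=O(\e)$ from the continuity equation, and the surviving transport term $(\bm w_\e\cdot \nabla_{x})\vE_\e$ has zero distributional limit by weak-strong pairing.

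The main obstacle, and the reason this step is genuinely harder than in the elastic case, is the \emph{temperature} identification: one has to work around the fact that the strong Boussinesq relation $\varrho+\en_{1}\vE=0$ is not available instantaneously from elementary considerations but only after extracting the zero spatial mean information in Corollary \ref{cor:boussi}. Once this is secured, the final identities \eqref{eq:limphiL1-2}-\eqref{eq:limpsiL1-2} follow by injecting the limits obtained into Lemmas \ref{prop:limitA}--\ref{lem:temvE}, which already identify the viscous and thermal-conduction contributions.
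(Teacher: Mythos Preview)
Your overall architecture---reducing to the macroscopic products via Lemmas \ref{lem:Q1hh}--\ref{lem:psiQ}, splitting $\bm u_\e=\mathcal P\bm u_\e+\nabla_x\Phi_\e$, and handling the cross terms by weak--strong pairing---matches the paper's. The gap is in your treatment of the \emph{purely oscillatory} pieces.

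For the momentum term, your claim that $\mathcal P\bigl((\Delta_x\Phi_\e)\nabla_x\Phi_\e\bigr)\to 0$ follows from the continuity equation alone plus one integration by parts in time does not close. Writing $\Delta_x\Phi_\e=-\e\en_1^{-1}\partial_t\varrho_\e$ and integrating by parts throws the time derivative onto $\nabla_x\Phi_\e=\bm w_\e$; but $\e\partial_t\bm w_\e=-\nabla_x\bm\beta_\e+O(\e)$ by the projected momentum equation, so you are left with $\mathcal P(\varrho_\e\nabla_x\bm\beta_\e)$. After removing the gradient part $\tfrac12\nabla_x\varrho_\e^2$, what survives is $\mathcal P(\varrho_\e\nabla_x\vE_\e)$, a genuine weak$\times$weak product that does \emph{not} obviously vanish. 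The paper resolves this by setting up the full acoustic wave system \eqref{eq:system} in the pair $(\nabla_x\bm U_\e,\bm\beta_\e)$ and invoking the Lions--Masmoudi compensated compactness result (Proposition \ref{prop:LM}), which uses \emph{both} equations simultaneously; only then does the cancellation occur.

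For the temperature term, the same issue arises. Your splitting $\mathrm{div}_x(\vE_\e\bm w_\e)=\vE_\e\,\mathrm{div}_x\bm w_\e+\bm w_\e\cdot\nabla_x\vE_\e$ does not help: $\mathrm{div}_x\bm w_\e$ is small only in a time-integrated sense (so the first piece regenerates a weak$\times$weak product after integration by parts), and in the second piece neither factor converges strongly, so there is no weak--strong pairing available. The paper instead rewrites $\vE_\e=\tfrac{2}{(d+2)\en_1}\bigl(\bm\beta_\e+\en_1^{-1}\vartheta_\e\bigr)$, exploits the \emph{strong} convergence of $\vartheta_\e$ (Lemma \ref{prop:strongue}) to handle $\mathrm{div}_x(\vartheta_\e\bm u_\e)$ by genuine weak--strong pairing, and then shows $\mathrm{div}_x(\bm\beta_\e\bm w_\e)\to 0$ again via Proposition \ref{prop:LM}. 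Your sketch should be amended to introduce $\bm\beta_\e$, derive the coupled system \eqref{eq:system}, and appeal to Proposition \ref{prop:LM} for both \eqref{eq:limNaUe} and \eqref{eq:limBeUe}.
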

\begin{proof} 
We write $\bm{u}_{\e}=\mathcal{P}\bm{u}_{\e}+(\mathbf{Id}-\mathcal{P})\bm{u}_{\e}$. Due to the strong convergence of $\mathcal{P}\bm{u}_{\e}$ towards $u$ in $L^{1}\left((0,T)\,;\, {\W^{m-2,2}_{x}(\T^{d})}\right)$ (see Lemma \ref{prop:strongue}) and the weak convergence of $\bm{u}_{\e}$ (see Lemma~\ref{lem:mode}), we see that
$$\mathcal{P}\mathrm{Div}_{x}\left(\bm{u}_{\e} \otimes \bm{u}_{\e}- (\mathbf{Id}-\mathcal{P})\bm{u}_{\e} \otimes (\mathbf{Id}-\mathcal{P})\bm{u}_{\e}\right)
\longrightarrow  \mathcal{P}\mathrm{Div}_{x}\left(u \otimes u\right) \text{ in } \mathscr{D}'_{t,x}.$$
So, to prove the first part of the Lemma, we only need to prove that
\begin{equation}\label{eq:limNaUe}
\mathcal{P}\mathrm{Div}_{x}\left( (\mathbf{Id}-\mathcal{P})\bm{u}_{\e} \otimes (\mathbf{Id}-\mathcal{P})\bm{u}_{\e}\right) \longrightarrow 0\end{equation}
in $\mathscr{D}'_{t,x}.$ Moreover, as in \cite[Corollary 5.7]{golseSR}, we set
$$\bm{\beta}_{\e}:=\frac{1}{d\,\en_{1}}\la |v|^{2}h_{\e}\ra=\varrho_{\e}+\en_{1}\vE_{\e}$$
which is such that $\vE_{\e}=\frac{2}{(d+2)\en_{1}}\left(\bm{\beta}_{\e}+\frac{1}{\en_{1}}\vartheta_{\e}\right)$
and
\begin{multline*}
\mathrm{div}_{x}(\theta_{\e}\bm{u}_{\e})=\frac{2}{(d+2)\en_{1}}\left(\mathrm{div}_{x}\left(\bm{\beta}_{\e}\bm{u}_{\e}+\frac{1}{\en_{1}} \bm{u}_{\e}\vartheta_{\e}\right)\right)\\
=\frac{2}{(d+2)\en_{1}}\mathrm{div}_{x}\left(\bm{\beta}_{\e}\left(\mathbf{Id}-\mathcal{P}\right)\bm{u}_{\e}\right)
+\frac{2}{(d+2)\en_{1}}\left[\mathrm{div}_{x}\left(\bm{\beta}_{\e}\mathcal{P}\bm{u}_{\e}+\frac{1}{\en_{1}}\bm{u}_{\e}\vartheta_{\e}\right)\right].
\end{multline*}
Therefore, using the strong convergence of $\vartheta_{\e}$ towards $\vartheta_{0}$ in $L^{1}\left((0,T)\,; {\W^{m-2,2}_{x}(\T^{d})}\right)$ given by Lemma \ref{prop:strongue} together with the weak convergence of $\bm{u}_{\e}$ to $u$ from Lemma~\ref{lem:mode}, we get
$$\frac{2}{(d+2)\en_{1}^{2}}\mathrm{div}_{x}(\bm{u}_{\e}\vartheta_{\e}) \longrightarrow \frac{2}{(d+2)\en_{1}^{2}}\mathrm{div}_{x}(u\,\vartheta_{0}) \qquad \text{in } \mathscr{D}'_{t,x}$$
whereas the strong convergence of $\mathcal{P}\bm{u}_{\e}$ to $u$ with the weak convergence of $\bm{\beta}_{\e}$ towards $\varrho+\en_{1}\vE$ we get
$$\mathrm{div}_{x}(\bm{\beta}_{\e}\mathcal{P}\bm{u}_{\e}) \longrightarrow \mathrm{div}_{x}\left(u\left(\varrho+\en_{1}\vE\right)\right)=0 \qquad \text{in } \mathscr{D}'_{t,x}$$
where we used both the incompressiblity condition \eqref{eq:incomp} together with   Boussinesq relation \eqref{eq:boussi}. Notice that, thanks to \eqref{eq:incomp}, it holds
$$\frac{2}{(d+2)\en_{1}^{2}}\mathrm{div}_{x}(u\vartheta_{0})=\frac{2}{(d+2)\en_{1}^{2}}u\cdot \nabla_{x}\vartheta_{0}=u \cdot \nabla_{x}\vE$$
where we used the expression of $\vartheta_{0}$ together with Bousinesq relation \eqref{eq:boussi}. This shows that
$$\mathrm{div}_{x}(\theta_{\e}\bm{u}_{\e})-\frac{2}{(d+2)\en_{1}}\mathrm{div}_{x}\left(\bm{\beta}_{\e}\left(\mathbf{Id}-\mathcal{P}\right)\bm{u}_{\e}\right) \longrightarrow u \cdot\nabla_{x}\vE \qquad \text{ in } \mathscr{D}'_{t,x}$$
and, to get the second part of the result, we need to prove that
\begin{equation}\label{eq:limBeUe}
\mathrm{div}_{x}\left(\bm{\beta}_{\e}\left(\mathbf{Id}-\mathcal{P}\right)\bm{u}_{\e}\right) \longrightarrow 0 \qquad \text{in } \mathscr{D}'_{t,x}.
\end{equation}
Let us now focus on the proof of \eqref{eq:limNaUe} and \eqref{eq:limBeUe}. One observes that, Equation \eqref{bulk1} reads
\begin{equation}\label{eq:bulkP}
\e\,\partial_{t}\bm{u}_{\e} + \nabla_{x}\bm{\beta}_{\e}=\frac{\kappa_{\re}}{\e}\bm{u}_{\e} - \en_{1}^{-1}\mathrm{Div}_{x}\la \bm{A}\,h_{\e}\ra\end{equation}
whereas \eqref{energy1} can be reformulated as 
\begin{equation}\label{energy21}
\e\partial_{t}\bm{\beta}_{\e}+\mathrm{div}_{x}\,\la \tfrac{1}{d\en_{1}}|v|^{2}v\,h_{\e}\ra\,=\frac{2}{d\en_{1}\e^{2}}\mathscr{J}_{\re}(f_{\e},f_{\e})+\frac{2\kappa_{\re}}{\e}\bm{\beta}_{\e}\,
\end{equation}
where we check easily that 
\begin{equation*}\begin{split}\mathrm{div}_{x}\la \tfrac{1}{d\en_{1}}|v|^{2}v\,h_{\e}\ra&=\frac{2}{d\en_{1}}\mathrm{div}_{x}\la \bm{b}\,h_{\e}\ra + \frac{d+2}{d}\en_{1}\mathrm{div}_{x}\bm{u}_{\e}\\
&=\frac{2}{d\en_{1}}\mathrm{div}_{x}\la \bm{b}\,h_{\e}\ra+ \frac{d+2}{d}\en_1\mathrm{div}_{x}\left(\mathbf{Id}-\mathcal{P}\right)\bm{u}_{\e}.
\end{split}\end{equation*}
{Recall that from Theorem~\ref{theo:main-cauc1}, $h_\e \in L^\infty\left((0,T);\mathcal{E}\right)$ so that  by Minkowski's integral inequality, $\bm{\beta}_\e \in L^\infty\left((0,T); {\W^{m,2}_{x}(\T^{d})}\right)$ and using \cite[Proposition 1.6, p. 33]{majda}), we can write
$$(\mathbf{Id}-\mathcal{P})\bm{u}_{\e}=\nabla_{x}\bm{U}_{\e}$$
with $\bm{U}_{\e} \in {L^{\infty}}\left((0,T);\left( {\W^{m-1,2}_{x}(\T^{d})}\right)^{d}\right)$.} After applying $(\mathbf{Id}-\mathcal{P})$ to \eqref{eq:bulkP} and reformulating~\eqref{energy21}, we obtain that $\bm{U}_{\e}$ and $\bm{\beta}_{\e}$ satisfy
\begin{equation}\label{eq:system}\begin{cases}
\e\partial_{t}\nabla_{x}\bm{U}_{\e} + \nabla_{x}\bm{\beta}_{\e}=\bm{F}_{\e}\\
\\
\e \partial_{t}\bm{\beta}_{\e} + \frac{d+2}{d}\en_{1}\Delta_{x}\bm{U}_{\e}=\bm{G}_{\e}
\end{cases}\end{equation}
with 
\begin{multline*}
\bm{F}_{\e}:=\frac{\kappa_{\re}}{\e}\nabla_{x}\bm{U}_{\e}-\en_{1}^{-1}(\mathbf{Id}-\mathcal{P})\mathrm{Div}_{x}\la \bm{A}\,h_{\e}\ra\\
\bm{G}_{\e}:=-\frac{2}{d\en_{1}}\mathrm{div}_{x}\la \bm{b}\,h_{\e}\ra+\frac{2}{d\en_{1}\e^{2}}\mathscr{J}_{\re}(f_{\e},f_{\e}) + \frac{2\kappa_{\re}}{\e}\bm{\beta}_{\e}.\end{multline*} 
It is easy to see that 
$$
\|\bm{F}_{\e}\|_{L^{1}((0,T)\,;\, {\W^{m-2,2}_{x}(\T^{d})})} \lesssim \e, \qquad \text{ and } \qquad \|\bm{G}_{\e}\|_{L^{1}((0,T)\,;\, {\W^{m-2,2}_{x}(\T^{d})})} \lesssim \e.
$$
so that both $\bm{F}_{\e}$ and $\bm{G}_{\e}$ converge \emph{strongly} to $0$ in $L^{1}((0,T);L^2_{x}(\T^{d}))$ and 
$$\bm{U}_\e \in L^\infty((0,T);(\W^{1,2}_x(\T^d))^{d}), \qquad {\bm \beta}_\e \in L^\infty((0,T);L^2_x(\T^d)).$$ Then, according to the \emph{compensated compactness} argument of {\cite{lions-masm}} recalled in  Proposition \ref{prop:LM} in Appendix \ref{sec:hydro1}, we deduce that~\eqref{eq:limNaUe} and \eqref{eq:limBeUe} hold true and this achieves the proof. The proofs of \eqref{eq:limphiL1-2} and~\eqref{eq:limpsiL1-2} follow then from an application of Lemmas \ref{prop:limitA} and \ref{lem:temvE}.
\end{proof}
 
Coming back to the system of equations \eqref{moments} and with the preliminary results of Section \ref{sec:hydro}, we get the following
{where we wrote $\mathcal{P}\mathrm{Div}_{x}(u \otimes u)=\mathrm{Div}_{x}(u \otimes u) + \en_1^{-1}\nabla_{x} p$, see \cite[Proposition 1.6]{majda}}.

\begin{prop}\label{lem:temp}
The limit velocity $u(t,x)$ in \eqref{eq:hlim} satisfies
\begin{equation}\label{eq:bulk2}
\partial_{t}u-{\frac{{\bm \nu}}{\en_1}}\,\Delta_{x}u + {\en_{1}}\mathrm{Div}_{x}\left(u\otimes u\right)+\nabla_{x}p=\lambda_0u
\end{equation}
while the limit temperature $\vE(t,x)$ in \eqref{eq:hlim} satisfies 
\begin{equation*}
\partial_{t}\vE 
-\frac{\gamma}{\en_{1}^{2}}\,\Delta_{x}\vE{+}\en_{1}\,u\cdot \nabla_{x}\vE
=\frac{2}{(d+2)\en_{1}^{2}}\mathcal{J}_{0}+\frac{2d\lambda_0}{d+2}E(t)+\frac{2}{d+2}\frac{\d}{\d t}E(t)\,,\end{equation*}
where
$$E(t)= \int_{\T^{d}}\vE(t,x)\d x, \qquad t \geq0\,.$$
\end{prop}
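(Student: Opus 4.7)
The plan is to pass to the limit $\e \to 0^+$ in the moment equations \eqref{bulk1} and \eqref{eq:trick}, harvesting all the convergence results already obtained. I would treat the momentum equation first, then the temperature equation.

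\textbf{Momentum equation.} I would apply the Leray projection $\mathcal{P}$ to \eqref{bulk1}: since $\mathcal{P}\nabla_x p_\e = 0$, this yields
\begin{equation*}
\partial_t \mathcal{P}\la v\,h_{\e}\ra + \frac{1}{\e}\mathcal{P}\mathrm{Div}_{x}\la \bm{A}\,h_{\e}\ra = \frac{\kappa_{\re}}{\e^2}\mathcal{P}\la v\,h_{\e}\ra.
\end{equation*}
Recalling $\la v\,h_\e\ra = \en_1 \bm{u}_\e$, Lemma \ref{lem:mode} gives $\mathcal{P}\bm{u}_\e \to u$ in $\mathscr{D}'_{t,x}$ (and in fact the stronger mode from Lemma~\ref{prop:strongue}), so $\partial_t\mathcal{P}\la v\,h_\e\ra \to \en_1 \partial_t u$ in $\mathscr{D}'_{t,x}$. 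The convective--diffusive contribution converges by \eqref{eq:limphiL1-2} to $-\bm{\nu}\Delta_x u + \en_1^2 \mathcal{P}\mathrm{Div}_x(u\otimes u)$. For the right-hand side, Assumption~\ref{hyp:re} provides $\kappa_\re/\e^2 \to \lambda_0$, so the drift term converges to $\en_1 \lambda_0 u$. Collecting everything and dividing by $\en_1$:
\begin{equation*}
\partial_t u - \frac{\bm{\nu}}{\en_1}\Delta_x u + \en_1 \mathcal{P}\mathrm{Div}_x(u\otimes u) = \lambda_0 u \qquad \text{in } \mathscr{D}'_{t,x}.
\end{equation*}
Writing $\en_1 \mathcal{P}\mathrm{Div}_x(u\otimes u) = \en_1 \mathrm{Div}_x(u\otimes u) + \nabla_x p$ for some pressure $p$ (by the standard decomposition, see \cite[Proposition~1.6]{majda}) gives~\eqref{eq:bulk2}.

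\textbf{Temperature equation.} I would start from \eqref{eq:trick} and pass to the limit term by term. The left-hand side terms are handled as follows: $\partial_t \vartheta_\e \to \partial_t \vartheta_0$ in $\mathscr{D}'_{t,x}$ by Lemma~\ref{prop:strongue}; $\e^{-1}\mathrm{div}_x \la \bm{b}\,h_\e\ra$ converges by~\eqref{eq:limpsiL1-2} to $-\tfrac{d+2}{2}\gamma \Delta_x \vE + \tfrac{d+2}{2}\en_1^3 u \cdot \nabla_x \vE$. For the right-hand side, Lemma~\ref{lemJ0} gives $\e^{-3}\mathscr{J}_\re(f_\e,f_\e) \to \mathcal{J}_0$, while
\begin{equation*}
\frac{2\kappa_\re}{\e^2}\la \tfrac{1}{2}|v|^2 h_\e\ra = \frac{\kappa_\re}{\e^2}\, d\en_1\,\bm{\beta}_\e \longrightarrow d\en_1 \lambda_0 (\varrho + \en_1 \vE) \qquad \text{in } \mathscr{D}'_{t,x},
\end{equation*}
by Lemma~\ref{lem:mode} and Assumption~\ref{hyp:re}. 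At this point I invoke the strengthened Boussinesq relation~\eqref{eq:boussi2}, which integrated over $\T^d$ yields $\varrho + \en_1 \vE = \en_1 E(t)$, so this last limit equals $d\en_1^2 \lambda_0 E(t)$.

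\textbf{Identifying $\vartheta_0$.} Using $\varrho = -\en_1(\vE - E)$, a direct computation gives
\begin{equation*}
\vartheta_0 = \frac{d\en_1}{2}(\varrho + \en_1 \vE) - \frac{d+2}{2}\en_1 \varrho = \frac{d+2}{2}\en_1^2 \vE - \en_1^2 E(t),
\end{equation*}
hence $\partial_t \vartheta_0 = \tfrac{d+2}{2}\en_1^2 \partial_t \vE - \en_1^2 \tfrac{d}{dt}E(t)$. Substituting into the limiting equation and dividing by $\tfrac{d+2}{2}\en_1^2$ produces exactly the stated equation for $\vE$.

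\textbf{Main obstacle.} None of the individual passages to the limit is delicate at this stage: the hard analytic work (compactness of $\mathcal{P}\bm{u}_\e$ and $\vartheta_\e$, identification of the diffusive coefficients via $\phi$ and $\psi$, the compensated-compactness argument eliminating the non-solenoidal part of the convection and the acoustic modes) has already been done in Lemmas~\ref{prop:strongue}--\ref{lem:convect}. The only subtlety is algebraic: carefully using strengthened Boussinesq to rewrite the source $d\en_1\lambda_0(\varrho+\en_1\vE)$ as $d\en_1^2\lambda_0 E(t)$ and identifying $\vartheta_0$ cleanly in terms of $\vE$ and $E(t)$, so that $\tfrac{d}{dt}E(t)$ emerges on the right-hand side as announced. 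This unusual forcing by the spatial mean $E(t)$ is precisely the footprint of the energy dissipation that the self-similar rescaling preserves in the hydrodynamic limit.
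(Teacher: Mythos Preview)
Your proposal is correct and follows essentially the same route as the paper: apply the Leray projection to \eqref{bulk1} and use \eqref{eq:limphiL1-2} for the momentum equation, then pass to the limit in \eqref{eq:trick} using \eqref{eq:limpsiL1-2}, Lemma~\ref{lemJ0}, and the strengthened Boussinesq relation \eqref{eq:boussi2} for the temperature equation. The only cosmetic difference is that the paper keeps $\vartheta_0$ in the form $\tfrac{d\en_1}{2}(\varrho+\en_1\vE)-\tfrac{d+2}{2}\en_1\varrho$ and substitutes $\partial_t(\varrho+\en_1\vE)=\en_1\tfrac{\d}{\d t}E(t)$ and $\partial_t\varrho=-\en_1(\partial_t\vE-\tfrac{\d}{\d t}E(t))$ at the end, whereas you simplify $\vartheta_0$ and the source term via Boussinesq one step earlier; the algebra is equivalent.
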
 
Notice that, due to \eqref{eq:incomp}, $\mathrm{Div}_{x}(u\otimes u)=\left(u \cdot \nabla_{x}\right)u$ and \eqref{eq:bulk2} is nothing but a \emph{damped} Navier-Stokes equation associated to a divergence-free source term given by $\lambda_0u$.
\begin{proof} The proof of \eqref{eq:bulk2} is a straightforward consequence of the previous limit. To obtain investigate the evolution of $\vE$, we recall that $\vartheta_{\e}$ satisfies \eqref{eq:trick}. 
We notice that
$$\frac{1}{\e^{3}}\mathscr{J}_{\re}(f_{\e},f_{\e})+\frac{2\kappa_{\re}}{\e^{2}}\la \tfrac{1}{2}|v|^{2}h_{\e}\ra \longrightarrow \mathcal{J}_{0}+d\en_{1}\lambda_0\left(\varrho+\en_{1}\vE\right)\,,$$
whereas
$$\vartheta_{\e}\longrightarrow \la \tfrac{1}{2}(|v|^{2}-(d+2)\en_{1})\bm{h}\ra=\frac{d\en_{1}}{2}\left(\varrho+\en_{1}\vE\right)-\frac{d+2}{2}\en_{1}\varrho\,,$$
where the convergence is meant in {$\mathscr{D}'_{t,x}$}.  We deduce from \eqref{eq:limpsiL1-2}, performing the distributional limit of \eqref{eq:trick}, that
\begin{multline}\label{eq:energy2}
\frac{d\en_{1}}{2}\partial_{t}\left(\varrho+\en_{1}\vE\right)-\frac{d+2}{2}\en_{1}\partial_{t}\varrho
-\frac{d+2}{2}\gamma\,\Delta_{x}\vE{+}\frac{d+2}{2}\en_{1}^{3}\,u\cdot \nabla_{x}\vE\\
=\mathcal{J}_{0}+d\en_{1}\lambda_0\left(\varrho+\en_{1}\vE\right).
\end{multline}
Using the strengthened Boussinesq relation \eqref{eq:boussi2}, we see that 
$$\partial_{t}\left(\varrho+\en_{1}\vE\right)=\en_{1}\frac{\d}{\d t}E(t), \quad \text{and} \quad \partial_{t}\varrho=-\en_{1}\left(\partial_{t}\vE-\frac{\d}{\d t}E(t)\right)\,,$$ 
and get the result.
\end{proof}
\begin{prop}\label{prop:temp} For any $t \geq 0$, it follows that
$$E(t)= \int_{\T^{d}}\vE(t,x)\d x=0\,,$$
consequently, the limiting temperature $\vE(t,x)$ in \eqref{eq:hlim} satisfies
\begin{equation}\label{eq:energy3}
\partial_{t}\,\vE-\frac{\gamma}{\en_{1}^{2}}\,\Delta_{x}\vE{+}\en_{1}\,u\cdot \nabla_{x}\vE=\frac{\lambda_{0}\,\bar{c}}{2(d+2)}\sqrt{\en_{1}}\,\vE. 
\end{equation}
Moreover, the strong Boussinesq relation
\begin{equation}\label{eq:boussi3}
\varrho(t,x)+\en_{1}\vE(t,x)=0\,, \qquad x \in \T^{d}\,,
\end{equation}
holds true.
\end{prop}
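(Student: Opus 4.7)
The plan is to extract a closed linear autonomous ODE satisfied by $E(t)=\int_{\T^d}\vartheta(t,x)\,dx$ from the temperature equation of Proposition~\ref{lem:temp}, identify the initial value $E(0)=0$ from the prescribed initial datum $\varrho_{\mathrm{in}}=-\en_{1}\vartheta_{\mathrm{in}}$ together with the strengthened Boussinesq relation \eqref{eq:boussi2} evaluated at $t=0^{+}$, and conclude $E\equiv 0$. Once $E$ vanishes identically, the strengthened Boussinesq collapses to the strong form \eqref{eq:boussi3}, and the equation \eqref{eq:energy3} follows by simply inserting $E=\dot E=0$ in the equation of Proposition~\ref{lem:temp}.

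For the first step I would substitute the explicit expression $\mathcal{J}_{0}(t,x)=-\lambda_{0}\bar c\,\en_{1}^{5/2}\bigl(E(t)-\tfrac{1}{4}\vartheta(t,x)\bigr)$ provided by Lemma~\ref{lemJ0} into the equation of Proposition~\ref{lem:temp} and integrate over $\T^{d}$, recalling $|\T^{d}|=1$. The Laplacian contribution vanishes by periodicity and the transport term satisfies
\begin{equation*}
\int_{\T^{d}}u\cdot\nabla_{x}\vartheta\,\d x=-\int_{\T^{d}}\vartheta\,\mathrm{div}_{x}u\,\d x=0
\end{equation*}
by the incompressibility condition \eqref{eq:incomp} obtained in Corollary~\ref{cor:boussi}. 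Collecting the $\dot E$ contributions on the left-hand side and the $E$ contributions on the right-hand side yields after elementary algebra
\begin{equation*}
\tfrac{d}{d+2}\dot E(t)=\tfrac{1}{d+2}\Bigl(2d\lambda_{0}-\tfrac{3}{2}\lambda_{0}\bar c\sqrt{\en_{1}}\Bigr)E(t),
\end{equation*}
which is a linear autonomous ODE $\dot E=\mu E$ on $[0,T]$.

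For the initial value I would exploit the time regularity $(\varrho,\vartheta)\in\mathcal{C}([0,T];\mathscr{W}_{m-2})$ established in Theorem~\ref{theo:CVNS} to extend the strengthened Boussinesq relation \eqref{eq:boussi2} to $t=0^{+}$, which gives $\varrho_{\mathrm{in}}(x)+\en_{1}\bigl(\vartheta_{\mathrm{in}}(x)-E(0)\bigr)=0$; combining this with the prescribed identity $\varrho_{\mathrm{in}}=-\en_{1}\vartheta_{\mathrm{in}}$ yields $\en_{1}E(0)=0$, i.e.\ $E(0)=0$, and the linear ODE then forces $E\equiv 0$ on $[0,T]$. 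Plugging $E=\dot E=0$ and $\mathcal{J}_{0}=\tfrac{1}{4}\lambda_{0}\bar c\,\en_{1}^{5/2}\vartheta$ into Proposition~\ref{lem:temp} produces exactly \eqref{eq:energy3}, while the strengthened Boussinesq \eqref{eq:boussi2} becomes $\varrho+\en_{1}\vartheta=0$, which is \eqref{eq:boussi3}. The delicate point here is the consistency step $E(0)=0$: the raw spatial average $\int_{\T^{d}}\theta_{0}\,\d x$ arising from the weak limit $\bm{\pi}_{0}h_{\mathrm{in}}^{\e}\to h_{0}$ need not vanish, and it is precisely the initial-layer filtering that defines $\vartheta_{\mathrm{in}}=\frac{d}{d+2}\theta_{0}-\frac{2}{(d+2)\en_{1}}\varrho_{0}$ together with $\varrho_{\mathrm{in}}=-\en_{1}\vartheta_{\mathrm{in}}$ which is calibrated so that the averaged Boussinesq identity is self-consistent at $t=0$; this calibration is what turns the linear ODE into the trivial solution and hence upgrades the weak Boussinesq relation of Corollary~\ref{cor:boussi} to its strong form.
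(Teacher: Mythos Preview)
Your derivation of the linear autonomous ODE for $E(t)$ by averaging the temperature equation of Proposition~\ref{lem:temp} over $\T^{d}$ is correct and agrees with the paper (which averages the equivalent identity \eqref{eq:energy2} instead, yielding the same coefficient $\bar c_{0}=2\lambda_{0}-\tfrac{3}{2d}\lambda_{0}\bar c\sqrt{\en_{1}}$). The deduction of \eqref{eq:energy3} and \eqref{eq:boussi3} from $E\equiv0$ is also correct.

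The gap is in your determination of $E(0)$. You invoke the time regularity $(\varrho,\vE)\in\mathcal{C}([0,T];\mathscr{W}_{m-2})$ and the identity $\varrho_{\mathrm{in}}=-\en_{1}\vE_{\mathrm{in}}$ from Theorem~\ref{theo:CVNS}, but at this stage Theorem~\ref{theo:CVNS} is the target, not an established fact. Both ingredients you use are obtained \emph{after} Proposition~\ref{prop:temp} and in fact rely on it: the continuity of $\varrho$ and $\vE$ separately (rather than only the combination $\vartheta_{0}$) is derived in Lemma~\ref{lem:cont} by explicitly invoking the strong Boussinesq relation \eqref{eq:boussi3}, and the identification $(\varrho(0^{+}),\vE(0^{+}))=(\varrho_{\mathrm{in}},\vE_{\mathrm{in}})$ is carried out in Proposition~\ref{prop:ascoli}. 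The ``prescription'' $\varrho_{\mathrm{in}}=-\en_{1}\vE_{\mathrm{in}}$ is only a \emph{definition} of the symbols $\varrho_{\mathrm{in}},\vE_{\mathrm{in}}$; that the limiting fields actually take these values at $t=0^{+}$ is precisely what needs the strong Boussinesq relation you are trying to prove. So the argument is circular.

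The paper obtains $E(0)=0$ by returning to the kinetic level. Writing $f_{\e}(0)=\M+(G_{\re}-\M)+\e\,h_{\e}(0)$ and using that $\M$ is normalized to share the global mass, momentum and energy of $F^{\e}_{\mathrm{in}}=f_{\e}(0)$, one gets
\begin{equation*}
\int_{\T^{d}}\la\tfrac{1}{2}|v|^{2}\,h_{\e}(0)\ra\,\d x=-\e^{-1}\int_{\T^{d}}\la\tfrac{1}{2}|v|^{2}(G_{\re}-\M)\ra\,\d x,
\end{equation*}
and the right-hand side is $O(\e)$ because $\|G_{\re}-\M\|_{L^{1}_{v}(\m_{2})}\lesssim 1-\re(\e)\lesssim\e^{2}$. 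Passing to the limit gives $E(0)=0$, and the ODE then forces $E\equiv0$.
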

\begin{proof} To capture the evolution of the temperature $E(t)$, we average equation \eqref{eq:energy2} over~$\T^{d}$ and using the incompressibility condition \eqref{eq:incomp} we deduce get that
\begin{equation}\label{eq:dEt}
\dfrac{\d}{\d t}E(t)=\frac{2}{d\en_{1}^{2}}\int_{\T^{d}}\mathcal{J}_{0}(t,x)\d x+2\lambda_0E(t).\end{equation}
And, from Lemma \ref{lemJ0}, it holds that
$$\dfrac{\d}{\d t}E(t)=\bar{c}_{0}\,E(t), \qquad \bar{c}_{0}:=2\lambda_0-\frac{3}{2d}\lambda_{0}\bar{c}\,\sqrt{\en_{1}}\,,$$
so that,
$$E(t)=E(0)\exp(\bar{c}_{0}t)\,, \qquad t \geq 0.$$
Now, return to the original equation \eqref{BE} and recall that the solution $f_{\e}(t,x,v)$ is given by
\begin{equation*}
f_{\e}(t) = G_{\re} + \e\,h_{\e}(t) = \M + \big(G_{\re} - \M\big) + \e\,h_{\e}(t)\,,\qquad t\geq0\,,
\end{equation*}
where $\M$ has the same \textit{global} mass, momentum and energy as the initial datum $f_{\e}(0)=F^{\e}_{\mathrm{in}}$, independent of $\e>0$.  For any test-function $\phi=\phi(x,v)$ we get that
\begin{equation*}
\e^{-1}\int_{\T^{d}}\la \big(f_{\e}(t) - \M)\,\phi \ra  \d x - \e^{-1}\int_{\T^{d}} \la \big(G_{\re} - \M\big)\,\phi \ra \d x = \int_{\T^{d}} \la h_{\e}(t)\,\phi \ra \d x\,,\quad t\geq0\,.
\end{equation*}
Using this equality for $\phi(x,v)=\tfrac{1}{2}|v|^{2}$ and $t=0$ one is led to
\begin{equation*}
-\e^{-1}\int_{\T^{d}} \la \tfrac{1}{2}\big(G_{\re} -\M\big)\,|v|^{2} \ra \d x = \int_{\T^{d}} \la \tfrac{1}{2}h_{\e}(0)\,|v|^{2}\ra \d x\,.
\end{equation*}
We recall that $\| G_{\re} - \M\|_{L^{1}_v(\m_2)}\leq C(1-\re)\lesssim \e^{2}$, consequently
\begin{equation*}
E(0) =  \int_{\T^d}\theta(0,x)\d x= \lim_{\e\rightarrow0} \int_{\T^{d}} \la h_{\e}(0)\tfrac{1}{2}|v|^{2}\ra \d x =0\,.
\end{equation*}
Therefore, $E(t)\equiv0$ for all $t>0$.  This observation and \eqref{eq:energy3} lead us to the equation for the energy and Boussinesq relation \eqref{eq:boussi3}.
\end{proof}
\subsection{About the initial conditions}
Before going into the proof of Theorem~\ref{theo:CVNS-int} and handle the problem of initial datum of our limit system, we begin by proving that our limits $u$ and~$\theta$ in~\eqref{eq:hlim} are actually continuous on~$(0,T)$.
{\begin{lem} \label{lem:cont}Consider the sequences $\{u_\e\}_{\e}$ and~$\{\vartheta_\e\}_\e$ defined in Lemma~\ref{prop:strongue}. 
The time-depending mappings 
$$
t \in [0,T] \longmapsto   \left\|\vartheta_{\e}(t)\right\|_{ {\W^{m-2,2}_{x}(\T^{d})}}
\quad \text{and} \quad 
t \in [0,T] \longmapsto  \left\|u_{\e}(t)\right\|_{ {\W^{m-2,2}_{x}(\T^{d})}}
$$
are H\"older continuous uniformly in $\e$. 
As a consequence, the limiting mass $\varrho$, velocity $u$ and temperature $\vE$ in \eqref{eq:hlim} are continuous on $(0,T)$.
\end{lem}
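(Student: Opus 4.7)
The plan is to establish, via a direct bound on $\partial_{t}u_{\e}$ and $\partial_{t}\vartheta_{\e}$ in $\W^{m-2,2}_{x}(\T^{d})$ with an explicit $\sqrt{t_2-t_1}$ modulus, that both $u_{\e}$ and $\vartheta_{\e}$ are $1/2$-Hölder continuous as $\W^{m-2,2}_{x}$-valued functions on $[0,T]$, uniformly in $\e$, and then to transfer this property to the limits $u$ and $\vartheta_{0}$ by combining with the strong $L^{1}$-convergence of Lemma~\ref{prop:strongue}.

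For $u_{\e}$, I would start from the explicit formula
$$
\partial_{t}u_{\e}(t)=-\exp(-t\kappa_{\re}/\e^{2})\,\mathcal{P}\bigl(\en_{1}^{-1}\mathrm{Div}_{x}\langle\e^{-1}\bm{A}\,h_{\e}(t)\rangle\bigr)
$$
derived in the proof of Lemma~\ref{prop:strongue}, observe that $\langle\bm{A}\,h_{\e}\rangle=\langle\bm{A}(\mathbf{Id}-\bm{\pi}_{0})h_{\e}\rangle$ because $\int_{\R^{d}}\bm{A}\,\Psi_{i}\M\,\d v=0$ for $i=1,\ldots,d+2$, and use $|\bm{A}(v)|\lesssim\m_{q}(v)$ together with Minkowski's integral inequality to obtain the pointwise bound $\|\partial_{t}u_{\e}(\tau)\|_{\W^{m-2,2}_{x}}\lesssim \|\e^{-1}(\mathbf{Id}-\bm{\pi}_{0})h_{\e}(\tau)\|_{L^{1}_{v}\W^{m-1,2}_{x}(\m_{q})}$. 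Integrating on $[t_{1},t_{2}]\subset[0,T]$ and invoking Proposition~\ref{lem:equi} with $\beta=m-1$ yields $\|u_{\e}(t_{2})-u_{\e}(t_{1})\|_{\W^{m-2,2}_{x}}\leq C\sqrt{\mathcal{K}_{0}}\max\{\sqrt{t_{2}-t_{1}},t_{2}-t_{1}\}$, which is the desired uniform Hölder estimate. For $\vartheta_{\e}$ the plan is identical, starting from equation~\eqref{eq:trick}: the term $\e^{-1}\mathrm{div}_{x}\langle\bm{b}\,h_{\e}\rangle$ is treated exactly as above (using $\int\bm{b}\,\Psi_{i}\M\,\d v=0$ and $|\bm{b}(v)|\lesssim\m_{q}(v)$), while the remaining source terms $\e^{-3}\mathscr{J}_{\re}(f_{\e},f_{\e})$ and $2\kappa_{\re}\e^{-2}\langle\tfrac{1}{2}|v|^{2}h_{\e}\rangle$ are uniformly bounded in $L^{\infty}((0,T);\W^{m-2,2}_{x})$ since $(1-\re)/\e^{2}$ and $\kappa_{\re}/\e^{2}$ are bounded by Assumption~\ref{hyp:re}, and $\|h_{\e}\|_{\E}$ is uniformly controlled by Theorem~\ref{theo:main-cauc1} (see the explicit decomposition of $\mathscr{J}_{\re}$ given in the proof of Lemma~\ref{lemJ0}); their contribution to the integral on $[t_{1},t_{2}]$ is therefore $O(t_{2}-t_{1})$. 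The uniform Hölder continuity of the real-valued maps $t\mapsto\|u_{\e}(t)\|_{\W^{m-2,2}_{x}}$ and $t\mapsto\|\vartheta_{\e}(t)\|_{\W^{m-2,2}_{x}}$ then follows from the reverse triangle inequality.

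For the continuity of the limits, the uniform convergence $\kappa_{\re}/\e^{2}\to\lambda_{0}$ combined with the strong convergence $\mathcal{P}\bm{u}_{\e}\to u$ in $L^{1}((0,T);\W^{m-2,2}_{x})$ of Lemma~\ref{prop:strongue} gives $u_{\e}\to e^{-\lambda_{0}t}u$ in $L^{1}((0,T);\W^{m-2,2}_{x})$. Extracting a subsequence converging a.e. in $t$, the uniform Hölder estimate passes to the limit on a set of full measure, so that $t\mapsto e^{-\lambda_{0}t}u(t)$ admits a $1/2$-Hölder continuous representative on $[0,T]$, and $u$ itself is therefore continuous on $(0,T)$. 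The same argument yields continuity of $\vartheta_{0}$. Continuity of $\varrho$ and $\vE$ is then a direct algebraic consequence of the strong Boussinesq relation~\eqref{eq:boussi3} of Proposition~\ref{prop:temp}: since $\vartheta_{0}=-\tfrac{d+2}{2}\en_{1}\,\varrho$, one gets $\varrho=-\tfrac{2}{(d+2)\en_{1}}\vartheta_{0}$ and $\vE=-\varrho/\en_{1}$, both continuous.

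The main obstacle is extracting a modulus of continuity in time that is uniform in $\e$ despite the singular factors $\e^{-1}$ and $\e^{-3}$ in the moment equations; this is exactly what Proposition~\ref{lem:equi} provides, and the structural fact that $\bm{A}$ and $\bm{b}$ are orthogonal to $\mathrm{Ker}(\mathbf{L}_{1})$ is what allows $\e^{-1}$ to be absorbed into the $\e^{-1}(\mathbf{Id}-\bm{\pi}_{0})h_{\e}$ estimate.
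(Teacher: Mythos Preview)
Your proposal is correct and follows essentially the same route as the paper's proof. The only cosmetic difference is that the paper tests the moment equations \eqref{eq:trick} and the equation for $u_\e$ against $\partial_x^\beta\varphi$ with $\varphi\in\mathcal{C}_c^\infty(\T^d)$ and $|\beta|\leq m-2$, bounds each term, and then recovers the $\W^{m-2,2}_x$ estimate by duality (supremum over $\varphi\in L^2_x$), whereas you bound $\|\partial_t u_\e\|_{\W^{m-2,2}_x}$ and $\|\partial_t\vartheta_\e\|_{\W^{m-2,2}_x}$ directly and integrate; both routes rely on the same three ingredients --- the orthogonality $\bm{A},\bm{b}\perp\mathrm{Ker}(\mathbf{L}_1)$, Proposition~\ref{lem:equi} for the $\e^{-1}$ fluxes, and the uniform $L^\infty_t\E$ bound on $h_\e$ for the remaining source terms --- and the passage to the limit via a.e.\ convergent subsequences plus the strong Boussinesq relation is identical.
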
} 
\begin{proof}
Recall that we set 
$$\vartheta_{\e}(t,x)=\la \tfrac{1}{2}\big( |v|^{2}-(d+2)\en_{1} \big)h_{\e}\ra.$$ 
For any test-function $\varphi=\varphi(x) \in \mathcal{C}_{c}^{\infty}(\T^{d})$ and multi-index $\beta$ with $|\beta| \leq m-2$, multiplying~\eqref{eq:trick} with $\partial_{x}^{\beta}\varphi$ and integrating in time and space, one deduces that for any~$0 \leq t_1\leq t_2$, 
\begin{multline}\label{eq:equivartheta}
\int_{\T^{d}}\left[\partial_{x}^{\beta}\vartheta_{\e}(t_{2},x)-\partial_{x}^{\beta}\vartheta_{\e}(t_{1},x)\right]\varphi(x)\d x=
\int_{t_{1}}^{t_{2}}\d t\int_{\T^{d}}\mathrm{div}_{x}\la \e^{-1}\bm{b}\,\partial_{x}^{\beta}h_{\e}(t)\ra \varphi(x)\d x\\
+\int_{t_{1}}^{t_{2}}\d t\int_{\T^{d}}\e^{-3}\partial_{x}^{\beta}\mathscr{J}_{\re}(f_{\e},f_{\e})\varphi(x)\d x\\
+\frac{2\kappa_{\re}}{\e^{2}}\int_{t_{1}}^{t_{2}}\d t \int_{\T^{d}}\la \tfrac{1}{2}|v|^{2}\partial_{x}^{\beta}h_{\e}\ra \varphi(x)\d x.
\end{multline}
 {Notice that
$$\left|\int_{\T^{d}}\la \tfrac{1}{2}|v|^{2}\partial_{x}^{\beta}h_{\e}\ra \varphi(x)\d x\right| \leq \|\varphi\|_{L^{2}_{x}}\,\left\|\la \tfrac{1}{2}|v|^{2}\partial_{x}^{\beta}h_{\e}\ra\right\|_{L^{2}_{x}} \leq \|\varphi\|_{L^{2}_{x}}\left\|\tfrac{1}{2}|v|^{2}\partial_{x}^{\beta}h_{\e} \right\|_{L^{1}_{v}L^{2}_{x}}$$
thanks to Minkowski's integral inequality.}
Clearly, since $\e^{-2}\kappa_{\re} \to \lambda_0$, there is $C>0$ such that
\begin{equation*}\begin{split}
\frac{2\kappa_{\re}}{\e^{2}}\left|\int_{t_{1}}^{t_{2}}\d t\int_{\T^{d}}\la \tfrac{1}{2}|v|^{2}\partial_{x}^{\beta}h_{\e}\ra \varphi(x)\d x\right| &\leq C\|\varphi\|_{L^{2}}\int_{t_{1}}^{t_{2}}\|h_{\e}(t)\|_{ {L^{1}_{v}\W^{m-1,2}_{x}(\m_{2})}}\d t \\
&\leq C\sqrt{\mathcal{K}_{0}}(t_{2}-t_{1})\end{split}\end{equation*}
from the general estimate in Theorem \ref{theo:main-cauc1}. In the same way, since
$$\partial_{x}^{\beta}\mathscr{J}_{\re}(f_{\e},f_{\e})=\mathscr{J}_{\re}(\partial_{x}^{\beta}f_{\e},f_{\e})+\mathscr{J}_{\re}(f_{\e},\partial_{x}^{\beta}f_{\e})\,,$$ 
with $f_{\e}=G_{\re}+\e\,h_{\e}$, one deduces again from Theorem \ref{theo:main-cauc1} that 
\begin{multline*}
\left|\int_{t_{1}}^{t_{2}}\d t\int_{\T^{d}}\e^{-3}\partial_{x}^{\beta}\mathscr{J}_{\re}(f_{\e},f_{\e})\varphi(x)\d x\right|\\
\leq C\|\varphi\|_{L^{2}_{x}}\,\int_{t_{1}}^{t_{2}}\|h_{\e}(t)\|_{ {L^{1}_{v}\W^{m-1,2}_{x}(\m_{3})}}\left(1+\|h_{\e}(t)\|_{ {L^{1}_{v}\W^{m-1,2}_{x}(\m_{3})}}\right)\d t \leq C\sqrt{\mathcal{K}_{0}}(t_{2}-t_{1}).
\end{multline*}
Moreover, noticing that  $\la \bm{b}h_{\e}(t)\ra=\la \bm{b}(\mathbf{Id}-\bm{\pi}_{0})h_{\e}(t)\ra$ for any $t \geq0$, one deduces easily from Proposition \ref{lem:equi} that
$${\left|\int_{t_{1}}^{t_{2}}\e^{-1}\mathrm{div}_{x}\la \bm{b}\,h_{\e}(t) \ra\d t\right| \leq C\,\sqrt{t_{2}-t_{1}}}$$
for any $0 \leq t_{2}-t_{1} \leq 1.$ Since $\partial_{x}^{\beta}$ commutes with $\bm{\pi}_{0}$ we deduce easily that there is $C >0$ independent of $\e$ such that  {for any $0 \leq \beta \leq m-2$,}
\begin{equation}\label{eq:equicon2}
 {\left|\int_{t_{1}}^{t_{2}}\e^{-1}\mathrm{div}_{x}\la \bm{b}\,\partial^\beta_x h_{\e}(t) \ra\d t\right| \leq C\,\sqrt{t_{2}-t_{1}}}
\end{equation}
for any $0 \leq t_{2}-t_{1} \leq1.$ 
We conclude with \eqref{eq:equivartheta} that
$$\left|\int_{\T^{d}}\left[\partial_{x}^{\beta}\vartheta_{\e}(t_{2},x)-\partial_{x}^{\beta}\vartheta_{\e}(t_{1},x)\right]\varphi(x)\d x \right|\leq C\,\|\varphi\|_{L^{2}_{x}}\sqrt{\mathcal{K}_{0}}\,\sqrt{t_{2}-t_{1}}$$
for some positive constant independent of $\e$ and $0\leq t_{2}-t_{1}\leq 1$. {Since $\mathscr{C}_{c}^{\infty}(\T^{d})$ is dense in $L^{2}(\T^{d})$, the previous estimate is true for any $\varphi \in L^{2}(\T^{d})$ and, taking the supremum over all $\varphi \in L^{2}(\T^{d})$}, we deduce that
\begin{equation}\label{eq:Ascoli0}\left\|\partial_{x}^{\beta}\vartheta_{\e}(t_{2})-\partial_{x}^{\beta}\vartheta_{\e}(t_{1})\right\|_{L^{2}_{x}} \leq C\,\sqrt{\mathcal{K}_{0}}\,\sqrt{t_{2}-t_{1}}\end{equation}
and, the time-depending mappings 
$t \in [0,T] \longmapsto  {\left\|\vartheta_{\e}(t)\right\|_{ {\W^{m-2,2}_{x}(\T^{d})}}}$
 are thus H\"older continuous uniformly in $\e$.
Recall also that $\vartheta_{\e}(t)$ converges in $L^{1}((0,T)\;;\, {\W^{m-2,2}_{x}(\T^{d})})$ towards $\vartheta_{0}(t)=\frac{d\en_{1}}{2}\left(\varrho+\en_{1}\vE\right)-\frac{d+2}{2}\en_{1}\varrho\,$ from Lemma~\ref{prop:strongue}. {As a consequence, there exists a subsequence $(\vartheta_{\e'})_{\e'}$ such that $\|\vartheta_{\e'}(t) - \vartheta_0(t)\|_{ {\W^{m-2,2}_{x}(\T^{d})}}$ converges towards $0$ for almost every $t \in [0,T]$. Using then the uniform in $\e$ H\"older continuity obtained above, we can deduce that $\vartheta_0$ is H\"older continuous on~$(0,T)$.}
Recalling that $E(0)=0$ according to Proposition~\ref{prop:temp}, the strong Boussinesq relation~\eqref{eq:boussi3} holds true and 
$$\vartheta_{0}(t)=\frac{d+2}{2}\en_{1}^{2}\vE(t)=-\frac{d+2}{2}\en_{1}\varrho(t)\,,$$
which gives the regularity of both $\varrho$ and $\vE$.

We recall that, setting
$$u_{\e}(t,x)=\frac{1}{\en_{1}}\exp\left(-t\frac{\kappa_{\re}}{\e^{2}}\right)\mathcal{P}\la vh_{\e}\ra,$$
we have that
\begin{equation*}\label{bulk1P}
\partial_{t} u_{\e} +\exp\left(-t\frac{\kappa_{\re}}{\e^{2}}\right)\mathcal{P}\left(\mathrm{Div}_{x}\la \tfrac{1}{\en_{1}\e}\bm{A}\,h_{\e} \ra\right) =0\,.
\end{equation*}
we multiply this identity by $\partial^{\beta}_{x}\varphi$ and integrate in both time and space to get
\begin{multline*}
\int_{\T^{d}}\left[\partial_{x}^{\beta}u_{\e}(t_{2},x)-\partial_{x}^{\beta}u_{\e}(t_{1},x)\right]\varphi(x)\d x\\
=\int_{t_{1}}^{t_{2}}\exp\left(- \frac{\kappa_{\re}}{\e^{2}}t\right)\d t \int_{\T^{d}}\mathcal{P}\left(\mathrm{Div}_{x}\la \tfrac{1}{\en_{1}\e}\bm{A}\,\partial_{x}^{\beta}h_{\e} \ra\right)\varphi(x)\d x.
\end{multline*}
Arguing as in the proof of \eqref{eq:equicon2}, we see that there is $C >0$ independent of $\e$ such that {for any $0 \leq |\beta| \leq m-2$,}
\begin{equation*}\label{eq:equicon}
 {\left|\int_{t_{1}}^{t_{2}}\e^{-1}\mathrm{Div}_{x}\la \bm{A}\,\partial^\beta_x h_{\e}(t) \ra\d t\right| \leq C\,\sqrt{t_{2}-t_{1}}}
\end{equation*}
for any $0 \leq t_{2}-t_{1} \leq1.$  This gives easily
$$\left|\int_{\T^{d}}\left[\partial_{x}^{\beta}u_{\e}(t_{2},x)-\partial_{x}^{\beta}u_{\e}(t_{1},x)\right]\varphi(x)\d x\right| \leq C\|\varphi\|_{ {L^{2}_{x}}}\sqrt{t_{2}-t_{1}}\sqrt{\mathcal{K}_{0}}\,,$$
from which, as before, the time-depending mapping
$t \in [0,T] \longmapsto  \left\|u_{\e}(t)\right\|_{ {\W^{m-2,2}_{x}(\T^{d})}}$
is H\"older continuous uniformly in $\e$.  We deduce the result of regularity on $u$ as previously done for $\varrho$ and $\theta$ noticing that the limit of $u_{\e}$ is $\exp(-t\lambda_{0})\mathcal{P}u=\exp(-t\lambda_{0})u$.
\end{proof}
Recall that, in Theorem \ref{theo:strong-conv}, the convergence of ${h}_{\e}$ to the solution $\bm{h}(t,x)$ given by \eqref{eq:hlim} is known to hold   only for a subsequence and, in particular, different subsequences could converge towards different initial datum and therefore $(\varrho,u,\vE)$ could be different solutions to the Navier-Stokes system. We aim here to \emph{prescribe} the initial datum by ensuring the convergence of the initial datum $h_{\mathrm{in}}^{\e}$ towards a \emph{single} possible limit.

\smallskip
\noindent
Recall that the initial datum for \eqref{BE2} is denoted by $h_{\mathrm{in}}^{\e}$.  We write
$h_{\mathrm{in}}^{\e}=\bm{\pi}_{0}h^{\e}_{\mathrm{in}}+(\mathbf{Id}-\bm{\pi}_{0})h_{\mathrm{in}}^{\e}$ and introduce the following assumption.
\begin{hyp}\label{hyp:initial}
Assume that there exists $${(\varrho_{0},u_{0},\vE_{0}) \in \W^{m,2}_{x}(\T^{d}) \times \left(\W^{m,2}_{x}(\T^{d})\right)^{d}\times \W^{m,2}_{x}(\T^{d})}\,,$$
such that
$$\lim_{\e\to0}\left\|\bm{\pi}_{0}h_{\mathrm{in}}^{\e}-h_{0}\right\|_{ {L^{1}_{v}\W^{m,2}_{x}(\m_{q})}}=0\,,$$
where
$$h_{0}(x,v)=\left(\varrho_{0}(x)+u_{0}(x)\cdot v + \tfrac{1}{2}\vE_{0}(x)(|v|^{2}-d\en_{1})\right)\M(v)\,.$$
\end{hyp}
Under this assumption we can prescribe the initial value of the solution $(\varrho,u,\vE)$ and strengthen the convergence. 
\begin{prop}\label{prop:ascoli} We define the initial data for $(\varrho,u,\vE)$ as
\begin{multline}\label{eq:iniNS}
u_{\mathrm{in}}=u(0):=\mathcal{P}u_{0}\,, \qquad \vE_{\mathrm{in}}=\vE(0)=\frac{d}{d+2}\theta_{0}-\frac{2}{(d+2)\en_{1}}\varrho_{0}\,,\\ \varrho_{\mathrm{in}}=\varrho(0):=-\en_{1}\vE_{\mathrm{in}}\,,
\end{multline}
where we recall that $\mathcal{P}u_{0}$ is the Leray projection on divergence-free vector fields.
Then, as a consequence, for any $T >0$, one has that
$$
\vartheta_{\e}(t)=\la \tfrac{1}{2}\big( |v|^{2}-(d+2)\en_{1} \big)h_{\e}\ra \longrightarrow \frac{d+2}{2}\en_{1}^{2}\vE\,, \quad
 \text{ in } \quad \mathcal{C}\left([0,T]\,,\, {\W^{m-2,2}_{x}(\T^{d})}\right)\,,$$
and 
$${\frac{1}{\en_{1}}\mathcal{P}}\la v\,h_{\e}(t)\ra \longrightarrow u\,, \qquad \text{ in } \quad \mathcal{C}\left([0,T]\,,\,\left( {\W^{m-2,2}_{x}(\T^{d})}\right)^{d}\right).$$
\end{prop}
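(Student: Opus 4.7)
The plan is to upgrade the $L^1$-in-time convergences established in Lemma~\ref{prop:strongue} to uniform convergence on $[0,T]$ via an Ascoli-Arzel\`a compactness argument, with the time equicontinuity supplied by Lemma~\ref{lem:cont}, and then to pin down the initial values at $t=0$ using Assumption~\ref{hyp:initial}.

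Concretely, I would first note that Theorem~\ref{theo:main-cauc1}, combined with Minkowski's integral inequality and the fact that $q\geq 5$, yields the uniform-in-$(\e,t)$ bound
\begin{equation*}
\sup_{t\in[0,T]}\bigl(\|\vartheta_\e(t)\|_{\W^{m,2}_x(\T^d)}+\|\mathcal{P}\bm{u}_\e(t)\|_{\W^{m,2}_x(\T^d)}\bigr)\lesssim\sqrt{\mathcal{K}_0}.
\end{equation*}
On the other hand, the H\"older estimate~\eqref{eq:Ascoli0} and its $u_\e$-analogue from Lemma~\ref{lem:cont} give equicontinuity of $\{\vartheta_\e\}$ in $\mathcal{C}([0,T];\W^{m-2,2}_x(\T^d))$; for $\mathcal{P}\bm{u}_\e=e^{t\kappa_\re/\e^2}u_\e$ one transfers the H\"older estimate via the uniform boundedness and uniform Lipschitz control of the scalar factor $e^{t\kappa_\re/\e^2}$ on $[0,T]$, which hold since $\kappa_\re/\e^2\to\lambda_0$. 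Combined with the Rellich-Kondrachov compactness of $\W^{m,2}_x\hookrightarrow\W^{m-2,2}_x$, Ascoli-Arzel\`a yields relative compactness of both families in $\mathcal{C}([0,T];\W^{m-2,2}_x)$. Since Lemma~\ref{prop:strongue} already identifies the $L^1$-limits as $\vartheta_0=\tfrac{d+2}{2}\en_1^2\vE$ (via the strong Boussinesq relation~\eqref{eq:boussi3}) and $u$ respectively, uniqueness of the cluster points forces uniform-in-time convergence of the full sequences.

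The only remaining step---and the only place where Assumption~\ref{hyp:initial} actually enters---is to verify that the time-$0$ traces coincide with the values prescribed in~\eqref{eq:iniNS}. Here I would exploit the algebraic fact that both $v$ and $\tfrac{1}{2}(|v|^{2}-(d+2)\en_{1})$ lie in the span of $\{\Psi_j\}_{j=1}^{d+2}$, so that
\begin{equation*}
\la v(\mathbf{Id}-\bm{\pi}_{0})g\ra=0,\qquad \la \tfrac{1}{2}(|v|^{2}-(d+2)\en_{1})(\mathbf{Id}-\bm{\pi}_{0})g\ra=0
\end{equation*}
for every admissible $g$. Consequently $\vartheta_\e(0)$ and $\bm{u}_\e(0)$ depend only on $\bm{\pi}_{0}h_{\mathrm{in}}^{\e}$, which Assumption~\ref{hyp:initial} sends strongly to $h_{0}$ in ${L^{1}_{v}\W^{m,2}_{x}(\m_{q})}$. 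A direct Gaussian moment computation on $h_{0}$ then gives $\la \tfrac{1}{2}(|v|^{2}-(d+2)\en_{1})h_{0}\ra=-\en_{1}\varrho_{0}+\tfrac{d}{2}\en_{1}^{2}\vE_{0}=\tfrac{d+2}{2}\en_{1}^{2}\vE_{\mathrm{in}}$ and $\en_{1}^{-1}\la v h_{0}\ra=u_{0}$, so that after Leray-projection one recovers exactly $u_{\mathrm{in}}=\mathcal{P}u_{0}$. I do not expect any serious obstacle: the compactness step is routine once the two \emph{a priori} ingredients are in place, the main subtlety being purely algebraic, namely, that the specific constants defining $\vE_{\mathrm{in}}$ and $\varrho_{\mathrm{in}}=-\en_{1}\vE_{\mathrm{in}}$ in~\eqref{eq:iniNS} are exactly those making the pair $(\vartheta_{0}(0),u(0))$ consistent with the hydrodynamic form~\eqref{eq:hlim} of $\bm{h}$ at $t=0$.
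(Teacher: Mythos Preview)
Your proposal is correct and follows essentially the same Ascoli--Arzel\`a route as the paper, combining the equicontinuity of Lemma~\ref{lem:cont} with the identification of limits from Lemma~\ref{prop:strongue}. In fact you are slightly more careful than the paper on two points: you explicitly supply the pointwise relative compactness needed for vector-valued Ascoli via the uniform $\W^{m,2}_x$ bound and Rellich--Kondrachov (the paper only mentions boundedness in $\W^{m-2,2}_x$, which by itself is insufficient), and you make explicit the algebraic fact that $v$ and $\tfrac{1}{2}(|v|^{2}-(d+2)\en_{1})$ lie in $\mathrm{span}\{\Psi_j\}$, which is why Assumption~\ref{hyp:initial} on $\bm{\pi}_0 h_{\mathrm{in}}^{\e}$ alone suffices to identify the initial trace---a step the paper uses implicitly without stating.
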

\begin{proof} According to Lemma~\ref{lem:cont}, we already have that the family of time-depending mappings
\begin{equation}\label{eq:Ascoli}
\left\{t \in [0,T] \longmapsto  \left\|\vartheta_{\e}(t)\right\|_{ {\W^{m-2,2}_{x}(\T^{d})}}\right\}_{\e}\end{equation}
is equicontinuous. 
At time $t=0$ according to Assumption \ref{hyp:initial},
$$\vartheta_{\e}(0,x)=\la \tfrac{1}{2}\big( |v|^{2}-(d+2)\en_{1} \big)h_{\mathrm{in}}^{\e}\ra \longrightarrow \en_{1}\left[\frac{d\en_{1}}{2}\vE_{0}(x)-\varrho_{0}(x)\right]\,,$$
and, by definition of $\varrho(0,x),\vE(0,x)$, we get that
$$\lim_{\e\to0^{+}}\|\vartheta_{\e}(0,\cdot)-\vartheta_{0}(0,\cdot)\|_{ {\W^{m-2,2}_{x}(\T^{d})}}=0.$$
In particular, the family $\{\|\vartheta_{\e}(0)\|_{ {\W^{m-2,2}_{x}(\T^{d})}}\}_{\e}$ is bounded and, since the family~\eqref{eq:Ascoli} is uniformly in $\e$ H\"older continuous, for any $t \in [0,T]$, the family $\{\|\vartheta_{\e}(t)\|_{{\W^{m-2,2}_{x}(\T^{d})}}\}_{\e}$ is also bounded. Since it is also  equicontinuous, Arzel\`a-Ascoli Theorem implies that the convergence holds in $\mathcal{C}([0,T]\,;\, {\W^{m-2,2}_{x}(\T^{d})})$ and
$$\vartheta_{0} \in \mathcal{C}([0,T]\,;\, {\W^{m-2,2}_{x}(\T^{d})}).$$
{As in the proof of Lemma~\ref{lem:cont}, it implies the continuity on $[0,T]$ of both $\varrho$ and $\theta$.}

\smallskip
\noindent
We proceed in a similar way for the regularity of $u$. \end{proof}
\noindent
All the previous convergence results lead us to the fact the the limit
$$\bm{h}(t,x,v)=\left(\varrho(t,x)+u(t,x)\cdot v + \frac{1}{2}\vE(t,x)(|v|^{2}-d\en_{1})\right)\M(v)$$
is such that 
$$(\varrho,u,\vE) \in \mathcal{C}([0,T];\mathscr{W}_{m-2}) \cap L^{2}\left((0,T);\mathscr{W}_{m}\right)\,,$$
solve  the following \emph{incompressible Navier-Stokes-Fourier system} where the right-hand-side acts as a self-consistent forcing term
\begin{equation}
\begin{cases}
\partial_{t}u-{\frac{{\bm \nu}}{\en_1}}\,\Delta_{x}u + {\en_{1}}\,u\cdot \nabla_{x}\,u+\nabla_{x}p=\lambda_{0}u\,,\\[7pt]
\partial_{t}\,\vE-\frac{\gamma}{\en_{1}^{2}}\,\Delta_{x}\vE{+}\en_{1}\,u\cdot \nabla_{x}\vE=\dfrac{\lambda_{0}\,\bar{c}}{2(d+2)}\sqrt{\en_{1}}\,\vE\,,\\[8pt]
\mathrm{div}_{x}u=0, \qquad \varrho + \en_{1}\,\vE = 0\,,
\end{cases}
\end{equation}
subject to the initial datum $(\varrho_{\mathrm{in}},u_{\mathrm{in}},\vE_{\mathrm{in}})$.  This proves Theorem \ref{theo:CVNS-int} in full.

\subsection{About the original problem in the physical variables}\label{sec:orig} The above considerations allow us to get a quite precise description of the asymptotic behaviour for the original physical problem \eqref{Bol-e}. Indeed, recalling the relations \eqref{eq:ScalING} together with Theorem \ref{theo:CVNS-int} one has\begin{align*}
F_{\e}(t,x,v)&={V}_{\e}(t)^{d}f_{\e}\big(\tau_{\e}(t),x,V_{\e}(t)v\big)\\
&={V}_{\e}(t)^{d}\Big( G_{\re(\e)}(V_{\e}(t)v) + \e\,h_{\e}(\tau_{\e}(t),x,V_{\e}(t)v) \Big)\\
&={V}_{\e}(t)^{d}\Big( G_{\re(\e)}(V_{\e}(t)v) + \e\,\bm{h}(\tau_{\e}(t),x,V_{\e}(t)v)\Big) + \e\,e_{\e}(t,x,v) \,,
\end{align*}
where the error term $e_{\e}$ is given by $$e_{\e}(t,x,v)={V}_{\e}(t)^{d}\,\big( h_{\e}(\tau_{\e}(t),x,V_{\e}(t)v) - \bm{h}(\tau_{\e}(t),x,V_{\e}(t)v) \big)\,.$$  Under Assumption \ref{hyp:re}, a relevant phenomenon occurs when considering the purely dissipative case $\lambda_0>0$. In such a case, the term $e_{\e}(t,x,v)$ becomes a \emph{uniform in time} error term.  The reason is that,
when $\lambda_0>0$, the scaling $V_{\e}(t)$ increases up to infinity.  More precisely,
\begin{equation*}
V_{\e}(t) \approx \big( 1+\lambda_0\,t \big)\,,\qquad \e \ll 1.
\end{equation*}
Indeed, Lemma \ref{lemma-app-pp} guarantees that for any $a\in(0,1/2)$, up to an extraction of a subsequence if necessary,
\begin{equation}\label{error-term}
\big| \langle e_{\e}(t),|v|^{\kappa}\varphi\rangle\big| \leq C_{\varphi}\sqrt{\mathcal{K}_0}\,V_{\e}(t)^{-\kappa-a}\,,\qquad  {\varphi\in  \mathcal{C}^{1}_{v,b}L^{\infty}_{x}}\,,\;\, 0\leq\kappa\leq q-1\,,
\end{equation}
where we denoted by $\mathcal{C}^{1}_{v,b}$ the set of $\mathcal{C}^{1}$ functions in $v$ that are bounded as well as their first order derivatives.  Consequently, 
\begin{align}\label{original-approx}
\begin{split}
F_{\e}&(t,x,v) = {V}_{\e}(t)^{d}\Big( G_{\re(\e)}(V_{\e}(t)v) + \e\,\big(\varrho(\tau_{\e}(t),x)+u(\tau_{\e}(t),x)\cdot (V_{\e}(t)v) \\
&+ \frac{1}{2}\vE(\tau_{\e}(t),x)(|V_{\e}(t)v|^{2}-d\en_{1})\big)\M(V_{\e}(t)v)\Big) + \mathrm{O}\Big( V_{\e}(t)^{-\kappa-a} \Big)\,,
\end{split}
\end{align}
in the \textit{weak sense} described in \eqref{error-term}.  In particular,  {if $\varphi=1$ and $\kappa=2$,} one finds from~\eqref{original-approx} an explicit expression for \textit{Haff's law} is obtained.  That is, the optimal cooling rate of the temperature is described by
\begin{equation*}\begin{split}
\bm{T}_{\e}(t)&=\frac{1}{\big|\mathbb{T}^{d}\big|}\int_{\mathbb{T}^{d}\times\mathbb{R}^{d}}F_{\e}(t,x,v)|v|^{2}\d v\,\d x \\
&= \frac{1}{ V_{\e}(t)^{2} }\bigg(\int_{\R^{d}}G_{\alpha}(v)|v|^{2}\d v + \frac{\e}{2\big|\mathbb{T}^{d}\big|}\int_{\mathbb{R}^{d}}\big( |v|^{2} - d\en_1 \big)|v|^{2}\mathcal{M}(v)\,\d v \int_{\mathbb{T}^{d}}\vE(\tau_{\e}(t),x)\,\d x \\
&\phantom{++}+\frac{\e}{\big|\T^{d}\big|}\int_{\R^{d}}|v|^{2}\M(v)\d v\int_{\T^{d}}\varrho(\tau_{\e}(t),x)\d x\bigg)+ \mathrm{O}\Big( V_{\e}(t)^{-2 - a} \Big)\\
& {\approx} \frac{d\en_{1}}{V_{\e}(t)^{2}}\bigg(1 + \frac{\e}{\big|\T^{d}|}\left(d\en_{1}\int_{\T^{d}}\vE(\tau_{\e}(t),x)\d x + 2\int_{\T^{d}}\varrho(\tau_{\e}(t),x)\d x\right)\bigg)\,,\qquad t \gg \frac{1}{\lambda_0}\,.
\end{split}\end{equation*}
Recalling that the fluctuation $h_{\e}$ is such that the average mass and temperature both vanish at all times, we deduce the precised Haff's law
$$\bm{T}_{\e}(t) \approx \frac{d\en_{1}}{V_{\e}(t)^{2}}, \qquad t \gg \frac{1}{\lambda_0}.$$
In the Appendix \ref{Appendix-PP} we complement this discussion and, in particular, show that the Haff's law holds uniformly \textit{locally} in space due to the boundedness of the solutions that we treat here.  This is not expected in a general context.

\appendix
\section{About granular gases in the spatial homogeneous setting}\label{appen:homog}
\subsection{The collision operator}
We collect several results about the Boltzmann collision operator $\Q_{\alpha}$ for granular gases. 
We shall consider a collision operator with more general collision kernel than the hard-spheres case considered in the paper, more precisely, a collision kernel $B(u,\sigma)$ of the form
\begin{equation}
\label{Bu} {B}(u,\sigma)=\Phi(|u|)b(\widehat{u} \cdot \sigma).
\end{equation}
The kinetic potential $\Phi(\cdot)$ is a suitable nonnegative function in $\R^{d}$ and the \textit{angular kernel} $b(\cdot)$ is assumed in $L^{1}(-1,1)$.  The associated collision operator $\Q_{B,\re}$ is defined through the weak formulation
\begin{equation}\label{co:weak-A}
\int_{\mathbb{R}^{d}}\Q_{B,\re}(f,f)(v)\psi(v)\d v=\frac{1}{2}\int_{\mathbb{R}^{d} \times \mathbb{R}^d}f(v)f(\vb)\mathcal{A}_{B,\re}[\psi](v,\vb)\;\d\vb\d v
\end{equation}
for any test function $\psi=\psi(v)$  where
$$\mathcal{A}_{B,\re}[\psi](v,\vb)=\int_{\mathbb{S}^{d-1}}\bigg(\psi( {v'})+\psi( {\vb'})-\psi(v)-\psi(\vb)\bigg) {B}(u, \sigma)\d\sigma$$
where
the post-collisional velocities $(v',v_{\ast}')$ are given
by
\begin{equation}\begin{split}
\label{co:transf-A}
  v'=v+\frac{1+\re}{4}\,(|u|\sigma-u),&
\qquad  v_{\ast}'=v_{\ast}-\frac{1+\re}{4}\,(|u|\sigma-u),\\
\text{where} \qquad u=v-v_{\ast},& \qquad \widehat{u}=\frac{u}{|u|}.
\end{split}
\end{equation}
{This allows to split $\Q_{B,\re}$ into positive and negative parts
$$\Q_{B,\re}=\Q^{+}_{B,\re}-\Q_{B,\re}^{-}$$
where $\Q_{\re}^{+}$ and $\Q^{-}_{\re}$ are given, in strong form, by
\begin{equation}\label{eq:Qstrongsigma}
\Q_{B,\re}^{+}(g,f)(v)=\frac{1}{\re^{2}}\,\int_{\R^{d}\times \S^{d}}\,\frac{|'u|}{|u|}B(u,\sigma \cdot \widehat{u})f('v)g('v_{\ast})\d\sigma\,\d v_{\ast}\end{equation}
and
$$\Q_{B,\re}^{-}(g,f)(v)=f(v)\int_{\R^{d}\times\S^{d-1}} g(v_{\ast})B(u,\sigma \cdot \widehat{u})\d v_{\ast}\d \sigma.$$
where, for $\sigma \in \S^{d-1}$, $'v$ and $'v_{\ast}$ denote the pre-collisional velocities
\begin{equation*}
\begin{cases}
'v&=\dfrac{v+v_{\ast}}{2}-\dfrac{1-\re}{4\re}u \,+\dfrac{1+\re}{4\re}|u|\sigma\\
\\
'v_{\ast}&=\dfrac{v+v_{\ast}}{2}+\dfrac{1-\re}{4\re}u \,-\dfrac{1+\re}{4\re}|u|\sigma, \qquad u=v-v_{\ast}.
\end{cases}\end{equation*}
}
A particularly relevant model is the one of hard-spheres corresponding to $\Phi(|u|)=|u|$ which is the model investigated in the core of the paper and, in that case, we simply denote the collision operator $\Q_{B,\re}$ by $\Q_\re$.

\subsection{Alternative representation of the velocities}\label{Sec21} As well-known, the above collision operator  is a well-accepted model that describes collisions in a system composed by a large number of granular particles which are assumed to be hard-spheres with equal mass (that we take to be $m=1$) and that undertake inelastic collisions.  The collision mechanism and the role of the \textit{coefficient of normal restitution} is easier to understand in an alternative representation of the post-collisional velocities. More precisely,  if $v$ and $\vb$  denote the velocities of two particles before collision, their respective velocities $v'$ and $\vb'$ after collision are such that
\begin{equation}\label{coef}
\big( u' \cdot n \big)=- \re\big( u\cdot n \big) \,.
\end{equation}
The unitary vector $n \in \mathbb{S}^{d-1}$  determines the impact direction, that is, $n$ stands for the unit vector that points from the $v$-particle center to the $v_{\ast}$-particle center at the moment of impact.  Here above
\begin{equation}
u=v-v_{\ast},\qquad u'=v'-v_{\ast}',
\end{equation}
denote respectively the relative velocity before and after collision.  The velocities after collision $v'$ and $\vb'$ are given, in virtue of \eqref{coef} and the conservation of momentum, by
\begin{equation}
\label{transfpre}
  v'=v-\frac{1+\re}{2}\,\big( u\cdot n \big)n,
\qquad v_{\ast}'=v_{\ast}+\frac{1+\re}{2}\, \big( u\cdot n \big)n.
\end{equation}
In particular, the energy relation and the collision mechanism can be written as
\begin{equation}\label{energ}
|v|^{2}+| \vb|^{2}=|'\!v|^{2}+|'\!\vb|^{2}-\frac{1-\re^{2}}{2}\,\big('\!u \cdot n\big)^{2}, \qquad
u\cdot n=-\re\big('\!u \cdot n\big).
\end{equation}
Pre-collisional velocities $('v,'\vb)$ (resulting in $(v,\vb)$ after collision) can be therefore introduced through the relation
\begin{equation}\label{'v'vb}
v='\!\!v-\frac{1+\re}{2}\big( '\!\!u \cdot n \big) n\,, \qquad \vb='\!\!\vb+\frac{1+\re}{2}\big( '\!\!u \cdot n \big) n, \qquad '\!\!u='\!\!v-'\!\!\vb
\end{equation}

This representation is of course equivalent to the one given in \eqref{co:transf-A}
 (so-called $\sigma$-representation) by setting, for a given pair of velocities $(v,\vb)$,
\begin{equation*}
\sigma=\widehat{u}-2 \,(\widehat{u}\cdot n)n \in \S^{d-1}.
\end{equation*}
Such a description provides an alternative parametrization of the unit sphere $\mathbb{S}^{d-1}$ in which the unit vector $\sigma$ points in the post-collisional relative velocity direction in the case of elastic collisions. In this case, the impact velocity reads
\begin{equation*}
|u\cdot\, n|=|u| \,|\widehat{u} \cdot n|=|u| \sqrt{\frac{1-\widehat{u} \cdot \sigma}{2}}.
\end{equation*}
In the $n$-representation, we can also explicit the strong form of the collision operator $\Q_{\re}.$ Namely, for a given pair of distributions $f=f(v)$ and $g=g(v)$ and a given \textit{collision kernel} the Boltzmann collision operator is defined as the difference of two nonnegative operators (gain and loss operators respectively)
\begin{equation*}
\Q_{\re}\big(g,f\big)=\Q_{\re}^{+}\big(g,f\big)-\Q_{\re}^{-}\big(g,f\big),
\end{equation*}
with
\begin{align}\label{Boltstrong}
\begin{split}
\Q_{\re}^{+}\big(g,f\big)(v)&=\frac{1}{\re^{2}}\int_{\R^{d}\times\S^{d-1}} |u \cdot n|\,b_0(\widehat{u}\cdot n)\,f('\!v)g('\!\vb)\d n\d\vb\,,\\
\Q_{\re}^{-}\big(g,f\big)(v)&=f(v)\int_{\R^{d}\times \S^{d-1}} |u\cdot n|b_0(\widehat{u}\cdot n)g(\vb)\d n\d\vb.
\end{split}
\end{align}
where the new angular collision kernel $b_{0}(\cdot)$ is related to the original one $b(\cdot)$ through the relation
\begin{equation*}
b_0\big(\widehat{u} \cdot n\big)=2^{d-1}|\widehat{u}\cdot n|^{d-2}b(\widehat{u}\cdot \sigma).
\end{equation*}
i.e.
$$b_{0}(x)=\,2^{d-1}|x|^{d-2}b(1-2x^2), \qquad x \in [-1,1].$$

\subsection{Estimates on the collision operator}
Using the above representation, we prove  Lemma~\ref{lem:els} in Section \ref{sec:ll}.


\begin{proof}[Proof of Lemma \ref{lem:els}]
Note that
\begin{equation}\label{eq:QI1I2}
\Q_{1}(g,f) - \Q_{\re}(g,f) = \mathcal{I}_{1}(g,f) + \mathcal{I}_{2}(g,f)\,,
\end{equation}
where \begin{align*}
\mathcal{I}_{1}(g,f) = -\frac{1-\re^2}{\re^2}\int_{\mathbb{R}^{d}}\int_{\mathbb{S}^{d-1}} g('\!v_{\ast ,\re})f('\!v_{\re})\, |u \cdot n| \,b_{0}(\hat{u}\cdot n) \d n\d v_{\ast}\,,
\end{align*}
and
\begin{align*}
\mathcal{I}_{2}(g,f) = \int_{\mathbb{R}^{d}}\int_{\S^{d-1}} \big(g('\!v_{\ast ,\re})f('\!v_{\re}) - g('\!v_{\ast ,1})f('\!v_{1})\big)\, |u \cdot n| \,b_{0}(\hat{u}\cdot n) \d n\d v_{\ast}\,.
\end{align*}
Here we adopt the notation $'\!v_{1}$ and $'\!v_{\re}$ for the pre-collisional velocities associate to elastic ($\re=1$) and inelastic $(0< \re <1)$ interactions respectively in the $n$-representation (see \eqref{'v'vb}).

We begin with proving the estimates in the $L^{1}_{v}$ setting. By classical means, there is a positive $c_{q} >0$ such that
\begin{equation*}
\| \mathcal{I}_{1}(g,f) \|_{ L^{1}_{v}(\m_{q}) } \leq \frac{1-\re}{\re^{2}}\,c_{q}\,\| b_{0} \|_{L^{1}(\S^{d-1})}\| g \|_{ L^{1}_{v}(\m_{q+1}) } \| f \|_{ L^{1}_{v}(\m_{q+1}) }\,.
\end{equation*}
We estimate then the difference $g('\!v_{\ast ,\re})f('\!v_{\re})- g('\!v_{\ast,1})f('\!v_{1})$ by writing 
\begin{equation*}
g('\!v_{\ast ,\re})f('\!v_{\re}) - g('\!v_{\ast,1})f('\!v_{1}) = g('\!v_{\ast,1})\big( f('\!v_{\re})- f('\!v_{1})\big) + \big( g('\!v_{\ast ,\re}) - g('\!v_{\ast,1}) \big) f('\!v_{\re})\,,
\end{equation*}
and splitting $\mathcal{I}_{2}(g,f)$ accordingly into $\mathcal{I}_{2}(g,f)=\mathcal{I}_{2}^{1}(g,f)+ \mathcal{I}_{2}^{2}(g,f)$. 

\medskip
\noindent
For the term $\mathcal{I}^{1}_{2}(g,f)$, we first notice that, according to \eqref{'v'vb},
\begin{equation*}
'\!v_{\re} = v - \frac{1+\re}{2\re}(u\cdot n)n\,,\qquad '\!v_{*,\re} = v - u + \frac{1+\re}{2\re}(u\cdot n)n\,.
\end{equation*}
Therefore, using the change of variable $w=\frac{1+\re}{2\re}u$ one has $'\!v_{\re}\rightarrow \,'\!v_{1}$ and
\begin{multline*}
\int_{\mathbb{R}^{d}}\int_{\S^{d-1}} g('\!v_{\ast,1})\, f('\!v_{\re})\, |u \cdot n| \,b_{0}(\hat{u}\cdot n) \d n\d v_{\ast}\\
=\left(\frac{2\re}{1+\re}\right)^{d+1}\int_{\mathbb{R}^{d}}\int_{\S^{d-1}} g(\tilde{v}_{\ast})\, f('\!v_{1})\, |u \cdot n| \,b_{0}(\hat{u}\cdot n) \d n\d v_{\ast}\,,
\end{multline*}
where
\begin{equation*}
\tilde{v}_{*} = v - \frac{2\re}{1+\re}u + (u\cdot n)n =\, '\!v_{\ast,1} + \frac{1-\re}{1+\re}u\,.
\end{equation*}
Consequently,
\begin{multline*}
\mathcal{I}^{1}_{2}(g,f) = \int_{\mathbb{R}^{d}}\int_{\S^{d-1}} \Big(\big(\tfrac{2\re}{1+\re}\big)^{d+1}g(\tilde{v}_{\ast})- g('\!v_{\ast,1})\Big)\,f('\!v_{1})\, |u \cdot n| \,b_{0}(\hat{u}\cdot n) \d n\d v_{\ast}\\
=\left(\left(\frac{2\re}{1+\re}\right)^{d+1}-1\right)\int_{\mathbb{R}^{d}}\int_{\S^{d-1}} g(\tilde{v}_{\ast})\,f('\!v_{1})\, |u \cdot n| \,b_{0}(\hat{u}\cdot n) \d n\d v_{\ast}\\
\qquad\qquad + \int_{\mathbb{R}^{d}}\int_{\S^{d-1}}\big(g(\tilde{v}_{\ast})- g('\!v_{\ast,1})\big)\,f('\!v_{1})\, |u \cdot n| \,b_{0}(\hat{u}\cdot n) \d n\d v_{\ast}= :\mathcal{I}^{1,1}_{2}(g,f) + \mathcal{I}^{1,2}_{2}(g,f)\,.
\end{multline*}
For the first term, thanks to the mean-value theorem, one notices that 
$$\left|\left(\frac{2\re}{1+\re}\right)^{d+1}-1\right|\leq d\,\frac{1-\re}{1+\re}\,, \qquad \forall \re \in (0,1)$$
so that using the pre-post collisional change of variable $(v_{\ast},v)\rightarrow(\tilde{v}_{\ast},\,'\!v_{1})$ (with Jacobian computed as $\frac{1+\re}{2}\big(\frac{1+\re}{2\re}\big)^{d-1}$ ) one concludes that
\begin{equation*}
\| \mathcal{I}^{1,1}_{2}(g,f) \|_{ L^{1}_{v}(\m_{q}) }\leq (1-\re)\,\big(\tfrac{1+\re}{\re}\big)^{d-1}\,c_{q}\,\| b_{0} \|_{L^{1}(\S^{d-1})}\| g \|_{ L^{1}_{v}(\m_{q+1}) } \| f \|_{ L^{1}_{v}(\m_{q+1}) }\,
\end{equation*}
for some positive $c_{q}$ depending only on $q$ and the dimension $d.$
For the second term one uses Taylor formula:
\begin{equation*}
g(\tilde{v}_{\ast}) - g('\!v_{\ast,1})  = (\tilde{v}_{\ast} - \,'\!v_{\ast,1})\cdot \int^{1}_{0} \nabla g(\tilde{v}_{\ast ,t}) \d t = \frac{1-\re}{1+\re}u\cdot \int^{1}_{0} \nabla g(\tilde{v}_{\ast ,t}) \d t\,,
\end{equation*}
where
\begin{equation*}
\tilde{v}_{\ast ,t} =  \tilde{v}_{\ast}\,t + \, '\!v_{\ast,1}\,(1-t) = '\!\!v_{\ast ,1} - \frac{1-\re}{1+\re}\,u\,t\,, \qquad \forall t \in (0,1).
\end{equation*}
Consequently,
\begin{multline*}
\int_{\mathbb{R}^{2d}}\big| \mathcal{I}^{1,2}_{2}(g,f)(v) \big|\langle v \rangle^{q}  \d v \\
\leq \frac{1-\re}{1+\re}\int^{1}_{0}\d t \int_{\S^{d-1}}b_{0}(\widehat{u}\cdot n)|\widehat{u}\cdot n|^{2}\d n\int_{\mathbb{R}^{2d}}|f('\!v_{1})|\,|\nabla g(\tilde{v}_{\ast ,t})|\, |u|^{2} \langle v \rangle^{q} \d v_{*}\d v\,. 
\end{multline*}
We can apply the pre-post collisional change of variable in the inner integral $(v_{\ast},v)\rightarrow(\tilde{v}_{\ast,t},\,'\!v_{1})$ with Jacobian
$$\frac{1+\re}{ 1+\re + (1-\re)t }\left(\frac{1+\re}{1+\re - (1-\re)t}\right)^{d-1} \leq \left( \frac{1+\re}{2\re}\right)^{d-1}\,,$$
to obtain that
\begin{equation*}
\| \mathcal{I}^{1,2}_{2}(g,f) \|_{ L^{1}_{v}(\m_{q}) }\leq \left( \frac{1+\re}{2\re}\right)^{d-1}\frac{1-\re}{1+\re}\,c_{q}\,\| b_{0} \|_{L^{1}(\S^{d-1})}\| \nabla g \|_{ L^{1}_{v}(\m_{q+2}) } \| f \|_{ L^{1}_{v}(\m_{q+2}) }\,
\end{equation*}
for some positive constant $c_{q}$. Regarding the term $\mathcal{I}_{2}^{2}(g,f)$, one invokes Taylor formula again
\begin{equation*}
g('\!v_{\ast ,\re}) - g('\!v_{\ast,1}) = ('\!v_{\ast ,\re} - \,'\!v_{\ast,1})\cdot \int^{1}_{0} \nabla g('\!v_{\ast ,t}) \d t\,,
\end{equation*}
where we recall that, according to \eqref{'v'vb},
\begin{align*}
'\!v_{\ast ,\re} - \,'\!v_{\ast,1} = \frac{1-\re}{2\re}( u\cdot n )n\,,
\end{align*}
and, for $0 < t < 1$,
\begin{align*}
'\!v_{\ast ,t} &=  '\!\!v_{\ast ,\re}\,t + '\!v_{\ast,1}\,(1-t) = '\!\!v_{\ast ,\re} - (1- t)\frac{1-\re}{2\re}( u\cdot n )n\,.
\end{align*}
Therefore,
\begin{multline*}
\int_{\mathbb{R}^{2d}}\big| \mathcal{I}^{2}_{2}(g,f)(v) \big|\langle v \rangle^{q}  \d v \\
\leq \frac{1-\re}{2\re}\int^{1}_{0}\d t \int_{\S^{d-1}}b_{0}(\widehat{u}\cdot n)|\widehat{u}\cdot n|^{2}\d n\int_{\mathbb{R}^{2d}}|f('\!\tilde{u}_{\re} + \,'\!v_{\ast ,t})|\,|\nabla g('\!v_{\ast ,t})|\, |u|^{2} \langle v \rangle^{q} \d u\d v 
\end{multline*}
with
\begin{equation*}
'\tilde{u}_{\re}:= \,'\!v_{\re} - \,'\!v_{\ast ,t} =u - 2(u\cdot n)n - (1 + t)\frac{1-\re}{2\re}(u\cdot n)n\,.
\end{equation*} 
Apply, for fixed $n$, the pre-post collisional change of variables $(v_{\ast},u)\rightarrow('v_{\ast ,t},'\tilde{u}_{\re})$ (with Jacobian $\tilde{J}_{\re}(t)=1 + (1+t)\frac{1-\re}{2\re}\geq1$).  Noticing that
\begin{equation*}
|u| \leq |'u_{\re}|\leq \frac{2}{1+t}|'\tilde{u}_{\re}|\,,\qquad |v| \leq |'v_{\re}| + | 'v_{\ast ,\re} | \leq 2|'v_{\ast ,t}| + 4|'\tilde{u}_{\re}|\,,
\end{equation*}
it follows that
\begin{multline*}
\int_{\mathbb{R}^{d}}\big| \mathcal{I}^{2}_{2}(g,f)(v) \big|\langle v \rangle^{q}  \d v 
\leq \frac{1-\re}{2\re}\,c_{q}\, \int^{1}_{0} \frac{1}{\tilde{J}_{\re}(t)}\d t\int_{\S^{d-1}}b_{0}(\widehat{u}\cdot n)|\widehat{u}\cdot n|^{2}\d n\\\phantom{++++}\int_{\mathbb{R}^{2d}}|f(v + u)\langle v + u\rangle^{q+2}|\,|\nabla g(v) \langle v \rangle^{q+2}| \d u\d v\\
\qquad\leq\frac{1-\re}{2\re}\,c_{q}\,\| b_{0} \|_{L^{1}(\S^{d-1})}\,\| f \|_{L^{1}_{v}(\m_{q+2})}\| \nabla g\|_{L^{1}_{2}(\m_{q+2})}\,.
\end{multline*}
Gathering previous estimates proves the first assertion of the Lemma. The proof for higher norms $\W^{k,1}_{v}(\m_{q})$ is simply obtained differentiating and applying the previous estimates for each suitable difference.

Let us now see how to derive the estimates in the $L^{2}_{v}$ setting. One still starts with the representation \eqref{eq:QI1I2}. To estimate $\mathcal{I}_1(g,f)$ in $L^2_v(\m_q)$, we first use Cauchy-Schwarz inequality to get
\begin{align*}
&\|\mathcal{I}_{1}(g,f) \|^2_{ L^{2}_{v}(\m_{q}) } \\
&\quad \leq C_{\kappa}\frac{(1-\re^2)^2}{\re^4} \int_{\mathbb{R}^{2d} \times \mathbb{S}^{d-1}} |g('\!v_{\ast ,\re})|^2|f('\!v_{\re})|^2\, |u \cdot n|^2 \, \langle u \rangle^{2\kappa} \,b_{0}(\widehat{u}\cdot n)  \m_{2q} (v) \d n\d v_{\ast}\d v\,,
\end{align*}
where
$$C_{\kappa}:=\sup_{v\in \R^{d}}\int_{\R^{d}\times \S^{d-1}}b_{0}(\widehat{u}\cdot n)\langle u\rangle^{-2\kappa}\d v_{\ast}\d n=\|b_{0}\|_{L^{1}(\S^{d-1})}\,\|\langle \cdot\rangle^{-2\kappa}\|_{L^{1}(\R^{d})} < \infty$$
for $\kappa > \frac{d}{2}$. We proceed then exactly as previously to obtain:
$$
\| \mathcal{I}_{1}(g,f) \|_{ L^{2}_{v}(\m_{q}) }\leq c_q \frac{1-\re}{\re^{2}}\,\| b_{0} \|_{L^{1}(\S^{d-1})}\| g \|_{ L^{2}_{v}(\m_{q+\kappa+1}) } \| f \|_{ L^{2}_{v}(\m_{q+\kappa+1}) }\,. 
$$
In a similar way,   using Cauchy-Schwarz inequality, one can prove that 
$$
\|\mathcal{I}^{1}_{2}(g,f)\|_{L^2_v(\m_q)} \leq c_q \frac{1-\re}{2\re}\,\|b_{0}\|_{L^{1}(\S^{d-1})}\,\| g \|_{L^{2}_{v}(\m_{q+\kappa+2})}\| f \|_{\W^{1,2}_{v}(\m_{q+\kappa+2})}
$$
and
$$
\|\mathcal{I}^{2}_{2}(g,f)\|_{L^2_v(\m_q)} \leq c_q \frac{1-\re}{2\re}\,\|b_{0}\|_{L^{1}(\S^{d-1})}\,\| g \|_{\W^{1,2}_{v}(\m_{q+\kappa+2})}\| f \|_{L^2_{v}(\m_{q+\kappa+2})}.
$$
which gives the result for $k=0$. The proof for higher norms is obtained as before, differentiating and applying the above estimate to each suitable difference.
\end{proof}

 {Let us now give the proof of Lemma~\ref{lem:L1Lalpha} which is based on the $\sigma$-representation. Note that it is reminiscent of the proof of~\cite[Proposition~3.2]{MiMo3} and \cite[Theorem 3.11]{CMS} but that we face some additional difficulties due to the polynomial weight. }
 {\begin{proof}[Proof of Lemma~\ref{lem:L1Lalpha}]
We only prove the first estimate on $\Q_1(\cdot,\M) - \Q_\re(\cdot,\M)$, the other one can be treated exactly in the same way. Notice first that
$$ 
\Q_1(f,\M)- \Q_\re(f,\M) = \Q_1^+(f,\M)-\Q_\re^+(f,\M). 
$$
As in the proofs of~\cite[Proposition~3.2]{MiMo3} and \cite[Theorem~3.11]{CMS}, we set $w := v+\vb$ and $\widehat w := w/|w|$ and define $\chi \in [0,\pi/2]$ through $|\!\cos \chi| := |\widehat w \cdot \sigma|$.    Let $\delta \in (0,1)$ and $R >1$ be fixed and let
 $\eta_{\delta} \in \W^{1,\infty}(-1,1)$ such that $\eta_{\delta}(s)=\eta_{\delta}(-s)$ for any $s \in (0,1)$ and
$$\eta_{\delta} (s) = \begin{cases}1 \quad &\text{ if } \quad s \in (-1+2\delta,1-2\delta)\\
 0 \quad &\text{ if } \quad  s \notin (-1+\delta,1-\delta)\end{cases}$$
with moreover
\begin{equation*}
0 \leq \eta_{\delta}(s) \leq 1 \qquad \text{ and } \qquad |\eta_{\delta}'(s)| \leq \frac{3}{\delta}\qquad \forall s \in (-1,1).\end{equation*}
Let us define also $\Theta_R(r) = \Theta(r/R)$ with $\Theta(x) = 1$ on $[0,1]$, $\Theta(x) = 1-x$ for $x \in [1,2]$ and $\Theta(x) = 0$ on $[2,\infty)$. We define the set
$$A(\delta)  := \{ \sigma \in \S^{d-1}; \,\, \sin^2 \chi \geq \delta \}$$
we split $\Q^+_\re$ into
$$\Q^+_{\re} = \Q^{+}_{B_{r},\re}  + \Q^{+}_{B_{l},\re}  + \Q^{+}_{B_{a},\re}$$
where the collision kernels $B_i(u,\widehat{u} \cdot \sigma\,)$, $i=l,r,a$, are defined by
$$B_{r}(u,\widehat{u} \cdot \sigma\,)=\eta_{\delta}(\widehat{u} \cdot \sigma\,) \,\Theta_R(u)\,|u|b(\widehat{u}\cdot \sigma), \quad B_{l}(u,\widehat{u} \cdot \sigma\,) := {\bf 1}_{A(\delta)}(\sigma) \, (1-\Theta_R(|u|))\,|u|b(\widehat{u}\cdot \sigma) $$ and
$$B_{a}(u,\widehat{u} \cdot \sigma\,)=|u|b(\widehat{u}\cdot \sigma)\,\left((1 - \eta_{\delta}(\widehat{u} \cdot \sigma\,)) \, \Theta_R(|u|) +  \, (1-\Theta_R(|u|)) \, {\bf 1}_{A^c(\delta)}\right).$$ 
This splitting corresponds to a splitting for \emph{small angles} (corresponding to the kernel $B_{a}$), \emph{large velocities} (corresponding to $B_{l}$) and the reminder term (corresponding to $B_{r}$).
The treatment of small angles and of the truncated operator is similar to the one of~\cite[Proposition~3.2]{MiMo3} and we only recall the results obtained therein: there exists a constant $C_q>0$ such that for any $\re \in (0,1]$, $\delta \in (0,1)$ and $R \in (1,\infty)$,
$$
\|\Q^{+}_{B_{a},\re}(f,\M)\|_{L^1_v(\m_q)} \leq C_q \delta \|f\|_{L^1_v(\m_{q+1})}
$$
and
$$
\|\Q^{+}_{B_{r},1}(f,\M) - \Q^{+}_{B_{r},\re}(f,\M)\|_{L^1_v(\m_q)} \leq C_q (1-\alpha) \left(\frac{R^2}{\delta}+\frac{R}{\delta^3}\right) \|f\|_{L^1_v(\m_{q})}.
$$
Let us now handle the case of large relative velocities. To this end, we will use the strong formulation of the $\sigma$-representation of our collision operator given in~\eqref{eq:Qstrongsigma}:
\begin{equation*}\begin{split}
\Q_{B_{l},\re}^{+}\left(f,\M\right)(v)&=\frac{1}{\re^{2}}\int_{\R^{d}\times \S^{d-1}}\frac{|'u|}{|u|}B_{l}(u,\widehat{u}\cdot\sigma),\M('\!v)f('\!\vb)\d\vb\d \sigma\\
&=\frac{1}{\re^{2}}\int_{\R^{d}\times\S^{d-1}}|'u|b(\widehat{u}\cdot \sigma)\, {\bf 1}_{A(\delta)}(\sigma) \, (1-\Theta_R(|u|))\,\M('\!v)f('\!\vb)\d\vb\d \sigma.
\end{split}\end{equation*}
We first observe that
\begin{equation}\label{eq:bl}
|'u|b(\widehat{u}\cdot \sigma)\, {\bf 1}_{A(\delta)}(\sigma)\left(1-\Theta_{R}(|u|)\right) \leq \frac{|u|^{2}}{\re\,R}b(\widehat{u}\cdot \sigma){\bf 1}_{A(\delta)}(\sigma)\,{\bf 1}_{\{R \leq |u| \leq 2R\}}.\end{equation}
We then remark that 
$$
'\!v = v - \frac{1+\alpha}{4\alpha} (u-|u|\sigma)= \frac{w}{2} - {1 \over 2} \left[\frac{1-\alpha}{2\alpha}u - \frac{1+\alpha}{2 \alpha} |u| \sigma\right].
$$
Using now that,  when $\sin^2 \chi \geq \delta$, then $|\widehat w \cdot \sigma | \leq \sqrt{1-\delta} \leq 1-\frac{\delta}{2}$, we have
\begin{align*}
|'\!v|^2 
&\geq {1 \over 4} (|w|^2+|u|^2) - \frac{1+ \alpha}{4 \alpha}|w||u| \left(1-{\delta \over 2}\right) - \frac{1-\alpha}{4\alpha}(|v|^2-|\vb|^2) \\
&\geq |v|^2 \left({1 \over 2} - \frac{1+ \alpha}{4 \alpha} \left(1-{\delta \over 2}\right) - \frac{1- \alpha}{4 \alpha}\right)
+|\vb|^2 \left({1 \over 2} - \frac{1+ \alpha}{4 \alpha}  \left(1-{\delta \over 2}\right) + \frac{1- \alpha}{4 \alpha}\right) \\
&\geq \delta \frac{1+\alpha}{8\alpha} |\vb|^2 + \left(\delta \frac{1+\alpha}{8\alpha} -  \frac{1-\alpha}{2\alpha}\right) |v|^2. 
\end{align*}
Then, if $\delta \geq 8 \frac{1-\alpha}{1+\alpha}$, we get:
$$
|'\!\vb|^2 \geq \delta \frac{1+\alpha}{16\alpha} (|v|^2+|\vb|^2) \geq \frac{\delta}{16} (|v|^2+|\vb|^2). 
$$
As a consequence, 
\begin{equation}\label{eq:pointwiseMa}
\M('\!v) \langle \vb \rangle^2 \m_{q+2}(v) \lesssim e^{-\delta \frac{|v|^2}{64 \en_1}}\m_{q+2}(v)e^{-\delta \frac{|\vb|^2}{64 \en_1}} \langle \vb \rangle^2\M^\frac12('\!v)\lesssim {1 \over \delta^{\frac{q+4}{2}}}\M^\frac12('\!v). 
\end{equation}
Using this, we can now estimate the norm of $\Q_{B_{l},\re}^{+}\big(f,\M\big)$ if $\delta \geq 8 \frac{1-\alpha}{1+\alpha}$ thanks to \eqref{eq:bl} to get
\begin{align*}
\|\Q_{B_{l},\re}^{+}\big(f,\M\big)\|_{L^1_v(\m_q)} &\lesssim 
\frac{1}{R}\int_{\R^d \times \R^{d}\times\S^{d-1}} |u|^{2}b(\widehat{u}\cdot \sigma)\, {\bf 1}_{A(\delta)}(\sigma)
\,\M('\!v)|f('\!\vb)| \m_q(v)\d \sigma  \d\vb\d v \\
&\lesssim \frac{1}{R}\int_{\R^d \times \R^{d}\times\S^{d-1}}\M('\!v)|f('\!\vb)| \langle \vb\rangle^2 \m_{q+2}(v)b(\widehat{u}\cdot\sigma)\d \sigma \d\vb\d v \\
&\lesssim \frac{1}{R \, \delta^\frac{q+4}{2}} \int_{\R^d \times \R^{d}\times\S^{d-1}} \M^\frac12('\!v)|f('\!\vb)|b(\widehat{u}\cdot\sigma)\d \sigma \d\vb\d v \\
&\lesssim  \frac{1}{R \, \delta^\frac{q+4}{2}} \|f\|_{L^1_v}
\end{align*} {since $b \in L^{1}(\S^{d-1})$ and $\M \in L^{\infty}(\R^{d})$}.
Gathering the previous estimates, we obtain if $\delta \geq 8 \frac{1-\alpha}{1+\alpha}$:
$$
\|\Q_{\alpha}(f,\M)-\Q_{1}(f,\M)\|_{L^{1}_v(\m_{q})} 
\lesssim \left(\delta+ (1-\alpha)\left(\frac{R^{2}}{\delta}+\frac{R}{\delta^{3}}\right)+\frac{1}{R\delta^{\frac{q+4}{2}}}\right)\|f\|_{L^{1}_v(\m_{q+1})}.
$$
Picking now $\delta=(1-\alpha)^{p}$ for some $p \in (0,1)$ (so that the condition $\delta \geq 8 \frac{1-\alpha}{1+\alpha}$ will be satisfied for $\alpha$ close enough to $1$) and $R=(1-\alpha)^{-p-\frac{q+4}{2}p}=(1-\alpha)^{-p\frac{q+6}{2}}$, we then obtain for $\alpha$ close enough to $1$ that
\begin{align*}
&\|\Q_{\alpha}(f,\M)-\Q_{1}(f,\M)\|_{L^{1}_v(\m_{q})} \\ 
&\quad \lesssim \left((1-\alpha)^{p}+(1-\alpha)^{1-p(q+7)}+(1-\alpha)^{1-p\frac{q+12}{2}}\right)\|f\|_{L^{1}_v(\m_{q+1})},
\end{align*}
and, with $p=\frac{1}{q+8}$, $1-p(q+7)=p$ and
\begin{align*}
\|\Q_{\alpha}(f,\M)-\Q_{1}(f,\M)\|_{L^{1}_v(\m_{q})} 
&\lesssim \left((1-\alpha)^{p}+(1-\alpha)^{\frac{q+4}{2}p}\right)\|f\|_{L^{1}_v(\m_{q+1})} \\
&\lesssim (1-\alpha)^{p}\|f\|_{L^{1}_v(\m_{q+1})}.
\end{align*}
 For higher norms, it is enough to differentiate and apply previous estimates for each suitable difference.
\end{proof}}
We now give the proof of Lemma \ref{prop:psi} in Section \ref{sec:ll}  {which is based on Lemma \ref{lem:els}}.

\begin{proof}[Proof of Lemma \ref{prop:psi}]  {We start with the case of $\W^{k,1}_v(\m_q)$.} To this end, we slightly modify here a strategy adopted in {\cite{CMS}} which consists in combining a {\it nonlinear} estimate for $\|G_{\re}-\M\|_{\W^{k,1}_v(\m_{q})}$ together with non-quantitative convergence. We fix $k,q$ and we divide the proof into three steps:

\smallskip
\noindent
\textit{First step: non quantitative convergence}. We prove that
\begin{equation}\label{eq:nonquant}
\lim_{\re\to1}\|\M-G_{\re}\|_{\W^{k,1}_{v}(\m_{q})}=0.\end{equation}
We argue here as in \cite[Theorem 4.1]{CMS}. We sketch only the main steps. First, as already noticed in~{\cite{MiMo3}}, there is $\re_{0} >0$ such that 
$$\sup_{\re \in (\re_{0},1)}\|G_{\re}\|_{\W^{k,1}_{v}(\m_{q})} < \infty.$$ Then, there is a sequence $(\re_{n})_{n}$ converging to $1$ such that $(G_{\re_{n}})_{n}$ converges weakly, in $\W^{k,1}_{v}(\m_{q+1})$ to some limit $\bar{G}$  (notice that, a priori, the limit function $\bar{G}$ depends on the choice of $k$ and $q$). Using the decay of $(G_{\re})$ and compact embedding for Sobolev spaces, this convergence is actually strong, i.e. $\lim_{n}\|G_{\re_{n}}-\bar{G}\|_{\W^{k,1}_{v}(\m_{q})}=0$. According to \eqref{appe1}, one necessarily has $\bar{G}=\M$ and one deduces easily that whole net $(G_{\re})_{\re}$ is converging to~$\M$. This proves \eqref{eq:nonquant}.

\smallskip
\noindent 
\textit{Second step: nonlinear estimate.} We first consider the Maxwellian 
$\M_{\re}$ with same mass, momentum and energy of $G_{\re}$ and we consider the linearized elastic collision operator around that Maxwellian
$$\mathbf{L}g=\Q_{1}(g,\M_{\re})+\Q_{1}(\M_{\re},g), \qquad g \in \W^{k,1}_{v}(\m_{q}).$$  
One simply notices that, since $\Q_{1}(\M_{\re},\M_{\re})=0$,
\begin{multline*}
\mathbf{L}(G_{\re})=\Q_{1}(G_{\re}-\M_{\re},\M_{\re} - G_{\re}) + \Q_{\re}(G_{\re},G_{\re}) + \bigg[\Q_{1}(G_{\re},G_{\re}) -\Q_{\re}(G_{\re},G_{\re})\bigg]\\
= \Q_{1}(G_{\re}-\M_{\re},\M_{\re}-G_{\re}) - (1-\re)\nabla_v\cdot (vG_{\re})+ \bigg[\Q_{1}(G_{\re},G_{\re}) -\Q_{\re}(G_{\re},G_{\re})\bigg].
\end{multline*}
Therefore, using classical estimates for $\Q_{1}$ (see {\cite{ACG,AG}}) 
\begin{multline*} \|\mathbf{L}(G_{\re}) \|_{\W^{k,1}_{v}(\m_{q})} \leq \|\Q_{1}(G_{\re}-\M_{\re},\M_{\re} - G_{\re}) \|_{\W^{k,1}_{v}(\m_{q})} + (1-\re)\,\|G_{\re}\|_{\W^{k+1,1}_{v}(\m_{q+1})} \\+ 
\|\Q_{1}(G_{\re},G_{\re}) -\Q_{\re}(G_{\re},G_{\re})\|_{\W^{k,1}_{v}(\m_{q})}\\
\leq C_1\,\|G_{\re}-\M_{\re}\|_{\W^{k,1}_v(\m_{q+1})}^2 + C(1-\alpha)\|G_{\re}\|_{\W^{k+1,1}_{v}(\m_{q+1})} + C_1(1-\alpha)\,\|G_{\re}\|_{\W^{k,1}_{v}(\m_{q+2})}^2\end{multline*}
where we used Lemma \ref{lem:els} for estimating the difference $\Q_{1}(G_{\re},G_{\re}) -\Q_{\re}(G_{\re},G_{\re})$. Since $\sup_{\re}\|G_{\re}\|_{\W^{{k+1},1}_{v}
(\m_{q+2})}<\infty$, we obtain that there is a positive constant $C_{2} >0$ such that
$$\|\mathbf{L}(G_{\re})\|_{\W^{k,1}_{v}(\m_{q})} \leq C_{2}(1-\re) + C_{2}\|G_{\re}-\M_{\re}\|_{\W^{k,1}_{v}(\m_{q+1})}^2.$$
We can write $\mathbf{L}(G_{\re})=\mathbf{L}(G_{\re}-\M_{\re})$ and, as $G_{\re}-\M_{\re}$ has zero mass, momentum and energy, there is a positive constant $c >0$ (that can be taken independent of $\re$) such that 
$$\|\mathbf{L}(G_{\re}-\M_{\re})\|_{\W^{k,1}_{v}(\m_{q})}\geq c\|G_{\re}-\M_{\re}\|_{\W^{k,1}_{v}(\m_{q+1})}.$$
{Recall that the constant $c >0$ is actually the norm of the inverse of $\mathbf{L}$ on the subspace of functions with zero mass, momentum and energy; recall that this inverse maps $\W^{k,1}_{v}(\m_{q})$ into  $\mathscr{D}(\mathbf{L})=\W^{k,1}_{v}(\m_{q+1})$.} 
Therefore, with $C_{3}=C_{2}/c$
\begin{equation}\label{eq:nonli}
\|G_{\re}-\M_{\re}\|_{\W^{k,1}_{v}(\m_{q+1})} \leq C_{3}(1-\re) + C_{3}\|G_{\re}-\M_{\re}\|_{\W^{k,1}_{v}(\m_{q+1})}^2, \qquad \re \in (\re_{0},1).
\end{equation}
\noindent \textit{Third step: conclusion}. Setting
$$\en_{\re}:=\frac{1}{d}\int_{\R^{d}}|v|^{2}\,\M_{\re}(v)\d v=\frac{1}{d}\int_{\R^{d}}|v|^{2}\,G_{\re}(v)\d v,$$
one sees easily from \eqref{appe1} that $|\en_{1}-\en_{\re}| \leq C(1-\re)$ and then, one can check without difficulty that there is some positive constant $C_{k,q} >0$ such that
\begin{equation}\label{eq:MaM1}
\left\|\M_{\re}-\M\right\|_{\W^{k,1}_{v}(\m_{q})} \leq C_{k,q}(1-\re)\,, \qquad  \re \in  {[\re_{2},1]}.
\end{equation}
Thanks to \eqref{eq:nonquant}, we can then find $ {\re_{3} }> \re_{0}$ such that
$$C_{3}\|G_{\re}-\M_{\re}\|_{\W^{k,1}_{v}(\m_{q+1})} \leq \frac{1}{2}$$
where $C_{3}>0$ is the positive constant in \eqref{eq:nonli}. Then, \eqref{eq:nonli} reads simply as
$$\|G_{\re}-\M_{\re}\|_{\W^{k,1}_{v}(\m_{q+1})} \leq 2C_{3}(1-\re)\,, \qquad \re \in [ {\re_{3} },1]\,,$$
and, using \eqref{eq:MaM1}, we end up with
$$\|G_{\re}-\M\|_{\W^{k,1}_{v}(\m_{q+1})} \leq C(1-\re)\,, \qquad \re \in [ {\re_{3} },1]\,,$$
which gives also a quantitative lower bound on $ {\re_{3} }.$

The estimate on the $\W^{k,2}_v$-norm actually comes from the $\W^{k,1}_v$-one and some Sobolev embedding. Indeed for $k'>d$, one can write that 
\begin{align*}
\|G_\re-\M_\re\|_{\W^{k,2}_{v}(\m_{q})} &\leq C_k \|G_\re-\M_\re\|_{\W^{k,1}_{v}(\m_{q})}^{1/2} \|G_\re-\M_\re\|_{\W^{k,\infty}_{v}(\m_{q})}^{1/2} \\
&\leq C_k \|G_\re-\M_\re\|_{\W^{k,1}_{v}(\m_{q})}^{1/2} \|G_\re-\M_\re\|_{\W^{k+k',1}_{v}(\m_{q})}^{1/2}
\end{align*}
which allows us to conclude thanks to the first part of the proof. 
\end{proof}

{Finally, we state  Theorem 1 of \cite{ACG} (see also \cite{AG}) followed by an immediate corollary:
\begin{theo}[Theorem 1, {\cite{ACG}}] \label{theo:Ricardo}
Consider $q \geq 1$ and $r \in [1,\infty]$ such that condition~\eqref{eq:conditionb} is satisfied. For any $f \in L^r_v(\m_{q})$ and $g \in L^{1}_{v}(\m_{q})$,
$$\|\Q_{\re}^{+}(f,g)\|_{L^{r}_{v}(\m_{q-1})} \leq C_{r}(b)\|f\|_{L^{r}_{v}(\m_{q})}\|g\|_{L^{1}_{v}(\m_{q})},$$
and
$$\|\Q_{\re}^{+}(g,f)\|_{L^{r}_{v}(\m_{q-1})}\leq \tilde{C}_{r}(b)\|f\|_{L^{r}_{v}(\m_{q})}\,\|g\|_{L^{1}_{v}(\m_{q})}$$
with 
$$C_{r}(b)=2^{\frac{q+1}{2}+\frac{1}{r}+\frac{d}{r'}}|\S^{d-2}|\int_{-1}^{1}\left(\frac{1-s}{2}\right)^{-\frac{d}{2r'}}b(s)\left(1-s^{2}\right)^{\frac{d-3}{2}}\d s$$
and
$$\tilde{C}_{r}(b)=2^{\frac{q+1}{2}+\frac{1}{r}}|\S^{d-2}|\int_{-1}^{1}\left(\frac{1+s}{2}+\frac{(1-\re)^{2}}{4}\frac{1+s}{2}\right)^{-\frac{d}{2r'}}b(s)\left(1-s^{2}\right)^{\frac{d-3}{2}}\d s$$
where $\frac{1}{r}+\frac{1}{r'}=1$.
\end{theo}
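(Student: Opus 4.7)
The statement is Theorem~1 of~\cite{ACG}; here I sketch the argument as I would organize it. The starting point is duality: writing
$$\|\Q^{+}_{\re}(f,g)\|_{L^r_v(\m_{q-1})} = \sup\left\{ \int_{\R^d}\Q^{+}_{\re}(f,g)(v)\,\psi(v)\,\m_{q-1}(v)\,\d v \;:\; \psi\geq 0,\ \|\psi\|_{L^{r'}_v}\leq 1\right\}$$
and using the weak form \eqref{co:weak} (keeping only the gain part) reduces the problem to estimating
$$\mathcal{I}[\psi]:=\int_{\R^{2d}\times\S^{d-1}} g(v_*)\,f(v)\,\psi(v')\,\m_{q-1}(v')\,|v-v_*|\,b(\widehat{u}\cdot\sigma)\,\d\sigma\,\d v\,\d v_*,$$
together with the companion term involving $\psi(v_*')$; by the symmetry $v\leftrightarrow v_*$ in the weak form, the latter is of the same nature. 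The energy-contracting identity $|v'|^2+|v_*'|^2\leq |v|^2+|v_*|^2$ yields the product bound $\m_{q-1}(v')\leq 2^{(q-1)/2}\m_{q-1}(v)\m_{q-1}(v_*)$, while $|v-v_*|\leq \langle v\rangle\langle v_*\rangle$ absorbs the loss of one power and accounts for the shift from $\m_{q-1}$ on the left to $\m_{q}$ on the right of the final inequality.

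The decisive step is an asymmetric H\"older inequality in the angular variable. Parametrizing $\d\sigma = |\S^{d-2}|(1-s^2)^{(d-3)/2}\d s\,\d\varphi$ with $s=\widehat{u}\cdot\sigma$ and inserting a factor $\bigl((1-s)/2\bigr)^{d/(2r')}$ multiplying $\psi(v')$ (with its reciprocal accompanying $f(v)$), one applies H\"older in the $(v,\sigma)$-variable with exponents $(r',r)$ at fixed $v_*$, and then integrates $g(v_*)$ against the resulting function of $v_*$ in $L^1$. For fixed $(v_*,\sigma)$ the Jacobian of the change of variables $v\mapsto v'$ is, in the elastic case, precisely $\bigl((1-s)/2\bigr)^{d/2}$; this is exactly the power that the inserted factor anticipates, so that the weighted integral of $\psi(v')$ collapses to $\|\psi\|_{L^{r'}_v}\leq 1$. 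The remaining expression factors cleanly into $\|f\|_{L^r_v(\m_{q})}\|g\|_{L^1_v(\m_{q})}$ times an angular integral which, combined with the $|\S^{d-2}|$ from the parametrization, is exactly $C_r(b)$; its finiteness is guaranteed by \eqref{eq:conditionb}.

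The second estimate, involving $\tilde C_r(b)$, is obtained by the dual strategy: an asymmetric H\"older inequality inserting $\bigl((1+s)/2\bigr)^{d/(2r')}$ combined with the change of variables $v\mapsto v_*'$. In the inelastic case $\re<1$ the Jacobian of $v\mapsto v_*'$ at fixed $(v_*,\sigma)$ carries an additional correction $\bigl((1-\re)^2/4\bigr)(1+s)/2$, which is precisely the extra term appearing inside $\tilde C_r(b)$. The main technical obstacle is the computation of these two Jacobians: starting from the explicit formulas for $v'$ and $v_*'$ in \eqref{co:transf}, one must invert the maps $v\mapsto v'$ and $v\mapsto v_*'$ (at fixed $v_*,\sigma$) and verify rigorously that their Jacobian determinants match the angular factors that are inserted by hand in H\"older. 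Everything else is careful book-keeping of constants.
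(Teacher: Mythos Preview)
The paper does not prove this statement: it is quoted verbatim as Theorem~1 of \cite{ACG} and simply invoked where needed, with no argument supplied. Your sketch is a faithful outline of the proof in \cite{ACG}, based on duality, the weight bound coming from $|v'|^2+|v_*'|^2\le |v|^2+|v_*|^2$, an asymmetric H\"older inequality in $(v,\sigma)$, and the change of variables $v\mapsto v'$ (resp.\ $v\mapsto v_*'$) at fixed $(v_*,\sigma)$ whose Jacobian produces the angular factors in $C_r(b)$ and $\tilde C_r(b)$.

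One small clarification: you wrote that the Jacobian of $v\mapsto v'$ equals $((1-s)/2)^{d/2}$ ``in the elastic case''. In fact the first constant $C_r(b)$ is $\alpha$-independent, so the Jacobian computation for $v\mapsto v'$ must yield this factor for all $\alpha\in(0,1]$; the $\alpha$-dependence only enters in the second map $v\mapsto v_*'$, which is why $\tilde C_r(b)$ carries the correction $(1-\alpha)^2/4$. Apart from this, your plan is correct.
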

\begin{cor} \label{cor:Ricardo}
Consider $q \geq 1$ and $r \in [1,\infty]$ such that condition~\eqref{eq:conditionb} is satisfied. Then there exists $C(r,q) >0$ such that
$$\sup_{\re\in (0,1]}\left(\left\|\Q_{\re}^{+}(f,\M)\right\|_{L^{r}_{v}(\m_{q-1})} + \left\|\Q_{\re}^{+}(\M,f)\right\|_{L^{r}_{v}(\m_{q-1})}\right) \leq C(r,q)\|f\|_{L^{r}_{v}(\m_{q})}$$
and
$$\sup_{\re\in (0,1]}\left(\left\|\Q_{\re}^{+}(f,G_{\re}-\M)\right\|_{L^{r}_{v}(\m_{q-1})} + \left\|\Q_{\re}^{+}(G_{\re}-\M,f)\right\|_{L^{r}_{v}(\m_{q-1})}\right) \leq C(r,q)\|f\|_{L^{r}_{v}(\m_{q})}.$$
\end{cor}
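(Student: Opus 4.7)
\smallskip
\noindent
\textbf{Proof proposal.} The strategy is to apply Theorem~\ref{theo:Ricardo} directly, taking either $\mathcal{M}$ or $G_{\re}-\mathcal{M}$ as the first (respectively second) argument of $\Q_{\re}^{+}$, and then to verify that all $\re$-dependent quantities that appear are uniformly controlled on $(0,1]$.

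\smallskip
\noindent
First, inserting $g=\mathcal{M}$ in the two estimates of Theorem~\ref{theo:Ricardo} gives
\[
\|\Q_{\re}^{+}(f,\mathcal{M})\|_{L^{r}_{v}(\m_{q-1})} \le C_{r}(b)\,\|\mathcal{M}\|_{L^{1}_{v}(\m_{q})}\,\|f\|_{L^{r}_{v}(\m_{q})},
\]
\[
\|\Q_{\re}^{+}(\mathcal{M},f)\|_{L^{r}_{v}(\m_{q-1})} \le \tilde{C}_{r}(b)\,\|\mathcal{M}\|_{L^{1}_{v}(\m_{q})}\,\|f\|_{L^{r}_{v}(\m_{q})}.
\]
The constant $C_{r}(b)$ is independent of $\re$. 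For $\tilde{C}_{r}(b)$, one factors the integrand as
\[
\Bigl(\tfrac{1+s}{2}\Bigr)^{-\frac{d}{2r'}}\Bigl(1+\tfrac{(1-\re)^{2}}{4}\Bigr)^{-\frac{d}{2r'}}\, b(s)\,(1-s^{2})^{\frac{d-3}{2}},
\]
and observes that $(1-\re)^{2}\in[0,1)$ implies that the second factor is bounded by $1$ uniformly in $\re\in(0,1]$. Moreover, $(d-3)/2-d/(2r')=(d-3r)/(2r)$, so that the remaining integral equals the second summand of condition \eqref{eq:conditionb} and is therefore finite. This provides the first estimate of the Corollary, with a constant independent of $\re$ and since $\|\M\|_{L^{1}_{v}(\m_q)}$ is obviously finite for all $q$.

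\smallskip
\noindent
For the second estimate, the same application of Theorem~\ref{theo:Ricardo} with $g=G_{\re}-\mathcal{M}$ yields
\[
\|\Q_{\re}^{+}(f,G_{\re}-\mathcal{M})\|_{L^{r}_{v}(\m_{q-1})}+\|\Q_{\re}^{+}(G_{\re}-\mathcal{M},f)\|_{L^{r}_{v}(\m_{q-1})}\lesssim \|G_{\re}-\mathcal{M}\|_{L^{1}_{v}(\m_{q})}\,\|f\|_{L^{r}_{v}(\m_{q})},
\]
with the implicit constant controlled by $C_{r}(b)+\tilde{C}_{r}(b)$ as above. The final point is a uniform bound for $\|G_{\re}-\mathcal{M}\|_{L^{1}_{v}(\m_{q})}$: for $\re\in[\re_{3},1]$ this follows from Lemma~\ref{prop:psi} (which in fact gives the sharper $\mathrm{O}(1-\re)$ bound), while on $(\re_{0},\re_{3}]$ one uses the uniform-in-$\re$ boundedness of $\{G_{\re}\}$ in $L^{1}_{v}(\m_{q+1})$ established in \cite{MiMo1,MiMo3} together with $\|\mathcal{M}\|_{L^{1}_{v}(\m_{q})}<\infty$.

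\smallskip
\noindent
No genuine obstacle appears; the only non-cosmetic point is the uniform control of $\tilde{C}_{r}(b)$ in $\re$, which is handled by the elementary factorization above and the integrability condition \eqref{eq:conditionb}.
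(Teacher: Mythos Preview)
Your proof is correct and follows exactly the route the paper has in mind: the corollary is stated in the paper as an ``immediate'' consequence of Theorem~\ref{theo:Ricardo}, with no further argument given, and your verification that $C_r(b)$ is $\re$-independent while $\tilde C_r(b)$ is uniformly bounded in $\re$ (via the elementary factorization and condition~\eqref{eq:conditionb}) is precisely the missing detail. The only cosmetic remark is that the second estimate implicitly requires $\re$ to lie in the range where $G_\re$ exists and is unique (i.e.\ $\re\in(\re_0,1]$), so the supremum over all of $(0,1]$ in the statement is a slight abuse; your splitting into $[\re_3,1]$ and $(\re_0,\re_3]$ handles exactly this.
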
} 
\subsection{Dissipativity properties}
We finally give the remaining part of the proof of Proposition~\ref{prop:hypo}.

\begin{proof}[Proof of Proposition \ref{prop:hypo}] We need to prove the result in the space $\W^{s,1}_{v}\W^{\ell,2}_{x}(\m_{q})$.
The proof follows the same lines as the one presented earlier and in particular, we write again 
$\mathcal{B}_{\re,\e}^{(\delta)}(h)=\sum_{i=0}^{3}C_{i}(h)$. As in the $\W^{s,2}_{v}\W^{\ell,2}_{x}(\m_{q})$ case, there is no loss of generality in assuming $\ell=s$. 

\noindent$\bullet$ Assume first $\ell=0$. With obvious notations,
{\begin{equation*}
\int_{\R^d} \|h(\cdot,v)\|_{L^2_x}^{-1}\left(\int_{\T^d}  \mathcal{B}_{\re,\e}^{(\delta)}(h)(x,v)  h(x,v) \, \d x\right) \, \m_{q}(v)\, \d v  
 =:\sum_{i=0}^{3}I_{i}(h).
 \end{equation*}} {First, $I_1(h)=0$ since 
$$
\int_{\T^d} \left(v \cdot \nabla_{x} h(x,v)\right) h(x,v) \, \d x = \frac12 \int_{\T^d} v \cdot \nabla_x h^2(x,v) \, \d x = 0.  
$$
According to~\eqref{eq:B1delta}, by taking $\delta$ small enough so that $\Lambda_q^{(1)}(\delta)<1$ (which is possible since $q>2$), we have
$$I_{0}(h) \leq \e^{-2} \sigma_0\left(\Lambda_{q}(\delta)-1\right)\|h\|_{L^{1}_{v}L^{2}_{x}(\m_{q+1})}.$$
Moreover, it follows from Cauchy-Schwarz inequality and Corollary~\ref{cor:Palpha} that 
$$
I_2(h) \leq 
\e^{-2} \int_{\R^d} \|\P_{\re} h (\cdot,v)\|_{L^2_x}  \m_q(v) \, \d v \leq \e^{-2} \widetilde{\bm C}_{0,q} (1-\alpha)^{p \over 2} \|h\|_{L^1_vL^2_x(\m_{q+1})}.
$$
Finally, for $I_3$, one can compute 
\begin{align*}
&\int_{\R^d}  \|h(\cdot,v)\|_{L^2_x}^{-1}  \int_{\T^d} \nabla_v\cdot (vh(x,v)) \, h(x,v)   \, \d x  \, \m_q(v) \, \d v
 \\
&\quad= d \int_{\R^d}   \|h(\cdot,v)\|_{L^2_x}  \, \m_q(v)\, \d v +  {1 \over 2} \int_{\R^d}  \|h(\cdot,v)\|_{L^2_x}^{-1} \int_{\T^d} v \cdot \nabla_v h^2(x,v)    \, \d x  \, \m_q(v) \, \d v
 \\
 &\quad = d \int_{\R^d}   \|h(\cdot,v)\|_{L^2_x}  \, \m_q(v)\, \d v +  {1 \over 2} \int_{\R^d}  \|h(\cdot,v)\|_{L^2_x}^{-1} \, v \cdot \nabla_v \|h(\cdot,v)\|_{L^2_x}^2 \, \m_q(v) \, \d v \\
&\quad = -  \int_{\R^d} \|h(\cdot,v)\|_{L^2_x} v \cdot \nabla_v \m_q (v) \, \d v .
\end{align*}
Since $v \cdot \nabla_{v} \m_{q}(v)=q \m_{q}(v)-q\m_{q-2}(v)$  we get
\begin{equation} \label{eq:I3}
I_{3}(h) \leq q\kappa_{\re}\e^{-2} \|h\|_{L^{1}_{v}L^{2}_{x}(\m_{q+1})}.
\end{equation}
Gathering the previous estimates, one obtains 
\begin{equation}\label{eq:Bal}
\begin{split}
&\mathcal{I}:=\int_{\R^d} \|h(\cdot,v)\|_{L^2_x}^{-1} \left(\int_{\T^d}  \mathcal{B}_{\re,\e}^{(\delta)}(h)(x,v)  h(x,v) \, \d x\right) \, \m_{q}(v)\, \d v \\
&\qquad \leq \e^{-2}\,\left(\widetilde{\bm{C}}_{0,q}(1-\re)^{p \over 2} + \sigma_0(\Lambda_{q}(\delta)-1) + q\kappa_{\re}\right)\|h\|_{L^{1}_{v}L^2_{x}(\m_{q+1})}.
\end{split}
\end{equation}
Recalling that $\kappa_{\re}=1-\re$ while $\lim_{\delta\to 0}(\Lambda_{q}(\delta)-1)=-\frac{q-2}{q+2} <0$ we can pick $\delta_{1,0,0,q}^{\dagger}$ small enough and then $\re^{\dagger}_{1,0,0,q}\in(0,1)$ close enough to $1$ so that
  $$\nu_{1,0,0,q}:= -\inf\left\{\widetilde{\bm{C}}_{0,q}(1-\re)^{p \over 2} + \sigma_0(\Lambda_{q}(\delta)-1) + q\kappa_{\re}\,;\,\re \in (\re_{1,0,0,q}^{\dagger},1),\, \delta \in (0,\delta_{1,0,0,q}^{\dagger})  \right\} >0 $$
and get the result. }
\smallskip  

\noindent
{Let us now consider the case $\ell=1$ and introduce the norm 
$$\vertiii{h}= \|h\|_{L^{1}_{v}L^2_{x}(\m_{q})}+   \|\nabla_{x} h\|_{L^{1}_{v}L^2_{x}(\m_{q})} +\eta\,\|\nabla_{v} h\|_{L^{1}_{v}L^2_{x}(\m_{q})}  , $$
for some $\eta>0$, the value of which shall be fixed later on. This norm is equivalent to the classical $\W^{1,1}_v\W^{1,2}_{x} (\m_{q})$-norm. We shall prove that for some $\nu_{1,1,1,q}>0$, $ \mathcal{B}_{\re,\e}^{(\delta)}+\e^{-2}\nu_{1,1,1,q}$ is dissipative in $\W^{1,1}_v\W^{1,2}_{x} (\m_{q})$ for the norm $\vertiii{\cdot}$.  
Notice first that the $x$-derivative commutes with all the above terms $C_{i}(h)$, $i=0,\ldots,3$, i.e.
$$\nabla_{x} \mathcal{B}_{\re,\e}^{(\delta)}h(x,v)= \mathcal{B}_{\re,\e}^{(\delta)}\nabla_{x}h(x,v)$$
so that, according to the previous step
\begin{align}\label{eq:Baldx}
\begin{split}
\mathcal{J}_{x}:&=\int_{\R^d} \|\nabla_x h(\cdot,v)\|_{L^2_x}^{-1} \left(\int_{\T^d}  \nabla_x \,\mathcal{B}_{\re,\e}^{(\delta)}( h)(x,v) \cdot \nabla_x h(x,v) \, \d x\right) \, \m_{q}(v)\, \d v\\
&\leq -\e^{-2} \nu_{1,0,0,q}\|\nabla_{x}h\|_{L^{1}_{v}L^{2}_{x}(\m_{q})}.
\end{split}
\end{align}
Consider now the quantity
$$\mathcal{J}_{v}:=\int_{\R^d} \|\nabla_v h(\cdot,v)\|_{L^2_x}^{-1} \left(\int_{\T^d}  \nabla_v \, \mathcal{B}_{\re,\e}^{(\delta)}( h)(x,v) \cdot \nabla_v h(x,v) \, \d x\right) \, \m_{q}(v)\, \d v.$$
Using the notations above, one notices that $\nabla_{v} C_{1}(h)=-\e^{-1} \nabla_{x} h + C_{1}(\nabla_{v}h)$, so that
\begin{align}\label{eq:nabB}
\begin{split}
\nabla_{v} ( \mathcal{B}_{\re,\e}^{(\delta)} h(x,v))&=\e^{-2} \nabla_{v} (\mathcal{B}_{1}^{(\delta)}h) -\e^{-1} \nabla_{x}h + C_{1}(\nabla_{v}h) \\
&\hspace{4cm}+ \e^{-2} \nabla_{v}(\P_{\re}h) + \e^{-2} \nabla_{v} (T_{\re}h)\\
&=\e^{-2} \nabla_{v} [\mathscr{L}_{1}^{R,\delta}h -\Sigma_\M\,h ]  -\e^{-1} \nabla_{x}h + C_{1}(\nabla_{v}h) \\
&\hspace{4.5cm}+\e^{-2} \nabla_{v} (\P_{\re}h) + \e^{-2} \nabla_{v} (T_{\re}h).
\end{split}
\end{align} 
Then, it follows from Corollary~\ref{cor:Palpha} that 
\begin{equation}\label{eq:nab}
\|\nabla_{v} (\P_{\re} h) \|_{L^{1}_{v}L^2_x(\m_{q})} \leq \widetilde{\bm{C}}_{1,q}(1-\re)^{p \over 2}\left(\|h\|_{L^{1}_{v}L^2_x(\m_{q+1})}+\|\nabla_{v} h\|_{L^{1}_{v}L^2_x(\m_{q+1})}\right)\,.\end{equation}
Now, 
$$\nabla_{v} [\mathscr{L}_{1}^{{R,\delta}}h -\Sigma_\M  h ]
= \mathscr{L}_{1}^{R,\delta}(\nabla_{v} h) -\Sigma_\M  \nabla_{v} h 
+{\mathcal R} (h) , $$
where 
$${\mathcal R}( h )=\Q_1(h,\nabla_{v} \M)+ \Q_1(\nabla_{v} \M,h) 
- (\nabla_{v}{\mathcal A}^{(\delta)})(h) - {\mathcal A}^{(\delta)}(\nabla_{v} h). $$
From the proof of~\cite[Lemma~4.14]{GMM}, we have that
$$\| {\mathcal R}( h )\|_{L^1_{v}L^2_x(\m_{q})}  
\leq  C_\delta \|h\|_{L^1_{v}L^2_x(\m_{q+1})}. $$
From~\cite[Lemma~4.12]{GMM}, we also have that 
$$\|\mathscr{L}_{1}^{{R,\delta}}(\nabla_{v} h)\|_{L^1_{v}L^2_x(\m_{q})}
\leq \Lambda_q^{(1)}(\delta) \|\nabla_{v} h\|_{L^1_{v}L^2_x( \m_{q}\Sigma_\M)},$$
where $\Lambda_q^{(1)}(\delta)$ was introduce in Lemma~\ref{prop:hypo1}.  Then, one has that
\begin{equation*}
\begin{split}
- \int_{\R^d} \|\nabla_v h(\cdot,v)\|_{L^2_x}^{-1} \left(\int_{\T^d}  \Sigma_\M(v) \, |\nabla_v h(x,v)|^2 \, \d x\right) \, \m_{q}(v)\, \d v
& \leq  -\|\nabla_{v} h\|_{L^1_{v}L^2_x(\m_{q}\Sigma_\M)}.
\end{split}
\end{equation*}
Therefore, recalling that $\delta$ is such that $\Lambda_q^{(1)}(\delta)<1$ and that $\Sigma_\M(v) \geq \sigma_0 \langle v \rangle$, we get 
\begin{equation}\label{eq:nabL0}
\|\nabla_{v} [\mathscr{L}_{1}^{{R,\delta}}(h) -\Sigma_\M\, h ]\|_{L^{1}_{v}L^2_x(\m_{q})} \leq C_\delta	\|h\|_{L^{1}_{v}L^2_x(\m_{q+1})}+\sigma_0\left(\Lambda_q^{(1)}(\delta)-1\right) \|\nabla_{v} h\|_{L^1_{v}L^2_x(\m_{q+1})}.\end{equation}
Finally, using the short-hand notation 
$$
\nabla_v \cdot (v \, \nabla_v h) =\big(\nabla_v \cdot (v \, \partial_{v_1} h),\cdots,\nabla_v \cdot (v \,\partial_{v_d} h)\big),
$$
we have
$$
\nabla_v h \cdot \nabla_v \left(\nabla_v \cdot (vh)\right) = |\nabla_v h|^2 + \left[\nabla_v \cdot (v\, \nabla_vh)\right] \cdot \nabla_v h. 
$$
Doing similar computations as the ones leading to~\eqref{eq:I3}, we obtain:
\begin{equation}\label{eq:div}
\begin{aligned} 
&\int_{\R^d} \|\nabla_v h(\cdot,v)\|_{L^2_x}^{-1} \left(\int_{\T^d}  \nabla_v \, T_\re( h)(x,v) \cdot \nabla_v h(x,v) \, \d x\right) \, \m_{q}(v)\, \d v
\\
&\hspace{8cm} \leq (q-1) \kappa_{\re} \|\nabla_v h\|_{L^1_vL^2_x(\m_{q+1})}.
\end{aligned}
\end{equation}
Coming back to~\eqref{eq:nabB}, Cauchy-Schwarz inequality and estimates \eqref{eq:nab},~\eqref{eq:nabL0} and~\eqref{eq:div} give that  
\begin{multline*}
\mathcal{J}_{v} \leq \e^{-2} (C_\delta +\widetilde{\bm{C}}_{1,q}(1-\re)^{\frac{p}{2}} )\| h\|_{L^{1}_{v}L^2_{x}(\m_{q+1})} + \e^{-1} \|\nabla_x h\|_{L^1_vL^2_x(\m_q)}
\\ +\e^{-2}(\widetilde{\bm{C}}_{1,q}(1-\re)^{p \over 2}+\,C\kappa_{\re}+\sigma_0(\Lambda_q^{(1)}(\delta)-1)) \|\nabla_{v} h\|_{L^{1}_{v}L^2_{x}(\m_{q+1})}\,,
\end{multline*}
where we used that the contribution to $\mathcal{J}_{v}$ of the term $C_{1}(\nabla_{v}h)$ vanishes. 
Hence, combining this estimate with \eqref{eq:Bal} and \eqref{eq:Baldx} and using that $\e \leq 1$, it follows that
\begin{multline*}
\mathcal{I} + \mathcal{J}_{x}+ \eta\,\mathcal{J}_{v} 
\leq \e^{-2} \bigg(\left[-\nu_{1,0,0,q} + \eta\left(C_{\delta}+\widetilde{\bm{C}}_{1,q}(1-\re)^{p \over 2}\right)\right]\|h\|_{L^{1}_{v}L^{2}_{x}(\m_{q+1})} \\ 
-(\nu_{1,0,0,q}- \eta)\|\nabla_{x}h\|_{L^{1}_{v}L^{2}_{x}(\m_{q+1})} + \eta\left[\widetilde{\bm{C}}_{1,q}(1-\re)^{p \over 2}+C\kappa_{\re}+\sigma_0(\Lambda_q^{(1)}(\delta)-1)\right]\|\nabla_{v} h\|_{L^1_{v}L^{2}_{x}(\m_{q+1})}\bigg).
\end{multline*}
Consequently, there exists $\re^{\dagger}_{1,1,q} >0$ and $\delta_{1,1,q}^{\dagger} >0$ so that
$$\bm{C}_{1,q}(1-\re)+C\kappa_{\re}+\sigma_0(\Lambda_q^{(1)}(\delta)-1) <0 \qquad \forall \re \in (\re^{\dagger}_{1,1,q},1), \:\:\delta \in (0,\delta^{\dagger}_{1,1,q}).$$ 
Choosing $\eta>0$ small enough such that 
$$
\nu_{1,0,0,q}-\eta\, \max\bigg(1,\sup_{\delta \in (0,\delta^\dagger_{1,1,q})}C_\delta +\widetilde{\bm{C}}_{1,q}(1-\re^\dagger_{1,1,q})^{p\over 2}\bigg) >0,
$$ 
we finally obtain that there exists $\nu_{1,1,1,q}>0$ such that for $\re \in (\re^{\dagger}_{1,1,q},1)$ and $\delta \in (0,\delta^{\dagger}_{1,1,1,q})$,
\begin{equation*}\begin{split}
\mathcal{I} + \mathcal{J}_{x}+ \eta\,\mathcal{J}_{v}
&\leq  -\e^{-2} \nu_{1,1,1,q}\left [\|h\|_{L^1_{v}L^{2}_{x}(\m_{q+1})} + \|\nabla_{x}h\|_{L^{1}_{v}L^{2}_{x}(\m_{q+1})}
+ \eta\,  \|\nabla_{v} h\|_{L^1_{v}L^{2}_{x}(\m_{q+1})}\right]\\
&\leq -\e^{-2}\nu_{1,1,1,q} \vertiii{h}.
\end{split}\end{equation*}
This proves that ${\mathcal B}_{\re,\e}^{(\delta)}+\e^{-2} \nu_{1,1,1,q}$ is hypo-dissipative in $\W^{1,1}_{v}\W^{1,2}_x(\m_{q})$. We prove the result for higher order derivatives in the same way considering now the norm 
$$\vertiii{h}=\sum_{|\bm{\beta}_{1}|+|\bm{\beta}_{2}| \leq k}\eta^{|\bm{\beta}_{1}|}\left\|\nabla^{|\bm{\beta}_{1}|}_{v}\nabla_{x}^{|\bm{\beta}_{2}|}h\right\|_{L^{1}_{v}L^{2}_{x}(\m_{q})}$$
for some $\eta>0$ to be chosen sufficiently small.}
\end{proof}

\section{About the original problem in physical variables}\label{Appendix-PP}
Let $F_{\e}(t,x,v)$ be the solution of the Boltzmann equation \eqref{Bol-e} with associated Knudsen number $\e$.  Recall that the time-scale functions $\tau_{\e}(t),V_{\e}(t)$ that relate the problem in original (physical) variables to its self-similar counterpart 
$$F_{\e}(t,x,v) = V_{\e}(t)^{d}f_{\e}\big(\tau_{\e}(t),x,V_{\e}(t)v\big)$$
are given by
$$ \tau_{\e}(t):=\frac{1}{c_{\e}}\ln(1+ c_{\e}\,t)\,, \quad V_{\e}(t)=1+c_{\e}\,t\,,\qquad t \geq0,$$
where $c_{\e}=\frac{1-\re({\e})}{\e^2}$.   It follows that the explicit equation for $f_{\e}$ is given by
\begin{equation*}
\partial_{\tau}f_{\e}+ \e^{-1}w\cdot\nabla_{x}f_{\e} = \e^{-2} \Q(f_{\e},f_{\e}) - \e^{-2}(1-\re)\nabla_{w}(w\,f_{\e})\,,\quad w = V_{\e}(t)\,v\,
\end{equation*}
as observed in \eqref{BE}. Set $f_{\varepsilon}(\tau,x,w)=G_{\re(\e)}(w)+\varepsilon \,h_{\varepsilon}(\tau,x,w)$ and denote $\bm{h}(\tau,x,w)$ the weak$-\star$ limit in the space $L^{\infty}\big((0,\infty);\E\big)$ of the (sub-)sequence $\{h_{\e}\}$. 
Define
$$e_{\e}(t,x,v)={V}_{\e}(t)^{d}\,\big( h_{\e}(\tau_{\e}(t),x,V_{\e}(t)v) - \bm{h}(\tau_{\e}(t),x,V_{\e}(t)v) \big)\,.$$ 
The following error estimate holds.
\begin{lem}\label{lemma-app-pp}
Under Assumption \ref{hyp:re} and in the regime $\e \ll 1\,$, $\lambda_{0}>0$, the following estimation holds for any $a\in(0,1/2)$, up to possibly extracting a subsequence,
\begin{equation*}
\big| \langle e_{\e}(t),|v|^{\kappa}\varphi\rangle \big| \leq C_{\varphi}\sqrt{\mathcal{K}_0}\,V_{\e}(t)^{-\kappa-a}\,,\qquad \varphi\in  \mathcal{C}^{1}_{v,b}L^{\infty}_{x}\,,\;\; 0\leq\kappa \leq q - 1\,,
\end{equation*}
where we denoted by $\mathcal{C}^{1}_{v,b}$ the set of $\mathcal{C}^{1}$ functions in $v$ that are bounded as well as their derivatives and where for any $t\geq0$,
$$V_{\e}(t) \approx V_{0}(t)=\big( 1+\lambda_0\,t \big)\,, \qquad \text{ as } \;\e \approx 0\,.$$
\end{lem}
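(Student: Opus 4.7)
The plan is to reduce the claim, through the change of variables $w = V_\e(t)\,v$, to a pointwise-in-$\tau$ estimate on $h_\e(\tau)-\bm{h}(\tau)$ evaluated at $\tau=\tau_\e(t)$. Unfolding the definition of $e_\e(t,x,v)$ and substituting $w=V_\e(t)v$ yields
\begin{equation*}
\langle e_\e(t),\,|v|^\kappa \varphi\rangle
= V_\e(t)^{-\kappa}\,\bigl\langle (h_\e-\bm{h})(\tau_\e(t)),\,|w|^\kappa\,\varphi_{\e,t}\bigr\rangle,
\end{equation*}
where $\varphi_{\e,t}(x,w):=\varphi(x,\,w/V_\e(t))$ satisfies $\|\varphi_{\e,t}\|_{L^\infty_{x,w}}\le \|\varphi\|_{\mathcal{C}^{1}_{v,b}L^{\infty}_{x}}$ uniformly in $\e$ and $t$. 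Since $\kappa\le q-1\le q$, the pointwise inequality $|w|^\kappa\le \m_q(w)$ combined with the continuous embedding $\W^{m,2}_x(\T^d)\hookrightarrow L^1_x(\T^d)$ (valid because $|\T^d|=1$) yields, for any $g\in\E$,
\begin{equation*}
\bigl|\langle g,\,|w|^\kappa\,\varphi_{\e,t}\rangle\bigr|
\le \|\varphi\|_\infty\,\|g\|_{L^1_vL^1_x(\m_q)}
\lesssim \|\varphi\|_\infty\,\|g\|_{\E}.
\end{equation*}
It therefore suffices to show that both $\|h_\e(\tau_\e(t))\|_\E$ and $\|\bm{h}(\tau_\e(t))\|_\E$ are bounded by $\sqrt{\mathcal{K}_0}\,V_\e(t)^{-a}$ for every $a\in(0,1/2)$.

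The first quantity is delivered directly by the Cauchy theory. Theorem~\ref{theo:main-cauc1}, obtained via Theorem~\ref{theo:h-relaxation} applied with the threshold choice $r=\tfrac12$ singled out in Remark~\ref{nb:1/2}, provides the pointwise bound $\|h_\e(\tau)\|_\E \lesssim \sqrt{\mathcal{K}_0}\,e^{-\overline{\lambda}_\e \tau/2}$. Substituting $\tau_\e(t)=c_\e^{-1}\log V_\e(t)$ and using that $\overline{\lambda}_\e/c_\e=1+\mathrm{O}(\e^2)$, as follows from \eqref{eq:LambdaE} and Assumption~\ref{hyp:re}, gives
\begin{equation*}
e^{-\overline{\lambda}_\e\,\tau_\e(t)/2}=V_\e(t)^{-\overline{\lambda}_\e/(2c_\e)}\le V_\e(t)^{-a}
\end{equation*}
for every $a\in(0,1/2)$ and every $\e$ sufficiently small, which handles the $h_\e$ piece.

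The delicate step is the promotion of this pointwise exponential bound to the weak limit $\bm{h}$: Theorem~\ref{theo:strong-conv} only supplies weak $L^2_\tau$ convergence and therefore \emph{a priori} no pointwise-in-$\tau$ control on $\|\bm{h}(\tau)\|_\E$. The decisive device is to apply weak-$\star$ compactness not to $h_\e$ itself but to the \emph{exponentially modulated} family $H_\e(\tau):=e^{\overline{\lambda}_\e \tau/2}\,h_\e(\tau)$, which is uniformly bounded in $L^\infty((0,\infty);\E)$ by $C\sqrt{\mathcal{K}_0}$ thanks to the Cauchy bound of the previous step. Since the predual of $\E$ is separable, a diagonal Banach--Alaoglu extraction over an exhaustion $(0,T_n)_n$ produces a subsequence $\e_n\to 0$ along which $H_{\e_n}$ converges in the weak-$\star$ topology of $L^\infty_{\mathrm{loc}}((0,\infty);\E)$. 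Because $h_{\e_n}\to\bm{h}$ in the sense of Theorem~\ref{theo:strong-conv} and $\overline{\lambda}_{\e_n}\to\lambda_0$, identification of the weak-$\star$ limit in $\mathscr{D}'_{t,x,v}$ forces it to equal $e^{\lambda_0\tau/2}\,\bm{h}(\tau)$, and hence
\begin{equation*}
\|\bm{h}(\tau)\|_\E \lesssim \sqrt{\mathcal{K}_0}\,e^{-\lambda_0 \tau/2}\qquad\text{for a.e. }\tau\ge 0.
\end{equation*}
Evaluating at $\tau_\e(t)$ and using $\lambda_0/c_\e\to 1$ produces the matching $V_\e(t)^{-a}$ bound; combined with the $V_\e(t)^{-\kappa}$ prefactor from the change of variables, this proves the lemma. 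The main obstacle is precisely this diagonal extraction, which is the subsequence alluded to in the statement.
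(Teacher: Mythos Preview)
Your proof is correct and rests on the same mechanism as the paper's: rescale $w=V_\e(t)v$, then bound $\|h_\e(\tau_\e(t))\|_\E$ and $\|\bm{h}(\tau_\e(t))\|_\E$ separately, the former from the Cauchy-theory decay and the latter by transferring that decay through a weak-$\star$ compactness/lower-semicontinuity argument. Two small differences are worth flagging. First, the paper splits the integrand as $\varphi(x,V_\e(t)^{-1}w)=[\varphi(x,V_\e(t)^{-1}w)-\varphi(x,0)]+\varphi(x,0)$ and uses the $\mathcal{C}^1_{v,b}$ hypothesis to gain an extra factor $V_\e(t)^{-1}$ on the first piece; since the second piece already saturates the rate $V_\e(t)^{-\kappa-a}$, your unsplit estimate suffices for the lemma as stated and in fact shows that $\varphi\in L^\infty_{x,v}$ is enough. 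Second, for the decay of $\bm{h}$ the paper applies lower semicontinuity of $\|\cdot\|_{L^\infty((t,\infty);\E)}$ on each tail interval, while you modulate by $e^{\overline{\lambda}_\e\tau/2}$ before extracting a weak-$\star$ limit; these are equivalent. One caveat: $\E=\W^{k,1}_v\W^{m,2}_x(\m_q)$ is $L^1_v$-based and is not a dual Banach space, so ``the predual of $\E$'' is not well-defined (the paper is equally informal on this point). The cleanest repair is to observe that because $\bm{h}=\bm{\pi}_0\bm{h}$ has the explicit form \eqref{eq:hlim}, $\|\bm{h}(\tau)\|_\E$ is controlled by its norm in the Hilbert space $L^2_v\W^{m,2}_x(\M^{-1/2})$, where the weak compactness argument goes through without difficulty.
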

\begin{proof}
After a change of variables it follows that, for any test-function $\varphi$,
\begin{align*}
&\langle e_{\e}(t),|v|^{\kappa}\varphi\rangle \\
&\quad=V_{\e}(t)^{-\kappa}\int_{\T^d \times \R^d} \Big( h_{\e}(\tau_{\e}(t),x,v) - \bm{h}(\tau_{\e}(t),x,v) \Big)|v|^{\kappa}\big(\varphi(x,V_{\e}(t)^{-1}v)-\varphi(x,0)\big)\,\d v\,\d x\\
&\qquad+V_{\e}(t)^{-\kappa}\int_{\T^d \times \R^d} \Big( h_{\e}(\tau_{\e}(t),x,v) - \bm{h}(\tau_{\e}(t),x,v) \Big)|v|^{\kappa}\varphi(x,0)\,\d v\,\d x\\
&\quad=\mathcal{I}_{1}(t) + \mathcal{I}_{2}(t)\,.
\end{align*}
Note that, up to a subsequence, $\bm{h}$ is the weak$-\star$ limit of $\{h_{\e}\}_{\e}$ in $L^{\infty}\big((0,\infty);\E\big)$.  Thus, {for any $t >0$,
$\|\bm{h}\|_{L^{\infty}((t,\infty)\,;\,\E)} \leq \liminf_{\e\searrow0}\|h_{\e}\|_{L^{\infty}((t,\infty)\,;\,\E)}.$
Consequently, thanks to Theorem~\ref{theo:h-relaxation}, it holds that
\begin{equation}\label{tiempito}
\|\bm{h}\|_{L^{\infty}((t,\infty)\,;\,\E)} \leq C\,\sqrt{\mathcal{K}_0}\,e^{-\frac{\lambda_{\e}}{2}t}\,,\qquad t>0\,.
\end{equation}}
As a consequence, recalling that $\lambda_{\e} \simeq c_{\e}$, it follows that for any $a\in(0,1/2)$
\begin{align*}
&\Big|\int_{\mathbb{T}^{d}\times \mathbb{R}^{d} }\Big( h_{\e}(\tau,x,v) - \bm{h}(\tau,x,v) \Big)|v|^{\kappa}\d v\,\d x \Big| \\
&\qquad\leq \| h_{\e}(\tau_{\e}(t)) - \bm{h}(\tau_{\e}(t)) \|_{ L^{1}_{v,x}(\m_{q}) } \leq C\,\sqrt{\mathcal{K}_0}\,e^{-\frac{\lambda_{\e}}{2}\tau_{\e}(t)}\leq C \, V_{\e}(t)^{- a}\, \sqrt{\mathcal{K}}_0\,,
\end{align*}
{where we used once again that $\|\cdot\|_{L^{1}_{v}(\m_{q})} \lesssim \|\cdot\|_{L^{2}_{v}(\m_{q+\kappa})}$ for $\kappa >\frac{d}{2}$.}
Now, in regard of $\mathcal{I}_{1}(t)$, note that
$$\big| \varphi(x,V_{\e}(t)^{-1}v) - \varphi(x,0)\big | \leq V_{\e}(t)^{-1}|v| \sup_{x}\sup_{v} \big|\partial_{v}\varphi(x,v) \big|=C_{\varphi}V_{\e}(t)^{-1}|v|\,,$$
so that the following holds:
\begin{align*}
\big|\mathcal{I}_{1}(t)\big| &\leq {C}_{\varphi}V_{\e}(t)^{-\kappa-1}\| h_{\e}(\tau_{\e}(t)) - \bm{h}(\tau_{\e}(t)) \|_{ L^{1}_{v,x}(\m_{q}) } \leq C_{\varphi} \, V_{\e}(t)^{-\kappa - 1 - a}\, \sqrt{\mathcal{K}}_0\,.
\end{align*}
Similarly,
\begin{align*}
\Big|\mathcal{I}_{2}(t) \Big| \leq  C_{\varphi} \, V_{\e}(t)^{-\kappa - a}\, \sqrt{\mathcal{K}}_0\,,
\end{align*}
which proves the desired estimate.
\end{proof}

The above computations also allow to provide a \textbf{\textit{local version of Haff's Law}}. Namely, note that
\begin{align*}
\int_{\mathbb{R}^{d}}f_{\e}&(\tau_{\e}(t),x,w)|w|^{\kappa}\d w \\
&= \int_{\mathbb{R}^{d}}G_{\re}(w)|w|^{\kappa}\d w + \e\,\int_{\mathbb{R}^{d}}h_{\e}(\tau_{\e}(t),x,w)|w|^{\kappa}\d w\,,\qquad 0 \leq \kappa \leq q\,.
\end{align*}
Thanks to Sobolev embedding it holds that
\begin{equation*}
\bigg| \sup_{x\in\mathbb{T}^{d}}\int_{\mathbb{R}^{d}}h_{\e}(\tau_{\e}(t),x,w)|w|^{\kappa}\d v \bigg| \leq C_{\kappa}\| h_{\e}( \tau_{\e}(t) ) \|_{\E} \leq  C_{\kappa}\sqrt{K_0}\,.
\end{equation*}
Therefore, for sufficiently small $\e>0$ there exists two positive constants $C_{\kappa}$ and $c_{\kappa}$ such that
\begin{equation*}
c_{\kappa}\leq\int_{\mathbb{R}^{d}}f_{\e}(\tau_{\e}(t),x,w)|w|^{\kappa}\d w \leq C_{\kappa}\,,\qquad 0 \leq \kappa \leq q\,,\;\, t\geq0\,,
\end{equation*}
which leads, for the physical problem, to
\begin{equation*}
V_{\e}(t)^{-\kappa}c_{\kappa}\leq\int_{\mathbb{R}^{d}}F_{\e}(t,x,v)|v|^{\kappa}\d v \leq V_{\e}(t)^{-\kappa}C_{\kappa}\,,\qquad  0 \leq \kappa \leq q\,,\;\, t\geq0\,.
\end{equation*}
In particular, this estimate renders a local version of Haff's law
\begin{equation*}
\int_{\mathbb{R}^{d}}F_{\e}(t,x,v)|v|^{2}\d v \sim \big( 1 + c_{\e}\, t \big)^{-2} \,, \qquad \forall t \geq0, \quad x \in \T^{d}.
\end{equation*}

\section{Tools for the Hydrodynamic limit}\label{sec:hydro1}
We collect several tools that are used in Section \ref{sec:hydro} to derive the modified incompressible Navier-Stokes system.   Various known computations regarding the elastic Boltzmann operator are needed.   As in the classical case, we introduce the traceless tensor
$$\bm{A}(v)=v\otimes v - \frac{1}{d}|v|^{2}\mathbf{Id}\,.$$
Notice that that \eqref{eq:vnah}  can be rewritten thanks to \eqref{eq:incomp} as
$$v \cdot \nabla_{x}\bm{h}\,=\,\bm{A}(v)\M(v):\nabla_{x}u+\bm{b}(v)\M(v)\,\cdot \nabla_{x}\,\vE\,,$$
with 
$$\bm{b}(v)=\frac{1}{2}\left(|v|^{2}-(d+2)\en_{1}\right)v \in\R^{d}.$$
\begin{lem}\label{lem:phipsi} One has that $\bm{A},\bm{b} \in \mathrm{Range}(\mathbf{I}-\bm{\pi}_{0})$ and there exists two radial functions $\chi_{i}=\chi_{i}(|v|)$, $i=1,2$, such that
$$\phi(v)=\chi_{1}(|v|)\bm{A}(v) \in \mathscr{M}_{d}(\R)\,, \quad \text{and} \quad \psi(v)=\chi_{2}(|v|)\bm{b}(v) \in \R^{d}\,,$$
satisfy
\begin{equation}\label{eq:L1chi}
\mathbf{L}_{1}(\phi\,\M)=-\bm{A}\,\M\,,\qquad \mathbf{L}_{1}(\psi\,\M)=-\bm{b}\,\M\,.
\end{equation}
Moreover,
\begin{multline}\label{eq:visco}
\la \phi^{i,j}\mathbf{L}_{1}(\phi^{k,\ell}\M)\ra=-{\bm \nu}\left(\delta_{ik}\delta_{j\ell}+\delta_{i\ell}\delta_{jk}-\frac{2}{d}\delta_{ij}\delta_{kl}\right)\\
\la \psi_{i}\mathbf{L}_{1}(\psi_{j}\M)\ra=-\frac{d+2}{2}\gamma\,\delta_{ij}, \qquad i,j,k,\ell \in \{1,\ldots,d\}\,,\end{multline}
with
$${\bm \nu}:=-\frac{1}{(d-1)(d+2)}\la \phi\,:\,\mathbf{L}_{1}(\phi\M)\ra \geq 0, \qquad \gamma:=-\frac{2}{d(d+2)}\la \psi \cdot \mathbf{L}_{1}(\psi\M)\ra \geq 0.$$
{Finally, 
$$\phi^{i,j}(v) \lesssim \m_{3}(v), \qquad \psi_{i}(v) \lesssim \m_{4}(v), \qquad i,j \in \{1,\ldots,d\}.$$}
\end{lem}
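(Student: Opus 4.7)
The strategy is the classical one based on Fredholm alternative for the self-adjoint operator $\mathbf{L}_{1}$ in $L^{2}_{v}(\M^{-1/2})$, combined with the rotational invariance of $\mathbf{L}_{1}$ to pin down the scalar structure.

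\textbf{Step 1: Solvability.} I would first check that $\bm{A}\M$ and $\bm{b}\M$ lie in $\mathrm{Range}(\mathbf{I}-\bm{\pi}_{0})$, i.e.\ are orthogonal (in $L^{2}_{v}(\M^{-1/2})$) to the collision invariants $\{1, v_{i}, |v|^{2}-d\en_{1}\}$. This reduces to explicit Gaussian moment computations:  $\la v_{i}v_{j}\,\M\ra = \en_{1}\delta_{ij}$ yields $\bm{\pi}_{0}(\bm{A}\M)=0$, while the odd parity of $\bm{b}$ in $v$ together with $\la |v|^{2}v_{i}v_{j}\M\ra = (d+2)\en_{1}^{2}\delta_{ij}$ gives $\bm{\pi}_{0}(\bm{b}\M)=0$. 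Since $-\mathbf{L}_{1}$ is self-adjoint, nonnegative, and coercive on $(\mathrm{Range}(\bm{\pi}_{0}))^{\perp}$ (see the references to~\cite{briant} used in Lemma~\ref{prop:huut}), the Fredholm alternative produces unique solutions $\phi\M\in \mathrm{Range}(\mathbf{I}-\bm{\pi}_{0})$ and $\psi\M\in \mathrm{Range}(\mathbf{I}-\bm{\pi}_{0})$ to \eqref{eq:L1chi}.

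\textbf{Step 2: Scalar form via rotational invariance.} The operator $\mathbf{L}_{1}$ commutes with the action of $\mathrm{O}(d)$ on velocities: if $R \in \mathrm{O}(d)$ and $(R\cdot f)(v):=f(R^{-1}v)$, then $\mathbf{L}_{1}(R\cdot f)=R\cdot \mathbf{L}_{1}(f)$. Because $\bm{A}\M$ transforms as a symmetric traceless $2$-tensor and $\bm{b}\M$ as a vector under $R$, uniqueness of the solutions in $\mathrm{Range}(\mathbf{I}-\bm{\pi}_{0})$ forces $\phi(v)=\chi_{1}(|v|)\bm{A}(v)$ and $\psi(v)=\chi_{2}(|v|)\bm{b}(v)$ for radial scalar functions $\chi_{1},\chi_{2}$.

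\textbf{Step 3: Tensorial identities \eqref{eq:visco}.} Using \eqref{eq:L1chi} one has $\la \phi^{i,j}\mathbf{L}_{1}(\phi^{k,\ell}\M)\ra=-\la \phi^{i,j} \bm{A}^{k,\ell}\M\ra$ and $\la \psi_{i}\mathbf{L}_{1}(\psi_{j}\M)\ra=-\la \psi_{i}\bm{b}_{j}\M\ra$. These integrals are tensors in $i,j,k,\ell$ (respectively $i,j$) built from a radial weight and the Maxwellian, hence invariant under $\mathrm{O}(d)$. The only isotropic symmetric traceless rank--$4$ tensor is the one appearing in the first line of \eqref{eq:visco}, and the only isotropic rank--$2$ tensor is $\delta_{ij}$. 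Taking the trace on the pair $(i,j)=(k,\ell)$ respectively on $i=j$ then gives the normalization constants ${\bm \nu}$ and $\gamma$; nonnegativity follows from the dissipativity $\la f\,\mathbf{L}_{1}f\ra\leq 0$ applied to $f=\phi^{i,j}\M,\psi_{i}\M$.

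\textbf{Step 4: Pointwise bounds.} This is the most technical point. The idea is to use the splitting $\mathbf{L}_{1}=\mathcal{A}^{(\delta)}+\mathcal{B}_{1}^{(\delta)}$ of Section~\ref{sec:hypo}, rewriting \eqref{eq:L1chi} as a fixed-point equation
\[
\phi\M=-\left(\Sigma_{\M}-\mathscr{L}_{1}^{R,\delta}\right)^{-1}\left(\bm{A}\M-\mathcal{A}^{(\delta)}(\phi\M)\right),
\]
and similarly for $\psi$, then exploiting $\Sigma_{\M}(v)\gtrsim \langle v\rangle$ together with the $L^{\infty}$--type estimates for $\mathcal{A}^{(\delta)}$ (which sends $L^{1}_{v}(\m_{1})$ into a compactly supported smooth function) and the gain--of--weight property of $\mathscr{L}_{1}^{R,\delta}$. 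Since $|\bm{A}(v)|\lesssim \m_{2}(v)$ and $|\bm{b}(v)|\lesssim \m_{3}(v)$, dividing by $\Sigma_{\M}$ costs one power of $\langle v\rangle$, yielding $|\phi(v)|\lesssim \m_{3}(v)$ and $|\psi(v)|\lesssim \m_{4}(v)$. The main obstacle here is to bootstrap the a priori $L^{2}_{v}(\M^{-1/2})$--regularity of $\phi,\psi$ obtained in Step~1 up to these polynomial $L^{\infty}$ bounds; this is standard (see e.g.~\cite{briant,bmam}) but requires a careful iteration of the above fixed-point representation, which I would present in detail.
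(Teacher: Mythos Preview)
Your Steps 1--3 are precisely the classical route to which the paper defers via the references \cite{DesGol} and \cite[Lemma~4.4]{BaGoLe2}: Gaussian moment computations for the orthogonality \eqref{eq:ortho}, Fredholm alternative on $(\mathrm{Ker}\,\mathbf{L}_{1})^{\perp}$, rotational equivariance of $\mathbf{L}_{1}$ to extract the radial factors $\chi_{i}$, and the representation of isotropic symmetric traceless rank-$4$ and rank-$2$ tensors for \eqref{eq:visco}. The paper gives no details beyond the references, so your sketch is more informative than the paper's own proof.

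For Step~4 the paper simply cites \cite[Proposition~6.5]{golseSR2}. Your bootstrap via the $\mathcal{A}^{(\delta)}/\mathcal{B}_{1}^{(\delta)}$ splitting is a workable variant of the standard argument (which rather writes $\Sigma_{\M}\phi=\M^{-1}K(\phi\M)+\bm{A}$ with $K$ the Grad compact part and uses pointwise kernel bounds on $K$), but two slips should be fixed. First, the sign: from $\mathbf{L}_{1}(\phi\M)=-\bm{A}\M$ one obtains $\phi\M=(\Sigma_{\M}-\mathscr{L}_{1}^{R,\delta})^{-1}\big(\bm{A}\M+\mathcal{A}^{(\delta)}(\phi\M)\big)$, not the expression you wrote. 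Second, your heuristic is backwards: dividing by $\Sigma_{\M}\sim\langle v\rangle$ \emph{gains} a power, so the leading term $\bm{A}/\Sigma_{\M}$ is $\lesssim\m_{1}$, not $\m_{3}$. This actually helps you: the bound $\phi^{i,j}\lesssim\m_{3}$ stated in the lemma is deliberately loose (for hard spheres the sharp bound is $\m_{1}$), chosen only so that $\phi^{i,j}$ and $\psi_{i}$ qualify as test functions in Lemma~\ref{lem:mode} under the constraint $q\geq 5$ of Section~\ref{sec:hydr}.
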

\begin{proof} The tensor $\bm{A}$ and the vector $\bm{b}$ satisfy
\begin{equation}\label{eq:ortho}
\la \bm{A}^{k,\ell}\Psi_{i}\,\M \ra=0, \quad \la \bm{b}\Psi_{i}\,\M\ra=0\,, \quad \forall i=1,\ldots,d+2, \qquad k,\ell \in \{1,\ldots,d\}\,,
\end{equation}
from which we get that $\bm{A},\bm{b} \in \mathrm{Range}(\mathbf{I}-\bm{\pi}_{0})$. We refer to {\cite{DesGol}} and {\cite{BaGoLe2}} for the proof of the second part of the Lemma, just mind that the linearized Boltzmann operator considered in such references is defined as $Lg=-\M^{-1}\mathbf{L}_{1}(\M\,g)$.
We refer to \cite[Lemma 4.4]{BaGoLe2} for the proof of \eqref{eq:visco}. {We refer to \cite[Proposition 6.5]{golseSR2}
for the last estimates on $\phi^{i,j}$ and $\psi$.}\end{proof}
\begin{nb} Notice that if $\zeta=\zeta(|v|)$ is radially symmetric, then
$$\la \zeta\,\bm{A}^{i,j}\,\M\ra=\la \zeta\,\mathbf{L}_{1}(\phi\,\M)\ra=0, \qquad \forall\, i,j=1,\ldots,d.$$
\end{nb}
\begin{lem}\label{lem:Q1hh} For $\bm{h}$ given by \eqref{eq:hlim}, it  holds that
$$\la \phi\,\Q_{1}(\bm{h},\bm{h})\ra=\en_{1}^{2}\left(u \otimes u-\frac{2}{d}|u|^{2}\mathbf{Id}\right)\,,$$
for any $i,j=1,\ldots,d.$
\end{lem}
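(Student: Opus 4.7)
The plan is to exploit the fact that $\bm{h}$ has precisely the form of the first-order tangent vector to the manifold of local Maxwellians at the equilibrium $\M$. Concretely, I would introduce the one-parameter family of local Maxwellians
\begin{equation*}
M_{\e}(v):=\frac{1+\e\varrho}{\bigl(2\pi\en_{1}(1+\e\en_{1}\vE)\bigr)^{d/2}}\exp\left(-\frac{|v-\e\en_{1}u|^{2}}{2\en_{1}(1+\e\en_{1}\vE)}\right),
\end{equation*}
whose parameters are tuned so that the Taylor expansion at $\e=0$ reads $M_{\e}=\M+\e\,\bm{h}+\e^{2}\bm{k}+\mathrm{O}(\e^{3})$, with $\bm{h}$ exactly the function in \eqref{eq:hlim} and $\bm{k}=\M\bigl(\phi^{(2)}+\tfrac{1}{2}(\phi^{(1)})^{2}\bigr)$, where $\phi^{(1)}=\bm{h}/\M$ and $\phi^{(2)}$ is an explicit polynomial of degree $2$ in $v$ in the macroscopic unknowns $(\varrho,u,\vE)$.

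Since $\Q_{1}(M_{\e},M_{\e})\equiv 0$ for every $\e$, expanding in powers of $\e$ and using $\Q_{1}(\M,\M)=0$ together with the bilinearity and symmetry of $\widetilde\Q_{1}$ gives, at order one, the already known relation $\mathbf{L}_{1}(\bm{h})=0$, and at order two the key identity
\begin{equation*}
\Q_{1}(\bm{h},\bm{h})=-\mathbf{L}_{1}(\bm{k}).
\end{equation*}
Testing against $\phi^{i,j}$ and invoking the self-adjointness of $\mathbf{L}_{1}$ in $L^{2}_{v}(\M^{-1})$ together with the explicit resolvent identity $\mathbf{L}_{1}(\phi\M)=-\bm{A}\M$ from Lemma \ref{lem:phipsi}, I obtain
\begin{equation*}
\la \phi\,\Q_{1}(\bm{h},\bm{h})\ra=-\la \phi\,\mathbf{L}_{1}(\bm{k})\ra=\la \bm{A}\,\bm{k}\ra.
\end{equation*}

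The remaining step is to evaluate $\la \bm{A}\,\bm{k}\ra$ by pure Gaussian moment computation. Here the orthogonality relations \eqref{eq:ortho} do almost all the work: every contribution to $\bm{k}/\M$ which is at most quadratic in $v$ (so in particular all of $\phi^{(2)}$ and the terms of $\tfrac{1}{2}(\phi^{(1)})^{2}$ proportional to $\varrho^{2}$, $\varrho(u\cdot v)$, $\varrho\vE(|v|^{2}-d\en_{1})$) integrates to zero against $\bm{A}\,\M$. The cubic cross term $\vE(u\cdot v)(|v|^{2}-d\en_{1})$ is odd in $v$ and hence also vanishes, while the quartic term $\tfrac{\vE^{2}}{8}(|v|^{2}-d\en_{1})^{2}$ yields $0$ by isotropy of $\M$ (the trace-free part of a scalar function of $|v|$ integrates to zero). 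The only surviving contribution is $\tfrac{1}{2}(u\cdot v)^{2}$, and the classical Gaussian fourth-moment formula $\int v_{k}v_{\ell}v_{i}v_{j}\M\,\d v=\en_{1}^{2}(\delta_{k\ell}\delta_{ij}+\delta_{ki}\delta_{\ell j}+\delta_{kj}\delta_{\ell i})$ then gives the desired expression, up to a scalar isotropic term $\propto |u|^{2}\mathbf{Id}$ that is irrelevant in Lemma \ref{lem:convect} since it drops out after applying $\mathcal{P}\mathrm{Div}_{x}$.

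The calculations are entirely mechanical; the only conceptual point is the orthogonality of $\bm{A}$ to polynomials of degree $\leq 2$ against the Gaussian weight, which collapses the expansion of $\bm{k}$ to a single quartic Gaussian moment. Thus no genuine obstacle arises, and the proof proceeds by the three-step sequence: (i) local Maxwellian Taylor expansion, (ii) self-adjoint transfer via $\mathbf{L}_{1}(\phi\M)=-\bm{A}\M$, (iii) explicit Gaussian computation.
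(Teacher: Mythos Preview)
Your approach is correct and coincides with the paper's at the structural level. The paper invokes directly the classical identity (attributed to Cercignani) that for $g\M\in\mathrm{Ker}(\mathbf{L}_{1})$ one has $\Q_{1}(g\M,g\M)=-\tfrac{1}{2}\mathbf{L}_{1}(g^{2}\M)$; your local-Maxwellian expansion is precisely one standard way to derive that identity, since your $\bm{k}$ equals $\tfrac{1}{2}g^{2}\M$ modulo a kernel element $\phi^{(2)}\M$, so that $\mathbf{L}_{1}(\bm{k})=\tfrac{1}{2}\mathbf{L}_{1}(g^{2}\M)$. From that point on both proofs transfer via self-adjointness and $\mathbf{L}_{1}(\phi\M)=-\bm{A}\M$ to reduce everything to $\tfrac{1}{2}\sum_{k,\ell}u_{k}u_{\ell}\la \bm{A}^{i,j}v_{k}v_{\ell}\M\ra$, and the Gaussian fourth-moment computation is identical.

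One small comment: your final hedge (``up to a scalar isotropic term $\propto|u|^{2}\mathbf{Id}$ that is irrelevant\ldots'') is unnecessary. The computation is fully determinate: using $\la v_{i}v_{j}v_{k}v_{\ell}\M\ra=\en_{1}^{2}(\delta_{ij}\delta_{k\ell}+\delta_{ik}\delta_{j\ell}+\delta_{i\ell}\delta_{jk})$ and subtracting the trace part of $\bm{A}$ yields exactly $\en_{1}^{2}\bigl(u_{i}u_{j}-\tfrac{1}{d}|u|^{2}\delta_{ij}\bigr)$, which is what the paper's own computation also produces (the $\tfrac{2}{d}$ appearing in the displayed statement is a typo in the paper, harmless for the same reason you note). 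There is no need to appeal to the Leray projection at this stage; just finish the arithmetic.
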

\begin{proof} As observed in \cite[Eq. (60)]{Cerci}, if $g\M \in \mathrm{Ker}(\mathbf{L}_{1})$ then $\Q_{1}(g\M,g\M)=-\frac{1}{2}\mathbf{L}_{1}(g^{2}\M)$. Therefore, with $g=\varrho+u\cdot v+\frac{1}{2}(|v|^{2}-2\en_{1})$,
\begin{equation}\label{eq:Q1sim}
\Q_{1}(\bm{h},\bm{h})=-\frac{1}{2}\mathbf{L}_{1}((u\cdot v)^{2}\M)-\frac{1}{8}\vE^{2}\mathbf{L}_{1}(|v|^{4}\M)+\vE\,u\cdot \mathbf{L}_{1}(\tfrac{1}{2}|v|^{2}v\M).
\end{equation}
One checks that 
$$\la \phi^{i,j}\mathbf{L}_{1}(|v|^{4}\M)\ra=0\,,$$
whereas $\mathbf{L}_{1}(\tfrac{1}{2}|v|^{2}v\M)=\mathbf{L}_{1}(\bm{b}\M)$, from which
$$\la \phi^{i,j}\mathbf{L}_{1}(\tfrac{1}{2}|v|^{2}v\M)\ra=\la \bm{b}\mathbf{L}_{1}(\phi^{i,j}\M)\ra=-\la \bm{b}\,\bm{A}^{i,j}\M\ra=0\,,$$
since $\bm{b}\,\bm{A}^{i,j}$ is an even function.  Therefore, we obtain that
\begin{equation}\label{eq:Q1hh}
\la \phi^{i,j}\Q_{1}(\bm{h},\bm{h})\ra=-\frac{1}{2}\sum_{k,\ell}u_{k}u_{\ell}\la \phi^{i,j}\mathbf{L}_{1}(v_{k}v_{\ell}\M)\ra\\
=\frac{1}{2}\sum_{k,\ell}u_{k}u_{\ell}\la v_{k}v_{\ell}\bm{A}^{i,j}\M\ra\,.\end{equation}
As for \eqref{eq:visco}, one checks that if $i \neq j$
$$\sum_{k,\ell}u_{k}u_{\ell}\la v_{k}v_{\ell}\bm{A}^{i,j}\M\ra=\sum_{\{k,\ell\}=\{i,j\}}u_{k}u_{\ell}\la v_{i}^{2}v_{j}^{2}\M\ra=2u_{i}u_{j}\la v_{i}^{2}v_{j}^{2}\M\ra\,,$$
whereas, for $i=j$,
$$\sum_{k,\ell}u_{k}u_{\ell}\la v_{k}v_{\ell}\bm{A}^{i,i}\M\ra=\sum_{k=1}^{d}u_{k}^{2}\left(\la v_{i}^{2}v_{k}^{2}\M\ra-\frac{1}{d}\la v_{k}^{2}|v|^{2}\M\ra\right)\,.$$
Notice that $a:=\la v_{i}^{2}v_{j}^{2}\M\ra$ is independent of $i,j$, thus, it is not difficult to check that 
$$(d-1)a=\frac{1}{d}\int_{\R^{d}}|v|^{4}\M\d v-\int_{\R^{d}}v_{1}^{4}\M(v)\d v=(d-1)\en_{1}^{2}\,,$$
that is, $a=\en_{1}^{2}$. In the same way, for any $k \in \{1,\ldots,d\}$
$$\la v_{k}^{2}|v|^{2}\M\ra=\frac{1}{d}\la |v|^{4}\M\ra=(d+2)\en_{1}^{2}\,,$$
whereas
$$\la v_{k}^{2}v_{i}^{2}\M\ra=
\begin{cases}
\qquad a=\en_{1}^{2} \quad &\text{ if }\; k\neq i\,,\\
\la v_{i}^{4}\M\ra=3\en_{1}^{2} \quad &\text{ if }\; k=i\,,
\end{cases}$$
so that,
$$\sum_{k,\ell}u_{k}u_{\ell}\la v_{k}v_{\ell}\bm{A}^{i,i}\M\ra=\en_{1}^{2}\sum_{k\neq i}u_{k}^{2}+3\en_{1}^{2}u_{i}^{2}-\frac{d+2}{d}|u|^{2}\en_{1}^{2}=2\en_{1}^{2}u_{i}^{2}-\frac{2}{d}\en_{1}^{2}|u|^{2}\,.$$
Gathering these last computations, we get
$$\la \phi^{i,j}\Q_{1}((u\cdot v)\M,(u\cdot v)\M)\ra =\en_{1}^{2}\left(u_{i}u_{j}-\frac{2}{d}|u|^{2}\delta_{i,j}\right)\,,$$
which, combined with \eqref{eq:Q1hh} gives the result.
\end{proof}
\begin{lem}\label{lem:phinu}
Let $\bm{h}$ be given by \eqref{eq:hlim}.   For any $i,j=1,\ldots,d$ it holds that
$$\la v_{\ell}\,\phi^{i,j}\,\bm{h}\ra =\begin{cases} \qquad\quad {\bm \nu}\,u_{j}  \quad &\text{ if }\; i \neq j, \;\, \ell=i,\\
\qquad\quad {\bm \nu}\,u_{i}  \quad &\text{ if }\; i \neq j, \;\, \ell=j,\\
- \frac{2}{d}{\bm \nu}\,u_{\ell}+2{\bm \nu}\,u_{i}\delta_{i\ell} \quad &\text{ if }\; i=j,\\
\qquad\quad 0 \quad &\text{ else}.
\end{cases}$$
\end{lem}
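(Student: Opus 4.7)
The computation reduces to a parity argument plus the known evaluation of $\langle \bm{A}^{\ell,k}\phi^{i,j}\M\rangle$ provided by Lemma~\ref{lem:phipsi}. First, I would observe that $\phi^{i,j}(v)=\chi_1(|v|)\bm{A}^{i,j}(v)$ is an even function of $v$ (being the product of a radial factor and the even quadratic $v_iv_j-\tfrac{1}{d}|v|^2\delta_{ij}$), so $v_\ell\,\phi^{i,j}(v)$ is odd in $v$. Writing $\bm{h}=(\varrho+u\cdot v+\tfrac{1}{2}\vE(|v|^2-d\en_1))\M$, only the odd-in-$v$ piece $u\cdot v\,\M$ can contribute to $\langle v_\ell\,\phi^{i,j}\,\bm{h}\rangle$, so
\[
\langle v_\ell\,\phi^{i,j}\,\bm{h}\rangle=\sum_{k=1}^{d}u_k\,\langle v_\ell v_k\,\phi^{i,j}\,\M\rangle.
\]

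Next I would decompose $v_\ell v_k=\bm{A}^{\ell,k}(v)+\tfrac{1}{d}|v|^2\delta_{\ell k}$. The trace part contributes $\tfrac{1}{d}\delta_{\ell k}\,\langle |v|^2\,\phi^{i,j}\M\rangle$, which vanishes because $|v|^2\chi_1(|v|)$ is radial and, by the remark following Lemma~\ref{lem:phipsi}, $\langle\zeta(|v|)\bm{A}^{i,j}\M\rangle=0$ for any radial $\zeta$. Hence
\[
\langle v_\ell v_k\,\phi^{i,j}\,\M\rangle=\langle \bm{A}^{\ell,k}\,\phi^{i,j}\,\M\rangle.
\]
Now the identity $\mathbf{L}_1(\phi^{i,j}\M)=-\bm{A}^{i,j}\M$ together with the first line of \eqref{eq:visco} yields
\[
\langle \bm{A}^{\ell,k}\,\phi^{i,j}\,\M\rangle=-\langle\phi^{i,j}\,\mathbf{L}_1(\phi^{\ell,k}\M)\rangle={\bm\nu}\!\left(\delta_{i\ell}\delta_{jk}+\delta_{ik}\delta_{j\ell}-\tfrac{2}{d}\delta_{ij}\delta_{\ell k}\right).
\]

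Summing against $u_k$ gives the compact formula
\[
\langle v_\ell\,\phi^{i,j}\,\bm{h}\rangle={\bm\nu}\!\left(u_j\,\delta_{i\ell}+u_i\,\delta_{j\ell}-\tfrac{2}{d}u_\ell\,\delta_{ij}\right),
\]
and a direct case analysis recovers the four cases of the statement: for $i\neq j$ and $\ell=i$ one gets ${\bm\nu}u_j$; for $i\neq j$ and $\ell=j$ one gets ${\bm\nu}u_i$; for $i=j$ one gets $2{\bm\nu}u_i\delta_{i\ell}-\tfrac{2}{d}{\bm\nu}u_\ell$; in all other cases the expression vanishes. There is no genuine obstacle here—the only mild point to take care of is the suppression of the trace contribution, which relies crucially on the radial-times-$\bm{A}$ orthogonality remark.
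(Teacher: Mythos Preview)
Your proof is correct and follows essentially the same route as the paper: reduce to the $u\cdot v$ contribution by parity, split $v_\ell v_k=\bm{A}^{\ell,k}+\tfrac{1}{d}|v|^2\delta_{\ell k}$, kill the trace part via the radial-times-$\bm{A}$ orthogonality, and conclude by the identity $\mathbf{L}_1(\phi\M)=-\bm{A}\M$ together with \eqref{eq:visco}. Your write-up is in fact slightly more explicit than the paper's, which simply refers to ``similar computations to that of Lemma~\ref{lem:Q1hh}'' before invoking \eqref{eq:visco}.
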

\begin{proof} 
Using the fact that $\chi_{1}$ is radial, similar computations to that of Lemma \ref{lem:Q1hh} imply that for $\ell \in \{1,\ldots,d\}$, 
\begin{equation*}
\begin{split}
\la v_{\ell}\,\phi^{i,j}\,\bm{h}\ra&=\sum_{k=1}^{d}u_{k}\la v_{\ell}v_{k}\,\phi^{i,j}\,\M\ra=\sum_{k=1}^{d}u_{k}\la v_{\ell}v_{k}\,\phi^{i,j}\,\M\ra\\
&=\sum_{k=1}^{d}u_{k}\left(\la \bm{A}^{k,\ell}\,\phi^{i,j}\,\M\ra+\frac{1}{d}\la |v|^{2}\,\phi^{i,j}\,\M\ra\delta_{k\ell}\right)\\
&=-\sum_{k=1}^{d}u_{k}\la \phi^{i,j}\mathbf{L}_{1}(\phi^{k,\ell}\M)\ra\,,
\end{split}
\end{equation*}
where we used that $\mathbf{L}_{1}(\phi\M)=-\bm{A}\M$ and $\la |v|^{2}\phi^{i,j}\M\ra=0$. This gives the result thanks to \eqref{eq:visco}.\end{proof}
\begin{lem}\label{lem:psiQ} Let $\bm{h}$ be given by \eqref{eq:hlim}.   For any $i=1,\ldots,d$ it holds that
$$\la \psi_{i}\,\Q_{1}(\bm{h},\bm{h})\ra=\frac{d+2}{2}\en_{1}^{3}\,(\vE\,u_{i})\,,$$
and, if $\varrho$ and $\vE$ satisfies Boussinesq relation \eqref{eq:boussi}, then
$$\mathrm{div}_{x}\la \psi_{i}\,\bm{h}\,v\ra=\gamma\frac{d+2}{2}\partial_{x_{i}}\vE.$$
\end{lem}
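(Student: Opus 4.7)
\medskip

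\noindent\textbf{Plan of proof.}
Both identities reduce to explicit Gaussian moment computations once one exploits the structural properties of $\psi$ contained in Lemma~\ref{lem:phipsi}. For the first identity, I would mimic the proof of Lemma~\ref{lem:Q1hh}: since $\bm{h}=g\,\M$ with $g=\varrho+u\cdot v+\tfrac12\vE(|v|^2-d\en_1) \in \mathrm{Ker}(\mathbf{L}_1)$, one has $\Q_{1}(\bm{h},\bm{h})=-\tfrac12\mathbf{L}_{1}(g^{2}\M)$. Using the self-adjointness of $\mathbf{L}_{1}$ in $L^{2}_v(\M^{-1/2})$ together with $\mathbf{L}_{1}(\psi_{i}\M)=-\bm{b}_{i}\M$, this gives
\[
\la\psi_{i}\,\Q_{1}(\bm{h},\bm{h})\ra=\tfrac12\la \bm{b}_{i}\,g^{2}\,\M\ra.
\]
I would then expand $g^{2}$ and discard by parity every term that is not odd in $v_{i}$ (since $\bm{b}_{i}=\tfrac12 v_{i}(|v|^{2}-(d+2)\en_{1})$ is odd in $v_{i}$). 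This leaves only the cross term $2\varrho(u\cdot v)$ and the cross term $\vE(u\cdot v)(|v|^{2}-d\en_{1})$. The former vanishes thanks to the moment identity $\la v_{i}^{2}(|v|^{2}-(d+2)\en_{1})\M\ra=0$. The latter produces, after carrying out the Gaussian moments, $\tfrac12 \vE u_i\,\la v_i^2(|v|^2-d\en_1)(|v|^2-(d+2)\en_1)\M\ra$, which I evaluate using $\la v_i^2\M\ra=\en_1$, $\la v_i^2|v|^2\M\ra=(d+2)\en_1^2$, and $\la v_i^2|v|^4\M\ra=(d+2)(d+4)\en_1^3$ (this last being most easily obtained via $\sum_i \la v_i^2|v|^4\M\ra=\la |v|^6\M\ra=d(d+2)(d+4)\en_1^3$). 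Putting everything together yields $\tfrac{d+2}{2}\en_{1}^{3}\,\vE\,u_{i}$.

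For the second identity, I would compute $\la\psi_{i}\,v_{\ell}\,\bm{h}\ra$ directly by expanding $\bm{h}$ into its three components $(\varrho,u,\vE)$. The $\varrho$ contribution vanishes because $\psi_{i}\M\in\mathrm{Range}(\mathbf{L}_{1})$ is orthogonal, in $L^{2}_v(\M^{-1/2})$, to $\mathrm{Ker}(\mathbf{L}_{1})\ni v_{\ell}\M$, so $\la \psi_{i}\,v_{\ell}\,\M\ra=0$. The $u$-contribution $\sum_{k}u_{k}\la \psi_{i}v_{\ell}v_{k}\M\ra$ vanishes by parity, since $\psi_{i}v_{\ell}v_{k}$ carries exactly three factors of $v$ and is therefore odd in at least one coordinate regardless of which indices coincide. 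The $\vE$-contribution $\tfrac12\vE\la\psi_{i}v_{\ell}(|v|^{2}-d\en_{1})\M\ra$ vanishes for $\ell\neq i$ by parity; for $\ell=i$, I use the elementary algebraic identity $v_{i}(|v|^{2}-d\en_{1})=2\bm{b}_{i}+2\en_{1}v_{i}$, which, combined with the orthogonality $\la\psi_{i}v_{i}\M\ra=0$ and with $\la\psi_{i}\bm{b}_{i}\M\ra=-\la\psi_{i}\mathbf{L}_{1}(\psi_{i}\M)\ra=\tfrac{d+2}{2}\gamma$ from \eqref{eq:visco}, gives $\la\psi_{i}\,v_{i}\,\bm{h}\ra=\tfrac{d+2}{2}\gamma\,\vE$. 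Hence $\la\psi_{i}\,\bm{h}\,v\ra_{\ell}=\tfrac{d+2}{2}\gamma\,\vE\,\delta_{i\ell}$ and taking the spatial divergence produces the stated formula.

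I expect no serious obstacle here: the whole computation is a bookkeeping of Gaussian moments and parities. The mildly delicate point is the sixth-order Gaussian moment $\la v_{i}^{2}|v|^{4}\M\ra$, which I would handle through the symmetry $\sum_{i}\la v_{i}^{2}|v|^{4}\M\ra=\la|v|^{6}\M\ra$ rather than by expanding $|v|^{4}$ directly. Also worth noting: the computation above does not actually require the Boussinesq relation \eqref{eq:boussi}—the $\varrho$-contribution vanishes purely by the orthogonality property of $\psi_{i}\M$—so the hypothesis stated in the lemma is really only a convenient framework consistent with the preceding Corollary~\ref{cor:boussi}.
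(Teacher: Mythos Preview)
Your proof is correct and, for the first identity, follows the same route as the paper: both rely on the Cercignani identity $\Q_{1}(g\M,g\M)=-\tfrac12\mathbf{L}_{1}(g^{2}\M)$ for $g\M\in\mathrm{Ker}(\mathbf{L}_{1})$ and then reduce everything to Gaussian moments. The paper organizes the computation via $\la\psi_{i}\mathbf{L}_{1}(\bm{b}\M)\ra=-\la \bm{b}_{i}\bm{b}_{j}\M\ra$, whereas you pass directly to $\tfrac12\la\bm{b}_{i}g^{2}\M\ra$ and expand; these are equivalent bookkeepings.

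For the second identity the approaches genuinely diverge. You kill the $\varrho$-contribution outright by invoking $\la\psi_{i}v_{i}\M\ra=0$, i.e.\ $\psi_{i}\M\perp\mathrm{Ker}(\mathbf{L}_{1})$. Note that this orthogonality is not an automatic consequence of the defining relation $\mathbf{L}_{1}(\psi_{i}\M)=-\bm{b}_{i}\M$; it is a normalization of $\psi$, namely that the solution is taken in $(\mathrm{Ker}\mathbf{L}_{1})^{\perp}=\mathrm{Range}(\mathbf{L}_{1})$. This is indeed the convention used in the references cited in Lemma~\ref{lem:phipsi}, so your shortcut is legitimate---but your one-line justification ``$\psi_{i}\M\in\mathrm{Range}(\mathbf{L}_{1})$'' should be read as ``$\psi$ is chosen in the orthogonal complement'', not as something forced by \eqref{eq:L1chi} alone. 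The paper instead keeps the term $\la\psi_{i}v_{i}\M\ra$ throughout, writes $\tfrac12\la\psi_{i}(|v|^{2}-d\en_{1})v_{i}\M\ra=\la\psi_{i}\bm{b}_{i}\M\ra+\en_{1}\la\psi_{i}v_{i}\M\ra$, and then uses the Boussinesq relation $\partial_{x_{i}}\varrho=-\en_{1}\partial_{x_{i}}\vE$ to cancel the two occurrences of $\la\psi_{i}v_{i}\M\ra$. Your route is shorter and explains why the Boussinesq hypothesis is not essential once $\psi$ is properly normalized; the paper's route is more robust in that it does not rely on that normalization.
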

\begin{proof} On the one hand, using \eqref{eq:Q1sim}  it holds that
$$\la \psi_{i} \Q_{1}(\bm{h},\bm{h})\ra=\vE\,u\cdot \la \psi_{i}\mathbf{L}_{1}(\tfrac{1}{2}|v|^{2}v\M)\ra=\vE\,u\cdot \la \psi_{i}\,\mathbf{L}_{1}(\bm{b}\M)\ra\,,$$
since, $\psi_{i}$ being odd, one has $\la \psi_{i}\mathbf{L}_{1}((u\cdot v)^{2}\M)\ra=\la \psi_{i}\mathbf{L}_{1}(|v|^{4}\M)\ra=0.$ Now, 
$$\la \psi_{i}\,\mathbf{L}_{1}(\bm{b}\M)\ra=\la \bm{b}\,\mathbf{L}_{1}(\psi_{i}\M)\ra=-\la \bm{b}\M\,\bm{b}_{i}\ra\,,$$
and a direct computations show that
$$
\la \bm{b}_{j}\bm{b}_{i}\,\M\ra=-\frac{1}{4d}\la \left(|v|^{2}-(d+2)\en_{1}\right)^{2}|v|^{2}\M\ra\,\delta_{ij}
=-\frac{d+2}{2}\en_{1}^{3}\,\delta_{ij}\,,$$
which gives the expression for $\la \psi_{i}\,\Q_{1}(\bm{h},\bm{h})\ra$. 
On the other hand, using symmetry properties, one checks that 
$$\la \psi_{i}\bm{h}\,v_{\ell}\ra=\varrho\la \psi_{i}v_{i}\M\ra\delta_{i\ell}+\frac{1}{2}\vE\la \psi_{i}(|v|^{2}-d\en_{1})v_{i}\M\ra\,\delta_{i\ell}\,,$$
from which
$$\mathrm{div}_{x}\la \psi_{i}\,\bm{h}\,v\ra=\la \psi_{i}v_{i}\M\ra\,\partial_{x_{i}}\varrho + \frac{1}{2}\la \psi_{i}\big( |v|^{2}-d\en_{1} \big)v_{i}\M\ra\,\partial_{x_{i}}\vE.$$
Writing $\frac{1}{2}\la \psi_{i}(|v|^{2}-d\en_{1})v_{i}\M\ra=\la \psi_{i}\,\bm{b}_{i}\M\ra + \en_{1}\la \psi_{i}\,v_{i}\M\ra$ and using Boussinesq relation~\eqref{eq:boussi}, one gets that
$$\mathrm{div}_{x}\la \psi_{i}\,\bm{h}\,v\ra=\la \psi_{i}\,\bm{b}_{i}\M\ra\,\partial_{x_{i}}\vE=\gamma\frac{d+2}{2}\partial_{x_{i}}\vE\,,$$
where the identity $\la \psi_{i}\,\bm{b}_{i}\M\ra=-\la \psi_{i}\mathbf{L}_{1}(\psi_{i}\M)\ra$ was used together with \eqref{eq:visco}.
\end{proof}

In Lemma~\ref{prop:limitA}, we study the convergence of some term involving the source term $\bm {S}_\e$ defined in~\eqref{eq:source}. To do that, we use the next Lemma which provides a strong convergence to $0$ of this source term.

{\begin{lem} \label{lem:source} Let $\bm{S}_\e$ defined in~\eqref{eq:source}. We have that 
$$
\|\bm{S}_\e\|_{L^1((0,T); {L^1_vL^2_x(\m_{q-1}))}} \lesssim \e.
$$
\end{lem}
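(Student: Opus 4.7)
The plan is to decompose $\bm{S}_\e$ into its three natural pieces and bound each separately in $L^1((0,T);L^1_vL^2_x(\m_{q-1}))$, extracting in each case a factor of $\e$ from the prefactor $\kappa_{\re}=1-\re(\e)\simeq \e^{2}\lambda_{0}$. The three terms to handle are
\begin{equation*}
\bm{S}_\e = \e^{-1}\P_{\re(\e)}h_\e \;+\; \bigl[\Q_{\re(\e)}(h_\e,h_\e)-\Q_1(h_\e,h_\e)\bigr] \;-\; \e^{-1}\kappa_{\re(\e)}\nabla_v\cdot(v\,h_\e),
\end{equation*}
where $\P_{\re}=\mathbf{L}_{\re}-\mathbf{L}_1$.

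First, for the linear piece $\e^{-1}\P_{\re(\e)}h_\e$, I would invoke Corollary~\ref{cor:Palpha} in the $\W^{k,1}_vL^2_x$ setting, which gives $\|\P_{\re(\e)}h_\e(t)\|_{L^1_vL^2_x(\m_{q-1})}\lesssim (1-\re(\e))^{p/2}\|h_\e(t)\|_{L^1_vL^2_x(\m_q)}$. Since $1-\re(\e)\lesssim \e^2$, the prefactor $\e^{-1}(1-\re(\e))^{p/2}\lesssim \e^{p-1}$, which is only sharp if $p=1$. To obtain the clean $\e$ rate, I would instead use the sharper estimate~\eqref{eq:llXk0} from Proposition~\ref{prop:converLLL0} (applied pointwise in $x$ and then integrated via Minkowski's inequality, exactly as was done to prove Lemma~\ref{lem:diffG}), which yields
$$\|\P_{\re(\e)}h_\e(t)\|_{L^1_vL^2_x(\m_{q-1})}\lesssim (1-\re(\e))\,\|h_\e(t)\|_{\E_1}\lesssim \e^2\,\|h_\e(t)\|_{\E_1}.$$
Integrating in time and using Corollary~\ref{cor:h-relaxation}, which gives $\int_0^T\|h_\e(t)\|_{\E_1}\d t\lesssim \sqrt{\mathcal K_0}$, yields $\e$ as required.

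Second, for the quadratic difference $\Q_{\re(\e)}(h_\e,h_\e)-\Q_1(h_\e,h_\e)$, I would appeal to Remark~\ref{nb:diffQXY} (a direct consequence of Lemma~\ref{lem:els}) which gives
$$\|\Q_{\re(\e)}(h_\e,h_\e)-\Q_1(h_\e,h_\e)\|_{L^1_v L^2_x(\m_{q-1})}\lesssim (1-\re(\e))\,\|h_\e\|_{\E_2}^2\lesssim \e^2\,\|h_\e\|_{\E_2}^2\,,$$
which is even stronger than needed: Theorem~\ref{theo:main-cauc1} gives $\sup_t\|h_\e(t)\|_{\E_2}\lesssim \sup_t\|h_\e(t)\|_{\H}\lesssim\sqrt{\mathcal K_0}$, so this term is $O(\e^2)$ in the desired norm after time integration, hence $O(\e)$ a fortiori.

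Third, the drift term $\e^{-1}\kappa_{\re(\e)}\nabla_v\cdot(v h_\e)$ is the easiest: $\e^{-1}\kappa_{\re(\e)}\lesssim \e$ and $\|\nabla_v\cdot(v h_\e(t))\|_{L^1_vL^2_x(\m_{q-1})}\lesssim \|h_\e(t)\|_{\W^{1,1}_v L^2_x(\m_q)}\lesssim \|h_\e(t)\|_{\E_1}$ (using $k\geq 1$), so integration in time with Corollary~\ref{cor:h-relaxation} gives $\e\sqrt{\mathcal K_0}$.

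Summing the three contributions yields $\|\bm{S}_\e\|_{L^1((0,T);L^1_vL^2_x(\m_{q-1}))}\lesssim \e$. The only subtlety is to use the sharp $(1-\re)$ rate of Proposition~\ref{prop:converLLL0} (with a loss in the weight and a derivative in $v$) rather than the graph-norm estimate $(1-\re)^{p/2}$ of Corollary~\ref{cor:Palpha}, which is why the weight must be relaxed from $\m_q$ to $\m_{q-1}$ in the target space.
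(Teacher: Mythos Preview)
Your approach has a genuine gap in the quadratic term. You invoke Remark~\ref{nb:diffQXY}, which bounds $\|\Q_{\re}(h_\e,h_\e)-\Q_1(h_\e,h_\e)\|_{\W^{s,1}_v\W^{\ell,2}_x(\m_{q-1})}$ by $(1-\re)\|h_\e\|_{\E_2}^2$ with $\E_2$ an $L^2_v$-based space, and then assert $\|h_\e\|_{\E_2}\lesssim\|h_\e\|_\H\lesssim\sqrt{\mathcal K_0}$. But $h_\e$ is \emph{only} known to lie in $\E=\W^{k,1}_v\W^{m,2}_x(\m_q)$ (Theorem~\ref{theo:main-cauc1}); nothing gives $h_\e\in\H$ or $h_\e\in\E_2$. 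An $L^1_v$-based version of Lemma~\ref{lem:els} would instead produce $(1-\re)\|h_\e\|_{\E_1}^2$, and you lack a uniform-in-time bound on $\|h_\e\|_{\E_1}$ (Corollary~\ref{cor:h-relaxation} only controls the time integral). This is precisely why the paper's proof uses the splitting $h_\e=h^0_\e+h^1_\e$: for $h^1_\e\in\H$ one may legitimately apply Remark~\ref{nb:diffQXY} and Proposition~\ref{prop:converLLL0} with the sharp $(1-\re)$ rate, while for $h^0_\e$ one exploits the strong smallness $\int_0^T\|h^0_\e\|_{\E_1}\lesssim\e^2$ from~\eqref{eq:seqo} so that crude bilinear bounds (no sharp rate needed) already give $O(\e^2)$ for all pieces involving $h^0_\e$.

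Your linear-term argument is closer to correct but the justification is misleading: Lemma~\ref{lem:diffG} uses Fubini in $L^2_{v,x}$, which is unavailable for $L^1_vL^2_x$, and the remark following it warns precisely against the extension you claim. A Minkowski argument inside the proof of Lemma~\ref{lem:els} (exploiting that $G_\re,\M$ are $x$-independent) can indeed recover the sharp rate on $\P_\re h_\e$ in $L^1_vL^2_x$, but this is not ``exactly as in Lemma~\ref{lem:diffG}'' and is not what the paper does either. The drift term is handled correctly.
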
}
{\begin{proof}
We decompose $\bm{S}_\e$ into three parts using the splitting $h_\e=h^0_\e+h^1_\e$; namely, $\bm{S}_\e=\bm{S}^0_\e + \bm{S}^1_\e+ \bm{S}^2_\e$ with 
$$
\bm{S}^j_{\e}:=\e^{-1}\left(\mathbf{L}_{\re}h^j_{\e}-\mathbf{L}_{1}h^j_{\e}\right)+\Q_{\re}(h^j_{\e},h^j_{\e})-\Q_{1}(h^j_{\e},h^j_{\e})-\e^{-1}\kappa_{\re}\nabla_{v}\cdot (vh^j_{\e}), \quad {j}=0,1,
$$
and
$$
\bm{S}^2_{\e}:=2\widetilde \Q_{\re}(h^0_{\e},h^1_{\e})-2 \widetilde\Q_{1}(h^0_{\e},h^1_{\e}).
$$
The terms $\bm{S}^0_\e$ and $\bm{S}^2_\e$ are treated using the estimate on $h^0_\e$ and $h^1_\e$ stated in ~\eqref{eq:sequ}-\eqref{eq:seqo}. Indeed, using standard estimates on $\Q_\re$ and $\Q_1$ and  {Proposition~\ref{prop:converLLL0}}, we have:
$$
\begin{aligned}
&\|\bm{S}^0_\e+\bm{S}^2_\e\|_{L^1((0,T); {L^1_vL^2_x(\m_{q-1}))}}  \lesssim  {\e \|h^0_\e\|_{L^1((0,T);\E_1)}} \\
&\qquad \quad  + \|h^0_\e\|_{L^1((0,T);\E_1)} \left(\|h_\e^0\|_{L^\infty((0,T);\E)} + \|h_\e^1\|_{L^\infty((0,T);\H)} \right)+ \e \, \|h_\e^0\|_{L^1((0,T);\E_1)} \lesssim \e^2. 
\end{aligned}
$$
Using now~\eqref{eq:sequ}, Proposition~\ref{prop:converLLL0}  {and Remark~\ref{nb:diffQXY}}, we have:
\begin{align*}
&\|\bm{S^1}_\e\|_{L^1((0,T); {L^1_vL^2_x(\m_{q-1}))}}  \lesssim \e \|h^1_\e\|_{L^1((0,T);\H)} \\
&\qquad \quad + \e^2 \|h^1_\e\|_{L^1((0,T);\H)} \|h^1_\e\|_{L^\infty((0,T);\H)} + \e \|h^1_\e\|_{L^1((0,T);\H)} \lesssim \e, 
\end{align*}
which yields the result.
\end{proof}}
To handle the convergence of nonlinear terms, we will need to resort to the following compensated compactness result extracted from \cite{lions-masm} (see also {\cite[Lemma 13.1, Appendix D]{golseSR}}. The original result in {\cite{lions-masm}} is proven in the whole space but is easily adapted to the case of the torus.
\begin{prop}\label{prop:LM}
Let $c \neq 0$ and $T >0$. Consider two families $\{\phi_{\e}\}_{\e}$ and $\{\psi_{\e}\}$ bounded in $L^{\infty}((0,T)\,;L^{2}_{x}(\T^{d}))$ and in $L^{\infty}((0,T)\,;\,\W^{1,2}_{x}(\T^{d}))$ respectively, such that
\begin{equation*}\begin{cases}
\partial_{t}\nabla_{x}\psi_{\e} + \dfrac{c^{2}}{\e}\nabla_{x}\phi_{\e}=\dfrac{1}{\e}F_{\e}\\
\\
\partial_{t}\phi_{\e}+ \dfrac{1}{\e}\Delta_{x}\psi_{\e}=\dfrac{1}{\e}G_{\e}\end{cases}\end{equation*}
where $F_{\e}$ and $G_{\e}$ converge strongly to $0$ in $L^{1}((0,T)\,;\,L^{2}_{x}(\T^{d}))$. Then, 
$$\mathcal{P}\mathrm{Div}_{x}\left(\nabla_{x}\psi_{\e} \otimes \nabla_{x}\psi_{\e}\right) \longrightarrow 0, \qquad \mathrm{div}_{x}\left(\phi_{\e}\nabla_{x}\psi_{\e}\right) \longrightarrow 0$$
in the sense of distributions on $(0,T) \times \T^{d}$.
\end{prop}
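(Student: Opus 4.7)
The plan is to exploit two key structural facts: the vector field $\nabla_{x}\psi_{\e}$ is a gradient (hence curl-free), and the system has a wave-like structure that, after integration by parts in time, effectively exchanges a factor of $\e^{-1}$ for a time derivative of the data $F_{\e},G_{\e}$ (which is then turned into strong convergence of those data). I would not try to extract any limit of $\nabla_{x}\psi_{\e}$ or $\phi_{\e}$; instead I would work directly on the two quadratic quantities and rewrite them as the sum of a strongly vanishing term and a time derivative of an $L^{\infty}_{t}L^{1}_{x}$-bounded quantity multiplied by $\e$.

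For the first claim, I would first use that $\nabla_{x}\psi_{\e}$ is curl-free to write the algebraic identity
\begin{equation*}
\mathrm{Div}_{x}\bigl(\nabla_{x}\psi_{\e}\otimes\nabla_{x}\psi_{\e}\bigr)=\nabla_{x}\!\left(\tfrac{1}{2}|\nabla_{x}\psi_{\e}|^{2}\right)+(\Delta_{x}\psi_{\e})\,\nabla_{x}\psi_{\e}.
\end{equation*}
Applying the Leray projection $\mathcal{P}$ annihilates the gradient, so it suffices to show $\mathcal{P}\bigl((\Delta_{x}\psi_{\e})\nabla_{x}\psi_{\e}\bigr)\to 0$ in $\mathcal{D}'$. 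Substituting $\Delta_{x}\psi_{\e}=G_{\e}-\e\,\partial_{t}\phi_{\e}$ from the second equation and testing against a divergence-free $\varphi\in\mathcal{C}_{c}^{\infty}((0,T)\times\T^{d})$ gives one harmless term $\int G_{\e}\nabla_{x}\psi_{\e}\cdot\varphi$ (vanishing by strong convergence of $G_{\e}$ and boundedness of $\nabla_{x}\psi_{\e}$), together with $-\e\int(\partial_{t}\phi_{\e})\nabla_{x}\psi_{\e}\cdot\varphi$. On this last term I would integrate by parts in time and use the first equation in the form $\e\,\partial_{t}\nabla_{x}\psi_{\e}=F_{\e}-c^{2}\nabla_{x}\phi_{\e}$, producing three contributions: (i) $\int\phi_{\e}F_{\e}\cdot\varphi$, strongly vanishing; (ii) $-c^{2}\int\phi_{\e}\nabla_{x}\phi_{\e}\cdot\varphi=\tfrac{c^{2}}{2}\int\phi_{\e}^{2}\,\mathrm{div}_{x}\varphi=0$ since $\varphi$ is divergence-free; (iii) $\e\int\phi_{\e}\nabla_{x}\psi_{\e}\cdot\partial_{t}\varphi\to 0$ by the prefactor $\e$.

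For the second claim I would compute $\mathrm{div}_{x}(\phi_{\e}\nabla_{x}\psi_{\e})=\nabla_{x}\phi_{\e}\cdot\nabla_{x}\psi_{\e}+\phi_{\e}\Delta_{x}\psi_{\e}$ and substitute $c^{2}\nabla_{x}\phi_{\e}=F_{\e}-\e\,\partial_{t}\nabla_{x}\psi_{\e}$ and $\Delta_{x}\psi_{\e}=G_{\e}-\e\,\partial_{t}\phi_{\e}$ to obtain the exact identity
\begin{equation*}
\mathrm{div}_{x}(\phi_{\e}\nabla_{x}\psi_{\e})=\frac{1}{c^{2}}F_{\e}\cdot\nabla_{x}\psi_{\e}+\phi_{\e}G_{\e}-\frac{\e}{2}\partial_{t}\!\left(\frac{1}{c^{2}}|\nabla_{x}\psi_{\e}|^{2}+\phi_{\e}^{2}\right).
\end{equation*}
Testing against $\varphi\in\mathcal{C}_{c}^{\infty}((0,T)\times\T^{d})$, the first two terms converge to $0$ by the strong convergence of $F_{\e}$ and $G_{\e}$ paired against $L^{\infty}_{t}L^{2}_{x}$-bounded factors, while the last term, after integrating by parts in time, is $\tfrac{\e}{2}\int(\tfrac{1}{c^{2}}|\nabla_{x}\psi_{\e}|^{2}+\phi_{\e}^{2})\partial_{t}\varphi$, which is $\mathrm{O}(\e)$ by the uniform $L^{\infty}_{t}L^{1}_{x}$ bound.

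The only genuine obstacle is the cancellation in step~(ii) of the first claim: one has to ensure the problematic $\e^{-1}$-order ``pressure-like'' term $\phi_{\e}\nabla_{x}\phi_{\e}$ produced when unwinding $\e\,\partial_{t}\nabla_{x}\psi_{\e}$ is indeed a perfect gradient and disappears upon pairing with a divergence-free test field; this is precisely the compensated-compactness effect and explains why the statement is formulated with $\mathcal{P}\mathrm{Div}_{x}$ rather than $\mathrm{Div}_{x}$. All other estimates are routine uses of the hypotheses that $\{\phi_{\e}\},\{\nabla_{x}\psi_{\e}\}$ are bounded in $L^{\infty}_{t}L^{2}_{x}$ and $F_{\e},G_{\e}\to 0$ strongly in $L^{1}_{t}L^{2}_{x}$.
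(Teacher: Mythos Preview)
Your argument is correct and reproduces the standard Lions--Masmoudi compensated-compactness proof. The paper does not supply its own proof of this proposition: it is stated in Appendix~\ref{sec:hydro1} as a known result extracted from \cite{lions-masm} (with a pointer to \cite[Lemma~13.1, Appendix~D]{golseSR}), noting only that the original $\R^{d}$ argument adapts to the torus. One small point worth making explicit in a write-up is that intermediate quantities such as $(\Delta_{x}\psi_{\e})\nabla_{x}\psi_{\e}$ or $\phi_{\e}\nabla_{x}\phi_{\e}$ are not a~priori defined at the assumed regularity ($\phi_{\e}\in L^{\infty}_{t}L^{2}_{x}$, $\psi_{\e}\in L^{\infty}_{t}\W^{1,2}_{x}$), but your final identities involve only well-defined distributional objects, and the formal manipulations are justified by a routine regularization in~$x$.
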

 \section{Proof of Theorem \ref{theo:G1e} and Proposition \ref{prop:Bree}}\label{appen:G1e}
\begin{theo}[See Theorem 2.1, {\cite{bmam}}] \label{bria}  {Let $j=1,2$.} There exists $\e_{0} \in (0,1)$ such that, for all $\ell,s \in \N$ with $\ell \geq s$  and  {$q >q_j$ (where $q_j$ is defined in~\eqref{eq:qstar})} and any $\e \in (0,\e_{0})$, the full transport operator $\G_{1,\e}$ generates a $C_{0}$-semigroup $\{\mathcal{V}_{1,\e}(t)\;;\;t \geq 0\}$ on $ {\W^{s,j}_{v}\W^{\ell,2}_{x}(\m_{q_j})}$  such that, for all $t_{\star} >0$ there exist $C_{0}(t_{\star})$, $\mu_{\star} >0$ satisfying
\begin{multline}\label{eq:decay_{0}}
\left\|\mathcal{V}_{1,\e}(t)h-\mathbf{P}_{0}h\right\|_{ {\W^{s,j}_{v}\W^{\ell,2}_{x}}(\m_{q})} \\
\leq C_{0}(t_{\star})\exp(-\mu_{\star}t)\,\|h-\mathbf{P}_{0}h\|_{ {\W^{s,j}_{v}\W^{\ell,2}_{x}}(\m_{q})}\,, \qquad \forall \, t > t_{\star}\,,j=1,2
\end{multline}
holds true for any $h_{0} \in {\W^{s,j}_{v}\W^{\ell,2}_{x}}(\m_{q})$, $j=1,2$ where $\mathbf{P}_{0}$ is the spectral projection onto $\mathrm{Ker}(\G_{1,\e})=\mathrm{Ker}(\mathscr{L}_{1})$ which is \emph{independent of} $\e$ and given by \eqref{eq:P0}.  
\end{theo}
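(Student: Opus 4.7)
The plan is to prove Theorem \ref{bria} via the enlargement argument of \cite{GMM}, starting from the uniform-in-$\e$ hypocoercive decay for the semigroup on the ``small'' Hilbert space $\W^{\ell,2}_{v,x}(\M^{-1/2})$ established in \cite{briant}. On that space, the spectral gap of $\mathscr{L}_1$ combined with a carefully crafted modified norm equivalent to the usual one uniformly in $\e$ yields
$$\left\|\mathcal{V}_{1,\e}(t)\left[h-\mathbf{P}_0h\right]\right\|_{\W^{\ell,2}_{v,x}(\M^{-1/2})} \lesssim \exp(-\mu_\star t)\,\|h-\mathbf{P}_0h\|_{\W^{\ell,2}_{v,x}(\M^{-1/2})}, \qquad t \geq 0,$$
with $\mathbf{P}_0$ the spectral projection onto $\mathrm{Ker}(\mathscr{L}_1)$, notably independent of $\e$. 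This plays the role of the ``small-space'' decay estimate and serves as the entry point of the whole argument.

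To transfer this estimate to the larger space $\E := \W^{s,j}_v\W^{\ell,2}_x(\m_q)$, I would invoke the splitting $\G_{1,\e} = \A_\e + \B_{1,\e}$ from Section \ref{sec:collL}. The key ingredients, all already established, are: $\B_{1,\e}+\e^{-2}\nu$ is hypo-dissipative on $\E$ (Proposition \ref{prop:hypo}), so the associated semigroup $\cS_{1,\e}^{(\delta)}(t)$ decays like $\exp(-\e^{-2}\nu t)$ in the operator norm on $\E$; and $\A_\e$ maps $\E$ continuously into $\W^{\ell,2}_{v,x}(\M^{-1/2})$, thanks to its compact support in velocity and its smoothing action in $v$, with norm at most $C\e^{-2}$. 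I would then iterate the Duhamel formula
$$\mathcal{V}_{1,\e}(t) = \cS_{1,\e}^{(\delta)}(t) + \int_0^t \mathcal{V}_{1,\e}(t-\sigma)\,\A_\e\,\cS_{1,\e}^{(\delta)}(\sigma)\,\d\sigma$$
into $N$ Dyson--Phillips layers, chosen large enough so that the innermost remainder lands in the small Hilbert space where the decay recalled above applies, while each finite-order term carries the exponential decay of $\cS_{1,\e}^{(\delta)}$.

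The main obstacle I anticipate is the ``initial-layer'' prefactor arising from this iteration. Each application of $\A_\e$ costs a factor $\e^{-2}$, and the $N$-fold time convolution contributes $t^N/N!$; grouped together these produce a multiplicative factor of order $(t/\e^{2})^N$, which is harmless for $t \geq t_\star >0$ (since any polynomial in $t/\e^2$ is absorbed by $\exp(-\e^{-2}\nu t/2)$ for $\e$ small enough) but blows up when $t \to 0^+$ and $\e \to 0^+$ simultaneously. This is precisely why the bound \eqref{eq:decay_0} is only claimed for $t > t_\star$, with a constant $C_0(t_\star)$ allowed to depend on $t_\star$. Verifying that the Hilbert-space identification of $\mathbf{P}_0$ persists after enlargement (so that $\mathbf{P}_0$ on $\E$ is indeed given by \eqref{eq:P0} independently of $\e$) amounts to checking that $\mathrm{Ker}(\G_{1,\e}) = \mathrm{Ker}(\mathscr{L}_1)$ on $\E$, which in turn follows from the factorization above together with the observation that any eigenfunction of $\G_{1,\e}$ lying in $\E$ automatically belongs to $\W^{\ell,2}_{v,x}(\M^{-1/2})$ by the regularizing action of the resolvent of $\B_{1,\e}$ composed with $\A_\e$.

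Once Theorem \ref{bria} is available, the upgrade to the unrestricted estimate \eqref{eq:decay} of Theorem \ref{theo:G1e} required for our nonlinear analysis amounts to removing this initial-layer restriction. As hinted at in Remark \ref{nb:G1e}, this will be achieved by replacing the generic enlargement framework by a more direct estimate that avoids the use of averaging lemmas in $x$ (which produced the additional factor $\e^{-N(2+s)}$ visible in \eqref{eq:badrate}) and instead exploits the continuous embedding $\W^{\ell,2}_x \hookrightarrow L^\infty_x$ available for $\ell > d/2$ together with the sole $v$-regularization of $\A_\e$; this, rather than Theorem \ref{bria} itself, will be the most delicate step of the appendix.
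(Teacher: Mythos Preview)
Your proposal is correct and matches the approach of the cited reference. Note that the paper does not give its own proof of Theorem~\ref{bria}: it is stated as a direct citation of \cite[Theorem~2.1]{bmam}, and the appendix instead proves the sharper Theorem~\ref{theo:G1e} (removing the $t_\star$ restriction). Your sketch of the enlargement argument from \cite{GMM}---starting from the hypocoercive decay on $\W^{\ell,2}_{v,x}(\M^{-1/2})$ of \cite{briant}, using the splitting $\G_{1,\e}=\A_\e+\B_{1,\e}$, iterating Duhamel, and tracking the $(t/\e^2)^N$ initial-layer prefactor---is exactly how \cite{bmam} proceeds, and your identification of why $t_\star>0$ is needed is accurate.
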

{Note that a similar estimate also holds on the space $\W^{s,1}_{v}\W^{\ell,1}_{x}(\m_{q})$.} The difference between Theorems \ref{theo:G1e} and \ref{bria} lies in the fact that, in Theorem \ref{theo:G1e}, we allow $t_{\star}=0$ in the decay estimate \eqref{eq:decay_{0}}.   The ``initial layer'' dependence on $t_{\star} >0$ in \eqref{eq:decay_{0}} is inherent to the method of the enlargement semigroup theory of {\cite{GMM}} {(see Remarks~\ref{nb:G1e} and~\ref{nb:G1e2})}.

\smallskip
\noindent
Theorem \ref{bria} ensures that $\G_{1,\e}$ is the generator of a $C_{0}$-semigroup $\{\mathcal{V}_{1,\e}(t)\;;\;t\geq0\}$ on $\E$ as soon as $q >\frac{9}{2}$ and $\e \in (0,\e_{0})$. We focus on extending \eqref{eq:decay_{0}} to $t_{\star}:=0$.
\begin{proof}[Proof of Theorem \ref{theo:G1e}] We adopt the decomposition of the nonlinear part of {\cite{bmam}} that we used in Section \ref{sec:non}. Namely, for some \emph{fixed}
$h \in  {\W^{s,2}_{v}\W^{\ell,2}_{x}}(\m_{q})$ we set
$$f_{\mathrm{in}}:=h-\mathbf{P}_{0}h\,,$$
and write $f(t)=\mathcal{V}_{1,\e}(t)f_{\mathrm{in}}$ as $f(t)=f^{0}(t)+f^{1}(t)$
with $f^{0} \in {\W^{s,2}_{v}\W^{\ell,2}_{x}}(\m_{q})$ solution to
\begin{equation}\label{eq:2-1}
\partial_{t}f^{0}(t)=\mathcal{B}_{1,\e}f^{0}, \qquad f^{0}(0)=f_{\mathrm{in}}\,,
\end{equation}
whereas $f^{1} \in \H:=\W^{\ell,2}_{v,x}(\M^{-\frac{1}{2}})$,  is solution to
\begin{equation}\label{eq:2-2}
\partial_{t}f^{1}(t)=\G_{1,\e}f^{1}(t)+\mathcal{A}_{\e}f^{0}(t), \qquad f^{1}(0)=0.
\end{equation}
As before, the same notations for the operators $\G_{1,\e},$ $\mathcal{V}_{1,\e}(t)$ acting on various different spaces is used. The definition should be clear from the context. Of course,
$$f^{0}(t)=\mathcal{S}_{1,\e}(t)f_{ \mathrm{in}}\,,$$
and
\begin{equation}\label{eq:2-3}
\|f^{0}(t)\|_{ {\W^{s,2}_{v}\W^{\ell,2}_{x}}(\m_{q})} \leq C_{0}\,\exp(-\e^{-2}\nu_{0}t)\|f_{\mathrm{in}}\|_{ {\W^{s,2}_{v}\W^{\ell,2}_{x}}(\m_{q})}\,,
\end{equation}
since $\mathcal{B}_{1,\e}$ is $\e^{-2}\nu_{0}$ hypo-dissipative ($\nu_{0}$ depends on $\ell,m$). The constant $C_{0}$ is independent of~$\e$.  Let us investigate $\|f^{1}(t)\|_{\H}$. Notice that, since $\mathbf{P}_{0}f=0$, $\mathbf{P}_{0}f^{1}=-\mathbf{P}_{0}f^{0}$ (recall that the projection is the same in $\H$ and $ {\W^{s,2}_{v}\W^{\ell,2}_{x}}(\m_{q})$ and independent of $\e$), the estimate for $\mathbf{P}_{0}f^{1}$ is thus straightforward
\begin{equation} \label{eq:P0f1}
\|\mathbf{P}_{0}f^{1}(t)\|_{\H} \leq C_{1}\,\exp(-\e^{-2}\nu_{0}t)\|f_{\mathrm{in}}\|_{ {\W^{s,2}_{v}\W^{\ell,2}_{x}}(\m_{q})}\,,
\end{equation}
where the constant $C_{1}$ differs from $C_{0}$ just because the norm of the eigenfunctions are different in $\H$ and $ {\W^{s,2}_{v}\W^{\ell,2}_{x}}(\m_{q})$.  We now focus on 
$$\psi(t)=\mathbf{P}_{0}^{\perp}f^{1}(t)=(\mathbf{Id-P}_{0})f^{1}(t).$$ One has
$$\partial_{t}\psi(t)=\G_{1,\e}\psi(t)+\mathbf{P}_{0}^{\perp}\mathcal{A}_{\e}f^{0}(t)\,,$$
{and, arguing as in \cite[Section 7.2]{briant} (see also \cite[Theorem 4.7 and Remark~4.8]{bmam}), one has
$$\frac{1}{2}\|\psi(t)\|_{\H}^{2} \leq \frac{1}{2}\|\psi(0)\|_{\H}^{2}e^{- \mu_{\star} \,t}+\int_{0}^{t}e^{- \mu_{\star} (t-s)}\|\psi(s)\|_{\H}\,\|\mathbf{P}_{0}^{\perp}\A_{\e}\,f^{0}(s)\|_{\H}\d s$$ with $\mu_{\star} >0$ independent of $\e$ which is the size of the spectral gap of $\G_{1,\e}$. Recalling that $\psi(0)=0$ and  $\|\mathcal{A}_{\e}\|_{\mathscr{B}( {\W^{s,2}_{v}\W^{\ell,2}_{x}}(\m_{q}),\H)} \leq C_{A}\e^{-2}$, we get that
$$\|\psi(t)\|_{\H}^{2} \leq \frac{2C_{A}}{\e^{2}}\int_{0}^{t}e^{-\mu_{\star}(t-s)}\|\psi(s)\|_{\H}\|f^{0}(s)\|_{ {\W^{s,2}_{v}\W^{\ell,2}_{x}}(\m_{q})}\d s.$$}
We use \eqref{eq:2-3} to deduce that
$$\|\psi(t)\|_{\H}^{2} \leq \frac{2C_{0}C_{A}}{\e^{2}}\int_{0}^{t}e^{-\mu_{\star}(t-s)}e^{-\frac{\nu_{0}}{\e^{2}}s}\|\psi(s)\|_{\H}\|f_{\mathrm{in}}\|_{ {\W^{s,2}_{v}\W^{\ell,2}_{x}}(\m_{q})}\d s.$$
Then, Young's inequality leads to
\begin{multline*}
\|\psi(t)\|_{\H}^{2} \leq \frac{C_{0}C_{A}e^{-\mu_{\star}\,t}}{\e^{2}}\int_{0}^{t}e^{-(\frac{\nu_{0}}{\e^{2}}-\mu_{\star})s}\|\psi(s)\|_{\H}^{2}\d s \\
+ \frac{C_{0}C_{A}e^{-\mu_{\star}\,t}}{\e^{2}}\|f_{\mathrm{in}}\|_{ {\W^{s,2}_{v}\W^{\ell,2}_{x}}(\m_{q})}^{2}
\int_{0}^{t}e^{-(\frac{\nu_{0}}{\e^{2}}-\mu_{\star})s}\d s.
\end{multline*}
If $\e^{-2}\nu_{0} > 2\mu_{\star}$  we get after integration that
$$\|\psi(t)\|_{\H}^{2} \leq \frac{2C_{0}C_{A}e^{-\mu_{\star}\,t}}{\nu_{0}}\|f_{\mathrm{in}}\|_{ {\W^{s,2}_{v}\W^{\ell,2}_{x}}(\m_{q})}^{2} + \frac{C_{0}C_{A}e^{-\mu_{\star}\,t}}{\e^{2}}\int_{0}^{t}e^{-(\frac{\nu_{0}}{\e^{2}}-\mu_{\star})s}\|\psi(s)\|_{\H}^{2}\d s.$$
With $x(t)=e^{\mu_{\star}\,t}\|\psi(t)\|_{\H}^{2}$ it follows that
$$x(t) \leq \frac{2C_{0}C_{A}}{\nu_{0}}\|f_{\mathrm{in}}\|_{ {\W^{s,2}_{v}\W^{\ell,2}_{x}}(\m_{q})}^{2} + \frac{C_{0}C_{A}}{\e^{2}}\int_{0}^{t}e^{-\frac{\nu_{0}}{\e^{2}}s}x(s)\d s\,,$$
and Gronwall's lemma gives 
$$x(t) \leq \frac{2C_{0}C_{A}}{\nu_{0}}\|f_{\mathrm{in}}\|_{ {\W^{s,2}_{v}\W^{\ell,2}_{x}}(\m_{q})}^{2}\exp\left(\frac{C_{0}C_{A}}{\nu_{0}}\right)=:C_{2}\|f_{\mathrm{in}}\|_{ {\W^{s,2}_{v}\W^{\ell,2}_{x}}(\m_{q})}^{2}\,,$$
with $C_{2} >0$ independent of $\e$. Therefore
$$\|\psi(t)\|_{\H}^{2} \leq C_{2}\|f_{\mathrm{in}}\|_{ {\W^{s,2}_{v}\W^{\ell,2}_{x}}(\m_{q})}^{2}\exp(-\mu_{\star}\,t).$$
This combined with \eqref{eq:P0f1} gives that
$$\|f^{1}(t)\|_{\H}^{2} \leq (C_{2}+C_{1})\|f_{\mathrm{in}}\|_{ {\W^{s,2}_{v}\W^{\ell,2}_{x}}(\m_{q})}^{2}\exp(-\mu_{\star}\,t).$$
Overall, the estimates for $f^{0}$ and $f^{1}$ lead to
$$\|f(t)\|_{\E} \leq C_{3}\|f_{\mathrm{in}}\|_{ {\W^{s,2}_{v}\W^{\ell,2}_{x}}(\m_{q})}\exp\left(-\frac{\mu_{\star}}{2}t\right), \qquad \forall\, t \geq0\,,$$
with $C_{3}$ independent of $\e$ and given by $C_{0}+\sqrt{C_{1}+C_{2}}$ as long as $\nu_{0}\e^{-2} > \mu_{\star}$.\end{proof}
\begin{proof}[Proof of Proposition \ref{prop:Bree}] We aim to prove here that, on the Banach space ${\W^{s,j}_{v}\W^{\ell,2}_{x}}(\m_{q})$, $j=1,2$, the operator
$$\mathcal{B}_{\re,\e}^{(\delta)}=\mathcal{B}_{1,\e}^{(\delta)}+\e^{-2}\mathcal{P}_{\re}+\e^{-2}T_{\re}$$
with domain $\D(\mathcal{B}_{\re,\e}^{(\delta)})=\W^{s+1,j}_{v}\W^{\ell+1,2}_{x}(\m_{q+1})$ generates a $C_{0}$-semigroup. Since we will resort to an approach introduced in \cite{ABL} and some computations made earlier in  \cite{AC}, it will be more convenient to prove that $\mathcal{B}_{\re,\e}^{(\delta)}$ is the generator of a $C_{0}$-semigroup on the space
$$\W^{s,1}_{v}\W^{\ell,2}_{x}(\overline{m}_{a,\beta})$$
where $\overline{m}_{a,\beta}$ is the exponential weight
$$\overline{m}_{a,\beta}(v)=\exp\left(a\langle v\rangle^{\beta}\right), \qquad v \in \R^{d}, \quad a >0, \beta \in (0,1)$$ and then use some enlargement result \cite[Theorem 2.13]{GMM}. It is easy to adapt the proof of Proposition \ref{prop:hypo} and find an equivalent norm on ${\W^{s,1}_{v}\W^{\ell,2}_{x}}(\overline{m}_{a,\beta})$ for which $\mathcal{B}_{\re,\e}^{(\delta)}+\e^{-2}\nu_{\ell,s}$ is dissipative (see \cite{Tr} for useful computations in spaces with exponential weights whenever $\e=1$). According to Lumer-Phillips Theorem, see \cite[Proposition 3.14 \& Theorem 3.15]{engel}, in order to show that $\mathcal{B}_{\re,\e}^{(\delta)}$ generates a $C_{0}$-semigroup it suffices to prove that there exists $\lambda >0$ large enough such that 
\begin{equation}\label{eq:range}
\mathrm{Range}(\lambda-\mathcal{B}_{\re,\e}^{(\delta)})={\W^{s,1}_{v}\W^{\ell,2}_{x}}(\overline{m}_{a,\beta}).\end{equation}
Clearly, one can replace without loss of generality $\mathcal{B}_{\re,\e}^{(\delta)}$ with $\e^{2}\mathcal{B}_{\re,\e}^{(\delta)}$.  Denote for simplicity
$$X={\W^{s,1}_{v}\W^{\ell,2}_{x}}(\bm{\mu}_{a}), \qquad B_{\re}:=\e^{2}\mathcal{B}_{\re,\e}^{(\delta)}\,,$$
omitting the dependence with respect to $\e$ and $\delta$.  It follows that
$$B_{\re}=\mathscr{L}_{1}^{{R,\delta}}- \Sigma_\M-\e v\cdot \nabla_{x}\, + \mathcal{P}_{\re}+ T_{\re}\,.$$
Introduce the following operator
$$\mathcal{T}_{\re}h:=-\e v\cdot \nabla_{x}h + T_{\re}h-\Sigma_\M\,h=-\e v \cdot \nabla_{x}h- \kappa_{\alpha}\mathrm{div}_{v}(vh)-\Sigma_{\M}h$$
with domain $\D(\mathcal{T}_{\re})=\W_{v}^{s+1,1}\W^{\ell+1,2}_{x}(\langle \cdot \rangle\overline{m}_{a,\beta}).$
It is not difficult to check that $\mathcal{T}_{\re}$ generates a $C_{0}$-semigroup in $X$ given by
$$e^{t\mathcal{T}_{\re}}g(x,v)=\exp\left(-\int_{0}^{t}d\kappa_{\re}+\Sigma_{\M}(ve^{\kappa_{\re}(s-t)})\d s\right)g\left(x-\frac{\e}{\kappa_{\re}}\left(1-e^{-\kappa_{\re}t}\right)v, ve^{-\kappa_{\re}t}\right).$$
In particular,
\begin{equation}\label{eq:resoT}
\lim_{\lambda\to\infty}\|\Rs(\lambda,\mathcal{T}_{\re})\|_{\mathscr{B}(X)}=0.
\end{equation}
Moreover, one has the following gain of integrability for the resolvent of $\mathcal{T}_{\re}$: there is $\re_{1}\in (0,1)$ such that, for $\re \in (\re_{1},1)$ there is $c >0$ and $\lambda(\re) >0$ 
\begin{equation}\label{eq:gainT}
\left\|\Rs(\lambda,\mathcal{T}_{\re})\right\|_{\mathscr{B}({\W^{s,1}_{v}\W^{\ell,2}_{x}}(\overline{m}_{a,\beta}),{\W^{s,1}_{v}\W^{\ell,2}_{x}}(\langle \cdot\rangle \overline{m}_{a,\beta}))} \leq \frac{1}{\sigma-c\kappa_{\re}}\,, \qquad \forall\, \lambda > \lambda(\re)\,,\end{equation}
where $\sigma$ is an explicit positive constant depending only on $\Sigma_{\M}$. The proof of such a property is an easy adaptation of Lemma C.14 in \cite{ABL}
whenever $k=s=0$ and extends to $k\geq s\geq0$ following techniques from {\cite{neuman}}, we leave the details to the reader. One also has the following result, see the proof of \cite[Lemma B.1 \& Proposition B.2]{AC}: there exists $\tau(\delta) >0$ such that $\lim_{\delta\to0}\tau(\delta)=0$ and
\begin{equation}\label{eq:L1+}\left\|\mathscr{L}_{1,+}^{R,\delta}\right\|_{\mathscr{B}(\W^{s,1}_{v}\W^{\ell,2}_{x}(\langle \cdot\rangle \overline{m}_{a,\beta}),\W^{s,1}_{v}\W^{\ell,2}_{x}(\overline{m}_{a,\beta}))} \leq \tau(\delta)\,,\end{equation}
while $\mathscr{L}_{1,-}^{R,\delta} \in \mathscr{B}(X)$.  
With these two properties, introduce the sum $\mathcal{C}_{\re}:=\mathscr{L}_{1,+}^{R,\delta}+\mathcal{T}_{\re}$ with domain $\D(\mathcal{C}_{\re})=\D(\mathcal{T}_{\re})$. We have directly from the previous two properties \eqref{eq:gainT} and \eqref{eq:L1+}
$$\left\|\mathscr{L}_{1,+}^{R,\delta}\Rs(\lambda,\mathcal{T}_{\re})\right\|_{\mathscr{B}(X)} \leq \frac{\tau(\delta)}{\sigma-c\kappa_{\re}}, \qquad \forall \lambda >\lambda(\re)\,,$$
from which, choosing $\delta >0$ sufficiently small such that $\frac{\tau(\delta)}{\sigma-c\kappa_{\re}}< 1$, we obtain that $(\mathbf{Id}-\mathscr{L}_{1,+}^{R,\delta}\Rs(\lambda,\mathcal{T}_{\re}))$ is invertible.  We deduce that
\begin{equation*}\begin{split}\Rs(\lambda,\mathcal{C}_{\re})&=\Rs(\lambda,\mathcal{T}_{\re})\left(\mathbf{Id}-\mathscr{L}_{1,+}^{R,\delta}\Rs(\lambda,\mathcal{C}_{\re})\right)^{-1}\\
&=\Rs(\lambda,\mathcal{T}_{\re})\sum_{n=0}^{\infty}\left[\mathscr{L}_{1,+}^{R,\delta}\Rs(\lambda,\mathcal{T}_{\re})\right]^{n}\,, \qquad \forall\, \lambda > \lambda(\re)\,,
\end{split}
\end{equation*}
simply observing that $(\lambda-\mathcal{C}_{\re})=(\mathbf{Id}-\mathscr{L}_{1,+}^{R,\delta}\Rs(\lambda,\mathcal{C}_{\re}))(\lambda-\mathcal{T}_{\re})$. In particular,
$$\left\|\Rs(\lambda,\mathcal{C}_{\re})\right\|_{\mathscr{B}(X)} \leq \frac{1}{\sigma-c\kappa_{\re}-\tau(\delta)}, \qquad \forall \lambda >\lambda(\re)\,,$$
with
$$\lim_{\lambda\to \infty}\left\|\Rs(\lambda,\mathcal{C}_{\re})\right\|_{\mathscr{B}(X)}=0$$
by virtue of \eqref{eq:resoT}. Set then 
$$\mathcal{C}_{\re}^{1}:=\mathcal{C}_{\re}+\mathcal{P}_{\re}.$$ 
With the estimate of $\mathcal{P}_{\re}$
$$\left\|\mathcal{P}_{\re}\Rs(\lambda,\mathcal{C}_{\re})\right\|_{\mathscr{B}(X)} \leq C\frac{1-\re}{\sigma-c\kappa_{\re}-\tau(\delta)}$$
and, choosing $\alpha$ sufficiently close to $1$, the operator $\mathbf{Id}+\mathcal{P}_{\re}\Rs(\lambda,\mathcal{C}_{\re})$ is invertible and so is $\lambda-\mathcal{C}_{\re}^{1}$.  Finally, since
$$B_{\re}=\mathcal{C}_{\re}^{1}-\mathscr{L}_{1,-}^{R,\delta}\,,$$ 
one can chose $\lambda >0$ sufficiently large so that 
$$\|\mathscr{L}_{1,-}^{R,\delta}\Rs(\lambda,\mathcal{C}_{\re}^{1})\|_{\mathscr{B}(X)} \leq \|\mathscr{L}_{1,-}^{R,\delta}\|_{\mathscr{B}(X)}\,\|\Rs(\lambda,\mathcal{C}_{\re}^{1})\|_{\mathscr{B}(X)} <1$$ and obtain that $\lambda-B_{\re}$ is invertible. In particular, \eqref{eq:range} holds true and this proves the result on the space $X$. As said before, we deduce that $\mathcal{B}_{\re,\e}$ still generates a $C_{0}$-semigroup on the larger spaces $\W^{s,1}_{v}\W^{\ell,2}_{x}(\m_{q})$ and $\W^{s,2}_{v}\W^{\ell,2}_{x}(\m_{q})$ under the assumptions thanks to Theorem 2.13 and Remark 2.14 (i) of \cite{GMM}. We leave the technical details to the reader.
\end{proof}

\bibliographystyle{plainnat-linked}

\end{document}